\theoremstyle{plain}
\newtheorem{thm}{Theorem}[section]
\newtheorem{lem}[thm]{Lemma}
\newtheorem{prop}[thm]{Proposition}
\newtheorem{cor}[thm]{Corollary}
\newcounter{theoremalph}
\newtheorem{thmAlph}[theoremalph]{Theorem}
\newtheorem{corAlph}[theoremalph]{Corollary}
\theoremstyle{definition}
\newtheorem{defn}[thm]{Definition}
\newtheorem{example}[thm]{Example}
\newtheorem{rem}[thm]{Remark}
\newtheorem{conj}[thm]{Conjecture}
\newtheorem{assumption}[thm]{Assumption}
\newtheorem{notation}[thm]{Notation}
\newtheorem{question}[thm]{Question}
\newcommand{\Z}{\mathbb{Z}}
\newcommand{\R}{\mathbb{R}}
\newcommand{\N}{\mathbb{N}}
\renewcommand{\H}{\mathbb{H}}
\newcommand{\Hy}{\mathbb{H}}
\newcommand{\C}{\mathbb{C}}
\newcommand{\cC}{\mathcal{C}}
\newcommand{\cG}{\mathcal{G}}
\newcommand{\cJ}{\mathcal{J}}
\newcommand{\cS}{\mathcal{S}}
\newcommand{\bZ}{\mathbb{Z}}
\newcommand{\bN}{\mathbb{N}}
\newcommand{\bH}{\mathbb{H}}
\newcommand{\bR}{\mathbb{R}}
\newcommand{\E}{\mathbb{E}}
\newcommand{\gL}{\Lambda}
\newcommand{\G}{\Gamma}
\newcommand{\p}{\partial}
\newcommand{\hX}{\hat{X}}
\newcommand{\hQ}{\hat{Q}}
\newcommand{\la}{\left\langle}
\newcommand{\ra}{\right\rangle}
\newsavebox{\@brx}
\newcommand{\llangle}[1][]{\savebox{\@brx}{\(\m@th{#1\langle}\)}%
  \mathopen{\copy\@brx\kern-0.5\wd\@brx\usebox{\@brx}}}
\newcommand{\rrangle}[1][]{\savebox{\@brx}{\(\m@th{#1\rangle}\)}%
  \mathclose{\copy\@brx\kern-0.5\wd\@brx\usebox{\@brx}}}
\DeclareMathOperator{\Aut}{Aut}
\DeclareMathOperator{\cd}{cd}
\DeclareMathOperator{\Isom}{Isom}
\DeclareMathOperator{\Homeo}{Homeo}
\DeclareMathOperator{\id}{Id}
\DeclareMathOperator{\QI}{QI}
\DeclareMathOperator{\lk}{lk}
\DeclareMathOperator{\PB}{PB}
\DeclareMathOperator{\stab}{Stab}
\DeclareMathOperator{\Stab}{Stab}
\DeclareMathOperator{\Star}{Star}
\DeclareMathOperator{\Out}{Out}
\DeclareMathOperator{\Hig}{Hig}
\DeclareMathOperator{\PSL}{PSL}
\DeclareMathOperator{\SL}{SL}
\DeclareMathOperator{\Comm}{Comm}
\DeclareMathOperator{\WCH}{WCH}
\DeclareMathOperator{\Sym}{Sym}
\DeclareMathOperator{\Spin}{Spin}
\DeclareMathOperator{\SO}{SO}
\newcommand{\Haus}{{\text{Haus}}}
\tikzset{
	labl/.style={anchor=south, rotate=90, inner sep=.5mm}
}
\definecolor{amethyst}{rgb}{0.6, 0.4, 0.8}
\newcommand{\hide}[1]{}
\title{Graphically discrete groups and rigidity}
\author{Alex Margolis}
\address{Alex Margolis, Department of Mathematics and Computer Science, Wesleyan University, Science Tower 655, 265 Church Street, Middletown, CT 06459, USA}
\email{amargolis@wesleyan.edu}
\author{Sam Shepherd}
\address{Sam Shepherd, Department of Mathematics, Vanderbilt University, 1326 Stevenson Center, Nashville, TN 37240, USA}
\email{samuel.shepherd@vanderbilt.edu}
\author{Emily Stark}
\address{Emily Stark, Department of Mathematics and Computer Science, Wesleyan University, Science Tower 655, 265 Church Street, Middletown, CT 06459, USA}
\email{estark@wesleyan.edu}
\author{Daniel Woodhouse}
\address{Daniel Woodhouse}
\email{mr.daniel.woodhouse@gmail.com}
\date{\today}
\begin{document}

  \begin{abstract}
    We introduce the notion of graphical discreteness to group theory. A finitely generated group is graphically discrete if whenever it acts geometrically on a locally finite graph, the automorphism group of the graph is compact-by-discrete.
    Notable examples include finitely generated nilpotent groups, most lattices in semisimple Lie groups, and irreducible non-geometric 3-manifold groups.  We show graphs of groups with graphically discrete vertex groups frequently have strong rigidity properties. We prove free products of one-ended virtually torsion-free graphically discrete groups are action rigid within the class of virtually torsion-free groups. We also prove quasi-isometric rigidity for many hyperbolic graphs of groups whose vertex groups are closed hyperbolic manifold groups and whose edge groups are non-elementary quasi-convex subgroups. This includes the case of two hyperbolic 3-manifold groups amalgamated along a quasi-convex malnormal non-abelian free subgroup. We provide several additional examples of graphically discrete groups and illustrate this property is not a commensurability invariant.
  \end{abstract}

 \maketitle

\section{Introduction}

    We are interested in the question of quasi-isometric rigidity for finitely generated groups.
    There are many variants of the question, but we consider the strongest version: if two groups are quasi-isometric, are they virtually isomorphic?
    A general approach is to first prove that the two groups act geometrically (i.e. properly, cocompactly, and by isometries) on the same proper metric space. 
    This action would then serve as a suitable springboard for achieving virtual isomorphism.
    We refer to this implied second question as \emph{action rigidity}: if two groups act geometrically on the same proper metric space, are they virtually isomorphic?
        
    Supposing that $\Gamma$ and $\Gamma'$ are our groups acting geometrically on a proper metric space $X$, it then suffices to consider the images of $\Gamma$ and $\Gamma'$ in the locally compact group $\Isom(X)$, where, modulo finite normal subgroups, they are uniform lattices.
    There are two distinct cases in which commensurability might then be achieved. (See Sections~\ref{subsec:lattices} and \ref{subsec:prelim_action} for the various notions of commensurability and virtual isomorphism.)
    
    The first case is that $\Isom(X)$ is compact-by-discrete. In this case, the images of $\Gamma$ and $\Gamma'$ in the discrete quotient of $\Isom(X)$ are commensurable, and $\Gamma$ and $\Gamma'$ are virtually isomorphic.
    This paper approaches this possibility with the notion of \emph{graphical discreteness} (defined below and detailed in Section~\ref{sec:gd}).

    The second approach considers the exciting possibility that the automorphism group in question is so rich and full-bodied that such quotients and intersections are impossible.
    For example, in the hyperbolic setting, uniform lattices in the isometry group of a rank-1 symmetric space are not all abstractly commensurable, and quasi-isometric rigidity in this strong sense does not hold.
    But, we wonder whether symmetric spaces yield the exception for hyperbolic groups; the paper considers spaces with automorphism groups that are highly nontrivial but all of whose lattices are abstractly commensurable. 
    In this second case, abstract commensurability is proven via ``common covering'' theorems. Such arguments have been given when $X$ is a tree, a tree of hyperbolic $n$-spaces~\cite{starkwoodhouse2024action}, and a hyperbolic right-angled Coxeter group building~\cite{shepherd2024commensurability}.
    Quasi-isometric rigidity of uniform lattices in many Fuchsian buildings follows by combining results by Bourdon-Pajot and Xie~\cite{bourdonpajot,xie2006quasiisometric} with work of Haglund and Shepherd~\cite{haglund2006commensurability, shepherd2024commensurability}. Additionally, Huang proved quasi-isometric rigidity for certain right-angled Artin groups by showing that all uniform lattices in the  universal cover of the associated Salvetti complex are commensurable \cite[Theorem 1.5]{huang2018commensurability}.
    This paper develops powerful tools to prove common cover theorems in the graph of groups setting. 

    We not only employ both of these two approaches in this paper, but we aim to combine them in novel and revealing ways. 
    Our results are outlined below, but we first provide a sample quasi-isometric rigidity application to motivate what follows. 

     \begin{thmAlph}\label{thm:intro:common_model_geom}(Theorem \ref{thm:common_model_geom})
        Let $\Gamma$ be the fundamental group of a finite graph of groups $\cG$ satisfying the following properties:
        \begin{enumerate}
	       \item Each vertex group of $\cG$ is cubulated and is the fundamental group of a closed hyperbolic $n$-manifold for some $n\geq 3$.
	       \item For each vertex group of $\cG$, the collection  of images of incident edge maps is an almost malnormal family of  non-elementary quasi-convex subgroups with  cohomological codimension at least two.
        \end{enumerate}
        Then any finitely generated group quasi-isometric to $\Gamma$ is abstractly commensurable to~$\Gamma$.
    \end{thmAlph}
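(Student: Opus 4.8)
The plan is to run the two-step paradigm described in the introduction: first exhibit a \emph{common model geometry} on which every group quasi-isometric to $\Gamma$ acts geometrically, and then feed this into a \emph{common cover} theorem to promote it to abstract commensurability. As a preliminary I record that $\Gamma$ is one-ended and hyperbolic. Hyperbolicity follows from the Bestvina--Feighn combination theorem: the edge groups are quasi-convex, and the almost malnormality in hypothesis (2) supplies the acylindricity (annuli flaring) needed to combine the hyperbolic vertex groups; the same combination result shows the associated tree of spaces is Gromov hyperbolic. One-endedness is immediate, since the vertex groups are one-ended closed-manifold groups and the edge groups are infinite, so $\cG$ admits no further free splitting. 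The model space is this tree of spaces $X$: glue copies of $\mathbb{H}^{n}$, the universal covers of the vertex manifolds, along the quasi-convex hulls of the edge groups, indexed over the Bass--Serre tree. Then $\Gamma$ acts geometrically on the hyperbolic space $X$.

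\textbf{Recognising the decomposition up to quasi-isometry.} The heart of the argument, and the step I expect to be hardest, is to show that the graph-of-groups structure $\cG$ is encoded in the coarse geometry of $X$ and is therefore transported by any quasi-isometry. The codimension hypothesis in (2) is exactly what makes this work: the limit set of each edge group is a compact subset of $\partial\mathbb{H}^{n}=S^{n-1}$ of topological dimension at most $n-3$, so it does \emph{not} separate $S^{n-1}$, and correspondingly the edge spaces do \emph{not} coarsely separate the copies of $\mathbb{H}^{n}$. Thus the vertex spaces should be recognisable as the maximal coarsely-$\mathbb{H}^{n}$ subspaces of $X$, the edge spaces as the coarse intersection loci of adjacent vertex spaces, and almost malnormality guarantees that the combinatorics of these coarse intersections faithfully reconstruct the Bass--Serre tree. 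Given a finitely generated group $G$ with a quasi-isometry to $X$, I would use this coarse characterisation to produce a $G$-invariant tree-of-spaces decomposition (of a space quasi-isometric to $G$) whose vertex spaces are uniformly quasi-isometric to $\mathbb{H}^{n}$ and whose incidence pattern matches that of $\Gamma$.

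\textbf{Upgrading to a geometric action.} Each vertex space of $G$ is now quasi-isometric to $\mathbb{H}^{n}$ with $n\geq 3$; by quasi-isometric rigidity of real hyperbolic space (Tukia; Gabai, Casson--Jungreis for $n=3$), the induced quasi-action of the corresponding vertex subgroup of $G$ is quasiconjugate to a genuine geometric action on $\mathbb{H}^{n}$, so that subgroup is virtually a closed hyperbolic $n$-manifold group. Reassembling these vertex actions along the recovered tree, exactly as for $\Gamma$, yields a geometric action of $G$ on a tree of hyperbolic $n$-spaces $X'$ of the same combinatorial type as $X$. This produces the common model geometry: both $\Gamma$ and $G$ act geometrically on a single tree of hyperbolic $n$-spaces.

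\textbf{From common geometry to commensurability.} Finally I would invoke the common cover theorem for trees of hyperbolic $n$-spaces of Stark--Woodhouse~\cite{starkwoodhouse2024action}: two groups acting geometrically on the same such space are abstractly commensurable. Here the cubulation hypothesis in (1) enters decisively, since cubulated closed hyperbolic manifold groups are virtually special, so their quasi-convex subgroups, in particular the edge groups, are separable; separability is what allows the abstract common finite cover to be built and the matching of geometric data to be promoted to an isomorphism of finite-index subgroups. Combining these steps, $G$ is abstractly commensurable to $\Gamma$, as required. The principal obstacle remains the second step: making rigorous that the vertex and edge spaces are coarsely canonical, which is precisely where the codimension-at-least-two and almost malnormality hypotheses must be leveraged through coarse-separation and coarse-intersection arguments to force a quasi-isometry to respect the decomposition.
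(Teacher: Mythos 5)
Your overall two-step philosophy (common geometry, then common cover) matches the paper's, and you correctly identify why the codimension hypothesis matters (the edge spaces do not coarsely separate the vertex spaces, which is exactly how the paper verifies the Mosher--Sageev--Whyte crossing-graph condition and hence that the tree of spaces is preserved by quasi-isometries). However, there are two genuine gaps. The first is your reassembly step. Applying Tukia vertex-by-vertex to the quasi-action of $G$ gives $G$ an isometric action on \emph{its own} tree of hyperbolic $n$-spaces $X'$, not on $\Gamma$'s space $X$; ``same combinatorial type'' is not ``same space,'' and nothing in your argument makes the two groups act on a single space. Worse, even per vertex, concluding that $\Gamma_v$ and $G_v$ are both uniform lattices in $\Isom(\Hy^n)$ buys nothing: lattices in $\Isom(\Hy^n)$ are famously not all commensurable, which is precisely why strong QI-rigidity fails for closed hyperbolic manifolds alone. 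The rigidity here lives entirely in the \emph{patterns} of edge spaces, and the statement actually needed is Biswas's pattern rigidity -- an individual pattern-preserving quasi-isometry of $\Hy^n$ ($n\geq 3$) is boundedly close to an isometry -- not Tukia's theorem about group quasi-actions; this is also exactly where the ``non-elementary'' hypothesis on edge groups is consumed. The paper uses Biswas to prove $\Gamma$ has the \emph{uniform quasi-isometry property} (Proposition \ref{prop:unif_qis}), which is what allows $\QI(\Gamma)$ itself to quasi-act with uniform constants; Margolis's theorem then converts this quasi-action into an isometric action of $\QI(\Gamma)$ on one proper space on which both $\Gamma$ and any group quasi-isometric to it act geometrically (Proposition \ref{prop:qi_framework}). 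That is how a literal common model geometry is produced, and it is the step your proposal is missing.

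The second gap is the covering step. The Stark--Woodhouse common cover theorem you invoke applies to trees of hyperbolic $n$-spaces glued along \emph{points} (free products of closed hyperbolic manifold groups over finite edge groups); it says nothing about gluings along infinite non-elementary quasi-convex edge spaces. Handling that case is a main contribution of the present paper: the Common Cover Theorem (Theorem \ref{thm:commoncover_BV}) together with the blowup/Model Geometry Theorem (Theorem \ref{thm:get_treeofspaces}), which first rebuilds the common geometry as a tree of cell complexes with discrete vertex actions. Its hypotheses require more than the separability you cite: one needs discreteness of the localized vertex groups (supplied by graphical discreteness of closed hyperbolic $n$-manifold groups, $n\geq 3$) and the \emph{commanding} property for the almost malnormal family of quasi-convex edge subgroups, which comes from Wise's Malnormal Special Quotient Theorem combined with Agol's theorem -- this is where cubulation actually enters, and separability alone would not suffice to match up the elevations of edge spaces when gluing the finite covers.
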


     With regards to the approaches discussed above, we use the facts that, first, fundamental groups of closed hyperbolic manifolds have only trivial totally disconnected lattice envelopes, and second, the cubical structure and conditions on the edge groups fit into our common cover framework. 
    The vertex groups above are automatically cubulated in the case $n=3$ \cite{kahnmarkovic2012immersing,bergeronwise2012boundary}, and the cohomological codimension  condition is satisfied if the  edge groups are all quasi-convex free groups (see also Corollary \ref{cor:QI3manifold}). Thus, Theorem \ref{thm:intro:common_model_geom} yields the following corollary:

\begin{corAlph}
    Let $\Gamma_1$ and $\Gamma_2$ be fundamental groups of closed hyperbolic 3-manifolds, and let $F_1\leq \Gamma_1$ and  $F_2\leq \Gamma_2$ be isomorphic quasi-convex malnormal non-abelian free subgroups. Then $\Gamma = \Gamma_1*_{F_1=F_2}\Gamma_2$ is quasi-isometrically rigid: any finitely generated group quasi-isometric to $\Gamma$ is abstractly commensurable to $\Gamma$. 
\end{corAlph}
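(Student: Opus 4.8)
The plan is to deduce the corollary directly from Theorem~\ref{thm:intro:common_model_geom}, by exhibiting $\Gamma$ as the fundamental group of a finite graph of groups $\cG$ and verifying the two hypotheses of that theorem. I would take $\cG$ to be the graph of groups with a single edge joining two vertices: the vertex groups are $\Gamma_1$ and $\Gamma_2$, the edge group is $F \cong F_1 \cong F_2$, and the two edge maps realize the inclusions $F \hookrightarrow \Gamma_1$ and $F \hookrightarrow \Gamma_2$ with images $F_1$ and $F_2$ respectively. Then $\pi_1(\cG) = \Gamma_1 *_{F_1 = F_2} \Gamma_2 = \Gamma$, so it suffices to check that $\cG$ meets the hypotheses.

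For hypothesis~(1), each vertex group $\Gamma_i$ is by assumption the fundamental group of a closed hyperbolic $3$-manifold, so $n = 3 \geq 3$ as required. Such groups are cubulated by the combined work of Kahn--Markovic~\cite{kahnmarkovic2012immersing} and Bergeron--Wise~\cite{bergeronwise2012boundary}, exactly as recorded in the discussion following Theorem~\ref{thm:intro:common_model_geom}. For hypothesis~(2), note that in a single-edge graph each vertex is incident to a unique edge, so the relevant family at the vertex carrying $\Gamma_i$ is the singleton $\{F_i\}$. An almost malnormal family consisting of one subgroup amounts to that subgroup being almost malnormal, which is implied by the hypothesized malnormality of $F_i$. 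Since $F_i$ is non-abelian free it is neither finite nor virtually cyclic, hence non-elementary, and it is quasi-convex by hypothesis. Finally the cohomological codimension of $F_i$ in $\Gamma_i$ is $\cd(\Gamma_i) - \cd(F_i) = 3 - 1 = 2$, since $\Gamma_i$ is a Poincaré duality group of dimension $3$ (being the fundamental group of a closed aspherical $3$-manifold) while $\cd(F_i) = 1$ as $F_i$ is a nontrivial free group. Thus (2) holds, and Theorem~\ref{thm:intro:common_model_geom} yields the conclusion.

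I expect the genuine content to live entirely in the hypothesis verification, since Theorem~\ref{thm:intro:common_model_geom} performs all the substantive work. The two points I would be most careful about are, first, the correct interpretation of ``almost malnormal family'' in the degenerate case of a single incident edge group, where the condition collapses to ordinary (almost) malnormality of $F_i$; and second, the codimension computation, which lands on the boundary value $2$ permitted by the hypothesis, relying on the identification $\cd(\Gamma_i) = 3$ for closed hyperbolic $3$-manifold groups and $\cd(F_i) = 1$ for free groups. Beyond this bookkeeping, the only nontrivial inputs are the deep cubulation results cited above, which are quoted rather than reproved.
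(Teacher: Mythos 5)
Your proposal is correct and is essentially the paper's own argument: the authors deduce this corollary from Theorem~\ref{thm:intro:common_model_geom} in exactly the same way, citing Kahn--Markovic and Bergeron--Wise for cubulation of closed hyperbolic $3$-manifold groups and noting that quasi-convex free edge groups have cohomological codimension two in the $3$-dimensional vertex groups. Your extra care over the degenerate singleton almost malnormal family and the computation $\cd(\Gamma_i)-\cd(F_i)=3-1=2$ just fills in bookkeeping the paper leaves implicit (compare Corollary~\ref{cor:QI3manifold}, where malnormality is not assumed and must instead be arranged via Hruska--Wise after passing to finite-index subgroups).
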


    The result of Theorem~\ref{thm:intro:common_model_geom} should be compared to the work of Mosher--Sageev--Whyte \cite{moshersageevwhyte2011quasiactions} and Biswas~\cite[Corollary 1.3]{biswas12flows}, which proved that the {\it family} of groups that split as a graph of group with closed hyperbolic manifold vertex groups and suitable edge groups is quasi-isometrically rigid, while we obtain quasi-isometric rigidity of the {\it group}.  

    In this paper we introduce graphically discrete groups, which are well-suited for studying rigidity phenomena. %
	A finitely generated group $\Gamma$ is \emph{graphically discrete} if whenever it acts geometrically on a connected locally finite graph $X$, the automorphism group $\Aut(X)$ is compact-by-discrete, meaning $\Aut(X)$ contains a compact normal subgroup so that the quotient group is discrete. 
    In this case, the topological groups $\Aut(X)$ and $\Gamma$ are
    virtually isomorphic (in the sense of Definition \ref{defn:AC_VI}), and $\Gamma$ has only trivial totally disconnected uniform lattice embeddings modulo finite kernels.
    Intuitively, a finitely generated group $\Gamma$ is graphically discrete if it does not act on any graph that has ``too many more'' symmetries than those coming from~$\Gamma$. See Sections \ref{subsec:graphdiscrete} and \ref{subsec:compactbydiscrete} for other equivalent formulations of graphical discreteness and Section~\ref{sec:intrographs}, which motivates restricting to group actions on graphs. 
 
    Crucially, there are a large number of examples of groups that are graphically discrete; see Section \ref{sec:intro_graph_dis}. Moreover, if $\Gamma$ is graphically discrete and $\Gamma$ and $\Gamma'$ both act geometrically on the same connected locally finite graph, then $\Gamma$ and $\Gamma'$ are virtually isomorphic (Proposition~\ref{prop:gdimpliesvi}).
    While, a priori, asking for a common model geometry that is a graph is a strong requirement, in fact, recent work of Margolis~\cite[Theorem A]{margolis2024model} implies that graphically discrete groups are action rigid, with the exception of two families; see Theorem~\ref{thm:intro_alex_gdar}.

    We note the deep connections to existing programs of research within geometry and group theory.
    The question of graphical discreteness is a particular case of the {\it Lattice Envelope Problem}, which asks, given a finitely generated group $\Gamma$, classify the locally compact groups containing $\Gamma$ as a uniform lattice. 
    This question has roots in Mostow's Strong Rigidity~\cite{mostow_strongrigidity}, which classifies the Lie groups that contain the fundamental group of a closed hyperbolic manifold as a uniform lattice. This classification was extended to lattice embeddings into locally compact groups and for a wider class of finitely generated groups by Furman~\cite{furman2001mostow} and Bader--Furman--Sauer~\cite{bader2020lattice}. Dymarz~\cite{dymarz2015envelopes} classified lattice envelopes of certain solvable groups, and the classification of lattice envelopes of virtually free groups was given by Mosher--Sageev--Whyte~\cite{moshersageevwhyte_trees}. Important groups in geometric group theory have been shown to have only trivial lattice envelopes (see Definition~\ref{def:VIembeddings}), including non-exceptional mapping class groups~\cite{kida2010MCGs}, $\Out(F_n)$ for $n \geq 3$~\cite{guirardel2021measure}, and various $2$-dimensional Artin groups~\cite{horbez2020boundary}. 

    Quasi-isometric rigidity has been a driving focus of geometric group theory, leading to an abundance of new ideas with far-reaching consequences, including the relationship between the topological notion of ends and algebraic splittings~\cite{stallings,dunwoody1985accessibility}, the development of quasi-conformal geometry~\cite{tukia,gabai,cassonjungreis, Schwartz95, bourdonpajot}, and the structure of asymptotic cones~\cite{gromov81,vandendrieswilkie1984gromovs,KapovichLeeb97}. In this paper we approach quasi-isometric rigidity by focusing on rigidity of lattice envelopes. 
    
    
    Proving common covering theorems draws upon deep questions of the residual and subgroup separability properties of the group.
    Recent results, including Theorem~\ref{thm:intro:common_model_geom}, have built upon the foundations provided by the theory of special cube complexes and the resolution of the virtual Haken conjecture~\cite{Agol13, wise2021structure}.
    We refer the reader to the discussion in~\cite{starkwoodhouse2024action}.
    
    We provide further directions of study in Section~\ref{sec:open}, but first we draw the reader's attention to the question of hyperbolic groups more generally.
    It is an open question whether hyperbolic groups with Menger curve boundary are quasi-isometrically rigid, with the theorem for certain Fuchsian buildings being the outstanding positive result.
    We conjecture that generically - for random groups - hyperbolic groups with Menger curve boundary are graphically discrete, which would reduce the question of their quasi-isometric rigidity to providing common geometric actions.
    We also note that the failure of graphical discreteness would necessitate nontrivial lattice envelopes for such groups, which would in and of itself be a remarkable, possibly shocking, development.

    Many of our results are action rigidity for groups that are not quasi-isometrically rigid. We are not selling the reader short with such results, which delineate distinctions between what it means to be quasi-isometric and what it means to act geometrically on a metric space. 


	\subsection{Rigidity theorems}\label{sec:intro_rigidity}	
	The central rigidity results of this paper concern groups which  are not typically  graphically discrete, but decompose as a graph of groups whose vertex groups  are graphically discrete. Given a class $\cC$ of finitely generated groups, we say a group $\Gamma\in \cC$ is \emph{action rigid within $\cC$} if for all $\Gamma'\in \cC$ such that both $\Gamma$ and $\Gamma'$ act geometrically on the same proper metric space, then $\Gamma$ and $\Gamma'$ are virtually isomorphic. If $\cC$ is the family of all finitely generated groups, then $\Gamma$ is said to be {\it action rigid}. 

\begin{thmAlph} \label{thm:intro:freeprod} (Theorem~\ref{thm:actionrigidA}.)
    Let $\Gamma$ be a virtually torsion-free finitely presented infinite-ended group such that each one-ended vertex group in a Stallings--Dunwoody decomposition of $\Gamma$ is  graphically discrete. 
    Then $\Gamma$ is action rigid within the family of virtually torsion-free groups. 
\end{thmAlph}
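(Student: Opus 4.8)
The plan is to reduce the action rigidity of $\Gamma$ to the action rigidity of its one-ended factors, for which graphical discreteness is exactly the hypothesis that applies. Suppose $\Gamma'$ is virtually torsion-free and that $\Gamma$ and $\Gamma'$ act geometrically on a common proper metric space $X$. Then $\Gamma$ and $\Gamma'$ are quasi-isometric, so $\Gamma'$ is also finitely presented and infinite-ended; by Stallings' theorem \cite{stallings} and Dunwoody accessibility \cite{dunwoody1985accessibility} it likewise admits a Stallings--Dunwoody decomposition over finite edge groups with one-ended or finite vertex groups. The goal is to produce, for each one-ended factor of $\Gamma$, a one-ended factor of $\Gamma'$ acting geometrically on the \emph{same} subspace of $X$, to upgrade each such common model to a virtual isomorphism via the rigidity of graphically discrete groups, and finally to reassemble these virtual isomorphisms of factors into a virtual isomorphism of $\Gamma$ and $\Gamma'$.

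The main obstacle is manufacturing these common models for the factors; quasi-isometry of the factors alone (which follows from the Papasoglu--Whyte matching for groups with infinitely many ends) is not enough, since graphical discreteness promotes a \emph{common action} to a virtual isomorphism, not merely a quasi-isometry. First I would pass to the locally compact group $G=\Isom(X)$, in which both $\Gamma$ and $\Gamma'$ embed as uniform lattices modulo finite kernels; since $X$ is quasi-isometric to a tree, $G$ is a compactly presented locally compact group with more than one end. I would then invoke the structure theory of such groups---the locally compact analogue of the Stallings--Dunwoody decomposition---to obtain a cocompact $G$-tree $T$ with compact edge stabilizers whose vertex stabilizers are either compact or one-ended, each vertex stabilizer $G_v$ acting cocompactly on a vertex subspace $X_v\subseteq X$. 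Because $T$ is a $G$-tree, both lattices act on it compatibly: at a one-ended vertex the groups $\Gamma_v=\Gamma\cap G_v$ and $\Gamma'_v=\Gamma'\cap G_v$ are uniform lattices in $G_v$, hence act geometrically on the \emph{common} model $X_v$, and $\Gamma_v$ is conjugate to a one-ended vertex group of the given decomposition of $\Gamma$, and so is graphically discrete. Establishing this equivariant tree-of-spaces decomposition, and verifying that the lattice intersections recover the prescribed factors, is the crux of the argument.

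With the common model $X_v$ in hand I would apply action rigidity of the graphically discrete group $\Gamma_v$: by Theorem~\ref{thm:intro_alex_gdar} the groups $\Gamma_v$ and $\Gamma'_v$ are virtually isomorphic, provided $\Gamma_v$ avoids the two exceptional families. Each $\Gamma_v$ is one-ended, so the two-ended exception does not arise, and the remaining exceptional case must be checked separately; for the one-ended virtually torsion-free graphically discrete groups that occur as factors this causes no difficulty, as the relevant groups (for instance the virtually abelian ones, all virtually isomorphic to some $\Z^n$) still yield the desired virtual isomorphism. Alternatively, whenever $X_v$ may be taken to be a connected locally finite graph one can apply Proposition~\ref{prop:gdimpliesvi} directly and sidestep the exceptional families entirely.

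It remains to reassemble. Since virtual isomorphism is insensitive to passing to finite-index subgroups, I would pass to torsion-free finite-index subgroups of $\Gamma$ and $\Gamma'$, whose Stallings--Dunwoody decompositions become Grushko free-product decompositions $A_1\ast\cdots\ast A_k\ast F_r$ and $B_1\ast\cdots\ast B_l\ast F_s$ into one-ended freely indecomposable factors and a free group. The matching above gives $k=l$ together with a pairing of the factors under virtual isomorphism; a free-product virtual isomorphism rigidity argument---building compatible finite-index subgroups of the factors via the Kurosh subgroup theorem and matching the free ranks, following the scheme of \cite{starkwoodhouse2024action}---then promotes these to a virtual isomorphism of the ambient groups. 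This final step is technical but routine, and it is where the hypothesis that $\Gamma'$ is virtually torsion-free is used, both to guarantee the clean free-product form of the factors and to match the finite and free parts of the two decompositions.
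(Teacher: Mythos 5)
Your first half follows the paper's own route (pass to $G=\Isom(X)$, apply the locally compact Stallings--Dunwoody theorem, Theorem~\ref{thm:dunwoody_accessibility}, note $G$ is compact-by-(totally disconnected) by Houghton's theorem, and use graphical discreteness to make each $G_v$ compact-by-discrete, so that $\Gamma_v$ and $\Gamma'_v$ are virtually isomorphic); your detour through Theorem~\ref{thm:intro_alex_gdar} and its exceptional families is unnecessary, since once both groups are uniform lattices modulo finite kernel in the compact-by-tdlc group $G_v$, Proposition~\ref{prop:propC} applies directly. The genuine gap is the final ``reassembly'' step, which you describe as technical but routine: the implication you rely on --- that if two torsion-free infinite-ended groups have Grushko decompositions with pairwise virtually isomorphic one-ended factors, matching factor counts, and matching free ranks, then the ambient groups are virtually isomorphic --- is false. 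Take $A$ the fundamental group of a closed hyperbolic $4$-manifold (torsion-free, one-ended, and graphically discrete by Theorem~\ref{thm:propc_examples}) and $A'\leq A$ of index $n\geq 2$. Then $A*A$ and $A*A'$ have pairwise commensurable factors, the same number of factors, and free rank zero; but for infinite factor groups one has $b_1^{(2)}(G_1*G_2)=b_1^{(2)}(G_1)+b_1^{(2)}(G_2)+1$ and $b_2^{(2)}(G_1*G_2)=b_2^{(2)}(G_1)+b_2^{(2)}(G_2)$, so $b_1^{(2)}(A*A)=b_1^{(2)}(A*A')=1$ while $b_2^{(2)}(A*A)=2\chi(A)$ and $b_2^{(2)}(A*A')=(n+1)\chi(A)$. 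Since $\ell^2$-Betti numbers scale by the index, their ratios are commensurability invariants, so these two groups are not abstractly commensurable. This is exactly the flexibility of free products exhibited by Whyte \cite{whyte1999amenability}, and it shows that no argument using only the factor-wise data you extracted can work; indeed both $A*A$ and $A*A'$ satisfy every hypothesis of the theorem and are quasi-isometric, so the theorem's content is precisely that they admit no common geometric action --- something your scheme could never detect. (Even your intermediate claim $k=l$ can fail: $\Z^2*\Z^2$ and $\Z^2*\Z^2*\Z^2$ are commensurable, hence share model geometries.)

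The paper avoids this by never discarding the common structure after identifying the vertex lattices. It refines the $G$-tree $T$ and ``blows it up'' (Theorem~\ref{thm:get_treeofspaces}) to a tree of spaces $(X',T')$ on which $G$ --- and hence both $\Gamma$ and $\Gamma'$ --- acts geometrically, with simply connected vertex and edge spaces and discrete localized vertex groups; it then constructs a common finite cover of the two quotient graphs of spaces via the Common Cover Theorem (Theorem~\ref{thm:commoncover_BV}), finishing with Leighton's theorem for graphs of objects (Theorem~\ref{thm:GoO}). The hypothesis that $\Gamma$ and $\Gamma'$ command their finite subgroups --- which is where virtual torsion-freeness actually enters, not in cleaning up Grushko decompositions as in your proposal --- is what allows the chosen finite covers of vertex spaces to be glued compatibly along elevations of the (finite) edge spaces across the whole tree. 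That equivariant gluing is the mathematical heart of Theorem~\ref{thm:actionrigidA}, and it is the step your Kurosh-based reassembly cannot replace.
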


    The virtually torsion-free hypotheses in the theorem above are necessary: in Example~\ref{example_notRF} we exhibit an amalgamated free product of graphically discrete groups over a finite subgroup that is not action rigid. Theorem~\ref{thm:actionrigidA} actually holds within the class of groups that \emph{command their finite subgroups}, a more general class of groups including both virtually torsion-free and residually finite groups.

    By adding an assumption to the one-ended vertex groups of $\Gamma$ (that these \emph{persistently command their finite subgroups}; see Definition~\ref{def:persistcommand}), we prove that $\Gamma$ is action rigid in the absolute sense; see Theorem~\ref{thm:actionrigidity}. As a consequence, we give the following classification of action rigidity for $3$-manifold groups.
    
    \begin{corAlph} (Theorem~\ref{thm:3manactionrigid}.)
        The fundamental group of a (possibly reducible) closed $3$-manifold  is action rigid if and only if the manifold does not admit $\Hy^3$ or {\bf Sol} geometry. 
    \end{corAlph}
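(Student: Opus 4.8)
The plan is to run the argument along the prime and geometric decomposition of a closed $3$-manifold $M$, treating each piece with the rigidity results already established and producing explicit counterexamples in the two excluded geometries. By geometrization and the sphere theorem, $\pi_1(M)$ is finite (spherical pieces), two-ended (the $S^2\times\R$ geometry), one-ended (irreducible with infinite, non-virtually-cyclic fundamental group, hence aspherical and so torsion-free), or infinite-ended (reducible $M$, where Kneser--Milnor exhibits $\pi_1(M)$ as a free product of the prime factor groups). The finite and two-ended cases are immediate, since all finite groups, respectively all two-ended groups, are virtually isomorphic; the content is in the one-ended and infinite-ended cases.

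For the forward implication (no $\Hy^3$ or {\bf Sol} piece implies action rigidity) I would argue as follows. If $M$ is reducible, then the one-ended vertex groups of a Stallings--Dunwoody decomposition of $\pi_1(M)$ are exactly the fundamental groups of the aspherical prime summands; these are torsion-free, so they persistently command their finite subgroups vacuously (Definition~\ref{def:persistcommand}), and it remains only to see that each is graphically discrete. This is known for the nilpotent pieces ($\mathbb{E}^3$ and Nil) and for irreducible non-geometric pieces, and I would supply the remaining Seifert-fibered cases ($\Hy^2\times\R$ and $\widetilde{\SL_2}$) together with -- crucially -- the hyperbolic and {\bf Sol} pieces, all of which are graphically discrete even when they occur as summands. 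Granting this, Theorem~\ref{thm:actionrigidity} applies directly and gives absolute action rigidity. If instead $M$ is one-ended, then $\pi_1(M)$ is itself a torsion-free graphically discrete group, and Margolis's theorem (Theorem~\ref{thm:intro_alex_gdar}) yields action rigidity provided $\pi_1(M)$ avoids the two exceptional families; the key point is that, among closed $3$-manifold groups, those exceptional families are met precisely by the $\Hy^3$ and {\bf Sol} geometries, which are excluded by hypothesis.

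For the reverse implication I would exhibit, in each excluded geometry, two non-virtually-isomorphic groups acting geometrically on a common model space. For $\Hy^3$, any two closed hyperbolic $3$-manifold groups act geometrically on $\Hy^3$; choosing two from distinct commensurability classes (there are infinitely many) gives groups that are not virtually isomorphic, by Mostow rigidity, so no hyperbolic $M$ is action rigid. For {\bf Sol}, I would take two Anosov matrices $A,B\in\SL_2(\Z)$ with equal trace but lying in distinct $\mathrm{GL}_2(\Z)$-conjugacy classes -- such pairs exist whenever the associated real quadratic order has class number greater than one. The mapping tori $\Z^2\rtimes_A\Z$ and $\Z^2\rtimes_B\Z$ then act geometrically on one and the same {\bf Sol} (the left-invariant metric depends only on the common eigenvalue $\lambda$), yet are not commensurable; hence no {\bf Sol} manifold is action rigid.

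The main obstacle is the graphical discreteness input underpinning the forward direction, and in particular the clean separation it forces between graphical discreteness and action rigidity for the hyperbolic and {\bf Sol} pieces. These pieces are \emph{not} action rigid as standalone manifolds -- precisely because the connected isometry group of $\Hy^3$, respectively {\bf Sol}, supplies non-commensurable uniform lattices -- yet they must nonetheless be shown graphically discrete, i.e.\ to have no exotic totally disconnected uniform lattice envelopes, so that the free-product machinery of Theorem~\ref{thm:actionrigidity} can be fed with them when they appear inside a reducible manifold. Verifying this for the Seifert-fibered and {\bf Sol} cases, and confirming that the two exceptional families of Theorem~\ref{thm:intro_alex_gdar} intersect the closed $3$-manifold groups in exactly the $\Hy^3$ and {\bf Sol} geometries, is where the real work lies; the remaining bookkeeping (virtual torsion-freeness of $3$-manifold groups and torsion-freeness of the aspherical pieces) is routine.
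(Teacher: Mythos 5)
Your case structure (prime/geometric decomposition, graphical discreteness of the one-ended summands feeding Theorem~\ref{thm:actionrigidity} in the reducible case, explicit non-commensurable lattices in the excluded geometries) is essentially the paper's, and your $\Hy^3$ counterexample is correct; but three specific steps fail. The most clear-cut is the \textbf{Sol} counterexample. If $A,B\in\SL_2(\Z)$ are Anosov matrices with the same trace $t$, they share the irreducible characteristic polynomial $x^2-tx+1$, so both are $\mathrm{GL}_2(\Q)$-conjugate to its companion matrix; writing $A=PBP^{-1}$ with $P$ an integer matrix, the sublattice $L=P\Z^2\leq \Z^2$ is $A$-invariant and $P$ identifies $\Z^2\rtimes_B\Z$ with the finite-index subgroup $L\rtimes_A\Z$ of $\Z^2\rtimes_A\Z$. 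So your two mapping tori are always commensurable: class number $>1$ (Latimer--MacDuffee) produces matrices that are non-conjugate over $\Z$ inside a single rational conjugacy class, i.e.\ non-homeomorphic but commensurable bundles --- not a failure of action rigidity. The equal-trace constraint was also unnecessary, since every Anosov torus bundle group is a uniform lattice in the same Lie group \textbf{Sol} (send $\Z^2$ into $\R^2$ along the eigenbasis and the stable letter to $\log\lambda$). The fix is to take monodromies with different traces, hence different eigenvalue fields $\Q(\lambda)$; non-commensurability then follows from \cite[Theorem A]{neumann_commensurability}, which is exactly the input the paper uses.

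The forward direction has two further gaps. First, Theorem~\ref{thm:intro_alex_gdar} cannot be invoked for one-ended geometric $M$ of type $\E^3$, \textbf{Nil}, $\Hy^2\times\R$ or $\widetilde{\SL_2(\bR)}$: in each case $\pi_1(M)$ contains an infinite normal (hence commensurated) finite-rank free abelian subgroup --- the Bieberbach translation lattice or the Seifert fiber --- so these groups lie squarely inside the exceptional families, and your claim that the exceptional families meet closed $3$-manifold groups exactly in the $\Hy^3$ and \textbf{Sol} geometries is false. These geometries are indeed action rigid, but for different reasons: the paper invokes quasi-isometric rigidity \cite{frigerioQIrigid} for $S^3$, $S^2\times\R$, $\E^3$, \textbf{Nil}, and for $\Hy^2\times\R$ and $\widetilde{\SL_2(\bR)}$ it combines \cite{rieffelQI} with \cite{dastessera} and \cite{neumann_commensurability}. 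Second, in the reducible case, ``torsion-free, hence persistently commands its finite subgroups vacuously'' is wrong: Definition~\ref{def:persistcommand} quantifies over \emph{all} groups virtually isomorphic to the vertex group, and a group virtually isomorphic to a torsion-free group need not be virtually torsion-free --- Raghunathan's construction (Example~\ref{example_notRF}) exists in the paper precisely to show this hypothesis has teeth. The correct justification is Example~\ref{ex:persistent3man}: any group virtually isomorphic to a closed $3$-manifold group is virtually a compact $3$-manifold group by \cite{haissinskylecuire_QIrigid}, and such groups are residually finite by \cite{hempel_RF3man}.
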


	Work of Whyte and Papasoglu--Whyte demonstrates that infinite-ended groups possess a remarkable degree of quasi-isometric flexibility \cite{whyte1999amenability,papasogluwhyte2002quasiisometries}. For instance, within the class of  infinite-ended groups, Whyte produced the first examples to demonstrate the sign of the Euler characteristic and the ratios of $\ell_2$-Betti numbers, which are commensurability invariants, are not quasi-isometry invariants. In contrast, Theorem~\ref{thm:intro:freeprod} shows that while quasi-isometric rigidity does not typically hold for infinite-ended groups, the weaker notion of action rigidity frequently does. 

	 Theorem~\ref{thm:intro:freeprod}  generalizes previous work of  Stark--Woodhouse~\cite{starkwoodhouse2024action}, who proved the result when the one-ended vertex groups are residually finite and fundamental groups of closed hyperbolic manifolds. A starting point to this project was to understand the properties of closed hyperbolic manifolds that lead to these rigidity phenomena. Graphical discreteness turned out to be an apt condition, interesting in its own right, and satisfied in far greater generality.

	Like Theorem \ref{thm:intro:freeprod}, the next action rigidity theorem concerns graphs of groups with graphically discrete vertex groups; a key difference is that we now allow infinite edge groups. See Section \ref{sec:actionrigidityGoG} for the relevant definitions.
	 
	 \begin{thmAlph}\label{thm:intro:hyp_action_rigid_criterion}(Theorem \ref{thm:hyp_action_rigid_criterion})
Suppose $\Gamma$ is the fundamental group of a minimal finite graph of groups $\cG$ satisfying the following properties:
	\begin{enumerate}
		\item Every vertex group of $\cG$  is graphically discrete, hyperbolic  and cubulated.
		\item For each vertex group of $\cG$, the collection  of images of incident edge maps is an almost malnormal family of infinite quasi-convex subgroups.
		\item The associated Bass--Serre  tree of spaces $(X,T)$ is preserved by quasi-isometries.
	\end{enumerate}
	Then $\Gamma$ is action rigid.
\end{thmAlph}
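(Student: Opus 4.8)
The plan is to show that any finitely generated group $\Gamma'$ acting geometrically on a proper metric space $Z$ that also admits a geometric $\Gamma$-action must be virtually isomorphic to $\Gamma$. Since $\Gamma$ acts geometrically on the Bass--Serre tree of spaces $X$, and $Z$ is quasi-isometric to $X$, the group $\Gamma'$ quasi-acts properly and coboundedly on $X$.

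First I would use hypothesis (3): because the pair $(X,T)$ is preserved by quasi-isometries, the collection of vertex and edge spaces of $X$ forms a quasi-isometry invariant pattern, and minimality of $\cG$ ensures $T$ has no proper invariant subtree, so this pattern is unambiguous. Transporting the pattern to $Z$ via a quasi-isometry, both the isometric $\Gamma$-action and the isometric $\Gamma'$-action coarsely permute the resulting ``pieces'' of $Z$, and the incidence structure of these pieces recovers the tree $T$. This produces a single tree $T$ on which both $\Gamma$ and $\Gamma'$ act; cocompactness of the two actions on $Z$ forces the two actions on $T$ to be cocompact, so $\Gamma'$ inherits a decomposition as the fundamental group of a finite graph of groups $\cG'$ modelled on $T$. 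After passing to finite-index subgroups to align the quotient graphs, each vertex (resp.\ edge) of $T$ has a piece $P_v \subseteq Z$ on which both the $\Gamma$-stabilizer $V_v$ and the $\Gamma'$-stabilizer $V_v'$ act geometrically, and likewise for edges.

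The heart of the argument is then local. For each vertex $v$, the groups $V_v$ and $V_v'$ act geometrically on the common proper metric space $P_v$, which is quasi-isometric to the vertex space $X_v$. Since $V_v$ is graphically discrete, hyperbolic and cubulated by hypothesis (1), I would invoke action rigidity for graphically discrete groups (Theorem~\ref{thm:intro_alex_gdar}, together with Proposition~\ref{prop:gdimpliesvi} once a common graph model is extracted from the cubulation) to conclude that $V_v$ and $V_v'$ are virtually isomorphic; one must check that the hyperbolic cubulated vertex groups avoid the two exceptional families appearing in Theorem~\ref{thm:intro_alex_gdar}. The same comparison over edges, together with hypothesis (2), is used to control the edge groups: almost malnormality and quasi-convexity make the incident edge subgroups a canonical peripheral structure detected by the coarse geometry, so the vertex-group virtual isomorphisms carry edge groups of $\cG$ to edge groups of $\cG'$.

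Finally I would assemble the local data into a global virtual isomorphism between $\Gamma$ and $\Gamma'$. The vertex isomorphisms $V_v \to V_v'$ must be made simultaneously compatible along every edge group; this is arranged by repeatedly passing to finite-index subgroups and commensurating the edge groups inside their vertex groups, using the almost malnormal quasi-convex structure from (2) to guarantee that the peripheral matchings can be realized consistently. I expect this assembly step to be the main obstacle: while each vertex comparison is governed cleanly by graphical discreteness, globalizing the isomorphisms requires careful bookkeeping of finite-index subgroups and of the gluing (monodromy) data, so that the local isomorphisms patch to an isomorphism of finite-index subgroups of $\Gamma$ and $\Gamma'$ and thereby yield the desired virtual isomorphism.
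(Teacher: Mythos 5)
Your opening stage (using hypothesis (3) to transport the pattern of vertex and edge spaces and obtain an action on $T$, hence a decomposition of $\Gamma'$ over $T$) does parallel the paper, but after that there are two genuine gaps, and each is fatal as written. First, the local step fails. You compare $V_v$ and $V_v'$ purely through the common piece $P_v$ and invoke Theorem~\ref{thm:intro_alex_gdar} together with Proposition~\ref{prop:gdimpliesvi}. But the motivating vertex groups --- closed hyperbolic $n$-manifold groups --- lie squarely inside the exceptional family of Theorem~\ref{thm:intro_alex_gdar}: such a group is itself an infinite commensurated subgroup that is a uniform lattice in a connected semisimple Lie group with finite center. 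This is not a removable technicality: those vertex groups are genuinely not action rigid (two non-commensurable uniform lattices in $\Isom(\Hy^n)$ act geometrically on the common space $\Hy^n$; cf.\ Theorem~\ref{thm:3manactionrigid}), so no argument using only ``$V_v$ and $V_v'$ share the geometry $P_v$'' can conclude they are virtually isomorphic. The paper avoids this by never leaving the ambient locally compact group $G$ in which $\Gamma$ and $\Gamma'$ are uniform lattices modulo finite kernel: almost malnormality makes the $\Gamma$-action on $T$ $2$-acylindrical (Lemma~\ref{lem:acylindrical}), so the $G$-stabilizer of a length-two segment is an open subgroup containing a finite uniform lattice (Lemma~\ref{lem:open_lattice}), hence compact open; thus $G$, and each open vertex stabilizer $G_v$, is compact-by-(totally disconnected) (Lemma~\ref{lem:vanDantzig}), and only now does graphical discreteness of $\Gamma_v$ bite, forcing $G_v$ to be compact-by-discrete and hence $\Gamma_v$ and $\Gamma'_v$, both lattices in $G_v$, to be virtually isomorphic. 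Total disconnectedness of the common envelope is global information that your piece-by-piece argument discards.

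Second, the assembly step you defer as ``careful bookkeeping'' is the actual content of the theorem, and bookkeeping cannot do it. In the paper it is carried by two pieces of machinery absent from your outline: the Model Geometry Theorem~\ref{thm:get_treeofspaces}, which blows up the continuous $G$-action on $T$ to a geometric $G$-action on a tree of spaces with simply connected vertex and edge spaces and discrete localized vertex groups $Q_v$; and the Common Cover Theorem~\ref{thm:commoncover_BV}, whose gluing condition is met precisely because each $Q_v$ \emph{commands} the stabilizers $\{Q_v^{e_i}\}$ of its incident edge spaces. That commanding is exactly where hypothesis (1)'s cubulation is consumed: Agol's theorem plus Wise's Malnormal Special Quotient Theorem supply it for almost malnormal families of quasi-convex subgroups of cubulated hyperbolic groups (Example~\ref{rem:special_command}), and cubulation (via virtual specialness) is also what makes $\Gamma$ and $\Gamma'$ virtually torsion-free, so that after passing to finite-index subgroups the vertex and edge stabilizers act freely on the vertex and edge spaces --- another hypothesis of Theorem~\ref{thm:commoncover_BV}. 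Your proposal never uses cubulation for anything substantive, which is the clearest sign the key mechanism is missing; and the matching can genuinely fail without such separability input, as Example~\ref{example_notRF} shows for an amalgam of two graphically discrete groups over a finite subgroup.
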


    We provide the following strategy detailed in Section~\ref{sec:actiontoQIrigid} for upgrading action rigidity to quasi-isometric rigidity. We show that if $\Gamma$ is a finitely generated \emph{tame} group that is action rigid and has the \emph{uniform quasi-isometry property}, then $\Gamma$ is quasi-isometrically rigid. So, by specializing to the case of hyperbolic $n$-manifold vertex groups, and adding an assumption about cohomological dimension, we use work of Mosher--Sageev--Whyte \cite{moshersageevwhyte2011quasiactions} and Biswas \cite{biswas12flows} to upgrade the conclusion of Theorem \ref{thm:intro:hyp_action_rigid_criterion} to quasi-isometric rigidity as in Theorem~\ref{thm:intro:common_model_geom}. Note that if we weakened the hypotheses of Theorem \ref{thm:intro:common_model_geom} to allow two-ended edge groups, then $\Gamma$ would no longer possess the uniform quasi-isometry property and the above strategy would break down.

\subsection{Graphical discreteness}\label{sec:intro_graph_dis}

	A vast family of well-studied groups can be shown to be graphically discrete using the existing literature: virtually nilpotent groups~\cite{trofimov1984polynomial}, irreducible lattices in connected center-free real semisimple Lie groups without compact factors, other than non-uniform lattices in $\PSL_2(\bR)$  \cite{bader2020lattice}, uniform lattices in the isometry group of the $3$-dimensional geometry {\bf Sol}~\cite{dymarz2015envelopes}, mapping class groups of non-exceptional finite-type surfaces~\cite{kida2010MCGs}, $\Out(F_n)$ for $n \geq 3$~\cite{guirardel2021measure}, various $2$-dimensional Artin groups of hyperbolic type~\cite{horbez2020boundary}, hyperbolic surface-by-free groups~\cite{farbmosher2002surfacebyfree}, the Higman groups~\cite{horbez2025measure}, and topologically rigid hyperbolic groups~\cite{kapovichkleiner2000hyperbolic}. See Theorem~\ref{thm:propc_examples} for details. 
	The results above, together with work of Kleiner--Leeb~\cite{kleinerleeb2001symmetric} imply that the fundamental group of a closed geometric $3$-manifold is graphically discrete; see Theorem~\ref{thm:propc-3manifolds}.
	
	We show fundamental groups of closed irreducible non-geometric $3$-manifolds have only trivial  lattice envelopes, and are therefore graphically discrete. 

    \begin{thmAlph} (Theorem~\ref{thm:nongeo3mangd}.)
    		Let $M$ be a closed irreducible, oriented $3$-manifold with a nontrivial geometric decomposition. Let $G$ be a locally compact group containing $\pi_1(M)$ as a  lattice. Then $G$ contains a compact open normal subgroup $K$ such that $G/K$ is isomorphic to the fundamental group of a compact $3$-orbifold  covered by $M$. In particular, $\pi_1(M)$ has no nontrivial lattice embeddings, hence is graphically discrete.
    \end{thmAlph}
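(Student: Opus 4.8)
The plan is to analyze the structure of a locally compact group $G$ containing $\pi_1(M)$ as a lattice, where $M$ has nontrivial geometric decomposition, by exploiting the canonical JSJ (geometric) decomposition of $M$ and the rigidity of its pieces. First I would pass to the connected component: let $G^\circ$ be the identity component of $G$, so $G/G^\circ$ is totally disconnected. The key structural input is that $\pi_1(M)$ for such $M$ is a nontrivial graph of groups with vertex groups that are fundamental groups of Seifert-fibered or hyperbolic pieces and edge groups $\cong \Z^2$ coming from the incompressible tori of the decomposition. Since $M$ is irreducible, aspherical, and closed, $\pi_1(M)$ is a one-ended Poincar\'e duality group of dimension $3$; the canonical JSJ splitting over $\Z^2$ subgroups is preserved (up to the appropriate equivariance) by all automorphisms and, crucially, by abstract commensurations. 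The strategy is to show that the lattice embedding $\pi_1(M)\hookrightarrow G$ forces $G^\circ$ to be trivial (or compact), so that $G$ is compact-by-totally-disconnected, and then to identify the totally disconnected quotient with an orbifold fundamental group.

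The main technical engine I would invoke is the rigidity of the commensurator: the idea is that any lattice envelope $G$ acts on $\pi_1(M)$ by conjugation, giving a homomorphism $G \to \Comm(\pi_1(M))$ (the abstract commensurator), with the image of $\pi_1(M)$ landing in the natural inner part. So the second step is to compute or constrain $\Comm(\pi_1(M))$. For nongeometric $3$-manifold groups, the commensurator is itself virtually a $3$-manifold (or $3$-orbifold) group: the JSJ decomposition is a commensurability invariant, the hyperbolic pieces have commensurators that are themselves lattices in $\Isom(\Hy^3)$ by Mostow--Margulis rigidity (and are discrete since the manifolds are not arithmetic in the relevant sense, or more precisely the relevant commensurator acts properly), and the Seifert pieces contribute commensurators controlled by their base orbifolds. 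Assembling these along the tree of the JSJ decomposition should show $\Comm(\pi_1(M))$ is itself virtually isomorphic to $\pi_1(M)$, and in particular is discrete as a topological group when given the natural topology.

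Concretely, the steps in order are: (1) show $G$ acts continuously, properly, and cocompactly on a model space $X$ for $\pi_1(M)$ (e.g.\ the universal cover $\widetilde{M}$ with its geometric decomposition into hyperbolic and Seifert pieces glued along horoball-like or Euclidean-plane regions); (2) show $G$ preserves the $G$-equivariant tree-of-spaces structure dual to the JSJ decomposition, because this decomposition is canonical and hence invariant under the $\Isom$-type action --- this uses that edge groups are the maximal $\Z^2$ subgroups and vertex spaces are characterized intrinsically (hyperbolic vs.\ flat/Seifert geometry); (3) analyze the action on each vertex space: on a hyperbolic piece, rigidity of closed hyperbolic manifold groups (they have only trivial totally disconnected lattice envelopes, as used for Theorem~\ref{thm:intro:common_model_geom}) forces the stabilizer to be compact-by-discrete with discrete quotient a hyperbolic orbifold group; on a Seifert piece one uses the structure of lattice envelopes of Seifert/graph-manifold pieces (the center giving a central $\R$ or circle direction that must map into the compact part); (4) glue: combining the local compact-by-discrete structure with the tree action via a Bass--Serre / graph-of-groups argument yields a compact open normal subgroup $K\trianglelefteq G$ with $G/K$ a graph of orbifold groups, i.e.\ $\pi_1$ of a $3$-orbifold covered by $M$; (5) conclude graphical discreteness, since any such $G$ being compact-by-discrete is exactly the defining condition.

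The hard part will be step (3)--(4), specifically handling the Seifert-fibered vertex pieces and the gluing tori uniformly with the hyperbolic pieces. The hyperbolic pieces are rigid by Mostow-type arguments, but the Seifert pieces have a circle/line direction in their center that could, a priori, support extra continuous symmetry --- I must show the lattice embedding does not introduce a genuine connected factor there, i.e.\ that the central direction of each Seifert piece is forced into the compact normal subgroup $K$ because it must be compatible with the neighboring rigid hyperbolic pieces across the incompressible tori. Controlling how the compact-by-discrete structures on adjacent vertex stabilizers are forced to agree along edge groups (so that the local compact kernels assemble into a single \emph{global} compact normal subgroup $K$, rather than merely a compact kernel over each vertex) is the crux, and will likely require a careful Bass--Serre-theoretic argument showing the edge-group stabilizers force the vertex kernels to coincide on overlaps. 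The final identification of $G/K$ with a genuine orbifold group (rather than an abstract graph of orbifold groups) should then follow from realizing the discrete quotient action geometrically on $\widetilde{M}$, exhibiting $M \to M/(G/K)$ as an orbifold covering.
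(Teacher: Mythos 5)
Your concrete steps (2)--(5) do outline the correct skeleton --- invariance of the JSJ tree, vertex-by-vertex analysis with rigid hyperbolic pieces and Seifert pieces whose fiber direction must be absorbed into the compact kernel, matching the local kernels along edge stabilizers, and only then identifying $G/K$ with an orbifold group --- and you correctly identify the crux (forcing adjacent vertex kernels to coincide along the tori; the paper does this by showing each vertex kernel $K_v$ is the \emph{unique} maximal compact normal subgroup of every incident edge stabilizer). However, your two foundational mechanisms fail. First, the ``main technical engine'' does not exist: conjugation by $g\in G$ carries $\Gamma=\pi_1(M)$ to the lattice $g\Gamma g^{-1}$, which in general is \emph{not} commensurable with $\Gamma$ --- a lattice is not a commensurated subgroup of its envelope (two generic conjugates of a cocompact Fuchsian group in $\PSL_2(\R)$ intersect trivially). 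So there is no homomorphism $G\to\Comm(\Gamma)$, and the entire commensurator computation is moot. It would also be wrong on its own terms: the hyperbolic JSJ pieces are cusped, not closed, so the relevant rigidity is Schwartz's quasi-isometric rigidity for non-uniform lattices rather than anything about closed hyperbolic manifold groups; and such a piece can be arithmetic (e.g.\ the figure-eight knot complement), in which case its commensurator is \emph{dense} in $\PSL_2(\C)$, not discrete, so the Margulis dichotomy does not give you what you claim.

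Second, your step (1) --- that $G$ acts continuously, properly and cocompactly on $\widetilde{M}$ --- is exactly what cannot be established at the outset; the paper explicitly flags this as the main difficulty of the theorem. All the lattice embedding gives you is a \emph{quasi-action} of $G$ on $\Gamma$, hence on $\widetilde{M}$ (Furman's construction, Section~\ref{sec:qi}). The paper's replacement for your steps (1)--(2) is Kapovich--Leeb quasi-isometric rigidity of the geometric decomposition, which upgrades this quasi-action to a genuine \emph{continuous action on the JSJ tree only} (Lemma~\ref{lem:nongeocont}, where acylindricity of the tree action also supplies the compact open subgroup making $G$ compact-by-tdlc); genuine geometric actions are then recovered piecewise --- Schwartz rigidity on hyperbolic vertex stabilizers, and the Kapovich--Leeb fiber-collapsing argument together with graphical discreteness of $\Z$ on Seifert vertex stabilizers --- and only after the global $K$ is assembled does one realize $G/K$ geometrically as $\pi_1$ of an orbifold covered by $M$. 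A further omission: the theorem allows arbitrary (not necessarily uniform) lattices, and every quasi-action argument needs cocompactness, so you must first rule out non-uniform embeddings; the paper does this via acylindrical hyperbolicity of $\Gamma$ and Proposition~\ref{prop:non_uniform_lattices}, a step your proposal never addresses.
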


	Cannon's Conjecture states that a hyperbolic group with boundary a 2-sphere is virtually the fundamental group of a closed hyperbolic 3-manifold, hence must be graphically discrete as noted above.  The next result says that any potential counterexamples to Cannon's Conjecture must also  have this property.
	
	\begin{thm} (Theorems~\ref{thm:graphical_discrete_sphere_boundaries}, \ref{thm:graphical_discrete_sier_boundaries}.)
		A hyperbolic group with visual boundary homeomorphic to an $n$-sphere with $n \leq 3$ or to a Sierpinski carpet is graphically discrete. 
	\end{thm}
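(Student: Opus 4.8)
The plan is to argue directly from the definition: I must show that whenever $\Gamma$ acts geometrically on a connected locally finite graph $X$, the group $G=\Aut(X)$ is compact-by-discrete. Since $X$ is locally finite, $G$ is totally disconnected and locally compact with compact open vertex stabilizers, and $\Gamma$ sits inside $G$ as a uniform lattice (modulo a finite kernel); in particular $G$ itself acts on $X$ properly and cocompactly. As $\Gamma$ is hyperbolic and the action is geometric, $X$ is hyperbolic and its visual boundary $\partial X$ is $G$-equivariantly homeomorphic to $\partial\Gamma$, hence to $S^n$ with $n\le 3$, or to the Sierpinski carpet. The isometric action of $G$ on $X$ extends continuously to the compactification $X\cup\partial X$, giving a continuous homomorphism $\rho\colon G\to\Homeo(\partial X)$. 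I will produce the required compact normal subgroup as $K:=\ker\rho$.

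The first step is to show $K$ is compact. An element of $K$ fixes $\partial X$ pointwise, and since $\partial X$ has at least three points such an isometry is elliptic with uniformly bounded displacement: fixing both endpoints of a bi-infinite quasigeodesic forces translation length zero along it, so the displacement on that quasigeodesic is bounded in terms of the hyperbolicity constant, and by cocompactness every vertex lies uniformly close to such a quasigeodesic, yielding a uniform bound $D$ on $d(v,gv)$ over all $g\in K$. Thus $K$ is contained in the compact set of automorphisms moving a fixed vertex a distance at most $D$, and being closed it is compact. As $K=\ker\rho$ it is normal, so it remains only to prove that $\bar G:=G/K$ is discrete, equivalently that $K$ is open.

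Now $\bar G$ is again totally disconnected and locally compact (a quotient of a totally disconnected locally compact group by a compact normal subgroup), and it acts continuously and \emph{faithfully} on $\partial X$. For the \textbf{sphere case} this is where the dimension restriction is forced: by the Hilbert--Smith conjecture, known classically for $n\le 2$ and by Pardon for $n=3$, a locally compact group acting effectively by homeomorphisms on a connected manifold of dimension at most three is a Lie group. Applied to $\bar G$ acting on $\partial X\cong S^n$, this makes $\bar G$ a Lie group; being totally disconnected it is therefore discrete, so $K$ is open and $G$ is compact-by-discrete. For the \textbf{carpet case}, $\partial X$ is not a manifold, so I reduce to the sphere case: every self-homeomorphism of the Sierpinski carpet permutes the peripheral circles (the frontiers of the complementary Jordan domains in the planar embedding), and extending the action over each complementary disk in a canonical, equivariant fashion should yield a continuous faithful action of $\bar G$ on $S^2$, after which the Hilbert--Smith theorem in dimension two gives discreteness exactly as before.

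The compactness of $K$ and the entire sphere case are essentially formal once Hilbert--Smith is invoked; the genuine difficulty is the carpet reduction, namely producing a \emph{continuous, injective, homomorphic} map $\bar G\to\Homeo(S^2)$ from the action on the carpet. Naive coning of each circle homeomorphism over its disk is not canonical and need not respect composition, so I expect to use a conformally natural (Douady--Earle-type) extension together with Whyburn's topological characterisation of the carpet to control how the shrinking complementary disks are permuted and to verify continuity at points of the carpet itself. An alternative would be to run the Hilbert--Smith argument on the three-dimensional space of distinct triples of $\partial X$, on which $\bar G$ acts properly and cocompactly by the convergence property; but that space is not a manifold, so one would have to replace the manifold input with a Hilbert--Smith statement for finite-dimensional locally connected spaces. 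Either way, establishing discreteness of $\bar G$ from the one-dimensional carpet boundary, rather than from a genuine low-dimensional manifold, is the main obstacle.
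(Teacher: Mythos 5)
Your proposal follows the same route as the paper in both cases: pass to the continuous boundary action $\rho\colon \Aut(X)\to\Homeo(\partial X)$, show the kernel is compact, invoke the Hilbert--Smith conjecture in dimensions $\leq 3$ (Pardon for $n=3$) to make the quotient a Lie group, and conclude discreteness from total disconnectedness; the carpet case is reduced to the $S^2$ case by extending over the complementary disks. Two remarks.

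First, the step you single out as the main obstacle---producing a continuous injective homomorphism $\Homeo(\cS)\to\Homeo(S^2)$---is exactly the paper's reduction, and the paper handles it by citing a known theorem (\cite[Theorem 2.1]{jmelsalhivago}); no Douady--Earle machinery or Hilbert--Smith statement for non-manifolds is needed. Moreover, your worry that naive coning ``need not respect composition'' is unfounded: fix once and for all an identification of each complementary disk with the standard unit disk; then the radial (cone) extension $\hat h(re^{i\theta})=r\,h(e^{i\theta})$ satisfies $\widehat{h_1\circ h_2}=\hat h_1\circ \hat h_2$, so conjugating by the fixed identifications gives a genuine homomorphism, and continuity follows because the complementary disks form a null sequence. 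So your reduction goes through essentially as you sketched it, and this is the content of the cited theorem.

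Second, a local error in your compactness argument: an isometry fixing both endpoints of a bi-infinite quasigeodesic need \emph{not} have zero translation length along it---loxodromic isometries fix both endpoints of their axes and translate along them. What rescues the argument is that elements of $\ker\rho$ fix the \emph{entire} boundary pointwise, which has more than two points: fixing the three ideal vertices of an ideal triangle coarsely fixes its three sides, hence moves its coarse center a bounded amount, and cocompactness (plus the fact that such centers coarsely fill $X$) spreads this bound over all vertices. The paper sidesteps this entirely by citing \cite[Theorem 3.5]{furman2001mostow} for continuity of $\rho$ and compactness of its kernel.
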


	The proof of the theorem above is an application of the resolution of the Hilbert--Smith Conjecture concerning which locally compact groups act continuously and faithfully on manifolds by homeomorphisms. The Hilbert--Smith Conjecture is open for $n>3$ and additional positive solutions would imply the result above holds for the same values of~$n$.
	
	Given a property of groups such as graphical discreteness, it is natural to ask how robust the property is: is it a quasi-isometry or virtual isomorphism invariant? We show graphical discreteness is neither via the following examples. 
	A manifold is a \emph{minimal element} in its commensurability class if it does not nontrivially cover any orbifold. We note that there exist closed hyperbolic 3-manifolds that are minimal elements in their commensurability class; see Remark \ref{rem:minelts}.
    
    \begin{thm} (Theorem~\ref{thm:no_fi_subgroup}.)
        Let $M$ and $M'$ be closed hyperbolic $3$-manifolds that are minimal elements in their commensurability classes. Then $\Gamma = \pi_1(M)* \pi_1(M')$ has no nontrivial lattice embeddings, but has a finite-index subgroup that does have nontrivial lattice embeddings. In particular, $\Gamma$ is graphically discrete but contains a finite-index subgroup that is not. 
    \end{thm}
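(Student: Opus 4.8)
The plan is to establish the two assertions separately, and to read off one structural consequence of the hypotheses first. Since the manifolds are distinct, Mostow rigidity gives that $\pi_1(M)$ and $\pi_1(M')$ are non-conjugate lattices in $\Isom(\Hy^3)$; since each is a minimal element of its commensurability class, each is a \emph{maximal} lattice, i.e.\ not a proper finite-index subgroup of any larger lattice. (Note $M\not\cong M'$ is forced: if $M\cong M'$ then $\Gamma$ has a repeated free factor, and the first conclusion already fails by the mechanism used below for the finite-index subgroup.) These two facts---maximality and non-conjugacy---are exactly what drive the first assertion.

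For the first assertion, let $G$ be a locally compact group containing $\Gamma=\pi_1(M)*\pi_1(M')$ as a uniform lattice. Since $\Gamma$ is finitely presented and infinite-ended, I would invoke the structure theory for lattice envelopes of infinite-ended groups underlying Theorem~\ref{thm:intro:freeprod} to see that the identity component $G^\circ$ is compact, and then pass to the totally disconnected group $\bar G=G/G^\circ$, which contains a lattice commensurable to $\Gamma$ and acts on the canonical Bass--Serre tree $T$ of the free splitting (canonical because this splitting is quasi-isometry invariant for infinite-ended groups with one-ended factors). Each vertex stabilizer of $\bar G$ on $T$ is a compact-by-(lattice envelope) of $\pi_1(M)$ or $\pi_1(M')$. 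Because closed hyperbolic manifold groups have only trivial lattice envelopes (Mostow rigidity together with Bader--Furman--Sauer), such a stabilizer is, modulo a compact normal subgroup, a lattice in $\Isom(\Hy^3)$ commensurable to the corresponding factor; maximality then forces it to be exactly $\pi_1(M)$ or $\pi_1(M')$ modulo compact, with no room for enlargement. Finally, non-conjugacy of the two factors prevents any element of $\bar G$ from interchanging the two vertex types of $T$, so the $\bar G$-action has the same vertex and edge stabilizers and quotient as the $\Gamma$-action. Hence $\bar G$ is, modulo compact, commensurable to $\Gamma$, i.e.\ the lattice embedding is trivial.

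For the second assertion, I would manufacture a repeated factor by a covering/Kurosh computation. Choose a nontrivial finite quotient $q\colon\pi_1(M)\twoheadrightarrow Q$ (which exists by residual finiteness) and extend it to $\phi\colon\Gamma\to Q$ by sending $\pi_1(M')$ to the identity. Analyzing the induced finite cover of the one-edge graph of groups (equivalently, the Kurosh subgroup theorem) for $\Gamma_0=\ker\phi$, of index $|Q|\geq 2$, yields $\Gamma_0\cong \pi_1(\hat M)*\pi_1(M')^{*|Q|}*F_r$, where $\hat M\to M$ is the cover corresponding to $\ker q$ and $F_r$ is free; the crucial point is that $\Gamma_0$ has at least two \emph{isomorphic} one-ended free factors, namely copies of $\pi_1(M')$. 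I would then form the Bass--Serre tree of spaces for this splitting as a connected locally finite graph $X_0$, built from Cayley graphs of the factors---using \emph{isometric} copies of the Cayley graph of $\pi_1(M')$ for the repeated factor---on which $\Gamma_0$ acts geometrically, so that $\Aut(X_0)$ is a lattice envelope of $\Gamma_0$. The two isometric $\pi_1(M')$-branches produce, at arbitrarily large radius, pairs of isometric subtrees of $X_0$ rooted at $\pi_1(M')$-vertices; swapping such a pair is a nontrivial automorphism of $X_0$ fixing an arbitrarily large ball about a basepoint. Thus $\Aut(X_0)$ is non-discrete, and one checks (the swaps act on the Bass--Serre tree with no invariant compact open normal subgroup) that it is not compact-by-discrete. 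Hence $\Gamma_0\hookrightarrow\Aut(X_0)$ is a nontrivial lattice embedding and $\Gamma_0$ is not graphically discrete.

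The main obstacle is the first assertion, and within it the rigidity step: upgrading ``each vertex stabilizer is commensurable to a factor'' to ``equals the factor modulo compact,'' and ruling out extra symmetry of the tree action. This is precisely where minimality is indispensable---without maximality a vertex stabilizer could be an honest over-lattice, and were two factors to acquire conjugate over-lattices, the tree of spaces would admit exactly the swapping symmetry that makes $\Gamma_0$ non-discrete in the second assertion. Controlling the identity component and certifying that the free splitting is canonical (so that $\bar G$ must respect it) are the remaining delicate points; both rest on the infinite-ended lattice-envelope structure theory developed earlier in the paper.
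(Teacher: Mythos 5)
Your second assertion is essentially the paper's own argument and is sound: the paper notes that $\pi_1(M)$ and $\pi_1(M')$ are residually finite and invokes Corollary~\ref{cor:nonvirtgd.freeprod}, while you pass to the kernel of a finite quotient and run the branch-swapping mechanism of Proposition~\ref{prop:notgd_fp}; these are the same toolkit. (One small slip: since closed $3$-manifold groups have zero Euler characteristic, your finite cover has no free factor $F_r$ at all --- the underlying graph of the cover is a star --- but the repeated $\pi_1(M')$-factors are what your mechanism needs, so this is harmless.)

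The first assertion is where there are genuine gaps. To begin with, your parenthetical claim that $M\not\cong M'$ is forced --- i.e.\ that the conclusion fails for $M\cong M'$ ``by the mechanism used below'' --- is false, and this matters because your argument leans on non-conjugacy of the factors to forbid type-swapping. The paper's proof never uses distinctness of $M$ and $M'$: it quasi-conjugates the $\Isom(X)$-action onto the Stark--Woodhouse ideal model geometry $Z$ (copies of $\Hy^3$ joined by arcs along orbits) \cite{starkwoodhouse2024action}, and then shows $\Isom(Z)$ is \emph{discrete} by propagation: an isometry fixing a marked point lies in $\Aut(\Hy^3,\pi_1(M)\cdot x)=\pi_1(M)$ by Lemma~\ref{lemma_aut_dis} (the only place minimality enters), hence restricts to the identity on that vertex space, hence fixes the attached arcs and the marked points of the neighbouring vertex spaces, and so on across the tree. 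This works verbatim when $M\cong M'$; a factor-swapping isometry is perfectly compatible with discreteness, and the flip mechanism you have in mind genuinely requires at least three factors --- the swapped branches must avoid a basepoint and recur in infinitely many translates --- which is exactly why Proposition~\ref{prop:notgd_fp} concerns $\Gamma_1*\Gamma_2*\Gamma_2$ and not $\Gamma_1*\Gamma_2$.

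Second, and more seriously, the step from ``each vertex stabilizer of $\bar G$ is, modulo compact, exactly the corresponding factor'' to ``hence $\bar G$ is, modulo compact, commensurable to $\Gamma$'' is a non sequitur as written: the compact kernels $K_v\vartriangleleft \bar G_v$ vary with $v$, and the entire content of the theorem is that they assemble into a single compact open normal subgroup of $\bar G$. Vertex-level rigidity does not globalize for free --- the paper's simple surface amalgams (Theorem~\ref{thm:ssa_char}) have graphically discrete vertex and edge groups yet admit nontrivial envelopes --- so an actual argument is required at precisely this point; in the paper it is the propagation via Lemma~\ref{lemma_aut_dis}. In your tree-theoretic setup the gluing \emph{can} be carried out, but it uses inputs you do not establish: one must first show that $\bar G_v/K_v$ is exactly the (torsion-free) image of $\Gamma_v$, which needs Mostow rigidity to realize the abstract overgroup as a lattice in $\Isom(\Hy^3)$ before minimality can be applied, not bare ``maximality''; then each compact open edge stabilizer has finite, hence trivial, image in that torsion-free quotient, so $\bar G_e\leq K_v$, and transitivity of $\Gamma_v$ on $\lk(v)$ together with $\Gamma_v\cap K_v=1$ forces $K_v=\bar G_e$ for every incident edge, whence all the $K_v$ coincide and are normal in $\bar G$. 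Nothing of this sort appears in your write-up, and ``no swapping'' neither implies it nor is needed for it. Finally, two smaller omissions: you only treat \emph{uniform} lattice embeddings, whereas the statement concerns all lattice embeddings, and the paper first rules out non-uniform ones via Proposition~\ref{prop:non_uniform_lattices}; and your appeals to ``the canonical Bass--Serre tree'' and to compactness of the identity component should instead go through Theorems~\ref{thm:houghton_ends} and~\ref{thm:dunwoody_accessibility} (or through the model geometry of \cite{starkwoodhouse2024action}, as the paper does), since Grushko trees are not canonical and quasi-isometry invariance of the splitting is not by itself a license for $\bar G$ to act on $\Gamma$'s tree.
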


    \begin{cor}  
      The properties of being graphically discrete and possessing no nontrivial lattice embeddings do not pass to finite-index subgroups. In particular, these properties are not invariant under virtual isomorphisms nor under quasi-isometries.
    \end{cor}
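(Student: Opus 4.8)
The plan is to deduce the corollary directly from Theorem~\ref{thm:no_fi_subgroup}, which does all the substantive work by exhibiting an explicit witness. Fix closed hyperbolic $3$-manifolds $M$ and $M'$ that are minimal elements in their commensurability classes, and set $\Gamma = \pi_1(M) * \pi_1(M')$. By that theorem, $\Gamma$ has no nontrivial lattice embeddings and is therefore graphically discrete, yet $\Gamma$ contains a finite-index subgroup $H$ that \emph{does} admit a nontrivial lattice embedding and so is not graphically discrete. This single example immediately establishes that neither property---being graphically discrete, nor possessing no nontrivial lattice embeddings---is inherited by finite-index subgroups.

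To conclude that these properties are not virtual isomorphism invariants, I would recall that a group is virtually isomorphic to each of its finite-index subgroups: the identity map on $H$ is an isomorphism between finite-index subgroups of $\Gamma$ and of $H$, so $\Gamma$ and $H$ are abstractly commensurable, hence virtually isomorphic in the sense of Definition~\ref{defn:AC_VI}. Since $\Gamma$ enjoys both properties and $H$ enjoys neither, the properties cannot be preserved under virtual isomorphism.

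Finally, for the quasi-isometry claim I would invoke the standard fact that a finitely generated group is quasi-isometric to each of its finite-index subgroups: equipping $H$ with the word metric of a finite generating set, the inclusion $H \hookrightarrow \Gamma$ is a quasi-isometry. As $\Gamma$ and $H$ are then quasi-isometric with $\Gamma$ graphically discrete and $H$ not, neither property is a quasi-isometry invariant.

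There is no genuine obstacle at this stage, as the corollary is a formal consequence of Theorem~\ref{thm:no_fi_subgroup}; the entire difficulty is concentrated in constructing the witness group $\Gamma$ together with its offending finite-index subgroup $H$, which is precisely the content of that theorem. The only point demanding mild care is confirming that the specific notions of virtual isomorphism and quasi-isometry appearing in the statement do relate a group to its finite-index subgroups, and both of these are standard.
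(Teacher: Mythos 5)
Your proposal is correct and matches the paper's (implicit) argument exactly: the corollary is a formal consequence of Theorem~\ref{thm:no_fi_subgroup}, using the standard facts that a group is abstractly commensurable (hence virtually isomorphic) and quasi-isometric to any finite-index subgroup. Nothing more is needed, and your care in checking those two standard facts against Definition~\ref{defn:AC_VI} is exactly the right level of diligence.
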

	
	We also provide additional  examples of groups that are not graphically discrete. A good source of such examples are graphs of groups that exhibit some sort of ``flipping symmetry''. We exploit this idea in the following setting.
  A \emph{simple surface amalgam} is a topological space constructed by taking a set of $n \geq 3$ compact surfaces $\Sigma_1,\ldots, \Sigma_n$ with negative Euler characteristic and a single boundary component, and identifying all the boundary components by a homeomorphism.
  For fundamental groups of simple surface amalgams, graphical discreteness is equivalent to having no nontrivial lattice embeddings (Theorem \ref{thm:ssa_char}). 
  The following result characterizes when the fundamental group of a simple surface amalgam has nontrivial lattice embeddings, and provides examples of one-ended groups for which graphical discreteness is not a quasi-isometry invariant.
 
 \begin{thm} \label{thm:intro:ssa_char} (Theorem \ref{thm:ssa_char})
  The fundamental group of a simple surface amalgam  has no nontrivial lattice embeddings if and only if  the surfaces in the amalgam have pairwise-distinct Euler characteristics.
 \end{thm}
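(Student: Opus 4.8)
The plan is to show that a lattice envelope of $\Gamma=\pi_1$ of a simple surface amalgam is compact-by-discrete exactly when the surfaces have pairwise-distinct Euler characteristics, the only obstruction to discreteness being a branch-flipping symmetry available precisely when two Euler characteristics coincide. Write $\Gamma$ as the fundamental group of the star-shaped graph of groups $\cG$ with central vertex group $\Z$ (the common boundary circle), surface vertex groups $F_i=\pi_1(\Sigma_i)$ (free, since each $\Sigma_i$ has boundary), and edge groups $\Z$ included as the boundary word $\partial_i\in F_i$; since $n\geq 3$ there is no $\Z^2$, so $\Gamma$ is one-ended and hyperbolic, with Bass--Serre tree $T$ and tree of spaces $X$. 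A lattice embedding $\Gamma\hookrightarrow G$ is \emph{trivial} iff $G$ is compact-by-discrete with discrete part commensurable to $\Gamma$. First I would reduce to the totally disconnected case: the identity component of any envelope of $\Gamma$ is compact (as $\Gamma$ is one-ended hyperbolic but not virtually a surface group, it admits no envelope with noncompact connected part), so we may assume $G$ acts geometrically on a locally finite graph. Because the splitting of $\Gamma$ over two-ended subgroups is its canonical JSJ, it is preserved under the quasi-isometry $G\simeq\Gamma$; promoting this quasi-action to a genuine action yields a cocompact $G$-action on $T$ extending that of $\Gamma$, in which central vertices have degree $n$ (one branch per surface type) and surface vertices have infinite degree.

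For the construction direction, suppose $\chi(\Sigma_i)=\chi(\Sigma_j)$ for some $i\neq j$. Then $F_i\cong F_j$ and, after passing to homeomorphic (or commensurable-rel-boundary) models, the two sub-trees-of-spaces hanging off the type-$i$ and type-$j$ edges at a central vertex are isomorphic rel the central line. Hence $X$ admits, at every central vertex, an automorphism \emph{flipping} these two branches. Let $G$ be the closure in $\Aut(X)$ of $\Gamma$ together with all such flips. A flip performed at a vertex far from a fixed basepoint fixes a large ball, so these flips accumulate at the identity and $G$ is non-discrete; on the other hand a single flip at a central vertex moves any vertex lying in one of its two flipped branches, so the flip subgroup has unbounded orbits and is non-compact, and this non-discreteness is not absorbed into any compact normal subgroup. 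Thus $G$ is a totally disconnected group containing $\Gamma$ as a uniform lattice that is not compact-by-discrete, i.e. a nontrivial lattice embedding, so $\Gamma$ is not graphically discrete.

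For the rigidity direction, suppose the $\chi(\Sigma_i)$ are pairwise distinct and let $G$ be a totally disconnected envelope acting cocompactly on $T$ as above. The only possible source of symmetry beyond $\Gamma$ is the permutation action of the central-vertex stabilizers $G_c$ on the $n$ incident branches, and I would show that $G$ can never carry a type-$i$ branch to a type-$j$ branch when $i\neq j$. The key lemma is a covolume computation: an element of $G$ mapping one surface piece onto another is measure-preserving for the Haar measure on $G$, so the two pieces have equal covolume, and by Gauss--Bonnet the covolume of a hanging-Fuchsian piece is proportional to $|\chi|$; hence swappable branches must have equal Euler characteristic. Since the $\chi_i$ are distinct, every element of $G$ fixes the branch-type at each central vertex, so the local action of $G$ on $T$ agrees with that of $\Gamma$; the vertex and edge stabilizers are then forced to be compact-by-cyclic with no extra branch permutations, and this rigidity forces $G$ to be compact-by-discrete with discrete part commensurable to $\Gamma$. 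Thus $\Gamma$ has only trivial envelopes, equivalently is graphically discrete.

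The main obstacle is the covolume lemma and, dually, making the flip construction precise: one must detect the Euler characteristic from the purely topological envelope, and since all surface pieces are quasi-isometric this invariant is invisible to the coarse geometry and must be extracted from the fine (Haar-measure) structure of $G$. A secondary technical point is the reduction in the first paragraph, namely upgrading the canonical JSJ quasi-action to a genuine cocompact action on $T$, together with the orientability subtlety in the construction: two surfaces of equal Euler characteristic but differing orientability are interchanged only after replacing \emph{homeomorphic} by \emph{commensurable rel boundary}, which is exactly the equivalence that equal covolume detects.
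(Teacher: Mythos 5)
Your overall architecture matches the paper's (Propositions \ref{prop:ssa_onlyif} and \ref{prop:ssa_if}): branch flips when two Euler characteristics agree, and, in the rigid direction, an action of the envelope $G$ on the JSJ tree $T$ together with an Euler-characteristic comparison ruling out branch swaps at central vertices. But the rigid direction has a genuine gap: your claim that ``the only possible source of symmetry beyond $\Gamma$ is the permutation action of the central-vertex stabilizers $G_c$ on the $n$ incident branches'' is precisely what must be proved at the \emph{hanging Fuchsian} vertices, and it is false a priori. Those vertex groups are finitely generated free groups, which are \emph{not} graphically discrete, so the open stabilizer $G_w$ of a Fuchsian vertex could perfectly well fail to be compact-by-discrete; what pins it down is the pattern of incident edge groups, not the group alone. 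The paper's proof devotes a separate argument to this: for $e$ incident to a Fuchsian vertex $v$, the maximal compact open normal subgroup $K_e\vartriangleleft G_e$ (which exists because $\Z$ is graphically discrete, Remark \ref{rem:gd_maxcmpctopen}) fixes the two-point set $\partial\Gamma_e$ and preserves the cyclic order on the Cantor set $\partial\Gamma_v$, hence fixes $\partial\Gamma_v$ pointwise; this forces $K_e$ to equal the kernel $K_v$ of $G_v\curvearrowright\partial\Gamma_v$ for \emph{every} $e\in\lk(v)$. Without this step you cannot assemble the $K_e$ into a single compact open normal subgroup of $G$, and — crucially for your own argument — you do not know that $G_w$ is compact-by-discrete, which is what any ``covolume is proportional to $\chi$'' statement rests on.

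The covolume lemma itself also has a gap exactly where the paper inserts its index computation. Unimodularity of $G$ gives that conjugation by $g$ carries the lattice $\Gamma_w\leq G_w$ to the lattice $g\Gamma_wg^{-1}\leq G_{gw}$ with the same covolume; but $g\Gamma_wg^{-1}\neq\Gamma_{gw}$, since $g$ need not normalize $\Gamma$ (the paper explicitly warns about this). So ``measure-preserving, hence the two pieces have equal covolume'' does not compare the two lattices you actually need to compare, namely $\Gamma_{gw}$ and $g\Gamma_wg^{-1}$ inside $G_{gw}$. Closing this requires routing the comparison through the central vertex, using $\Gamma_e=\Gamma_{ge}=\Gamma_v$ and $[G_v:G_e]=[G_v:G_{ge}]$; this is the content of the chain of index identities built from Lemma \ref{lem:index} in Proposition \ref{prop:ssa_if} (a covolume reformulation of that chain works, but it is that chain, not unimodularity alone). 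Relatedly, Gauss--Bonnet is the wrong tool: the hanging vertex groups are free, and $G_w$ is compact-by-totally-disconnected rather than $\PSL_2(\R)$, so covolume here is not hyperbolic area; the link between covolume and $\chi$ comes from multiplicativity of the virtual Euler characteristic in the discrete quotient $G_w/K_w$, which again presupposes the Fuchsian-vertex step. Finally, in the flip direction your assertion that the non-discreteness ``is not absorbed into any compact normal subgroup'' is not a proof as stated; the clean route (and the paper's) is to note that a vertex stabilizer in $\Aut(\widetilde{Z}^{(1)})$ contains infinitely many flips with distinct actions on $T$, so it has infinite image in $\Aut(T)$, contradicting compact-by-discreteness by Theorem \ref{thm:compactbydiscrete} and Corollary \ref{cor:vertex_stab_jsj_alt}.
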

 
 In Proposition \ref{prop:HNNnotGD} we give a general criterion to show an HNN extension is not graphically discrete. In particular, we use this to deduce a right-angled Artin group $A_\Gamma$ is graphically discrete if and only if $\Gamma$ is a complete graph.%

 If the automorphism group of a connected locally finite  graph  contains uniform lattices that are not virtually isomorphic, then these uniform lattices are neither graphically discrete nor action rigid. Results in \cite{BurgerMozes00,forester2024incommensurable} then imply that any product of free groups $F_m\times F_n$ and certain Baumslag--Solitar groups (including BS($k, kn$) for coprime $k$ and $n$) are neither graphically discrete nor action rigid.
 In addition, work of Francoeur \cite{francoeur2025bireversible} implies that infinite bireversible groups are not graphically discrete (see Theorem \ref{thm:bireversible}).

    We also note that graphically discrete groups were considered by de la Salle--Tessera, as groups in \emph{family $\cC$}~\cite[Section 6]{delasalletessera2019char}. They proved that Cayley graphs of  torsion-free graphically discrete groups are \emph{uniquely strongly local-to-global rigid}; see the reference for definitions.

\subsection{Methods of proof} 
We highlight tools and strategies developed in this paper that we hope and expect will serve as a blueprint for future rigidity results. 


The rough strategy that we employ to prove action rigidity is as follows. Suppose that $\G$ splits as the fundamental group of a graph of groups, and suppose $\Gamma$ and another finitely generated group $\G'$ act geometrically on the same proper  metric space $X$. We divide the argument into three steps:
\begin{enumerate}
    \item\label{item:GT} Show that the full isometry group $G=\Isom(X)$ acts continuously on a tree $T$ that corresponds (at least in some way) to the splitting of $\G$.
    \item\label{item:XtoY} Upgrade the metric space $X$ to a simply connected combinatorial cell complex~$Y$ that decomposes as a tree of spaces over $T$.
    \item\label{item:commoncover} Build a common finite cover for the quotient spaces $Y/\G$ and $Y/\G'$.
\end{enumerate}
Given the assumptions in Theorems \ref{thm:intro:freeprod} and \ref{thm:intro:hyp_action_rigid_criterion}, Step (\ref{item:GT}) follows relatively easily from results in the literature, so most of the work lies in Steps (\ref{item:XtoY}) and (\ref{item:commoncover}).

For Step (\ref{item:XtoY}), we provide a general construction of a common graphical model geometry with desirable properties. For example, we use the following proposition (or one of its more technical versions from Section \ref{sec:blowups}).
This result can be viewed as a variant of the Cayley--Abels graph for a tdlc group $G$ (see \cite{kronmoller08roughcayley}) %
that is compatible with the action of $G$ on a tree $T$.

\begin{prop}\label{prop:intro:blowup}
    Let $G$ be a locally compact group acting continuously, minimally and cocompactly on a tree $T$. 
    Suppose all vertex and edge stabilizers of $T$ are compactly generated and $G$ contains a compact open subgroup.
    Then there exists a locally finite connected graph $X$ admitting a geometric action of $G$, and a surjective $G$-equivariant map $p:X\to T$ such that the fibers of vertices in $T$ are connected subgraphs of $X$.
\end{prop}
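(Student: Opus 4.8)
The plan is to construct $X$ by a ``blow-up'' procedure: starting from the tree $T$, replace each vertex by a Cayley--Abels-type graph for its stabilizer, and then glue these fibers together along edge orbits so that the resulting graph is connected, locally finite, and carries a proper cocompact $G$-action refining the projection to $T$.

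\emph{Step 1: fibers over vertices.} Let $U \leq G$ be a compact open subgroup. Since $G$ acts minimally and cocompactly on $T$, there are finitely many $G$-orbits of vertices; pick representatives $v_1,\dots,v_k$ with stabilizers $G_{v_i}$. Each $G_{v_i}$ is compactly generated by hypothesis and contains the compact open subgroup $G_{v_i}\cap U'$ for a suitable conjugate, so $G_{v_i}$ is a compactly generated totally-disconnected-by-(the connected part) locally compact group admitting a Cayley--Abels graph; more precisely $G_{v_i}$ acts geometrically on a connected locally finite graph $X_{v_i}$ whose vertex set may be taken to be $G_{v_i}/K_i$ for a compact open $K_i$. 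Translating by $G$, I would define the fiber over an arbitrary vertex $w = g v_i$ to be $g \cdot X_{v_i}$; this is well-defined up to the stabilizer action and assembles into a $G$-set $\coprod_{w \in VT} X_w$ on which $G$ acts with finitely many orbits of vertices. These fibers are the connected subgraphs $p^{-1}(w)$ demanded in the conclusion.

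\emph{Step 2: edges of $X$ between adjacent fibers.} For each edge $e$ of $T$ with endpoints $w, w'$, the edge stabilizer $G_e$ is compactly generated and sits inside both $G_w$ and $G_{w'}$. The key is to add finitely many $G$-orbits of edges in $X$ joining $p^{-1}(w)$ to $p^{-1}(w')$, chosen $G_e$-equivariantly, so that (i) the construction is $G$-equivariant overall, (ii) local finiteness is preserved—this requires that only finitely many fiber-vertices in $p^{-1}(w)$ receive an edge to $p^{-1}(w')$ within each $G_w$-orbit, which follows because $G_e$ is cocompact-by-nothing in the relevant coset spaces (a compactness/cocompactness count), and (iii) the total graph is connected. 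I would obtain connectivity from minimality of the $T$-action: since every edge of $T$ lies in the $G$-orbit of finitely many edges and each fiber is already connected, adding at least one connecting edge per $T$-edge makes $X$ connected because $T$ is connected and $p$ is surjective with connected fibers.

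\emph{Step 3: geometricity of the $G$-action on $X$.} Properness follows because vertex stabilizers in $X$ are contained in the compact groups $K_i$ (up to conjugacy), hence are compact and open; cocompactness follows from the finiteness of vertex- and edge-orbits arranged in Steps 1–2. Local finiteness is the conjunction of local finiteness within each fiber (Step 1) and finiteness of the connecting edges at each vertex (Step 2(ii)). Thus $G$ acts geometrically on the locally finite connected graph $X$, and $p\colon X \to T$ is $G$-equivariant with connected vertex-fibers, as required.

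The main obstacle I expect is Step 2, specifically arranging the connecting edges so that local finiteness and full $G$-equivariance hold simultaneously. The delicate point is that the edge group $G_e$ must act cocompactly on the set of chosen connecting edges in its orbit, and one must verify that the $G_w$-translates of these do not accumulate at any vertex of $p^{-1}(w)$; this is exactly where the compact-open-subgroup hypothesis and the compact generation of edge stabilizers are used, via a double-coset finiteness argument of the form $\lvert K_i \backslash G_w / G_e\rvert < \infty$. Once this finiteness is secured, equivariance is automatic by defining everything on orbit representatives and extending by the $G$-action.
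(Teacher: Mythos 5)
Your overall architecture is the same as the paper's: vertex fibers that are Cayley--Abels graphs $G_{v_i}/K_i$, glued along $G$-orbits of connecting edges (the paper's ``type I / type II'' edges), with two harmless simplifications --- you blow up only vertices (permitted, since the statement only demands connected vertex fibers) and you use a separate compact open $K_i\le G_{v_i}$ per orbit rather than the paper's single common $K\le G_w$ for all orbit representatives $w$. However, there is a genuine error at the step you yourself flag as the crux, Step 2(ii). The claim that only finitely many fiber-vertices in $p^{-1}(w)$ receive an edge to $p^{-1}(w')$ is false whenever the edge stabilizer $G_e$ is noncompact: $G$-equivariance forces the set of connecting edges lying over the $T$-edge $e$ to be a union of $G_e$-orbits, and the $G_e$-orbit of any vertex of $p^{-1}(w)$ is $G_e/(G_e\cap K)$ for a compact open $K$, which is infinite when $G_e$ is noncompact. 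Likewise the finiteness $\lvert K_i\backslash G_w/G_e\rvert<\infty$ that you invoke is equivalent to $G_e$ being cocompact in $G_w$, which fails exactly in the situations the proposition is built for (Bass--Serre trees of free products, or amalgams over infinite-index quasi-convex subgroups as in the paper's main theorems). So the finiteness you propose to ``secure'' cannot be secured, and forcing it by adding only finitely many connecting edges over $e$ would destroy $G$-equivariance.

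The good news is that local finiteness holds for the construction you describe, just for a different reason: it is a pointwise statement about degrees, not a statement about how many vertices of a fiber meet connecting edges. The stabilizer in $G$ of a vertex $x\in X$ is a conjugate of the compact open subgroup $K_i$; for each of the finitely many $G$-orbits of edges, the edges of that orbit incident to $x$ are indexed by the orbit of the opposite endpoint under this compact stabilizer, and a compact group acting on the discrete coset space $G/K_j$ (whose point stabilizers are open) has finite orbits --- equivalently, a double coset $K_i h K_j$ is compact and is a disjoint union of open left cosets of $K_j$, hence of finitely many of them. Summing over the finitely many edge orbits bounds every degree uniformly; this is precisely the paper's argument that commensuration of $K$ gives a finite set $J_0$ with $KJK\subseteq J_0K$. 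Note also that this repair uses no property of $G_e$ at all: compact generation of edge stabilizers is irrelevant to your vertex-only construction (the paper needs it only because its notion of blowup additionally requires connected fibers over edge midpoints). With that step corrected, the rest of your outline --- connectivity from connected fibers plus connectedness of $T$, properness and cocompactness from compact open stabilizers and finitely many orbits --- goes through.
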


Our main common covering result used for Step (\ref{item:commoncover}) is Theorem \ref{thm:commoncover_BV}. 
This theorem takes two compact graphs of spaces $X_1$ and $X_2$ with a common universal cover $\tilde{X}$, satisfying certain additional assumptions, and guarantees the existence of a common finite cover.
The assumptions involved are too technical to state here in full, but the rough idea for the most important assumptions is as follows.
First we assume that the  vertex spaces in $\tilde{X}$ have discrete automorphism groups. This allows us to construct common finite covers for pairs of vertex spaces in $X_1$ and $X_2$. In the context of Theorems \ref{thm:intro:freeprod} and \ref{thm:intro:hyp_action_rigid_criterion}, this assumption can be deduced  from the graphical discreteness of vertex groups. 

Then, to build a common finite cover of $X_1$ and $X_2$, we want to glue together these finite covers of vertex spaces along elevations of edge spaces.
However, we need an additional assumption in order to make these elevations of edge spaces match up.
The required assumption involves the notion of a group \emph{commanding} a finite collection of subgroups (see Definition \ref{defn:commands}). More precisely, we assume that each automorphism group of a vertex space in $\tilde{X}$ commands a collection of stabilizers of orbit representatives of incident edge spaces.
For Theorem~\ref{thm:intro:freeprod}, the commanding assumption derives from the hypothesis that  vertex groups are virtually torsion-free; while for Theorem \ref{thm:intro:hyp_action_rigid_criterion} it derives from the assumption that each vertex group of $\cG$ is hyperbolic and cubulated with quasi-convex incident edge groups.

To determine which groups are graphically discrete, we use a range of different arguments and employ many results from the literature. In particular, we make use of the following characterization of graphical discreteness.

\begin{thm}(Theorem \ref{thm:compactbydiscrete})\label{thm:intro:compactbydiscrete}
    A finitely generated group $\Gamma$ is graphically discrete if and only if for every geometric action of $\Gamma$ on a connected locally finite graph $X$ and for all vertices $x \in X$, the homomorphism  $(\Aut(X))_{x} \rightarrow \QI(X)$ from the stabilizer of $x$ to the quasi-isometry group of $X$ has finite image.
\end{thm}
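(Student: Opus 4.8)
The plan is to prove the equivalence one graph at a time. Both ``graphically discrete'' and the displayed condition are universally quantified over geometric actions of $\Gamma$ on connected locally finite graphs, so it suffices to fix one such graph $X$, set $G := \Aut(X)$, and show that $G$ is compact-by-discrete if and only if the natural map $\phi \colon G \to \QI(X)$ restricts to a finite-image homomorphism on every vertex stabilizer $G_x$. Here $G$ is a totally disconnected locally compact group acting geometrically on $X$ (it contains the geometric action of $\Gamma$, and local finiteness makes the vertex stabilizers $G_x$ compact open), and $\phi$ sends an automorphism to its class as a quasi-isometry. The two facts I would isolate at the outset are: (i) the displacement $g \mapsto \sup_{v} d(gv,v)$ is conjugation-invariant, so $N := \ker\phi = \{g : \sup_v d(gv,v) < \infty\}$ is a normal subgroup, and $\phi(G_x)$ is finite exactly when $N \cap G_x$ has finite index in $G_x$; and (ii) each ``bounded displacement'' set $D_R := \{g \in G : \sup_v d(gv,v) \le R\}$ is compact, being a closed subset of the compact product $\prod_v B_R(v)$ of finite balls. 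Thus $N = \bigcup_{R \in \N} D_R$ is an increasing union of compact sets.

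For the forward direction, suppose $G$ is compact-by-discrete and let $K \trianglelefteq G$ be a compact open normal subgroup. I claim $K \le N$: writing an arbitrary vertex as $v = g v_i$ for one of the finitely many orbit representatives $v_i$, conjugation-invariance gives $d(kv,v) = d((g^{-1}kg)v_i, v_i) \le \operatorname{diam}(K v_i)$ for $k \in K$, and $Kv_i$ is a compact, hence finite, subset of $X$; maximizing over the finitely many $v_i$ bounds the displacement of $k$ uniformly. Since $K$ is open, $K \cap G_x$ has finite index in the compact group $G_x$, so $N \cap G_x \supseteq K \cap G_x$ does too, and $\phi(G_x) \cong G_x/(N \cap G_x)$ is finite.

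For the converse I would first show that $N$ is open. By hypothesis $N \cap G_x = \bigcup_R (D_R \cap G_x)$ has finite index in $G_x$, so $G_x$ is a countable union of the closed sets $g_i (D_R \cap G_x)$ over coset representatives $g_i$ and $R \in \N$. As $G_x$ is compact, the Baire category theorem forces some $D_R \cap G_x$ to have nonempty interior; a subgroup with nonempty interior is open, so $N \cap G_x$, and hence $N$, is open and $G/N$ is discrete.

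The main obstacle is the final step: extracting from $N$ a compact open subgroup that is normal in $G$. One cannot simply take $N$ itself, nor a point-stabilizer core: already for $\Gamma = \Z$ acting on the line, $N$ is the (noncompact) translation subgroup, and in ``thickened line'' examples the required compact normal subgroup is spread across all vertex fibers and lies in no vertex stabilizer. The plan is therefore to build, Trofimov-style, a $G$-invariant partition $\mathcal P$ of $V(X)$ into blocks of uniformly bounded diameter such that $G$ acts on the locally finite quotient graph $X/\mathcal P$ with finite vertex stabilizers; the finiteness of $\phi(G_x)$ (equivalently, that $N$ is open with $N \cap G_x$ compact) is exactly what is needed to produce such an imprimitivity system realizing the ``elliptic'' part of $N$. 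Granting this, $K := \ker\big(G \to \Aut(X/\mathcal P)\big)$ preserves each block, so has uniformly bounded displacement and is compact by (ii); it is normal by construction; and since the quotient action has finite vertex stabilizers, $G/K$ is discrete, exhibiting $G$ as compact-by-discrete. Isolating and constructing this bounded block system is where essentially all the difficulty lies.
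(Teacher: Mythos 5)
Your reduction to a fixed graph, your forward direction, and your Baire-category argument that $N=\ker\phi$ is open are all correct; the forward direction is essentially the paper's chain (\ref{item:compactbydiscrete})$\Rightarrow$(\ref{item:openQIkernel})$\Rightarrow$(\ref{item:QIfinitestabs}) in Theorem \ref{thm:compactbydiscrete}, and the Baire step, though not needed in the paper, is sound. The problem is the converse, and you have named it yourself: the entire proof rests on producing a $G$-invariant block system with finite blocks whose quotient action is discrete, and you do not construct it --- you only say it should be built ``Trofimov-style.'' This is not a routine step that can be granted; it is precisely the content of Trofimov's theorem (\cite[Proposition 2.3]{trofimov1984polynomial}, quoted as Theorem \ref{prop:trofimov} in the paper), a substantial structural result about vertex-transitive graphs which the paper invokes as a black box. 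Nothing you established earlier gets you closer to it: openness of $N$ says the bounded automorphisms form an open subgroup, but it provides no mechanism for separating the noncompact part of $N$ (e.g.\ the translations in your own line example) from the compact kernel you need, and the imprimitivity system is exactly that missing mechanism.

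There is also a secondary obstruction built into your plan that you would hit even after citing Trofimov: his theorem applies to \emph{vertex-transitive} graphs, whereas $X$ is merely cocompact under $G=\Aut(X)$, so a partition of all of $V(X)$ is not directly available. The paper handles this by forming an auxiliary graph $Y$ whose vertex set is a single orbit $G\cdot x$, with edges joining points at $X$-distance at most $d$ (with $d$ large enough that $Y$ is connected); $Y$ is vertex-transitive, Trofimov's theorem applies to it, and one sets $K=\ker\bigl(G\to\Aut(Y/\sim)\bigr)$. Compactness of $K$ is Proposition \ref{prop:compact<->bounded}, and openness of $K$ uses the finite-image hypothesis exactly once: any $g\in G_x\setminus K$ fixes the vertex $[x]$ of $Y/\sim$ and acts on it nontrivially, hence acts unboundedly on $Y/\sim$ by Trofimov's conclusion, hence unboundedly on $X$, so $\phi(g)\neq 1$; finiteness of $\phi(G_x)$ then forces $[G_x:K\cap G_x]<\infty$, and $K\cap G_x=G_x\cap\bigcap_{i=1}^n G_{[x_i]}$ is exhibited as a finite intersection of open block stabilizers, hence open. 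To complete your write-up you must either import this orbit-graph argument together with Trofimov's theorem, or give an independent proof of the block-system statement --- and the latter is a theorem in its own right, not a lemma one can grant.
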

An analogous statement holds for non-elementary hyperbolic groups, replacing $\QI(X)$ with $\Homeo(\p X)$; see Theorem \ref{thm:boundary_char}. %
In a similar vein, one can characterize graphical discreteness for one-ended hyperbolic groups with nontrivial JSJ decomposition using the action of $(\Aut(X))_{x}$ on the JSJ tree (Corollary \ref{cor:vertex_stab_jsj_alt}).
Another source of graphically discrete groups are those with discrete quasi-isometry groups; more precisely, any finitely generated tame group $\G$ for which the natural map $\G\to\mathrm{QI}(\G)$ has finite-index image is graphically discrete (see Theorem \ref{thm:propc_examples}(\ref{propc:QI}) and Theorem \ref{thm:discrete_qi}).
	
\subsection{Groups acting on graphs} \label{sec:intrographs}

The study of locally compact groups is often split into the Lie group setting and the totally disconnected locally compact group (tdlc) setting. Indeed, a locally compact group $G$ sits in a short exact sequence $1 \rightarrow G^0 \rightarrow G \rightarrow G/G^0 \rightarrow 1$, where the identity component~$G^0$ is an inverse limit of connected Lie groups and $G/G^0$ is a tdlc group; see~\cite{tao_hilbert}. This paper primarily concerns the tdlc setting; the automorphism group of a locally finite graph is a tdlc group and a partial converse holds as well~\cite{kronmoller08roughcayley}. Restricting to groups acting on graphs is particularly natural in light of recent work of  Margolis~\cite{margolis2024model}, who characterized which finitely generated groups can embed, modulo finite kernel, as uniform lattices into locally compact groups that are not compact-by-(totally disconnected). In particular:

	\begin{thm} \label{thm:intro_alex_gdar} \cite{margolis2024model}
		Let $\Gamma$  be a finitely generated group with no infinite commensurated  subgroup that is either a finite-rank free abelian group or a uniform lattice in a connected semisimple Lie group with finite center. Then, if $\Gamma$ and another finitely generated group $\Gamma'$ act geometrically on the same proper  metric space, then $\Gamma$ and $\Gamma'$ act geometrically on the same connected locally finite graph. In particular, if $\Gamma$ as above is graphically discrete, then $\Gamma$ is action rigid.
	\end{thm}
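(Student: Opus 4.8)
The plan is to route the argument through the structure theory of the locally compact group $G=\Isom(X)$ and, on the totally disconnected side, through the passage from geometric actions to actions on locally finite graphs. Suppose $\Gamma$ and $\Gamma'$ both act geometrically on a proper metric space $X$. Equipping $G=\Isom(X)$ with the compact–open topology, properness of $X$ and Arzelà–Ascoli show that $G$ is locally compact; it acts properly and cocompactly on $X$, so by the Milnor–Švarc lemma for locally compact groups it is compactly generated. The given actions factor through homomorphisms $\Gamma\to G$ and $\Gamma'\to G$ with finite kernel whose images are uniform lattices in $G$. The goal is then to show that $G$ is \emph{compact-by-(totally disconnected)}, i.e.\ that there is a compact normal subgroup $K\trianglelefteq G$ with $G/K$ totally disconnected. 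Granting this, $G/K$ is a compactly generated totally disconnected locally compact group containing the images of $\Gamma$ and $\Gamma'$ as uniform lattices, so its Cayley–Abels graph \cite{kronmoller08roughcayley} is a connected locally finite graph on which $G/K$, and hence both $\Gamma$ and $\Gamma'$, act geometrically, as required.

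It remains to rule out a noncompact connected part of $G$, and this is where the hypothesis enters. Writing $1\to G^0\to G\to G/G^0\to 1$, I would invoke the Gleason–Yamabe theorem to see that, modulo a compact normal subgroup, $G^0$ is a connected Lie group; by the Levi decomposition its radical and its semisimple Levi factor are characteristic in $G^0$, hence normal in $G$. If $G^0$ is noncompact, one of these pieces yields a connected normal subgroup $N\trianglelefteq G$ that is either a vector group (for instance the center of the nilradical of the radical) or a semisimple group with finite center (after passing to a suitable central or compact quotient). I then analyze the projection $q\colon G\to G/N$. If $q(\Gamma)$ is discrete, then $\Gamma\cap N$ is a uniform lattice in $N$ and is normal, hence commensurated, in $\Gamma$: as a lattice in a vector group it is a finite-rank free abelian group, and as a lattice in a semisimple group with finite center it is of the second excluded type. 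If instead $q(\Gamma)$ is non-discrete, one shows $\Gamma$ itself commensurates a uniform lattice in a connected semisimple Lie group with finite center. In either case the hypothesis is contradicted, so $G^0$ is compact and $G$ is compact-by-(totally disconnected).

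The ``in particular'' clause is then immediate: if $\Gamma$ is graphically discrete and $\Gamma,\Gamma'$ act geometrically on a common proper metric space, the main assertion supplies a connected locally finite graph on which both act geometrically, and Proposition~\ref{prop:gdimpliesvi} gives that $\Gamma$ and $\Gamma'$ are virtually isomorphic, which is exactly action rigidity. I expect the main obstacle to be the extraction, in the second paragraph, of a commensurated subgroup of \emph{precisely} the two excluded types from a noncompact connected factor: one must control the discreteness of $q(\Gamma)$, arrange finite center on the semisimple side, and locate a genuine vector group rather than a general nilpotent factor on the solvable side. This delicate interface between the structure theory of locally compact groups and the theory of lattices, in the spirit of \cite{bader2020lattice}, is the technical heart of \cite{margolis2024model}.
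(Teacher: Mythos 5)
A preliminary remark: the paper you were asked to match contains no proof of this statement at all — Theorem \ref{thm:intro_alex_gdar} is imported wholesale from \cite{margolis2024model}, and the only part the present paper is responsible for is the closing ``in particular'' sentence. Your handling of that sentence is correct and is exactly what the paper intends: the main assertion supplies a common connected locally finite graph, and Proposition \ref{prop:gdimpliesvi} upgrades this to virtual isomorphism, i.e.\ action rigidity. Your first paragraph is also sound and consistent with the paper's toolkit: $G=\Isom(X)$ is locally compact, acts geometrically on $X$, contains $\Gamma,\Gamma'$ as uniform lattices modulo finite kernel, is compactly generated, and once $G$ is known to be compact-by-(totally disconnected), Lemma \ref{lem:vanDantzig} (the Cayley--Abels graph of \cite{kronmoller08roughcayley}) finishes the argument. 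So the skeleton ``show $G^0$ is compact, else extract a forbidden commensurated subgroup of $\Gamma$'' is the right one.

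The gap is in your second paragraph, which is the entire technical content of \cite{margolis2024model}, and as written it contains both a step that fails and an unproved assertion. The failing step: the semisimple Levi factor of a connected Lie group is \emph{not} characteristic — Levi subgroups are unique only up to conjugacy (Malcev) and need not even be closed — so you cannot conclude that it is normal in $G$; only the solvable radical $R$ is characteristic. The semisimple part is therefore accessible only as the quotient $G^0/R$, and a quotient cannot be intersected with $\Gamma$; one is forced into projection and discreteness arguments there as well. The unproved assertion: the case where $q(\Gamma)$ is non-discrete is precisely the hard case, and ``one shows $\Gamma$ itself commensurates a uniform lattice in a connected semisimple Lie group with finite center'' is a restatement of the goal, not an argument; this is where machinery in the spirit of \cite{bader2020lattice} (Borel density, normality/closure arguments for non-discrete projections of lattices) must actually be deployed. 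Even your discrete case is incomplete: $\Gamma\cap R$ is a priori only polycyclic, not free abelian, and after passing to a characteristic vector subgroup $V\leq R$ (say the center of the nilradical) the intersection $\Gamma\cap V$ need not be a lattice in $V$, so producing an \emph{infinite} free abelian commensurated subgroup requires Mostow-type results on lattices in solvable Lie groups rather than a formal reduction. None of this undermines the strategy — it is plausibly how the cited proof proceeds — but as submitted, neither case of the dichotomy is established, so the main assertion is not proved.
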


    Understanding the finitely generated groups with only trivial lattice envelopes is a question of considerable interest in the field. We note that a finitely generated group is graphically discrete exactly when the group has only trivial \emph{tdlc} lattice envelopes (Proposition~\ref{prop:propC}). The theorem above implies that the only potential sources of examples where these are different notions are finitely generated groups with infinite commensurated  subgroups that are either finite-rank free abelian or uniform lattices in connected semisimple Lie groups.

\subsection{Open questions} \label{sec:open}

 We make the following conjectures: 
 
 \begin{conj} \label{conj:product}
  If $\G$ and $\G'$ are graphically discrete, 
then $\G \times \G'$ is graphically discrete.
 \end{conj}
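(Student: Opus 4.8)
The plan is to work through the lattice-envelope reformulation of graphical discreteness. By Proposition~\ref{prop:propC}, a finitely generated group is graphically discrete precisely when it has only trivial tdlc lattice envelopes, so it suffices to take a totally disconnected locally compact group $G$ containing $\Lambda = \Gamma \times \Gamma'$ as a uniform lattice (modulo a finite kernel, which we may ignore) and to show that $G$ is compact-by-discrete. First I would pass to the closures $H := \overline{\Gamma}$ and $H' := \overline{\Gamma'}$ inside $G$. These are closed, hence tdlc, subgroups in which $\Gamma$ and $\Gamma'$ sit as discrete subgroups, and since $\Gamma$ and $\Gamma'$ commute in $\Lambda$, continuity of the commutator map forces $[H,H'] = 1$; in particular $H \subseteq C_G(\Gamma')$ and $H' \subseteq C_G(\Gamma)$, and $HH'$ is a subgroup of $G$ containing the cocompact lattice $\Lambda = \Gamma\Gamma'$.

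The heart of the argument is a product-rigidity statement for the envelope: I would aim to show that, up to finite index and a compact normal subgroup, $G$ is the direct product $H \times H'$, with $\Gamma$ a uniform lattice in $H$ and $\Gamma'$ a uniform lattice in $H'$. Concretely, the steps are: (i) using compact generation of $G$ (which follows from cocompactness of $\Lambda$) together with the inclusion $\Lambda \subseteq HH'$, show that $HH'$ is a closed, open, finite-index subgroup of $G$, so that $G$ is compact-by-$(HH')$ up to finite index; (ii) show that $H \cap H'$ is compact, using that it centralizes both $\Gamma$ and $\Gamma'$ and hence the uniform lattice $\Lambda$, so that the multiplication map $H \times H' \to HH'$ is a continuous surjection with compact kernel $\{(z,z^{-1}) : z \in H\cap H'\}$; and (iii) show that $\Gamma$ is cocompact in $H$ and $\Gamma'$ in $H'$, so that $H$ and $H'$ are genuine tdlc lattice envelopes of $\Gamma$ and $\Gamma'$. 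Granting (i)--(iii), graphical discreteness of $\Gamma$ and of $\Gamma'$ forces $H$ and $H'$ to be compact-by-discrete; a direct product of compact-by-discrete groups is compact-by-discrete, this property is inherited by the quotient $HH' = (H\times H')/(H\cap H')$ by a compact normal subgroup, and finally passes to $G$ since $HH'$ is open of finite index. This would complete the proof.

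The main obstacle is step (iii): proving that each factor is cocompact in its own closure. The difficulty is that the cocompactness of $\Lambda$ in $G$ is ``shared'' between the two factors, and a priori the closure $\overline{\Gamma}$ could be strictly larger than a cocompact hull of $\Gamma$, with the extra bulk supplied by interaction with the $\Gamma'$-direction; ruling this out is a lattice-envelope analogue of the coarse product-rigidity phenomena underlying quasi-isometric rigidity of products. I expect this to be genuinely delicate in exactly the situations flagged by Theorem~\ref{thm:intro_alex_gdar}, namely when $\Gamma$ or $\Gamma'$ carries an infinite commensurated subgroup that is free abelian or a uniform lattice in a semisimple Lie group. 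Already for $\Gamma = \Gamma' = \mathbb{Z}$ the clean internal product decomposition $G \approx H \times H'$ need not hold, even though $\mathbb{Z}^2$ is graphically discrete for nilpotent reasons; this signals that the product-structure strategy above, while I expect it to be decisive in the generic regime, is not by itself sufficient.

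A complete proof will therefore likely proceed by cases: a ``generic'' regime---say when the factors are one-ended, have trivial centre, and carry no infinite commensurated free-abelian or semisimple-lattice subgroups---where the closure and product-rigidity argument above can be pushed through, together with a separate treatment of the degenerate commensurated-subgroup cases that appeals directly to the nilpotent and semisimple structure theory already used to establish graphical discreteness of those pieces, rather than to an internal product decomposition. Unifying these two regimes into a single argument, and in particular establishing step (iii) without auxiliary hypotheses on the factors, is the essential remaining difficulty.
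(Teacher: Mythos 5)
The statement you set out to prove is Conjecture~\ref{conj:product} of the paper: it is not proved there, and the authors explicitly record it as open even in the special cases where $\Gamma'$ is $\mathbb{Z}$, abelian, or nilpotent. The only case the paper establishes is that of non-elementary hyperbolic factors (Theorem~\ref{thm:dirprod_hyp}), and it does so by a route quite different from yours: rather than decomposing the envelope $G$ algebraically into a product, it splits the induced quasi-action of $G$ on $\Gamma_1\times\cdots\times\Gamma_n$ coarsely into factor quasi-actions (a de Rham-type splitting of quasi-isometries of products, which is exactly where hyperbolicity enters), quasi-conjugates each factor quasi-action to an isometric action on a locally finite graph or rank-one symmetric space, and then uses graphical discreteness of each factor to see that each resulting homomorphism $G^*\to\Isom(Y_i)/K_i$ is continuous with discrete image; the product of these maps then has discrete image and compact kernel. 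Your proposal is, by your own account in its final two paragraphs, a program with an unresolved main step, so it cannot be judged correct; but the difficulties are more severe than the one you flag at step (iii).

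Step (i) is false as stated. Take $G=\mathbb{Z}_p\times\mathbb{Z}\times\mathbb{Z}$ with $\Gamma$ and $\Gamma'$ the two infinite cyclic coordinate factors, so that $\Lambda=\Gamma\times\Gamma'$ is a uniform lattice in $G$. Both $\Gamma$ and $\Gamma'$ are discrete, hence closed, so $H=\Gamma$, $H'=\Gamma'$ and $HH'=\Lambda$, which is closed and cocompact but is neither open nor of finite index in $G$: the compact direction $\mathbb{Z}_p$ is entirely invisible to closures of subgroups of $\Lambda$, and no amount of multiplying such closures together can detect it. (Here $G$ is of course compact-by-discrete, consistent with the conjecture; the point is that your route cannot see this.) Openness of a subgroup is essentially the conclusion one is after --- compare Theorem~\ref{thm:compactbydiscrete}, where openness is extracted via Schlichting's theorem applied to uniformly commensurable vertex stabilizers --- so asserting it for $HH'$ begs the question. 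Step (ii)'s justification also rests on a false principle: a closed subgroup centralizing a uniform lattice need not be compact, because $C_G(\Lambda)$ always contains the center $Z(\Lambda)$, which for $\Lambda=\mathbb{Z}\times\mathbb{Z}$ is infinite and discrete. So all three steps (i)--(iii), not only (iii), require hypotheses well beyond graphical discreteness of the factors, and the case one would most like to dispose of cheaply ($\Gamma'=\mathbb{Z}$) is precisely where they break down --- consistent with the authors' remark that the conjecture is open already there. If you want a provable result along these lines, the realistic target is the hyperbolic case, and there the paper's quasi-action splitting replaces your algebraic decomposition entirely.
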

 In Theorem \ref{thm:dirprod_hyp}, we prove Conjecture~\ref{conj:product} holds when $\G$ and $\G'$ are non-elementary hyperbolic groups.
 Conjecture~\ref{conj:product} is open even in the special cases that $\G'$ is $\mathbb{Z}$, abelian, or nilpotent.
 
 Uniform lattices in isometry groups of thick hyperbolic buildings are not graphically discrete; see Corollary~\ref{cor:buildings}. These hyperbolic groups in dimension two have Menger curve visual boundary and the failure of graphical discreteness is realized in the automorphism group of the building. On the other hand, we conjecture that many hyperbolic groups with Menger curve boundary are graphically discrete.
  
  \begin{conj}
  Random groups in the few-relator model and Gromov's density model are graphically discrete asymptotically almost surely as the length of the chosen relators tends to infinity.
 \end{conj}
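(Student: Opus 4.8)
The plan is to reduce the conjecture, via the boundary characterisation of graphical discreteness, to a statement about the symmetries of $\p\G$ that no longer mentions any graph, and then to establish that statement using the combinatorial asymmetry of generic relators. First I would record the coarse geometry: a random group $\G$ in either model (in Gromov's model at density $<1/2$) is a.a.s. a torsion-free, one-ended, non-elementary hyperbolic group whose visual boundary $\p\G$ is the Menger curve. In the few-relator model this follows because, as the relator length $\ell\to\infty$, the presentation a.a.s. satisfies $C'(\lambda)$ for every fixed $\lambda>0$, while one-endedness and the absence of local cut points (hence the Menger curve boundary, by Kapovich--Kleiner) follow from the fact that random groups do not split over elementary subgroups (Dahmani--Guirardel--Przytycki). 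Since $\G$ is non-elementary hyperbolic, I would invoke the boundary analogue of Theorem~\ref{thm:compactbydiscrete}, namely Theorem~\ref{thm:boundary_char}: it suffices to show that for every geometric action of $\G$ on a connected locally finite graph $X$ and every vertex $x\in X$, the image of the stabiliser $(\Aut X)_x$ under $\Aut(X)\to\Homeo(\p X)$ is finite.

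Next I would make this intrinsic to $\p\G$. Fixing a $\G$-equivariant homeomorphism $\p X\cong\p\G$, the map $\Aut(X)\to\Homeo(\p X)$ factors through $\QI(X)=\QI(\G)$, which embeds in $\Homeo(\p\G)$ as quasi-M\"obius homeomorphisms (two quasi-isometries inducing the same boundary map lie at bounded distance, so the map is injective). A vertex stabiliser $(\Aut X)_x$ is a compact, profinite subgroup of $\Aut(X)$, so the target statement becomes: $\p\G$ admits no faithful action of an infinite profinite group by quasi-M\"obius homeomorphisms arising from graph automorphisms. Equivalently, by Proposition~\ref{prop:propC}, $\G$ has a.a.s. no nontrivial totally disconnected lattice envelope. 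This reformulation is the crux, and it is exactly where randomness must enter: the thick hyperbolic buildings of Corollary~\ref{cor:buildings} are one-ended hyperbolic groups with Menger curve boundary that do carry such profinite boundary symmetry, so the argument cannot be formal and must exploit the asymmetry of generic relators.

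For the decisive step I would argue by contradiction. If $\Aut(X)$ fails to be compact-by-discrete, then by Proposition~\ref{prop:propC} together with the structure theory of totally disconnected locally compact groups, $\G$ admits a nontrivial totally disconnected lattice envelope; concretely there is a compact open subgroup $U\le\Aut(X)$ fixing a vertex $x$ and acting nontrivially on arbitrarily large spheres about $x$, exhibiting a self-similar local symmetry of the model $X$. The task is then to show a generic $\G$ admits no model with such symmetry. In the few-relator model one expects that, a.a.s. as $\ell\to\infty$, the standard Cayley $2$-complex has no nontrivial automorphism germ at a vertex, since a generic tuple of long relators possesses no nontrivial combinatorial self-symmetry; one would then hope to propagate this local rigidity in the spirit of the unique strong local-to-global rigidity of \cite{delasalletessera2019char} to conclude that $(\Aut X)_x$ can permute only a bounded amount of combinatorial data, forcing finite image in $\Homeo(\p\G)$.

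The hardest part, and the reason this remains a conjecture, is precisely this propagation: the explicit asymmetry of the standard Cayley complex must be transferred to an \emph{arbitrary} graph model $X$, for which one has no description beyond being quasi-isometric to $\G$. Controlling all such $X$ simultaneously---ruling out the building-type phenomenon uniformly---requires a genuinely new generic statement about $\Homeo(\p\G)$ (or about $\QI(\G)$) that lies beyond current quasi-isometric rigidity technology, and the density model compounds the difficulty, since there the small-cancellation control is only metric rather than combinatorial. A realistic first target would therefore be the single-relator few-relator case, where the Cayley complex is explicit: there one may hope to verify the vertex-stabiliser criterion of Theorem~\ref{thm:boundary_char} by hand, or even that $\G\to\QI(\G)$ has finite-index image so that graphical discreteness follows directly from Theorem~\ref{thm:discrete_qi}, giving a proof of principle before attacking the general density regime. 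The existing result that topologically rigid hyperbolic groups are graphically discrete \cite{kapovichkleiner2000hyperbolic}, together with Margolis's dichotomy \cite{margolis2024model}, suggests that the missing ingredient is a generic topological rigidity theorem for boundaries of random groups.
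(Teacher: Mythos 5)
You were not asked to reproduce a proof here: the statement is posed in Section~\ref{sec:open} of the paper as an open conjecture, and the paper contains no argument for it. So the only question is whether your proposal actually closes the conjecture, and it does not — a point you concede yourself. Your preliminary reductions are sound and consistent with the paper's framework: random groups in both models are a.a.s.\ non-elementary hyperbolic (one-ended with Menger curve boundary in the relevant regimes), and by Theorem~\ref{thm:boundary_char} together with Proposition~\ref{prop:propC} graphical discreteness is equivalent to finiteness of the image of every vertex stabilizer $(\Aut(X))_x\to\Homeo(\partial\Gamma)$ over all geometric actions on locally finite graphs, i.e.\ to the nonexistence of nontrivial totally disconnected lattice envelopes. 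But these steps only restate the conjecture in boundary language; they do not advance it.

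The genuine gap is the step you flag as "propagation," and it is not a technical detail but the entire content of the conjecture. Generic asymmetry of the standard Cayley $2$-complex (even granting it, which you do not prove) says nothing a priori about an \emph{arbitrary} locally finite graph $X$ on which $\Gamma$ acts geometrically: such an $X$ is known to you only up to quasi-isometry, and the local-to-global rigidity of de la Salle--Tessera that you invoke concerns spaces locally modelled on a fixed Cayley graph, not all graph models of the group. Moreover, as you correctly observe via Corollary~\ref{cor:buildings}, there exist one-ended hyperbolic groups with Menger curve boundary — uniform lattices in thick hyperbolic buildings — for which the desired conclusion is false; hence no argument that uses only the topology of $\partial\Gamma$ can work, and randomness must enter exactly at the step your proposal leaves blank. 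Your fallback suggestion, proving that $\Gamma\to\QI(\Gamma)$ has finite-index image so that Theorem~\ref{thm:discrete_qi} applies, is indeed a sufficient condition (random groups are hyperbolic, hence tame by Example~\ref{exmp:tame_morse}), but establishing it is itself an open rigidity problem of comparable difficulty, not a reduction you carry out. In short: the proposal is a reasonable research plan whose crux coincides with the open problem, not a proof.
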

  
 \begin{conj}
  A one-ended, hyperbolic free-by-cyclic group $F_n \rtimes_{\phi} \Z$ with trivial JSJ decomposition is graphically discrete. 
 \end{conj}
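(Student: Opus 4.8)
The natural starting point is the boundary characterization of graphical discreteness for hyperbolic groups (Theorem~\ref{thm:boundary_char}): writing $\Gamma = F_n \rtimes_\phi \Z$, it suffices to show that for every geometric action of $\Gamma$ on a connected locally finite graph $X$ and every vertex $x$, the image of the compact open subgroup $K := (\Aut X)_x$ under the natural map $K \to \Homeo(\p X)$ is finite. Since $\cd(\Gamma)=2$, the boundary $\p\Gamma \cong \p X$ is at most one-dimensional; and since $\Gamma$ is one-ended, hyperbolic, and has trivial JSJ decomposition, it admits no essential splitting over a virtually cyclic subgroup, so by Bowditch's work $\p\Gamma$ has no local cut points. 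The Kapovich--Kleiner analysis of one-dimensional boundaries then forces $\p\Gamma$ to be homeomorphic either to a Sierpinski carpet or to the Menger curve. The carpet case is already covered by Theorem~\ref{thm:graphical_discrete_sier_boundaries}, so the entire content of the conjecture lies in the Menger curve case --- precisely the regime where the thick hyperbolic buildings of Corollary~\ref{cor:buildings} show that Menger curve boundary alone is insufficient. Any proof must therefore use the free-by-cyclic structure in an essential way to separate $\Gamma$ from the building examples.

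The mechanism I would exploit is the suspension flow. The fibration $\Gamma \to \Z$ with kernel $F_n$ equips $\p\Gamma$ with canonical structure: the two fixed points of a generator $t$ of the $\Z$-factor, together with the attracting and repelling laminations of the monodromy $\phi$ --- equivalently the Cannon--Thurston image of $\p F_n$ together with its transverse flow direction. The plan is to show that this structure is preserved by the image of $G := \Aut(X)$ in $\Homeo(\p X)$. Concretely, I would first argue that the fibration direction is a quasi-isometry invariant of $\Gamma$: the class in $H^1(\Gamma;\R)$ determined by $\phi$ is detected by the exponential stretching of the flow, with stretch factor $\lambda>1$, and for atoroidal fully irreducible $\phi$ this is the unique such direction, hence canonical and $G$-invariant. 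This would produce a continuous homomorphism from $G$ onto a group commensurable with $\Z$ whose kernel $N$ is a totally disconnected locally compact group containing $F_n$ as a uniform lattice, and on which the conjugation action of a lift of $t$ induces an expanding automorphism whose scale is governed by $\lambda$.

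With this in hand, the remaining task is to show the image of $K$ in $\Homeo(\p X)$ is finite. The subtlety is that $F_n$ is virtually free and therefore carries many non-discrete lattice envelopes --- its full tree-automorphism group, in the spirit of the Burger--Mozes and Baumslag--Solitar examples cited in the introduction --- so reduction to the fiber alone gives no discreteness. The extra ingredient is the compatible expanding automorphism coming from the monodromy: I would analyze the Willis scale function of a lift of $t$ acting on $N$ and show that compatibility with the flow laminations pins down the structure, forcing $N$ to be compact-by-$F_n$ with $F_n$ embedded discretely, and hence $G$ to be compact-by-discrete. Combined with the boundary characterization, this yields graphical discreteness and, via Theorem~\ref{thm:intro_alex_gdar} (note that $\Gamma$ has no commensurated free-abelian or semisimple-lattice subgroup), action rigidity as well.

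The step I expect to be the genuine obstacle is the invariance claim of the second paragraph: that the fibration, equivalently the flow structure, is preserved not merely by self-quasi-isometries of $\Gamma$ but by the full automorphism group of an \emph{arbitrary} locally finite graph model. This is essentially a quasi-isometric rigidity statement for the fibration of a hyperbolic free-by-cyclic group, which is known only under strong hypotheses on $\phi$; establishing it in the generality of the conjecture --- and, crucially, extracting \emph{finiteness} of the boundary action rather than mere discreteness --- is exactly the content that keeps the statement open.
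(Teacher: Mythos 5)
The statement you set out to prove is not a theorem of the paper at all: it is one of the explicitly open conjectures in Section~\ref{sec:open}, and the paper contains no proof of it. So the only question is whether your argument closes the conjecture, and it does not --- by your own admission. The entire proposal funnels through the claim in your second paragraph that the fibration/flow structure on $\partial\Gamma$ is invariant under the image of $\Aut(X)\to\Homeo(\partial X)$ for an \emph{arbitrary} connected locally finite graph $X$ admitting a geometric $\Gamma$-action, and your final paragraph concedes that this step is unknown. A proof whose pivotal step is flagged as open is a research program, not a proof; the reduction via Theorem~\ref{thm:boundary_char} and Kapovich--Kleiner, and the appeal to Theorem~\ref{thm:graphical_discrete_sier_boundaries} in the carpet case, are fine but were never the issue.

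Beyond the conceded gap, the program itself has a genuine error. You assert that the class in $H^1(\Gamma;\R)$ determined by $\phi$ is ``the unique such direction, hence canonical and $G$-invariant.'' This is false in general: a hyperbolic free-by-cyclic group with $b_1(\Gamma)\geq 2$ typically admits infinitely many distinct splittings $F_m\rtimes_\psi\Z$, with different fiber ranks, different monodromies, and different stretch factors (this is the content of Dowdall--Kapovich--Leininger's work on dynamics on free-by-cyclic groups, the $\Out(F_n)$ analogue of the fibered faces of the Thurston norm ball); even for a single splitting the expansion factors of $\phi$ and $\phi^{-1}$ generally disagree, so there is no well-defined ``stretch factor of the flow'' to be detected coarsely. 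Hence there is no canonical cohomology class for $G=\Aut(X)$ to preserve, and the continuous surjection from $G$ onto a group commensurable with $\Z$ does not get off the ground. Note also that the conjecture does not grant you that $\phi$ is fully irreducible --- triviality of the Bowditch JSJ is a weaker hypothesis --- yet your uniqueness claim leans on it. The subsequent Willis-scale step (``pins down the structure, forcing $N$ to be compact-by-$F_n$'') is an aspiration rather than an argument: since $F_n$ has non-discrete lattice envelopes of Burger--Mozes/tree type, as the paper emphasizes, you would need to exclude, e.g., $N$ acting non-discretely on a tree with the monodromy realized by a tree automorphism of the prescribed scale, and nothing in the sketch does so. Finally, a small point: the Kapovich--Kleiner trichotomy also allows $\partial\Gamma\cong S^1$; that case is easy to rule out (a virtually Fuchsian group is not finitely-generated-free-by-cyclic, because infinite-index normal subgroups of closed surface groups are infinitely generated free), but your reduction should say so.
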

 
	We show hyperbolic groups with boundary homeomorphic to an $n$-sphere for $n \leq 3$ are graphically discrete. While the bound on dimension is used in the proof, we conjecture:
  
 \begin{conj}
    Hyperbolic groups with boundary the $n$-sphere for $n \geq 4$ are graphically discrete. 
 \end{conj}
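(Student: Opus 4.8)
The plan is to run, for $n\geq 4$, exactly the argument that already establishes graphical discreteness when the boundary sphere has dimension $n\leq 3$, with the one ingredient that becomes conjectural being the Hilbert--Smith conjecture in higher dimensions. So let $\Gamma$ be a hyperbolic group with $\partial\Gamma\cong S^n$, and note that $\Gamma$ is non-elementary since $S^n$ is infinite. First I would invoke the boundary characterization of graphical discreteness for non-elementary hyperbolic groups (Theorem~\ref{thm:boundary_char}): it suffices to show that for every geometric action of $\Gamma$ on a connected locally finite graph $X$ and every vertex $x\in X$, the natural homomorphism $(\Aut(X))_x\to\Homeo(\partial X)$ has finite image. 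Since $\Gamma$ acts geometrically on $X$, the graph $X$ is quasi-isometric to $\Gamma$, so $\partial X\cong\partial\Gamma\cong S^n$ and the target group is $\Homeo(S^n)$.

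Next I would analyze the image $K\leq\Homeo(S^n)$ of the vertex stabilizer. Because $X$ is a connected locally finite graph, $\Aut(X)$ is totally disconnected and locally compact, and $(\Aut(X))_x$ is a compact open subgroup, hence profinite. As the homomorphism to $\Homeo(\partial X)$ is continuous and $\Homeo(S^n)$ is Hausdorff, the image $K$ is the continuous homomorphic image of a profinite group in a Hausdorff group, so $K$ is itself compact and totally disconnected, i.e. profinite. By construction $K$ acts faithfully and continuously on the manifold $S^n$. Thus the required finiteness reduces exactly to the statement that no infinite compact totally disconnected group acts faithfully by homeomorphisms on $S^n$ --- which is precisely the content of the Hilbert--Smith conjecture for $S^n$, equivalently the assertion that $\mathbb{Z}_p$ admits no faithful continuous action on a connected $n$-manifold for any prime $p$. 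Granting this, $K$ would be a totally disconnected compact Lie group, hence finite, and Theorem~\ref{thm:boundary_char} would yield that $\Gamma$ is graphically discrete.

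The hard part --- indeed the only genuine obstacle --- is this last step. The Hilbert--Smith conjecture is known for manifolds of dimension $n\leq 3$, which is exactly why the published result covers sphere boundaries with $n\leq 3$, but it is open for $n\geq 4$. Every other part of the argument (the boundary characterization, the identification of $\partial X$ with $S^n$, and the profiniteness and faithfulness of $K$) is dimension-independent and follows from results already established in the paper together with standard facts about automorphism groups of locally finite graphs. Consequently, a positive resolution of the Hilbert--Smith conjecture in a given dimension $n\geq 4$ would immediately promote this conjecture to a theorem in that dimension, and I see no route that circumvents this input: the whole difficulty is concentrated in ruling out a faithful $\mathbb{Z}_p$-action on $S^n$.
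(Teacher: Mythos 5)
You have not proved the statement, and no one currently can: in the paper this is an \emph{open conjecture} (it appears in the ``Open questions'' section), and the only missing ingredient is exactly the one you isolate. Your argument is conditional --- it establishes the implication ``Hilbert--Smith in dimension $n$ implies graphical discreteness for hyperbolic groups with $S^n$ boundary'' --- but the Hilbert--Smith Conjecture is open for every $n\geq 4$, so the key finiteness step (ruling out an infinite profinite subgroup of $\Homeo(S^n)$ acting faithfully) is unavailable. That is a genuine gap in the sense that the conjecture remains a conjecture; it is also exactly the gap the authors acknowledge: after Theorem~\ref{thm:hilbert_smith} they remark that the proof of Theorem~\ref{thm:graphical_discrete_sphere_boundaries} works verbatim in any dimension where a positive solution to the Hilbert--Smith Conjecture is known, and they state the $n\geq 4$ case as a conjecture precisely because no such solution exists.

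Setting aside the conditionality, your reduction is correct and matches the paper's intended strategy, with one small difference of route. The paper's proof for $n\leq 3$ works with the full automorphism group: Furman's theorem gives a continuous action $\rho:\Aut(X)\to\Homeo(S^n)$ with compact kernel, Hilbert--Smith makes the image a Lie group, and then Lemma~\ref{lem:totdisc_to_lie} (a continuous homomorphism from a tdlc group to a Lie group has open kernel) shows $\ker(\rho)$ is a compact open normal subgroup, so $\Aut(X)$ is compact-by-discrete. You instead route through the vertex-stabilizer criterion of Theorem~\ref{thm:boundary_char}, using that $(\Aut(X))_x$ is profinite, that its image in $\Homeo(S^n)$ is again profinite and acts faithfully, and that Hilbert--Smith forces a compact totally disconnected Lie group to be finite. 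Both routes are dimension-independent apart from the Hilbert--Smith input; the paper's version is slightly more economical in that it needs neither the boundary characterization nor your appeal to the standard (Newman-theorem) equivalence between the profinite case and the $\mathbb{Z}_p$ case, but the two arguments are essentially the same reduction packaged differently.
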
  
    
    As explained above, quasi-isometric rigidity for tame groups is implied by the group being both graphically discrete and having the uniform quasi-isometry property, with the exception of the two families from Theorem~\ref{thm:intro_alex_gdar}. Thus, we are very interested in the following. 
    
    \begin{question}
        Does the uniform quasi-isometry property hold for a one-ended hyperbolic group that does not split over a virtually cyclic subgroup and is not a lattice in the isometry group of a rank-1 symmetric space?
    \end{question}

\subsection*{Acknowledgments} The authors are thankful for helpful discussions with Tullia Dymarz and with Genevieve Walsh, who explained Remark~\ref{rem:minelts}. We also thank Adrien Le Boudec for pointing out an error in an earlier version of the article. The authors thank the referee for valuable comments. The third author was supported by NSF Grant No. DMS-2204339.

  \tableofcontents

\section{Preliminaries}

 \subsection{Graphs and cell complexes}
 
 \begin{defn}
   A \emph{graph} $X$ is a set of \emph{vertices} $VX$ and \emph{edges} $EX$.
An edge $e$ is oriented with an \emph{initial vertex} $\iota(e)$ and a \emph{terminal vertex} $\tau(e)$.
Associated to $e$ is the edge $\bar{e}$ with reversed orientation: $\iota(e) = \tau(\bar{e})$ and $\tau(e) = \iota(\bar{e})$. We have $e\neq\bar{e}$ and $e=\bar{\bar{e}}$ for all $e\in EX$. 
We also consider graphs as geodesic spaces by making each edge isometric to the unit interval.
For $v$ a vertex in a graph, the \emph{link} of $v$ is defined to be the following set:
\[
 \lk(v) = \{ e \mid \tau(e) = v\}
\]

A \emph{tree} is a connected graph with no cycles. In a tree, or more generally in a simplicial graph, each edge is determined by its endpoints, so we will often write $e=(u,v)$ to mean $u=\iota(e)$ and $v=\tau(e)$.
 \end{defn}
 
 \begin{defn}
   A 2-dimensional \emph{cell complex} is a space obtained by attaching 2-cells to a graph, where the boundary of each cell is glued to a loop in the graph. We only deal with 2-dimensional cell complexes in this paper and refer to them as  cell complexes.
   A cell complex (or graph) is \emph{locally finite} if each vertex is incident to finitely many edges and 2-cells.
 \end{defn}
 \subsection{Coarse geometry}\label{sec:coarsegeom}
 \begin{defn}
 	Let $X$ and $Y$ be metric spaces. Let $K \geq 1$ and $A \geq 0$. A function $f:X\to Y$ is:
 	\begin{enumerate}
 		\item \emph{$(K,A)$-coarse Lipschitz} if for all $x,x'\in X$, $\ d_Y\bigl(f(x),f(x')\bigr)\leq Kd_X(x,x')+A$.
 		\item \emph{$(K,A)$-quasi-isometry} is a map $f:X\to Y$ such that:
 		\begin{enumerate}
 			\item for all $x,x'\in X$, \[\frac{1}{K}d_X(x,x')-A\leq d_Y\bigl(f(x),f(x')\bigr)\leq Kd_X(x,x')+A,\]
 			\item for all $y\in Y$, there is some $x\in X$ such that $d_Y(f(x),y)\leq A$.
 		\end{enumerate} 
 	\end{enumerate}
 A map is \emph{coarse Lipschitz} (resp. a \emph{quasi-isometry}) if it  is $(K,A)$-coarse Lipschitz (resp. a $(K,A)$-quasi-isometry) for some $K\geq 1$ and $A\geq 0$.
 A map $f:X\to Y$ is \emph{coarsely surjective} if for some $A$, $N_A(f(X))=Y$.
 
 Two quasi-isometries $f,g:X\to Y$ are \emph{$A$-close} if $\sup_{x\in X} d_Y(f(x),g(x))\leq A$, and are \emph{close} if they are $A$-close for some $A\geq 0$.
 \end{defn}
 
If $\Gamma$ is a finitely generated group, $\Gamma$ can be equipped with the word metric with respect to a finite generating set. This metric is well-defined up to quasi-isometry.

\begin{defn}
	The \emph{quasi-isometry group} $\QI(X)$ of a metric space $X$ consists of self-quasi-isometries of $X$ modulo the equivalence relation $f\sim g$ if $f$ and $g$ are close. An equivalence class in $\QI(X)$ containing a quasi-isometry $f$ is denoted $[f]$.  The group operation on $\QI(X)$ is given by composition $[f][g]=[f\circ g]$, which can readily be shown to be well-defined.
\end{defn}

\begin{notation}
	If $G$ is a group, then we use $1_G$ (or $1$ in the absence of ambiguity) to denote the identity element of $G$. 
\end{notation}

\begin{defn}
If $G$ is a group and $X$ is a metric space, then a \emph{$(K,A)$-quasi-action} of $G$ on $X$ is a collection of maps $\{f_g\}_{g\in G}$ such that
\begin{enumerate}
	\item for every $g\in G$, $f_g$ is a $(K,A)$-quasi-isometry from $X$ to $X$;
	\item for every $g,k\in G$, $f_{gk}$ is $A$-close to $f_g\circ f_k$;
	\item $f_1$ is $A$-close to the identity on $X$.
\end{enumerate}
A \emph{quasi-action} of $G$ on $X$ is a $(K,A)$-quasi-action of $G$ on $X$ for some $K\geq 1$ and $A\geq 0$.
A quasi-action $\{f_g\}_{g\in G}$ of $G$ on $X$ is said to be \emph{cobounded} if there exists a $B$ such that for all $x,x'\in X$, there is some $g\in G$ such that $d(f_g(x),x')\leq B$. Two quasi-actions $\{f_g\}_{g\in G}$ and $\{h_g\}_{g\in G}$  of $G$ on $X$ and $Y$ respectively, are \emph{quasi-conjugate} if there is a quasi-isometry $f:X\to Y$ and a constant $C$ such that \[d\bigl(f(f_g(x)),h_g(f(x))\bigr)\leq C\] for all $g\in G$ and $x\in X$.
\end{defn}

\begin{rem}
    A quasi-action $\{f_g\}_{g\in G}$ of $G$ on $X$ induces a homomorphism $G\to \QI(X)$ given by $g\mapsto [f_g]$. In particular, the natural isometric action of a finitely generated group $\Gamma$ on itself by left multiplication induces a map $\Gamma\to \QI(\Gamma)$.
\end{rem}

 \subsection{Locally compact groups and lattices} \label{subsec:lattices}
 
 By convention, topological groups are assumed to be Hausdorff. If $G$ is topological group and $H\leq G$ is a closed subgroup, $G/H$ will always be assumed to be equipped with the quotient topology.  The group $G/H$ is discrete if and only if $H$ is open. It follows that $G$ is a compact-by-discrete if and only if $G$ contains a compact open normal subgroup.
 
 If $X$ is a metric space, then $\Isom(X)$ denotes the isometry group of~$X$, equipped with the compact-open topology. If $X$ is proper, i.e.\ closed balls are compact, then $\Isom(X)$ is a second countable locally compact topological group; moreover,  the compact-open topology on $\Isom(X)$ coincides with the topology of pointwise convergence. If $X$ is a connected graph, then we identify its automorphism group $\Aut(X)$ with a subgroup of the isometry group of the graph, metrized so each edge has length one.

    \begin{defn}
      A \emph{lattice} of a locally compact group $G$ is a discrete subgroup $\Gamma\leq G$ so that $\Gamma$ acts on $G$ with a Borel fundamental domain of finite Haar measure. The lattice is \emph{uniform} if in addition $\Gamma\backslash G$ is compact. A \emph{(uniform) lattice embedding} into a locally compact group $G$ is a monomorphism $\Gamma\rightarrow G$ whose image is a (uniform) lattice in $G$. %
        A \emph{(uniform) lattice embedding modulo finite kernel} into a locally compact group $G$ is a homomorphism $\Gamma \rightarrow G$ with finite kernel and with image a (uniform) lattice in~$G$. 
        If there exists a (uniform) lattice embedding  modulo finite kernel $\Gamma \rightarrow G$, we say $\Gamma$ is a \emph{(uniform) lattice modulo finite kernel} in $G$.%
    \end{defn}

We make frequent use of the following lemma:
\begin{lem}[{\cite[\S 2.C]{capracemonod2012lattice}}]\label{lem:open_lattice}
	Let $\Gamma$ be a (uniform) lattice in a locally compact group $G$.  If $H\leq G$ is open, then $\Gamma\cap H$ is a (uniform) lattice in $H$.
\end{lem}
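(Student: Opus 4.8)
The plan is to exploit the fact that an open subgroup $H$ of a locally compact group $G$ is clopen, so that both the measure-theoretic and the topological finiteness conditions defining a lattice descend to $\Gamma \cap H$. Throughout, fix a left Haar measure $\mu$ on $G$; since $H$ is open, its restriction $\mu_H := \mu|_H$ is a left Haar measure on $H$. First I record two easy structural facts. Discreteness of $\Gamma \cap H$ in $H$ is immediate: a neighbourhood $U$ of $1_G$ with $U \cap \Gamma = \{1_G\}$ restricts to $U \cap H$, which witnesses discreteness of $\Gamma \cap H$. Next, because $H$ is open it is also closed, so each left coset $\gamma H$ is clopen; consequently the $\Gamma$-saturated set $\Gamma H = \bigcup_{\gamma \in \Gamma} \gamma H$ is a union of clopen cosets and is itself clopen, and $\Gamma H = \bigsqcup_{[\gamma] \in \Gamma/(\Gamma \cap H)} \gamma H$ since $\gamma H = \gamma' H$ exactly when $\gamma^{-1}\gamma' \in \Gamma \cap H$.

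For the (non-uniform) lattice statement I would argue via fundamental domains. Let $D \subseteq H$ be a Borel fundamental domain for the left action of $\Gamma \cap H$ on $H$. The key claim is that $D$ is simultaneously a Borel fundamental domain for the left action of $\Gamma$ on $\Gamma H$: using $H = \bigsqcup_{\delta \in \Gamma \cap H} \delta D$ together with the coset decomposition above, one computes $\bigsqcup_{\gamma \in \Gamma} \gamma D = \bigsqcup_{[\gamma]}\gamma\big(\bigsqcup_{\delta} \delta D\big) = \bigsqcup_{[\gamma]} \gamma H = \Gamma H$, and uniqueness of representatives follows because $\gamma_1 d_1 = \gamma_2 d_2$ with $d_i \in D \subseteq H$ forces $\gamma_2^{-1}\gamma_1 \in \Gamma \cap H$ and hence $d_1 = d_2$. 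Comparing with any Borel fundamental domain $F$ for $\Gamma$ on all of $G$, and using that all fundamental domains for a fixed invariant measure have equal measure, I obtain $\mu_H(D) = \mu(D) = \mu(F \cap \Gamma H) \leq \mu(F) = \mathrm{covol}_G(\Gamma) < \infty$. Thus $\Gamma \cap H$ has finite covolume in $H$, hence is a lattice.

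For the uniform case I would pass to the quotient topology. Let $\pi : G \to \Gamma \backslash G$ be the quotient map. Since $\Gamma H$ is clopen and $\Gamma$-saturated, its image $\Omega := \pi(H) = \pi(\Gamma H)$ is clopen in $\Gamma \backslash G$; as $\Gamma \backslash G$ is compact, $\Omega$ is compact. The restriction $\pi|_H : H \to \Omega$ is continuous, open (both $\pi$ and the inclusion of the open set $H$ are open), surjective, and has fibres exactly the $(\Gamma \cap H)$-cosets, so it induces a homeomorphism $(\Gamma \cap H)\backslash H \xrightarrow{\cong} \Omega$. Therefore $(\Gamma \cap H)\backslash H$ is compact, and $\Gamma \cap H$ is a uniform lattice in $H$.

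The routine parts are the discreteness and the coset arithmetic; the step demanding the most care is the identification of $D$ as a fundamental domain for $\Gamma$ on $\Gamma H$ together with the measure comparison, since this is where one must correctly invoke invariance of $\mu$ under $\Gamma$, keep the left/right conventions straight, and use the equality of measures of distinct fundamental domains. A secondary subtlety is ensuring a Borel fundamental domain $D$ for $\Gamma \cap H$ on $H$ actually exists; I would either cite the standard existence result for discrete subgroups of second-countable locally compact groups (recall $\Isom(X)$, and hence the relevant $G$, is second countable), or bypass it entirely by instead folding a fundamental domain of $\Gamma$ on $G$ down into $H$ via Borel coset representatives for $\Gamma/(\Gamma \cap H)$.
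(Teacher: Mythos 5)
Your proof is correct. Note that the paper itself does not prove this lemma at all: it is stated with a citation to Caprace--Monod, so there is no in-paper argument to compare against, and your write-up supplies a genuine proof where the paper defers to the literature. Your route -- restricting Haar measure to the open (hence closed) subgroup $H$, promoting a fundamental domain $D$ for $\Gamma\cap H\curvearrowright H$ to one for $\Gamma\curvearrowright\Gamma H$, comparing with $F\cap\Gamma H$, and handling uniformity via the clopen image $\pi(H)\subseteq\Gamma\backslash G$ -- is sound, and it is well matched to this paper's definition of a lattice, which is phrased precisely in terms of Borel fundamental domains of finite Haar measure. The points you flag as delicate are the right ones: equality of measures of fundamental domains needs countability of $\Gamma$ (available after the paper's reduction to second countable $G$ in Remark~\ref{rem:2ndcountable}), and existence of $D$ is best handled by your ``folding'' alternative, which works cleanly because $\Gamma h\cap H=(\Gamma\cap H)h$ for $h\in H$, so translating the pieces $F\cap\gamma_i H$ back into $H$ by coset representatives $\gamma_i$ of $\Gamma/(\Gamma\cap H)$ produces a Borel fundamental domain of measure $\mu(F\cap\Gamma H)<\infty$. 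For comparison, the argument behind the cited reference is the slightly slicker dual formulation: the natural map $(\Gamma\cap H)\backslash H\to\Gamma\backslash G$ is injective (same fiber computation as yours), continuous and open, hence a homeomorphism onto an open subset, and one restricts the finite invariant measure on $\Gamma\backslash G$ to get finite covolume; your fundamental-domain version and this quotient-measure version buy the same thing, with yours avoiding any discussion of invariant measures on quotients at the cost of the countability bookkeeping.
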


    The following terminology is due to Bader--Furman--Sauer~\cite{bader2020lattice}. We generalize the definition given in \cite{bader2020lattice} from lattice embeddings to lattice  embeddings modulo finite kernel.

    \begin{defn}[{\cite[Definition 3.1]{bader2020lattice}}] \label{def:VIembeddings}
        Let $\Gamma$ and $\Gamma'$ be countable groups. Two lattice embeddings modulo finite kernels $\rho:\Gamma\rightarrow G$ and $\rho':\Gamma'\rightarrow G'$ are \emph{virtually isomorphic} if there exist:
        \begin{enumerate}
            \item open finite-index subgroups $H\leq G$ and $H'\leq G'$;
            \item compact normal subgroups $K\vartriangleleft H$ and $K'\vartriangleleft H'$;
            \item a commutative square
            \[
            \begin{tikzcd}
                \rho^{-1}(H)/\rho^{-1}(K) \arrow[hookrightarrow]{r}{}	\arrow{d}{\cong} 	& H/K\arrow{d}{\cong}\\
                \rho'^{-1}(H')/\rho'^{-1}(K') \arrow[hookrightarrow]{r}{} 		& H'/K'
            \end{tikzcd}\]
            where the horizontal arrows are lattice embeddings induced by $\rho$ and $\rho'$ and the second vertical map is a topological isomorphism.
        \end{enumerate}
        A lattice embedding modulo finite kernel is \emph{trivial} if it is virtually isomorphic to the identity map $\id_\Gamma:\Gamma\rightarrow \Gamma$. 
    \end{defn}

\begin{rem}\label{rem:triv<->c-by-d}
	It follows from the preceding definition that a lattice embedding modulo finite kernel $\rho:\Gamma\to G$ is trivial if and only if either of the following equivalent conditions are satisfied:
	\begin{enumerate}
		\item  $G$ is compact-by-discrete, i.e.\ $G$ contains a compact normal subgroup $K$ such that the quotient $G/K$ is discrete.
		\item $G$ contains a compact open normal subgroup.
	\end{enumerate}
    In Remark~\ref{rem:lattice_embedding_virt_isom}, another characterization of trivial lattice embeddings is given in terms of virtual isomorphisms of topological groups. 
\end{rem}

    We refer the reader to \cite[Lemmas 3.3--3.5]{bader2020lattice} to verify that the preceding definition makes sense and that the relation of virtual isomorphism is an equivalence relation.

The following definitions are analogues of finite generation and finite presentation in the setting of locally compact groups.
\begin{defn}[{\cite[\S 7]{cornulierdlH2016metric}}]
	A locally compact group $G$ is \emph{compactly generated} if it admits a compact generating set. A locally compact group $G$ is \emph{compactly presented} if it admits a presentation $\langle S\mid R\rangle$ such that $S$ is compact and the relations $R$ are of bounded length as words in $S$.
\end{defn}

\begin{lem}[{\cite[Proposition 5.C.3 and Corollary 8.A.5]{cornulierdlH2016metric}}]\label{lem:finitevscompact}
	Suppose $\Gamma$ is a uniform lattice modulo finite kernel in a locally compact group $G$. Then:
	\begin{enumerate}
		\item $G$ is compactly generated if and only if $\Gamma$ is finitely generated.
		\item $G$ is compactly presented if and only if $\Gamma$ is finitely presented.
	\end{enumerate}
\end{lem}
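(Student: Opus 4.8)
The plan is to reduce to the case of an honest uniform lattice and then run the Milnor--\v{S}varc argument, directly for compact generation and one dimension up for compact presentation. First I would remove the finite kernel $N=\ker\rho$: the image $\rho(\Gamma)\cong\Gamma/N$ is a genuine uniform lattice in $G$, and since $N$ is finite, $\Gamma$ is finitely generated (resp.\ finitely presented) precisely when $\Gamma/N$ is --- finite generation passes both ways through quotients by finite normal subgroups, and a group that is finite-by-(finitely presented) is finitely presented. So I identify $\Gamma$ with a uniform lattice in $G$. Uniformity provides a compact symmetric set $C\ni 1$ with $\Gamma C=G$; this is the only feature of ``uniform lattice'' I use, and since $\Gamma$ is discrete, $\Gamma\cap K$ is finite for every compact $K$.

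For part (1), one direction is immediate: if $F$ is a finite generating set of $\Gamma$, then $C\cup F$ is compact and every $g\in G$ factors as $g=\gamma c$ with $\gamma\in\Gamma$ and $c\in C$, so $g\in\langle C\cup F\rangle$ and $G$ is compactly generated. Conversely, if $G=\langle S\rangle$ with $S$ compact symmetric, set $F:=\Gamma\cap CSC$, a finite set. Given $\gamma\in\Gamma$, write $\gamma=s_1\cdots s_n$ with $s_i\in S$, form the partial products $p_i=s_1\cdots s_i$ with $p_0=1$ and $p_n=\gamma$, and choose $\gamma_i\in\Gamma$, $c_i\in C$ with $p_i=\gamma_ic_i$, $\gamma_0=1$, $\gamma_n=\gamma$. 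Then each $\gamma_{i-1}^{-1}\gamma_i=c_{i-1}s_ic_i^{-1}\in\Gamma\cap CSC=F$, so $\gamma\in\langle F\rangle$ and $\Gamma$ is finitely generated.

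For part (2) I would run the same dictionary for relations. In either direction, part (1) already ensures that $\Gamma$ is finitely generated and $G$ compactly generated, and the telescoping estimate above realizes the inclusion $\Gamma\hookrightarrow(G,d_S)$ as a quasi-isometry. The remaining content is that compact presentation is a quasi-isometry invariant among compactly generated locally compact groups, combined with the observation that for a discrete group compact presentation coincides with finite presentation; granting this, $G$ is compactly presented if and only if $\Gamma$ is. I expect this to be the main obstacle. Concretely it amounts to transporting bounded-area fillings of loops across the quasi-isometry: showing that the finitely many relators of $\Gamma$ together with the relations supported in the compact piece $C$ fill every null-homotopic word in the compact generators of $G$ with uniformly bounded total length, and conversely that a compact presentation of $G$ furnishes a bounded isoperimetric filling for $\Gamma$. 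Conceptually this is the assertion that coarse simple connectedness is a coarse-equivalence invariant and matches compact presentability on $G$ and finite presentability on $\Gamma$; the difficulty is the two-dimensional bookkeeping rather than any new idea.
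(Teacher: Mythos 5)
Your argument is correct: the reduction modulo the finite kernel, the Milnor--Schwarz telescoping argument with $F=\Gamma\cap CSC$ for compact generation, and the appeal to quasi-isometry invariance of compact presentation (equivalently, coarse simple connectedness) together with its coincidence with finite presentation for discrete groups establish the lemma. The paper gives no proof of its own but cites Cornulier--de la Harpe, whose proof proceeds by exactly this route, so your proposal matches the intended argument.
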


A compact normal subgroup is \emph{maximal} if it is not a proper subgroup of any other compact normal subgroup.  Since the product of compact normal subgroups is also a compact normal subgroup, a maximal compact subgroup, if it exists, is unique and is the union of all compact normal subgroups.

The following remark allows us to restrict our attention to locally compact groups that are second countable in many parts of this paper.
\begin{rem}\label{rem:2ndcountable}
	 Suppose $\Gamma$ is finitely generated  and $\rho:\Gamma\to G$ is a (uniform) lattice embedding modulo finite kernel. Since $G$ is compactly generated by Lemma \ref{lem:finitevscompact}, $G$ contains a compact normal subgroup $K\vartriangleleft G$ such that $G/K$ is second countable \cite[Remark 2.B.7.2]{cornulierdlH2016metric}. Then $\Gamma\xrightarrow{\rho} G\to G/K$ is also (uniform) lattice embedding modulo finite kernel. Moreover, $G$  is compact-by-discrete if and only if $G/K$ is; see \cite[Theorem 5.25]{hewittross1963abstract}. 
\end{rem}

	Although we are primarily focused with uniform lattices, results of Bader--Furman--Sauer show a large class of groups do not possess any non-uniform lattice embeddings:
\begin{prop}[{\cite[Corollary 1.6]{bader2020lattice}}]\label{prop:non_uniform_lattices}
	Let $\Gamma$ be an acylindrically hyperbolic group. If $\Gamma$ is not virtually isomorphic to a  non-uniform lattice in a  connected rank one simple real Lie group, then all lattice embeddings of $\Gamma$ are  uniform.
\end{prop}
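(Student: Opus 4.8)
The plan is to deduce this statement from the structural classification of lattice envelopes of Bader--Furman--Sauer \cite{bader2020lattice}, of which it is essentially Corollary 1.6. The strategy has two parts: first verify that an acylindrically hyperbolic $\Gamma$ meets the hypotheses of their classification, and then read off the uniform/non-uniform dichotomy from the resulting list of possible envelopes. A purely coarse-geometric argument is not available here, since a non-uniform lattice need not be quasi-isometric to its ambient group; this is precisely why one is forced into the measured setting of \cite{bader2020lattice}.

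For the first part, I would check the cohomological and algebraic conditions that place $\Gamma$ in the class to which \cite{bader2020lattice} applies. The key input is that an acylindrically hyperbolic group has non-vanishing second bounded cohomology with coefficients in the regular representation, $H^2_b(\Gamma;\ell^2\Gamma)\neq 0$, which follows from the induced-quasicocycle construction of Hull--Osin. In addition, the amenable radical of an acylindrically hyperbolic group is finite, so after quotienting by the unique maximal finite normal subgroup -- a finite-kernel operation that is harmless for virtual isomorphism -- one may assume $\Gamma$ has trivial amenable radical. These are exactly the conditions needed to invoke the envelope classification.

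For the second part, given a lattice embedding $\Gamma\to G$, the classification produces a compact normal subgroup $K\lhd G$ and identifies $G/K$, after passing to a finite-index open subgroup, as one of three types: (i) discrete; (ii) a connected rank one simple real Lie group in which $\Gamma$ sits as a lattice; or (iii) a non-discrete totally disconnected group in which $\Gamma$ is forced to be uniform. In case (i) the envelope $G$ is compact-by-discrete, so by Remark~\ref{rem:triv<->c-by-d} the embedding is trivial and in particular uniform; case (iii) yields a uniform embedding directly from the classification. Thus the \emph{only} mechanism producing a non-uniform lattice is case (ii), where $\Gamma$ is realized as a non-uniform lattice in a connected rank one simple real Lie group. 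Taking the contrapositive: if $\Gamma$ is not virtually isomorphic to such a non-uniform lattice, then the non-uniform sub-case of (ii) is excluded and every lattice embedding lands in (i), (iii), or the uniform part of (ii), hence is uniform.

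The genuinely hard content lies entirely in the Bader--Furman--Sauer classification itself, whose proof rests on integrable measure equivalence rigidity, analysis of the Furstenberg--Poisson boundary, and the interplay of bounded cohomology with the structure theory of locally compact groups; I would treat this as a black box. Within the reduction, the subtle point to get right is case (iii): one must know that acylindrical hyperbolicity is incompatible with the ``cusped'' totally disconnected and irreducible-lattice-in-a-product phenomena that would otherwise permit non-uniform totally disconnected envelopes. For instance, $\mathrm{SL}_2(\Z[1/p])$ is a non-uniform lattice in a product of a real and a $p$-adic factor, yet is \emph{not} acylindrically hyperbolic, so it never threatens the conclusion. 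This incompatibility -- that an acylindrically hyperbolic group can appear as a higher-rank or product lattice only in degenerate ways precluded by its infinite-dimensional space of quasimorphisms and finite amenable radical -- is what confines all non-uniformity to a single rank one Archimedean factor, and is the place where the hypothesis on $\Gamma$ is genuinely used.
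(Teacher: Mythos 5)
Your outline coincides with the paper's in its first step: both verify that acylindrically hyperbolic groups satisfy the Bader--Furman--Sauer hypotheses (the paper simply cites \cite[\S 2]{bader2020lattice}, you sketch the Hull--Osin input $H^2_b(\Gamma;\ell^2\Gamma)\neq 0$ and finiteness of the amenable radical), and both then appeal to their classification. The problem is your statement of what the classification delivers. \cite[Corollary 1.6]{bader2020lattice} does \emph{not} output your three types: what it says is that a non-uniform lattice embedding of $\Gamma$ is virtually isomorphic either to a non-uniform lattice in a connected, center-free, semisimple real Lie group without compact factors --- of arbitrary rank, not necessarily simple or rank one --- or to a non-uniform $S$-arithmetic lattice in a semisimple group with both real and non-Archimedean factors present. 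Both alternatives genuinely contain non-uniform lattices (e.g.\ $\SL_3(\Z)$ and $\SL_2(\Z[1/p])$), so writing case (ii) as ``a connected rank one simple real Lie group'' and case (iii) as ``a tdlc group in which $\Gamma$ is forced to be uniform'' builds the conclusion into the citation; taken literally, your second part is circular. The entire content of the proposition beyond the citation is precisely the exclusion of the higher-rank semisimple and $S$-arithmetic alternatives.

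The paper performs this exclusion with Margulis' normal subgroup theorem: the $S$-arithmetic case always has $S$-rank at least two, higher-rank irreducible lattices have every normal subgroup finite or of finite index, whereas acylindrically hyperbolic groups are SQ-universal and therefore have far too many normal subgroups --- a contradiction, leaving only rank one simple Lie groups. Your closing paragraph does correctly identify that this exclusion is where acylindrical hyperbolicity is ``genuinely used,'' and the mechanism you gesture at --- an infinite-dimensional space of homogeneous quasimorphisms on an acylindrically hyperbolic group versus their vanishing for irreducible lattices in higher-rank groups and in products (Burger--Monod) --- is a valid alternative to the paper's SQ-universality argument. But to count as a proof you would need to (a) state the classification correctly, (b) actually invoke the Burger--Monod vanishing theorem rather than allude to ``degenerate ways,'' and (c) check that the relevant invariants survive virtual isomorphism (Gromov's mapping theorem disposes of the finite kernels, and injectivity of restriction of homogeneous quasimorphisms disposes of finite index). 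As written, the key step remains a gap.
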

\begin{proof}
	Let $\Gamma$ be an acylindrically hyperbolic group that is isomorphic to a non-uniform lattice in locally compact group.  As noted in \cite[\S 2]{bader2020lattice}, acylindrically hyperbolic groups satisfy the hypotheses of \cite[Corollary 1.6]{bader2020lattice}. Therefore, $\Gamma$ is a virtually isomorphic to either
	\begin{itemize}
		\item a non-uniform lattice in a connected, center-free, semisimple real Lie group
		without compact factors;
		\item a non-uniform  $S$-arithmetic lattice in a semisimple group with both  real  and non-Archimedean   factors present \cite[\S 4.1]{bader2020lattice}.
	\end{itemize}
	In particular, $S$-arithmetic lattices in the sense of \cite{bader2020lattice}  always have $S$-rank at least two. Margulis' normal subgroup theorem  asserts that for higher-rank lattices in semisimple algebraic groups, all normal subgroups 	are finite or of finite-index \cite[Chapter VIII]{margulis}. However, since acylindrically hyperbolic groups are SQ-universal \cite{osin2016acylindrically},  $\Gamma$ cannot satisfy the conclusions of the normal subgroup theorem, since there are uncountably many countable groups.  Thus $\Gamma$ is a non-uniform lattice in a rank one simple Lie group as required.
\end{proof}

We make frequent use of the following useful lemma concerning continuous maps from totally disconnected groups to Lie groups.
\begin{lem}\label{lem:totdisc_to_lie}
	If $\rho:G\rightarrow L$ is a continuous homomorphism from a totally disconnected locally compact group to a Lie group, then  $\ker(\rho)$ is open.
\end{lem}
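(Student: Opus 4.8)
The plan is to exploit the gap between the totally disconnected world and the Lie world by combining the no-small-subgroups property of Lie groups with the Hilbert--Smith-type structure theory for totally disconnected locally compact groups. The key observation is that every Lie group $L$ contains an identity neighbourhood $U$ that contains no nontrivial subgroup of $L$ (this is the standard fact that Lie groups have no small subgroups). On the totally disconnected side, by van Dantzig's theorem the compact open subgroups form a neighbourhood basis of the identity in $G$.

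\begin{proof}[Proof sketch]
First I would invoke the no-small-subgroups property of the Lie group $L$: there is an open neighbourhood $U$ of the identity $1_L$ that contains no subgroup of $L$ other than the trivial one. Next, since $G$ is totally disconnected locally compact, van Dantzig's theorem provides a compact \emph{open} subgroup $V\leq G$; by shrinking $V$ (replacing it with a smaller compact open subgroup, which still forms a neighbourhood basis of $1_G$) and using continuity of $\rho$ at the identity, I can arrange that $\rho(V)\subseteq U$. The crucial point is that $\rho(V)$ is a subgroup of $L$ contained in $U$, hence $\rho(V)=\{1_L\}$; that is, $V\subseteq\ker(\rho)$. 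Since $V$ is open and $V\subseteq\ker(\rho)$, the kernel contains an open neighbourhood of the identity, and a subgroup containing an open neighbourhood of the identity is itself open. Therefore $\ker(\rho)$ is open, as required.
\end{proof}

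The only mild subtlety, and the step I would be most careful about, is the order of quantifiers in arranging $\rho(V)\subseteq U$: continuity at $1_G$ gives an open neighbourhood $W$ of $1_G$ with $\rho(W)\subseteq U$, and then I choose a compact open subgroup $V\leq W$ using van Dantzig; this is legitimate precisely because the compact open subgroups form a neighbourhood basis. I expect no genuine obstacle here, as the proof is a clean two-ingredient argument (no small subgroups in $L$, a neighbourhood basis of compact open subgroups in $G$); the entire content is that a subgroup of $L$ squeezed into a small enough neighbourhood must be trivial.
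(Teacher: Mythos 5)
Your proof is correct and is essentially identical to the paper's argument: both apply the no-small-subgroups property of $L$ to get $U$, use van Dantzig's theorem to find a compact open subgroup $V\subseteq\rho^{-1}(U)$, conclude $\rho(V)$ is trivial so $V\leq\ker(\rho)$, and deduce openness of the kernel since it contains the open subgroup $V$. No differences worth noting.
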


This follows from two well-known properties of Lie and totally disconnected groups:

\begin{prop}[No small subgroups]\label{prop:no_small_sbgps}
	If $G$ is a Lie group, then there exists a neighborhood $U$ of the identity such that every subgroup of $G$ contained in $U$ is trivial.
\end{prop}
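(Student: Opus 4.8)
The plan is to prove this via the exponential map $\exp \colon \mathfrak{g} \to G$, where $\mathfrak{g}$ is the Lie algebra of $G$. The essential input is that $\exp$ is a local diffeomorphism at the origin: there is a neighborhood of $0 \in \mathfrak{g}$ on which $\exp$ restricts to a diffeomorphism onto an open neighborhood of the identity in $G$. The idea is then that a nontrivial element $g = \exp(X)$ with $X$ small but nonzero cannot sit inside an arbitrarily small neighborhood together with all its powers, because the powers $g^n = \exp(nX)$ eventually ``escape'' any fixed ball. Making this precise with a factor-of-two trick yields the neighborhood $U$ we seek.

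Concretely, first I would fix any norm $\|\cdot\|$ on the finite-dimensional vector space $\mathfrak{g}$ and write $B_r = \{X \in \mathfrak{g} : \|X\| < r\}$. I would choose $r > 0$ small enough that $\exp$ restricts to a diffeomorphism of $B_r$ onto an open neighborhood $W = \exp(B_r)$ of the identity; in particular $\exp|_{B_r}$ is \emph{injective}. The claimed neighborhood is then $U = \exp(B_{r/2})$, which is open since $\exp|_{B_r}$ is a homeomorphism onto its (open) image and $B_{r/2} \subseteq B_r$ is open.

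Next I would run the escaping-powers argument. Suppose $H \leq G$ is a subgroup with $H \subseteq U$, and suppose for contradiction that $H$ contains some $g \neq 1$. Then $g = \exp(X)$ for a unique $X \in B_{r/2}$ with $X \neq 0$, so $\|X\| > 0$. Let $n$ be the smallest positive integer with $n\|X\| \geq r/2$; since $\|X\| < r/2$ we have $n \geq 2$, and by minimality $(n-1)\|X\| < r/2$, whence
\[
\|nX\| = \|X\| + (n-1)\|X\| < \tfrac{r}{2} + \tfrac{r}{2} = r,
\]
so $nX \in B_r$. Thus $g^n = \exp(X)^n = \exp(nX)$ is a legitimate element of $W$. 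But $\|nX\| \geq r/2$ means $nX \notin B_{r/2}$, so by injectivity of $\exp$ on $B_r$ we get $g^n = \exp(nX) \notin \exp(B_{r/2}) = U$. This contradicts $g^n \in H \subseteq U$. Hence $H = \{1\}$.

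The step requiring the most care is the choice of $r$: I must pick it small enough that $\exp$ is not merely a local diffeomorphism but is genuinely \emph{injective} on the whole ball $B_r$, since the contradiction rests on deducing $g^n \notin U$ from $nX \notin B_{r/2}$ while $nX$ still lies in $B_r$. This is exactly where the factor of two enters: working inside $B_{r/2}$ but only needing injectivity up to $B_r$ guarantees that the first power of $g$ to leave $B_{r/2}$ has not yet left the region of injectivity. A minor point to note is that $\exp(\mathfrak{g})$ lands in the identity component $G^0$, which is open in $G$, so all the neighborhoods constructed are genuinely open in $G$ and the argument is insensitive to whether $G$ is connected.
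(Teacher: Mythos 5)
Your proof is correct. Note that the paper does not actually prove this proposition: it is stated as a well-known property of Lie groups (the classical ``no small subgroups'' property) and used as a black box in the proof of Lemma \ref{lem:totdisc_to_lie}, so there is no argument in the paper to compare against. Your argument is the standard one: pick $r>0$ so that $\exp$ is a diffeomorphism of $B_r$ onto an open set, take $U=\exp(B_{r/2})$, and use the identity $\exp(X)^n=\exp(nX)$ together with injectivity of $\exp$ on $B_r$ to show that the first power of a nontrivial $g=\exp(X)\in U$ whose parameter leaves $B_{r/2}$ still has parameter inside $B_r$, hence lies outside $U$. The factor-of-two bookkeeping is exactly right ($n\geq 2$, $(n-1)\|X\|<r/2$ gives $\|nX\|<r$), and your closing remark that $\exp$ lands in the open identity component, so $U$ is genuinely open in $G$ even when $G$ is disconnected, correctly handles the only subtlety in applying the statement to arbitrary Lie groups as the paper does.
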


\begin{thm}[\cite{vandantzig36}]\label{thm:vanD}
	If $G$ is a totally disconnected locally compact group, then every neighborhood of the identity contains a compact open subgroup.
\end{thm}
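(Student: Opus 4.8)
The plan is to prove the two halves of the statement separately: first a purely topological fact, that a locally compact totally disconnected Hausdorff space admits a neighborhood basis of compact open sets at every point, and then a group-theoretic refinement, that any compact open neighborhood of the identity contains a compact open \emph{subgroup}. Composing these yields the theorem: given a neighborhood $N$ of the identity $e$, the first step produces a compact open neighborhood $V \subseteq N$, and the second step produces a compact open subgroup $H \subseteq V \subseteq N$. Throughout I use the paper's standing convention that topological groups, and hence $G$, are Hausdorff.

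For the topological step, fix the identity $e$ together with a neighborhood; using local compactness I would choose a compact neighborhood $K$ contained in it, and set $O = \operatorname{int}(K)$. The key input is that in a compact Hausdorff space the connected component of a point coincides with its quasi-component, the intersection of all clopen subsets containing it; this is a standard fact about compact Hausdorff spaces. Since $G$ is totally disconnected this component is the single point $\{e\}$, so the clopen subsets of $K$ separate $e$ from every other point of $K$. I would then cover the compact set $K \setminus O$ by finitely many clopen-in-$K$ sets each avoiding $e$, and intersect their complements to obtain a set $W$ that is clopen in $K$, contains $e$, and is disjoint from $K\setminus O$, hence lies in $O$. Being closed in the compact set $K$ makes $W$ compact, while being open in $K$ and contained in the $G$-open set $O$ makes $W$ open in $G$; thus $W$ is the desired compact open neighborhood of $e$ inside the given neighborhood.

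For the group-theoretic step, I start from a compact open neighborhood $V$ of $e$. Continuity of multiplication together with compactness of $V$ yields an open neighborhood $U$ of $e$ with $UV \subseteq V$: for each $v \in V$ choose open sets $e \in U_v$ and $v \in W_v$ with $U_v W_v \subseteq V$, extract a finite subcover $W_{v_1},\dots,W_{v_n}$ of $V$, and put $U = \bigcap_i U_{v_i}$. Replacing $U$ by $U \cap U^{-1} \cap V$, I may assume $U$ is symmetric and $U \subseteq V$ while retaining $UV \subseteq V$. Let $H = \bigcup_{n\geq 1} U^n$ be the subgroup generated by $U$. Then $H$ is open, being a union of open sets, and an induction using $U^{n+1} = U \cdot U^n \subseteq U \cdot V \subseteq V$ (with base case $U \subseteq V$) shows $H \subseteq V$. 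Since open subgroups are automatically closed, as their complement is a union of cosets and each coset is open, $H$ is a closed subset of the compact set $V$, hence compact. Thus $H$ is a compact open subgroup contained in $V$.

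I expect the main obstacle to be the topological step, specifically the passage from total disconnectedness to the existence of arbitrarily small clopen neighborhoods, which is exactly the content of the lemma identifying connected components with quasi-components in compact Hausdorff spaces. The group-theoretic step is a routine continuity-and-compactness argument once a single compact open neighborhood is in hand; the only points requiring care are the symmetrization of $U$ and the observation that open subgroups are automatically closed, both of which are standard.
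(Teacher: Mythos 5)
The paper does not actually prove this statement: it is quoted as a classical theorem with only the citation to van Dantzig's 1936 paper, so there is no in-paper argument to compare against. Your proposal is a correct, self-contained proof, and it is the standard one (as in Hewitt--Ross or Montgomery--Zippin): first the purely topological fact that in a locally compact, totally disconnected Hausdorff space the identity has a basis of compact open neighborhoods (via components $=$ quasi-components in the compact Hausdorff set $K$, plus a finite-cover argument to find a clopen-in-$K$ set $W$ with $e \in W \subseteq \operatorname{int}(K)$), and then the group-theoretic refinement producing a symmetric open $U$ with $U \subseteq V$ and $UV \subseteq V$, whose generated subgroup $H = \bigcup_{n \geq 1} U^n$ is open, hence closed, hence compact inside $V$. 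The only slip is cosmetic: the set $U \cap U^{-1} \cap V$ need not be symmetric, since $V$ itself need not be; intersect instead with $V \cap V^{-1}$ (equivalently, symmetrize $U \cap V$), which is still open, contains $e$, lies in $V$, and satisfies the same product estimate, so the induction $U^{n+1} \subseteq UV \subseteq V$ and the rest of your argument go through unchanged.
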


\begin{proof}[Proof of Lemma \ref{lem:totdisc_to_lie}]
	Proposition \ref{prop:no_small_sbgps} ensures there is an identity neighborhood $U\subseteq L$ such that~$U$  contains no nontrivial subgroups. By Theorem \ref{thm:vanD}, $\rho^{-1}(U)$ contains a compact open subgroup~$V$. Our choice of $U$ ensures $\rho(V)\subseteq U$ is trivial, hence $V\leq \ker(\rho)$. Since $V$ is open and $\ker(\rho)$ is a union of cosets of $V$, $\ker(\rho)$ is open.
\end{proof}

We will also make use of the following related notions. 
\begin{defn}
	Let $G$ be a group and let $H,K\leq G$ be two subgroups. 
	\begin{enumerate}
		\item $H$ and $K$ are \emph{commensurable} if $H\cap K$ has finite index in both $H$ and $K$.
		\item $H$ and $K$ are \emph{weakly commensurable} if there exists some $g\in G$ such that $H$ and $gKg^{-1}$ are commensurable.
		\item $H$ is \emph{commensurated} if for all $g\in G$, $H$ and $gHg^{-1}$ are commensurable.
	\end{enumerate}
\end{defn}
Two important and closely related sources  of commensurable and commensurated subgroups are the following:
\begin{example}
	Two compact open subgroups of a topological group are commensurable.  In particular, every compact open subgroup is commensurated.
\end{example}

\begin{example}\label{exmp:vstab_comm}
	If a group $G$ acts on a connected locally finite graph $X$ by graph automorphisms, then for all $v,w\in VX$, the vertex stabilizers $G_v$ and $G_w$ are commensurable. In particular, every $G_v$ is commensurated in $G$.
\end{example}

  \subsection{Actions of locally compact groups on metric spaces}	
\begin{defn}\label{defn:action_loc_compact}
	Let $X$ be a proper metric space and $G$ a locally compact group.   An isometric action $\rho:G\to \Isom(X)$ is:
	\begin{enumerate}
		\item \emph{continuous} if $\rho$ is continuous, or equivalently, the map $G\times X\to X$ given by $(g,x)\mapsto \rho(g)(x)$ is continuous \cite[Proposition 5.B.6]{cornulierdlH2016metric};
		\item  \emph{proper} if for every compact $K\subseteq X$, the set $\{g\in G\mid gK\cap K\neq \emptyset\}$ has compact closure in $G$;
		\item \emph{cocompact} if there exists a compact $K\subseteq X$ such that $X=\rho(G)K$;
		\item \emph{geometric} if it is proper, cocompact, and continuous;
		\item \emph{discrete} if $\rho(G)$ is a discrete subgroup of $\Isom(X)$.
	\end{enumerate}
\end{defn}

\begin{rem}
    Throughout this paper, if $\Gamma$ is a finitely generated group, we will assume that $\Gamma$ is equipped with the discrete topology unless stated otherwise.  
\end{rem}

Note that if $X$ is a proper metric space, then $\Isom(X)$ acts properly and continuously on $X$.
    We make extensive use of the following well-known lemma; see Proposition~\ref{prop:ARequivalences}.

    \begin{lem}\label{lem:geom_lattice}
        If $\rho:\G\to\Isom(X)$ is a geometric action of a discrete group on a proper metric space, then $\rho$ is a uniform lattice embedding modulo finite kernel.
    \end{lem}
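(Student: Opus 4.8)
The plan is to verify directly that $\rho$ has finite kernel and that $\rho(\Gamma)$ is a uniform lattice in $G:=\Isom(X)$, using only properness, cocompactness, and the fact noted just above that $G$ itself acts properly and continuously on the proper space $X$. Throughout I fix a basepoint $x_0\in X$ and set $K=\bar B(x_0,1)$, which is compact since $X$ is proper. The four things to establish are: $\ker\rho$ is finite; $\rho(\Gamma)$ is discrete in $G$; the quotient $\rho(\Gamma)\backslash G$ is compact; and $\rho(\Gamma)$ has a fundamental domain of finite Haar measure.

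First I would handle the finite kernel and discreteness together. Applying properness of the $\Gamma$-action to $K$, the set $F=\{\gamma\in\Gamma : \rho(\gamma)K\cap K\neq\emptyset\}$ has compact closure in $\Gamma$; since $\Gamma$ is discrete, $F$ is finite. Every $\gamma\in\ker\rho$ acts as the identity and so fixes $x_0$, giving $\ker\rho\subseteq\Stab_\Gamma(x_0)\subseteq F$, hence $\ker\rho$ is finite. For discreteness, note that if $d(\rho(\gamma)x_0,x_0)<1$ then $\rho(\gamma)x_0\in K$ and also $\rho(\gamma)x_0\in\rho(\gamma)K$, so $\rho(\gamma)K\cap K\neq\emptyset$ and $\gamma\in F$. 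Thus the basic identity neighbourhood $U_0=\{\phi\in G : d(\phi x_0,x_0)<1\}$ meets $\rho(\Gamma)$ in the finite set $\{\rho(\gamma):\gamma\in F\}$. Since $G$ is Hausdorff, I can shrink $U_0$ to a neighbourhood $U$ of $\id$ avoiding the finitely many elements of this set other than $\id$, so that $U\cap\rho(\Gamma)=\{\id\}$, proving $\rho(\Gamma)$ is discrete.

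Next I would prove cocompactness of $\rho(\Gamma)\backslash G$. Enlarging $K$ if necessary so that $x_0\in K$ and $X=\rho(\Gamma)K$ (possible by cocompactness of the action), set $C=\{g\in G : gx_0\in K\}$. This set is closed, being the preimage of the closed set $K$ under the continuous orbit map $g\mapsto gx_0$, and it is contained in $\{g\in G : gK\cap K\neq\emptyset\}$, which has compact closure by properness of the $G$-action on $X$; hence $C$ is compact. For any $g\in G$ we have $gx_0\in X=\rho(\Gamma)K$, so $gx_0=\rho(\gamma)y$ with $y\in K$, whence $\rho(\gamma)^{-1}g\in C$ and $g\in\rho(\Gamma)C$. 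Therefore $G=\rho(\Gamma)C$, and $\rho(\Gamma)\backslash G$, being the continuous image of the compact set $C$, is compact.

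Finally I would upgrade compactness of the quotient to the lattice condition. Since $G$ is second countable (as recalled above) and $\rho(\Gamma)$ is discrete, there is a Borel fundamental domain for $\rho(\Gamma)$ acting on $G$; because $G=\rho(\Gamma)C$, such a fundamental domain can be chosen inside the compact set $C$ via a Borel section of $C\to\rho(\Gamma)\backslash G$, and it therefore has finite Haar measure. Hence $\rho(\Gamma)$ is a uniform lattice and $\rho$ is a uniform lattice embedding modulo finite kernel. The only genuinely delicate point is the interplay between the two notions of properness in play — the combinatorial properness of the discrete $\Gamma$-action and the properness of the ambient $G$-action on $X$ — so I expect the main bookkeeping to be in confirming discreteness of the image and the compactness of $C$; once those are in hand the remaining steps are routine.
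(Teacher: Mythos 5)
Your proof is correct, but it is worth noting that the paper itself offers no argument for Lemma~\ref{lem:geom_lattice}: it is stated as ``well-known'' with a pointer to Proposition~\ref{prop:ARequivalences}, whose content is in turn attributed to Mosher--Sageev--Whyte. So what you have written is a self-contained verification of a fact the paper outsources to the literature, and every step of it holds up. Finiteness of the kernel and discreteness of $\rho(\Gamma)$ follow, as you say, from properness of the $\Gamma$-action applied to $K=\bar B(x_0,1)$, together with the observation that $\{\phi\in G: d(\phi x_0,x_0)<1\}$ is open in the compact-open (equivalently pointwise-convergence) topology; your set $C=\{g\in G: g x_0\in K\}$ is compact precisely because the \emph{ambient} group $\Isom(X)$ acts properly on the proper space $X$, which is the fact recorded just before the lemma, and $G=\rho(\Gamma)C$ gives cocompactness of the quotient. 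The only step where you lean on an unnamed tool is the final one: the existence of a Borel section of the continuous surjection $C\to\rho(\Gamma)\backslash G$ between compact metrizable spaces is the Federer--Morse theorem (with Lusin--Souslin guaranteeing the image is Borel); alternatively one can avoid selection theorems entirely by the standard disjointification argument, covering the compact quotient by finitely many open sets $\pi(Ug_i)$ where $U$ is a relatively compact identity neighborhood with $\rho(\Gamma)\cap UU^{-1}=\{1\}$, and taking successive differences to produce a Borel fundamental domain inside $\bigcup_i Ug_i$. Two harmless imprecisions: in the discreteness step, $U_0\cap\rho(\Gamma)$ is only \emph{contained in} $\rho(F)$ rather than equal to it, and discreteness of the subgroup from isolation of the identity uses that left translations are homeomorphisms; neither affects the argument. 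Your separation of the two properness hypotheses (of the $\Gamma$-action and of the $\Isom(X)$-action) is exactly the right bookkeeping, and this direct route is arguably preferable to the paper's citation, since it makes clear that nothing beyond properness, cocompactness, and second countability of $\Isom(X)$ is used.
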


The following equivalences are essential in the theory of totally disconnected groups from a geometric viewpoint; see \cite{kronmoller08roughcayley}.

\begin{lem}[{\cite{abels1973/74speckerkompaktifizierungen}, see also \cite{kronmoller08roughcayley} and \cite[Proposition 2.14]{margolis2024model}}]\label{lem:vanDantzig}
	Let $G$ be a compactly generated locally compact group. The following are equivalent:
	\begin{enumerate}
		\item \label{item:vanDan1} $G$ is compact-by-(totally disconnected);
		\item \label{item:vanDan2} $G$ contains a compact open subgroup;
		\item \label{item:vanDan3} there exists a connected locally finite vertex-transitive graph $X$ and a   geometric action $G\curvearrowright X$.
	\end{enumerate}
\end{lem}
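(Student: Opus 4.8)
The plan is to establish the cycle of implications $(\ref{item:vanDan2})\Rightarrow(\ref{item:vanDan1})\Rightarrow(\ref{item:vanDan2})\Rightarrow(\ref{item:vanDan3})\Rightarrow(\ref{item:vanDan2})$, noting at the outset that the equivalence of (\ref{item:vanDan1}) and (\ref{item:vanDan2}) does not use compact generation; that hypothesis only enters in producing a connected, locally finite graph. For $(\ref{item:vanDan2})\Rightarrow(\ref{item:vanDan1})$ I would take a compact open subgroup $U\leq G$. As $U$ is an open subgroup its complement is a union of open cosets, so $U$ is closed; hence $G^0\cap U$ is a nonempty clopen subset of the connected space $G^0$, forcing $G^0\subseteq U$. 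Thus $G^0$ is a closed subset of the compact set $U$, hence compact, and since $G^0$ is always a closed normal subgroup with $G/G^0$ totally disconnected, this exhibits $G$ as compact-by-(totally disconnected). Conversely, for $(\ref{item:vanDan1})\Rightarrow(\ref{item:vanDan2})$, write $\pi\colon G\to G/K$ with $K$ compact normal and $G/K$ totally disconnected; van Dantzig's theorem (Theorem \ref{thm:vanD}) supplies a compact open subgroup $\bar U\leq G/K$, and then $U=\pi^{-1}(\bar U)$ is open and fits in an extension $1\to K\to U\to\bar U\to 1$ of compact groups, hence is itself compact.

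The substantive step is $(\ref{item:vanDan2})\Rightarrow(\ref{item:vanDan3})$, the Cayley--Abels construction. Given a compact open subgroup $U$, I would let $X$ have vertex set the discrete coset space $G/U$, on which $G$ acts transitively by left multiplication. Choosing a symmetric compact generating set $S$ with $U\subseteq S$ (available by compact generation), compactness of $S$ together with openness of $U$ implies that $S$ meets only finitely many left cosets of $U$; declaring $gU$ and $g'U$ adjacent when $g^{-1}g'\in USU$ then yields a graph that is locally finite (the degree is the cardinality of the finite double-coset set $U\backslash USU/U$), connected since $S$ generates $G$, and on which $G$ acts vertex-transitively by automorphisms. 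The stabilizer of the base coset is $U$, which is compact, so every vertex stabilizer is compact; since the set of group elements carrying one finite vertex set into another is a finite union of cosets of such stabilizers, the action is proper, and it is continuous because $G$ acts on a discrete space through the open subgroup $U$. Hence the action is geometric.

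For $(\ref{item:vanDan3})\Rightarrow(\ref{item:vanDan2})$ I would fix a vertex $v\in X$ and argue that its stabilizer $G_v$ is a compact open subgroup. Openness follows because an isometry of $X$ sends vertices to vertices, so distances between images of vertices are integers; thus $G_v=\{g: d(gv,v)<\tfrac12\}$, which is the preimage of an open set under the continuous action map $G\times X\to X$ and is therefore open. Compactness follows from properness applied to the compact set $K=\{v\}$: the set $\{g: gK\cap K\neq\emptyset\}=G_v$ has compact closure, and since $G_v$ is also closed (a stabilizer), it is compact.

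I expect the main obstacle to be the careful bookkeeping in $(\ref{item:vanDan2})\Rightarrow(\ref{item:vanDan3})$, namely verifying simultaneously that the constructed graph is connected, locally finite, and carries a \emph{proper} action: local finiteness rests on finiteness of $U\backslash USU/U$, which in turn uses that the compact set $S$ is covered by finitely many open cosets of $U$, while properness rests on compactness of the vertex stabilizers. The remaining three implications are short once van Dantzig's theorem and the continuity/properness formalism of Definition \ref{defn:action_loc_compact} are in hand.
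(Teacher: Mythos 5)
Your proof is correct, and it is the standard van Dantzig/Cayley--Abels argument: the paper supplies no proof of Lemma \ref{lem:vanDantzig} at all, deferring to \cite{abels1973/74speckerkompaktifizierungen} and \cite{kronmoller08roughcayley}, which proceed exactly as you do (van Dantzig plus the coset-graph construction for (\ref{item:vanDan2})$\Rightarrow$(\ref{item:vanDan3}), and stabilizer openness/compactness for (\ref{item:vanDan3})$\Rightarrow$(\ref{item:vanDan2})). The only slip is cosmetic: in the Cayley--Abels step the vertex degree is the number of left cosets of $U$ contained in $USU$, i.e.\ $|USU/U|$, not the double-coset count $|U\backslash USU/U|$; finiteness still holds because $USU$ is compact and is a disjoint union of open left $U$-cosets (equivalently, each $Us_iU$ is compact open, hence a finite union of left cosets), so your local-finiteness conclusion stands.
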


We now briefly discuss some elementary facts about compact subsets of locally compact groups that act on metric spaces. %
\begin{prop}\label{prop:compact<->bounded}
	Let $G$ be a locally compact group acting continuously and  properly on a proper metric space $X$. A subset $L\subseteq G$ has compact closure if and only if $L$ has bounded orbits in $X$.
\end{prop}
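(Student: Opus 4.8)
The plan is to prove both implications directly from the definitions: continuity of the orbit map handles one direction, and the definition of a proper action handles the other. Before starting, I would record a preliminary observation that makes the statement well-posed, namely that boundedness of an orbit $Lx$ is independent of the basepoint. Since $G$ acts by isometries, for any $x,x'\in X$ and $g\in L$,
\[
d(gx',x')\le d(gx',gx)+d(gx,x)+d(x,x')=2d(x,x')+d(gx,x),
\]
so $Lx$ bounded forces $Lx'$ bounded. Thus the phrase ``$L$ has bounded orbits'' may be read as ``$Lx$ is bounded for some, equivalently every, $x\in X$.''

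For the forward implication, I would assume $\overline L$ is compact and fix any $x\in X$. Continuity of the action makes the orbit map $\phi_x\colon G\to X$, $g\mapsto gx$, continuous (it is the composite of $g\mapsto(g,x)$ with the action map $G\times X\to X$). Hence $\phi_x(\overline L)$ is compact, and therefore bounded in $X$; since $Lx\subseteq\phi_x(\overline L)$, the orbit $Lx$ is bounded.

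For the reverse implication, I would assume $Lx\subseteq\overline B(x,R)=:K$ for some $R\ge 0$. Properness of $X$ gives that the closed ball $K$ is compact, and then the definition of a proper action says the return set $S=\{g\in G\mid gK\cap K\neq\emptyset\}$ has compact closure. The key point is that $L\subseteq S$: for $g\in L$ we have $x\in K$ and $gx\in Lx\subseteq K$, so $gx\in gK\cap K$. Hence $\overline L\subseteq\overline S$, and because $G$ is Hausdorff, the closed subset $\overline L$ of the compact set $\overline S$ is itself compact, so $L$ has compact closure.

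The whole argument is a direct unwinding of definitions, so I do not expect a genuine obstacle. The steps requiring the most care are verifying the basepoint-independence that makes the hypothesis well-posed, and correctly applying the definition of a proper action to the compact ball $K$ so as to trap $L$ inside the return set $S$. I would also flag the standing convention that topological groups are Hausdorff, which is exactly what guarantees that a closed subset of a compact set is compact.
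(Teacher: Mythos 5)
Your proof is correct and follows essentially the same route as the paper: continuity makes the image of $\overline L$ under the orbit map compact (hence bounded), and properness of the action applied to a compact set trapping the orbit gives the converse. Your choice of $K=\overline B(x,R)$ is in fact marginally cleaner than the paper's $K=\overline{L\cdot x}$, since it visibly contains the basepoint $x$, which is exactly what makes the inclusion $L\subseteq\{g\in G\mid gK\cap K\neq\emptyset\}$ immediate.
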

\begin{proof}
	Since the action of $G$ on $X$ is continuous, if $L\subseteq G$ has  compact closure, then the orbit $\overline{L}\cdot x$ is also compact, hence bounded.
	Conversely, suppose an orbit $L\cdot x$ is bounded, hence $K=\overline{L\cdot x}$ is compact. Then $L\subseteq \{g\in G\mid gK\cap K\neq \emptyset\}$ has compact closure as the action is proper.
\end{proof}

In  order to discuss additional some properties concerning compact normal subgroups, we require the following definitions:
\begin{defn}
	A isometry $\phi$ of a metric space $X$ is \emph{bounded} if $\sup_{x\in X} d(x,\phi(x))<\infty$.
\end{defn}

The following can be thought of as a coarse analogue of a space having no nontrivial bounded isometries:

  \begin{defn}\label{defn:tame}
	A metric space $X$ is \emph{tame} if for every $K\geq 1$ and $A\geq 0$, there is a constant $C = C(K,A)$ such that if $f,g:X\to X$ are $(K,A)$-quasi-isometries such that $\sup_{x\in X}d(f(x),g(x))<\infty$, then  $\sup_{x\in X}d(f(x),g(x))\leq C$.   \end{defn}

A finitely generated group is \emph{tame} if it is tame when equipped with the word metric.  

\begin{example}\label{exmp:tame_morse}
	A proper geodesic metric space whose Morse boundary contains at least three points is tame~\cite[Proposition 4.21]{margolis2022discretisable}. In particular, an acylindrically hyperbolic group is tame. 
\end{example}

\begin{example}[Not tame]
    The metric space $\R$ is not tame because the isometries $x \mapsto x+n$ are each bounded distance from the identity, but their distance in the sup metric from the identity is unbounded over $\{n \in \N\}$. More generally, any finitely generated group with infinite center is not tame by a similar argument using left translations by elements in the center. 
\end{example}

We use these definitions to describe maximal compact normal subgroups of certain locally compact groups.

\begin{prop}\label{prop:compactnormal.vs.bounded}
	Let $X$ be a proper metric space, $G$ a locally compact group and $\rho:G\to \Isom(X)$ a geometric action. 
	 If $X$ is tame, then	 \[B=\{g\in G\mid \rho(g) \text{ is a bounded isometry}\}\] is the maximal compact normal subgroup of $G$. In particular, if $X$ has no nontrivial bounded isometries, then $\ker(\rho)$ is  the maximal compact normal subgroup of $G$. 
\end{prop}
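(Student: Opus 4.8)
The plan is to verify four things in turn: that $B$ is a subgroup, that it is normal, that it is compact, and that it contains every compact normal subgroup of $G$; the first two are formal and the last two are where the hypotheses do work. For the subgroup property, if $\rho(g)$ and $\rho(h)$ are bounded, the triangle inequality together with the fact that $\rho(g)$ is an isometry gives $d(x,\rho(gh)x)\le d(x,\rho(g)x)+d(x,\rho(h)x)$, so $gh\in B$; and writing $y=\rho(g^{-1})x$ shows $d(x,\rho(g^{-1})x)=d(\rho(g)y,y)$ is bounded, so $g^{-1}\in B$. For normality, given $g\in B$ and $h\in G$, setting $y=\rho(h^{-1})x$ gives $d(x,\rho(hgh^{-1})x)=d(y,\rho(g)y)\le\sup_z d(z,\rho(g)z)<\infty$.

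The crux is compactness, and this is precisely where tameness enters. The subtlety is that although each $\rho(g)$ with $g\in B$ has finite displacement, these bounds could a priori blow up as $g$ ranges over $B$. To rule this out I would observe that every $\rho(g)$ is a surjective isometry, hence a $(1,0)$-quasi-isometry, as is the identity; since $g\in B$ means $\sup_x d(x,\rho(g)x)<\infty$, tameness applied with $K=1$, $A=0$ furnishes a single constant $C=C(1,0)$ with $\sup_x d(x,\rho(g)x)\le C$ for all $g\in B$. Thus $B$ has orbits of diameter at most $2C$, so by Proposition \ref{prop:compact<->bounded} the closure $\overline B$ is compact. Finally $B$ is closed: if $g_i\to g$ with $g_i\in B$, continuity of the action gives $\rho(g_i)x\to\rho(g)x$ pointwise, whence $d(x,\rho(g)x)\le C$ and $g\in B$. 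Hence $B=\overline B$ is a compact normal subgroup.

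For maximality I would take an arbitrary compact normal subgroup $K\vartriangleleft G$ and show $K\subseteq B$, i.e. that each $\rho(k)$ has finite displacement; here cocompactness replaces tameness. Fix a compact $D\subseteq X$ with $X=\rho(G)D$. Given $x\in X$, write $x=\rho(g)y$ with $y\in D$; normality gives $g^{-1}kg\in K$, and since $\rho(g)$ is an isometry, $d(x,\rho(k)x)=d(y,\rho(g^{-1}kg)y)$. The function $(k',y)\mapsto d(y,\rho(k')y)$ is continuous on the compact set $K\times D$, hence bounded by some $R$, so $d(x,\rho(k)x)\le R$ for every $x$ and every $k\in K$, giving $K\subseteq B$. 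Together with the observation made before the statement that a maximal compact normal subgroup is the union of all compact normal subgroups, this identifies $B$ as the maximal compact normal subgroup.

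The ``in particular'' clause is then immediate: if $X$ admits no nontrivial bounded isometry, then $\rho(g)$ is bounded exactly when $\rho(g)=\id_X$, i.e. when $g\in\ker\rho$, so $B=\ker\rho$. I expect the uniform displacement estimate via tameness to be the only genuine obstacle; the normality, closedness, and maximality steps are routine consequences of properness, continuity, and cocompactness.
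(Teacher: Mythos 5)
Your proposal is correct. The first half — normality, the use of tameness with $K=1$, $A=0$ to extract a single displacement bound $C$ for all of $B$, compact closure via Proposition \ref{prop:compact<->bounded}, and closedness from pointwise convergence — is essentially the paper's own argument, just with the subgroup and closedness verifications written out in more detail. Where you genuinely diverge is the maximality step. The paper invokes the Milnor--Schwarz lemma for locally compact groups to quasi-conjugate the action of $G$ on $X$ to its action on a Cayley graph of $G/K$; since $K$ (being normal) acts trivially on $G/K$, its elements must act on $X$ as bounded isometries. You instead argue directly: write each $x\in X$ as $\rho(g)y$ with $y$ in a compact fundamental set $D$, use normality to rewrite $d(x,\rho(k)x)=d\bigl(y,\rho(g^{-1}kg)y\bigr)$, and bound this uniformly by continuity of the displacement function on the compact set $K\times D$. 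Your route is more elementary and self-contained — it uses only the defining properties of a geometric action and avoids any quasi-isometry machinery for this step — while the paper's is a one-line deduction given the cited theorem and reflects the quasi-conjugacy viewpoint used repeatedly elsewhere (e.g.\ in Lemma \ref{lem:cpct_normal_qaction}). Both are complete proofs of the statement.
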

\begin{proof}
	 It follows from the definition that $B$ is normal. The tameness condition ensures there is a constant $C$ such that for any two bounded isometries $\phi,\psi$ of $X$, $\sup_{x\in X}d(\phi(x),\psi(x))\leq C$. Thus $B$ has bounded orbits in $X$, hence has  compact closure by Proposition \ref{prop:compact<->bounded}. The tameness condition ensures the limit of a convergent sequence of bounded isometries is also bounded, hence $B$ is closed. Thus $B$ is a compact normal subgroup.

	 We now show that any compact normal subgroup $K\lhd G$ is contained in $B$, which shows $B$ is maximal. The Milnor--Schwarz lemma for locally compact groups \cite[Theorem 4.C.5]{cornulierdlH2016metric} ensures that the action of $G$ on the Cayley graph of $G/K$ is quasi-conjugate to the action of $G$ on $X$. Thus every element of $K$ must be a bounded isometry of $X$, hence $K\leq B$.
\end{proof}

We will make frequent  use of the following lemma to deduce  continuity of a group action.  

\begin{lem}\label{lem:continuous_quasiconj}
	Let $X$ and $Y$ be proper metric spaces, and assume $Y$ is tame with cocompact isometry group.   Suppose  $H\leq \Isom(X)$ is  a closed subgroup and $\rho:H\rightarrow \Isom(Y)$ is an isometric action. Suppose also $f:X\rightarrow Y$ is a coarsely surjective, coarse Lipschitz map such that $\sup_{x\in X,h\in H}d(\rho(h)f(x),f(hx))<\infty.$
	Then the composition \[H\xrightarrow{\rho} \Isom(Y)\xrightarrow{q} \Isom(Y)/K\] is continuous, where $K\vartriangleleft \Isom(Y)$ is the maximal compact normal subgroup of $\Isom(Y)$ and $q$ is the quotient map. 
\end{lem}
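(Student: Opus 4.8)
The plan is to establish continuity at the identity and then transfer it everywhere using that $q\circ\rho$ is a homomorphism of topological groups. Since $X$ and $Y$ are proper, both $H$ (a subgroup of $\Isom(X)$) and $\Isom(Y)/K$ are second countable, locally compact and Hausdorff, hence metrizable, so it is enough to show that $h_n\to 1_H$ implies $q(\rho(h_n))\to 1$ in $\Isom(Y)/K$. First I would pin down $K$: as $Y$ is proper the action $\Isom(Y)\curvearrowright Y$ is proper and continuous, and by hypothesis it is cocompact, hence geometric; since $Y$ is tame, Proposition~\ref{prop:compactnormal.vs.bounded} identifies $K$ with the subgroup of \emph{bounded} isometries of $Y$. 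Throughout I set $D:=\sup_{x\in X,\,h\in H}d(\rho(h)f(x),f(hx))<\infty$ and take $f$ to be $(\lambda,\epsilon)$-coarse Lipschitz and $R$-coarsely surjective.

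Fix a basepoint $x_0\in X$ and write $y_0=f(x_0)$. If $h_n\to 1_H$ then, as convergence in $\Isom(X)$ is pointwise, $d(h_n x_0,x_0)\to 0$, and for all large $n$
\[
 d(\rho(h_n)y_0,y_0)\le d(\rho(h_n)f(x_0),f(h_n x_0))+d(f(h_n x_0),f(x_0))\le D+\lambda\,d(h_n x_0,x_0)+\epsilon
\]
is uniformly bounded. Thus a tail $\{\rho(h_n):n\ge N\}$ has a bounded orbit on $Y$, so by Proposition~\ref{prop:compact<->bounded} this tail is relatively compact in $\Isom(Y)$.

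To conclude, I would argue by subsequences, which suffices because $\Isom(Y)/K$ is metrizable: it is enough to show that every subsequence of $(q(\rho(h_n)))_n$ has a further subsequence converging to $1$. Relative compactness lets me extract $\rho(h_{n_k})\to g$ in $\Isom(Y)$ (pointwise). The crux is then to verify $g\in K$, i.e.\ that $g$ is a bounded isometry. For each $x\in X$,
\[
 d(gf(x),f(x))\le d(gf(x),\rho(h_{n_k})f(x))+d(\rho(h_{n_k})f(x),f(h_{n_k}x))+d(f(h_{n_k}x),f(x)),
\]
where the first term tends to $0$ by pointwise convergence, the middle term is at most $D$, and the last is at most $\lambda\,d(h_{n_k}x,x)+\epsilon$, which tends to $\epsilon$; hence $\sup_{x}d(gf(x),f(x))\le D+\epsilon$. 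Coarse surjectivity then upgrades this to $\sup_{y}d(gy,y)\le D+\epsilon+2R<\infty$, so $g$ is bounded and $g\in K$. Therefore $q(\rho(h_{n_k}))\to q(g)=1$, which completes the subsequence argument and the proof.

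The step I expect to be the main obstacle is the final identification $g\in K$: it is where the quasi-conjugacy constant $D$, the coarse Lipschitz and coarse surjectivity properties of $f$, and the tameness of $Y$ (via Proposition~\ref{prop:compactnormal.vs.bounded}) must all be combined to turn a merely pointwise limit into a genuinely bounded isometry. The extraction of the convergent subsequence, resting on Proposition~\ref{prop:compact<->bounded}, is the other load-bearing ingredient.
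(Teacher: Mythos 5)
Your proof is correct and follows essentially the same route as the paper's: both identify $K$ with the bounded isometries via Proposition~\ref{prop:compactnormal.vs.bounded}, use the quasi-conjugacy and coarse Lipschitz estimates to get uniform displacement bounds, extract a pointwise-convergent subsequence by compactness, and show any subsequential limit is a bounded isometry, hence lies in $K$. The only cosmetic differences are that the paper invokes Arzel\'a--Ascoli where you use Proposition~\ref{prop:compact<->bounded}, and it applies coarse surjectivity before passing to the limit rather than after.
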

\begin{proof}
	By  Proposition \ref{prop:compactnormal.vs.bounded},  the subgroup $K$ of $\Isom(Y)$ consisting of bounded isometries of $Y$ is the maximal compact normal subgroup of $\Isom(Y)$.	
	Suppose  $(\phi_i)$ is a sequence that converges to the identity in $H$. For each $x\in X$, there exists an $n_x$ such that $d(\phi_i(x),x)\leq 1$ for all $i\geq n_x$. We choose $L$ and $A$ such that  $f$ is $(L,A)$-coarse Lipschitz with $N_A(f(X))=Y$ and $\sup_{x\in X,h\in H}d(\rho(h)f(x),f(hx))\leq A$. For each $y\in Y$, there exists $x_y\in X$ with $d(f(x_y),y)\leq A$. Hence for all $i\geq n_{x_y}$, we have 
	\begin{align*}
		d(\rho(\phi_i)(y),y)&\leq d(\rho(\phi_i)(f(x_y)),f(x_y))+2A\leq d(f(\phi_i (x_y)),f(x_y))+3A\\
		&\leq Ld(\phi_i( x_y),x_y)+4A\leq L+4A\eqqcolon A'.
	\end{align*}
    Thus, $\{\rho(\phi_i)\}$ has bounded orbits in $Y$ and, by    
    Proposition~\ref{prop:compact<->bounded}, the closure of $\{\rho(\phi_i)\}$ in $\Isom(Y)$ is compact. Therefore, every subsequence of $(\rho(\phi_i))$ has a convergent subsequence.      
    
    We wish to show $(\rho (\phi_i)K)$ converges to $K$ in $\Isom(Y)/K$. If this is not the case, then there exists  a convergent subsequence of $(\rho (\phi_i))$ that converges to some $\lambda\in \Isom(Y)\setminus K$. However, we know for each $y\in Y$, there exists an $N_y$ sufficiently large such that  $d(\rho(\phi_i)(y),y)\leq A'$ for all $i\geq N_y$, which implies $d(\lambda(y),y)\leq A'$ for all $y\in Y$.  This contradicts  our assumption $\lambda\notin K$.
\end{proof}
In the case where $Y$ has no nontrivial bounded isometries, Proposition \ref{prop:compactnormal.vs.bounded} ensures that  the maximal compact normal subgroup of $Y$ is trivial. 
We therefore deduce the following important special case of Lemma \ref{lem:continuous_quasiconj}.
\begin{cor}\label{cor:cts_quasiaction}
	Let $X$, $Y$, $H$, $f$ and $\rho$ be as in Lemma \ref{lem:continuous_quasiconj}. In addition, assume that $Y$ has no nontrivial bounded isometries. 
	Then  $\rho$ is   continuous. 
\end{cor}

\subsection{Action rigidity} \label{subsec:prelim_action}
In this section we give  equivalent characterizations of action rigidity in terms of group actions on metric spaces and in terms of lattice embeddings modulo finite kernels into locally compact groups.

A metric space $X$ is \emph{quasi-geodesic} if it is quasi-isometric to a geodesic metric space. This property is called \emph{large-scale geodesic} in \cite{cornulierdlH2016metric}; see Definition 3.B.1 and Lemma 3.B.6 of \cite{cornulierdlH2016metric} for more details and equivalent formulations. 
For instance, a nontrivial finitely generated group equipped with a word metric is not a geodesic metric space but is  a quasi-geodesic space, since it is quasi-isometric to the corresponding Cayley graph.

\begin{prop}[{\cite[Corollary 7]{moshersageevwhyte_trees}}]\label{prop:ARequivalences}
	Let $\Gamma_1$ and $\Gamma_2$ be finitely generated groups. The following are equivalent:
	\begin{enumerate}
		\item \label{item:ARequiv1} 
		$\Gamma_1$ and $\Gamma_2$ are uniform lattices modulo finite kernels in the same locally compact group;
		\item \label{item:ARequiv2} $\Gamma_1$ and $\Gamma_2$ act geometrically on the same proper quasi-geodesic metric space;
		\item \label{item:ARequiv3} $\Gamma_1$ and $\Gamma_2$ act geometrically on the same proper metric space.
	\end{enumerate}
\end{prop}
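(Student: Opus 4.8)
The plan is to prove the cycle (3) $\Rightarrow$ (1) $\Rightarrow$ (2) $\Rightarrow$ (3), of which only (1) $\Rightarrow$ (2) carries real content. The implication (2) $\Rightarrow$ (3) is immediate, since a proper quasi-geodesic metric space is in particular a proper metric space, so the very same geometric action witnessing (2) witnesses (3). For (3) $\Rightarrow$ (1), suppose $\Gamma_1$ and $\Gamma_2$ act geometrically on a common proper metric space $X$. As $X$ is proper, $\Isom(X)$ is locally compact, and each geometric action $\Gamma_i\to\Isom(X)$ is a uniform lattice embedding modulo finite kernel by Lemma \ref{lem:geom_lattice}; taking $G=\Isom(X)$ realizes $\Gamma_1$ and $\Gamma_2$ as uniform lattices modulo finite kernels in a common locally compact group, which is exactly (1).

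The main work is (1) $\Rightarrow$ (2), and here I would proceed as follows. Let $\rho_i:\Gamma_i\to G$ be uniform lattice embeddings modulo finite kernels into a common locally compact group $G$. Since $\Gamma_1$ is finitely generated, $G$ is compactly generated by Lemma \ref{lem:finitevscompact}, and hence carries a continuous, proper, left-invariant, quasi-geodesic (large-scale geodesic) metric $d$, well-defined up to quasi-isometry \cite{cornulierdlH2016metric}; concretely one may take the word metric associated to a compact generating set. I claim $(G,d)$ is a common model geometry. Left translation makes $G$ act on $(G,d)$ isometrically (by left-invariance), transitively (hence cocompactly), continuously, and properly: for compact---equivalently bounded---$K\subseteq G$ one has $\{g : gK\cap K\neq\emptyset\}\subseteq KK^{-1}$, which is compact. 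Restricting this action along $\rho_i$ yields a geometric action of $\Gamma_i$ on $(G,d)$. Indeed it remains isometric and, since $\Gamma_i$ is discrete, continuous; it is cocompact because $\rho_i(\Gamma_i)$ is a uniform lattice, so $\rho_i(\Gamma_i)K_0=G$ for some compact $K_0$; and it is proper because $\{\gamma:\rho_i(\gamma)K\cap K\neq\emptyset\}=\rho_i^{-1}\big(\rho_i(\Gamma_i)\cap KK^{-1}\big)$ is finite, the subgroup $\rho_i(\Gamma_i)$ being discrete and $\ker\rho_i$ finite. Thus both $\Gamma_1$ and $\Gamma_2$ act geometrically on $(G,d)$, giving (2).

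The main obstacle is precisely the construction in (1) $\Rightarrow$ (2) of a metric on $G$ that is simultaneously proper and quasi-geodesic: this is where compact generation (Lemma \ref{lem:finitevscompact}) is essential, and it is the feature distinguishing (2) from (3), since an arbitrary proper metric space realizing (3) need not be quasi-geodesic. Once this metric is in hand, the remaining verifications are routine, the key points being that discreteness of $\rho_i(\Gamma_i)$ delivers properness and that uniformity of the lattice delivers cocompactness.
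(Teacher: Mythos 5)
Your implications (2)$\Rightarrow$(3) and (3)$\Rightarrow$(1) are exactly the paper's: the paper also gets (3)$\Rightarrow$(1) from Lemma~\ref{lem:geom_lattice} applied to $G=\Isom(X)$. The difference is (1)$\Rightarrow$(2), which the paper disposes of by citing \cite[Corollary 7]{moshersageevwhyte_trees}, while you attempt a direct construction; that construction has a genuine gap. The word metric $d$ on $G$ with respect to a compact generating set $S$ does \emph{not} make $G$ a proper quasi-geodesic metric space in the sense required by (2). Since $d$ is integer-valued, the topology it induces on $G$ is discrete; in particular $d$ is not continuous with respect to the group topology (for $h\neq g$ with $h\in gS$ one has $d(g,h)=1$, which does not tend to $0$ as $h\to g$), and a closed $d$-ball, namely a set of the form $(S\cup S^{-1}\cup\{1\})^n g$, is an infinite set carrying the discrete metric topology, hence is non-compact whenever $G$ is non-discrete. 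So $(G,d)$ is not a proper metric space. You are conflating two different notions of properness: properness of the \emph{metric space} (closed balls compact in the metric topology), which is what statement (2) and Definition~\ref{defn:action_loc_compact} require, versus properness of the \emph{metric} in the adapted sense of \cite{cornulierdlH2016metric} (bounded sets have compact closure in the group topology), which is what the word metric satisfies — this is precisely property (P1) in the paper's remark on adapted metrics, and the paper is careful to keep the two apart. The same conflation breaks your verification of the $\Gamma_i$-actions being geometric: compact subsets of $(G,d)$ are finite sets, so cocompactness in the metric-space sense would require $G=\rho_i(\Gamma_i)F$ for a finite set $F$, which fails for any non-discrete $G$.

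The gap can be closed, but it needs real additional input rather than a routine verification. First reduce to the case that $G$ is second countable via Remark~\ref{rem:2ndcountable} (the composite $\Gamma_i\to G\to G/K$ is still a uniform lattice embedding modulo finite kernel, since $\rho_i(\Gamma_i)\cap K$ is discrete and compact, hence finite). Then invoke Struble's theorem to obtain a left-invariant, proper, \emph{compatible} metric $d_0$ on $G/K$, and interpolate it with the word metric — for instance, for a compact symmetric generating set $S$ containing the $d_0$-unit ball, set $d(g,h)=\inf\bigl\{\sum_{i=1}^n d_0(1,s_i) : g^{-1}h=s_1\cdots s_n,\ s_i\in S\bigr\}$ — to get a left-invariant metric that is simultaneously compatible with the group topology, proper, and large-scale geodesic. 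With such a metric, $(G/K,d)$ is a genuine proper quasi-geodesic metric space and your remaining checks (isometric, cocompact, proper, continuous restrictions to $\Gamma_1$ and $\Gamma_2$) do go through. This reduction-plus-metric-construction is essentially the content of \cite[Corollary 7]{moshersageevwhyte_trees} and of the corresponding results in \cite{cornulierdlH2016metric}, which is why the paper simply cites it rather than reproving it.
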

\begin{proof}
	The equivalence of (\ref{item:ARequiv1}) and (\ref{item:ARequiv2}) was shown in \cite[Corollary 7]{moshersageevwhyte_trees}, (\ref{item:ARequiv2})$\implies$(\ref{item:ARequiv3}) is obvious, and (\ref{item:ARequiv3})$\implies$(\ref{item:ARequiv1}) follows from Lemma \ref{lem:geom_lattice}.
\end{proof}

\begin{defn}
    Groups $G_1$ and $G_2$ are {\it abstractly commensurable} if they have isomorphic finite-index subgroups. 
\end{defn}

\begin{defn} \label{defn:AC_VI}
  Topological groups $G_1$ and $G_2$ are \emph{virtually isomorphic} if there exist finite-index open subgroups $H_1 \leq G_1$ and $H_2 \leq G_2$ and compact normal subgroups $K_1 \triangleleft H_1$ and $K_2 \triangleleft H_2$ so that $H_1/K_1$ and $H_2/K_2$ are topologically isomorphic. 
\end{defn}

\begin{rem}\label{rem:lattice_embedding_virt_isom}
    It can be readily verified that a lattice embedding modulo finite kernel $\rho:\Gamma\to G$ is trivial if and only if $G$ and $\Gamma$ are virtually isomorphic.
\end{rem}

Two discrete groups that are abstractly commensurable are also virtually isomorphic. Although the converse does not hold in general, there is the following partial converse:

\begin{lem}\label{lem:tfree}
	Suppose $\Gamma_1$ and $\Gamma_2$ are virtually torsion-free discrete groups. Then $\Gamma_1$ and $\Gamma_2$  are abstractly commensurable if and only if they are virtually isomorphic.
\end{lem}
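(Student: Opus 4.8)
The plan is to prove the two implications separately. The forward implication (abstractly commensurable $\Rightarrow$ virtually isomorphic) is immediate and uses no torsion-freeness; all the content lies in the converse, where the virtually torsion-free hypothesis on \emph{both} groups is genuinely needed. For the forward direction I would take isomorphic finite-index subgroups $H_1\le\Gamma_1$ and $H_2\le\Gamma_2$ and set $K_1=K_2=\{1\}$. Since $\Gamma_1,\Gamma_2$ carry the discrete topology, every subgroup is open and the trivial subgroup is compact, so this data is admissible in Definition~\ref{defn:AC_VI}, and the identifications $H_1/K_1\cong H_1\cong H_2\cong H_2/K_2$ exhibit the virtual isomorphism.

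For the converse, suppose $\Gamma_1$ and $\Gamma_2$ are virtually isomorphic, giving finite-index subgroups $H_i\le\Gamma_i$, compact---hence finite---normal subgroups $K_i\lhd H_i$, and an abstract isomorphism $\phi\colon H_1/K_1\to H_2/K_2$. The key observation is that a torsion-free group meets any finite normal subgroup trivially. So I would first use virtual torsion-freeness to choose finite-index torsion-free subgroups $\Lambda_i\le\Gamma_i$ and set $H_i'=H_i\cap\Lambda_i$, which is torsion-free and of finite index in $\Gamma_i$. The restriction of the quotient map $\pi_i\colon H_i\to H_i/K_i$ to $H_i'$ is then injective, because its kernel $H_i'\cap K_i$ is trivial, and its image $A_i:=\pi_i(H_i')=H_i'K_i/K_i$ has finite index in $H_i/K_i$. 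Thus $\pi_i$ identifies each $H_i'$ with a finite-index subgroup $A_i$ of $H_i/K_i$.

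The final step transports both pictures into the single group $H_2/K_2$ via $\phi$ and intersects. Setting $C=\phi(A_1)\cap A_2$, both $\phi(A_1)$ and $A_2$ have finite index in $H_2/K_2$, hence so does $C$, and therefore $C$ has finite index in each of them. Pulling $C$ back yields finite-index subgroups $M_1:=(\pi_1|_{H_1'})^{-1}(\phi^{-1}(C))\le H_1'$ and $M_2:=(\pi_2|_{H_2'})^{-1}(C)\le H_2'$ of $\Gamma_1$ and $\Gamma_2$ respectively, and the composite $M_1\xrightarrow{\pi_1}\phi^{-1}(C)\xrightarrow{\phi}C\xrightarrow{(\pi_2)^{-1}}M_2$ is an isomorphism. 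This produces isomorphic finite-index subgroups of $\Gamma_1$ and $\Gamma_2$, which is exactly abstract commensurability.

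The main obstacle is conceptual rather than computational: one must arrange that the torsion-free finite-index subgroups embed \emph{with finite index} into the quotients $H_i/K_i$, and this is precisely where torsion-freeness of both $\Gamma_1$ and $\Gamma_2$ is essential. Without it one only matches a finite-index subgroup of $\Gamma_1$ with a subgroup of the quotient $H_2/K_2$, which a priori is not a subgroup of $\Gamma_2$ at all. The remaining effort is the routine bookkeeping of the three isomorphisms $\pi_1$, $\phi$, $\pi_2$ to confirm that the matched subgroups $M_1$ and $M_2$ are genuinely isomorphic and of finite index.
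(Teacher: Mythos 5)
Your proof is correct, and in fact the paper states Lemma~\ref{lem:tfree} without proof, so there is no written argument to compare against; your route is precisely the standard one the authors leave to the reader. Both halves check out: the forward direction via $K_1=K_2=\{1\}$ is immediate, and in the converse you correctly use that compact subgroups of discrete groups are finite, that a torsion-free finite-index subgroup $H_i'$ injects into $H_i/K_i$ with finite-index image, and that intersecting $\phi(A_1)\cap A_2$ and pulling back produces isomorphic finite-index subgroups of $\Gamma_1$ and $\Gamma_2$.
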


The equivalences in the next definition follow from Proposition~\ref{prop:ARequivalences}. 

\begin{defn}\label{defn:AB_COMM&VISOM}
	A finitely generated group $\Gamma$ is said to be \emph{action rigid} if either of the following equivalent conditions hold: 
	\begin{enumerate}
		\item whenever $\Gamma'$ is a finitely generated group  such that $\Gamma$ and $\Gamma'$ both act geometrically on the same proper metric space, then $\Gamma$ and $\Gamma'$ are virtually isomorphic; 
		\item whenever $\Gamma'$ is a finitely generated group and there exist uniform lattice embeddings modulo finite kernels of $\Gamma$ and $\Gamma'$ into the same locally compact group, then $\Gamma$ and $\Gamma'$ are virtually isomorphic.
	\end{enumerate}
\end{defn}

    The following lemma follows immediately from definitions. 
    
    \begin{lem} \label{lem:triv_env_act_rig}
        If all uniform lattice embeddings modulo finite kernels of a finitely generated group $\Gamma$ are trivial, then $\Gamma$ is action rigid. 
    \end{lem}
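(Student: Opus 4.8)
The plan is to reduce the statement to the elementary fact that two finite-index subgroups of a discrete group are commensurable, hence virtually isomorphic. Using the characterization in Definition~\ref{defn:AB_COMM&VISOM}(2), suppose $\Gamma'$ is a finitely generated group and that there exist uniform lattice embeddings modulo finite kernels $\rho\colon\Gamma\to G$ and $\rho'\colon\Gamma'\to G$ into a common locally compact group $G$; I must show that $\Gamma$ and $\Gamma'$ are virtually isomorphic as discrete groups. The whole argument will take place inside a discrete quotient of $G$.

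First I would exploit the hypothesis on $\Gamma$. The map $\rho$ is a uniform lattice embedding modulo finite kernel, so by assumption it is trivial, and hence by Remark~\ref{rem:triv<->c-by-d} the group $G$ contains a compact \emph{open} normal subgroup $K$, with $D\coloneqq G/K$ discrete. Let $q\colon G\to D$ be the quotient map. The strategy is then to push both lattices down to $D$ and compare their images there.

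Next I would analyze $q\circ\rho$ and $q\circ\rho'$. Because $\rho(\Gamma)$ is discrete in $G$ and $K$ is compact, the intersection $\rho(\Gamma)\cap K$ is finite; together with the finiteness of $\ker\rho$ this shows that $\rho^{-1}(K)=\ker(q\circ\rho)$ is a finite normal subgroup of $\Gamma$. On the other hand, cocompactness of the uniform lattice $\rho(\Gamma)$ descends: writing $G=\rho(\Gamma)\cdot C$ with $C$ compact gives $D=q(\rho(\Gamma))\cdot q(C)$ with $q(C)$ finite, so $A\coloneqq q(\rho(\Gamma))$ has finite index in $D$. Thus $\Gamma/\rho^{-1}(K)\cong A$, which (taking the finite normal subgroup $\rho^{-1}(K)\lhd\Gamma$ on one side and the trivial subgroup of $A$ on the other in Definition~\ref{defn:AC_VI}) exhibits $\Gamma$ as virtually isomorphic to the finite-index subgroup $A\le D$. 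The identical computation applied to $\rho'$ produces a finite normal subgroup $\rho'^{-1}(K)\lhd\Gamma'$ and a finite-index subgroup $B\coloneqq q(\rho'(\Gamma'))\le D$ with $\Gamma'/\rho'^{-1}(K)\cong B$, so $\Gamma'$ is virtually isomorphic to $B$.

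Finally, since $A$ and $B$ are both finite-index subgroups of $D$, the intersection $A\cap B$ has finite index in each, so $A$ and $B$ are commensurable and in particular virtually isomorphic. Chaining this with the virtual isomorphisms $\Gamma\leftrightarrow A$ and $B\leftrightarrow\Gamma'$ and using that virtual isomorphism is an equivalence relation (as recorded after Definition~\ref{def:VIembeddings}) yields that $\Gamma$ and $\Gamma'$ are virtually isomorphic, which is exactly the conclusion of Definition~\ref{defn:AB_COMM&VISOM}(2). The argument is essentially bookkeeping with the definitions; the only step that requires a moment's care is verifying that quotienting a uniform lattice of $G$ by the compact open normal subgroup $K$ yields a genuinely \emph{finite-index} subgroup of $D$ with \emph{finite} kernel rather than merely an infinite discrete subgroup, and this is precisely where cocompactness of the lattice and both the compactness and openness of $K$ enter.
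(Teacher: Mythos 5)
Your proof is correct and is exactly the unfolding of definitions that the paper has in mind: the paper gives no proof beyond the remark that the lemma ``follows immediately from definitions,'' and its implicit argument (triviality gives a compact open normal subgroup $K\vartriangleleft G$, both lattices then descend to finite-index subgroups of the discrete quotient $G/K$ with finite kernels, and finite-index subgroups of a common discrete group are commensurable) is the same one the paper spells out in the analogous Proposition~\ref{prop:gdimpliesvi}. Your bookkeeping with $\rho^{-1}(K)$, cocompactness, and openness of $K$ is all accurate, so nothing is missing.
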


 \subsection{Graphs of groups and spaces}
 
 In this subsection we define graphs of groups, graphs of spaces, and related concepts.
 We refer to~\cite{SerreTrees, ScottWall79} for further background.

 \begin{defn}\label{defn:GoS}
A \emph{graph of spaces} $(X,\Lambda)$ is a graph $\Lambda$ with the following data:
\begin{enumerate}
	\item a connected length space $X_v$ for each $v \in V\Lambda$ (called a \emph{vertex space})
	\item a connected length space $X_e$ for each $e\in E\Lambda$ (called an \emph{edge space}) such that $X_{\bar{e}} = X_e$
	\item a $\pi_1$-injective path-isometric map $\phi_e : X_e \rightarrow X_{\tau(e)}$ for each $e\in E\Lambda$.
\end{enumerate}

The \emph{total space} $X$ is the following quotient space:
\[
 X = \left( \bigsqcup_{v \in V \Lambda} X_v \bigsqcup_{e \in E\Lambda} X_e \times [0,1]\right)  \; \Big/ \; \sim, 
\]
where $\sim$ is the relation that identifies $X_e \times \{0\}$ with $X_{\bar{e}} \times \{0\}$ via the identification of $X_e$ with $X_{\bar{e}}$, and the point $(x, 1)\in X_e \times [0,1]$ with $\phi_e(x)$.
Note that $X$ inherits a length space metric from the vertex and edge spaces.
If $\Lambda$ is a tree, then we call $(X,\Lambda)$ a \emph{tree of spaces}.
\end{defn}

\begin{defn} \label{def:gog_gosp} 
      A \emph{graph of groups} $\mathcal{G}$ consists of the following data:
      \begin{enumerate}
      	\item a graph $\Lambda$  (called the \emph{underlying graph}),
          \item a group $G_v$ for each $v \in V\Lambda$ (called a \emph{vertex group}),
          \item  a group $G_e$ for each $e \in E\Lambda$ (called an \emph{edge group}) such that $G_e = G_{\bar{e}}$,
          \item and injective homomorphisms $\Theta_e: G_e \rightarrow G_{\tau(e)}$ for each $e \in E\Lambda$ (called \emph{edge maps}).    
      \end{enumerate}
           If $\cG$ is a graph of groups with underlying graph $\Lambda$, a \emph{graph of spaces associated to  $\mathcal{G}$} consists of a graph of spaces $(X,\Lambda)$ with points $x_v \in X_v$ and $x_e \in X_e$ so that $\pi_1(X_v,x_v) \cong G_v$ and $\pi_1(X_e, x_e) \cong G_e$ for all $v \in V\Lambda$ and $e \in E\Lambda$, and the maps $\phi_e:X_e\rightarrow X_{\tau(e)}$ are such that $(\phi_e)_* = \Theta_e$.  
      The \emph{fundamental group} of the graph of groups $\mathcal{G}$ is $\pi_1(X)$.
      The fundamental group is independent of the choice of $X$, so we will often denote it by $\pi_1(\cG)$.
      \end{defn}

Given a graph of spaces $(X,\Lambda)$, any covering  $f:\hat{X}\to X$ naturally has the structure of a graph of spaces $(\hat{X},\hat{\Lambda})$, where the edge and vertex spaces correspond to components of the preimages of the edge and vertex spaces in $(X,\Lambda)$.
Moreover, $f$ restricts to coverings between edge and vertex spaces.
We will usually denote such a covering by $f:(\hat{X},\hat{\Lambda})\to (X,\Lambda)$ to emphasize that the graph of spaces structure is preserved.

\begin{defn}\label{defn:actonToS}
  An \emph{isometric action of a group $G$ on a tree of spaces $(X,T)$} consists of the following data:
  \begin{enumerate}
      \item an action of $G$ on $T$
      \item for each $g\in G$, $v\in VT$ and $e\in ET$, isometries $g_v:X_v\to X_{gv}$ and $g_e=g_{\bar{e}}:X_e\to X_{ge}$, such that if $v=\tau(e)$,  the following diagram commutes \[
            \begin{tikzcd}
                X_e \arrow{r}{g_e}	\arrow{d}{\phi_e} 	& X_{ge}\ar{d}{\phi_{ge}}\\
                X_v \arrow{r}{g_v} 		& X_{gv}.
            \end{tikzcd}\]
            Moreover, if the vertex and edge spaces are cell complexes, then we additionally require the maps $g_v$ and $g_e$ to be cellular.
  \end{enumerate}
  Note there is an induced isometric action of $G$ on the total space $X$.
  We say that the action of $G$ on $(X,T)$ is \emph{geometric} if the action on $X$ is geometric.
\end{defn}

We will often consider a group $\G$ acting on a tree of spaces $(X,T)$, with simply connected edge and vertex spaces, free actions of the edge and vertex stabilizers on the corresponding edge and vertex spaces, and no edge inversions in $T$. Then $X/\G$ has the structure of a graph of spaces  with underlying graph $T/\G$.
We call this the \emph{quotient graph of spaces}, and denote it by $(X,T)/\G$.
Moreover, the quotient map defines a covering $(X,T)\to (X,T)/\G$, and $\pi_1(X/\G)=\G$.

Let $G$ be a locally compact group generated by a compact set $S\subseteq G$ with Cayley graph $X$. The \emph{number of ends} $e(G)$ of $G$ is the supremum of the number of unbounded components of $X\setminus B$, where $B$ is a bounded subset of $X$. This is independent of the choice of $S$, since any two Cayley graphs of $G$ with respect to different compact generating sets are quasi-isometric.  We remark that if the Cayley graph of $G$ is not locally finite, then the number of ends as defined above may be different from the standard definition as defined in \cite{DrutuKapovich18}.

    A \emph{Stallings--Dunwoody decomposition} of a group $\Gamma$ is a graph of groups decomposition satisfying the conclusions of the  next theorem. 
     
    \begin{thm}[\cite{dunwoody1985accessibility, stallings}] 
      If $\Gamma$ is a finitely presented group, then $\Gamma$ is the fundamental group of a finite graph of groups with finite edge groups and vertex groups that have at most one end. 
    \end{thm}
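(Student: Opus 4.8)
The plan is to combine two classical results referenced in the statement: Stallings' structure theorem for groups with more than one end, which supplies the individual splitting steps, and Dunwoody's accessibility theorem, which guarantees that the resulting refinement process terminates. Recall that a finitely generated group has $0$, $1$, $2$, or infinitely many ends, and that ``more than one end'' means having at least two.

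First, I would record the splitting step. By Stallings' theorem, a finitely generated group $G$ has more than one end if and only if $G$ admits a nontrivial splitting as an amalgamated product $A *_C B$ or an HNN extension $A *_C$ with the edge group $C$ finite; equivalently, $G$ acts on a tree without a global fixed point and with finite edge stabilizers. This is exactly the tool that replaces a vertex group with more than one end by a graph of groups with finite edge groups.

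Next, I would organize these splittings into a single decomposition and control termination. Call a graph-of-groups decomposition of $\Gamma$ with finite edge groups \emph{reduced} if no edge map from an edge group is an isomorphism onto an adjacent vertex group; any such decomposition can be made reduced by collapsing the offending edges, without enlarging the edge groups. Since $\Gamma$ is finitely presented, Dunwoody's accessibility theorem provides an integer $n$ bounding the number of edges in any reduced decomposition of $\Gamma$ with finite edge groups. I would therefore select a reduced decomposition with finite edge groups having the maximal possible number of edges, which exists by the bound $n$. If $\Gamma$ is finite or one-ended, the trivial one-vertex decomposition already suffices; otherwise the maximal decomposition is the candidate.

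Finally, I would verify that in this maximal decomposition every vertex group has at most one end. Suppose some vertex group $G_v$ had more than one end. By the splitting step, $G_v$ splits nontrivially over a finite subgroup; substituting this splitting in place of $v$ and then reducing yields a reduced decomposition of $\Gamma$ with finite edge groups and strictly more edges, contradicting maximality. The main obstacle is precisely this termination: naively iterating Stallings' theorem need not stop for a general finitely generated group, since Dunwoody exhibited inaccessible examples, so the finite presentation hypothesis is essential and enters exactly through the accessibility bound. Both the splitting step (Stallings) and the bound (Dunwoody) are deep inputs, but given them the maximality argument is routine, requiring only the standard check that refining a vertex group's nontrivial splitting and then reducing strictly increases the edge count.
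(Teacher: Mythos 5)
The paper does not actually prove this theorem; it cites Stallings and Dunwoody, and the proof in the literature is Dunwoody's track argument. Measured against that, your proposal takes a purely group-theoretic route whose final step --- the one you call ``routine'' --- is false, and this is a genuine gap rather than a technicality. It is not true that substituting a nontrivial splitting of a vertex group and then reducing strictly increases the edge count, and consequently an edge-count-maximal reduced decomposition can still have a vertex group with more than one end. Concretely, let $A$ be a one-ended group containing an involution $s$ and whose finite subgroups all have order at most $2$ (for instance $A=\mathbb{Z}^2\rtimes \mathbb{Z}/2$, with $\mathbb{Z}/2$ acting by $-1$), and let $\Gamma=A*\langle t\rangle$ with $t$ of order $2$. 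Then $\Gamma=A*_{\langle s\rangle}\langle s,t\rangle$, where $\langle s,t\rangle=\langle s\rangle *\langle t\rangle\cong D_\infty$. This one-edge decomposition is reduced in your sense, yet the vertex group $D_\infty$ is two-ended. Every nontrivial splitting of $D_\infty$ over a finite subgroup has the form $\mathbb{Z}/2 * \mathbb{Z}/2$ with the incident edge group $\langle s\rangle$ conjugate \emph{onto} one of the two vertex groups, so substituting it produces the two-edge decomposition $A*_{\langle s\rangle}\langle s\rangle *_{1}\langle t\rangle$, which is not reduced, and reducing collapses it straight back to the one-edge decomposition $A*\langle t\rangle$: no strict increase. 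Moreover, a short Grushko-type count shows that every reduced decomposition of this $\Gamma$ over finite subgroups has exactly one edge, so $A*_{\langle s\rangle}D_\infty$ is already edge-count maximal; your maximality argument would therefore ``prove'' that $D_\infty$ has at most one end. Note also that the unreduced two-edge refinement \emph{is} a valid Stallings--Dunwoody decomposition, which shows that it is precisely the insistence on reduced decompositions that breaks the argument.

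This failure is not incidental: it is exactly the difficulty that Dunwoody's tracks are designed to overcome, and neither natural repair of your scheme works off the shelf. If you keep ``reduced,'' maximal edge count does not force one-ended vertex groups (the example above); if you instead ask only that each edge define a nontrivial splitting of $\Gamma$, then there is no bound at all, since one can insert trivial vertices to make such decompositions arbitrarily long. Dunwoody's actual proof bounds the size of a family of pairwise \emph{non-parallel} essential tracks in a finite $2$-complex for $\Gamma$ (this is where finite presentability enters), takes a maximal such family, and shows each complementary piece has at most one end by realizing any Stallings splitting of a piece as a new essential track non-parallel to the previous ones; the geometric parallelism relation is what substitutes for your failed edge-count monotonicity. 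To fix your write-up you would either need to work with tracks (or patterns/resolutions) in this way, or exhibit a complexity of decompositions that provably decreases under refine-then-reduce; the number of edges of a reduced decomposition is not such a complexity.
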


\subsection{Commanding subgroups} \label{sec:commanding}

This subsection concerns the notion of a group commanding a collection of subgroups. See \cite[Section 6]{Shepherd23} for more details.

\begin{defn} \label{defn:commands}
	A group $\G$ \emph{commands} a collection of subgroups $(\gL_1,...,\gL_k)$ if there exist finite-index subgroups $\dot{\gL}_i<\gL_i$ such that, for any choice of finite-index subgroups $\hat{\gL}_i<\dot{\gL}_i$ with $\hat{\gL}_i\triangleleft \gL_i$, there exists a finite-index normal subgroup $\hat{\G}\triangleleft \G$ such that $\gL_i\cap\hat{\G}=\hat{\gL}_i$ for $1\leq i\leq k$.
\end{defn}

If a group $\G$ is the fundamental group of a graph of groups such that each vertex group commands its collection of incident edge groups, then it is possible to build a finite cover of the graph of groups in a flexible manner by picking finite-index subgroups of the vertex groups and using the commanding property to ensure that the incident edge groups match up. We will employ a variation of this argument in Section \ref{sec:commoncover} to a setting where two groups act on a tree of spaces, and we wish to build a common finite cover of the quotient graphs of spaces.

\begin{rem}
	Note that the collection of subgroups $(\gL_i)$ from Definition \ref{defn:commands} might contain duplicates. 
	Indeed, if $\G$ commands $(\gL_i)$, then we can add any number of duplicates of the trivial subgroup to our family of subgroups and $\G$ will still command them.
\end{rem}

\begin{defn}\label{defn:commandsfinite}
We say that a group $\G$ \emph{commands its finite subgroups} if it commands any collection $(\gL_1,...,\gL_k)$ of finite subgroups. Equivalently, for every finite subgroup $\gL \leq \Gamma$, there exists a finite-index subgroup $\Gamma' \leq \Gamma$ so that $\Gamma' \cap \gL = \{1\}$. 
\end{defn}

For example, residually finite groups and virtually torsion-free groups command their finite subgroups.

\begin{example}
	A group is \emph{omnipotent} in the sense of \cite{wise2000subgroup} if and only if it commands the subgroups $\{\langle g_i\rangle\}$ for any finite set of independent elements $\{g_i\}$ \cite{Shepherd23}. A finite set $\{g_i\}$  is \emph{independent} if the $g_i$ have infinite order and no non-zero power of $g_i$ is conjugate to a non-zero power of $g_j$ for $i\neq j$.
\end{example}

\begin{example}
	The group $\mathbb{Z}^n$ commands subgroups $\{\gL_i\}$ if and only if they are independent in the sense that $\sum_i \lambda_i=0$ with $\lambda_i\in \gL_i$ implies $\lambda_i=0$ for all $i$ \cite[Proposition 6.5]{Shepherd23}.
\end{example}
A group is said to be \emph{cubulated} if it acts geometrically on a CAT(0) cube complex. 
	A collection $(\gL_i)_{i\in I}$ of subgroups of $G$ is \emph{almost malnormal} if for all $i,j\in I$ and $g\in G$, the intersection $g\gL_ig^{-1}\cap \gL_j$ is finite unless  $i=j$ and $g\in \gL_i$.
\begin{example}\label{rem:special_command}
	A cubulated hyperbolic group commands any finite almost malnormal collection of quasi-convex subgroups. This follows from Wise's Malnormal Special Quotient Theorem \cite{wise2021structure} combined with theorems of Agol \cite{Agol13} and Osin \cite{Osin07}; see \cite[Theorem 1.4]{Shepherd23}.
	If hyperbolic groups are all residually finite, then we can extend this to any hyperbolic group commanding any finite, almost malnormal collection of quasi-convex subgroups \cite[Theorem 6.9]{Shepherd23}.
\end{example}

\begin{rem}
	In general one cannot choose $\dot{\gL}_i=\gL_i$ in Definition \ref{defn:commandsfinite}. For instance, let $\G$ be the free group with generators $a, b$, and consider $\gL_1=\langle a\rangle$, $\gL_2=\langle b\rangle$, $\gL_3=\langle ab\rangle$ and $\hat{\gL}_1=\gL_1$, $\hat{\gL}_2=\gL_2$, $\hat{\gL}_3=\langle abab\rangle$. Then any $\hat{\G}\triangleleft \G$ with $\gL_i\cap\hat{\G}=\hat{\gL}_i$ for $i=1,2$ must contain $a$ and $b$, so $\hat{\G}=\G$, and $\gL_3\cap\hat{\G}=\gL_3\neq \hat{\gL}_3$.
\end{rem}

\section{Introducing graphically discrete groups} \label{sec:gd}

\subsection{Equivalent definitions of graphical discreteness}\label{subsec:graphdiscrete}

    If the automorphism group of a connected locally finite graph $X$ is discrete, then two groups $\Gamma$ and $\Gamma'$ acting geometrically on that graph are virtually isomorphic. Indeed, modulo the finite kernels of the action, $\Gamma$ and $\Gamma'$ are finite-index subgroups of $\Aut(X)$. However, it is too much to expect that for any action of a group $\Gamma$ on a connected locally finite graph $X$ that $\Aut(X)$ is discrete: indeed, attaching a pair of leaves at every vertex of $X$ produces a graph with non-discrete automorphism group. The following definition accounts for these compact obstructions to discreteness. See Figure~\ref{figure:imprim}.
 
    \begin{defn}
        If  $X$ is a graph and $G\leq \Aut(X)$, then a \emph{$G$-imprimitivity system} is an equivalence relation $\sim$ on $VX$ that is invariant under the action of $G$, i.e.\ $g\cdot v\sim g\cdot w$ whenever $v\sim w$. Equivalence classes of a $G$-imprimitivity system are called \emph{blocks}. The \emph{quotient graph} $X/\sim$ is the simplicial graph whose vertices are blocks, and two blocks $[v]$ and $[w]$  are joined by an edge if and only if $(v',w')\in EX$ for some  $v'\in [v]$ and $w'\in [w]$. 
    \end{defn}
\begin{rem}
	Even if the imprimitivity system $\sim$ is trivial, i.e.\ all blocks consist of a single vertex, the graph $X/\sim$ might not be equal to $X$. This is because $X$ might not be a simplicial graph, and quotienting out by the trivial imprimitivity system will remove one-edge loops and multiple edges.
\end{rem}

\begin{rem}
	If $G\leq \Aut(X)$ and $\sim$ is a $G$-imprimitivity system, then there is an \emph{induced action} of $G$ on $X/\sim$ given by $g\cdot[v]=[g\cdot v]$. 
\end{rem}

    \begin{figure}[ht!]
    \begin{center}
        \begin{overpic}[width=.8\textwidth, tics=5]{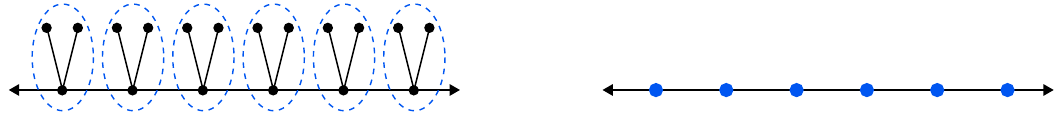} %
        \put(-2,7){$X$}
        \put(55,7){$X/\sim$}
        \end{overpic}
        \caption{The automorphism group of the graph $X$ is not discrete. The vertices are grouped in blue $\Aut(X)$-invariant finite equivalence classes so that the action of $\Aut(X)$ on the quotient graph $X/\sim$ is discrete.}
        \label{figure:imprim}
    \end{center}
    \end{figure}   
    
    A topological group $G$ is \emph{compact-by-discrete} if $G$ contains a compact normal subgroup such that the quotient, equipped with the quotient topology,  is discrete. Equivalently, $G$ is compact-by-discrete if it contains a compact open normal subgroup, since a normal subgroup is open if and only if the corresponding quotient is discrete. We can characterize compact-by-discrete automorphism groups using imprimitivity systems as follows. See Theorem \ref{thm:compactbydiscrete} for further characterizations.
    
    \begin{prop}\label{prop:compactbydiscrete}
        Let $X$ be a connected locally finite graph and let $G\leq\Aut(X)$ be a closed subgroup that acts cocompactly on $X$. Then $G$ is compact-by-discrete if and only if there exists a $G$-imprimitivity system $\sim$ on $X$ with finite blocks such that the induced action $G\curvearrowright X/\sim$ is discrete.
    \end{prop}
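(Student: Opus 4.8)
The plan is to reformulate both sides in the cleanest available form and then build an explicit dictionary between compact open normal subgroups of $G$ and $G$-imprimitivity systems with finite blocks. On the algebraic side, recall (Remark~\ref{rem:triv<->c-by-d}) that $G$ is compact-by-discrete exactly when it contains a compact \emph{open} normal subgroup, so the target of the forward direction is to produce such a subgroup, and the input of the backward direction is such a subgroup. On the geometric side, I will repeatedly use two standard facts for a subgroup $H\leq \Aut(Y)$ of the automorphism group of a connected locally finite graph $Y$: first, $\overline{H}$ is compact if and only if all $H$-orbits on $VY$ are bounded, equivalently finite (this is Proposition~\ref{prop:compact<->bounded} applied to the proper continuous action $\Aut(Y)\curvearrowright Y$, together with local finiteness); and second, $H$ is discrete if and only if some (equivalently every) vertex stabilizer $H_y=\{h\in H: hy=y\}$ is finite. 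The second fact I would establish by a short sequence argument: if $H$ is non-discrete then a nontrivial sequence $h_n\to 1$ eventually fixes $y$, so lands in the finite set $H_y$, forcing $h_n=1$ eventually, a contradiction; the converse uses that a discrete subgroup of a locally compact group is closed, so $H_y=H\cap\Aut(Y)_y$ is compact and discrete, hence finite.

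For the forward direction, suppose $K\vartriangleleft G$ is compact open. Each orbit $K\cdot v$ is the continuous image of the compact set $K$, hence a compact subset of the discrete vertex set $VX$, hence finite. I define $\sim$ to be the partition of $VX$ into $K$-orbits; it is an equivalence relation with finite blocks, and it is $G$-invariant precisely because $K$ is normal (if $w=kv$ then $gw=(gkg^{-1})(gv)$ with $gkg^{-1}\in K$). It remains to check that the induced action $\bar\rho\colon G\to\Aut(X/\!\sim)$ has discrete image. Since $\bar\rho(k)[v]=[kv]=[v]$ for $k\in K$, we have $K\subseteq\ker\bar\rho$, so $\ker\bar\rho$ is open. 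Each block stabilizer in $G$ is compact (it is a finite-index overgroup of the intersection of the finitely many compact-open stabilizers of the vertices in the block), so its $\bar\rho$-image is a compact group modulo an open subgroup, i.e.\ finite. Thus every vertex stabilizer of $\bar\rho(G)$ is finite and, by the dictionary above, $\bar\rho(G)$ is discrete.

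For the backward direction, suppose $\sim$ has finite blocks and $\bar\rho(G)$ is discrete, and set $K=\ker\bar\rho$, the subgroup of elements preserving every block. It is normal as a kernel; it is closed since $K=\bigcap_{B}\operatorname{Stab}_G(B)$ is an intersection of (closed) block stabilizers. Its orbits satisfy $K\cdot v\subseteq[v]$, which is finite, so by the compactness criterion $\overline{K}$ is compact, and as $K$ is closed it is itself compact. Finally I show $K$ is open: for a vertex $v$, the compact-open stabilizer $G_v$ maps into the \emph{finite} vertex stabilizer $\operatorname{Stab}_{\bar\rho(G)}([v])$, so $G_v\cap K$ has finite index in $G_v$; being a finite-index closed subgroup of a topological group it is open, and since $G_v$ is open, $K$ contains an open subgroup and is therefore open. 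Hence $K$ is a compact open normal subgroup and $G$ is compact-by-discrete.

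I expect the main obstacle to be the backward direction, and specifically the two-part verification that the kernel $K$ is simultaneously compact and open, since these pull in opposite directions and rely on different tools: openness comes from the discreteness hypothesis (finite vertex stabilizers forcing finite index), while compactness comes from the finiteness of the blocks via the bounded-orbits criterion. The auxiliary dictionary lemma relating discreteness of a subgroup of $\Aut(Y)$ to finiteness of vertex stabilizers is the other point needing care; everything else (finiteness and $G$-invariance of blocks, connectedness and local finiteness of $X/\!\sim$) is routine. I note that cocompactness of the $G$-action, though hypothesized, does not appear to be needed for the equivalence itself—the finite-block condition is what supplies compactness—so I would present the argument without invoking it and remark on this.
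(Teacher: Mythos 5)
Your proof is correct, and its overall skeleton matches the paper's: in the forward direction the imprimitivity system is the partition into $K$-orbits, in the backward direction the compact open normal subgroup is the kernel of $G\to\Aut(X/\!\sim)$, with Proposition~\ref{prop:compact<->bounded} converting finite blocks into compactness. The genuine difference is how discreteness of the induced action is handled. The paper simply asserts that openness of the kernel of $G\to\Aut(X/\!\sim)$ ``is equivalent to'' discreteness of the induced action, and in the backward direction extracts openness of the kernel from continuity of the induced homomorphism. You instead prove an auxiliary dictionary lemma -- a subgroup of $\Aut(Y)$, for $Y$ connected and locally finite, is discrete if and only if its vertex stabilizers are finite -- and then in the forward direction verify directly that the vertex stabilizers of the image group $\bar\rho(G)\leq\Aut(X/\!\sim)$ (which are exactly the images of the block stabilizers, each a compact group modulo an open subgroup) are finite. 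This is more than a stylistic choice: for a continuous homomorphism of topological groups, ``open kernel'' does not formally imply ``discrete image'' (a continuous injection from a discrete group can have indiscrete image), so your block-stabilizer argument supplies precisely the justification that the paper's one-line equivalence leaves implicit. Your backward direction likewise replaces the paper's continuity argument by the finite-index observation $[G_v:G_v\cap K]\leq|\Stab_{\bar\rho(G)}([v])|<\infty$; both are valid. Finally, your closing remark is accurate and consistent with the paper: cocompactness of the $G$-action is not used in either direction of this equivalence (the paper's proof does not use it either); it only becomes relevant in the later characterizations of Theorem~\ref{thm:compactbydiscrete}, e.g.\ for the uniform bound in condition~(\ref{item:Abounded}).
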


    \begin{proof}
        First, suppose there is a compact subgroup $K\vartriangleleft G$ such that $G/K$ is discrete.
        In particular, $K$ is open. We define an equivalence relation  $\sim$ on $VX$ by $v\sim w$ if and only if there exists $k\in K$ such that $kv=w$. As $K$ is normal, this is a $G$-imprimitivity system. Since $K$ is compact, every orbit $K\cdot v$ is finite by Proposition \ref{prop:compact<->bounded}, and so $\sim$ has finite blocks.  
        The kernel of the induced action $G\curvearrowright X/\sim$ is open because it contains the open subgroup $K$, and this is equivalent to saying that the induced action $G\curvearrowright X/\sim$ is discrete.
        
        Conversely, suppose there exists a $G$-imprimitivity system $\sim$ on $X$ with finite blocks such that the induced action $G\curvearrowright X/\sim$ is discrete. Since $\sim$ has finite blocks, the kernel~$K$ of the action $G\curvearrowright X/\sim$ is compact by Proposition \ref{prop:compact<->bounded}. The quotient $G/K$ is discrete because the action $G\curvearrowright X/\sim$ is discrete. Thus, $G$ is compact-by-discrete.
    \end{proof}

    \begin{example}
    If $G\leq \Aut(X)$ is cocompact and compact-by-discrete, then Proposition~\ref{prop:compactbydiscrete} tells us that there exists a $G$-imprimitivity system $\sim$ on $X$ with finite blocks such that the induced action $G\curvearrowright X/\sim$ is discrete.
    However, it is not necessarily true that $\Aut(X/\sim)$ is a discrete group, as shown in Figure~\ref{figure:autdiscrete}; note that $\Aut(X)$ does not surject onto $\Aut(X/\sim)$ in this example. 
	\end{example}

    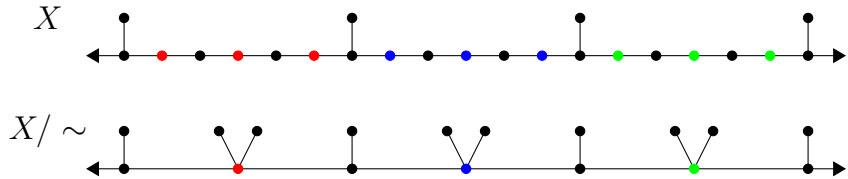
\begin{figure}[ht!]
		\centering
		\scalebox{.5}{
			\begin{tikzpicture}[auto,node distance=2cm,
				thick,every node/.style={circle,draw,fill,inner sep=0pt,minimum size=7pt},
				every loop/.style={min distance=2cm},
				hull/.style={draw=none},
				]
				\tikzstyle{label}=[draw=none,fill=none,font=\Huge]
				\tikzstyle{a}=[isosceles triangle,sloped,allow upside down,shift={(0,-.05)},minimum size=10pt]

\draw[{Triangle[scale=2]}-{Triangle[scale=2]}] (0,0)--(20,0);
\node at (1,0){};
\node[red] at (2,0){};
\node at (3,0){};
\node[red] at (4,0){};
\node at (5,0){};
\node[red] at (6,0){};
\node at (7,0){};
\node[blue] at (8,0){};
\node at (9,0){};
\node[blue] at (10,0){};
\node at (11,0){};
\node[blue] at (12,0){};
\node at (13,0){};
\node[green] at (14,0){};
\node at (15,0){};
\node[green] at (16,0){};
\node at (17,0){};
\node[green] at (18,0){};
\draw(1,0)--(1,1);
\draw(7,0)--(7,1);
\draw(13,0)--(13,1);
\draw(19,0)--(19,1);
\node at (19,0){};
\node at (1,1){};
\node at (7,1){};
\node at (13,1){};
\node at (19,1){};

\draw[{Triangle[scale=2]}-{Triangle[scale=2]}] (0,-3)--(20,-3);
\draw(9.5,-2)--(10,-3)--(10.5,-2);
\node at (9.5,-2){};
\node at (10.5,-2){};
\draw(3.5,-2)--(4,-3)--(4.5,-2);
\node at (3.5,-2){};
\node at (4.5,-2){};
\draw(15.5,-2)--(16,-3)--(16.5,-2);
\node at (15.5,-2){};
\node at (16.5,-2){};

\node at (1,-3){};
\node[red] at (4,-3){};
\node at (7,-3){};
\node[blue] at (10,-3){};
\node at (13,-3){};
\node[green] at (16,-3){};
\node at (19,-3){};
\draw(1,-3)--(1,-2);
\draw(7,-3)--(7,-2);
\draw(13,-3)--(13,-2);
\draw(19,-3)--(19,-2);
\node at (1,-2){};
\node at (7,-2){};
\node at (13,-2){};
\node at (19,-2){};

\node[label] at (-1,1) {$X$};
\node[label] at (-1,-2) {$X/\sim$};
				
			\end{tikzpicture}
		}
		\label{figure:autdiscrete}
		\caption{An example where $\Aut(X)$ acts discretely on $X/\sim$, but the group $\Aut(X/\sim)$ is not discrete. On the left, each set of colored vertices indicates one equivalence class, and each black vertex is in its own equivalence class.}
	\end{figure}

    We now turn to the definition of graphical discreteness.
    
    \begin{prop}\label{prop:propC}
        Let $\Gamma$ be a finitely generated group. The following are equivalent:
            \begin{enumerate}
                \item \label{item:propC1} If $\Gamma$ is a virtual  uniform lattice in a tdlc group $G$, then  $G$ is compact-by-discrete.
                \item \label{item:propC3} If  $\rho:\Gamma \to G$ is a uniform lattice embedding modulo finite kernel with  $G$ a tdlc group, then $\rho$ is trivial (in the sense of Definition \ref{def:VIembeddings}).
                \item \label{item:propC4} If $\Gamma$ acts geometrically on a connected locally finite graph $X$, then  $\Aut(X)$ is compact-by-discrete.
            \end{enumerate}
        If $\Gamma$ satisfies any of these equivalent properties, we say  $\Gamma$ is \emph{graphically discrete}. 
    \end{prop}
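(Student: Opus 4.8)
The plan is to read the three conditions as two substantively different equivalences: $(\ref{item:propC1})\Leftrightarrow(\ref{item:propC3})$ is essentially a reformulation, whereas $(\ref{item:propC3})\Leftrightarrow(\ref{item:propC4})$ carries the geometric content. For $(\ref{item:propC1})\Leftrightarrow(\ref{item:propC3})$ I would simply unwind definitions: saying that $\Gamma$ is a virtual uniform lattice in $G$ means precisely that there is a uniform lattice embedding modulo finite kernel $\rho\colon\Gamma\to G$, and by Remark~\ref{rem:triv<->c-by-d} such a $\rho$ is trivial exactly when $G$ is compact-by-discrete. Thus the two statements assert the same thing and no argument beyond quoting the remark is needed.

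The implication $(\ref{item:propC3})\Rightarrow(\ref{item:propC4})$ is short. Given a geometric action of $\Gamma$ on a connected locally finite graph $X$, the group $\Aut(X)$ with the compact-open topology is a tdlc group, and $\Aut(X)$ itself acts on $X$ properly (vertex stabilizers are compact open) and cocompactly (since $\Gamma\le\Aut(X)$ already acts cocompactly). Hence by Lemma~\ref{lem:geom_lattice} the inclusion $\Gamma\hookrightarrow\Aut(X)$ is a uniform lattice embedding modulo finite kernel into the tdlc group $\Aut(X)$. Applying~(\ref{item:propC3}) to this embedding shows it is trivial, which by Remark~\ref{rem:triv<->c-by-d} is exactly the assertion that $\Aut(X)$ is compact-by-discrete.

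The reverse implication $(\ref{item:propC4})\Rightarrow(\ref{item:propC3})$ is the heart of the proof. Suppose $\rho\colon\Gamma\to G$ is a uniform lattice embedding modulo finite kernel with $G$ tdlc. Since $\Gamma$ is finitely generated, Lemma~\ref{lem:finitevscompact} makes $G$ compactly generated, so van Dantzig's theorem via Lemma~\ref{lem:vanDantzig} supplies a connected locally finite vertex-transitive graph $Y$ (a Cayley--Abels graph) on which $G$ acts geometrically, say through $\psi\colon G\to\Aut(Y)$. I would first verify that the composite $\Gamma\xrightarrow{\rho}G\curvearrowright Y$ is again geometric: its kernel is finite, being built from the finite kernel of $\rho$ together with the intersection of the discrete group $\rho(\Gamma)$ with the compact kernel of $\psi$, while properness and cocompactness descend from the geometric $G$-action because $\rho(\Gamma)$ is a discrete cocompact subgroup. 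Hypothesis~(\ref{item:propC4}) then yields that $\Aut(Y)$ is compact-by-discrete, hence contains a compact open normal subgroup $N$.

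The remaining and, I expect, most delicate step is to transport compact-by-discreteness of $\Aut(Y)$ back across $\psi$ to $G$, by showing that $\psi^{-1}(N)$ is a compact open normal subgroup of $G$. Normality is immediate since $N\trianglelefteq\Aut(Y)$ and $\psi$ is a homomorphism, and openness follows from the continuity of $\psi$ guaranteed by Definition~\ref{defn:action_loc_compact}. For compactness I would invoke properness twice: by Proposition~\ref{prop:compact<->bounded} the compact subgroup $N$ has bounded orbits in $Y$, so $\psi^{-1}(N)$ has a bounded orbit at a basepoint, hence bounded orbits throughout as $G$ acts by isometries, and therefore $\psi^{-1}(N)$ is relatively compact in $G$ by Proposition~\ref{prop:compact<->bounded} again; being closed (as $\psi$ is continuous and $N$ is closed in the Hausdorff group $\Aut(Y)$), it is compact. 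Thus $G$ possesses a compact open normal subgroup, so $G$ is compact-by-discrete, and by Remark~\ref{rem:triv<->c-by-d} the embedding $\rho$ is trivial, completing $(\ref{item:propC4})\Rightarrow(\ref{item:propC3})$ and hence the cycle of equivalences.
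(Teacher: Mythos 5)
Your proposal is correct and follows essentially the same route as the paper: the $(\ref{item:propC1})\Leftrightarrow(\ref{item:propC3})$ equivalence via Remark~\ref{rem:triv<->c-by-d}, the forward direction via Lemma~\ref{lem:geom_lattice}, and the converse via Lemma~\ref{lem:finitevscompact}, the Cayley--Abels graph from Lemma~\ref{lem:vanDantzig}, and pulling back the compact open normal subgroup using continuity and properness (Proposition~\ref{prop:compact<->bounded}). The only differences are cosmetic (you route the implications through~(\ref{item:propC3}) rather than~(\ref{item:propC1}), and spell out the finiteness of the kernel and the compactness argument that the paper states more tersely).
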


    \begin{proof}
        (\ref{item:propC3})$\iff$(\ref{item:propC1}) is a consequence of Definition \ref{def:VIembeddings} and Remark \ref{rem:triv<->c-by-d}.

        (\ref{item:propC1})$\implies$(\ref{item:propC4}): Suppose $\rho:\Gamma\rightarrow \Aut(X)$ is a geometric action of $\Gamma$ on a connected locally finite graph $X$. By Lemma \ref{lem:geom_lattice},  $\rho$ is a uniform lattice embedding modulo finite kernel.   Since $\Aut(X)$ is totally disconnected, (\ref{item:propC1}) implies  $ \Aut(X)$ is compact-by-discrete. 
        
        (\ref{item:propC4})$\implies$(\ref{item:propC1}): Suppose $\Gamma$ is a virtual  uniform lattice in a tdlc group $G$.
        Then $G$ is compactly generated by Lemma \ref{lem:finitevscompact}, and, by Lemma \ref{lem:vanDantzig},
         $G$ admits a  continuous, proper, cocompact action $\rho:G\to \Aut(X)$ on a connected locally finite graph $X$. In particular, $\Gamma$ acts geometrically on $X$. It then follows from (\ref{item:propC4}) that $\Aut(X)$ has a compact open normal subgroup $K$.
         Then $N\coloneqq \rho^{-1}(K)$ is clearly open and normal in $G$, and is compact because the action of $G$ on $X$ is proper.
         Hence $G$ is compact-by-discrete.
    \end{proof}

	\begin{rem}\label{rem:gd_maxcmpctopen}
		Suppose $\Gamma$ is graphically discrete and contains a maximal finite normal subgroup. If $\Gamma\to G$ is a uniform lattice embedding modulo finite kernel with $G$ totally disconnected, then there exists a compact normal subgroup $K\vartriangleleft G$ such that $G/K$ is discrete. Letting $\rho:\Gamma\to G/K$ be the uniform lattice embedding modulo finite kernel composed with the quotient map, we see that $\rho(\Gamma)$ is isomorphic to a finite-index subgroup of $G/K$, hence $G/K$ also has a maximal finite normal subgroup. Thus $G$ has a maximal compact normal subgroup, which contains $K$ hence is open.
	\end{rem}
    
   It thus follows from Proposition \ref{prop:propC} that a finitely generated group $\Gamma$ is graphically discrete if all its uniform lattice embeddings modulo finite kernels into tdlc groups are trivial. Since much of the literature is concerned with genuine (i.e.\ injective) lattice embeddings rather than lattice embeddings modulo finite kernels,  we note the following consequence of Proposition~\ref{prop:propC}. 

    \begin{cor}\label{cor:propctorsionfree}
        Let $\Gamma$ be a finitely generated group. Suppose there exists a finite-index torsion-free subgroup $\Gamma'\leq \Gamma$  such that whenever $\rho:\Gamma' \to G$ is a uniform lattice embedding with  $G$ a tdlc group, then $\rho$ is trivial.  Then $\Gamma$ is graphically discrete.  
    \end{cor}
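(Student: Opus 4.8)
The plan is to verify condition (\ref{item:propC3}) of Proposition~\ref{prop:propC}: I would take an \emph{arbitrary} uniform lattice embedding modulo finite kernel $\rho\colon \Gamma\to G$ with $G$ a tdlc group, and show that $\rho$ is trivial. The key organizing observation is that, by Remark~\ref{rem:triv<->c-by-d}, triviality of $\rho$ is equivalent to $G$ being compact-by-discrete — a property of $G$ alone, independent of which lattice embedding we are considering. Thus it suffices to exhibit \emph{some} trivial lattice embedding into $G$, and the natural candidate is the restriction of $\rho$ to the torsion-free finite-index subgroup $\Gamma'$.

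First I would check that $\rho|_{\Gamma'}\colon \Gamma'\to G$ is a genuine (injective) uniform lattice embedding. Its kernel is $\ker(\rho)\cap\Gamma'$; since $\rho$ has finite kernel and $\Gamma'$ is torsion-free, this is a finite subgroup of a torsion-free group and hence trivial, so $\rho|_{\Gamma'}$ is injective. This is precisely where the torsion-free hypothesis is used. Moreover $\rho(\Gamma')$ has finite index in the uniform lattice $\rho(\Gamma)$ (the index is at most $[\Gamma:\Gamma']$), and a finite-index subgroup of a uniform lattice is again a uniform lattice, so $\rho|_{\Gamma'}$ is a genuine uniform lattice embedding into the tdlc group $G$.

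The hypothesis of the corollary now applies directly to $\rho|_{\Gamma'}$, so this embedding is trivial, and by Remark~\ref{rem:triv<->c-by-d} the group $G$ is compact-by-discrete. Applying Remark~\ref{rem:triv<->c-by-d} once more — this time in the reverse direction — the original embedding $\rho\colon \Gamma\to G$ is trivial. Since $\rho$ was an arbitrary uniform lattice embedding modulo finite kernel into a tdlc group, condition (\ref{item:propC3}) of Proposition~\ref{prop:propC} holds, and $\Gamma$ is graphically discrete.

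I do not anticipate a serious obstacle: the argument is essentially a transfer between $\Gamma$ and $\Gamma'$ mediated by the fact that triviality is intrinsic to $G$. The only points requiring care are confirming that restriction to $\Gamma'$ genuinely kills the finite kernel, and recording that finite-index subgroups of uniform lattices remain uniform lattices so that $\rho|_{\Gamma'}$ lands in the scope of the hypothesis.
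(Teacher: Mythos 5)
Your proposal is correct and follows essentially the same route as the paper: the paper's proof also restricts $\rho$ to $\Gamma'$, notes that torsion-freeness kills the finite kernel so $\rho|_{\Gamma'}$ is a genuine uniform lattice embedding, and then invokes the hypothesis together with the fact (Remark~\ref{rem:triv<->c-by-d}) that triviality is a property of $G$ alone. Your write-up merely makes explicit two points the paper leaves implicit — that finite-index subgroups of uniform lattices are uniform lattices, and the back-and-forth use of Remark~\ref{rem:triv<->c-by-d} — which is fine.
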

\begin{proof}
	If $\rho:\Gamma\to G$ is lattice embedding modulo finite kernel, then $\rho|_{\Gamma'}$ is injective, hence a lattice embedding of $\Gamma'$.
\end{proof}

    One useful consequence of graphical discreteness, foreshadowing many of the rigidity results to appear later, is the following:
    
    \begin{prop} \label{prop:gdimpliesvi}
        Let $\Gamma_1$ and $\Gamma_2$ be two finitely generated groups, at least one of which is graphically discrete. If $\Gamma_1$ and $\Gamma_2$  act geometrically on the same connected locally finite graph, then $\Gamma_1$ and $\Gamma_2$ are virtually isomorphic.
    \end{prop}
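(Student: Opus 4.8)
The plan is to work inside the tdlc group $G=\Aut(X)$ and push both lattice embeddings down to a common discrete quotient. Assume without loss of generality that $\Gamma_1$ is graphically discrete, and let $X$ be the connected locally finite graph on which both groups act geometrically. Since the actions are by graph automorphisms, Lemma~\ref{lem:geom_lattice} gives uniform lattice embeddings modulo finite kernel $\rho_i\colon\Gamma_i\to G$ for $i=1,2$. Graphical discreteness of $\Gamma_1$ together with Proposition~\ref{prop:propC} then ensures that $G$ is compact-by-discrete: there is a compact open normal subgroup $K\vartriangleleft G$ such that $Q\coloneqq G/K$ is discrete. I write $q\colon G\to Q$ for the quotient map.

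The heart of the argument is to show that each composition $\pi_i\coloneqq q\circ\rho_i\colon\Gamma_i\to Q$ has finite kernel and finite-index image. For the kernel, $\rho_i(\Gamma_i)$ is a discrete subgroup of $G$, so its intersection with the compact subgroup $K$ is finite; since $\rho_i$ has finite kernel as well, $\ker(\pi_i)=\rho_i^{-1}(K)$ is finite. For the image, cocompactness supplies a compact set $C$ with $G=\rho_i(\Gamma_i)\,C$, whence $Q=q(\rho_i(\Gamma_i))\,q(C)$; since $q(C)$ is a compact, hence finite, subset of the discrete group $Q$, the image $Q_i\coloneqq\pi_i(\Gamma_i)$ has finite index in $Q$.

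Given this, I would assemble the virtual isomorphism directly from Definition~\ref{defn:AC_VI}. Set $Q_0\coloneqq Q_1\cap Q_2$, a finite-index subgroup of $Q$ and hence of both $Q_1$ and $Q_2$. Let $H_i\coloneqq\pi_i^{-1}(Q_0)\leq\Gamma_i$, which has finite index and is open since $\Gamma_i$ is discrete, and let $K_i\coloneqq\ker(\pi_i)$, a finite and hence compact normal subgroup of $H_i$. Then $\pi_i$ induces an isomorphism $H_i/K_i\xrightarrow{\ \cong\ }Q_0$, so that $H_1/K_1\cong Q_0\cong H_2/K_2$, which is exactly the data exhibiting $\Gamma_1$ and $\Gamma_2$ as virtually isomorphic.

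The step I expect to be the main obstacle is the bookkeeping of the middle paragraph: one must use that a uniform lattice modulo finite kernel supplies both discreteness of the image (to bound $\ker\pi_i$ using compactness of $K$) and genuine cocompactness in $G$ (to bound the index of $Q_i$ using finiteness of $q(C)$), and it is precisely the fact that $K$ is simultaneously compact and open that makes both halves succeed. The remaining steps are formal consequences of Definition~\ref{defn:AC_VI}; alternatively one could observe that each $\Gamma_i$ is virtually isomorphic to $Q$ and invoke transitivity of virtual isomorphism, but the explicit construction above avoids relying on that.
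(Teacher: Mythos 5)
Your proof is correct and follows essentially the same route as the paper's: use graphical discreteness (via Proposition~\ref{prop:propC}) to obtain a compact open normal subgroup $K\vartriangleleft\Aut(X)$ with discrete quotient, observe that each $\Gamma_i$ maps to this quotient with finite kernel $\rho_i^{-1}(K)$ and finite-index image, and intersect the images to produce the virtual isomorphism. The paper's version is just terser, leaving the finiteness of the kernels and the finite-index claim as routine facts about uniform lattices modulo finite kernel.
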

    \begin{proof}
        Suppose $\Gamma_1$ is graphically discrete and that $\rho_i:\Gamma_i\rightarrow \Aut(X)$ is a geometric action on a connected locally finite graph $X$ for $i=1,2$. As $\Gamma_1$ is graphically discrete, there exists a compact subgroup $K\vartriangleleft \Aut(X)$ with $\Aut(X)/K$ discrete. For $i=1,2$, $\Gamma_i/\rho^{-1}_i(K)$ is isomorphic to a finite-index subgroup of $\Aut(X)/K$, thus $\Gamma_1/\rho^{-1}_1(K)$ and $\Gamma_2/\rho^{-1}_2(K)$ are abstractly commensurable. Since $\rho^{-1}_1(K)$ and $\rho^{-1}_2(K)$ are finite, $\Gamma_1$ and $\Gamma_2$ are virtually isomorphic.
    \end{proof}

\subsection{Characterization of compact-by-discrete automorphism groups}\label{subsec:compactbydiscrete}

\begin{thm}\label{thm:compactbydiscrete}
    Let $X$ be a connected locally finite graph and let $G\leq \Aut(X)$ be a closed subgroup that acts cocompactly on $X$.
    With the topology on $G$ induced from that on $\Aut(X)$, the following are equivalent:
    \begin{enumerate}
        \item\label{item:compactbydiscrete} $G$ is compact-by-discrete.
        \item\label{item:Abounded} There exists a constant $A>0$ such that $|G_x\cdot y|\leq A$ for all $x,y\in VX$, where $G_x$ denotes the stabilizer of $x$.
        \item\label{item:openQIkernel} The homomorphism $\phi:G\to\QI(X)$ has open kernel.
        \item\label{item:QIfinitestabs} For some (equivalently every) $x\in VX$, the stabilizer $G_x$ has finite image under the homomorphism $\phi:G\to\QI(X)$.
        \item\label{item:imprimitivity} There exists a $G$-imprimitivity system $\sim$ on $X$ with finite blocks such that the induced action  $G\curvearrowright X/\sim$ is discrete.
    \end{enumerate}
\end{thm}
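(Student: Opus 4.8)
The plan is to prove the cycle $(1)\Rightarrow(2)\Rightarrow(3)\Rightarrow(1)$, to record the formal equivalence $(3)\Leftrightarrow(4)$ (which also yields the ``some $\Leftrightarrow$ every'' clause of $(4)$), and to invoke Proposition~\ref{prop:compactbydiscrete} for $(1)\Leftrightarrow(5)$. Everything is organized around the subgroup $B:=\ker\phi=\{g\in G:\sup_{x}d(x,gx)<\infty\}$ of bounded isometries. Two facts are used repeatedly. First, since $X$ is locally finite, each vertex stabilizer $G_x$ is a \emph{compact open} subgroup of $G$: it is a basic open set, it is closed, and it has bounded (indeed finite) orbits, hence compact by Proposition~\ref{prop:compact<->bounded}. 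Second, $B$ is normal: for $g\in B$ and $h\in G$ one has $d(x,hgh^{-1}x)=d(h^{-1}x,gh^{-1}x)\le\sup_{y}d(y,gy)$, and $\phi(g)=1$ precisely when $g$ is close to the identity, i.e.\ when $g\in B$; thus $B=\ker\phi\trianglelefteq G$.

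The equivalence $(3)\Leftrightarrow(4)$ is then purely formal. If $B$ is open, then $G_x\cap B$ is open in the compact group $G_x$, hence of finite index, so $\phi(G_x)\cong G_x/(G_x\cap B)$ is finite for \emph{every} $x$; this proves $(3)\Rightarrow(4)$. Conversely, if $\phi(G_{x_0})$ is finite for \emph{some} $x_0$, then $G_{x_0}\cap B$ has finite index in the open subgroup $G_{x_0}$ and is therefore open; since $B$ is a subgroup containing this nonempty open set, $B$ is open, giving $(4)\Rightarrow(3)$. Together these also establish that $\phi(G_x)$ being finite for one $x$ is equivalent to it being finite for all $x$.

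For $(1)\Rightarrow(2)$, let $K\trianglelefteq G$ be compact and open. I first note $K$ has \emph{uniformly} bounded displacement: the function $x\mapsto\sup_{k\in K}d(x,kx)$ is finite (as $K$ is compact, orbits are bounded) and $G$-invariant (as $K$ is normal), hence bounded on the finitely many vertex orbits; call the bound $R$. Then $K\cdot z\subseteq\overline{B}(z,R)$, whose cardinality is uniformly bounded by local finiteness and cocompactness, say by $A_0$. Writing $|G_x\cdot y|\le[G_x:G_x\cap K]\cdot A_0$, and observing that $[G_x:G_x\cap K]$ is finite ($K$ open, $G_x$ compact) and constant along each vertex orbit (normality of $K$), cocompactness yields a single bound $A$, proving $(2)$. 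The same displacement estimate gives $K\subseteq B$, so $K$ open forces $B$ open; this re-proves $(1)\Rightarrow(3)$ as a consistency check.

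The crux is closing the loop, $(2)\Rightarrow(3)\Rightarrow(1)$. One must \emph{not} hope to take $B$-orbits as blocks: the example of $\mathbb Z$ acting on a line shows $B$ can be open yet noncompact (every translation is a bounded isometry), so its orbits can be infinite even when $G$ is already discrete. Instead I plan a Trofimov-style argument in two steps. The first is a key lemma: under $(2)$ and cocompactness there is an $N$ so that any automorphism fixing the $N$-ball about a vertex $v$ pointwise is a bounded isometry, i.e.\ the pointwise stabilizer $G_v^{[N]}$ satisfies $G_v^{[N]}\subseteq B$; since $G_v^{[N]}$ is open, this immediately gives $(2)\Rightarrow(3)$, and the point is that the uniform orbit bound $A$ prevents the ``cascading'' far-reaching symmetries (as occur in a regular tree, where $(2)$ fails) from producing unbounded displacement once a large ball is fixed. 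The second, and genuinely technical, step is to assemble the resulting finite local actions $G_v/G_v^{[N]}$ into a single $G$-invariant equivalence relation with \emph{finite} blocks whose induced action on $X/\!\sim$ is discrete; the main obstacle is making the blocks simultaneously finite and $G$-respecting, which is exactly where the uniform bound $A$ (rather than mere per-vertex finiteness of stabilizer orbits) and cocompactness are indispensable, since they control how the bounded local symmetries propagate through $X$. Once such an imprimitivity system is produced, condition $(5)$ holds, and Proposition~\ref{prop:compactbydiscrete} returns $(1)$, completing the cycle.
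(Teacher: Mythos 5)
Your fully worked steps are fine: (1)$\Rightarrow$(2) is essentially the paper's argument, and (3)$\Rightarrow$(4) (open kernel meets the compact open stabilizer in a finite-index subgroup) is correct. But the proposal never closes the cycle back to (1), and the two steps you leave as plans are precisely the hard content of the theorem. The "key lemma" behind (2)$\Rightarrow$(3) — that under the uniform orbit bound the pointwise stabilizer of a large ball consists of bounded automorphisms — is asserted, not proven, and the heuristic offered for it is doubtful: the hypothesis $|G_x\cdot y|\leq A$ bounds orbit \emph{cardinality}, not displacement (all points of $G_x\cdot y$ lie on the sphere of radius $d(x,y)$ about $x$, so they can be about $2d(x,y)$ apart), so it is not clear how fixing a ball forces bounded displacement without real work. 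Likewise the assembly of a $G$-invariant imprimitivity system with finite blocks, which you acknowledge as "genuinely technical," is exactly where the paper brings in outside machinery that you avoid without replacing: Schlichting's theorem for (2)$\Rightarrow$(1) (since $[G_x:G_x\cap G_y]=|G_x\cdot y|$, condition (2) says the conjugates of $G_x$ are uniformly commensurable, and Schlichting produces a normal subgroup commensurable with $G_x$ containing a finite intersection of conjugates of $G_x$, hence compact and open), and Trofimov's theorem for (4)$\Rightarrow$(1) (applied not to $X$, which is only cocompact, but to an auxiliary connected locally finite \emph{vertex-transitive} graph $Y$ on an orbit $G\cdot x$, with edges joining vertices at distance at most $d$; one then shows the kernel of $G\to\Aut(Y/\sim)$ is compact and open using finiteness of $\phi(G_x)$ and openness of block stabilizers). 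Without one of these inputs, or a genuine substitute, the theorem is not proved.

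There is also an actual error in the step you call "purely formal." For (4)$\Rightarrow$(3) you argue that $G_{x_0}\cap B$, having finite index in the compact open subgroup $G_{x_0}$, is therefore open. Finite-index subgroups of compact (profinite) groups need not be open; only \emph{closed} finite-index subgroups are, and you have not shown that $B\cap G_{x_0}$ is closed. Closedness of the group of bounded isometries is exactly the sort of statement the paper only obtains under a tameness hypothesis (Proposition \ref{prop:compactnormal.vs.bounded}), which is unavailable for a general locally finite graph, and Nikolov--Segal does not apply since $G_{x_0}$ need not be topologically finitely generated. This is why the paper does not deduce (3) from (4) formally, but instead routes (4)$\Rightarrow$(1) through Trofimov's theorem. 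As written, the implications you have actually established are (1)$\Rightarrow$(2), (1)$\Rightarrow$(3)$\Rightarrow$(4), and (1)$\Leftrightarrow$(5); nothing returns to (1), so the equivalence remains open in your proposal.
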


\begin{rem}
	As $X$ is locally finite, all vertex stabilizers $G_x$ are commensurable by Example \ref{exmp:vstab_comm}. This ensures that for $\phi:G\to\QI(X)$ as above, $\phi(G_x)$ is finite for some $x\in VX$ if and only if $\phi(G_x)$ is finite for every $x\in VX$.
\end{rem}

    The theorem above provides several additional ways to define graphical discreteness. Note the equivalence of (\ref{item:compactbydiscrete}) and (\ref{item:imprimitivity}) was established in Proposition~\ref{prop:compactbydiscrete}. The result is also a useful tool for showing that certain groups are not graphically discrete. For instance, non-abelian free groups act geometrically on the $4$-valent tree, so one can deduce the following corollary from Theorem~\ref{thm:compactbydiscrete}(\ref{item:Abounded}) or (\ref{item:QIfinitestabs}).

    \begin{cor}
        Non-abelian free groups are not graphically discrete. 
    \end{cor}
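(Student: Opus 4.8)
The plan is to produce a single geometric action of a non-abelian free group $F$ on a connected locally finite graph $X$ for which $\Aut(X)$ fails to be compact-by-discrete; by Proposition~\ref{prop:propC}(\ref{item:propC4}) this is exactly what is needed to conclude that $F$ is not graphically discrete. The natural choice is $X=T$, the $2n$-regular tree, where $F$ is free of rank $n\geq 2$. This is precisely the Cayley graph of $F$ with respect to a free generating set, so the left-multiplication action of $F$ on $T$ is free, cocompact and by isometries, hence geometric, and $T$ is connected and locally finite as required. (Alternatively, since every non-abelian free group is isomorphic to a finite-index subgroup of $F_2$, one could use the $4$-valent tree uniformly for all ranks, restricting the standard $F_2$-action; I would mention this but work with the Cayley tree $T$ since the geometric action is then immediate.)

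Next I would set $G=\Aut(T)$ and check the standing hypotheses of Theorem~\ref{thm:compactbydiscrete}: $G$ is (trivially) a closed subgroup of itself, and it acts vertex-transitively on $T$, hence cocompactly. It then suffices to show that one of the equivalent conditions of that theorem fails. I would use the failure of condition (\ref{item:Abounded}): there is no uniform bound on the orbit sizes $|G_x\cdot y|$. Indeed, for a fixed vertex $x$ the group $\Aut(T)$ acts transitively on each metric sphere $S_d(x)$ of radius $d$ about $x$, so for every $y\in S_d(x)$ the orbit $G_x\cdot y$ equals all of $S_d(x)$, which has cardinality $2n(2n-1)^{d-1}$. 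As $2n\geq 4$, this grows without bound as $d\to\infty$, so no constant $A$ as in (\ref{item:Abounded}) can exist. Hence $\Aut(T)$ is not compact-by-discrete, and $F$ is not graphically discrete.

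I do not expect a genuine obstacle here; the entire content lies in choosing the tree model and reading off the unbounded sphere sizes. The only nonroutine input is the transitivity of $\Aut(T)$ on each sphere $S_d(x)$, which is the standard fact that the automorphism group of a regular tree realizes every level-preserving permutation built branch-by-branch outward from the basepoint, and this is where I would be slightly careful. As a remark I would note the alternative route via condition (\ref{item:QIfinitestabs}): distinct permutations of the branches at $x$ by elements of $G_x$ send points deep in a branch to points that are linearly far apart, so they represent distinct classes in $\QI(T)$, showing $\phi(G_x)$ is infinite and giving the same conclusion.
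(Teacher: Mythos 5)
Your proposal is correct and follows essentially the same route as the paper: the paper likewise notes that non-abelian free groups act geometrically on a regular tree (it uses the $4$-valent tree, via the finite-index embedding in $F_2$) and then invokes Theorem~\ref{thm:compactbydiscrete}(\ref{item:Abounded}) or (\ref{item:QIfinitestabs}) to see that $\Aut(T)$ is not compact-by-discrete. Your verification that vertex stabilizers act transitively on spheres, hence violate the uniform orbit bound, is exactly the content the paper leaves implicit.
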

    
See Section \ref{subsec:nongraphdisc} for more examples of non-graphically-discrete groups whose proofs use Theorem~\ref{thm:compactbydiscrete}, and see Section~\ref{sec:altgraphdiscrete} for further analysis in the setting of hyperbolic groups.

We first prove some of the easier implications in Theorem~\ref{thm:compactbydiscrete}.

\begin{lem}
The implications (\ref{item:compactbydiscrete})$\iff$(\ref{item:Abounded}) and (\ref{item:compactbydiscrete})$\implies$(\ref{item:openQIkernel})$\implies$(\ref{item:QIfinitestabs}) of Theorem~\ref{thm:compactbydiscrete} hold.
\end{lem}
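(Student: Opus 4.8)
The plan is to treat the three forward directions, which are genuine orbit-counting arguments, and then the single reverse direction (\ref{item:Abounded})$\implies$(\ref{item:compactbydiscrete}), which I expect to be the crux. Throughout, the key object is a compact open normal subgroup $K\vartriangleleft G$ witnessing compact-by-discreteness, and the two recurring facts are that vertex stabilizers $G_x$ are \emph{compact} (the action of the closed subgroup $G\le\Isom(X)$ on the proper space $X$ is proper, so Proposition~\ref{prop:compact<->bounded} applies) and \emph{open} (pointwise-convergence topology), and that for a \emph{normal} subgroup $K$ the quantities $|K\cdot x|$, $\operatorname{diam}(K\cdot x)$ and $[G_x:G_x\cap K]$ depend only on the $G$-orbit of $x$, since $K\cdot gx=g(K\cdot x)$ and $G_{gx}\cap K=g(G_x\cap K)g^{-1}$. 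As $G$ acts cocompactly there are finitely many vertex orbits, so each of these quantities is \emph{uniformly bounded} over $VX$.

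For (\ref{item:compactbydiscrete})$\implies$(\ref{item:openQIkernel})$\implies$(\ref{item:QIfinitestabs}): given a compact open normal $K$, I would first show $K\subseteq\ker\phi$. Each orbit $K\cdot x$ is compact, hence finite in the locally finite graph $X$, so $\operatorname{diam}(K\cdot x)<\infty$; by the orbit-invariance above and cocompactness, $D:=\sup_{x}\operatorname{diam}(K\cdot x)<\infty$. Thus every $k\in K$ satisfies $\sup_{x}d(kx,x)\le D$, i.e.\ $k$ is a bounded isometry and $\phi(k)=[\id]$; hence $K\le\ker\phi$, and since $K$ is open, $\ker\phi$ is open, giving (\ref{item:openQIkernel}). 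For (\ref{item:openQIkernel})$\implies$(\ref{item:QIfinitestabs}), fix $x$: as $G_x$ is compact and $\ker\phi$ is open, $G_x\cap\ker\phi$ is an open, hence finite-index, subgroup of $G_x$, so $\phi(G_x)\cong G_x/(G_x\cap\ker\phi)$ is finite; the passage between ``some'' and ``every'' $x$ is handled by commensurability of vertex stabilizers (Example~\ref{exmp:vstab_comm}), as in the remark following the theorem statement.

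For (\ref{item:compactbydiscrete})$\implies$(\ref{item:Abounded}): with $K$ as above, set $M:=\sup_{y}|K\cdot y|$ and $N:=\sup_{x}[G_x:G_x\cap K]$, both finite by the uniform-bound discussion ($K\cdot y$ is finite, and $G_x\cap K$ is open in the compact group $G_x$). Writing $G_x=\bigsqcup_{i=1}^{n}g_i(G_x\cap K)$ with $n\le N$, one has $G_x\cdot y=\bigcup_i g_i(G_x\cap K)\cdot y$, and since each $g_i$ is a bijection and $(G_x\cap K)\cdot y\subseteq K\cdot y$, it follows that $|G_x\cdot y|\le NM=:A$, which is (\ref{item:Abounded}).

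The reverse direction (\ref{item:Abounded})$\implies$(\ref{item:compactbydiscrete}) is where I expect the real difficulty. My plan is to produce a compact open normal subgroup via the imprimitivity criterion of Proposition~\ref{prop:compactbydiscrete}, so that it suffices to build a $G$-imprimitivity system with finite blocks whose induced action on the quotient is discrete. The natural candidate is $v\sim w\iff G_v=G_w$, which is manifestly $G$-invariant (since $G_{gv}=gG_vg^{-1}$) and which one checks by hand yields the correct blocks in the guiding examples, grouping exactly the ``twin''/pendant vertices that carry the compact directions. The bound (\ref{item:Abounded}) feeds in through the identity $[G_x:G_x\cap G_y]=|G_x\cdot y|\le A$: in particular every $g\in G_x$ has all $\langle g\rangle$-orbits of size $\le A$, so $g^{A!}$ fixes every vertex and $G_x$ has exponent dividing $A!$. \textbf{The hard part} will be leveraging (\ref{item:Abounded}) together with local finiteness and connectivity of $X$ to prove the two required properties of this system---that the blocks $\{w:G_w=G_v\}$ are \emph{finite}, and that the kernel $\bigcap_v N_G(G_v)$ of the induced action is \emph{open}. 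Neither follows from the uniform commensuration of the stabilizers alone, since a bounded-orbit profinite action can have infinite-index kernel, so the graph geometry must be used essentially; this is the step I expect to occupy the bulk of the argument, after which Proposition~\ref{prop:compactbydiscrete} closes the loop.
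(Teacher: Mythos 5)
Your three forward implications are correct and essentially identical to the paper's: for (\ref{item:compactbydiscrete})$\implies$(\ref{item:Abounded}) and (\ref{item:compactbydiscrete})$\implies$(\ref{item:openQIkernel}) the paper uses exactly the same uniform bounds on $|K\cdot y|$, $\operatorname{diam}(K\cdot y)$ and $[G_x:G_x\cap K]$ coming from normality of $K$ plus cocompactness, and for (\ref{item:openQIkernel})$\implies$(\ref{item:QIfinitestabs}) your use of compactness of $G_x$ (an open subgroup of a compact group has finite index) is a harmless variant of the paper's appeal to the pointwise stabilizer of a ball.

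The genuine gap is the direction (\ref{item:Abounded})$\implies$(\ref{item:compactbydiscrete}), which you correctly identify as the crux but do not prove: you only propose a candidate imprimitivity system and explicitly defer the two properties (finite blocks, open kernel) that would make Proposition~\ref{prop:compactbydiscrete} applicable. Moreover, the candidate $v\sim w\iff G_v=G_w$ fails in general. Take any connected locally finite graph $X$ whose full automorphism group $G=\Aut(X)$ is an infinite group acting freely and cocompactly on $VX$ (e.g.\ a graphical regular representation of an infinite finitely generated group): then hypothesis (\ref{item:Abounded}) holds with $A=1$, but all vertex stabilizers are trivial, hence equal, so your relation has a single block equal to the whole infinite vertex set; the quotient is a point and the kernel of the induced action is all of $G$, which is not compact. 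So uniform commensuration plus the graph geometry cannot be fed through this particular relation. The paper's actual argument avoids constructing an imprimitivity system altogether: since $[G_x:G_x\cap G_y]=|G_x\cdot y|\leq A$, all conjugates of a vertex stabilizer $G_x$ are uniformly commensurable, and Schlichting's Theorem (in the strengthened form of Bergman--Lenstra) produces a normal subgroup $K\vartriangleleft G$ commensurable with $G_x$ that contains a finite intersection of conjugates of $G_x$ as a finite-index subgroup; such a $K$ is open (a finite intersection of vertex stabilizers is open) and compact by Proposition~\ref{prop:compact<->bounded}. Without Schlichting's Theorem, or some equivalent input, your outline does not close.
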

\begin{proof}
   (\ref{item:compactbydiscrete})$\implies$(\ref{item:Abounded}): Let $K\triangleleft G$ be a compact open normal subgroup. The vertex orbits of $K$ are finite because $K$ is compact (Proposition \ref{prop:compact<->bounded}), and this set of orbits is $G$-invariant because $K$ is normal. Since $G$ acts cocompactly on $X$, the sizes of the vertex orbits of~$K$ are bounded by some constant $N$. Given $x\in VX$, the stabilizer $G_x$ is compact and open (Proposition \ref{prop:compact<->bounded}). Since the intersection $K_x=G_x\cap K$ is open, $K_x$ has finite index in $G_x$. Then given a second vertex $y$, we have 
   $$|G_x\cdot y|\leq [G_x:K_x]|K_x\cdot y|\leq [G_x:K_x]N.$$
   Finally, the index $[G_x:K_x]$ can be bounded independently of $x$ because $K$ is normal and $G$ acts cocompactly on $X$.
   
   (\ref{item:Abounded})$\implies$(\ref{item:compactbydiscrete}): Schlichting's Theorem \cite{Schlichting80} (see also \cite[Theorem 3]{BergmanLenstraHendrik89}) states that if $G$ is a group with a subgroup $H$ such that the conjugates of $H$ are \emph{uniformly commensurable} (meaning there is a finite upper bound for the indices $[H:H\cap gHg^{-1}]$ with $g\in G$), then $H$ is commensurable with a normal subgroup $K\triangleleft G$.
   Moreover, we can assume that $K$ contains a finite intersection of conjugates of $H$ (see the proof of \cite[Theorem 3]{BergmanLenstraHendrik89}).
   With $G$ being the group from Theorem \ref{thm:compactbydiscrete}, now suppose there exists a constant $A>0$ such that $|G_x\cdot y|\leq A$ for all $x,y\in VX$. Since $[G_x: G_x \cap G_y]=|G_x\cdot y|$,  all conjugates of a vertex stabilizer $G_x$ are uniformly commensurable. So, we can apply Schlichting's Theorem with $H$ being any vertex stabilizer $G_x$. The resulting normal subgroup $K\triangleleft G$ contains a finite intersection of vertex stabilizers as a finite-index subgroup. Hence, $K$ is open by the definition of the compact-open topology, and $K$ is compact by Proposition~\ref{prop:compact<->bounded}.

   (\ref{item:compactbydiscrete})$\implies$(\ref{item:openQIkernel}): Let $K\triangleleft G$ be a compact open normal subgroup. The vertex orbits of $K$ are finite because $K$ is compact (Proposition \ref{prop:compact<->bounded}), and the set of orbits is $G$-invariant because $K$ is normal. Since $G$ acts cocompactly on $X$ we deduce that the diameters of the vertex orbits of $K$ are uniformly bounded, hence $K$ lies in the kernel of the homomorphism $\phi:G\to\QI(X)$. As $K$ is open, we conclude that $\ker (\phi)$ is open.
   
   (\ref{item:openQIkernel})$\implies$(\ref{item:QIfinitestabs}): Suppose $\ker(\phi)$ is open and let $x\in VX$. As $\ker(\phi)$ is an open neighborhood of the  identity automorphism, the pointwise $G$-stabilizer of some ball about $x$ is contained in $\ker(\phi)$ (see \cite[Example 5.B.11]{cornulierdlH2016metric}). But $X$ is locally finite, so this pointwise stabilizer has finite index in $G_x$. Hence $\phi(G_x)$ is finite.
   \end{proof}

To complete the proof of Theorem \ref{thm:compactbydiscrete}, it remains to prove the implication (\ref{item:QIfinitestabs})$\implies$(\ref{item:compactbydiscrete}).
For this we need the following theorem of Trofimov.  Recall a \emph{bounded automorphism} $\phi\in \Aut(X)$ of a  graph $X$ is a graph automorphism of $X$ such that $\sup_{x\in VX}d(x,\phi(x))<\infty$. Note that a graph automorphism is bounded precisely if it lies in the kernel of the natural map $\Aut(X)\to \QI(X)$.

\begin{thm}\cite[Proposition 2.3]{trofimov1984polynomial}\label{prop:trofimov}
    Let $X$ be a connected locally finite vertex-transitive graph.
    There exists an $\Aut(X)$-imprimitivity system $\sim$ on $X$ with finite blocks, such that no nontrivial bounded automorphism of the induced action $\Aut(X)\curvearrowright X/\sim$ stabilizes a vertex of $X/\sim$.
\end{thm}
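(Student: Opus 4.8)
The plan is to realize the desired imprimitivity system as the orbits of a canonical normal subgroup of $\Aut(X)$ that captures all of the ``local'' bounded symmetry of $X$, and then to verify semiregularity by a quasi-isometry argument. Throughout, equip $G=\Aut(X)$ with the permutation topology, so that $G$ is a totally disconnected locally compact group acting continuously, properly and cocompactly on $X$, each vertex stabilizer $G_v$ is compact, open and profinite, and (by Proposition~\ref{prop:compact<->bounded}) a subset of $G$ is relatively compact if and only if it has finite vertex orbits. Let $B=\ker(G\to\QI(X))$ be the normal subgroup of bounded automorphisms, and set
\[
  S=\{\,b\in B : \fix(b)\neq\emptyset\,\},\qquad N=\langle S\rangle .
\]
Since conjugation preserves both boundedness and the property of fixing a vertex, $S$ is conjugation-invariant, so $N\trianglelefteq G$; note also $N\leq B$, so every element of $N$ is a bounded automorphism. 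I would then define $\sim$ to be the partition of $VX$ into $N$-orbits. This is $G$-invariant because $N$ is normal, and by vertex-transitivity all blocks have the same cardinality.

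The crux of the argument---and the step I expect to be the main obstacle---is to show that this common cardinality is finite, equivalently that $N$ has compact closure in $G$. The point is that a bounded automorphism fixing a vertex $w$ preserves each (finite) sphere about $w$ and can only displace vertices within a bounded distance, so every generator in $S$ acts ``locally''; what must be ruled out is that a long product of such generators chains these local movements together into an automorphism (such as a translation) that displaces a fixed vertex arbitrarily far. This is precisely where local finiteness and vertex-transitivity are essential: in a graph of unbounded local complexity one could compose adjacent local swaps into automorphisms with infinite orbits, and the content of the theorem is that this cannot occur in the locally finite transitive setting. Concretely, I would analyse the profinite stabilizer $G_v$ together with the open subgroup $N\cap G_v$ and prove that it has finite index in $N$, deriving a uniform bound on orbit sizes from local finiteness.

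Granting finiteness of blocks, the remainder is formal. Because the blocks are finite of uniformly bounded size, the quotient map $q\colon X\to X/\sim$ is a quasi-isometry, so an automorphism of $X/\sim$ has bounded displacement if and only if any lift to $G$ has bounded displacement in $X$. The kernel $P$ of the induced action $G\to\Aut(X/\sim)$ consists of those $g\in G$ preserving every $N$-orbit; it contains $N$, its orbits coincide with the blocks, and hence $P$ is compact. Now suppose, for contradiction, that $\bar g$ is a nontrivial bounded automorphism of the induced action $G\curvearrowright X/\sim$ fixing a vertex $[v]$. Choosing a lift $g\in G$ with $gv\in Nv$ and correcting by an element of $N\subseteq P$, we may assume $gv=v$, without changing the induced automorphism $\bar g$. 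Since $\bar g$ is bounded in $X/\sim$ and $q$ is a quasi-isometry, $g\in B$; and since $g$ fixes the vertex $v$, we obtain $g\in S\subseteq N\subseteq P$, so $\bar g$ is trivial, a contradiction. Therefore no nontrivial bounded automorphism of the induced action stabilizes a vertex of $X/\sim$, which is the assertion of the theorem; only the finiteness of the blocks requires genuine work, and it is there that the local finiteness hypothesis is used in full strength.
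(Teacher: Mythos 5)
Your argument correctly reduces the theorem to a single claim, but that claim is never proved. The formal parts are fine: $S$ is conjugation-invariant, so $N=\langle S\rangle$ is normal and its orbits give an $\Aut(X)$-invariant partition; and, granting finite blocks, your lifting argument is correct (block diameters are uniformly bounded since $G$ permutes the blocks transitively by isometries, so $q\colon X\to X/\sim$ is a quasi-isometry; any bounded induced automorphism fixing a block can be corrected by an element of $N$ to a lift $g$ that is bounded on $X$ and fixes a vertex, whence $g\in S\subseteq N$ and the induced map is trivial). But this only shows the theorem is \emph{equivalent} to the assertion that $N$ has finite orbits: indeed, conversely, given the theorem every $b\in S$ induces a bounded automorphism of $X/\sim$ fixing a block, hence lies in the kernel of $G\to\Aut(X/\sim)$, so $N$ has finite orbits. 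The entire content of Trofimov's proposition is therefore concentrated in the step you defer, and what you offer there — ``prove that $N\cap G_v$ has finite index in $N$'' — is, by orbit–stabilizer, a verbatim restatement of the claim, not a strategy for proving it. Note also that the paper itself does not prove this statement; it cites Trofimov, whose proof is a genuinely substantial argument with preliminary lemmas on bounded automorphisms, so the missing step is not routine bookkeeping.

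The gap cannot be closed by the soft properties you list. Each element of $S$ does have all orbits finite (an automorphism fixing $v$ preserves the finite spheres about $v$), but ``bounded with all orbits finite'' is not closed under products at the level of permutations: on $\mathbb{Z}$, the involution swapping $2n\leftrightarrow 2n+1$ for all $n$ and the involution swapping $2n\leftrightarrow 2n-1$ for all $n$ each have displacement $1$ and orbits of size at most $2$, yet their product is a translation with infinite orbits. (These are not graph automorphisms with fixed points, so this is not a counterexample to your claim; it shows, rather, that your claim does not follow from the properties of the generators you actually use, and that any proof must exploit the graph structure, vertex-transitivity, and local finiteness in an essential way — exactly the ``chaining local moves into a translation'' obstruction you flag and then leave untouched.) As it stands, the proposal proves an equivalence between the theorem and an unproved lemma of the same depth, so it does not constitute a proof.
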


We now complete the proof of Theorem \ref{thm:compactbydiscrete} with the following lemma.

\begin{lem}
The implication (\ref{item:QIfinitestabs})$\implies$(\ref{item:compactbydiscrete}) from Theorem \ref{thm:compactbydiscrete} holds.
Namely, given a connected locally finite graph $X$ and $G<\Aut(X)$ a closed subgroup acting cocompactly on $X$, if the stabilizer $G_x$ has finite image under the homomorphism $\phi:G\to\QI(X)$ for all $x\in VX$, then $G$ is compact-by-discrete.
\end{lem}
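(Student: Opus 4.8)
The plan is to combine Trofimov's theorem (Theorem~\ref{prop:trofimov}) with the imprimitivity-system criterion for compact-by-discreteness (Proposition~\ref{prop:compactbydiscrete}). The obstruction to applying Trofimov directly is that his theorem requires $X$ to be vertex-transitive and concerns the full group $\Aut(X)$, whereas we only assume $G$ acts cocompactly. So the first task is a vertex-transitive reduction. I would first note that $G$ is a compactly generated tdlc group: it is a closed subgroup of the tdlc group $\Aut(X)$, the stabilizer $U:=G_{x_0}$ of a base vertex is compact open (the action is proper), and $G$ is compactly generated by the Milnor--Schwarz lemma \cite[Theorem 4.C.5]{cornulierdlH2016metric} since it acts geometrically on the connected graph $X$. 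By Lemma~\ref{lem:vanDantzig} there is a connected locally finite vertex-transitive graph $Y$ with a geometric action $\psi\colon G\to\Aut(Y)$; concretely $Y$ is a Cayley--Abels graph with $VY=G/U$ and base vertex $y_0$ whose $G$-stabilizer is $U$. The kernel $K_0=\ker\psi$ fixes $y_0$, hence lies in the compact subgroup $U$, so $K_0$ is a compact normal subgroup of $G$. It therefore suffices to prove that $\bar G:=\psi(G)\le\Aut(Y)$ (a closed, cocompact, in fact vertex-transitive subgroup) is compact-by-discrete, since a compact open normal subgroup $\bar K\triangleleft\bar G$ pulls back to the compact (extension of $K_0$ by $\bar K$) open normal subgroup $\psi^{-1}(\bar K)\triangleleft G$.

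Next I would transfer the hypothesis to $Y$. Since $X$ and $Y$ are both geometric $G$-spaces, a coarsely $G$-equivariant quasi-isometry $f\colon X\to Y$ induces an isomorphism $f_*\colon\QI(X)\to\QI(Y)$ intertwining $\phi\colon G\to\QI(X)$ with $\bar\phi\colon\bar G\to\QI(Y)$ (as $\bar\phi\circ\psi=f_*\circ\phi$). The compact normal subgroup $K_0$ has uniformly bounded orbits, so $K_0\subseteq\ker\phi$ and $\phi$ factors through $\bar G$, giving $f_*(\phi(U))=\bar\phi(\bar G_{y_0})$. Thus the hypothesis that $\phi(G_{x_0})$ is finite yields that $\bar\phi(\bar G_{y_0})$ is finite.

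Now I would apply Trofimov's theorem to $Y$ to obtain an $\Aut(Y)$-imprimitivity system $\sim$ with finite blocks for which no nontrivial bounded automorphism of the induced action $\Aut(Y)\curvearrowright Y/\sim$ fixes a vertex of $Y/\sim$; as $\bar G\le\Aut(Y)$, this $\sim$ is also a $\bar G$-imprimitivity system. The claim is that the induced action $\bar G\curvearrowright Y/\sim$ is discrete. Set $\bar G_{y_0}^{0}:=\bar G_{y_0}\cap\ker\bar\phi$, the subgroup of bounded automorphisms of $Y$ fixing $y_0$; it has finite index in the compact open subgroup $\bar G_{y_0}$, hence is open. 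For $g\in\bar G_{y_0}^{0}$, the element $g$ is a bounded automorphism of $Y$ fixing $y_0$, and since the blocks are finite of uniformly bounded diameter (vertex-transitivity makes $Y\to Y/\sim$ a quasi-isometry), the induced automorphism of $Y/\sim$ is bounded and fixes the vertex $[y_0]$. Trofimov's conclusion forces it to be trivial, so $g$ lies in the kernel $N$ of $\bar G\to\Aut(Y/\sim)$. Hence $N\supseteq\bar G_{y_0}^{0}$ is open, the induced action is discrete, and together with the finiteness of the blocks Proposition~\ref{prop:compactbydiscrete} gives that $\bar G$ is compact-by-discrete, completing the reduction and the proof.

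The hard part is precisely the vertex-transitive reduction and its bookkeeping: because Trofimov's theorem is only available for vertex-transitive graphs and the full automorphism group, one must introduce the auxiliary Cayley--Abels graph $Y$, verify that both compact-by-discreteness and the quasi-isometric finiteness hypothesis on vertex stabilizers survive passage through the compact kernel $K_0$ and the change of model (via $f_*$), and check that the bounded automorphisms coming from $\bar G_{y_0}^{0}$ genuinely descend to vertex-fixing bounded automorphisms of $Y/\sim$. Once this scaffolding is in place, the invocation of Theorem~\ref{prop:trofimov} and Proposition~\ref{prop:compactbydiscrete} is routine.
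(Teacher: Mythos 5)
Your overall route is the same as the paper's: manufacture a vertex-transitive locally finite graph to which Trofimov's theorem (Theorem \ref{prop:trofimov}) can be applied, descend bounded vertex-fixing automorphisms to the quotient $Y/\sim$, and conclude via Proposition \ref{prop:compactbydiscrete}. The paper does this with less machinery, taking $Y$ to be the orbit graph on $G\cdot x\subseteq VX$ with edges joining vertices at distance at most $d$, so that $G$ itself acts vertex-transitively and no passage to $\bar G=\psi(G)$ (whose closedness in $\Aut(Y)$ you assert without proof) or pullback through $K_0$ is needed. That bookkeeping is repairable; the real problem lies elsewhere.

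The genuine gap is the assertion that $\bar G_{y_0}^{0}=\bar G_{y_0}\cap\ker\bar\phi$ is open because ``it has finite index in the compact open subgroup $\bar G_{y_0}$.'' A finite-index subgroup of a compact group need not be open: $(\Z/2\Z)^{\N}$ has uncountably many subgroups of index two, only countably many of which are open. Nothing in your setup gives topological control over $\ker\bar\phi$ --- the group $\QI(Y)$ carries no topology here --- and indeed openness of $\ker\bar\phi$ is exactly condition (\ref{item:openQIkernel}) of Theorem \ref{thm:compactbydiscrete}, one of the statements being proved equivalent to (\ref{item:QIfinitestabs}); assuming it at this point is circular. The paper's proof is structured precisely to avoid this claim: it never shows $G_x\cap\ker\phi$ is open. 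Instead, with $K$ the kernel of $G\to\Aut(Y/\sim)$ and $K_x=G_x\cap K$, the Trofimov argument gives $G_x\cap\ker\phi\subseteq K_x$, hence $[G_x:K_x]\le|\phi(G_x)|<\infty$; then, choosing coset representatives $g_1,\dots,g_n$ of $K_x$ in $G_x$ and, for each $i$, a finite block $[x_i]$ moved by $g_i$, one checks that $K_x=G_x\cap\bigcap_{i=1}^{n}G_{[x_i]}$, a finite intersection of setwise stabilizers of finite vertex sets, which are open in the pointwise-convergence topology. Your argument can be repaired the same way: prove that $N\cap\bar G_{y_0}$ (rather than $\bar G_{y_0}^{0}$) is open by exhibiting it as such a finite intersection of block stabilizers, using only the finite-index bound $[\bar G_{y_0}:N\cap\bar G_{y_0}]\le[\bar G_{y_0}:\bar G_{y_0}^{0}]<\infty$, which your Trofimov step does establish.
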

\begin{proof}
    Fix $x\in VX$. Given $d>0$, we define a graph $Y$ with vertex set the $G$-orbit $G\cdot x$ and an edge joining $x_1,x_2\in G\cdot x$ whenever $d(x_1,x_2)\leq d$. The graph $Y$ is locally finite for all $d$, connected for sufficiently large $d$, and the group $G$ acts on $Y$ with a single vertex orbit. 
    We pick $d$ sufficiently large so that $Y$ is connected.
    Let $\sim$ be the $\Aut(Y)$-imprimitivity system on $Y$ provided by Proposition \ref{prop:trofimov}.
    In particular, $\sim$ is a $G$-imprimitivity system with finite blocks such that no nontrivial bounded automorphism of the induced action $G\curvearrowright Y/\sim$ stabilizes a vertex of $Y/\sim$.
    
    Let $K$ be the kernel of the homomorphism $G\to \Aut(Y/\sim)$. Proposition~\ref{prop:compact<->bounded} implies that $K$ is compact, so it remains to show that $K$ is open.
    Let $K_x= G_x\cap K$; since $K$ is a union of $K_x$-cosets, it suffices to prove $K_x$ is open.
    The subgroup $G_x$ stabilizes the vertex $[x]\in V(Y/\sim)$, so all elements of $G_x - K_x$ act on $Y/\sim$ as unbounded automorphisms, hence they also act on $X$ as unbounded automorphisms.
    Thus, the elements of $G_x - K_x$ lie outside the kernel of $\phi:G\to\QI(X)$.
    We assumed that $\phi(G_x)$ is finite, so we deduce that $K_x$ has finite index in $G_x$.
    Let $G_x=K_x\sqcup g_1 K_x\sqcup...\sqcup g_n K_x$ be the partition of $G_x$ into $K_x$-cosets.
    The $g_i$ are not in $K$, so $g_i[x_i]\neq [x_i]$ for some choice of $\sim$-block $[x_i]$.
    We deduce that
    $$K_x=G_x\cap\bigcap_{i=1}^n G_{[x_i]},$$
    where $G_{[x_i]}$ is the stabilizer of the block $[x_i]$.
    A finite intersection of open subgroups is open, therefore $K_x$ is open.
    Thus, $K$ is open, as required.
\end{proof}

\subsection{Examples of graphically discrete groups}

    \begin{thm}\label{thm:propc_examples}   
        Finitely generated groups in the following families are graphically discrete:
    \begin{enumerate}
        \item Virtually nilpotent groups \cite{trofimov1984polynomial}. \label{propc:trofimov}
        \item Irreducible lattices  in connected  center-free, real semisimple Lie groups without compact factors,  other than non-uniform lattices in $\PSL_2(\bR)$ \cite{furman2001mostow, bader2020lattice}. \label{propc:latticelie}
        \item Irreducible $S$-arithmetic lattices in the sense of \cite{bader2020lattice}. \label{propc:sarithmetic}
        \item Fundamental groups of closed irreducible $3$-manifolds with nontrivial geometric decomposition. \label{propc:nongeom}
        \item Finitely generated groups $\Gamma$ containing a  finite-index  torsion-free subgroup with no nontrivial lattice embeddings. Examples include: \label{propc:trivial lattice}
            \begin{enumerate}
                \item The mapping class group of a non-exceptional finite-type	orientable surface \cite[Theorem 1.4]{kida2010MCGs}
                \item The pure braid group $\PB_k(S_g)$ of $k$ strands on the closed surface of genus $g$, where $g,k\geq 2$ \cite[Theorem B]{chifankida2015and}.
                \item $\Out(F_n)$ for $n\geq 3$ \cite[Theorem 1.5]{guirardel2021measure}
                \item Various  2-dimensional Artin groups of hyperbolic type \cite[Theorem 10.9]{horbez2020boundary}
                \item The Higman groups \[\Hig_n=\langle a_1,\dots, a_n\mid a_ia_{i+1}a_i^{-1}=a_{i+1}^2 \text{ for $i=1,\dots, n\mod n$ }\rangle \] for $n\geq 5$ \cite[Corollary 1.3]{horbez2025measure}. 
                \item Fundamental groups of Riemannian $n$-manifolds for $n\geq 5$, with pinched negative curvature in the range $[-(1+\frac{1}{n-1})^2,-1]$,  that are not homeomorphic to closed hyperbolic manifolds \cite[Theorem E]{bader2020lattice}.
            \end{enumerate}
        \item Any finitely generated tame (see Definition \ref{defn:tame}) group $\Gamma$ for which the  natural map $\Gamma\rightarrow \QI(\Gamma)$ has finite-index image. Examples include: \label{propc:QI}
        \begin{enumerate}
            \item the mapping class group of non-exceptional finite-type	orientable surface \cite{behrstock2012mcgs};
            \item hyperbolic surface--by--free groups considered by Farb--Mosher \cite[Theorem 1.3]{farbmosher2002surfacebyfree};
            \item topologically rigid hyperbolic groups of Kapovich--Kleiner \cite{kapovichkleiner2000hyperbolic}.
        \end{enumerate}
    \end{enumerate}
    \end{thm}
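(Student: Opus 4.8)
The plan is to reduce each of the six families to one of the characterizations of graphical discreteness established earlier, and then quote the appropriate rigidity input. The two workhorses will be Proposition~\ref{prop:propC}, which says $\Gamma$ is graphically discrete exactly when every uniform lattice embedding modulo finite kernel of $\Gamma$ into a tdlc group is trivial, and Theorem~\ref{thm:compactbydiscrete}, which reduces the same conclusion to showing that vertex stabilizers have finite image in $\QI(X)$ for every geometric action on a connected locally finite graph $X$. I would keep two further tools at hand: Corollary~\ref{cor:propctorsionfree}, which lets one verify graphical discreteness after passing to a finite-index torsion-free subgroup and checking triviality of genuine (injective) lattice embeddings, and Lemma~\ref{lem:totdisc_to_lie}, which converts a continuous homomorphism from a tdlc group to a Lie group into one with open, hence compact, kernel.

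For the virtually nilpotent family, a geometric action on a locally finite graph $X$ forces $X$ to have polynomial growth, and Trofimov's structure theory for such graphs (recorded as Proposition~\ref{prop:trofimov}, applied after replacing $X$ by an auxiliary vertex-transitive orbit graph as in the proof of Theorem~\ref{thm:compactbydiscrete}) produces a finite-block imprimitivity system on which $\Aut(X)$ acts discretely; Proposition~\ref{prop:propC} then gives graphical discreteness. For the lattices in semisimple Lie groups and the $S$-arithmetic lattices, I would invoke Proposition~\ref{prop:propC} and feed a hypothetical tdlc envelope into the Bader--Furman--Sauer and Furman classification of lattice envelopes: the classification yields a continuous homomorphism from the envelope onto the relevant real semisimple part, and Lemma~\ref{lem:totdisc_to_lie} then forces a compact open kernel, so the envelope is compact-by-discrete. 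The excluded non-uniform lattices in $\PSL_2(\bR)$ are precisely the virtually free case where this breaks down, and Proposition~\ref{prop:non_uniform_lattices} is what permits restricting to uniform envelopes in the rank-one cases. The closed irreducible $3$-manifolds with nontrivial geometric decomposition are handled directly by the paper's own Theorem~\ref{thm:nongeo3mangd}.

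The fifth family is immediate from Corollary~\ref{cor:propctorsionfree}: each listed group is virtually torsion-free, and the cited references (Kida for mapping class groups, Chifan--Kida for surface pure braid groups, Guirardel--Horbez for $\Out(F_n)$, Horbez for the relevant $2$-dimensional Artin and Higman groups, and Bader--Furman--Sauer for the pinched-negatively-curved manifold groups) establish that a finite-index torsion-free subgroup has no nontrivial lattice embeddings. The sixth family uses the $\QI$ characterization directly: if $\Gamma$ is tame with $\Gamma\to\QI(\Gamma)$ of finite-index image, then any geometric action on a locally finite graph $X$ yields a natural identification $\QI(X)\cong\QI(\Gamma)$ under which $\phi(\Gamma)$ has finite index. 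A vertex stabilizer $G_x$ coarsely fixes the corresponding basepoint, so its intersection with $\phi(\Gamma)$ consists of classes of group elements of boundedly small displacement and is therefore finite; since $\phi(\Gamma)$ has finite index in $\QI(X)$, it follows that $\phi(G_x)$ is finite, and Theorem~\ref{thm:compactbydiscrete}(\ref{item:QIfinitestabs}) applies. Tameness guarantees, via Proposition~\ref{prop:compactnormal.vs.bounded}, that the bounded isometries form the maximal compact normal subgroup, keeping $\QI(X)$ well-behaved. This is the content of Theorem~\ref{thm:discrete_qi}, and the three examples are tame (being acylindrically hyperbolic or hyperbolic, cf.\ Example~\ref{exmp:tame_morse}) with $\QI$-groups computed in the cited works.

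I expect the bulk of the difficulty to be imported rather than intrinsic: the reductions above are short, but they rest on deep measure-equivalence and lattice-envelope rigidity theorems specific to each family. Within the present paper the genuinely new work supporting this theorem is the $3$-manifold classification (Theorem~\ref{thm:nongeo3mangd}) and the $\QI$-discreteness argument (Theorem~\ref{thm:discrete_qi}). The main bookkeeping obstacle will be reconciling \emph{lattice embeddings modulo finite kernel} with the genuine lattice embeddings for which the external rigidity results are usually stated; Corollary~\ref{cor:propctorsionfree}, combined with the virtual torsion-freeness of each example, is exactly what bridges that gap.
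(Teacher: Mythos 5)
Your treatment of families (2)--(6) matches the paper's proof in both structure and sources: (2) and (3) pass to a torsion-free finite-index lattice (Selberg's lemma) and feed the Bader--Furman--Sauer dichotomy through Corollary~\ref{cor:propctorsionfree}, with the nontrivial branch excluded because a tdlc group cannot have a nontrivial connected semisimple Lie group as a quotient by a compact subgroup (the paper simply notes the impossibility; your appeal to Lemma~\ref{lem:totdisc_to_lie} is the same mechanism); (4) is Theorem~\ref{thm:nongeo3mangd}; (5) is Corollary~\ref{cor:propctorsionfree}; and (6) is Theorem~\ref{thm:discrete_qi}, whose graph-level content your displacement argument correctly reconstructs (the role of tameness there is to make the sup-distance bound between equivalent quasi-isometries with fixed constants uniform, not Proposition~\ref{prop:compactnormal.vs.bounded}, but since you defer to Theorem~\ref{thm:discrete_qi} this is immaterial). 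Your remark that Proposition~\ref{prop:non_uniform_lattices} is needed in (2) to restrict to uniform envelopes is superfluous: Theorems B and D of \cite{bader2020lattice} already treat all lattice embeddings.

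The genuine gap is in family (1). You invoke Proposition~\ref{prop:trofimov} --- Trofimov's Proposition~2.3, which says that \emph{any} connected locally finite vertex-transitive graph admits a finite-block imprimitivity system on whose quotient no nontrivial bounded automorphism fixes a vertex --- and claim that, after passing to an orbit graph, it ``produces a finite-block imprimitivity system on which $\Aut(X)$ acts discretely.'' It does not. Proposition~\ref{prop:trofimov} has no growth hypothesis and cannot by itself yield discreteness: in the proof of Theorem~\ref{thm:compactbydiscrete} it is combined with the hypothesis that vertex stabilizers have finite image in $\QI(X)$, and that hypothesis is exactly what needs to be established here for a polynomial-growth graph. (Non-abelian free groups make the gap concrete: the $4$-valent tree is vertex-transitive, so Proposition~\ref{prop:trofimov} applies to it, yet its automorphism group is not compact-by-discrete.) What the paper actually cites for family (1) is a different result of the same reference, namely \cite[Theorem 1]{trofimov1984polynomial}: for vertex-transitive graphs of \emph{polynomial growth} there is a finite-block imprimitivity system such that the induced action of the full automorphism group is discrete. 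That is a deep theorem generalizing Gromov's polynomial growth theorem, and it is not recorded as a standalone statement anywhere in the paper; substituting Proposition~\ref{prop:trofimov} for it makes your argument circular. With the correct citation, your reduction (polynomial growth of $\Gamma$ passes to $X$, apply Trofimov, conclude via Proposition~\ref{prop:propC}) is exactly the paper's.
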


    \begin{proof}
        (\ref{propc:trofimov}): Since $\Gamma$ is virtually nilpotent, it has polynomial growth \cite{wolf1968growth}. Suppose $\Gamma$  acts geometrically on a locally finite vertex transitive graph $X$. Then $\Gamma$ is quasi-isometric to $X$ so $X$ has polynomial growth.
        A result of Trofimov, which generalizes Gromov's polynomial growth theorem,    ensures there is an $\Aut(X)$-imprimitivity system $\sim$ on $X$ with finite blocks such that the induced action  $\Aut(X)\curvearrowright X/\sim$ is discrete \cite[Theorem 1]{trofimov1984polynomial}. Thus $\Gamma$ satisfies (\ref{item:propC1}) of Proposition \ref{prop:propC}, hence is graphically discrete.
        
        (\ref{propc:latticelie}): Let $H$ be a center-free, connected real semisimple Lie group without compact factors. Let  $\Gamma\leq H$ be an irreducible lattice, which is assumed to be uniform if $H$ is locally isomorphic to  $\PSL_2(\bR)$.  By Selberg's lemma,  $\Gamma$ contains a finite-index torsion-free subgroup $\Gamma'$, which is also a lattice in $H$. Then \cite[Theorems B and D]{bader2020lattice} ensure  every lattice embedding $\rho:\Gamma' \hookrightarrow G$ is trivial or virtually isomorphic to $\Gamma' \hookrightarrow H$. The latter is impossible if $G$ is totally disconnected, thus $\Gamma$ is  graphically discrete by Corollary \ref{cor:propctorsionfree}.
           
        (\ref{propc:sarithmetic}): This also follows from \cite[Theorem B]{bader2020lattice},  similarly to (\ref{propc:latticelie}) above (see \cite[Remark 4.2]{bader2020lattice}).
       
		(\ref{propc:nongeom}): This is shown in Theorem \ref{thm:nongeo3mangd}.       
        
        (\ref{propc:trivial lattice}): Any such group satisfies the hypotheses of Corollary \ref{cor:propctorsionfree}, hence is graphically discrete. We note that $\Hig_n$ is torsion-free \cite[Lemma 2.7]{horbez2025measure}, and every lattice embedding of $\Hig_n$  is trivial; see \cite[Corollary 5.20]{horbez2025measure}.
        
        (\ref{propc:QI}): This will follow from Theorem \ref{thm:discrete_qi}.
    \end{proof}

\subsection{Examples of non-graphically-discrete groups}\label{subsec:nongraphdisc}

\begin{prop}\label{prop:HNNnotGD}
Let $\G$ be a finitely generated group and let $\Lambda\leq \G$ be a proper subgroup. Then the HNN extension 
$$\G*_\Lambda=\langle\G,t\mid t\lambda t^{-1}=\lambda, \lambda\in\Lambda\rangle$$
is not graphically discrete.
\end{prop}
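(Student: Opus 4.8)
The plan is to produce a single connected locally finite graph $X$ on which $G \coloneqq \Gamma*_\Lambda$ acts geometrically, and to show that $\Aut(X)$ is not compact-by-discrete; by Proposition~\ref{prop:propC} this certifies that $G$ is not graphically discrete. Since $\Gamma$ is finitely generated, so is $G$, and fixing a finite symmetric generating set $S$ of $\Gamma$ I would take $X = \mathrm{Cay}(G, S \cup \{t\})$, which is connected and locally finite and carries the geometric left-multiplication action of $G$. This graph has a natural tree-of-spaces structure over the Bass--Serre tree $T$ of the HNN extension: deleting the $t$-edges leaves the left cosets of $\Gamma$, each spanning a copy of $\mathrm{Cay}(\Gamma, S)$ (the vertex spaces $Y_v$), while the $t$-edges record the edges of $T$. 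I then want to exhibit ``flip'' automorphisms of $X$ and invoke the criterion of Theorem~\ref{thm:compactbydiscrete}: it suffices to find a vertex $x$ whose stabilizer $\Aut(X)_x$ has infinite image in $\QI(X)$.

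The source of the extra symmetry is that both edge maps of this HNN extension are the inclusion $\Lambda \hookrightarrow \Gamma$; equivalently, $t$ centralizes $\Lambda$. Consequently, at a vertex space $Y_v$ and for each coset $c\Lambda \in \Gamma/\Lambda$, the outgoing and incoming $t$-edges emanating from the subset $c\Lambda \subseteq VY_v$ lead into two branches that attach to $Y_v$ along the \emph{same} subset $c\Lambda$ and that are isomorphic as rooted subgraphs. The key local move is the \emph{flip} $\sigma$ that swaps these two branches via this rooted isomorphism and fixes the rest of $X$. Each such $\sigma$ is a nontrivial automorphism fixing $Y_v$ pointwise, and it is \emph{unbounded}: it interchanges two infinite branches (the tree $T$ has valence $2\lvert\Gamma/\Lambda\rvert \ge 4$ since $\Lambda$ is proper), so it displaces points arbitrarily far and hence maps to a nontrivial element of $\QI(X)$.

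To extract infinitely many distinct classes in $\QI(X)$, I would march along the $t$-axis $w_k = t^k\Gamma$ for $k \ge 1$. At each $w_k$ both directions of the axis use the trivial coset $[\Lambda]$, so choosing any coset $[c] \neq [\Lambda]$ -- possible precisely because $\Lambda$ is a \emph{proper} subgroup -- yields a flip $\sigma_k$ whose support (the two swapped branches over $[c]$ at $w_k$) is disjoint from the axis and from the vertex space $Y_{\Gamma}$. Taking $x = 1 \in Y_\Gamma$, every $\sigma_k$ lies in $\Aut(X)_x$, each is unbounded, and the supports of distinct $\sigma_k$ are pairwise disjoint, since a path between them must cross the fixed axis. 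Hence any nontrivial finite product $\prod_{k\in F}\sigma_k$ is again unbounded, so these products represent infinitely many distinct elements of $\QI(X)$. This shows $\Aut(X)_x \to \QI(X)$ has infinite image, so by Theorem~\ref{thm:compactbydiscrete} the group $\Aut(X)$ is not compact-by-discrete, and $G$ is not graphically discrete.

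The main obstacle is the claim used in the second paragraph: that when the two edge maps coincide, the outgoing and incoming branches over a common coset are genuinely isomorphic as rooted subgraphs of $X$, so that the flip is an honest graph automorphism and not merely a tree-level symmetry. I would justify this by self-similarity: because every vertex space is a copy of $\mathrm{Cay}(\Gamma,S)$ and every edge attaches by the \emph{same} inclusion $\Lambda \hookrightarrow \Gamma$ on both sides, the half-space lying beyond any outgoing edge and the half-space beyond the corresponding incoming edge are both isomorphic, rel their attaching coset, to one fixed standard half-space. The reflection $t \leftrightarrow t^{-1}$ -- which corresponds to the genuine automorphism of $G$ fixing $\Gamma$ pointwise and inverting $t$ -- organizes this identification and exhibits it as a graph isomorphism fixing the attaching coset pointwise. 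Verifying that each $\sigma_k$ assembled in this way simultaneously preserves the $S$-edges within vertex spaces and the $t$-edges between them is the one point requiring care; the remainder is bookkeeping with the equivalence of conditions in Theorem~\ref{thm:compactbydiscrete}.
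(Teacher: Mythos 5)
Your proposal is correct and is essentially the paper's own argument: the paper likewise takes the Cayley graph of $\G*_\Lambda$ with respect to $S\cup\{t\}$, constructs unbounded ``flip'' automorphisms exchanging the $t$- and $t^{-1}$-branches (which exist precisely because both edge inclusions $\Lambda\hookrightarrow\G$ coincide), places them at the translates $t^k$ along the axis using an element of $\G-\Lambda$ so that they all fix the base vertex, and concludes via Theorem~\ref{thm:compactbydiscrete} and Proposition~\ref{prop:propC}. The only difference is presentational: the paper defines its flip by negating the leading $t$-exponent in the HNN normal form and checks everything by direct normal-form computation, whereas you realize the same flips by restricting the order-two automorphism of $\G*_\Lambda$ fixing $\G$ and sending $t\mapsto t^{-1}$ to a pair of half-spaces of the tree-of-spaces decomposition; both verifications go through.
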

\begin{proof}
    Let $S$ be a finite generating set for $\G$ and let $X$ be the Cayley graph of $\G*_\Lambda$ with respect to the generating set $S\cup\{t\}$.
    
    We first define an involution of $\G*_\Lambda$.
    Write each element of $\G*_\Lambda$ in the form $w=t^{n_0}\gamma_1 t^{n_1}\gamma_2t^{n_2}\cdots\gamma_k t^{n_k}$ with $\gamma_i\in\Gamma - \Lambda$ (with $\gamma_1\in\Lambda$ also permitted if $k=1$ and $n_1=0$) and with $n_i\neq0$ (except possibly $n_0$ and $n_k$). This normal form is unique in the sense that the numbers $n_0,n_1,\dots,n_k$ are uniquely determined by $w$.
    Define an involution $\sigma:\G*_\Lambda\to \G*_\Lambda$ by $t^{n_0}\gamma_1 t^{n_1}\cdots\gamma_k t^{n_k}\mapsto t^{-n_0}\gamma_1 t^{n_1}\cdots\gamma_k t^{n_k}$.
    
    We claim that $\sigma$ defines an automorphism of $X$.
    Indeed, given $w$ as above and $s\in S$, the normal form for $ws$ is $t^{n_0}\gamma_1 t^{n_1}\cdots\gamma_k t^{n_k}s$ if $s\notin\Lambda$ and $t^{n_0}\gamma_1 t^{n_1}\cdots\gamma_ks t^{n_k}$ if $s\in\Lambda$. Hence $\sigma(ws)=\sigma(w)s$, and $g$ maps the edge $(w,ws)$ to the edge $(\sigma(w),\sigma(w)s)$.
    The normal form for $wt$ is $t^{n_0}\gamma_1 t^{n_1}\cdots\gamma_k t^{n_k+1}$, unless $k=1$, $n_1=0$ and $\gamma_1\in\Lambda$, in which case the normal form is $t^{n_0+1}\gamma_1$.
    In the first case, we have $\sigma(wt)=\sigma(w)t$, while in the second case we have $\sigma(wt)=\sigma(w)t^{-1}$ -- in either case the edge $(w,wt)$ maps to another edge in $X$. A similar argument holds for $wt^{-1}$.

    Let $w\in\G - \Lambda$ and $w_i=t^iw$ for $i\in\Z$.
     Each $w_i$ and $w_i^{-1}$ act on $\G*_\Lambda$ by left multiplication, so define automorphisms of $X$. 
     Hence we get a sequence $(w_i\sigma w_i^{-1})$ in $\Aut(X)$.
     We claim that these automorphisms are at infinite distance from each other.
     Indeed, let $i\neq j$ and let $k\geq1$ be an integer. We claim that the distance between $(w_i\sigma w_i^{-1})(t^i(wt)^k)$ and $(w_j\sigma w_j^{-1})(t^i(wt)^k)$ is at least $2(2k-1)$.
     We have
     \begin{align*}
         (w_i\sigma w_i^{-1})(t^i(wt)^k)&=w_i\sigma(t(wt)^{k-1})\\
         &=t^iwt^{-1}(wt)^{k-1},\\
     \end{align*}
     and
     \begin{align*}
         (w_j\sigma w_j^{-1})(t^i(wt)^k)&=w_j\sigma(w^{-1}t^{-j+i}(wt)^k)\\
         &=t^jww^{-1}t^{-j+i}(wt)^k\\
         &=t^i(wt)^k.
     \end{align*}
     The distance between these vertices is the word length of the following element:
     \begin{align*}
         (t^i(wt)^k)^{-1}t^iwt^{-1}(wt)^{k-1}&=(t^{-1}w^{-1})^k t^{-i}t^iwt^{-1}(wt)^{k-1}\\
         &=(t^{-1}w^{-1})^{k-1}t^{-2}(wt)^{k-1}.
     \end{align*}
     As $w$ has word length at least 1, the above element (which is in normal form) has word length at least $2(2k-1)$.

     The automorphisms $(w_i\sigma w_i^{-1})$ all fix the base vertex $1\in \G*_\Lambda=VX$, so we deduce from Theorem \ref{thm:compactbydiscrete}(\ref{item:compactbydiscrete}) and (\ref{item:QIfinitestabs}) that $\Aut(X)$ is not compact-by-discrete, and that $\G*_\Lambda$ is not graphically discrete.
\end{proof}

If $\Lambda \leq \Gamma$ in the presentation above is the centralizer of an element $g \in \Gamma$, such an HNN-extension is known as a {\it centralizer extension}; thus such a group is graphically discrete if the element $g$ is not in the center of $\Gamma$. 

We use Proposition \ref{prop:HNNnotGD} to classify the graphically discrete right-angled Artin groups.

\begin{cor} \label{cor:RAAG}
    The right-angled Artin group $A_\G$ is graphically discrete if and only if $\G$ is a complete graph.
\end{cor}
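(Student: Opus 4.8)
The plan is to dispose of the forward implication by a direct identification of the group, and to prove the contrapositive of the reverse implication by exhibiting $A_\G$ as an HNN extension to which Proposition \ref{prop:HNNnotGD} applies.

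First I would treat the case that $\G$ is a complete graph on $n$ vertices. Then all standard generators pairwise commute, so $A_\G\cong\Z^n$. This group is virtually nilpotent, hence graphically discrete by Theorem \ref{thm:propc_examples}(\ref{propc:trofimov}); this gives the ``if'' direction.

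For the converse I would assume $\G$ is not complete and show $A_\G$ is not graphically discrete. Fix two non-adjacent vertices $v,w$, let $\G-v$ denote the full subgraph on the remaining vertices, and let $\lk(v)$ denote the set of vertices of $\G$ adjacent to $v$. Singling out the generator $v$ as a stable letter, the standard presentation of $A_\G$ manifestly takes the form
$$A_\G=\langle A_{\G-v},\,v\mid v\lambda v^{-1}=\lambda\ \text{ for all }\lambda\in A_{\lk(v)}\rangle=(A_{\G-v})*_{A_{\lk(v)}},$$
where $A_{\lk(v)}\leq A_{\G-v}$ is the special subgroup generated by $\lk(v)$. This is precisely the shape of the HNN extension in Proposition \ref{prop:HNNnotGD}, with the role of the ambient group played by $A_{\G-v}$ and that of the edge subgroup $\Lambda$ played by $A_{\lk(v)}$.

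The one point that needs verification before invoking Proposition \ref{prop:HNNnotGD} is that $A_{\lk(v)}$ is a \emph{proper} subgroup of $A_{\G-v}$, and this is exactly where the non-adjacent vertex $w$ is used. Since $w\in V(\G-v)$ but $w\notin\lk(v)$, the canonical retraction $A_{\G-v}\to A_{\lk(v)}$ that kills every vertex generator outside $\lk(v)$ fixes $A_{\lk(v)}$ pointwise while sending $w\mapsto 1$; hence $w\notin A_{\lk(v)}$ and the inclusion is proper. With properness in hand, Proposition \ref{prop:HNNnotGD} yields that $A_\G$ is not graphically discrete, completing the contrapositive. I do not expect any genuine obstacle here: the only slightly delicate step is this properness check, which reduces to the standard fact that special subgroups of right-angled Artin groups are retracts, and everything else is immediate from the two cited results.
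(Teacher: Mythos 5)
Your proof is correct and is essentially the paper's own argument: the complete case is handled via $A_\G\cong\Z^n$ and Theorem \ref{thm:propc_examples}(\ref{propc:trofimov}), and the non-complete case uses exactly the same HNN decomposition $A_\G=A_{\G-\{v\}}*_{A_{\lk(v)}}$ with $v$ as stable letter, fed into Proposition \ref{prop:HNNnotGD}. Your explicit retraction argument for properness of $A_{\lk(v)}\leq A_{\G-\{v\}}$ is a detail the paper leaves implicit, but it is the same proof.
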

 \begin{proof}
     If $\G$ is complete, then $A_\G$ is free abelian, and graphical discreteness follows from Theorem \ref{thm:propc_examples}(\ref{propc:trofimov}).
     Otherwise, there exists $v \in \Gamma$ so that $\{v\} \cup \lk(v) \neq V\Gamma$. Thus, $A_\G$ can be expressed as a nontrivial HNN extension
     \[A_\G=A_{\G-\{v\}}*_{A_{\lk(v)}},\] 
     where the generator $v$ is the stable letter as in Proposition \ref{prop:HNNnotGD}. Thus, the group $A_\G$ is not graphically discrete.
 \end{proof}
 
 Proposition \ref{prop:HNNnotGD} also yields many examples of non-graphically-discrete infinite-ended groups (beyond right-angled Artin groups).
 
 \begin{cor} \label{cor:nongdfreeprod}
     For all nontrivial finitely generated groups $\G$, the free product $\G*\Z$ is not graphically discrete.
 \end{cor}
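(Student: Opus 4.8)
The plan is to realize $\G*\Z$ as an HNN extension of the precise form handled by Proposition \ref{prop:HNNnotGD}, and then apply that proposition verbatim. The entire argument reduces to a single structural observation together with a trivial hypothesis check, so there is no substantial obstacle to overcome.

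First I would identify the free product with an HNN extension over the trivial subgroup. Taking $\Lambda=\{1\}\leq\G$, the defining HNN relation $t\lambda t^{-1}=\lambda$ for $\lambda\in\Lambda$ becomes vacuous (it only asserts $t\cdot 1\cdot t^{-1}=1$), and hence
\[
\G*_{\{1\}}=\langle\,\G,\,t\,\rangle=\G*\langle t\rangle\cong\G*\Z,
\]
where $t$ generates the free factor $\Z$. Thus $\G*\Z$ is literally an HNN extension of $\G$ with trivial edge subgroup.

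Next I would verify the hypothesis of Proposition \ref{prop:HNNnotGD}, namely that $\Lambda$ is a \emph{proper} subgroup of $\G$. Since $\G$ is assumed nontrivial, the trivial subgroup $\{1\}$ is indeed proper in $\G$. Proposition \ref{prop:HNNnotGD} then applies directly to the presentation $\G*_{\{1\}}$ and yields that $\G*_{\{1\}}=\G*\Z$ is not graphically discrete.

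The only content is the identification of the free product as an HNN extension over the trivial subgroup; once that is in place, everything follows immediately from the previously established proposition, and I do not anticipate any genuine difficulty. One could alternatively note that the ``flipping symmetry'' $\sigma$ constructed in the proof of Proposition \ref{prop:HNNnotGD} specializes in this case to the automorphism of the Bass--Serre tree that flips the two halves of the line through the base vertex, but invoking the proposition as a black box is cleaner and suffices.
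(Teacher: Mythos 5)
Your proposal is correct and is exactly the paper's intended argument: the corollary is stated as an immediate consequence of Proposition~\ref{prop:HNNnotGD}, obtained by taking $\Lambda=\{1\}$, which is proper in $\G$ precisely because $\G$ is nontrivial, so that $\G*_{\{1\}}\cong\G*\Z$. Your hypothesis check (finite generation of $\G$ and properness of the trivial subgroup) is all that is needed, and the identification of the free product with the HNN extension over the trivial subgroup is the same observation the paper relies on.
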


\begin{cor} \label{cor:nonvirtgd.freeprod}
	If $\Gamma_1,\Gamma_2$ are nontrivial finitely generated  groups containing nontrivial proper finite-index subgroups, the free product $\Gamma=\Gamma_1*\Gamma_2$ has a finite-index subgroup that is not graphically discrete.
\end{cor}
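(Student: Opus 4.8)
The plan is to exhibit an explicit finite-index subgroup $H\leq \Gamma=\Gamma_1*\Gamma_2$ that splits as a free product $A*\Z$ with $A$ a nontrivial finitely generated group, and then to quote Corollary~\ref{cor:nongdfreeprod} to conclude that $H$ is not graphically discrete. The hypothesis supplies nontrivial proper finite-index subgroups $N_1\leq\Gamma_1$ and $N_2\leq\Gamma_2$; write $a=[\Gamma_1:N_1]\geq 2$ and $b=[\Gamma_2:N_2]\geq 2$. The fact that both indices are at least $2$ is exactly what will force a strictly positive first Betti number below, so this is precisely where the hypothesis enters.

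First I would build a finite $\Gamma$-set. Let $\Gamma_i$ act on the coset space $\Gamma_i/N_i$ by left multiplication, and endow the product $S=(\Gamma_1/N_1)\times(\Gamma_2/N_2)$ with the coordinatewise actions, so that $\Gamma_1$ acts on the first factor and trivially on the second, and $\Gamma_2$ acts on the second factor and trivially on the first. By the universal property of the free product these two actions assemble into a single action of $\Gamma$ on $S$, and a routine check shows it is transitive. Hence the point stabilizer $H=\Stab_\Gamma(s_0)$ has index $|S|=ab$ in $\Gamma$, and $H$ is finitely generated since $\Gamma$ is.

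Next I would read off the Bass--Serre decomposition of $H$ from its action on the Bass--Serre tree $T$ of the splitting $\Gamma=\Gamma_1*\Gamma_2$, whose edge stabilizers are trivial. The quotient $H\backslash T$ is a connected bipartite graph whose type-$i$ vertices correspond to the $\Gamma_i$-orbits on $S$ and whose edges correspond to the points of $S$. Since $\Gamma_1$ moves only the first coordinate there are exactly $b$ such orbits, and symmetrically $a$ orbits for $\Gamma_2$, while there are $ab$ edges. Thus $H\backslash T$ has first Betti number $ab-a-b+1=(a-1)(b-1)\geq 1$, and its vertex groups are conjugates of $N_1$ and $N_2$, all nontrivial. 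By Bass--Serre theory $H\cong\bigl(\ast_v G_v\bigr)*F_{(a-1)(b-1)}$; peeling off a single $\Z$ factor from the free part rewrites this as $H\cong A*\Z$, where $A$ still contains a conjugate of $N_1$ and is therefore a nontrivial finitely generated group.

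The proof then concludes by applying Corollary~\ref{cor:nongdfreeprod} to $H=A*\Z$, giving that $H$ is a finite-index subgroup of $\Gamma$ that is not graphically discrete. I do not anticipate a genuine difficulty here, as befits a corollary; the only points demanding care are the orbit-and-edge bookkeeping that yields the strictly positive Betti number $(a-1)(b-1)$, and the verification that at least one vertex group survives so that $A\neq 1$. Both follow immediately from $a,b\geq 2$ and $N_i\neq 1$, which is the sole role of the hypothesis that each $\Gamma_i$ has a nontrivial proper finite-index subgroup.
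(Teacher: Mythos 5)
Your proof is correct and takes essentially the same route as the paper: both produce a finite-index subgroup whose induced graph-of-groups decomposition has a non-tree underlying graph, hence a nontrivial free factor $\Z$, and then conclude via Corollary~\ref{cor:nongdfreeprod}. The paper does this with a terse covering-space argument (finite covers of the vertex spaces of the graph of spaces for $\Gamma_1*\Gamma_2$ assembled into a cover whose underlying graph is not a tree), while you give an explicit algebraic instantiation of the same idea — the coset action on $(\Gamma_1/N_1)\times(\Gamma_2/N_2)$ and the Bass--Serre count giving first Betti number $(a-1)(b-1)\geq 1$.
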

\begin{proof}
		 Since the groups $\Gamma_1$ and $\Gamma_2$ contain proper finite-index subgroups, there exists a finite-index subgroup $\hat{\G} \leq \G$ whose Grushko decomposition contains a nontrivial free factor that is a free group.		
		 For example, let $Y$ be the graph of spaces associated to $\Gamma_1*\Gamma_2$. By taking covers of the vertex spaces of $Y$, there exists a finite cover $\hat Y\to Y$ such that the underlying graph of $\hat Y$ is not a tree.   Then $\hat\G\coloneqq \pi_1(\hat Y)$ is a finite-index subgroup of $\G$ containing a nontrivial free factor that is a free group. By Corollary \ref{cor:nongdfreeprod}, $\hat \G$ is not graphically discrete.
\end{proof}
 
 Yet more examples of non-graphically-discrete infinite-ended groups are given by the following proposition.
 
 \begin{prop}\label{prop:notgd_fp}
     Let $\G_1,\G_2$ be nontrivial finitely generated groups. Then the free product $\G=\G_1*\G_2*\G_2$ is not graphically discrete.
   \end{prop}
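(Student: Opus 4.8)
The plan is to mimic the proof of Proposition \ref{prop:HNNnotGD}: for $X$ a Cayley graph of $\G=\G_1*\G_2*\G_2$, I would exhibit inside $\Aut(X)$ an infinite family of automorphisms that fix the basepoint and are pairwise at infinite distance, and then invoke the equivalence of (\ref{item:compactbydiscrete}) and (\ref{item:QIfinitestabs}) in Theorem \ref{thm:compactbydiscrete} (applied to $G=\Aut(X)$, which is closed and cocompact since $\G\leq\Aut(X)$ is). Write the two copies of $\G_2$ as $\G_2^{(1)}$ and $\G_2^{(2)}$, let $\phi\colon \G_2^{(1)}\to\G_2^{(2)}$ be the canonical isomorphism, and fix a finite generating set $S_2$ of $\G_2$ with copies $S_2^{(1)},S_2^{(2)}$ so that $\phi(S_2^{(1)})=S_2^{(2)}$. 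Take $X$ to be the (connected, locally finite) Cayley graph of $\G$ with respect to $S_1\cup S_2^{(1)}\cup S_2^{(2)}$, where $S_1$ generates $\G_1$.

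First I would define a \emph{first-syllable flip}. Every nontrivial element of $\G$ has a unique normal form $x_1x_2\cdots x_m$ as an alternating product of nontrivial syllables from the three factors. Let $\sigma\colon VX\to VX$ fix $1$ and send $x_1\cdots x_m$ to $(\sigma_1 x_1)x_2\cdots x_m$, where $\sigma_1$ applies $\phi$ if $x_1\in\G_2^{(1)}$, applies $\phi^{-1}$ if $x_1\in\G_2^{(2)}$, and is the identity if $x_1\in\G_1$. Then $\sigma$ is an involution of $VX$ fixing $1$. The crucial step is to check that $\sigma$ is a \emph{graph} automorphism, i.e.\ that $\sigma(w)^{-1}\sigma(ws)\in S_1\cup S_2^{(1)}\cup S_2^{(2)}$ whenever $s$ is a generator. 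Unlike left translation, $\sigma$ is not label-preserving, but the check still succeeds because $\sigma_1$ is the restriction of a factor isomorphism carrying generators to generators. One verifies this case-by-case according to how right multiplication by $s$ alters the normal form of $w$; the only delicate cases occur when $w$ has a single syllable, where one uses that $\sigma_1$ is a homomorphism on each factor so that, for instance, $\sigma_1(x_1)^{-1}\sigma_1(x_1s)=\sigma_1(s)$, which again lies in the generating set.

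Next I would produce the family. Fix $a\in\G_1\setminus\{1\}$ and $b_1\in\G_2^{(1)}\setminus\{1\}$, write $b_2=\phi(b_1)$, set $g_i=a(b_1a)^i$, and let $\tau_i=L_{g_i}\circ\sigma\circ L_{g_i^{-1}}$, where $L_g$ is left translation by $g$ (a graph automorphism). Since $g_i^{-1}=(a^{-1}b_1^{-1})^i a^{-1}$ begins with the $\G_1$-syllable $a^{-1}$, it is fixed by $\sigma$, so $\tau_i(1)=g_i\sigma(g_i^{-1})=1$ and each $\tau_i$ lies in $\Aut(X)_1$. Evaluating on the test words $x_N=a(b_1a)^N$ with $N>j>i$, the prefix $g_i$ cancels ($g_i^{-1}x_N=(b_1a)^{N-i}$) and the flip converts the leading $b_1$ into $b_2$, giving $\tau_i(x_N)=a(b_1a)^i\,b_2\,a(b_1a)^{N-i-1}$, and likewise for $j$. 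These normal forms agree on the prefix $a(b_1a)^i$ and then differ (a $b_2$-syllable against a $b_1$-syllable, from distinct factors), so $d_X(\tau_i(x_N),\tau_j(x_N))$ equals the sum of the two tail lengths, which grows linearly in $N$. Hence $\sup_{x}d_X(\tau_i(x),\tau_j(x))=\infty$ for $i\neq j$, so the $\tau_i$ have pairwise distinct images in $\QI(X)$. Thus $\Aut(X)_1$ has infinite image in $\QI(X)$, and Theorem \ref{thm:compactbydiscrete} shows $\Aut(X)$ is not compact-by-discrete; by Proposition \ref{prop:propC} the group $\G$ is not graphically discrete.

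The main obstacle is the second step: confirming that the non-equivariant map $\sigma$ genuinely preserves adjacency, particularly in the short-word and cancellation cases at the boundary between the (flipped) first syllable and the (modified) last syllable. Once $\sigma$ is established as a graph automorphism, the remaining distance estimates are routine normal-form bookkeeping.
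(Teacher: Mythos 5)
Your proposal is correct, but it takes a genuinely different route from the paper's proof. The paper never works with the Cayley graph directly: it builds an auxiliary graph $Y$ by replacing each vertex of the Cayley graph with a tripod whose three legs receive the $S_1$-, $S_2$- and $S_2'$-labeled edges respectively, so that the symmetry exchanging the two $\G_2$-factors becomes a manifest symmetry of the complex; the existence of a ``flip'' automorphism at each tripod is then immediate from the construction, and the paper concludes via the equivalence (\ref{item:compactbydiscrete})$\iff$(\ref{item:Abounded}) of Theorem \ref{thm:compactbydiscrete}, since a vertex stabilizer contains infinitely many flips and hence has arbitrarily large vertex orbits. You instead stay on the Cayley graph, build the first-syllable flip $\sigma$ by hand, and verify adjacency-preservation by a normal-form case analysis --- which is exactly the technique the paper itself uses to prove Proposition \ref{prop:HNNnotGD} --- and then conclude via (\ref{item:compactbydiscrete})$\iff$(\ref{item:QIfinitestabs}) using the conjugates $\tau_i=L_{g_i}\sigma L_{g_i}^{-1}$, which fix the basepoint and lie at pairwise infinite distance. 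Your flagged ``main obstacle'' does resolve positively: the only cases where $\sigma$ changes an edge label are those where both endpoints of the edge lie in a single $\G_2$-syllable, and there the fact that $\sigma_1$ restricts to a generator-preserving isomorphism of each factor gives $\sigma(w)^{-1}\sigma(ws)=\sigma_1(s)$, still a generator; in every other case one gets $\sigma(w)^{-1}\sigma(ws)=s$ on the nose. Your distance estimate is also sound (the two images share the prefix $a(b_1a)^i$ and then diverge into distinct factors, so the syllable length, hence the word length, of $\tau_i(x_N)^{-1}\tau_j(x_N)$ grows linearly in $N$; it is coarsely, not exactly, the sum of the tail lengths, but that does not matter). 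What the paper's tripod trick buys is precisely the step you had to sweat: by physically separating the three edge types at each vertex, the flip is an automorphism by construction, with no normal-form verification. What your approach buys is uniformity and economy of means: it exhibits Propositions \ref{prop:HNNnotGD} and \ref{prop:notgd_fp} as two instances of a single first-syllable-flip argument on Cayley graphs, with no auxiliary graph needed.
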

 \begin{proof}
     Let $S_1,S_2$ be finite generating sets for $\G_1,\G_2$ respectively. Let $\G'_2$ be a copy of $\G_2$ and let $S'_2$ be the copy of $S_2$ in $\G'_2$. We can then write $\G=\G_1*\G_2*\G'_2$, and consider the Cayley graph $X$ of $\G$ with respect to the finite generating set $S=S_1\sqcup S_2\sqcup S'_2$.
     Now construct a graph $Y$ from $X$ by replacing each vertex $x\in VX$ with a tripod $T_x$. That is, let $x_1,x_2,x'_2$ denote the endpoints of $T_x$, and for each edge incident to $x$ in $X$ labeled by a generator in $S_1$ (resp. $S_2,S'_2$) we connect this edge to $x_1$ (resp. $x_2,x'_2$) in $Y$.
     There is a natural geometric action of $\G$ on $Y$.
     Assuming $|S_1|,|S_2|>1$, the centers of the tripods are the only degree 3 vertices in $Y$, so all automorphisms of $Y$ preserve the collection of tripods. 
     It is clear from the symmetry of the construction that for each $x\in VX$, there is an automorphism of $Y$ that fixes $x_1$ and swaps $x_2,x'_2$ -- call this the \emph{flip at $T_x$}.
     
     By collapsing the edges labeled by $S_1,S_2,S'_2$ we get an $\Aut(Y)$-equivariant map to a tree $p:Y\to T$ (this is just a Bass--Serre tree for the splitting $\G=\G_1*\G_2*\G'_2$).
     The flip at $T_x$ induces an automorphism of $T$ that fixes $p(x_1)$ and swaps $p(x_2),p(x'_2)$.
     Given a vertex $y\in Y$, it is clear that the $\Aut(Y)$-stabilizer of $y$ contains infinitely many flips, so it has arbitrarily large vertex orbits in both $Y$ and $T$. It then follows from Theorem \ref{thm:compactbydiscrete}(\ref{item:compactbydiscrete}) and (\ref{item:Abounded}) that $\Aut(Y)$ is not compact-by-discrete, hence $\G$ is not graphically discrete.
 \end{proof}
We remark that Corollary \ref{cor:nongdfreeprod} and Proposition \ref{prop:notgd_fp} demonstrate that infinite-ended groups are frequently not graphically discrete, and so action rigidity theorems for such groups, for instance  Theorem \ref{thm:actionrigidA}, cannot be deduced cheaply via Proposition \ref{prop:gdimpliesvi}.

Due to work of Francoeur, another source of non-graphically-discrete groups are groups generated by bireversible Mealy automata. This class of groups includes free groups \cite{glasnermozes05automata}, groups of the form $A\wr\Z$ for $A$ a nontrivial finite abelian group \cite{francoeur23bireversiblelamplighter}, and some groups with Kazhdan's property (T) \cite{glasnermozes05automata}.

\begin{thm}(\cite[Theorem 4.1]{francoeur2025bireversible}).\label{thm:bireversible}
    Infinite bireversible groups are not graphically discrete.
\end{thm}

    \subsection{Uniform lattices and quasi-actions}\label{sec:qi} 
    In this subsection, we first summarize a construction due to Furman  relating uniform lattice embeddings and quasi-actions \cite[\S 3.2]{furman2001mostow}.
    We conclude the subsection by showing that certain finitely generated groups that have finite index in their quasi-isometry group are graphically discrete. 
    Recall that by Lemma~\ref{lem:finitevscompact} if a locally compact group $G$ contains a finitely generated uniform lattice modulo finite kernel, then $G$ is compactly generated. 

\begin{defn}
   Let $\Gamma$ be a finitely generated group, $G$ a locally compact group, and $\rho:\Gamma\to G$ a uniform lattice embedding modulo finite kernel. Equip $\Gamma$ and $G$ with word metrics $d_{\Gamma}$ and $d_G$ with respect to a finite and compact generating set, respectively (see Lemma \ref{lem:finitevscompact}). The map $\rho:\Gamma \rightarrow G$ is then a quasi-isometry with coarse inverse $\overline{\rho}$~\cite[Proposition 5.C.3]{cornulierdlH2016metric}.
   The \emph{induced quasi-action} of $G$ on $\Gamma$ 
   is the collection of maps $\{f_g\}_{g\in G}$, where $f_g:\Gamma\to\Gamma$ is the quasi-isometry given by $f_g=\overline{\rho} \circ L_g\circ \rho$, where $L_g$ is left multiplication by $g$. 
\end{defn}

    \begin{rem}
        Although the topology on $G$ induced by the metric $d_G$ does not coincide with the topology on $G$ (unless $G$ is discrete), this  metric  is \emph{adapted} in the sense of \cite{cornulierdlH2016metric}. That is, the metric $d_G$ is left-invariant, proper, and locally bounded, i.e.\ every point in $G$ has a neighborhood of finite diameter. In particular, the following properties are satisfied:
    \begin{enumerate}
    	\item[(P1)] $L \subseteq G$ has compact closure if and only if it is bounded in $(G,d_G)$;
    	\item[(P2)] there is some sufficiently large $R$ such that for every $g\in G$, the open ball $B_R(g)$ contains an open neighborhood of $g$. 
    \end{enumerate}
    \end{rem}

   The next lemma relates the topology of $G$ to the geometry of the induced quasi-action. 
      
   \begin{lem}\label{lem:coarse_conv}
   	Let  $\rho:\Gamma\to G$ be as above, and let $\{f_g\}_{g\in G}$ be the induced quasi-action of $G$ on $\Gamma$. There is a constant $B\geq 0$ such that if a sequence $(g_i)_{i \in \N}$ converges to $g$ in $G$, then for all $x\in \Gamma$, there is some $N_x \in \N$ such that  \[d_{\Gamma}(f_{g_i}(x),f_g(x))\leq B\] for all $i\geq N_x$.
   \end{lem}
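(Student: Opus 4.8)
The plan is to unwind the definition of the induced quasi-action and reduce the claim to a statement about the genuine topology of $G$. Fix constants $K'\geq 1$ and $A'\geq 0$ so that the coarse inverse $\overline{\rho}:G\to\Gamma$ is a $(K',A')$-quasi-isometry; in particular it is $(K',A')$-coarse Lipschitz. Since $f_g=\overline{\rho}\circ L_g\circ\rho$, for any $x\in\Gamma$ we have $f_{g_i}(x)=\overline{\rho}(g_i\rho(x))$ and $f_g(x)=\overline{\rho}(g\rho(x))$, so the coarse Lipschitz bound gives
\[
d_\Gamma\bigl(f_{g_i}(x),f_g(x)\bigr)\leq K'\,d_G\bigl(g_i\rho(x),\,g\rho(x)\bigr)+A'.
\]
Thus it suffices to bound $d_G(g_i\rho(x),g\rho(x))$ by a constant independent of $x$ and of the sequence, for all sufficiently large $i$.

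The main point — and the only subtlety — is that the adapted word metric $d_G$ does \emph{not} induce the topology of $G$, so one cannot argue directly that $g_i\to g$ forces $d_G(g_i,g)\to 0$; nor is $d_G$ right-invariant, so convergence of $(g_i)$ does not immediately control $d_G(g_i a,ga)$. Both difficulties are bridged by property (P2). First, right multiplication by the fixed element $\rho(x)$ is a homeomorphism of $G$, so $g_i\to g$ in the topology of $G$ yields $g_i\rho(x)\to g\rho(x)$ in the topology of $G$. Now I would invoke (P2): there is a single radius $R$, independent of the base point, such that the metric ball $B_R\bigl(g\rho(x)\bigr)$ contains a topological open neighborhood $V$ of $g\rho(x)$. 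Since $g_i\rho(x)\to g\rho(x)$ topologically, there is some $N_x$ with $g_i\rho(x)\in V\subseteq B_R(g\rho(x))$ for all $i\geq N_x$, whence $d_G(g_i\rho(x),g\rho(x))< R$.

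Combining this with the first display and setting $B\coloneqq K'R+A'$ gives $d_\Gamma(f_{g_i}(x),f_g(x))< B$ for all $i\geq N_x$. The constant $B$ is uniform because $R$ is uniform (property (P2) provides one radius valid at every base point) while $K',A'$ are the fixed quasi-isometry constants of $\overline{\rho}$; by contrast the threshold $N_x$ genuinely depends on $x$, since the rate at which $g_i\rho(x)$ enters the neighborhood $V$ depends on the point $\rho(x)$. I expect the only place requiring care is precisely this interface between the topology of $G$ and the metric $d_G$, which is exactly what property (P2) is designed to handle.
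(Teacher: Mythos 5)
Your proof is correct and follows essentially the same route as the paper's: both bridge the topology of $G$ and the adapted metric $d_G$ via property (P2) together with continuity of group translation, then transfer the resulting bound through the coarse Lipschitz map $\overline{\rho}$. The only difference is cosmetic — the paper first reduces to $g=1_G$ using the quasi-action axioms and then applies continuity of conjugation by $\rho(x)$ plus left-invariance of $d_G$, whereas you work at general $g$ directly via continuity of right multiplication and the uniformity of the radius in (P2); since that uniformity itself comes from left-invariance, these are the same manipulation in two guises.
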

\begin{proof}
	Pick $K \geq 1$ and $A \geq 0$ such that $\{f_g\}_{g\in G}$ is a $(K,A)$-quasi-action, $\rho$ and $\overline{\rho}$ are $(K,A)$-quasi-isometries, and the compositions $\overline{\rho}\rho$ and $\rho\overline{\rho}$ are $A$-close to the identity. Without loss of generality, we may assume $\rho(1_\Gamma)=1_G$ and $\overline\rho(1_G)=1_\Gamma$.
		Since $\{f_g\}_{g\in G}$ is a quasi-action, it is sufficient to prove the lemma in the case where $g$ is the identity. So, let $(g_i)_{i \in \N}$ be a sequence in $G$ converging to $1_G$. 

    Using Property~(P2) above, pick a constant $R$ sufficiently large such that $B_R(1_G)$ contains an open neighborhood of $1_G$. Let $x\in \Gamma$. As $g_i$ converges to the identity, so does $\rho(x)^{-1}g_i\rho(x)$. Therefore $d_G(\rho(x)^{-1}g_i\rho(x),1_G)\leq R$ for all $i$ sufficiently large.  It follows that \begin{align*}
			d_{\Gamma}(f_{g_i}(x),x)&=d_{\Gamma}(\overline{\rho}(g_i\rho(x)), x)\leq K d_G(g_i \rho(x), \rho(x))+2A \\&= Kd_G(\rho(x)^{-1}g_i\rho(x), 1_G)+2A\leq KR+2A
		\end{align*}
	for all $i$ sufficiently large.
\end{proof}

We can also describe compact normal subgroups of  $G$ geometrically:
\begin{lem}\label{lem:cpct_normal_qaction}
		Let  $\rho:\Gamma\to G$ be as above, and let $\{f_g\}_{g\in G}$ be the induced quasi-action of $G$ on $\Gamma$. If $K\lhd G$ is a compact normal subgroup, there is a constant $B$ such that for all $k\in K$, $\sup_{x\in \Gamma}d(f_k(x),x)\leq B$.
\end{lem}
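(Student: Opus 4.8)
The plan is to mimic the estimate in the proof of Lemma \ref{lem:coarse_conv}, replacing the appeal to convergence by the two defining features of a compact \emph{normal} subgroup: compactness gives a bound in the metric $d_G$, while normality makes that bound uniform under conjugation by elements of $\rho(\Gamma)$.

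First I would fix constants exactly as in the proof of Lemma \ref{lem:coarse_conv}. Since the compact subgroup is already named $K$, I use a different letter for the quasi-isometry constant: choose $L\geq 1$ and $A\geq 0$ so that $\{f_g\}_{g\in G}$ is an $(L,A)$-quasi-action, both $\rho$ and $\overline\rho$ are $(L,A)$-quasi-isometries, and the compositions $\overline\rho\rho$ and $\rho\overline\rho$ are $A$-close to the identity; as before we may also normalize $\rho(1_\Gamma)=1_G$ and $\overline\rho(1_G)=1_\Gamma$. Since $K$ is compact, Property~(P1) of the adapted metric $d_G$ shows that $K$ is bounded in $(G,d_G)$, so $D := \sup_{k\in K} d_G(k,1_G) < \infty$.

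The crucial step uses normality. For any $x\in\Gamma$ and $k\in K$, normality of $K$ gives $\rho(x)^{-1}k\rho(x)\in K$, and hence $d_G(\rho(x)^{-1}k\rho(x),1_G)\leq D$. Because $d_G$ is left-invariant,
\[ d_G(k\rho(x),\rho(x)) = d_G\bigl(\rho(x)^{-1}k\rho(x),1_G\bigr) \leq D, \]
a bound independent of both $x$ and $k$. This is precisely the point at which normality is essential: it is what upgrades the pointwise estimate of Lemma \ref{lem:coarse_conv} to one that is uniform over all $x\in\Gamma$.

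Finally I would unwind the definition $f_k(x)=\overline\rho(k\rho(x))$. Since $\overline\rho\rho$ is $A$-close to the identity we have $d_\Gamma(\overline\rho(\rho(x)),x)\leq A$, and using that $\overline\rho$ is an $(L,A)$-quasi-isometry together with the triangle inequality,
\[ d_\Gamma(f_k(x),x) \leq d_\Gamma\bigl(\overline\rho(k\rho(x)),\overline\rho(\rho(x))\bigr) + A \leq L\,d_G(k\rho(x),\rho(x)) + 2A \leq LD + 2A. \]
Setting $B := LD+2A$ then yields $\sup_{x\in\Gamma} d_\Gamma(f_k(x),x)\leq B$ for all $k\in K$, as required. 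There is no serious obstacle in this argument; all the content lies in recognizing the interplay between compactness (giving the bound $D$) and normality (making it conjugation-invariant, hence uniform in $x$).
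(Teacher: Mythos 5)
Your proof is correct, but it takes a genuinely different route from the paper. The paper's argument is a two-line appeal to the Milnor--Schwarz lemma for locally compact groups: the action of $G$ on the Cayley graph of $G/K$ is quasi-conjugate to the action of $G$ on its own Cayley graph, hence to the induced quasi-action on $\Gamma$; since $K$ acts trivially on $G/K$, the uniform bound follows immediately by transporting through the quasi-conjugacy. Your argument instead works entirely inside $(G,d_G)$: compactness of $K$ plus Property~(P1) of the adapted metric gives the bound $D=\sup_{k\in K}d_G(k,1_G)<\infty$, normality gives $\rho(x)^{-1}k\rho(x)\in K$ so that left-invariance yields $d_G(k\rho(x),\rho(x))\leq D$ uniformly in $x$, and then unwinding $f_k=\overline\rho\circ L_k\circ\rho$ produces the explicit constant $B=LD+2A$. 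Each step checks out (the normalization $\rho(1_\Gamma)=1_G$ is in fact never used, but it is harmless). What your approach buys is self-containedness and explicit constants — it avoids the black box of \cite[Theorem 4.C.5]{cornulierdlH2016metric} — and it isolates exactly where normality enters, namely in making the compactness bound conjugation-invariant. What the paper's approach buys is brevity and a conceptual picture: the statement is just the observation that $K$ acts trivially on $G/K$, read through a quasi-conjugacy, and that template (quasi-conjugate, then read off the obvious statement on the other side) recurs throughout the paper.
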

\begin{proof}
	The Milnor--Schwarz lemma for locally compact groups \cite[Theorem 4.C.5]{cornulierdlH2016metric} says that the action of $G$ on the Cayley graph of $G/K$ is quasi-conjugate the action of $G$ on its Cayley graph.  Therefore, the action of $G$ on $G/K$ is quasi-conjugate to the induced quasi-action $\{f_g\}_{g\in G}$ of  $G$ on $\Gamma$. Since $K$ acts trivially on $G/K$, the result follows.
\end{proof}

       \begin{thm}\label{thm:discrete_qi}
    	Let $\Gamma$ be a finitely generated tame group such that the image of the homomorphism $\phi:\Gamma\to \QI(\Gamma)$   induced by left-multiplication is a finite-index subgroup of $\QI(\Gamma)$. Then every uniform lattice embedding modulo finite kernel of $\Gamma$ is trivial. In particular, $\Gamma$ is graphically discrete.
    \end{thm}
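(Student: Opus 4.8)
The plan is to fix a uniform lattice embedding modulo finite kernel $\rho\colon\Gamma\to G$ and show it is trivial, i.e.\ that $G$ contains a compact open normal subgroup; graphical discreteness then follows from Proposition~\ref{prop:propC}. Equip $\Gamma$ and $G$ with word metrics $d_\Gamma,d_G$ as in Section~\ref{sec:qi}, and let $\{f_g\}_{g\in G}$ be the induced quasi-action, with coarse inverse $\overline{\rho}$ of $\rho$ normalized so that $\rho(1_\Gamma)=1_G$ and $\overline{\rho}(1_G)=1_\Gamma$. This quasi-action determines a homomorphism $\Phi\colon G\to\QI(\Gamma)$, $g\mapsto[f_g]$. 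The first step is to identify $\Phi$ on the lattice: because $\rho$ is a homomorphism we have $L_{\rho(\gamma)}\circ\rho=\rho\circ L_\gamma$, so $f_{\rho(\gamma)}=\overline{\rho}\circ\rho\circ L_\gamma$ is close to left multiplication $L_\gamma$ on $\Gamma$. Hence $\Phi\circ\rho=\phi$, and in particular $\phi(\Gamma)=\Phi(\rho(\Gamma))\subseteq\Phi(G)\subseteq\QI(\Gamma)$. Since $\phi(\Gamma)$ has finite index in $\QI(\Gamma)$ by hypothesis, so does $\Phi(G)$.

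Next I would prove that $N\coloneqq\ker(\Phi)$ is a compact normal subgroup. Normality is immediate. For compactness, tameness of $\Gamma$ is the key: the $f_g$ are $(K,A)$-quasi-isometries for fixed $K,A$, so Definition~\ref{defn:tame} supplies a single constant $C=C(K,A)$ with the property that $g\in N$ (meaning $f_g$ is close to the identity) forces $\sup_{x}d_\Gamma(f_g(x),x)\le C$. Evaluating at $x=1_\Gamma$ gives $d_\Gamma(\overline{\rho}(g),1_\Gamma)\le C$, and since $\overline{\rho}$ is a quasi-isometry this bounds $d_G(g,1_G)$ uniformly over $g\in N$. Thus $N$ is bounded in $(G,d_G)$, hence relatively compact by property~(P1). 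To see $N$ is closed, apply Lemma~\ref{lem:coarse_conv}: if $g_i\to g$ with $g_i\in N$, then $d_\Gamma(f_g(x),x)\le B+C$ for every $x$, so $f_g$ is close to the identity and $g\in N$. Therefore $N$ is compact.

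Finally I would show $G/N$ is discrete. As $N$ is compact and normal, the composite $\Gamma\xrightarrow{\rho}G\to G/N$ is again a uniform lattice embedding modulo finite kernel (Remark~\ref{rem:2ndcountable}), so its image $L\coloneqq\rho(\Gamma)N/N$ is discrete in $G/N$. Under the isomorphism $G/N\cong\Phi(G)$ induced by $\Phi$, the subgroup $L$ corresponds to $\Phi(\rho(\Gamma))=\phi(\Gamma)$, which has finite index in $\Phi(G)=G/N$ by the first step. A discrete subgroup of the Hausdorff group $G/N$ is closed, and a closed finite-index subgroup is open; being simultaneously open and discrete, $L$ forces $G/N$ to be discrete. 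Hence $N$ is a compact open normal subgroup of $G$, the embedding $\rho$ is trivial, and $\Gamma$ is graphically discrete.

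I expect the compactness of $N=\ker(\Phi)$ to be the main obstacle. The defining condition only says each $f_g$ is \emph{close} to the identity, which a priori permits $\sup_x d_\Gamma(f_g(x),x)$ to grow without control as $g$ ranges over $N$, so $N$ need not be bounded in $G$. The tameness hypothesis is exactly what produces a uniform constant $C$ across all these quasi-isometries, and is therefore the crucial input; the finite-index hypothesis on $\phi(\Gamma)$ then does the complementary work of promoting compactness of $N$ to openness via the discreteness of $G/N$.
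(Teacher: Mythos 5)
Your proof is correct in its essentials, but it reaches the conclusion by a genuinely different route from the paper's. Both arguments use tameness in exactly the same place: any $g$ whose quasi-isometry class $[f_g]$ is trivial has displacement bounded by the uniform constant $C(K,A)$, so the kernel $N$ of $G\to\QI(\Gamma)$ is bounded in $(G,d_G)$ and has compact closure by property~(P1). The divergence is in how \emph{openness} of that kernel is obtained. The paper proves openness first and directly: it equips $\QI(\Gamma)$ with a word metric, uses the finite-index hypothesis to write every class as $[\phi(h)\circ f_i]$ with $[f_i]$ ranging over finitely many coset representatives, and shows via Lemma~\ref{lem:coarse_conv} that for any sequence $g_i\to 1_G$ the classes $[f_{g_i}]$ lie in a finite set and are eventually trivial; closedness, hence compactness, of the kernel then comes for free, since openness makes the homomorphism to the discrete group $\QI(\Gamma)$ continuous. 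You instead prove compactness first (boundedness from tameness, closedness from Lemma~\ref{lem:coarse_conv}), and then obtain openness abstractly: $\rho(\Gamma)$ projects to a discrete subgroup $L\le G/N$ which, via $\Phi\circ\rho=\phi$ and the hypothesis, has finite index; discrete implies closed, closed of finite index implies open, and an open discrete subgroup forces $G/N$ to be discrete, i.e.\ $N$ open. Your route buys a cleaner argument, with no word metric on $\QI(\Gamma)$ and no coset bookkeeping, at the price of invoking the standard fact that uniform lattice embeddings modulo finite kernel descend along quotients by compact normal subgroups; since Remark~\ref{rem:2ndcountable} asserts exactly this, citing it is legitimate.

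One technical point needs patching. Your closedness step argues with sequences ($g_i\to g$, $g_i\in N$ implies $g\in N$), but Lemma~\ref{lem:coarse_conv} is stated for sequences and sequential closedness only yields closedness when $G$ is first countable; you also need $N$ closed before $G/N$ is a Hausdorff group, which your final step relies on. The fix is the same one-line reduction the paper makes at the outset: invoke Remark~\ref{rem:2ndcountable} to replace $G$ by a second countable quotient (alternatively, note that the proof of Lemma~\ref{lem:coarse_conv} works verbatim for nets). With that inserted, your argument is complete, and the final passage from triviality of all uniform lattice embeddings modulo finite kernel to graphical discreteness via Proposition~\ref{prop:propC} is exactly as in the paper.
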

\begin{proof}
     Suppose $\rho:\Gamma\to G$ is a uniform  lattice embedding modulo finite kernel. By Remark \ref{rem:2ndcountable} we can restrict to the case where $G$ is second countable.
    Let $\psi:G \rightarrow \QI(\Gamma)$ be the homomorphism induced by the quasi-action $\{f_g\}_{g\in G}$ of $G$ on $\Gamma$, where $\psi(g) = [f_g]$. We will show $\psi$ has compact open kernel. 
    Equip $\QI(\Gamma)$ with the word metric with respect to a finite generating set. The natural homomorphism $\phi:\Gamma \rightarrow \QI(\Gamma)$, which has finite-index image by assumption and finite kernel since $\Gamma$ is tame, is then a quasi-isometry. We note the composition $\psi\circ \rho$ coincides with $\phi$. 
  
    To set notation, pick $K$, $A$, $R$ and $B$ as in the proof and statement of Lemma \ref{lem:coarse_conv}.  	
	Let $F=\{[f_1],\dots,[f_n]\}\subseteq \QI(\Gamma)$ be a  set of right $\phi(\Gamma)$ representatives.
	    Increasing $K$ and $A$ if necessary, we can assume each $f_i$ is a $(K,A)$-quasi-isometry. Thus every element of $\QI(\Gamma)$ can be represented by a $(K,A)$-quasi-isometry of the form $\phi(h)\circ f_i$ for some $h\in \Gamma$ and $1\leq i\leq n$. Let $C = C(K,A)$ be the tameness constant as in Definition \ref{defn:tame}.

    We first show $\ker(\psi)$ is open, which implies $\psi$ is continuous. Since $G$ is second countable, it is sufficient to show that if $(g_i)$ is a sequence in $G$ converging to $1_G$, then $g_i\in \ker(\psi)$ for $i$ sufficiently large.  By Lemma~\ref{lem:coarse_conv}, $d_{\Gamma}(f_{g_i}(1_{\Gamma}),1_{\Gamma})\leq B$ for all $i$ sufficiently large. There are sequences $(h_i)$ in $\Gamma$  and $(k_i)$ in $\{1,\dots, n\}$ such that $[f_{g_i}]=[\phi(h_i)\circ f_{k_i}]$ for all $i \in \N$.  Let $R_1=\max_{i=1}^n d_{\Gamma}(f_i(1_\Gamma),1_\Gamma)$. 
    Then,        
    \begin{align*}
		d_{\Gamma}(h_i,1_\Gamma) &\leq 
        d_{\Gamma}(h_i, h_if_{k_i}(1_\Gamma))+d_\Gamma(h_if_{k_i}(1_\Gamma),1_\Gamma)  = 
        d_\Gamma(1_\Gamma, f_{k_i}(1_\Gamma))+d(h_if_{k_i}(1_\Gamma),1_\Gamma)\\
        &\leq R_1+d_\Gamma(h_if_{k_i}(1_\Gamma),1_\Gamma)   \leq d_{\Gamma}(f_{g_i}(1_\Gamma),1_\Gamma)+R_1+C\leq B+R_1+C    
	\end{align*}     
for all $i$ sufficiently large. Therefore, $\{\psi(g_i)\mid i\in \bN\}$  is finite. 
If $\psi(g_i)$ is not eventually the identity, we can replace $(g_i)_{i \in \N}$ with a subsequence such that $\psi(g_i)$ is constant and equal to some  $[f]\neq 1_{\QI(\Gamma)}$. Without loss of generality, we may assume $f$ is a $(K,A)$-quasi-isometry. Since $ [f]$ is not the identity, for all $r\geq 0$, there is some $x\in \Gamma$ such that $d_{\Gamma}(f(x),x)>r$. In particular, there is some $x\in \Gamma$ such that $d_{\Gamma}(f(x),x)>B+C$.  Since $[f_{g_i}]=[f]$, $d_\Gamma(f_{g_i}(x),f(x))\leq C$. Therefore, $d_\Gamma(f_{g_i}(x),x)>B$ for all $i$, contradicting  the choice of $B$. 

It remains to show $\ker(\psi)$ is compact.  Let $g\in \ker(\psi)$. Then $d_\Gamma(f_g(1_\Gamma),1_\Gamma)\leq C$, so \[d_G(g,1_G)\leq d_G(\rho(\overline{\rho}(g)),1_G)+A\leq Kd_\Gamma(f_g(1_\Gamma),1_\Gamma)+2A\leq KC+2A.\] Since the word metric on $G$ is adapted, $\ker(\psi)$ has compact closure by Property~(P1). Since $\ker(\psi)$ is the kernel of a continuous map, it is closed. Thus, $\ker(\psi)$ is compact.  \end{proof}

\begin{question}
    Can the tameness assumption be removed from the previous theorem? That is, if $\Gamma$ is a finitely generated group such that the image of the homomorphism $\phi:\Gamma\to \QI(\Gamma)$  induced by left-multiplication is a finite-index subgroup of $\QI(\Gamma)$, is every lattice embedding modulo finite kernel of $\Gamma$ trivial?
\end{question}

\section{Graphical discreteness and non-discreteness for hyperbolic groups}\label{sec:altgraphdiscrete}

\subsection{Boundary characterization of graphical discreteness}

       The next theorem is particularly useful for exhibiting groups that are not graphically discrete. For example, it follows immediately from the theorem that non-abelian free groups are not graphically discrete. We refer to \cite{bridsonhaefliger} for background on boundaries of hyperbolic groups. If $\Gamma$ is a hyperbolic group, we always assume $\Homeo(\partial \Gamma)$ is equipped with the topology of uniform convergence.
    
    \begin{thm} \label{thm:boundary_char}
     A finitely generated non-elementary hyperbolic group $\Gamma$ is graphically discrete if and only if for every geometric action of $\Gamma$ on a connected locally finite graph $X$ and for all vertices $x \in X$, the image of the induced action of the stabilizer $(\Aut(X))_{x} \rightarrow \Homeo(\p X)$ is finite. 
    \end{thm}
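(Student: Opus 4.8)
The plan is to reduce this boundary criterion to the quasi-isometry criterion of Theorem~\ref{thm:compactbydiscrete} by proving that, for the spaces at hand, the natural homomorphism $\Psi\colon\QI(X)\to\Homeo(\partial X)$ is injective.

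I would first fix the setup. Since $\Gamma$ acts geometrically on the connected locally finite graph $X$, the Milnor--Schwarz lemma provides a quasi-isometry $\Gamma\to X$; hence $X$ is a proper geodesic $\delta$-hyperbolic space, its Gromov boundary satisfies $\partial X\cong\partial\Gamma$, and $|\partial X|\geq 3$ because $\Gamma$ is non-elementary. As $\Gamma\leq\Aut(X)$ already acts cocompactly, $\Aut(X)$ is a closed subgroup of $\Aut(X)$ acting cocompactly on $X$, so Theorem~\ref{thm:compactbydiscrete} applies with $G=\Aut(X)$. Every $\phi\in\Aut(X)$ is an isometry, hence a $(1,0)$-quasi-isometry, and its boundary homeomorphism $\partial\phi$ is precisely $\Psi([\phi])$; thus the homomorphism $(\Aut(X))_x\to\Homeo(\partial X)$ appearing in the statement is the composite $(\Aut(X))_x\to\QI(X)\xrightarrow{\Psi}\Homeo(\partial X)$.

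The crux, and the main obstacle, is to show that $\Psi$ is injective, i.e.\ that a self-quasi-isometry of $X$ inducing the identity on $\partial X$ lies at bounded distance from $\id_X$. Let $f$ be a $(K,A)$-quasi-isometry with $\partial f=\id_{\partial X}$. I would fix three distinct points $\xi_1,\xi_2,\xi_3\in\partial X$ and a center $o$ of the ideal triangle they span (a point within $\delta'$ of each side $[\xi_i,\xi_j]$); for $g\in\Gamma$ the point $g\cdot o$ is then a center of the ideal triangle on $g\xi_1,g\xi_2,g\xi_3$. Since $\partial f=\id$, each quasigeodesic $f([g\xi_i,g\xi_j])$ shares its ideal endpoints with $[g\xi_i,g\xi_j]$, so by the Morse lemma it lies within $M=M(K,A,\delta)$ of that geodesic; hence $f(g\cdot o)$ is uniformly close to all three sides of the ideal triangle, and standard thin-triangle geometry forces $d(f(g\cdot o),g\cdot o)\leq D'$ with $D'$ independent of $g$. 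Because $\Gamma$ acts cocompactly, the orbit $\{g\cdot o\}$ is $R$-dense in $X$; for arbitrary $x$, choosing $g$ with $d(x,g\cdot o)\leq R$ gives $d(f(x),x)\leq KR+A+D'+R$, a uniform bound. Thus $[f]$ is trivial in $\QI(X)$, so $\Psi$ is injective.

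Finally I would assemble the equivalences. Injectivity of $\Psi$ identifies the image of $(\Aut(X))_x$ in $\Homeo(\partial X)$ with its image in $\QI(X)$, so one image is finite if and only if the other is (and the ``some $x$ versus every $x$'' interchange carries over from Theorem~\ref{thm:compactbydiscrete}). Consequently, for each such graph $X$, $\Aut(X)$ is compact-by-discrete if and only if $(\Aut(X))_x$ has finite image in $\Homeo(\partial X)$ for all $x$. By Proposition~\ref{prop:propC}, $\Gamma$ is graphically discrete exactly when $\Aut(X)$ is compact-by-discrete for every geometric action on a connected locally finite graph $X$, which now translates verbatim into the boundary condition in the statement. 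The only substantial ingredient is the injectivity of $\Psi$ above, namely the rigidity fact that a self-quasi-isometry of a cobounded non-elementary hyperbolic space acting trivially on its boundary is bounded.
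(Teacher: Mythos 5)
Your proof is correct and follows essentially the same route as the paper: both reduce the boundary criterion to Theorem~\ref{thm:compactbydiscrete} by factoring $(\Aut(X))_x\to\Homeo(\partial X)$ through $\QI(X)$ and using injectivity of $\QI(X)\to\Homeo(\partial X)$, then invoke Proposition~\ref{prop:propC}. The only difference is that the paper cites this injectivity from the literature (Dru\c{t}u--Kapovich, Corollary 11.115), whereas you prove it directly via the ideal-triangle-center argument, which is a correct, standard proof of that ingredient.
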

	\begin{proof}
		Suppose $\Gamma$ acts geometrically on a connected locally finite graph $X$, and let $G = \Aut(X)$. The induced map $G\to \Homeo(\p X)$ naturally factors through $\QI(X)$, where the induced map $\QI(X)\hookrightarrow\Homeo(\p X)$ is injective \cite[Corollary 11.115]{DrutuKapovich18}. Thus the image of $G_x$ in $\QI(X)$ is finite if and only if the image of $G_x$ in $\Homeo(\p X)$ is finite. The result follows from Proposition \ref{prop:propC} and Theorem \ref{thm:compactbydiscrete}.
	\end{proof}

    We refer to \cite{ronan_buildings, davis_buildings}  for background and the definition of hyperbolic buildings. 

    \begin{cor} \label{cor:buildings}
         Uniform lattices in thick locally finite hyperbolic buildings are not graphically discrete. In particular, non-abelian free groups are not graphically discrete.
    \end{cor}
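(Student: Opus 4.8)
The plan is to invoke the boundary characterization of graphical discreteness for non-elementary hyperbolic groups (Theorem \ref{thm:boundary_char}). Let $\Delta$ be a thick locally finite hyperbolic building and let $\Gamma$ be a uniform lattice in $\Aut(\Delta)$, so $\Gamma$ acts geometrically on $\Delta$. Since $\Delta$ is a proper geodesic Gromov hyperbolic space whose visual boundary has more than two points, $\Gamma$ is non-elementary hyperbolic, and we may identify $\partial\Gamma$ with $\partial\Delta$. Realizing $\Delta$ as a connected locally finite graph $X$ (for instance its chamber graph, with chambers as vertices and adjacency across panels as edges), we obtain a geometric action of $\Gamma$ on $X$, an inclusion $\Aut(\Delta)\hookrightarrow\Aut(X)$, and an identification $\partial X=\partial\Delta$. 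By Theorem \ref{thm:boundary_char} it then suffices to exhibit a vertex $x\in X$ for which the image of $(\Aut X)_x$ under $(\Aut X)_x\to\Homeo(\partial X)$ is infinite.

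To produce such automorphisms I would exploit thickness exactly in the spirit of Proposition \ref{prop:HNNnotGD} and Proposition \ref{prop:notgd_fp}. Fix a base vertex $x$ lying in a chamber $C_0$. Thickness means every panel is contained in at least three chambers, so beyond a suitable panel $\pi$ there are at least two interchangeable ``branches'' of the building that can be swapped by a type-preserving automorphism fixing the side of $\pi$ containing $C_0$. This yields a nontrivial $\sigma\in\Aut(\Delta)$ that fixes $C_0$ (hence $x$) together with a neighbourhood of $x$ on one side, but whose support is unbounded in a single direction, so that $\sigma$ acts nontrivially on $\partial\Delta$. In particular $\sigma$ lies outside the (compact, normal) kernel $N$ of $\Aut(X)\to\QI(X)\hookrightarrow\Homeo(\partial X)$ consisting of bounded automorphisms.

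To upgrade this single element to an infinite image, I would conjugate $\sigma$ by group elements that push its support off to infinity while keeping $x$ inside the region on which the conjugate acts trivially, mirroring the family $(w_i\sigma w_i^{-1})$ of Proposition \ref{prop:HNNnotGD}. Choosing $\gamma_i\in\Gamma$ translating deeper into the half on which $\sigma$ is trivial, the conjugates $\gamma_i\sigma\gamma_i^{-1}$ lie in $(\Aut X)_x$ for $i$ large, and their supports escape every bounded set and are eventually pairwise disjoint; hence they act on $\partial X$ by pairwise distinct nontrivial homeomorphisms. Therefore $(\Aut X)_x$ has infinite image in $\Homeo(\partial X)$, and Theorem \ref{thm:boundary_char} shows $\Gamma$ is not graphically discrete. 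For the final assertion, a non-abelian free group acts geometrically on a regular tree of valence at least three, which is a thick locally finite building, so the same argument applies (alternatively this is already the corollary recorded after Theorem \ref{thm:compactbydiscrete}).

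The main obstacle is the construction of $\sigma$ from thickness. In a tree the interchangeable branches beyond a vertex are visibly permutable, but in a higher-rank hyperbolic building one must invoke the structure theory of thick buildings—the existence of type-preserving automorphisms fixing a residue or half-apartment while permuting the chambers of a panel, equivalently the non-discreteness of $\Aut(\Delta)$ detected on the boundary—to guarantee both that $\sigma$ exists and that its boundary action is nontrivial and can be localized in one boundary direction. Once $\sigma$ is in hand, checking that the conjugates remain in the vertex stabilizer and give genuinely distinct boundary maps is a support-tracking argument of the same flavour as those already carried out in Section \ref{subsec:nongraphdisc}.
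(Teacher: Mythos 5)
Your overall strategy---reduce to Theorem \ref{thm:boundary_char} and exhibit a vertex whose $\Aut(X)$-stabilizer has infinite image in $\Homeo(\partial X)$---is exactly the paper's. But the mechanism you propose for producing the required automorphisms has a genuine gap, and you flag it yourself: everything hinges on the existence of a nontrivial automorphism $\sigma$ fixing a half-space (or half-apartment) containing the base chamber and supported on the branches beyond a single panel. Thickness gives you no such thing. Thickness only says that each panel lies in at least three chambers; it does not produce \emph{any} automorphism permuting those chambers, let alone one whose support is confined to one side of a wall. Support-localized automorphisms of this kind are a tree-like (or right-angled building) phenomenon; for general thick hyperbolic buildings, for instance non-right-angled Fuchsian buildings, they need not exist, and already for thick generalized polygons the full automorphism group can be trivial, so no argument starting from thickness alone can manufacture $\sigma$. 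A lesser issue: even granting $\sigma$, if the $\gamma_i$ translate deeper into the half on which $\sigma$ is trivial, the supports of the conjugates $\gamma_i\sigma\gamma_i^{-1}$ are nested rather than pairwise disjoint, so the final "distinct boundary homeomorphisms" claim also needs repair; but this is cosmetic compared with the existence problem.

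The paper avoids all of this by using a different homogeneity property, which it takes as part of the definition of the buildings under consideration: each chamber lies in infinitely many apartments, and for any two of them there is an automorphism of the building fixing the chamber \emph{pointwise} and interchanging the two apartments. Such automorphisms need not be supported in any half-space. Since apartments are quasi-convex, their limit sets embed in $\partial X$, and distinct apartments have distinct limit sets; hence these chamber-fixing automorphisms already give infinitely many distinct elements in the image of the chamber (hence vertex) stabilizer in $\Homeo(\partial X)$, and Theorem \ref{thm:boundary_char} concludes immediately, with no conjugation or support-tracking needed. To salvage your argument you should replace the branch-swapping construction by this apartment-swapping property, or else restrict the statement to buildings (such as trees and right-angled buildings) where your localized automorphisms genuinely exist.
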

    \begin{proof} Let $X$ be a thick locally finite hyperbolic building. By definition, each (compact) chamber of $X$ is contained in infinitely many (non-compact) apartments, and for any two such apartments there is an automorphism of the building fixing the chamber pointwise and interchanging the apartments. Each apartment is quasi-convex, hence its limit set embeds in the boundary of $X$. Moreover, distinct apartments have distinct limit sets. Thus, a uniform lattice in the isometry group of the building is not graphically discrete by Theorem~\ref{thm:boundary_char}.
    \end{proof}

A \emph{JSJ decomposition} of a finitely presented group is a graph of groups decomposition encoding how a group splits over a prescribed family of subgroups. In this article, we only consider JSJ decompositions of one-ended hyperbolic groups over two-ended subgroups, and we only consider the  JSJ decomposition described by Bowditch \cite{bowditch}. The associated Bass--Serre tree, called the \emph{JSJ tree}, is  canonically determined by the  local cut point structure of the boundary. The JSJ tree is said to be {\it nontrivial} if the tree has more than one vertex (and hence infinitely many vertices).  We refer the reader to Bowditch's article for details of this \cite{bowditch}, and to the monograph of Guirardel--Levitt  for a reference on  JSJ decompositions more generally \cite{guirardel2017jsj}.

Let $\Gamma$ be a one-ended  finitely generated hyperbolic group with nontrivial  Bowditch JSJ tree $T$.  
We equip  $\Aut(T)$  with the topology of pointwise convergence and note that in non-degenerate cases, the tree $T$ is not locally finite. Using the description of $T$ in terms of the topology of the boundary, we show the following:
\begin{prop}\label{prop:bdry_jsj_injective}
	If $\Gamma$ is a one-ended hyperbolic group with nontrivial JSJ tree $T$, there exists a continuous monomorphism $\Psi:\Homeo(\partial \Gamma)\to \Aut(T)$ extending the action of $\Gamma$ on $T$.
\end{prop}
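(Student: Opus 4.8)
The plan is to exploit the fact, central to Bowditch's work \cite{bowditch}, that the JSJ tree $T$ is constructed \emph{canonically} from the topological space $M=\partial\Gamma$: its vertex set, edge set, and incidence relation are read off from the local cut point structure of $M$, each edge $e$ carries an intrinsic cut pair $\lambda(e)=\{a_e,b_e\}\subseteq M$ (the limit set of the corresponding two-ended edge group), and each vertex $v$ an intrinsic ``limit set'' $L_v\subseteq M$. Since any $h\in\Homeo(M)$ preserves all of this topological data, it permutes the vertices and edges of $T$ while preserving incidence, yielding an automorphism $\Psi(h)\in\Aut(T)$ characterized by $L_{\Psi(h)v}=h(L_v)$ and $\lambda(\Psi(h)e)=h(\lambda(e))$. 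First I would record that $\Psi$ is a homomorphism: this is immediate from the naturality of the assignment $h\mapsto\Psi(h)$, since the defining data of $T$ depends only on the homeomorphism type of $M$. The same naturality, applied to the homeomorphisms coming from the boundary action of $\Gamma$, shows that $\Psi$ restricted to $\Gamma$ is precisely the $\Gamma$-action on the JSJ tree, so that $\Psi$ extends this action.

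For injectivity, suppose $h\in\ker\Psi$, so that $h$ fixes every vertex and edge of $T$; in particular $h(\lambda(e))=\lambda(e)$ for every edge $e$. The union $\bigcup_e\lambda(e)$ is $\Gamma$-invariant and nonempty, hence dense in $M$ because the action of the non-elementary hyperbolic group $\Gamma$ on its boundary is minimal. I would then promote this set-wise invariance to pointwise fixing: the cut pairs incident to a common vertex $v$ are organized by a separation (linking) relation inside $L_v$ that $h$ must preserve, and this rigidifies $h$ enough to rule out that $h$ interchanges the two points of any cut pair. Fixing a dense subset of $M$, the homeomorphism $h$ must be the identity.

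For continuity, since $\Aut(T)$ carries the topology of pointwise convergence and $V(T)$ is discrete, it suffices to check continuity at the identity: if $h_i\to\id$ uniformly, I must show that for each vertex $v$ one has $\Psi(h_i)v=v$ for all large $i$. As $\Psi(h_i)v$ is the vertex whose limit set is $h_i(L_v)$, and $h_i(L_v)\to L_v$ in the Hausdorff metric, the claim reduces to showing that the limit sets of distinct vertices cannot Hausdorff-accumulate onto $L_v$. This should follow from properness of the $\Gamma$-action together with finiteness of the number of vertex orbits and convergence dynamics on $\partial\Gamma$: after passing to a subsequence, a sequence of distinct vertices has the form $\gamma_i v_j$ with $\gamma_i\to\infty$, and the sets $\gamma_i L_{v_j}$ collapse to a point rather than converging to the nondegenerate set $L_v$.

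The step I expect to be the main obstacle is the topological control underlying the last two paragraphs: proving that distinct vertex limit sets remain uniformly separated from a fixed $L_v$ (for continuity) and that $h$ cannot interchange the points of a cut pair (for injectivity). Both require converting the discreteness and minimality of the boundary dynamics into quantitative statements about Bowditch's local cut point data, whereas the construction of $\Psi$ and its $\Gamma$-equivariance are formal consequences of the canonicity of $T$.
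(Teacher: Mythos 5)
Your construction of $\Psi$ and your continuity argument are essentially sound. The continuity step is implemented differently from the paper -- the paper works with Bowditch's betweenness relation on the collection $\Omega$ of cut pairs and $\sim$-classes, choosing $\omega_1,\omega_2$ so that $\omega$ is the unique element between them and using uniform convergence to preserve the four-point separation, whereas you use Hausdorff convergence of limit sets together with the fact that infinitely many distinct translates of a quasiconvex limit set have visual diameters tending to $0$ -- but both routes work.

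The genuine gap is in your injectivity step. You propose to promote the setwise invariance $h(\lambda(e))=\lambda(e)$ to pointwise fixing by appealing to a ``separation (linking) relation'' among cut pairs. This cannot work as stated: linking is a relation on \emph{unordered} pairs, and it is invariant under transposing the two points of any single pair, so its preservation places no constraint at all on whether $h$ swaps $a_e$ and $b_e$. You would need strictly finer data (for instance, how $h$ permutes the complementary components of $\partial\Gamma\setminus\lambda(e)$), and you have not supplied it; you yourself flag this as the unproven main obstacle, so as written the proof of injectivity is incomplete.

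The paper sidesteps the issue entirely: it never proves that $\phi\in\ker\Psi$ fixes any pair pointwise. Instead, fix one edge cut pair $\omega=\{g^+,g^-\}$. For every $z\in\partial\Gamma$ and every neighborhood $U$ of $z$ there is $h\in\Gamma$ with $h\omega\subseteq U$ (take $h$ of infinite order with fixed points disjoint from $\omega$ and $h^+\in U$, and use north--south dynamics). Since $\phi$ stabilizes every edge $he$, we have $\phi(h\omega)=h\omega$ setwise, so $\phi(hg^+)\in h\omega\subseteq U$. Running this over a neighborhood basis $U_i$ of $z$ gives points $h_ig^+\to z$ with $\phi(h_ig^+)\to z$, whence $\phi(z)=z$; as $z$ was arbitrary, $\phi=\id$. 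Alternatively, your own collapsing observation repairs your argument: if $h(p)=q\neq p$ with $p$ in a cut pair, choose cut-pair points $x_n\to p$ distinct from $p,q$; each $h(x_n)$ lies in the pair of $x_n$, and if $h(x_n)\neq x_n$ infinitely often, those (eventually distinct) pairs would converge to the two distinct points $p,q$, contradicting the collapse of diameters. Either way, the step requires a dynamical argument of this kind; the linking relation alone will not deliver it.
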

\begin{proof}
	We first summarize the construction of Bowditch's tree.
	The tree $T$ is defined  from a collection $\Omega$ of non-empty subsets  of $\partial \Gamma$. In the terminology of \cite{bowditch}, $\Omega$ is the collection of $\sim$-classes and $\approx$-pairs, although familiarity with this terminology is not needed here.  The set $\Omega$ is equipped with a certain \emph{betweenness relation}, where $\omega$ is between $\omega_1$ and $\omega_2$ if there exist $a,b\in \omega$, $c\in \omega_1$ and $d\in \omega_2$ such that $c$ and $d$ lie in different components of $\partial \Gamma\setminus\{a,b\}$. The set $\Omega$ and this betweenness relation  depend only on the  local cut point structure of $\partial \Gamma$, hence $\Homeo(\partial \Gamma)$ acts on the set $\Omega$  preserving the betweenness relation.

	 The tree $T$ is constructed in a combinatorial way from $\Omega$ such that elements of $\Omega$ are a subset of vertices of $T$, and the betweenness relation on $T$ agrees with that of $\Omega$. There is no proper subtree of $T$ containing $\Omega$, hence   any  bijection of $\Omega$  preserving the betweenness relation induces a unique automorphism of $T$. Thus there is a canonical homomorphism $\Psi:\Homeo(\partial \Gamma)\to \Aut(T)$ determined by the action of $\Homeo(\partial \Gamma)$ on $\Omega$. The action of  $\Gamma$ on its JSJ tree $T$ is induced by the action of $\Gamma$ on $\partial \Gamma$, hence $\Psi$ extends the action of $\Gamma$.  All this is contained in \cite{bowditch}.
		 
		 We now show $\Psi$ is continuous. Let $(\phi_i)_{i\in I}$ be a net %
		 in $\Homeo(\partial \Gamma)$ converging to the identity.  We show  for each $\omega\in \Omega$, there is some $i_0\in I$ such that $\phi_i(\omega)=\omega$ for all $i\geq i_0$. By the above discussion, this will imply $\Psi(\phi_i)$ converges pointwise on $T$. 
		 Since the tree $T$ is discrete and has no leaves, we can pick $\omega_1,\omega_2\in \Omega$ such that $\omega$ is the unique element of $\Omega$ between $\omega_1$ and $\omega_2$. Thus for all $i\in I$,  $\phi_i(\omega)$ is the unique element of $\Omega$ between $\phi_i(\omega_1)$ and $\phi_i(\omega_2)$.
		 
		  Since $\omega$ is  between $\omega_1$ and $\omega_2$, there exist $a,b\in \omega$, $c\in \omega_1$ and $d\in \omega_2$ such that $c$ and $d$ lie in different components of $\partial \Gamma\setminus\{a,b\}$.	As $(\phi_i)_{i\in I}$ converges to the identity, there exists $i_0\in I$ such that $\phi_i(c)$ and $\phi_i(d)$ lie in different components of $\partial \Gamma\setminus\{a,b\}$ for all $i\geq i_0$. Hence $\omega$ is between  $\phi_i(\omega_1)$ and $\phi_i(\omega_2)$ for all $i\geq i_0$. Therefore, $\phi_i(\omega)=\omega$ for all $i\geq i_0$ as required, showing $\Psi$ is continuous.

	We now show $\Psi$ is injective. We fix $\phi\in \ker(\Psi)$. Pick  an infinite order element $g\in\Gamma$ fixing an edge $e$ of $T$. Then  $\omega=\{g^+,g^-\}$ is the cut pair in $\partial \Gamma$ corresponding to $e$; see \cite{bowditch}. Then for all $h\in \Gamma$,  $\phi$ must stabilize the edge $he$, hence $\phi(h\omega)=h\omega$. 
		We claim that for every open $U\subseteq \partial \Gamma$, there is some $h\in \Gamma$ with $h\omega\subseteq U$. 
	Let $U\subseteq \partial \Gamma$. There is some infinite order $h\in \Gamma$ such that $\{h^+,h^{-}\}$ is disjoint from $\{g^+,g^-\}$ \cite[8.2.G]{gromov1987hyperbolic} and $h^+\in U$. Therefore $h^i(g^+),h^i(g^-)\to h^+$ as $i\to \infty$. Thus $h^i\omega\in U$ for $i$ sufficiently large, proving the claim.
	If $z\in \partial \Gamma$, let $(U_i)$ be a countable neighborhood basis of $z$. By the preceding claim, there is a  sequence $(h_i)$ in  $\Gamma$ such that $h_i\omega\in U_i$. Then $(h_i g^+)$ and $(\phi (h_i g^+))$ both converge to $z$, thus $\phi(z)=z$. Since $\phi(z)=z$ for all $z\in \partial \Gamma$, $\phi=\id$. Thus $\Psi$ is injective.
\end{proof}

\begin{cor} \label{cor:vertex_stab_jsj_alt}
	Let $\Gamma$ be a finitely generated one-ended hyperbolic group $\Gamma$, with nontrivial Bowditch JSJ tree $T$. Then $\Gamma$  is graphically discrete if and only if for every geometric action of $\Gamma$ on a connected locally finite graph $X$ and for all vertices $x \in X$, the image of the induced action of the stabilizer $(\Aut(X))_{x} \rightarrow \Aut(T)$ is finite.
\end{cor}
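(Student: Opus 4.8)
The plan is to derive this corollary formally from the two preceding results: the boundary characterization of graphical discreteness in Theorem~\ref{thm:boundary_char}, and the continuous monomorphism $\Psi\colon\Homeo(\partial\Gamma)\to\Aut(T)$ furnished by Proposition~\ref{prop:bdry_jsj_injective}. The point is that Theorem~\ref{thm:boundary_char} already reduces graphical discreteness of the one-ended (hence non-elementary) hyperbolic group $\Gamma$ to the finiteness of the image of each vertex stabilizer in $\Homeo(\partial\Gamma)$, and $\Psi$ lets us transport this finiteness condition verbatim to $\Aut(T)$.

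First I would fix a geometric action of $\Gamma$ on a connected locally finite graph $X$ together with a vertex $x\in X$, and write $G_x=(\Aut(X))_x$. Since $\Gamma$ acts geometrically on $X$, the graph $X$ is a proper hyperbolic space quasi-isometric to $\Gamma$, so its Gromov boundary is canonically homeomorphic to $\partial\Gamma$; under this identification the action of $G_x$ on $\partial X$ becomes a homomorphism $G_x\to\Homeo(\partial\Gamma)$. The next step is to observe that the induced action $G_x\to\Aut(T)$ appearing in the statement is, by the construction of $\Psi$, exactly the composite
\[
G_x\longrightarrow\Homeo(\partial\Gamma)\xrightarrow{\ \Psi\ }\Aut(T),
\]
since $\Psi$ is the canonical map realizing boundary homeomorphisms as tree automorphisms via the betweenness relation on $\Omega$, and it extends the $\Gamma$-action on $T$.

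The key observation is then that $\Psi$ is injective, so it restricts to a bijection between the image of $G_x$ in $\Homeo(\partial\Gamma)$ and the image of $G_x$ in $\Aut(T)$; in particular these two images are simultaneously finite or infinite. Combining this equivalence with Theorem~\ref{thm:boundary_char} immediately yields the stated characterization: $\Gamma$ is graphically discrete if and only if for every such $X$ and every $x$, the image of $(\Aut(X))_x\to\Aut(T)$ is finite.

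I do not expect a genuine obstacle here, as all the substantive work is already carried out in Theorem~\ref{thm:boundary_char} and Proposition~\ref{prop:bdry_jsj_injective}. The only point requiring care — and the place where one must invoke the specific construction rather than pure formalism — is verifying that the ``induced action on $T$'' named in the corollary coincides with $\Psi$ composed with the boundary action, i.e.\ that the two descriptions of how $G_x$ moves $T$ agree. This follows because $\Psi$ is the \emph{unique} automorphism of $T$ induced by a given betweenness-preserving bijection of $\Omega$ (Proposition~\ref{prop:bdry_jsj_injective}), so there is no competing tree action to rule out, and the corollary is a direct consequence.
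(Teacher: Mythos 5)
Your proposal is correct and follows exactly the paper's argument: the paper's proof also considers the composition $\Aut(X)\to\Homeo(\partial X)\xrightarrow{\Psi}\Aut(T)$ and deduces the corollary directly from Theorem~\ref{thm:boundary_char} together with the injectivity of $\Psi$ from Proposition~\ref{prop:bdry_jsj_injective}. Your extra care in checking that the ``induced action on $T$'' agrees with the composite is a reasonable point the paper leaves implicit, but the substance of the two arguments is identical.
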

\begin{proof}
	We consider the composition $\Aut(X)\to \Homeo(\partial X)\xrightarrow{\psi} \Aut(T)$. The result follows from Theorem \ref{thm:boundary_char} and Proposition \ref{prop:bdry_jsj_injective}.
\end{proof}

\subsection{Hyperbolic groups with sphere and Sierpinski carpet boundaries}    
    
    \begin{thm}\label{thm:graphical_discrete_sphere_boundaries}
	If $\Gamma$ is a hyperbolic group with boundary an $n$-sphere for $n\leq 3$, then $\Gamma$ is graphically discrete.
\end{thm}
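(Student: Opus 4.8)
The plan is to reduce everything to the boundary characterization of Theorem~\ref{thm:boundary_char} and then to feed the resulting compact group actions on $S^n$ into the resolution of the Hilbert--Smith conjecture. First I would dispose of the degenerate case $n=0$: then $\partial\Gamma\cong S^0$ is two points, so $\Gamma$ is two-ended, hence virtually $\Z$, and such groups are virtually nilpotent and therefore graphically discrete by Theorem~\ref{thm:propc_examples}(\ref{propc:trofimov}). So I may assume $1\leq n\leq 3$, in which case $\partial\Gamma\cong S^n$ is connected and infinite, $\Gamma$ is non-elementary, and Theorem~\ref{thm:boundary_char} applies. By that theorem it suffices to show: for every geometric action of $\Gamma$ on a connected locally finite graph $X$ and every vertex $x\in VX$, the image of the vertex stabilizer $G_x:=(\Aut(X))_x$ under the induced homomorphism $\rho\colon\Aut(X)\to\Homeo(\partial X)$ is finite.

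Fix such an action and set $G=\Aut(X)$. Since $X$ is a connected locally finite graph, $G$ is a totally disconnected locally compact group, the action $G\curvearrowright X$ is geometric (it contains the cocompact action of $\Gamma$ and is proper by local finiteness), and each vertex stabilizer $G_x$ is a compact open subgroup; in particular $G_x$ is compact and totally disconnected, i.e.\ profinite. Because $X$ is a proper geodesic hyperbolic space quasi-isometric to $\Gamma$, its Gromov boundary satisfies $\partial X\cong\partial\Gamma\cong S^n$, and the isometric action of $G$ on $X$ extends to a continuous action on the compactification $X\cup\partial X$; restricting to the boundary yields a continuous homomorphism $\rho\colon G\to\Homeo(S^n)$, where $\Homeo(S^n)$ carries the topology of uniform convergence. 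This continuity is the one place where the topology on $\Aut(X)$ interacts with the boundary dynamics, and it follows from standard properties of the Gromov compactification of a proper hyperbolic space \cite{bridsonhaefliger}.

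Now set $H=\rho(G_x)$. As the continuous image of the compact group $G_x$, the set $H$ is compact, and since $G_x$ is compact and $\Homeo(S^n)$ is Hausdorff, the natural continuous bijection $G_x/\ker(\rho|_{G_x})\to H$ is a homeomorphism; being a quotient of a profinite group, $H$ is therefore profinite, hence totally disconnected. Thus $H$ is a compact, totally disconnected subgroup of $\Homeo(S^n)$ acting effectively on $S^n$, and this action is continuous because the evaluation map $\Homeo(S^n)\times S^n\to S^n$ is continuous for the compact manifold $S^n$. This is exactly the input for Hilbert--Smith: by the resolution of the Hilbert--Smith conjecture for manifolds of dimension at most three (classical for $n\leq 2$, and Pardon for $n=3$), any locally compact group acting effectively and continuously on the connected manifold $S^n$ is a Lie group. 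But an infinite profinite group is never a Lie group, since a totally disconnected Lie group is discrete and a compact discrete group is finite. Hence $H$ must be finite. As $x$ and the action were arbitrary, Theorem~\ref{thm:boundary_char} gives that $\Gamma$ is graphically discrete.

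The step I expect to require the most care is establishing the continuity of the boundary action $\rho\colon\Aut(X)\to\Homeo(\partial X)$ and, relatedly, confirming that $H=\rho(G_x)$ is genuinely a compact (hence locally compact) group acting \emph{effectively} and continuously on $S^n$, since this is precisely the hypothesis that Hilbert--Smith consumes. Once that is secured, the remainder is a soft combination of two facts: compact open stabilizers in $\Aut(X)$ are profinite, and an infinite profinite group cannot be a Lie group. Note also that the dimension restriction $n\leq 3$ enters only through the known cases of Hilbert--Smith, so any extension of that conjecture to higher-dimensional spheres would immediately extend the conclusion.
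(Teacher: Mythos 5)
Your proof is correct, and it reaches the conclusion by a genuinely different reduction than the paper, even though both arguments ultimately feed the boundary action into the resolution of the Hilbert--Smith Conjecture (Theorem~\ref{thm:hilbert_smith}). The paper works with the \emph{full} automorphism group: it cites Furman \cite[Theorem 3.5]{furman2001mostow} to get that $\rho:\Aut(X)\to\Homeo(S^n)$ is continuous \emph{with compact kernel} $K$, applies Hilbert--Smith to conclude that $\Aut(X)/K$ is a Lie group, and then invokes Lemma~\ref{lem:totdisc_to_lie} (continuous maps from tdlc groups to Lie groups have open kernel) to see that $K$ is a compact open normal subgroup, giving graphical discreteness directly via Proposition~\ref{prop:propC}. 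You instead apply Hilbert--Smith to the image $H=\rho\bigl((\Aut(X))_x\bigr)$ of a single compact open vertex stabilizer, observe that $H$ is profinite and that an infinite profinite group cannot be a Lie group, conclude $H$ is finite, and then quote the boundary characterization, Theorem~\ref{thm:boundary_char}. What each route buys: yours needs only the \emph{continuity} of the boundary action (not the compactness of its kernel) and replaces Lemma~\ref{lem:totdisc_to_lie} by the elementary profinite-versus-Lie dichotomy, but it leans on Theorem~\ref{thm:boundary_char}, which is itself a substantial result of the paper (it rests on Theorem~\ref{thm:compactbydiscrete} and Trofimov's theorem); the paper's route avoids that machinery at the cost of the stronger input from Furman. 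One further point in your favor: your separate treatment of $n=0$ is not pedantry. Furman's theorem (and likewise Theorem~\ref{thm:boundary_char}) concerns non-elementary hyperbolic groups, and for a two-ended group the kernel of the boundary action of $\Aut(X)$ need not be compact (e.g.\ $X=\mathbb{R}$ as a line graph, where the kernel contains all translations), so the paper's proof as written literally covers only $n\geq 1$; your reduction of $S^0$ to the virtually nilpotent case via Theorem~\ref{thm:propc_examples}(\ref{propc:trofimov}) cleanly closes that degenerate case.
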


To prove this, we recall the Hilbert--Smith Conjecture:
\begin{conj}
	If a locally compact group $G$ acts faithfully and continuously by homeomorphisms on a connected $n$-manifold, then $G$ is a Lie group.
\end{conj}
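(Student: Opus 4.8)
The final displayed statement is the Hilbert--Smith Conjecture, which in full generality remains open; accordingly I will describe the standard strategy behind its \emph{known} resolution, which covers exactly the dimensions $n\leq 3$ that the paper invokes. The overarching plan is to reduce the conjecture to a single algebraic obstruction --- the $p$-adic integers $\Z_p$ --- and then to rule out effective $\Z_p$-actions by cohomological means.

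\emph{Reduction via Hilbert's fifth problem.} The goal is to show that a locally compact group $G$ acting faithfully and continuously on a connected $n$-manifold $M$ is a Lie group. By the Gleason--Montgomery--Zippin--Yamabe solution of Hilbert's fifth problem, a locally compact group is a Lie group precisely when it has no small subgroups; Proposition~\ref{prop:no_small_sbgps} records the easy ``Lie $\Rightarrow$ no small subgroups'' direction, while the converse is the deep input I would cite. Thus it suffices to show $G$ has no small subgroups. Passing to an open almost-connected subgroup (which still acts faithfully on $M$) and applying the structure theory of locally compact groups, one shows that a \emph{non}-Lie such $G$ must contain a closed subgroup topologically isomorphic to $\Z_p$ for some prime $p$; this is the Montgomery--Zippin reduction. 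Hence the conjecture follows once one establishes the claim $(\star)$: for every prime $p$, the group $\Z_p$ admits no faithful continuous action by homeomorphisms on a connected $n$-manifold.

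\emph{Ruling out $\Z_p$.} Here the dimension enters. Writing $\Z_p=\varprojlim \Z/p^k$, an effective $\Z_p$-action yields a tower of (generically free) $\Z/p^k$-actions, and the central tools are Smith theory together with C.-T. Yang's dimension theorem, to the effect that if $\Z_p$ acts effectively on an $n$-dimensional manifold-like space then the orbit space $M/\Z_p$ acquires cohomological dimension $n+2$. The crux is to show that this is incompatible with $M$ being an $n$-manifold, the subtle point being that one must first control the a priori possibly infinite dimension of the orbit space. For $n\leq 2$ this is classical, using Newman's theorem bounding the displacement of a nontrivial periodic homeomorphism. For $n=3$ it is the theorem of Pardon, who rules out $(\star)$ by combining the Smith--Yang cohomological bounds with three-manifold topology, analyzing the homology of the tower of $\Z/p^k$-covers against Poincar\'e duality and incompressibility to force a contradiction.

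\emph{Main obstacle.} The reduction step is soft and uniform in $n$, so the entire difficulty --- and the reason the conjecture is a theorem only for $n\leq 3$ --- lies in establishing $(\star)$, specifically in excluding free $\Z_p$-actions when one has no a priori finite-dimensionality or regularity of the orbit space. In dimensions $n\geq 4$ this is precisely where the argument stalls, and additional hypotheses (for example Lipschitz or quasiconformal actions, following Repov\v{s}--\v{S}\v{c}epin and Martin) are currently required. The paper therefore states the consequence only for $n\leq 3$ and notes that any future resolution of $(\star)$ in higher dimensions would immediately extend its applications.
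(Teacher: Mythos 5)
Your proposal correctly recognizes that this statement is the Hilbert--Smith Conjecture, which is open in general and which the paper does not prove: the paper merely states it and then records the known cases $n\leq 3$ (Theorem~\ref{thm:hilbert_smith}) by citing Montgomery--Zippin for $n\leq 2$ and Pardon for $n=3$, exactly the reduction-to-$\Z_p$ strategy and literature you describe. So your account matches the paper's treatment; there is nothing to prove here beyond the citations, and your identification of where the difficulty lies for $n\geq 4$ is consistent with the paper's remark that further cases of the conjecture would extend its results.
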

It is well-known that this conjecture reduces to showing $p$-adic integers cannot act faithfully on a connected $n$-manifold for any prime $p$. Although the general conjecture is  open, it is known in the case $n=1,2$; see  Pages 233 \& 249 of \cite{montgomeryzippin1955topological}. The $n=3$ case  was  shown by Pardon  \cite{pardon2013hilbertsmith}. 
\begin{thm}[\cite{montgomeryzippin1955topological,pardon2013hilbertsmith}]\label{thm:hilbert_smith}
The  cases of the Hilbert--Smith Conjecture where $n\leq 3$ are true.
\end{thm}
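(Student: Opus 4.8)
The plan is to reduce the conjecture to a single statement about the $p$-adic integers and then treat the low-dimensional cases separately, the case $n=3$ being by far the deepest. The first, essentially formal, step is the standard reduction. By the Gleason--Montgomery--Zippin solution to Hilbert's fifth problem, a locally compact group is a Lie group if and only if it has no small subgroups. Hence if a locally compact group $G$ acts faithfully and continuously by homeomorphisms on a connected $n$-manifold $M$ and fails to be a Lie group, then $G$ has arbitrarily small nontrivial subgroups; combining van Dantzig's theorem (Theorem~\ref{thm:vanD}) with the structure theory of compact groups, one extracts a closed subgroup isomorphic to $\Z_p$ for some prime $p$. Since $G$ acts faithfully, this copy of $\Z_p$ also acts faithfully and continuously on $M$ by restriction. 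It therefore suffices to prove that for $n\leq 3$ the group $\Z_p$ admits no faithful continuous action by homeomorphisms on a connected $n$-manifold.

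For $n=1$ this is elementary: $\Z_p$ is compact, so its image in $\Homeo(M)$ is a compact group of homeomorphisms of a $1$-manifold. By the classification of compact subgroups of $\Homeo(\R)$ and $\Homeo(S^1)$ (equivalently, after averaging to an invariant metric), such a group is conjugate into a group of isometries, hence into $O(1)$ or $O(2)$. These are Lie groups with no infinite totally disconnected subgroups, so the image of the infinite totally disconnected group $\Z_p$ would be finite, contradicting faithfulness. For $n=2$ I would invoke the theorem of Montgomery--Zippin that any locally compact group acting effectively on a $2$-manifold is a Lie group; as $\Z_p$ is not a Lie group, this rules out such an action directly.

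The case $n=3$ is the hard part, and for it I would follow Pardon's argument. One first analyzes the action through the finite cyclic quotients $\Z/p^k$ in the inverse limit defining $\Z_p$, applying Smith theory to their fixed-point sets in order to pass to a region where the orbit map $M\to M/\Z_p$ is suitably well-behaved. The crucial input is Yang's dimension theory for $p$-adic transformation groups: an effective action of $\Z_p$ on an $n$-manifold forces the integral cohomological dimension of the orbit space to rise to $n+2$. The contradiction in dimension three does \emph{not} come from a naive dimension count --- this dimension-raising phenomenon is precisely why the conjecture is open for $n>3$ --- but from a delicate $3$-manifold-specific cohomological analysis: one shows that realizing cohomological dimension $5$ as an inverse limit over the branched cyclic covers $M/(\Z/p^k)$ would produce cohomology classes, detected via Bockstein operations and the degree-two cup-product structure, that are incompatible with the topology of a $3$-manifold.

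I expect this last step to be the main obstacle by a wide margin. The reduction and the cases $n\leq 2$ are classical and essentially formal once the structure theory of locally compact groups and compact transformation groups is in hand; the genuine difficulty is concentrated entirely in excluding faithful $\Z_p$-actions on $3$-manifolds, where the dimension-theoretic obstruction of Yang must be converted into a contradiction using the special cohomological features of low-dimensional manifolds. For $n\leq 2$ this exclusion was accessible to Montgomery--Zippin, but the passage to $n=3$ required Pardon's substantially more intricate machinery, and no comparable argument is presently known for $n\geq 4$.
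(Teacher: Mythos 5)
The first thing to note is that the paper contains no proof of this statement: it is imported wholesale as a citation, with the cases $n\leq 2$ attributed to pages 233 and 249 of Montgomery--Zippin and the case $n=3$ to Pardon, and with the standard reduction to $\Z_p$-actions recalled (but not proved) in the surrounding text. Your proposal is therefore a sketch of the cited literature rather than an alternative to any argument in the paper, and at the level of architecture it matches exactly what the paper relies on: reduce to ruling out faithful $\Z_p$-actions, quote Montgomery--Zippin for $n\leq2$, quote Pardon for $n=3$.

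However, the one step you actually carry out yourself --- the reduction --- is wrong as stated, and this is a genuine gap. It is false that every locally compact group which fails to be a Lie group contains a closed subgroup isomorphic to $\Z_p$: take $G=\prod_{n\geq 1}\Z/q_n\Z$ with $q_1<q_2<\cdots$ distinct primes. This group is compact, infinite and totally disconnected, hence not a Lie group; but since the factors have pairwise coprime orders, every closed subgroup of $G$ is the product of its primary components, each of which lies inside a single factor $\Z/q_n\Z$, so $G$ contains no copy of $\Z_p$ for any prime $p$. Thus ``van Dantzig plus the structure theory of compact groups'' cannot extract $\Z_p$; the manifold action must be used \emph{before} the extraction. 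The classical reduction (the one Montgomery--Zippin and Pardon actually rely on) runs: arbitrarily small nontrivial compact subgroups exist by Gleason--Yamabe; by Newman's theorem, a nontrivial finite group cannot act on a connected manifold with all orbits of sufficiently small diameter, so every sufficiently small compact subgroup is torsion-free; and a nontrivial torsion-free compact group does contain a copy of $\Z_p$ (via the structure of infinite procyclic groups in the totally disconnected case, and via Pontryagin duality for pro-tori in the connected case). Newman's theorem is precisely the ingredient your sketch omits, and without it the argument fails on examples like the one above. A lesser caveat: your description of Pardon's proof, with Yang's dimension-raising theorem as the crucial input, does not match his argument, which is essentially self-contained and combines Smith theory with duality arguments specific to three-manifolds rather than taking Yang's theorem as a hypothesis; but since the paper only ever uses the theorem as a black box, that attribution is immaterial here.
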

We use Theorem \ref{thm:hilbert_smith} to prove Theorem \ref{thm:graphical_discrete_sphere_boundaries}. We note that the same proof will work for any  $n$ in which  a positive solution to  the $n$-dimensional   Hilbert--Smith Conjecture is  known.
\begin{proof}[Proof of Theorem \ref{thm:graphical_discrete_sphere_boundaries}]
	Let $\Gamma$ be a hyperbolic group with boundary an $n$-sphere for some $n\leq 3$. Suppose $\Gamma$ acts geometrically on a connected locally finite graph $X$. Let $\rho:\Aut(X)\to \Homeo(S^n)$ be the induced action on the boundary of $S^n$. Then  $\rho$ is continuous with compact kernel $K$; see  \cite[Theorem 3.5]{furman2001mostow}.
		As $\Aut(X)/K$  acts faithfully and continuously on $S^n$ by homeomorphisms, it is a Lie group by Theorem \ref{thm:hilbert_smith}. Since $\Aut(X)$ is totally disconnected,  Lemma \ref{lem:totdisc_to_lie} implies $\ker(\rho)$ is open. Thus $\Aut(X)$ contains a compact open normal subgroup, hence $\Gamma$ is graphically discrete.
\end{proof}

    \begin{thm} \label{thm:graphical_discrete_sier_boundaries}
		If $\Gamma$ is a hyperbolic group with visual boundary homeomorphic to a Sierpinski carpet, then $\Gamma$ is graphically discrete.     
    \end{thm}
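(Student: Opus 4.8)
The plan is to follow the proof of Theorem~\ref{thm:graphical_discrete_sphere_boundaries} almost verbatim, except that the Hilbert--Smith input must be routed through the $2$-sphere, since the Sierpinski carpet $S$ is not a manifold. Suppose $\Gamma$ acts geometrically on a connected locally finite graph $X$. Then $X$ is quasi-isometric to $\Gamma$, so $\partial X\cong\partial\Gamma\cong S$; in particular $\Gamma$ is non-elementary (its boundary is connected with more than two points and is not a circle). The induced action $\rho:\Aut(X)\to\Homeo(S)$ is continuous with compact kernel $K=\ker(\rho)$ by \cite[Theorem 3.5]{furman2001mostow}, so the quotient $G\coloneqq\Aut(X)/K$ carries a faithful continuous action $\alpha:G\to\Homeo(S)$. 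Once I produce a faithful continuous action of $G$ on $S^2$, Theorem~\ref{thm:hilbert_smith} (the case $n=2$) will force $G$ to be a Lie group; then the quotient map $\Aut(X)\to G$ is a continuous homomorphism from a totally disconnected locally compact group to a Lie group, and Lemma~\ref{lem:totdisc_to_lie} gives that $K$ is open. As $K$ is compact, open and normal, $\Aut(X)$ is compact-by-discrete and $\Gamma$ is graphically discrete.

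The central step is to extend the carpet action to the sphere. Fix an embedding $S\subset S^2$ whose complementary domains are open Jordan regions $\{D_i\}$ forming a null sequence with pairwise disjoint closures, and set $C_i=\partial D_i$. The peripheral circles $C_i$ are topologically intrinsic to $S$ --- by Whyburn's characterization they are exactly the non-separating simple closed curves --- so every self-homeomorphism of $S$ permutes the $D_i$, inducing a permutation $\pi$ and boundary homeomorphisms $C_i\to C_{\pi(i)}$. After fixing homeomorphisms $\phi_i:\overline{D_i}\to\overline{\mathbb{D}}$ onto the closed unit disk once and for all, I extend over each $D_i$ by Alexander coning: reading the boundary map in the charts $\phi_i$ as a homeomorphism $g$ of $\partial\mathbb{D}$, set $\hat g(z)=|z|\,g(z/|z|)$. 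Because coning preserves the radial coordinate it is functorial, $\widehat{g'\circ g''}=\hat g'\circ\hat g''$, and a short computation with the charts shows that the resulting map $f\mapsto\hat f$ is a group homomorphism $\Homeo(S)\to\Homeo(S^2)$. That each $\hat f$ is a genuine homeomorphism of $S^2$ follows from the null-sequence property: if $z_n\to x\in S$ with $z_n\in D_{i_n}$, then the $D_{i_n}$, and hence their images $D_{\pi(i_n)}$, shrink to points while the boundary circles $C_{\pi(i_n)}$ track $f(x)$. Finally, since $\hat f$ restricts to $f$ on $S$ and $\alpha$ is faithful, the composite $G\to\Homeo(S^2)$ is a faithful action.

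The point requiring the most care is continuity of the extended action $G\times S^2\to S^2$. On $S$ it coincides with $\alpha$. At an interior point of a fixed disk $D_i$, I need the permutation value $\pi_g(i)$ to be locally constant in $g$ and the coning to vary continuously with its boundary data; the latter is immediate from the formula, and the former holds because, for $g_n\to g$, the circles $\alpha(g_n)(C_i)$ converge to $\alpha(g)(C_i)=C_{\pi_g(i)}$ in Hausdorff distance and have diameter bounded below, whereas only finitely many peripheral circles exceed any fixed diameter --- so $\pi_{g_n}(i)=\pi_g(i)$ eventually. At a point of $S$ that is a limit of disk-points one combines uniform convergence of the $\alpha(g_n)$ with the null-sequence property, exactly as in the single-homeomorphism case above. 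Assembling these cases yields continuity, and the argument then concludes via Theorem~\ref{thm:hilbert_smith} and Lemma~\ref{lem:totdisc_to_lie} as described. I expect this continuity verification, together with the check that coning produces a homomorphism, to be the main technical content; the remainder is formally identical to the spherical case.
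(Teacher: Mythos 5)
Your proposal is correct and follows the paper's proof in essentially the same way: compose the continuous, compact-kernel homomorphism $\Aut(X)\to\Homeo(\cS)$ from \cite[Theorem 3.5]{furman2001mostow} with a continuous injective extension homomorphism $\Homeo(\cS)\to\Homeo(S^2)$, then invoke the $n\leq 3$ case of Hilbert--Smith (Theorem~\ref{thm:hilbert_smith}) and Lemma~\ref{lem:totdisc_to_lie} to conclude that $\Aut(X)$ has a compact open normal subgroup. The only difference is that the paper simply cites \cite[Theorem 2.1]{jmelsalhivago} for the existence of this extension homomorphism, whereas you construct it by hand via Whyburn's intrinsic characterization of peripheral circles and Alexander coning over the complementary disks; your construction and continuity checks are sound, just a self-contained substitute for the citation.
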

    \begin{proof}
		Let $\Gamma$ be a hyperbolic group with visual boundary homeomorphic to the Sierpinski carpet $\cS$. Suppose $\Gamma$ acts geometrically on a connected locally finite graph $X$. As above, there is a continuous homomorphism $\rho:\Aut(X) \rightarrow \Homeo(\cS)$ with compact kernel. Moreover, there is a continuous  homomorphism $\phi:\Homeo(\cS) \rightarrow \Homeo(S^2)$ with trivial kernel. Indeed, the map $\phi$ extends a homeomorphism of the Sierpinski carpet to a homeomorphism of the 2-sphere by extending the map on the peripheral circles to maps on disks; see \cite[Theorem 2.1]{jmelsalhivago}.     As in the above proof of Theorem \ref{thm:graphical_discrete_sphere_boundaries}, this implies $\Gamma$ is graphically discrete.
    \end{proof}

 \subsection{Graphical discreteness of direct products of hyperbolic groups}

\begin{thm}\label{thm:dirprod_hyp}
	The direct product of finitely many graphically discrete non-elementary hyperbolic groups is graphically discrete. 
\end{thm}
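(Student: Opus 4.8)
The plan is to prove the statement directly for an $n$-fold product $\Gamma=\Gamma_1\times\cdots\times\Gamma_n$ of non-elementary hyperbolic graphically discrete groups (handling all $n$ at once, so no genuine induction is needed once product rigidity is available). By Proposition~\ref{prop:propC} it suffices to fix a geometric action of $\Gamma$ on a connected locally finite graph $X$ and show that $G:=\Aut(X)$ is compact-by-discrete. Each $\Gamma_i$ is quasi-isometric to a proper geodesic hyperbolic space $D_i$, and $X$ is quasi-isometric to the product $D=\prod_i D_i$. The first ingredient is quasi-isometric rigidity of products of non-elementary hyperbolic spaces (see, e.g., \cite{KapovichLeeb97}), which is quantitative: every $(K,A)$-self-quasi-isometry of $D$ lies within distance $C(K,A)$ of a map permuting quasi-isometric factors and acting by a quasi-isometry on each factor. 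Consequently $\QI(X)\cong\QI(\Gamma)\cong\big(\prod_i\QI(\Gamma_i)\big)\rtimes P$ with $P$ a finite permutation group. I would first record that this rigidity, combined with tameness of each non-elementary hyperbolic factor (Example~\ref{exmp:tame_morse}) and the quasi-isometry invariance of tameness, forces $X$ to be tame in the sense of Definition~\ref{defn:tame}: two finite-distance self-quasi-isometries of $D$ must share the same permutation and be finite-distance on each $D_i$, and tameness of the factors then bounds the displacement factor-by-factor.

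Let $\phi\colon G\to\QI(\Gamma)$ be the canonical homomorphism and set $G^0:=\phi^{-1}\big(\prod_i\QI(\Gamma_i)\big)$; this is an open finite-index subgroup (open because automorphisms close to the identity induce quasi-isometries at bounded distance from the identity, which cannot swap the unbounded factors) and it contains $\Gamma$, since left multiplication by $\Gamma$ preserves each factor. Because $X$ is tame, Proposition~\ref{prop:compactnormal.vs.bounded} identifies $K:=\ker\phi$ with the maximal compact normal subgroup of $G$; in particular $K$ is compact and $K\le G^0$. The goal is exactly to show that $K$ is \emph{open}, which by Theorem~\ref{thm:compactbydiscrete} is equivalent to $G$ being compact-by-discrete. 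For each $i$ the factor-projection of the induced quasi-action gives a cobounded quasi-action of $G^0$ on $D_i$, hence, by convergence-group dynamics on the Gromov boundary, a continuous homomorphism $\bar\pi_i\colon G^0\to\Homeo(\partial\Gamma_i)$ with closed kernel $N_i$ (continuity is proved as in the hyperbolic case, cf.\ the use of \cite[Theorem 3.5]{furman2001mostow} in Theorem~\ref{thm:graphical_discrete_sphere_boundaries}). Since the $\bar\pi_i$ jointly realize $\phi|_{G^0}$ followed by the injection $\prod_i\QI(\Gamma_i)\hookrightarrow\prod_i\Homeo(\partial\Gamma_i)$, we have $\bigcap_i N_i=K$.

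The heart of the argument is to build, from each $\bar\pi_i$, a totally disconnected locally compact lattice envelope of $\Gamma_i$. I would take $G_i:=G^0/N_i$, a tdlc group because $N_i$ is closed and normal. For $j\ne i$ the factor $\Gamma_j$ acts trivially on $\partial\Gamma_i$, so $\Gamma_j\le N_i$, while $\Gamma_i\to G_i$ has finite kernel; using the coarse product structure of $X$ together with the Milnor--Schwarz lemma for the quasi-action on $D_i$, one checks that $\Gamma_i$ embeds modulo finite kernel as a \emph{uniform lattice} in $G_i$. Graphical discreteness of $\Gamma_i$ then applies through Proposition~\ref{prop:propC} to produce a compact open normal subgroup $C_i\vartriangleleft G_i$. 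Assembling, the quotient maps give a continuous homomorphism $\Delta=(q_i)_i\colon G^0\to\prod_iG_i$ with kernel $\bigcap_iN_i=K$; the preimage $\Delta^{-1}\big(\prod_iC_i\big)$ is open and normal in $G^0$, and it is compact provided $\Delta$ is proper. Properness reduces, via Proposition~\ref{prop:compact<->bounded} and the quasi-isometry $X\simeq\prod_iD_i$, to the statement that a compact family of boundary homeomorphisms in $\Homeo(\partial\Gamma_i)$ arising from uniform quasi-isometries corresponds to uniformly bounded displacement on $D_i$ — precisely the properness of a convergence action on triples of boundary points. This yields a compact open normal subgroup of $G^0$, hence of $G$ (intersect its finitely many $G$-conjugates), so $G$ is compact-by-discrete.

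The main obstacle is this middle step: verifying that $G_i=G^0/N_i$ is a genuine tdlc lattice envelope of $\Gamma_i$, and the closely related properness of $\Delta$. Both require upgrading the cobounded quasi-action of $G^0$ on the single factor $D_i$ to honest metric and convergence-group statements — that bounded boundary dynamics forces bounded displacement, and that collapsing the ``other factors'' preserves the uniform-lattice property of $\Gamma$. Everything else (quantitative product rigidity, tameness of $X$, the identification of $K$ as the maximal compact normal subgroup, and the final pullback of compact open normal subgroups) is either quantitative input from the literature or a direct application of the compact-by-discrete criteria in Theorem~\ref{thm:compactbydiscrete} and Proposition~\ref{prop:compact<->bounded}.
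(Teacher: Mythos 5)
Your strategy is genuinely different from the paper's: you work directly with the boundary actions $\bar\pi_i\colon G^0\to\Homeo(\partial\Gamma_i)$ and apply graphical discreteness to the abstract quotients $G_i=G^0/N_i$, whereas the paper first straightens the quasi-action of a finite-index subgroup $G^*$ on each factor $\Gamma_i$ into an honest isometric action on a model space $Y_i$ (a rank-one symmetric space or a locally finite graph) via \cite[Corollary 5.7]{margolis2022discretisable}, and then gets discreteness from Lemma \ref{lem:totdisc_to_lie} in the Lie case and from graphical discreteness of $\Gamma_i$ applied to the genuine isometry group $\Isom(Y_i)$ in the graph case. The skeleton of your argument is sound: $\bigcap_i N_i=K$, cocompactness of the image of $\Gamma_i$ in $G_i$ (since $\Gamma_j\le N_i$ for $j\ne i$), and discreteness of that image via convergence dynamics can all be made to work. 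However, the two steps you yourself flag as ``the main obstacle'' are genuine gaps as written, and they are exactly the points the paper's route is designed to avoid.

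First, continuity of $\bar\pi_i$ cannot be obtained by citing \cite[Theorem 3.5]{furman2001mostow}: that theorem concerns the quasi-action induced by a uniform lattice embedding, i.e.\ a \emph{proper} quasi-action, whereas the quasi-action of $G^0$ on the single factor $\Gamma_i$ is highly non-proper (its kernel $N_i$ contains the image of $\prod_{j\ne i}\Gamma_j$), and the compact-kernel conclusion of Furman's theorem visibly fails for it. Continuity can be proved by hand --- Lemma \ref{lem:coarse_conv} together with finiteness of balls in $\Gamma_i$ gives eventual uniformly bounded displacement on arbitrarily large finite balls, which forces uniform convergence of the boundary maps --- but this argument must be supplied, not cited. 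Second, and more seriously, compactness of $\Delta^{-1}\bigl(\prod_i C_i\bigr)$ does not follow from compactness of $\ker\Delta$: a continuous homomorphism with compact kernel need not be proper. What you actually need is that a family of quasi-isometries of $\Gamma_i$ with \emph{uniform} constants whose boundary maps lie in a compact subset of $\Homeo(\partial\Gamma_i)$ has uniformly bounded displacement; this is true, but it is the convergence property for uniformly quasi-M\"obius maps (Tukia-style), which is neither in the paper's toolkit nor supplied in your sketch. The paper sidesteps both issues simultaneously: after straightening, continuity is Lemma \ref{lem:continuous_quasiconj}, discreteness of the closures $H_i\le\Isom(Y_i)/K_i$ follows as above, and the compact kernel of the product map $\psi=(\psi_1,\dots,\psi_n)$ comes for free because $f\circ h$ quasi-conjugates the geometric action of $G^*$ on $X$ to the isometric action on $\prod_i Y_i$, so the kernel has bounded orbits and Proposition \ref{prop:compact<->bounded} applies. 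A smaller point: the quantitative product rigidity you invoke is Kapovich--Kleiner--Leeb's de Rham decomposition theorem, cited in the paper as \cite{kapovich1998derahm}, not \cite{KapovichLeeb97}, which concerns Haken $3$-manifolds.
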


To prove this, we require the following lemma:
\begin{lem}[\cite{kapovich1998derahm}]\label{lem:open_productpreserving}
	Let $\Gamma=\Pi_{i=1}^n \Gamma_i$ be the product of non-elementary hyperbolic groups, and let $\rho:\Gamma\to G$ be a uniform lattice embedding modulo finite kernel. Let  $\{f_g\}_{g\in G}$ be the induced quasi-action of $G$ on $\Gamma$.
	Then there is a finite-index open subgroup $G^*\leq G$ such that all elements of the restricted quasi-action $\{f_g\}_{g\in G^*}$  split up to uniform error as a product of $n$ quasi-isometries.
\end{lem}
\begin{proof}
	This is an application of work of Kapovich--Kleiner--Leeb. Pick $K \geq 1$ and $A \geq 0$ such that each $f_g$ is a $(K,A)$-quasi-isometry.  By \cite{kapovich1998derahm}, there are constants $K_1\geq 1$ and $A_1\geq 0$ such that for every $(K,A)$-quasi-isometry $f:\Gamma\to \Gamma$ and $1\leq i\leq n$, there is a permutation $\sigma_f\in \Sym(n)$ and a $(K_1,A_1)$-quasi-isometry $f_i:\Gamma_i\to \Gamma_{\sigma_f(i)}$ such that the following diagram commutes up to an error of at most $A_1$:
	\[\begin{tikzcd}
		\Gamma & \Gamma \\
		{\Gamma_i} & {\Gamma_{\sigma_f(i)}}
		\arrow["f"', from=1-1, to=1-2]
		\arrow["{\pi_i}", from=1-1, to=2-1]
		\arrow["{\pi_{\sigma_f(i)}}", from=1-2, to=2-2]
		\arrow["{f_i}"', from=2-1, to=2-2]
	\end{tikzcd},\]
	where the $\pi_i:\Gamma\to \Gamma_i$ are projections. The projections $\pi_i$ are all $(K_0,A_0)$-coarse Lipschitz for large enough $K_0$ and $A_0$. In particular, $f$ sends the fiber $\pi_i^{-1}(x)$ to the fiber $\pi_{\sigma_f(i)}^{-1}(f_i(x))$ up to uniform Hausdorff distance.  Since any fibers of the form $\pi_i^{-1}(x)$ and $\pi_j^{-1}(y)$  are at finite Hausdorff distance if and only if $i=j$, it follows that $\sigma_f$ is well-defined and that $\sigma_{f\circ g}=\sigma_f\circ \sigma_g$. Let $G^*$ be the kernel of the map $G\to \Sym(n)$ given by $g\mapsto \sigma_{f_g}$. By construction, for each $g\in G^*$, each $f_g$ splits as a product of $n$-quasi-isometries up to error $A_1$. 

    It remains to prove $G^*$ is open.
	By Remark~\ref{rem:2ndcountable}, after replacing $G$ with $G/K$ for some compact normal $K$ if necessary, we may assume that $G$ is second countable.     
    Pick $B$ such that the conclusion of Lemma~\ref{lem:coarse_conv} is true with respect to the induced quasi-action $\{f_g\}_{g\in G}$. Let $g_m$ be a sequence in $G$ that converges to the identity. It suffices to prove that $\sigma_{f_{g_m}}=\id$ for all $m$ sufficiently large. Indeed, then $G^*$ is an identity neighborhood because it is first countable and  any sequence converging to $\id$ is eventually contained in $G^*$. Then, since $G^*$ is a subgroup, it is open.
	
	For each $1\leq i\leq n$, we pick $x,y\in \Gamma$ with $\pi_i(x)=\pi_i(y)$ and $d(\pi_j(x),\pi_j(y))>D$ for all $i\neq j$, where $D=A_1 + 2(K_0B+A_0)$. Pick $M$ such that $d(f_{g_m}(x),x), d(f_{g_m}(y),y)\leq B$ for all $m\geq M$. Suppose for contradiction that for some $m\geq M$, we have $\sigma_f(i)\neq i$, where $f=f_{g_m}$. Then setting $j=\sigma_f(i)$, we have 
	\begin{align*}
		d(\pi_j(x),\pi_j(y))&\leq d(\pi_j(x),\pi_j(f(x)))+ d(\pi_j(f(x)),\pi_j(f(y))) +d(\pi_j(f(y)),\pi_j(y))\\
		& \leq (K_0B+A_0)+ \Bigl(d\bigl(f_i(\pi_i(x)),f_i(\pi_i(y))\bigr)+ A_1 \Bigr) + (K_0B+A_0)\\
		&=A_1 + 2(K_0B+A_0)=D.
	\end{align*}
    The first and third terms on the second line follow from the choice of $B$ and that the maps $\pi_j$ are coarse Lipschitz. The middle term follows from the coarse commutativity of the diagram above. Since $\pi_i(x) = \pi_i(y)$, the equality on the third line holds. This inequality contradicts our choice of $x$ and $y$. We conclude that $G^*$ is an open finite-index subgroup of $G$.
\end{proof}

\begin{proof}[Proof of Theorem \ref{thm:dirprod_hyp}]
Suppose $\Gamma=\Pi_{i=1}^n \Gamma_i$ is product of graphically discrete non-elementary hyperbolic groups and that $\rho:\Gamma\to \Aut(X)$ is a geometric action on a locally finite graph $X$. Let $G = \Aut(X)$. We will show $G$ is compact-by-discrete and apply Proposition~\ref{prop:propC}(\ref{item:propC4}).

Replacing $\Gamma$ with its quotient by the finite normal subgroup $\ker(\rho)$, we can assume $\rho$ is injective and hence $\Gamma$ can be identified with a subgroup of  $G$.  As in Section~\ref{sec:qi}, there is a quasi-isometry $h:X\to \Gamma$ quasi-conjugating the action of $G$ on $X$ to the induced quasi-action $\{f_g\}_{g\in G}$  of $G$ on $\Gamma$. We proceed with an argument similar to the proof of \cite[Theorem 6.1]{margolis2022discretisable}. 

Let $G^*\leq G$ be as in Lemma~\ref{lem:open_productpreserving} - i.e. for every $g\in G^*$, $f_g$ splits as a product up to uniform error. 
 We have that  $\rho(\Gamma)\leq G^*$ and that  there are quasi-actions of $G^*$ on each of $\Gamma_1,\dots, \Gamma_n$. Restricting to $\rho(\Gamma_i)\cong \Gamma_i$ we recover the natural map $\Gamma_i\to \QI(\Gamma_i)$ induced by left multiplication, which does not fix a point of $\p \Gamma_i$. Thus the quasi-action of $G^*$ on $\Gamma_i$ does not fix a point of $\p \Gamma_i$ either. 

There is a quasi-isometry $f_i:\Gamma_i\to Y_i$ that quasi-conjugates the quasi-action of $G^*$ on $\Gamma_i$  to an isometric action of $G^*$ on $Y_i$, where $Y_i$ is either a rank one symmetric space or a locally finite graph by \cite[Corollary 5.7]{margolis2022discretisable} since the $\Gamma_i$ are hyperbolic. We now consider the composition \[h_i:X\xrightarrow{h}\Gamma\xrightarrow{\pi_i}\Gamma_i\xrightarrow{f_i}Y_i,\] where $\pi_i$ is the projection map. The map $h_i$ is a coarse surjective coarse-Lipschitz map that is coarsely $G$-equivariant. We can thus apply Lemma \ref{lem:continuous_quasiconj} to deduce that   the composition $\psi_i:G^*\to \Isom(Y_i)/K_i$ is continuous,  where $K_i\lhd \Isom(Y_i)$ is the maximal compact normal subgroup. Let $H_i\leq \Isom(Y_i)/K_i$ be the closure of the image of $\psi_i$. We claim that $H_i$ is discrete. Indeed, in the case where $Y_i$ is a symmetric space, this follows from Lemma \ref{lem:totdisc_to_lie}, since $H_i$ is a Lie group and  $G^*$ is totally disconnected. Now suppose  $Y_i$ is a locally finite graph.  Since $f_i$ quasi-conjugates the geometric action of $\Gamma_i$ on itself  to a  geometric action of $\Gamma_i$ on $Y_i$, we deduce $\Gamma_i$ is a uniform lattice modulo finite kernel in $\Isom(Y_i)$. As $\Gamma_i$ is graphically discrete and $\Isom(Y_i)$ is totally disconnected with  $K_i$ the maximal compact normal subgroup,  it follows $H_i\leq \Isom(Y_i)/K_i$ is discrete.
 
Let $f=(f_1,\dots,f_n):\Gamma=\Pi_{i=1}^n \Gamma_i\to \Pi_{i=1}^n Y_i$ be the product map. Then $f\circ h:X\to \Pi_{i=1}^n Y_i$ quasi-conjugates the geometric action of $G^*$ on $X$  to the isometric action of $G^*$ on $\Pi_{i=1}^n Y_i$.  It follows that the product map  $\psi=(\psi_1,\dots,\psi_n):G^*\to \Pi_{i=1}^n \Isom(Y_i)/K_i$, which we already know  is continuous  with discrete image, has compact open kernel $K$. Since $G^*\leq G$ is a finite-index open subgroup, $K$ is open in $G$, and so the intersection $\cap_{g\in G}gKg^{-1}$ is a finite-index subgroup of $K$ that is a compact open normal subgroup of $G$. It follows that $G$ is compact-by-discrete as required.
\end{proof}

 \section{Graphical discreteness is not a commensurability invariant}

   \begin{defn}
        A manifold is a \emph{minimal element} in its commensurability class if it does not nontrivially cover any orbifold. 
    \end{defn}
    
        We assume throughout the paper that $3$-manifolds are connected. 

    \begin{rem} \label{rem:minelts} (Existence of manifold minimal elements.) We thank Genevieve Walsh for explaining the following examples. First, let $N$ be a hyperbolic knot complement, and let $\Gamma = \pi_1(N)$. Margulis~\cite{margulis} proved that if $\Gamma$ is non-arithmetic, then $\Gamma$ has finite index in its commensurator, $\Comm_{\PSL(2,\C)}(\Gamma) \leq \PSL(2, \C)$. So, $\Hy^3/\Comm_{\PSL(2,\C)}(\Gamma)$ is the minimal orbifold in the commensurability class of $N$ in this case. Showing that $\Gamma = \Comm_{\PSL(2,\C)}(\Gamma)$ is equivalent to showing $\Gamma$ has no symmetries (i.e.\ $\Gamma$ is equal to its normalizer) and no hidden symmetries (i.e.\ its normalizer is equal to its commensurator). The figure-eight knot complement is the only arithmetic knot complement~\cite{reid91}. For hyperbolic knots with up to 15 crossings, the only examples with hidden symmetries are the figure-eight knot and two dodecahedral knots~\cite{goodmanheardhodgson, chesebrodebloishoffmanmillichapmondalworden2023dehn}. See the survey~\cite{walsh} for more details. For example, the knots $K_1=9\_32$ and $K_2=9\_33$ with nine crossings in Rolfson's Knot Table are hyperbolic knots with no symmetries, as verified on KnotInfo. Thus, $N = S^3-K_i$ is a minimal knot complement in its commensurability class. 
    
    Sufficiently large Dehn filling on these knot complements yields closed hyperbolic manifolds that are minimal elements in their commensurability class, as shown in \cite[Proposition 2.4]{chesebrodebloishoffmanmillichapmondalworden2023dehn}.
    \end{rem}

	The next lemma illustrates the advantage of working with manifolds rather than orbifolds.     
    
    \begin{lem} \label{lemma_aut_dis}
        Let $M$ be a  closed hyperbolic $3$-manifold that is a minimal element in its commensurability class. Let $x \in \Hy^3$, and let $\Aut(\Hy^3, \pi_1(M)\cdot x)$ denote the subgroup of $\Isom(\Hy^3)$ that stabilizes the set $\pi_1(M)\cdot x$. Then $\Aut(\Hy^3, \pi_1(M) \cdot x) = \pi_1(M)$. In particular, if $g \in \Aut(\Hy^3, \pi_1(M) \cdot x)$ fixes $x$, then $g$ is trivial.
    \end{lem}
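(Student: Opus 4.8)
The plan is to prove that $A:=\Aut(\Hy^3,\Gamma\cdot x)$ (where $\Gamma:=\pi_1(M)$) is a discrete group containing $\Gamma$ with finite index, and then to rule out any nontrivial index using the minimality of $M$. Write $D:=\Gamma\cdot x$ for the orbit. Since $M$ is a closed hyperbolic manifold, $\Gamma$ acts freely, properly discontinuously and cocompactly on $\Hy^3$; in particular $\Gamma_x=\{1\}$, and $D$ is a discrete, cocompact (hence $R$-dense for some $R$) subset of $\Hy^3$. As $\Gamma\le A$ and $A$ preserves $D$, we have $A\cdot x=D$.

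The technical heart is to show that the stabilizer $A_x=A\cap\Isom(\Hy^3)_x$ is finite. Each $g\in A_x$ fixes $x$ and permutes $D$, hence preserves every intersection $S(x,r)\cap D$ of a metric sphere with $D$, which is finite by discreteness and properness. Because $D$ is cocompact it is not contained in any proper totally geodesic subspace through $x$, so I can choose finitely many points $p_1,\dots,p_k\in D$ whose directions $\exp_x^{-1}(p_i)$ span $T_x\Hy^3$. Setting $F:=\{p\in D:d(x,p)\in\{d(x,p_1),\dots,d(x,p_k)\}\}$, the set $F$ is finite and $A_x$-invariant, so restriction gives a homomorphism $A_x\to\mathrm{Sym}(F)$. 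Any element of its kernel fixes $x$ together with each $p_i$, hence fixes the spanning directions $\exp_x^{-1}(p_i)$ and is therefore the identity; thus $A_x$ embeds in $\mathrm{Sym}(F)$ and is finite.

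Next I would identify the coset structure. For $a\in A$ we have $ax\in D=\Gamma x$, so $ax=\gamma x$ for some $\gamma\in\Gamma$ and $\gamma^{-1}a\in A_x$, giving $A=\Gamma A_x$. Since the action is free, $\Gamma\cap A_x=\Gamma_x=\{1\}$, so distinct elements of $A_x$ lie in distinct $\Gamma$-cosets and $[A:\Gamma]=|A_x|<\infty$. Consequently $A$ is a finite union of translates of the discrete group $\Gamma$, hence discrete, and it acts cocompactly on $\Hy^3$; therefore $\Hy^3/A$ is a closed hyperbolic $3$-orbifold and the natural map $M=\Hy^3/\Gamma\to\Hy^3/A$ is an orbifold covering of degree $[A:\Gamma]=|A_x|$.

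Finally, minimality closes the argument: since $M$ does not nontrivially cover any orbifold, this covering must be trivial, forcing $[A:\Gamma]=|A_x|=1$. Hence $A=\Gamma$, which is the assertion $\Aut(\Hy^3,\Gamma\cdot x)=\Gamma$, and $A_x=\{1\}$ is precisely the \emph{in particular} statement that any $g\in A$ fixing $x$ is trivial. I expect the finiteness of $A_x$ to be the only step requiring genuine geometric input---via the facts that $D$ spans $\Hy^3$ and that an isometry fixing $x$ is determined by its action on directions at $x$---while the passage from finite index to equality is where the manifold (rather than orbifold) hypothesis, through the freeness of the $\Gamma$-action and the minimality of $M$, does the essential work.
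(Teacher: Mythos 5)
Your proof is correct and follows essentially the same route as the paper's: both show that $\Aut(\Hy^3,\pi_1(M)\cdot x)$ is a discrete cocompact group containing $\pi_1(M)$ with finite index, so that $\Hy^3/\Aut(\Hy^3,\pi_1(M)\cdot x)$ is an orbifold covered by $M$, and then invoke minimality to force equality, with freeness of the $\pi_1(M)$-action giving the final claim. The only difference is one of detail: you spell out the discreteness and finite-stabilizer step explicitly, which the paper compresses into the assertion that the group acts geometrically and is hence commensurable with $\pi_1(M)$.
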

    \begin{proof}
        The group  $\Aut(\Hy^3, \pi_1(M)\cdot x)$ acts geometrically on $\Hy^3$ and contains $\pi_1(M)$. Hence, these groups are commensurable. 
        Since $M$ is a minimal element in its commensurability class, we deduce that $\Aut(\Hy^3, \pi_1(M) \cdot x) = \pi_1(M)$.
        The final sentence follows since $M$ is a manifold: the only element in $\pi_1(M)$ fixing $x$ is the identity. 
    \end{proof}
	
	\begin{thm} \label{thm:no_fi_subgroup}
		 Let $M$ and $M'$ be closed hyperbolic $3$-manifolds that are minimal elements in their commensurability classes. Then $\Gamma = \pi_1(M)* \pi_1(M')$ has no nontrivial lattice embeddings, but has a finite-index subgroup that does have nontrivial lattice embeddings. In particular, $\Gamma$ is graphically discrete but contains a finite-index subgroup that is not. 
	\end{thm}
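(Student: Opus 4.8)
The plan is to prove the two assertions separately, the first being the substantial one. Throughout I write $Y$ for the tree of hyperbolic $3$-spaces on which $\Gamma=\pi_1(M)*\pi_1(M')$ acts geometrically: its vertex spaces are copies of $\widetilde M=\Hy^3$ and $\widetilde{M'}=\Hy^3$, its edge spaces are points (the edge groups are trivial), and the pieces are glued along the Bass--Serre tree $T$ of the free product. Thus the attaching points lying on a given $\pi_1(M)$-piece $P$ form a single orbit $\pi_1(M)\cdot x$, and these are exactly the cut points of $Y$ lying on $P$ (interior points of an $\Hy^3$-piece are non-separating); the analogous statement holds for $\pi_1(M')$-pieces. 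First I would record the reductions. Since $\Gamma$ is a free product of two nontrivial groups it is acylindrically hyperbolic and is not virtually a non-uniform lattice in a rank one Lie group, so by Proposition~\ref{prop:non_uniform_lattices} every lattice embedding of $\Gamma$ is uniform; as $\Gamma$ is torsion-free, every lattice embedding modulo finite kernel is genuine. By Remark~\ref{rem:2ndcountable} I may assume the ambient group $G$ is second countable.

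The geometric heart, and the place where minimality enters, is the claim that $\Isom(Y)$ is discrete. Any $\phi\in\Isom(Y)$ permutes the pieces and preserves the cut-point set of $Y$, so if $\phi$ stabilises a $\pi_1(M)$-piece $P$ then $\phi|_P$ preserves the orbit $\pi_1(M)\cdot x$ and hence $\phi|_P\in\Aut(\Hy^3,\pi_1(M)\cdot x)=\pi_1(M)$ by Lemma~\ref{lemma_aut_dis}. Next I would show that if $\phi$ fixes one piece $P_0$ pointwise then $\phi=\id$: each cut point $p$ of $P_0$ lies on exactly one further piece $P_1$, so $\phi$ fixes $P_1$ setwise while fixing $p\in P_1$, whence $\phi|_{P_1}=\id$ by the last sentence of Lemma~\ref{lemma_aut_dis} (applied to $M$ or $M'$); as $T$ is connected, induction on distance in $T$ forces $\phi$ to fix every piece pointwise. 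Consequently every point stabiliser in $\Isom(Y)$ is trivial (a stabiliser of an interior point of $P_0$ injects into $\pi_1(M)$, which acts freely on $\Hy^3$), and the identity is isolated: if $\phi_n\to\id$ then eventually $\phi_n$ stabilises $P_0$ and $\phi_n|_{P_0}\to\id$ inside the discrete group $\pi_1(M)\leq\Isom(\Hy^3)$, forcing $\phi_n|_{P_0}=\id$ and then $\phi_n=\id$. Hence $\Isom(Y)$ is discrete (indeed a finite extension of $\Gamma$).

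Now let $\rho:\Gamma\to G$ be a uniform lattice embedding, let $X$ be a proper geodesic space on which $G$ acts geometrically, and form Furman's induced quasi-action of $G$ on $\Gamma\simeq Y$ as in Section~\ref{sec:qi}. The space $Y$ is tame since $\Gamma$ is acylindrically hyperbolic (Example~\ref{exmp:tame_morse}), it has cocompact isometry group, and by the previous paragraph it has no nontrivial bounded isometries. Straightening this cobounded quasi-action to a genuine isometric action via quasi-isometric rigidity of trees of hyperbolic $3$-spaces \cite{moshersageevwhyte2011quasiactions,biswas12flows} produces an isometric $G$-action on $Y$ that is coarsely equivariant with a coarse-Lipschitz coarsely surjective map $X\to Y$ and restricts to $\rho$ on $\Gamma$; by Corollary~\ref{cor:cts_quasiaction} the resulting homomorphism $\theta:G\to\Isom(Y)$ is continuous. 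Since $\Isom(Y)$ is discrete, $\ker\theta$ is open, and since $Y$ is tame with no nontrivial bounded isometries, $\ker\theta$ acts with uniformly bounded orbits on $X$ and so is compact by Proposition~\ref{prop:compact<->bounded}. Thus $G$ has a compact open normal subgroup, the embedding is trivial, and $\Gamma$ has no nontrivial lattice embeddings; in particular $\Gamma$ is graphically discrete by Proposition~\ref{prop:propC}.

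For the second assertion, $\pi_1(M)$ and $\pi_1(M')$ are infinite and residually finite, hence contain nontrivial proper finite-index subgroups, so Corollary~\ref{cor:nonvirtgd.freeprod} supplies a finite-index subgroup $\hat\Gamma\leq\Gamma$ that is not graphically discrete; concretely a finite cover of the graph of spaces with non-tree underlying graph gives $\hat\Gamma$ with a free factor, which fails to be graphically discrete by Corollary~\ref{cor:nongdfreeprod} and therefore admits a nontrivial lattice embedding by Proposition~\ref{prop:propC}. The hard part will be the straightening step above: realising the envelope $G$ as acting honestly by isometries on $Y$ (equivalently, invoking quasi-isometric rigidity of the tree of hyperbolic $3$-spaces), after which the clean minimality lemma does the rest. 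One technical point to handle carefully is that a priori the straightening yields an action on some tree of hyperbolic $3$-spaces $Y'$ quasi-isometric to $Y$; I would use Mostow rigidity together with minimality of $M$ and $M'$ to check that the cocompact lattices on the vertex spaces of $Y'$ are again exactly $\pi_1(M)$ and $\pi_1(M')$, so that the discreteness argument of the second paragraph applies verbatim to $Y'$.
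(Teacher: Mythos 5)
Your proposal follows essentially the same route as the paper's proof: reduce to uniform (hence, by torsion-freeness, genuine) lattice embeddings via Proposition~\ref{prop:non_uniform_lattices}; straighten the envelope's quasi-action to an isometric action on a tree of hyperbolic $3$-spaces; prove that the isometry group of that tree of spaces is discrete using Lemma~\ref{lemma_aut_dis} and induction outward along the Bass--Serre tree; conclude compact-by-discreteness of the envelope via continuity (Lemma~\ref{lem:continuous_quasiconj}) and properness (Proposition~\ref{prop:compact<->bounded}); and obtain the non-graphically-discrete finite-index subgroup from Corollary~\ref{cor:nonvirtgd.freeprod}. Your discreteness argument for $\Isom(Y)$ is the paper's argument almost verbatim, and your final caveat (identify the vertex lattices of the straightened geometry with $\pi_1(M)$ and $\pi_1(M')$ via Mostow rigidity) is exactly how the paper handles the same point.

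The one step that would not survive as written is the straightening itself, which you rightly flag as the hard part but support with the wrong citations. Biswas's pattern rigidity \cite{biswas12flows} (Theorem~\ref{thm:rel_rigidity} in this paper) concerns symmetric patterns of quasi-convex subsets whose stabilizers are \emph{infinite}, infinite-index subgroups; here the edge spaces are points with trivial stabilizers, so that theorem says nothing. Similarly, the finite-depth machinery of \cite{moshersageevwhyte2011quasiactions} at best gives that quasi-isometries coarsely permute the $\Hy^3$ pieces; it does not by itself convert the quasi-action of the locally compact envelope into a genuine isometric action on a tree of spaces. The correct off-the-shelf ingredient---and the one the paper invokes---is \cite[Proposition 4.5]{starkwoodhouse2024action}, which quasi-conjugates the $\Isom(X)$-action to an isometric action on an ``ideal model geometry'' whose vertex spaces are points and copies of $\Hy^3$ and whose edge spaces are points; that proposition is proved from quasi-actions on bushy trees \cite{moshersageevwhyte_trees} together with Tukia-type rigidity \cite{tukia} for cobounded quasi-actions on $\Hy^3$, which is what your sketch is implicitly reaching for. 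With that substitution your argument closes up and coincides with the paper's.
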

	\begin{proof}

		Since $\pi_1(M)$ and $\pi_1(M')$ are residually finite, they contain  proper finite-index subgroups. By Corollary \ref{cor:nonvirtgd.freeprod}, $\Gamma$ contains a finite-index subgroup that is not graphically discrete, so it suffices to show $\Gamma$ has no nontrivial lattice embeddings.		As $\Gamma$ is hyperbolic and not virtually free, it is not virtually isomorphic to a uniform lattice in a connected rank-one semisimple Lie group. Therefore,  Proposition \ref{prop:non_uniform_lattices} ensures that all lattice embeddings of $\Gamma$ are uniform.
			Suppose $\Gamma$ acts geometrically on a proper quasi-geodesic metric space $X$. We will prove $\Isom(X)$ is compact-by-discrete. 
			
			By \cite[Proposition 4.5]{starkwoodhouse2024action}, the $\Isom(X)$-action on $X$ is quasi-conjugate to an $\Isom(X)$-action on an \emph{ideal model geometry} $Z$, which is a tree of spaces with vertex spaces either a point or a copy of $\Hy^3$ and edge spaces that are points. Let  $\rho:\Isom(X)\to \Isom(Z)$ be the action on $Z$, and let $f:X\to Z$ be this quasi-conjugacy. Since $Z$ is tame (see Example~\ref{exmp:tame_morse}),  Lemma \ref{lem:continuous_quasiconj} implies the composition $\Phi:\Isom(X)\xrightarrow{\rho} \Isom(Z)\to \Isom(Z)/K$ is continuous, where $K$ is the maximal compact normal subgroup. (In fact, $K$ is trivial, although we do not need this.) Since $f:X\to Z$ is a quasi-conjugacy and $K$ is compact, $\ker(\Phi)$ is a closed subgroup of $\Isom(X)$ with bounded orbits, hence is compact by Proposition \ref{prop:compact<->bounded}.
			
			We now show $\Isom(Z)$ is discrete, which  implies that $\Isom(X)$  is compact-by-discrete.
			   As $\Gamma$ acts geometrically on~$Z$,  there are two orbits of one-ended vertex spaces, and by Mostow rigidity, the quotient of one-ended vertex space by the corresponding vertex stabilizer is isometric to either $M$ or $M'$. 
			After rescaling the length of the edge, $\Gamma \backslash Z$ is isometric to $Y=\bigl( M \sqcup M' \sqcup [-1,1] \bigr)/\sim$, where  $\sim$ identifies $-1$ to  some $x \in M$ and $1$ to some $x' \in M'$. 
			Let $\widetilde{x}$ be a lift of $x \in Y$ in a copy $\widetilde{M}_0$ of $\widetilde{M}$. If $g \in \Isom(Z)$ fixes $\widetilde{x}$, then $g$ fixes $\widetilde{M}_0$ by Lemma~\ref{lemma_aut_dis}. Hence, $g$ fixes all intervals attached to lifts of $x$ in $\widetilde{M}_0$. Continuing inductively, we see that $g$ is the identity. 		
		Thus, $\Isom(Z)$ is discrete.		
	\end{proof}

 \section{Simple surface amalgams} \label{sec:ssa}

 In this section we prove Theorem \ref{thm:ssa_char}, characterizing graphically discrete simple surface amalgams.  
 
    \begin{notation}
        Let $\G = \pi_1(Z)$ be the fundamental group of a simple surface amalgam, where~$Z$ is the union of surfaces $\Sigma_1, \ldots, \Sigma_k$ which each have a single boundary component and negative Euler characteristic. The decomposition of $Z$ into surfaces yields a decomposition as a graph of spaces, where the underlying graph $\Lambda$ consists of vertices $V\Lambda = \{u_0, u_1, \ldots, u_k\}$ and edges $E\Lambda = \{ e_i,\bar e_i\mid 1 \leq i \leq k \text{ and } e_i=(u_i, u_0) \}$. 
        The vertex spaces are $Z_{u_0} \cong S^1$ and $Z_{u_i} \cong \Sigma_i$ for $1 \leq i \leq k$. Each edge space is $S^1$. 
        
        This decomposition gives the JSJ decomposition of $\G$ in the sense of \cite{bowditch}. Let $T$ denote the JSJ tree, and let $\rho: T \to \Lambda$ denote the map given by quotienting by $\G$. There are two types of vertices in $T$:
        \begin{enumerate}
        	\item infinite-valence vertices, denoted $V_{1}T$, that correspond to the hanging Fuchsian vertex groups;
        	\item  valence-$k$ vertices, denoted $V_2T$, that correspond to the cosets of the cyclic subgroup along which  the surface groups are amalgamated.
        \end{enumerate} 
        So, $\rho(V_2T)=\{u_0\}$ and $\rho(V_1 T)=\{u_1,...,u_k\}$.   Note that for all $w\in VT\sqcup ET$, the stabilizer   $\G_w$ is a  quasi-convex subgroup of $\G$ (\cite[Proposition 1.2]{bowditch}), so $\partial \G_w\subseteq \partial \G$.
    \end{notation}

    \begin{prop} \label{prop:ssa_onlyif}
        If $\chi(\Sigma_i) = \chi(\Sigma_j)$ for some $i \neq j$, then $\G$ is not graphically discrete.
    \end{prop}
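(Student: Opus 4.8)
The plan is to exhibit a single geometric action of $\G$ on a connected locally finite graph $X$ whose vertex stabilizers have infinite image in $\Aut(T)$, and then invoke Corollary \ref{cor:vertex_stab_jsj_alt}. Since the given decomposition of $Z$ realizes the Bowditch JSJ decomposition, $\G$ is a one-ended hyperbolic group with JSJ tree $T$, so this corollary applies. The mechanism producing non-discreteness is a \emph{flipping symmetry} that swaps the two surfaces of equal Euler characteristic, in the same spirit as the tripod flips of Proposition \ref{prop:notgd_fp}. The crucial input is that $\chi(\Sigma_i)=\chi(\Sigma_j)$ forces $\Sigma_i$ and $\Sigma_j$ to be homeomorphic (immediate for orientable surfaces with one boundary component, where $\chi=1-2g$); I would fix such a homeomorphism $h\colon \Sigma_i\to\Sigma_j$ carrying boundary to boundary.

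Next I would build the model and the flips. Realize $\G$ as acting geometrically on a $\G$-cocompact, locally finite cell complex $\widetilde{\cZ}$ that is a tree of spaces over $T$: the fibre over a vertex of $V_1T$ of type $\ell$ is a copy of the universal cover of $\Sigma_\ell$, the fibre over a vertex of $V_2T$ is a line, and edges attach along lifts of the boundary circles (alternatively one feeds the $\G$-action on $T$ into Proposition \ref{prop:intro:blowup}). The only care needed is to equip the type-$i$ and type-$j$ fibres with combinatorial structures corresponding under $h$; then $X=\widetilde{\cZ}^{(1)}$ is a connected locally finite graph carrying a geometric $\G$-action. Each vertex $a\in V_2T$ has exactly one neighbour of each type $1,\dots,k$, so the branches of $T$ hanging off $a$ come one per type. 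For such $a$, let $\phi_a$ be the involution of $\widetilde{\cZ}$ that is the identity outside the type-$i$ and type-$j$ branches beyond $a$ and swaps those two branches using $h$ and $h^{-1}$ (and the identity on the other surface types), following the global rule ``interchange the roles of types $i$ and $j$''. This rule is consistent precisely because each $V_2$-vertex meets one branch of each type, so the swap matches along every edge; compatibility of the combinatorial structures makes $\phi_a$ cellular, hence $\phi_a\in\Aut(X)$, fixing $a$ and the entire complement of the two flipped branches.

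Finally I would manufacture infinitely many independent flips fixing one vertex. Because $k\geq 3$, choose a surface vertex $w_0$ whose type $t_0\notin\{i,j\}$ and a basepoint $x_0$ in its fibre. The vertex $w_0$ has infinitely many $V_2$-neighbours $a_1,a_2,\dots$; for each $a_m$ the edge back to $w_0$ lies in the type-$t_0$ branch, which $\phi_{a_m}$ leaves untouched, so $\phi_{a_m}$ fixes $x_0$. The $\phi_{a_m}$ have pairwise disjoint supports, hence generate a subgroup isomorphic to $\bigoplus_m \Z/2$ inside $(\Aut X)_{x_0}$ whose image in $\Aut(T)$ is infinite. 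By Corollary \ref{cor:vertex_stab_jsj_alt}, $\G$ is not graphically discrete; equivalently, by Proposition \ref{prop:propC}, $\Aut(X)$ is a totally disconnected locally compact group that is not compact-by-discrete and contains the torsion-free group $\G$ as a uniform lattice, which is exactly the desired nontrivial lattice embedding.

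The main obstacle is the second step: arranging a genuinely locally finite graph $X$ together with a combinatorial structure fine enough that the branch-swap is an \emph{honest} graph automorphism rather than merely a quasi-isometry — this is where the homeomorphism $h$ is indispensable, and where one must check that the ``swap types $i,j$'' rule extends consistently up the whole tree. Relatedly, the reduction ``equal $\chi \Rightarrow$ homeomorphic'' is clean for orientable surfaces but in general needs the two surfaces to share orientability type; I would either restrict to that setting or supply a separate argument (for instance, matching the patterns of boundary axes, which have equal translation length when $\chi$ agrees) to handle an orientable/non-orientable pair of equal Euler characteristic.
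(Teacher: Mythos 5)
Your proposal is correct and uses essentially the same mechanism as the paper: a flipping involution exchanging the two surfaces of equal Euler characteristic, infinitely many such flips fixing a common vertex, hence a vertex stabilizer with infinite image in $\Aut(T)$, and Corollary \ref{cor:vertex_stab_jsj_alt} to conclude non-discreteness (with Proposition \ref{prop:propC} and torsion-freeness of $\G$ giving the nontrivial lattice embedding). The one real difference is how the flips are built, and it is exactly the point you flag as your main obstacle. You construct each involution $\phi_a$ directly on the universal cover, branch by branch, and must then check that the ``swap types $i,j$'' rule propagates consistently up the tree; note also that ``the identity on the other surface types'' is not quite accurate, since a type-$\ell$ fibre inside the type-$i$ branch must be carried to a \emph{different} type-$\ell$ fibre inside the type-$j$ branch, so compatible isomorphisms of fibres have to be chosen inductively, as in a fanning-out argument. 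The paper sidesteps all of this by working downstairs: give the compact quotient $Z$ a simplicial structure in which $\Sigma_i$ and $\Sigma_j$ are isomorphic subcomplexes, take the automorphism $\sigma$ of $Z$ swapping them and fixing the rest of $Z$ pointwise, and lift $\sigma$ to the universal cover; lifts exist since $\widetilde Z$ is simply connected, and for each $v\in V_2T$ one may choose the lift $h_v$ fixing $\widetilde Z_v$ pointwise. Each $h_v$ is then automatically a cellular automorphism, and it fixes pointwise the entire component, containing $\widetilde Z_v$, of the preimage of the union of the central circle with the surfaces $\Sigma_\ell$, $\ell\neq i,j$; this produces the infinitely many flips in a common vertex stabilizer with no consistency check at all. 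Finally, your orientability caveat is well taken: both your argument and the paper's genuinely need $\Sigma_i$ and $\Sigma_j$ to be homeomorphic, not merely of equal Euler characteristic, and the paper passes over this silently, so restricting to orientable surfaces (or handling a mixed orientable/non-orientable pair separately) is the honest fix.
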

    \begin{proof}
        Suppose without loss of generality that $\chi(\Sigma_1) = \chi(\Sigma_2)$. We can give the space $Z$ the structure of a simplicial complex such that $\Sigma_1$ and $\Sigma_2$ are isomorphic and there is an automorphism of $Z$ flipping $\Sigma_1$ and $\Sigma_2$ and fixing the rest of the complex. For each $v \in V_2T$, this automorphism lifts to an elliptic automorphism of the universal cover $h_v: \widetilde Z \to \widetilde Z$ that fixes the vertex space $\widetilde{Z}_v$ and induces the transposition on $\lk(v)$ that fixes all edges except $\widetilde e_1, \widetilde e_2 \in \lk(v)$ where $\rho(\widetilde e_i) = e_i$. The existence of these flipping automorphisms implies that if $x \in \widetilde{Z}_v^{(0)}$ for some $v \in V_2T$ and $G = \Aut\bigl(\widetilde{Z}^{(1)}\bigr)$, then the image of the stabilizer $G_x$ in $\Aut(T)$ is infinite. Hence, by Corollary~\ref{cor:vertex_stab_jsj_alt}, the group $\G$ is not graphically discrete. 
    \end{proof}

\begin{prop}\label{prop:ssa_if}
	If  $\chi(\Sigma_i)\neq \chi(\Sigma_j)$ for all $i\neq j$, then every lattice embedding modulo finite kernel into a locally compact group is trivial. Hence $\Gamma$ is graphically discrete.
\end{prop}
		   In the proof of Proposition \ref{prop:ssa_if}, we make use of the following elementary lemma: 
\begin{lem}\label{lem:index}
If $G$ is a group and $A, B, C$ are subgroups with $B\leq C$, then:
	\begin{align*}
		[AC:AB]=[C:(A\cap C)B]\tag{\dag} \label{eqn:index}
	\end{align*} provided the products in (\ref{eqn:index}) are all subgroups.
\end{lem}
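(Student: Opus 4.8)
The plan is to reduce the claimed index identity to a bijection between two systems of right cosets, the algebraic heart being the identity $AB\cap C=(A\cap C)B$, which is the only place the hypothesis $B\leq C$ is used. Throughout I would work with right cosets, noting that since $AB$, $AC$ and $(A\cap C)B$ are assumed to be subgroups and $AB\subseteq AC$ (because $B\leq C$), both indices $[AC:AB]$ and $[C:(A\cap C)B]$ are genuine subgroup indices and may be computed by counting right cosets.

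First I would observe that $A\subseteq AB$ (as $1\in B$), so for every $a\in A$ the equality $ABa=AB$ holds because $AB$ is a subgroup containing $a$. Hence for any $g=ac\in AC$ with $a\in A$ and $c\in C$ we have $ABg=ABac=(ABa)c=ABc$. This shows that the map $\theta\colon C\to AB\backslash AC$ sending $c\mapsto ABc$ is surjective onto the set of right cosets of $AB$ in $AC$.

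Next I would prove the key identity $AB\cap C=(A\cap C)B$. The inclusion $(A\cap C)B\subseteq AB\cap C$ is immediate, since $A\cap C\subseteq A$ gives $(A\cap C)B\subseteq AB$, while $A\cap C\subseteq C$ together with $B\leq C$ gives $(A\cap C)B\subseteq C$. For the reverse inclusion, I would take $x\in AB\cap C$ and write $x=ab$ with $a\in A$, $b\in B$; since $B\leq C$ we have $b^{-1}\in C$, and as $x\in C$ this forces $a=xb^{-1}\in C$, whence $a\in A\cap C$ and $x\in(A\cap C)B$. This step, where membership in $C$ is transferred from $x$ to $a$ using $b\in B\leq C$, is the one place the hypothesis $B\leq C$ genuinely enters.

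Finally I would determine the fibers of $\theta$. For $c_1,c_2\in C$, the cosets coincide, $ABc_1=ABc_2$, precisely when $c_1c_2^{-1}\in AB$; but $c_1c_2^{-1}\in C$, so this is equivalent to $c_1c_2^{-1}\in AB\cap C=(A\cap C)B$, i.e.\ to $c_1$ and $c_2$ lying in the same right coset of the subgroup $(A\cap C)B$ in $C$. Therefore $\theta$ descends to a well-defined bijection $(A\cap C)B\backslash C\to AB\backslash AC$, and comparing the cardinalities of these coset spaces yields $[AC:AB]=[C:(A\cap C)B]$. I do not anticipate a serious obstacle here: the argument is a routine second-isomorphism-theorem–style computation, and the only points requiring care are verifying the identity $AB\cap C=(A\cap C)B$ and consistently using right (rather than left) cosets so that the absorption $ABa=AB$ is available.
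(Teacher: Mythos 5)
Your proof is correct and takes essentially the same approach as the paper: both hinge on the identity $AB\cap C=(A\cap C)B$ (the one place $B\leq C$ enters) and use it to exhibit a bijection between the coset space of $(A\cap C)B$ in $C$ and that of $AB$ in $AC$. The only differences are cosmetic — you use right cosets and absorb $a\in A$ into $AB$ to get surjectivity, where the paper uses left cosets and invokes $AC=CA$ instead, and you write out the proof of the key identity that the paper merely asserts.
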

\begin{proof}
	We consider the function $\phi:\frac{C}{(A\cap C)B}\to \frac{AC}{AB}$ given by $c(A\cap C)B\mapsto cAB$. Since $B\leq C$,  $C\cap AB=(A\cap C)B$, which shows $\phi$ is well-defined and injective. Surjectivity of $\phi$ follows from the fact that as $AC$ is a subgroup, $AC=CA$.
\end{proof}

\begin{proof}[Proof of Proposition \ref{prop:ssa_if}]
    Let $\Gamma$ be the fundamental group of a simple surface amalgam so that $\chi(\Sigma_i) \neq \chi(\Sigma_j)$ for some $i \neq j$. Suppose $\rho:\Gamma \rightarrow G$ is a lattice embedding modulo finite kernel into a locally compact group. We will show $\rho$ is trivial in the sense of Definition~\ref{def:VIembeddings}. This will imply that $\Gamma$ is graphically discrete by Proposition~\ref{prop:propC}(\ref{item:propC3}). 
    As $\Gamma$ is torsion-free, the map $\rho$ is injective. As $\Gamma$ is hyperbolic and not virtually free, it is not virtually isomorphic to a non-uniform lattice in a connected simple rank one Lie group.  Proposition \ref{prop:non_uniform_lattices} implies $\rho$ is uniform. By Remark \ref{rem:2ndcountable}, we can assume that $G$ is second countable.  We identify $\Gamma$ with its image under $\rho$. 
 
 Consider the induced quasi-action  of $G$ on $\Gamma$, which gives a continuous homomorphism $\phi:G\to \Homeo(\partial \Gamma)$ with compact kernel by \cite[Theorem 3.5]{furman2001mostow}. If $T$ is the Bowditch JSJ tree of $\Gamma$, there is a continuous  injective map $\Psi:\Homeo(\partial \Gamma)\to \Aut(T)$ by Proposition \ref{prop:bdry_jsj_injective}. 	 	
	We note that the action of $G$ on $T$ extends the natural action of $\Gamma$ on $T$, and that $G$ is compact-by-(totally disconnected).
		As $G$ acts continuously on $T$, for each $w\in VT\sqcup ET$, the stabilizer $G_w$ is open. Therefore,  Lemma \ref{lem:open_lattice} implies each $\Gamma_w$ is a uniform lattice in $G_w$. In particular, if $w$ has infinite valence, $G_w$ acts continuously on $\partial \Gamma_w$ with compact kernel~\cite[Theorem 3.5]{furman2001mostow}. For each edge $e\in ET$, as  $\Gamma_e\cong \bZ$ is graphically discrete, the group $G_e$ contains a maximal compact open subgroup $K_e\vartriangleleft G_e$ by Remark \ref{rem:gd_maxcmpctopen}. We will show $K_e=K_f$ for all $e,f\in ET$, which proves $K_e$ is a compact open normal subgroup of $G$ as required (see Remark \ref{rem:triv<->c-by-d}).
		
		Firstly, suppose $v\in V_1T$ and $e\in \lk(v)$. Recall  $\partial \Gamma_v \subseteq \partial \Gamma$ is a cyclically ordered Cantor set and that $G_v$ preserves this cyclic ordering by \cite[Lemma 3.2]{bowditch}.  Moreover, the pair of points in $\partial \Gamma_e$ are adjacent in this cyclic ordering. As $K_e$ is compact and every edge stabilizer is open, for any finite $R\subseteq \lk(v)$ containing $e$, the orbit $K_eR\subseteq \lk(v)$ consists of finitely many edges. Since $K_e$ preserves the cyclic order   $\cup_{f\in K_eR} \partial \Gamma_f$ and $K_e$ fixes the adjacent pair of points $\partial \Gamma_e$ pointwise, $K_e$ fixes $\cup_{f\in K_eR} \partial \Gamma_f$ pointwise.   Thus, $K_e$ fixes  $\cup_{f\in \lk(v)} \partial \Gamma_f$. Since $\cup_{f\in \lk(v)} \partial \Gamma_f$ is dense in $\partial \Gamma_v$, the group $K_e$ fixes $\partial \G_v$. 
        Letting $K_v$ be the kernel of the action of $G_v$ on $\partial \Gamma_v$, we see that $K_e \leq K_v$ for all $e\in \lk(v)$. Since $K_v \leq G_e$, we have $K_e = K_v$ by maximality of $K_e$. 
        In particular, $K_e=K_f$ for all  $e,f\in \lk(v)$.
		
		Now suppose $v\in V_2T$. We claim every $g\in G_v$ fixes $\lk(v)$. This will imply $G_v=G_e$ for all $e\in \lk(v)$, and hence  $K_e=K_f$ for all  $e,f\in \lk(v)$. Combined with the previous paragraph, it will then follow that $K_e=K_f$ for all $e,f\in ET$  as required. Suppose for contradiction there is some $g\in G_v$ and $e\in \lk(v)$ with $ge\neq e$. Let  $\iota(e)=w\in V_1T$ be the hanging Fuchsian vertex incident to $e$. We will show that $\chi(\Gamma_w)=\chi(\Gamma_{gw})$, contradicting our hypotheses.

		As noted above, $K_w=K_e$ is a compact  open normal subgroup of $G_w$.  Since $\Gamma_w$ and $g^{-1}\Gamma_{gw}g$ are torsion-free uniform lattices  in $G_w$, they inject to finite-index subgroups of the discrete quotient $G_w/K_w$. %
		   As Euler characteristic is multiplicative, in order to show $\chi(\Gamma_w)$ and $\chi(g^{-1}\Gamma_{gw}g)=\chi(\Gamma_{gw})$ are equal, it is sufficient to show $[G_w:\Gamma_wK_w]=[G_{w}:(g^{-1}\Gamma_{gw}g)K_{w}]$. We emphasize that as $g$ need not normalize $\Gamma$, the subgroups $\Gamma_w$ and $g^{-1}\Gamma_{gw}g$ are not typically equal.

We first note that $\Gamma_v=\Gamma_e=\Gamma_{ge}\cong \bZ$.
Define $K_v=\cap_{f\in\lk(v)} K_f$, which is compact and normal in $G_v$.
Since $K_e$ is compact and $\G_e\cong\Z$ is discrete and torsion-free,  the intersection $\G_e\cap K_e$ is trivial.  Lemma \ref{lem:index} now implies  $[\G_e K_e:\G_e K_v]=[K_e:K_v]$.
Therefore,
\begin{align*}
	[G_v:G_e][G_e:\G_e K_e][K_e:K_v]&=[G_v:G_e][G_e:\G_e K_e][\G_e K_e:\G_e K_v]\\&=[G_v:\G_e K_v]=[G_v:\G_v K_v].
\end{align*}
Running the same argument with $ge$ instead of $e$, we get
$$[G_v:G_{ge}][G_{ge}:\G_{ge} K_{ge}][K_{ge}:K_v]=[G_v:\G_v K_v].$$
Conjugating by $g$, we have $[G_v:G_e]=[G_v:G_{ge}]$ and $[K_e:K_v]=[K_{ge}:K_v]$, so we deduce that $[G_e:\G_e K_e]=[G_{ge}:\G_{ge} K_{ge}]$.

 As $\Gamma_w$ acts transitively on edges in $\lk(w)$, we have $G_w=\Gamma_wG_e$. Similarly, $G_{gw}=\Gamma_{gw}G_{ge}$. Therefore, applying Lemma \ref{lem:index} twice, we have
	\begin{align*}
		[G_w:\Gamma_wK_w]&=[\Gamma_wG_e:\Gamma_wK_e]=[G_e:(\Gamma_w\cap G_e)K_e]=[G_e:\Gamma_eK_e]=[G_{ge}:\Gamma_{ge}K_{ge}]\\
		&=[G_{ge}:(\Gamma_{gw}\cap G_{ge}) K_{ge}]=[\Gamma_{gw}G_{ge}:\Gamma_{gw}K_{ge}]=[G_{gw}:\Gamma_{gw}K_{gw}].
	\end{align*}
		It follows that $[G_w:\Gamma_wK_w]=[G_{w}:g^{-1}\Gamma_{gw}gK_{w}]$ as required.
\end{proof}

Combining Propositions \ref{prop:ssa_onlyif} and \ref{prop:ssa_if} gives:
\begin{thm}\label{thm:ssa_char}
	 If $\Gamma$ is the fundamental group of a simple surface amalgam, the following are equivalent:
	\begin{enumerate}
		\item $\Gamma$ is graphically discrete;
		\item $\Gamma$ has no nontrivial lattice embeddings;
		\item the surfaces in the amalgam have pairwise-distinct Euler characteristics.
	\end{enumerate}
\end{thm}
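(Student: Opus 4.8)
The plan is to prove the three conditions equivalent by assembling the two preceding propositions into a single cycle of implications, say $(1)\Rightarrow(3)\Rightarrow(2)\Rightarrow(1)$, where I number the conditions as in the statement. The observation that makes this efficient is that Propositions~\ref{prop:ssa_onlyif} and~\ref{prop:ssa_if} together cover both logical directions for \emph{each} of the two properties ``graphically discrete'' and ``no nontrivial lattice embeddings'', organised according to whether or not the Euler characteristics are pairwise distinct. So no new geometric input is needed; the work is purely in choosing a clean route through the implications.

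First I would establish $(1)\Rightarrow(3)$ as the contrapositive of Proposition~\ref{prop:ssa_onlyif}: that proposition asserts that if $\chi(\Sigma_i)=\chi(\Sigma_j)$ for some $i\neq j$ then $\Gamma$ is not graphically discrete, so if $\Gamma$ is graphically discrete the Euler characteristics must be pairwise distinct. Next, $(3)\Rightarrow(2)$ is exactly the conclusion of Proposition~\ref{prop:ssa_if}, which gives that pairwise-distinct Euler characteristics force every uniform lattice embedding of $\Gamma$ to be trivial. Finally, $(2)\Rightarrow(1)$ is the general implication, valid for any finitely generated group, that having no nontrivial lattice embeddings forces graphical discreteness: by Proposition~\ref{prop:propC} graphical discreteness only requires triviality of uniform lattice embeddings modulo finite kernel into \emph{tdlc} groups, and these form a subclass of all lattice embeddings. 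Closing the cycle yields the equivalence of all three statements.

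The only points requiring care are bookkeeping ones. I would record that ``no nontrivial lattice embeddings'' in condition~(2) means every uniform lattice embedding modulo finite kernel (into an arbitrary locally compact group) is trivial in the sense of Definition~\ref{def:VIembeddings}, so that the inclusion of tdlc embeddings into the larger class used for $(2)\Rightarrow(1)$ is literally correct; I would also note that for these groups Proposition~\ref{prop:non_uniform_lattices} already guarantees every lattice embedding is uniform, so there is no gap between ``lattice embeddings'' and ``uniform lattice embeddings''. Since both propositions are in hand, there is essentially no obstacle in the theorem itself: the genuine difficulty lives entirely inside Proposition~\ref{prop:ssa_if}, whose proof carries out the Euler-characteristic index computation on edge and vertex stabilizers of the JSJ tree to rule out edge-flips, and inside Proposition~\ref{prop:ssa_onlyif}, which constructs the flipping automorphisms when two surfaces agree. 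The theorem is the clean corollary that packages these two analyses.
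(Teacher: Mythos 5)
Your proposal is correct and takes essentially the same route as the paper, whose entire proof is the one-line observation that Propositions~\ref{prop:ssa_onlyif} and~\ref{prop:ssa_if} combine to give the theorem: $(1)\Rightarrow(3)$ is the contrapositive of the former, and $(3)\Rightarrow(2)\Rightarrow(1)$ is the content of the latter (its ``hence is graphically discrete''). The bookkeeping issue you flag---that lattice embeddings modulo finite kernel into tdlc groups are not literally a subclass of injective lattice embeddings---is resolved in the paper by the remark, made inside the proof of Proposition~\ref{prop:ssa_if}, that $\Gamma$ is torsion-free, so the two notions coincide.
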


Two simple surfaces amalgams are quasi-isometric if and only if the number of surfaces in the unions are equal~\cite{malone}. Thus, we have the following corollary. 

\begin{cor}
    Graphical discreteness is not a quasi-isometry invariant for one-ended groups. 
\end{cor}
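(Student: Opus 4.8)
The plan is to assemble the two results already in hand. Theorem~\ref{thm:ssa_char} characterizes graphical discreteness for a simple surface amalgam in terms of its Euler characteristics: the fundamental group is graphically discrete exactly when the constituent surfaces have pairwise-distinct Euler characteristics. On the other hand, Malone's classification \cite{malone} says that two simple surface amalgams have quasi-isometric fundamental groups precisely when they are built from the same number of surfaces. These two invariants are logically independent, so to prove the corollary it suffices to produce two simple surface amalgams built from the same number of pieces, one whose pieces have pairwise-distinct Euler characteristics and one whose pieces do not.

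Concretely, I would fix $n = 3$ and build two amalgams. For the first, take compact surfaces with a single boundary component and Euler characteristics $-1,-2,-3$; such surfaces exist since a non-orientable surface with one boundary component and $g$ crosscaps has Euler characteristic $1-g$, so every negative integer is realized (orientable one-holed surfaces alone already realize all negative odd values). Gluing these three surfaces along their boundary circles produces a simple surface amalgam $Z$, and by Theorem~\ref{thm:ssa_char} the group $\Gamma = \pi_1(Z)$ is graphically discrete. For the second, take surfaces with one boundary component and Euler characteristics $-1,-1,-2$, glue along the boundaries to form $Z'$, and set $\Gamma' = \pi_1(Z')$; since two of the Euler characteristics coincide, Theorem~\ref{thm:ssa_char} shows $\Gamma'$ is not graphically discrete.

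It remains to verify the two properties in the statement. Both $Z$ and $Z'$ are built from exactly three surfaces, so $\Gamma$ and $\Gamma'$ are quasi-isometric by \cite{malone}. Moreover, both groups are one-ended: each is a one-ended hyperbolic group, since it carries the nontrivial Bowditch JSJ decomposition recorded in the Notation at the start of Section~\ref{sec:ssa}, and that decomposition is defined only for one-ended hyperbolic groups. Hence $\Gamma$ and $\Gamma'$ are quasi-isometric one-ended groups, exactly one of which is graphically discrete, proving that graphical discreteness is not a quasi-isometry invariant for one-ended groups.

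Since both substantive inputs are already established, there is essentially no obstacle beyond bookkeeping. The only two points needing a word of justification---the existence of compact surfaces with a single boundary component and an arbitrary prescribed negative Euler characteristic, and the one-endedness of the fundamental groups---are handled respectively by using non-orientable pieces and by invoking the Bowditch JSJ framework of Section~\ref{sec:ssa}; I expect the former to be genuinely routine and the latter to be the only place where one must be slightly careful to confirm the groups lie in the stated class.
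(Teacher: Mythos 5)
Your proposal is correct and is essentially the paper's own argument: the paper derives this corollary by combining Theorem~\ref{thm:ssa_char} with Malone's result that simple surface amalgams are quasi-isometric if and only if they have the same number of surfaces, exactly as you do. Your explicit examples (pieces of Euler characteristics $-1,-2,-3$ versus $-1,-1,-2$) and the check of one-endedness via the Bowditch JSJ structure are just the routine details the paper leaves implicit.
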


 \section{Graphical discreteness of closed \texorpdfstring{$3$-}{3-}manifold groups}

\subsection{Geometric 3-manifolds}
In this section we show the following:
\begin{thm}\label{thm:propc-3manifolds}
	The fundamental group of a closed  geometric 3-manifold is graphically discrete.
\end{thm}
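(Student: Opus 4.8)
The plan is to run through Thurston's eight geometries, using that the fundamental group of a closed geometric $3$-manifold is a cocompact lattice in the isometry group of one of $S^3$, $\E^3$, $\mathbf{Nil}$, $S^2\times\R$, $\mathbf{Sol}$, $\Hy^3$, $\Hy^2\times\R$, or $\widetilde{\SL_2(\R)}$. Six of these cases I would dispatch immediately from results already in hand. If $M$ carries $S^3$, $\E^3$, $\mathbf{Nil}$, or $S^2\times\R$ geometry then $\pi_1(M)$ is virtually nilpotent (in the $S^2\times\R$ case $\pi_1(M)$ is virtually $\Z$, e.g.\ $D_\infty$ for $\mathbb{RP}^3\#\mathbb{RP}^3$), so graphical discreteness follows from Theorem~\ref{thm:propc_examples}(\ref{propc:trofimov}). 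The $\mathbf{Sol}$ case is exactly the classification of lattice envelopes of $\mathbf{Sol}$-lattices by Dymarz~\cite{dymarz2015envelopes}. Finally, if $M$ is hyperbolic then $\pi_1(M)$ is a hyperbolic group with visual boundary $S^2$, so Theorem~\ref{thm:graphical_discrete_sphere_boundaries} applies.

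This leaves the two Seifert-fibered geometries $\Hy^2\times\R$ and $\widetilde{\SL_2(\R)}$, which I expect to be the only genuine difficulty; these are precisely the geometries whose groups carry a canonical commensurated $\Z$ and so fall outside the scope of Theorem~\ref{thm:intro_alex_gdar}. In both cases $\Gamma=\pi_1(M)$ sits in a central extension $1\to Z\to\Gamma\to Q\to 1$ with $Z\cong\Z$ the regular fiber and $Q$ a cocompact Fuchsian group, and $\Gamma$ is quasi-isometric to $\Hy^2\times\R$. Supposing $\Gamma$ acts geometrically on a connected locally finite graph $\hat X$, it suffices by Proposition~\ref{prop:propC} to show $G:=\Aut(\hat X)$ is compact-by-discrete. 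I would transport the induced quasi-action of $G$ across a quasi-isometry $\hat X\to\Hy^2\times\R$ and invoke the de Rham splitting of quasi-isometries of products~\cite{kapovich1998derahm} (as in the proof of Theorem~\ref{thm:dirprod_hyp}): after passing to a finite-index subgroup $G^*\leq G$ the quasi-action splits as a product of cobounded quasi-actions on $\Hy^2$ and on $\R$. The $\Hy^2$-factor is a cobounded quasi-action on a space quasi-isometric to the non-elementary hyperbolic group $Q$; by \cite[Corollary 5.7]{margolis2022discretisable} it is quasi-conjugate to an isometric action on a space $Y_1$ that is either $\Hy^2$ or a locally finite graph, and then Lemma~\ref{lem:continuous_quasiconj} (whose tameness hypothesis holds since $Y_1$ has at least three Morse boundary points) yields a continuous homomorphism $\psi_1\colon G^*\to\Isom(Y_1)/K_1$. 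Exactly as in Theorem~\ref{thm:dirprod_hyp}, the image of $\psi_1$ is discrete, using Lemma~\ref{lem:totdisc_to_lie} when $Y_1=\Hy^2$ and the graphical discreteness of $Q$ (boundary $S^1$, Theorem~\ref{thm:graphical_discrete_sphere_boundaries}) when $Y_1$ is a graph.

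The crux is then to control the fiber. Set $N=\ker\psi_1$, an open normal subgroup of $G^*$. Since the central fiber $Z$ is bounded in the $\Hy^2$-factor we have $Z\leq N$, and since $Z$ acts coboundedly on the $\R$-factor while $N$ acts boundedly on the $\Hy^2$-factor, properness of the $G$-action on $\hat X$ shows that every $N$-orbit lies at bounded Hausdorff distance from the corresponding $Z$-orbit: for $n\in N$ one finds $z\in Z$ with $z^{-1}n$ moving a basepoint a bounded amount, whence $z^{-1}n$ lies in a fixed compact set and $N=ZC$ for some compact $C$. Thus $Z\cong\Z$ is a cocompact lattice in the totally disconnected group $N$. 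As $\Z$ is graphically discrete (Theorem~\ref{thm:propc_examples}(\ref{propc:trofimov})), Proposition~\ref{prop:propC}(\ref{item:propC1}) shows $N$ is compact-by-discrete, and Remark~\ref{rem:gd_maxcmpctopen} (applied with the trivial maximal finite normal subgroup of $\Z$) provides a maximal compact normal subgroup $K_N\vartriangleleft N$ that is open; being maximal it is characteristic in $N$. Because $N\vartriangleleft G^*$, the subgroup $K_N$ is then a compact open normal subgroup of $G^*$, so $G^*$—and hence $G$—is compact-by-discrete, as required.

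The main obstacle, as the argument makes clear, is the $\R$-factor, i.e.\ the regular fiber. Because $\R$ (and any two-ended locally finite graph) fails to be tame, the continuity lemma cannot be applied to the $\R$-factor directly the way it was to the $\Hy^2$-factor; the purpose of passing to $N=\ker\psi_1$ is precisely to convert the bad factor into a genuinely proper cobounded action, where the two-ended structure theory packaged inside the graphical discreteness of $\Z$ takes over. I anticipate the delicate points to be justifying the product splitting for a quasi-action that is only cobounded—not proper—in each factor, and verifying the properness estimate that makes $Z$ cocompact in $N$; both rest on the quasi-isometric rigidity of the product $\Hy^2\times\R$ furnished by \cite{kapovich1998derahm,kleinerleeb2001symmetric}.
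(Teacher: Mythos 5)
Your handling of six of the eight geometries coincides with the paper's: virtually nilpotent cases via Theorem~\ref{thm:propc_examples}(\ref{propc:trofimov}), {\bf Sol} via Dymarz (the paper's Proposition~\ref{prop:sol_c}), and $\Hy^3$ via a result the paper also records (the paper cites Theorem~\ref{thm:propc_examples}(\ref{propc:latticelie}) where you cite Theorem~\ref{thm:graphical_discrete_sphere_boundaries}; both are fine). The real content is the Seifert geometries, which the paper treats in Proposition~\ref{prop:H2timesr_propc} and Corollary~\ref{cor:latticesol} (reducing $\widetilde{\SL_2(\R)}$ to $\Hy^2\times\R$ by \cite{gersten1992bounded}). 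Here your first step is false as stated: a cobounded quasi-action on $\Hy^2\times\R$ does \emph{not} split, even after passing to a finite-index subgroup, as a product of quasi-actions on $\Hy^2$ and on $\R$. The de Rham theorem of \cite{kapovich1998derahm} only produces induced quasi-isometries of the \emph{non-Euclidean} factors; nothing descends to the Euclidean one. Concretely, for any unbounded $1$-Lipschitz function $b:\Hy^2\to\R$ the shear $(x,t)\mapsto(x,t+b(x))$ is a bi-Lipschitz self-map of $\Hy^2\times\R$ at infinite distance from every product map, so no splitting statement of the claimed form can hold; this is also exactly why Theorem~\ref{thm:dirprod_hyp}, which you take as your model, is restricted to products of non-elementary hyperbolic groups. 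What is true --- and is precisely the paper's Proposition~\ref{prop:quasi-action_project}, due to Kleiner--Leeb \cite{kleinerleeb2001symmetric} --- is that the quasi-action projects to an isometric action $\rho:G\to\Isom(\Hy^2)$ coarsely compatible with $\pi:\Hy^2\times\R\to\Hy^2$. (Also, passing to $G^*$ is unnecessary: $\Hy^2$ and $\R$ are not quasi-isometric, so no factors can be permuted.)

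The good news is that your argument never genuinely uses a quasi-action on the $\R$-factor, so it is repairable by substituting Proposition~\ref{prop:quasi-action_project} for the splitting: ``$Z$ acts coboundedly on the $\R$-factor'' only needs the correct statement that $Z$-orbits are coarsely dense in the fibers $\pi^{-1}(\mathrm{pt})$, and ``$N$ acts boundedly on the $\Hy^2$-factor'' is exactly what membership in $\ker\psi_1$ gives, uniformly over $N$, by tameness of $Y_1$ (cf.\ Lemma~\ref{lem:cpct_normal_qaction}). With that repair, your kernel analysis is correct and genuinely different from the paper's. The paper quotes the ``moreover'' clause of Proposition~\ref{prop:quasi-action_project} to see that $\Gamma\cap\ker\rho$ is a two-ended uniform lattice in the open subgroup $\ker\rho$ (Lemma~\ref{lem:open_lattice}) and then invokes \cite[Corollary 19.39]{cornulier2018quasiisometric} to extract a topologically characteristic compact open normal subgroup. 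You instead prove directly, via the coarse fiber argument and properness, that the center $Z\cong\Z$ is cocompact in $N$, and then obtain the characteristic compact open normal subgroup from graphical discreteness of $\Z$ together with Proposition~\ref{prop:propC} and Remark~\ref{rem:gd_maxcmpctopen}. This trades Cornulier's structure theorem for a cocompactness estimate that does go through, and keeps the argument inside the paper's own toolkit. One last detail to record: your compact open normal subgroup is normal only in a finite-index open subgroup, so to conclude for $G$ itself one should intersect its finitely many $G$-conjugates, which is again compact, open, and now normal in $G$.
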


\begin{rem}
	Note that  $\mathbb{Z}\times F_2$ is  simultaneously a  uniform lattice in $\mathbb{Z}\times \Aut(T)$ and a non-uniform lattice in $\mathbb{R}\times \PSL_2(\bR)$. Thus, it is the fundamental group of a  compact geometric  3-manifold with non-empty toroidal boundary that is \emph{not} graphically discrete. In particular, Theorem~\ref{thm:propc-3manifolds} does not generalize to fundamental groups of  geometric 3-manifolds with boundary.
\end{rem}

\begin{prop}\label{prop:sol_c}
	If $\Gamma$ is a lattice in the Lie group $\textbf{Sol}$, then $\Gamma$ is graphically discrete.
\end{prop}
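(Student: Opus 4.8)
The plan is to verify the characterization of graphical discreteness from Proposition~\ref{prop:propC}: it suffices to show that every uniform lattice embedding modulo finite kernel $\rho\colon\Gamma\to G$ with $G$ totally disconnected is trivial. First I would record the algebraic structure. A lattice in $\textbf{Sol}$ is, up to finite index, of the form $\Z^2\rtimes_A\Z$ for some $A\in\SL_2(\Z)$ with $|\mathrm{tr}(A)|>2$, and such a group is torsion-free with Fitting subgroup $\Z^2$. Thus Corollary~\ref{cor:propctorsionfree} lets me replace $\Gamma$ by this torsion-free finite-index subgroup and assume $\rho$ is injective, and Remark~\ref{rem:2ndcountable} lets me assume $G$ is second countable. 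Since $\Gamma$ is quasi-isometric to $\textbf{Sol}$ equipped with its left-invariant metric, the construction of Section~\ref{sec:qi} transports $\rho$ into a cobounded quasi-action $\{f_g\}_{g\in G}$ of $G$ on $\textbf{Sol}$.

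The key geometric input is the quasi-isometric rigidity of $\textbf{Sol}$ due to Eskin--Fisher--Whyte: every self-quasi-isometry of $\textbf{Sol}$ coarsely preserves the two transverse height foliations, up to the order-two symmetry exchanging them, and is determined up to bounded distance, with bound controlled by its quasi-isometry constants, by a pair of boundary bi-Lipschitz maps of $\R$. Feeding the cobounded quasi-action $\{f_g\}$ into this structure is precisely the setting of Dymarz's classification of lattice envelopes of $\textbf{Sol}$~\cite{dymarz2015envelopes}. The conclusion I want to extract is that a totally disconnected envelope of a $\textbf{Sol}$-lattice is compact-by-discrete: a compact open subgroup $U\leq G$ has relatively compact image in the boundary bi-Lipschitz data, and the rigidity constraints force this image to be trivial, so that $U$ lies in the kernel of the resulting representation; running this over all of $G$ produces a compact open normal subgroup $K\vartriangleleft G$. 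By Proposition~\ref{prop:propC} this shows $\Gamma$ is graphically discrete.

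The main obstacle, and the reason $\textbf{Sol}$ cannot be treated by the generic machinery of this paper, is that $\QI(\textbf{Sol})\cong(\mathrm{Bilip}(\R)\times\mathrm{Bilip}(\R))\rtimes\Z/2\Z$ is very far from discrete. Consequently one cannot quasi-conjugate the quasi-action of $G$ to an isometric action on $\textbf{Sol}$, and one cannot directly verify the finite-image criterion of Theorem~\ref{thm:compactbydiscrete}; indeed $\Gamma$ is exactly one of the exceptional groups excluded in Theorem~\ref{thm:intro_alex_gdar}, since its Fitting subgroup $\Z^2$ is an infinite commensurated finite-rank free abelian subgroup. What rescues the argument is that the constraints coarse differentiation places on the boundary maps arising from a genuine group quasi-action are rigid enough to force a compact open subgroup into the kernel of the boundary representation, even though the ambient $\QI(\textbf{Sol})$ is large. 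Carefully extracting the compact-by-discrete conclusion for the totally disconnected case from Dymarz's envelope analysis is the technical heart of the proof.
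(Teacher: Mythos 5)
Your overall strategy---reducing everything to Dymarz's classification of lattice envelopes of $\textbf{Sol}$---is the same as the paper's, but the decisive step is left as a promissory note, and the justification you sketch for it does not work. You assert that a compact open subgroup $U\leq G$ has relatively compact image in the boundary bi-Lipschitz data and that ``the rigidity constraints force this image to be trivial.'' Compactness of an image never forces triviality on its own: both $\Isom(\textbf{Sol})$ and the bi-Lipschitz groups contain nontrivial compact subgroups, and nothing in the Eskin--Fisher--Whyte description of $\QI(\textbf{Sol})$ excludes nontrivial compact groups of boundary maps. So the part of your argument that you yourself label ``the technical heart'' is exactly the part that is missing, and the mechanism you propose for it is false as stated.

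What closes the gap---and what the paper actually does---is to quote \cite[Theorem 1]{dymarz2015envelopes} as a black box: for any locally compact group $G$ containing a lattice in $\textbf{Sol}$, there is a continuous homomorphism $\rho:G\to\Isom(\textbf{Sol})$ with compact kernel. (In particular, contrary to your final paragraph, the quasi-action of $G$ \emph{can} be quasi-conjugated, modulo a compact kernel, to an isometric action on $\textbf{Sol}$; this is precisely the content of Dymarz's theorem, obtained by coarse differentiation rather than by the generic machinery that you correctly observe fails because $\QI(\textbf{Sol})$ is huge.) Once $\rho$ exists, the conclusion is two lines via Lemma \ref{lem:totdisc_to_lie}: $\Isom(\textbf{Sol})$ is a Lie group, hence has no small subgroups, while $G$ is totally disconnected, so by van Dantzig the preimage of a small identity neighborhood contains a compact open subgroup that must map trivially; thus $\ker(\rho)$ is open as well as compact, and $G$ is compact-by-discrete, which is what Proposition \ref{prop:propC} requires. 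Two further simplifications you missed: by \cite[Theorems 3.1 \& 4.28]{raghunathan1972discrete} lattices in $\textbf{Sol}$ are automatically uniform and torsion-free, so neither the passage to a torsion-free finite-index subgroup via Corollary \ref{cor:propctorsionfree} nor any discussion of non-uniform embeddings is needed.
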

\begin{proof}
	By \cite[Theorems 3.1 \& 4.28]{raghunathan1972discrete} lattices in $\textbf{Sol}$ are always uniform and torsion-free.
	Suppose $G$ is a totally disconnected locally compact group containing $\Gamma$ as a lattice.  It is sufficient to show $G$ is compact-by-discrete. By \cite[Theorem 1]{dymarz2015envelopes}, there is a continuous map $\rho:G\rightarrow \Isom(\textbf{Sol})$ with compact kernel $K$. It follows from Lemma \ref{lem:totdisc_to_lie} that $K$ is open, hence  $G$ is compact-by-discrete. 
\end{proof}

We will now show a finitely generated group quasi-isometric to $\Hy^2 \times \R$ is graphically discrete. 
We need the following theorem concerning quasi-actions on $\mathbb{H}^2\times \mathbb{R}$:
\begin{prop}[{\cite[\S 3,4,6]{kleinerleeb2001symmetric}}]\label{prop:quasi-action_project}
	If there is a cobounded quasi-action of a group $G$ on $\mathbb{H}^2\times \mathbb{R}$, then there is an isometric action $\rho:G\rightarrow \Isom(\mathbb{H}^2)$ 
	such that \[\sup_{x\in \mathbb{H}^2\times \mathbb{R},g\in G}(\pi(gx),\rho(g)\pi(x))<\infty,\] 
	where $\pi:\mathbb{H}^2\times \mathbb{R}\rightarrow \mathbb{H}^2$ is the projection.
	Moreover, when $G$ is finitely generated and the quasi-action on $\mathbb{H}^2\times \mathbb{R}$ is proper, then $\ker(\rho)$ is two-ended.
\end{prop}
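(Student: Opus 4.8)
The plan is to reduce to the quasi-isometric rigidity of $\mathbb{H}^2$ by first splitting off the line factor, following \cite{kleinerleeb2001symmetric}. The space $\mathbb{H}^2\times\mathbb{R}$ is a coarse product of two factors that are not quasi-isometric to one another, one being non-elementary Gromov hyperbolic and the other two-ended. The analysis of quasi-isometries of products of symmetric spaces and Euclidean buildings shows that every $(K,A)$-quasi-isometry of $\mathbb{H}^2\times\mathbb{R}$ coarsely preserves the de Rham product decomposition and, since the factors are not quasi-isometric, cannot interchange them. Applying this uniformly to the given quasi-action, each $f_g$ coarsely splits (with constants independent of $g$) as a pair consisting of a quasi-isometry of $\mathbb{H}^2$ and a quasi-isometry of $\mathbb{R}$; composing with the projection $\pi$ produces a cobounded quasi-action of $G$ on $\mathbb{H}^2$.

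I would then uniformize this quasi-action. A cobounded quasi-action of a group on $\mathbb{H}^2$ induces a uniform convergence action on the circle $\partial\mathbb{H}^2$; by the convergence group theorem \cite{tukia,gabai,cassonjungreis}, this action is topologically conjugate to the boundary action of a subgroup of $\Isom(\mathbb{H}^2)$, and the conjugacy promotes to a self-quasi-isometry of $\mathbb{H}^2$ quasi-conjugating the projected quasi-action to an isometric action $\rho:G\to\Isom(\mathbb{H}^2)$. Absorbing this quasi-conjugacy of the $\mathbb{H}^2$-factor into $\pi$, one obtains $\sup_{x,g}d(\pi(gx),\rho(g)\pi(x))<\infty$, which is the first assertion.

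For the additional statement, assume $G$ is finitely generated and the quasi-action proper, so that $G$ is quasi-isometric to $\mathbb{H}^2\times\mathbb{R}$. If $g\in\ker(\rho)$, then coarse equivariance of $\pi$ gives $\sup_x d(\pi(gx),\pi(x))<\infty$, so $g$ coarsely preserves each line-fibre $\pi^{-1}(x_0)$ and acts on it through the second coordinate of $f_g$. This defines a quasi-action of $\ker(\rho)$ on $\mathbb{R}$, which is proper because properness of the product quasi-action restricts to the coarsely fixed fibre. If I can also show this fibre quasi-action is cobounded, then the Milnor--Schwarz lemma \cite[Theorem 4.C.5]{cornulierdlH2016metric} identifies $\ker(\rho)$ as quasi-isometric to $\mathbb{R}$, hence two-ended. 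Coboundedness I would extract from coboundedness of the product quasi-action: every point of $\{x_0\}\times\mathbb{R}$ is coarsely in the orbit of a basepoint, and the elements realizing this lie in the coarse stabiliser of $x_0$, which is a bounded neighbourhood of $\ker(\rho)$ provided $\rho(G)$ has discrete cocompact image in $\Isom(\mathbb{H}^2)$.

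The main obstacle is precisely this last point: pinning down $\ker(\rho)$ as exactly the vertical fibre direction, equivalently showing $\rho$ has discrete, cocompact (virtually Fuchsian) image. The kernel cannot be finite, since otherwise $G$ would be quasi-isometric to $\mathbb{H}^2$ rather than to $\mathbb{H}^2\times\mathbb{R}$, forcing the line direction to be carried by $\ker(\rho)$; and properness of the product quasi-action prevents the kernel from being larger than a single fibre. Making the coarse stabiliser argument precise—controlling the non-discreteness of $\rho(G)$ coming from elements that are small on $\mathbb{H}^2$ but translate far along $\mathbb{R}$—is where the properness hypothesis does its essential work, and this is the step I expect to require the most care.
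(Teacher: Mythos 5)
Your overall architecture (split the product, project, uniformize the $\mathbb{H}^2$-factor, then analyze the kernel) is the right one — it is the Kleiner--Leeb strategy, and indeed the paper offers no proof of its own here: the proposition is imported wholesale from \cite[\S 3,4,6]{kleinerleeb2001symmetric}, with the splitting step you describe corresponding to \cite{kapovich1998derahm} and \S 3 of that paper. But your middle step contains a genuine error. The convergence group theorem of Tukia--Gabai--Casson--Jungreis applies to groups acting on $S^1$ so that the induced action on distinct triples is properly discontinuous and cocompact; convergence groups are in particular discrete subgroups of $\Homeo(S^1)$. The projected quasi-action of $G$ on $\mathbb{H}^2$ is essentially never proper in the situations the proposition covers: every element translating mainly in the $\R$-direction acts on $\mathbb{H}^2$ with bounded displacement at a basepoint, so infinitely many group elements move a given triple a bounded amount — and in the first assertion $G$ need not even be countable (take $G=\Isom(\mathbb{H}^2\times\R)$ itself). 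So the boundary action is by a possibly non-discrete group of uniformly quasi-M\"obius homeomorphisms, and the convergence group theorem simply does not apply. What is needed is uniformization of uniformly quasisymmetric groups of the circle \emph{without} discreteness — Hinkkanen's theorem (in full generality, Markovic), or Kleiner--Leeb's own route: the closure of the image is a locally compact group of uniformly quasi-M\"obius maps, hence a Lie group (Hilbert--Smith in dimension one), which cocompactness on triples forces to be conjugate into $\PSL_2(\R)$. This non-discrete uniformization is precisely the content of the cited sections; replacing it by the discrete convergence group theorem is a step that fails, not a stylistic variant.

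The second gap is the one you flag yourself, and it is a real circle rather than a technicality: you derive coboundedness of the $\ker(\rho)$-quasi-action on a fiber $\{x_0\}\times\R$ from discreteness and cocompactness of $\rho(G)$, but discreteness of $\rho(G)$ is equivalent in strength to that fiber-coboundedness (granting fiber-coboundedness, discreteness follows by a short properness argument: if $\rho(g_i)\to 1$ with $\rho(g_i)$ distinct, properness forces the $\R$-displacements of the $g_i$ to diverge, and multiplying by kernel elements that cancel those displacements traps the $g_i$ in finitely many $\ker(\rho)$-cosets, a contradiction — and your argument is the converse direction). What you can get without discreteness is that the \emph{coarse stabilizer} $S=\{g\in G: d(\pi(f_g(x_0,0)),x_0)\le C\}$ quasi-acts properly and coboundedly on the fiber, since coboundedness of the product quasi-action produces, for each $t$, an element carrying $(x_0,0)$ near $(x_0,t)$, and all such elements lie in $S$. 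The unproved content of the ``moreover'' clause is exactly the passage from the coarse subgroup $S$ to the honest subgroup $\ker(\rho)$; that is where the properness hypothesis does its real work in \cite{kleinerleeb2001symmetric}, and it does not follow from the pieces you have assembled.
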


\begin{prop}\label{prop:H2timesr_propc}
	A finitely generated group quasi-isometric to $\Hy^2\times \mathbb{R}$ is graphically discrete.
\end{prop}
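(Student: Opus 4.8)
The plan is to adapt the strategy of Theorem \ref{thm:dirprod_hyp}, using Proposition \ref{prop:quasi-action_project} to project away the $\mathbb{R}$-factor. Suppose $\Gamma$ acts geometrically on a connected locally finite graph $X$ and set $G=\Aut(X)$; by Proposition \ref{prop:propC} it suffices to show $G$ is compact-by-discrete. After quotienting by the finite kernel of the action, we regard $\Gamma$ as a uniform lattice in $G$. Since $\Gamma$, and hence $X$, is quasi-isometric to $\Hy^2\times\mathbb{R}$, fix a quasi-isometry $F:X\to \Hy^2\times\mathbb{R}$ with coarse inverse $\overline F$. Conjugating the geometric action of $G$ on $X$ by $F$ yields a cobounded quasi-action $\{f_g=F\circ g\circ\overline F\}_{g\in G}$ of $G$ on $\Hy^2\times\mathbb{R}$, which is moreover proper when restricted to $\Gamma$. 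Proposition \ref{prop:quasi-action_project} then provides an isometric action $\rho:G\to\Isom(\Hy^2)$ with $\sup_{y,g}d(\pi(f_g y),\rho(g)\pi(y))<\infty$, where $\pi:\Hy^2\times\mathbb{R}\to\Hy^2$ is the projection.

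First I would establish continuity of $\rho$. Consider $h:=\pi\circ F:X\to\Hy^2$, which is coarsely surjective and coarse Lipschitz. Since $F$ quasi-conjugates the genuine action to $\{f_g\}$, we have $\pi(f_g(F(x)))$ uniformly close to $h(gx)$, and the displacement bound above gives $\pi(f_g(F(x)))$ uniformly close to $\rho(g)h(x)$; combining these yields $\sup_{x\in X,\,g\in G}d(\rho(g)h(x),h(gx))<\infty$. As $\Hy^2$ is proper, tame (its Morse boundary is a circle, so Example \ref{exmp:tame_morse} applies), has cocompact isometry group, and admits no nontrivial bounded isometries, Corollary \ref{cor:cts_quasiaction} shows $\rho$ is continuous. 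Because $G$ is totally disconnected and $\Isom(\Hy^2)$ is a Lie group, Lemma \ref{lem:totdisc_to_lie} implies $N:=\ker(\rho)$ is open in $G$.

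Next I would identify $N$ geometrically. Restricting $\rho$ to $\Gamma$ gives an isometric action of $\Gamma$ satisfying the same projection estimate for the proper quasi-action of $\Gamma$. Any two such actions differ by an isometry of $\Hy^2$ of uniformly bounded displacement, hence by the identity, so $\rho|_\Gamma$ coincides with the action produced by Proposition \ref{prop:quasi-action_project} applied to the \emph{finitely generated} group $\Gamma$. The moreover clause then shows $\Gamma\cap N=\ker(\rho|_\Gamma)$ is two-ended. By Lemma \ref{lem:open_lattice}, $\Gamma\cap N$ is a uniform lattice in the totally disconnected group $N$; being two-ended it is virtually $\mathbb{Z}$, hence graphically discrete by Theorem \ref{thm:propc_examples}(\ref{propc:trofimov}). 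Proposition \ref{prop:propC} then forces $N$ to be compact-by-discrete.

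Finally I would transport a compact open normal subgroup from $N$ up to $G$. Let $K\triangleleft N$ be a compact open normal subgroup; then $N/K$ is discrete and, being quasi-isometric to $\mathbb{Z}$, is virtually $\mathbb{Z}$. Let $M$ be the preimage in $N$ of the maximal finite normal subgroup of $N/K$. Then $M$ is compact and open, and it is the maximal compact normal subgroup of $N$, hence characteristic in $N$. Since $N=\ker(\rho)$ is normal in $G$, the characteristic subgroup $M$ is normal in $G$; as $M$ is compact and open in $G$, we conclude $G$ is compact-by-discrete, so $\Gamma$ is graphically discrete. I expect the main obstacle to be the continuity step combined with the uniqueness argument pinning down $\rho|_\Gamma$: the two-endedness of the kernel that drives the whole proof is only directly available for the finitely generated lattice $\Gamma$, not for the ambient locally compact group $G$, so one must transfer it carefully.
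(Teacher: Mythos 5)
Your proposal is correct and follows essentially the same route as the paper's proof: quasi-conjugate the $\Aut(X)$-action to a cobounded quasi-action on $\Hy^2\times\R$, apply Proposition \ref{prop:quasi-action_project} to get an isometric action $\rho$ on $\Hy^2$ whose continuity follows from Corollary \ref{cor:cts_quasiaction}, deduce from Lemma \ref{lem:totdisc_to_lie} that $\ker(\rho)$ is open with two-ended uniform lattice $\Gamma\cap\ker(\rho)$ (via Lemma \ref{lem:open_lattice} and the moreover clause of Proposition \ref{prop:quasi-action_project}), and finally promote a characteristic compact open subgroup of the kernel to a compact open normal subgroup of the whole group. The only difference is in that last step, and it is minor: where the paper invokes Cornulier's structure result for totally disconnected groups containing a two-ended uniform lattice, you re-derive the maximal compact open normal subgroup of $\ker(\rho)$ by hand from graphical discreteness of virtually-$\Z$ groups (Theorem \ref{thm:propc_examples}) together with Proposition \ref{prop:propC} — a valid, self-contained substitute.
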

\begin{proof}
	Suppose $\Gamma$ is a finitely generated group quasi-isometric to $\mathbb{H}^2\times \mathbb{R}$ and $\Gamma$ acts geometrically on a connected locally finite graph $X$. We prove that $G=\Isom(X)$ is compact-by-discrete. 
	
	The Milnor--Schwarz lemma implies there is a quasi-isometry $f:X\rightarrow \mathbb{H}^2\times \mathbb{R}$ that quasi-conjugates the isometric action of $G$ on $X$ to a cobounded quasi-action of $G$ on $\mathbb{H}^2\times \mathbb{R}$. Proposition \ref{prop:quasi-action_project} implies there is an isometric action $\rho:G\rightarrow \Isom(\mathbb{H}^2)$ such that the coarse Lipschitz map $\pi\circ f$ is $G$-equivariant up to uniformly bounded error. Since $\mathbb{H}^2$ has no nontrivial bounded isometries, the map $\rho$ is continuous by Corollary \ref{cor:cts_quasiaction}.
	
	By Lemma \ref{lem:totdisc_to_lie}, $\ker(\rho)$ is open. Since $\Gamma$ is a lattice in $G$, Lemma \ref{lem:open_lattice} implies   $\Gamma \cap \ker(\rho)$ is a lattice in $\ker(\rho)$. As the quasi-action $G\curvearrowright \mathbb{H}^2\times \mathbb{R}$  restricts to a proper cobounded quasi-action $\Gamma \curvearrowright \mathbb{H}^2\times \mathbb{R}$, Proposition \ref{prop:quasi-action_project} implies $\Gamma \cap \ker(\rho)$ is two-ended. Since $\ker(\rho)$ is totally disconnected and contains a uniform lattice that is a  two-ended subgroup, $\ker(\rho)$ has a unique maximal compact open  normal  subgroup $K$ with quotient infinite cyclic or infinite dihedral \cite[Corollary 19.39]{cornulier2018quasiisometric}. Since $K$ is a  topologically characteristic subgroup of $\ker(\rho)$, it is a  normal compact open subgroup of $G$. Hence $G$ is compact-by-discrete.
\end{proof}

\begin{rem}
	The same argument holds without much modification for all the groups in \cite{kleinerleeb2001symmetric}.
\end{rem}

Since  $\mathbb{H}^2\times \mathbb{R}$ and $\widetilde{\SL_2(\bR)}$ are quasi-isometric \cite{gersten1992bounded},  Proposition \ref{prop:H2timesr_propc} implies:
\begin{cor}\label{cor:latticesol}
	A cocompact lattice in either $\PSL_2(\R)  \times \mathbb{R}$ or $\widetilde{\SL_2(\bR)}$ is graphically discrete.
\end{cor}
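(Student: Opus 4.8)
The plan is to deduce this directly from Proposition \ref{prop:H2timesr_propc}: since that proposition already shows every finitely generated group quasi-isometric to $\mathbb{H}^2\times\mathbb{R}$ is graphically discrete, it suffices to check that any cocompact lattice $\Gamma$ in either of the two groups is quasi-isometric to $\mathbb{H}^2\times\mathbb{R}$.

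First I would invoke the Milnor--Schwarz lemma: a cocompact lattice $\Gamma$ in a connected Lie group $G$, acting by left multiplication on $G$ equipped with a left-invariant Riemannian metric, acts geometrically, so $\Gamma$ is quasi-isometric to $G$. Thus the problem reduces to identifying the quasi-isometry type of the two ambient groups. For $\widetilde{\SL_2(\bR)}$ this is exactly the quasi-isometry recorded immediately above the statement: it is quasi-isometric to $\mathbb{H}^2\times\mathbb{R}$ by Gersten \cite{gersten1992bounded}.

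For $\PSL_2(\R)\times\mathbb{R}$, I would observe that the bundle projection $\PSL_2(\R)\to \mathbb{H}^2=\PSL_2(\R)/\SO(2)$ is a Riemannian submersion whose fibers are compact circles of uniformly bounded diameter; collapsing uniformly bounded fibers is a quasi-isometry, so $\PSL_2(\R)$ is quasi-isometric to $\mathbb{H}^2$. Taking the product with the identity on the $\mathbb{R}$ factor then shows $\PSL_2(\R)\times\mathbb{R}$ is quasi-isometric to $\mathbb{H}^2\times\mathbb{R}$. (The contrast with $\widetilde{\SL_2(\bR)}$ is precisely that its fibers over $\mathbb{H}^2$ are unbounded copies of $\mathbb{R}$, which is why that identification is nontrivial and needs the cited theorem rather than a bounded-fiber collapse.)

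Combining these observations, in either case $\Gamma$ is quasi-isometric to $\mathbb{H}^2\times\mathbb{R}$, and Proposition \ref{prop:H2timesr_propc} then yields that $\Gamma$ is graphically discrete. I do not expect any genuine obstacle here, as this is essentially a corollary: the only point requiring care is the quasi-isometry type of each group, and the single nontrivial input---the quasi-isometry $\widetilde{\SL_2(\bR)}\simeq\mathbb{H}^2\times\mathbb{R}$---is already supplied by \cite{gersten1992bounded}.
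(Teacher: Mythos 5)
Your proposal is correct and follows exactly the paper's route: the paper deduces the corollary from Proposition~\ref{prop:H2timesr_propc} by noting that $\widetilde{\SL_2(\bR)}$ is quasi-isometric to $\Hy^2\times\R$ via \cite{gersten1992bounded}, leaving the (standard) facts that a cocompact lattice is quasi-isometric to its ambient group and that $\PSL_2(\R)\times\R$ is quasi-isometric to $\Hy^2\times\R$ implicit. You simply spell out those implicit steps (Milnor--Schwarz and the bounded-fiber collapse $\PSL_2(\R)\to\Hy^2$), which is fine.
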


 \begin{proof}[Proof of Theorem \ref{thm:propc-3manifolds}]
 	Recall (see \cite[Figure 4.22]{thurston1997threedimensional}) that the fundamental group of a  closed geometric three-manifold is either:
 	\begin{enumerate}
 		\item virtually nilpotent ($S^3$, $S^2\times \mathbb{R}$, $\mathbb{R}^3$ or  $\textbf{Nil}$),
 		\item a cocompact lattice in $\textbf{Sol}$,
 		\item a cocompact lattice in $PSL_2(\R) \times \mathbb{R}$ or $\widetilde{\SL_2(\bR)}$, or
 		\item a cocompact lattice in $\mathbb{H}^3$.
 	\end{enumerate}
 	 The virtually nilpotent and hyperbolic fundamental groups are graphically discrete by Theorem~\ref{thm:propc_examples}. The remaining groups are graphically discrete by Proposition \ref{prop:sol_c} and Corollary \ref{cor:latticesol}.
 \end{proof}

 \subsection{Non-geometric 3-manifolds}
 In this section we use a  result of  Kapovich--Leeb to classify lattice embeddings of non-geometric 3-manifolds \cite{KapovichLeeb97}. 
 
Let $M$ be a closed irreducible, oriented $3$-manifold with a nontrivial geometric decomposition. 
As a consequence of geometrization (see Theorem 1.14 in~\cite{AschenbrennerFriedlWilton15}) the JSJ tori give a decomposition of $M$ into Seifert fibered and hyperbolic components.
By taking tubular neighborhoods of the JSJ tori we obtain a graph of spaces decomposition $f: M \to Y$ where $Y$ is a finite graph.
The vertex spaces $M_v = f^{-1}(v)$ are the Seifert fibered or hyperbolic components, and the edge spaces $M_e = f^{-1}(m_e)$, where $m_e$ is the midpoint of an edge, are homeomorphic to $ \mathbb{T}^2$.
Let $T$ denote the associated JSJ tree, which is the Bass--Serre tree associated to the graph of spaces decomposition.  The universal cover $\tilde{M}$ is a graph of spaces over $T$ with vertex and edge spaces denoted $\tilde M_v$ and $\tilde M_e$.

 Let $\Gamma=\pi_1(M)$. 	We first note that $\Gamma$ has no non-uniform lattice embeddings. Indeed, $\Gamma$ is acylindrically hyperbolic \cite[Corollary 2.9]{minasyanosin2015acylindrical}. Since $\Gamma$ is non-geometric, it is not isomorphic to a non-uniform lattice in a rank one simple Lie group, hence Proposition~\ref{prop:non_uniform_lattices} implies it has no non-uniform lattice embeddings.
 Now suppose $\rho :\Gamma\to G$ is a  uniform lattice embedding. By Remark \ref{rem:2ndcountable}, we assume without loss of generality that $G$ is second countable. We identify $\Gamma$ with its image in $G$. The uniform lattice embedding $\rho$ induces a  quasi-action of $G$ on $\Gamma$ as in Section \ref{sec:qi}. Since $\Gamma$ acts geometrically  on $\tilde{M}$, this induces a quasi-action
 $\{f_g\}_{g\in G}$  of $G$ on $\tilde{M}$. 
 
 \begin{lem} \label{lem:nongeocont}
 	The quasi-action $\{f_g\}_{g\in G}$  of $G$ on $\tilde{M}$ induces a continuous action of $G$ on $T$ such that for all $g\in G$,  the Hausdorff distance between $f_g(\tilde M_v)$ and $\tilde M_{gv}$ is finite for all $v\in VT$. Moreover, $G$ is compact-by-(totally disconnected).
 \end{lem}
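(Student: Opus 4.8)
The plan is to extract a genuine continuous $G$-action on the JSJ tree $T$ from the quasi-action $\{f_g\}_{g\in G}$ by invoking the quasi-isometric rigidity of the JSJ decomposition, and then to read off compactness of the identity component. Since $\Gamma$ acts geometrically on $\tilde{M}$, the maps $f_g$ are $(K,A)$-quasi-isometries of $\tilde{M}$ for fixed $K,A$, so first I would apply the Kapovich--Leeb rigidity theorem \cite{KapovichLeeb97}: there is a uniform constant $C=C(K,A)$ such that every such quasi-isometry coarsely permutes the vertex spaces, i.e.\ for each $v\in VT$ there is a unique $w\in VT$ with $d_{\Haus}(f_g(\tilde{M}_v),\tilde{M}_w)\leq C$. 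Setting $g\cdot v:=w$ gives exactly the required Hausdorff bound. Because quasi-isometries also coarsely preserve adjacency (two vertex spaces are adjacent precisely when they coarsely intersect in an edge flat), the induced permutation of $VT$ extends to an automorphism $\bar f_g\in\Aut(T)$.

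Next I would upgrade this to a genuine action. The key observation is that distinct vertex spaces lie at infinite Hausdorff distance from one another, so if two $(K,A)$-quasi-isometries are a finite sup-distance apart they induce the \emph{same} tree automorphism. Hence $g\mapsto\bar f_g$ depends only on the class of $f_g$ in $\QI(\tilde{M})$, and the quasi-action axioms (which hold only up to closeness) become the genuine homomorphism identities $\bar f_{gh}=\bar f_g\bar f_h$ and $\bar f_1=\id$. This produces a homomorphism $\alpha\colon G\to\Aut(T)$ extending the Bass--Serre action of $\Gamma$.

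The main work is continuity, with $\Aut(T)$ carrying the topology of pointwise convergence. Since $\alpha$ is a homomorphism, continuity is equivalent to each vertex stabilizer $G_v=\alpha^{-1}(\stab(v))$ being open, and as $G$ is second countable it suffices to show $g_i\to g$ forces $g_i\cdot v=g\cdot v$ for large $i$. Transporting Lemma \ref{lem:coarse_conv} through the quasi-conjugacy between $\Gamma$ and $\tilde{M}$, there is a constant $B$ so that $f_{g_i}$ and $f_g$ are eventually $B$-close on any fixed finite set; applying this to a fine net of a large ball $\Omega_R\subseteq\tilde{M}_v$ shows that, for $i$ large, $f_{g_i}$ and $f_g$ are uniformly close on all of $\Omega_R$. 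Consequently the large region $f_g(\Omega_R)\subseteq\tilde{M}_{g\cdot v}$ lies in a bounded neighborhood of $\tilde{M}_{g_i\cdot v}$. I then invoke the coarse separation of vertex spaces by edge flats (again Kapovich--Leeb): if $g_i\cdot v\neq g\cdot v$, this region would be trapped in a bounded neighborhood of the single flat $\tilde{M}_e$ separating the two vertices, contradicting that a sufficiently large ball in $\tilde{M}_{g\cdot v}$ is never contained in a neighborhood of a lower-dimensional flat. Hence $g_i\cdot v=g\cdot v$. I expect this coarse-separation step to be the principal obstacle, since it requires the full Kapovich--Leeb structure theory and care in choosing $R$ uniformly.

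Finally, for the compact-by-(totally disconnected) claim, note that $\Aut(T)$ is totally disconnected, so the continuous image $\alpha(G^0)$ is a connected subgroup of a totally disconnected group, hence trivial; thus $G^0\leq\ker\alpha$ fixes $T$ pointwise. Since the Bass--Serre action of $\Gamma=\pi_1(M)$ on its JSJ tree is faithful, $\Gamma\cap G^0\leq\ker(\alpha|_\Gamma)=\{1\}$. Combining this with the finer Kapovich--Leeb rigidity, namely that elements fixing $T$ pointwise coarsely preserve every vertex space together with its Seifert fiber or hyperbolic structure, constrains the connected group $G^0$ to have bounded orbits in $\tilde{M}$; transporting back to the geometric action of $G$ on itself, $G^0$ is bounded and hence relatively compact by Proposition \ref{prop:compact<->bounded}, and being closed it is compact. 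Equivalently, $G$ then contains a compact open subgroup and is compact-by-(totally disconnected) by Lemma \ref{lem:vanDantzig}.
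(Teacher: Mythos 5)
Your construction of the induced action, the uniform Hausdorff bound, and the continuity argument are essentially the paper's own: both rest on the main theorem of \cite{KapovichLeeb97}, on Lemma~\ref{lem:coarse_conv}, and on the fact that distinct vertex spaces of $\tilde{M}$ lie at infinite Hausdorff distance; your fine-net and coarse-separation elaboration of the continuity step is a legitimate filling-in of details the paper leaves terse.

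The gap is in the final step. From total disconnectedness of $\Aut(T)$ you correctly get $G^0\le\ker(\alpha)$, but your claim that this forces $G^0$ to have bounded orbits on $\tilde{M}$ is not justified by what you state. An element whose quasi-isometry coarsely preserves every vertex space, every edge flat, and every Seifert-fibered structure can still translate unboundedly along the fibers of a Seifert piece: fiber translations of $\Sigma\times\R$ preserve each boundary flat setwise and preserve the fibration, yet have unbounded displacement. Ruling such elements out of $\ker(\alpha)$ is genuinely hard; it requires pattern rigidity on the hyperbolic pieces, uniformity of the quasi-action constants, and, crucially, the transversality of the fibers of adjacent Seifert pieces across the JSJ tori. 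This is precisely the analysis carried out later in the proof of Theorem~\ref{thm:nongeo3mangd}, where for a Seifert vertex the kernel $R_v$ of $G_v\to\Isom(\Hy^2)$ is shown to be only compact-by-($\Z$ or $D_\infty$), not compact, so it cannot be invoked at this stage as ``finer Kapovich--Leeb rigidity.'' Your observation that $\Gamma\cap G^0=\{1\}$ does not rescue the argument either: trivial intersection of a uniform lattice with a closed normal subgroup does not imply that subgroup is compact (an irreducible lattice in a product of two simple Lie groups meets each factor trivially). The paper proves the ``Moreover'' clause by a much softer route that avoids $\ker(\alpha)$ entirely: since the $\Gamma$-action on $T$ is acylindrical \cite[Lemma 2.4]{wiltonzalesskii2010profinite}, there are edges $e,e'$ with $\Gamma_e\cap\Gamma_{e'}$ finite; continuity of the $G$-action makes $G_e\cap G_{e'}$ open, Lemma~\ref{lem:open_lattice} exhibits the finite group $\Gamma\cap(G_e\cap G_{e'})$ as a uniform lattice in it, so $G_e\cap G_{e'}$ is a compact \emph{open} subgroup, and Lemma~\ref{lem:vanDantzig} concludes. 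No compact normal subgroup, and no compactness of $G^0$, needs to be established here.
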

 \begin{proof}
 	The main theorem of \cite{KapovichLeeb97} proves that the quasi-action $\{f_g\}_{g\in G}$ of $G$ on $\tilde{M}$ induces an action  $\sigma:G\to \Aut(T)$ such that  $f_g( \tilde M_v)$ and $\tilde M_{\sigma(g)(v)}$ are at uniform finite Hausdorff distance  for all $v\in VT$ and $g\in G$.  To see this action is continuous, let $(g_i)$ be a sequence in $G$ converging to the identity. By Lemma \ref{lem:coarse_conv}, there is a constant $B$ such that for each $x\in \tilde{M}$,  $d(f_{g_i}( x) ,x)\leq B$ for all $i$ sufficiently large. Since no two distinct vertex spaces of $\tilde{M}$ are at finite Hausdorff distance, for each $v\in VT$, the element $\sigma(g_i)$ fixes $v$  for $i$ sufficiently large; hence, the action is continuous.	
 	
 	Since the action of $\Gamma$ on $T$ is acylindrical \cite[Lemma 2.4]{wiltonzalesskii2010profinite},  there exist edges $e$ and $e'$ such that $\Gamma_{e}\cap \Gamma_{e'}=\Gamma\cap (G_e\cap G_{e'})$ is finite. Since the open subgroup $G_{e}\cap G_{e'}$ contains a finite group as a uniform lattice by Lemma \ref{lem:open_lattice}, it is compact. Thus, $G$ contains a compact open subgroup, hence is compact-by-(totally disconnected) by Lemma~\ref{lem:vanDantzig}.
 \end{proof}

We now show fundamental groups of closed irreducible non-geometric  $3$-manifolds have no nontrivial lattice embeddings. The proof is motivated by the observation that if $M$ is such a manifold, then the isometry group of the universal cover of $M$ is discrete; in particular, if an isometry fixes a copy of the universal cover of a JSJ torus, then it fixes the entire space. The main difficulty in the proof is that if $G$ is a locally compact group containing $\pi_1(M)$ as a uniform lattice, then it need not be the case that $G$ acts on $\tilde{M}$.

\begin{thm} \label{thm:nongeo3mangd}
       Let $M$ be a closed irreducible, oriented $3$-manifold with a nontrivial geometric decomposition. Let $G$ be a locally compact group containing $\pi_1(M)$ as a  lattice. Then $G$ contains a compact open normal subgroup $K$ such that $G/K$ is isomorphic to the fundamental group of a compact $3$-orbifold  covered by $M$. In particular, $\pi_1(M)$ has no nontrivial lattice embeddings, hence is graphically discrete.
\end{thm}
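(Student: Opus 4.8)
The plan is to produce a compact open normal subgroup $K\vartriangleleft G$; once this is done, $G/K$ is discrete by Remark \ref{rem:triv<->c-by-d}, and because $\pi_1(M)$ is torsion-free the compact subgroup $K$ meets $\Gamma:=\pi_1(M)$ trivially, so the image of $\Gamma$ in $G/K$ is a torsion-free finite-index subgroup (finite index since $\Gamma$ is a uniform lattice in $G$). Thus $G/K$ will be a group virtually isomorphic to $\pi_1(M)$, and the remaining task is to recognize such a group as the fundamental group of a compact $3$-orbifold orbifold-covered by $M$. As a first reduction, by the acylindrical hyperbolicity of $\pi_1(M)$ together with Proposition \ref{prop:non_uniform_lattices} the given lattice embedding is uniform, and by Remark \ref{rem:2ndcountable} I may assume $G$ is second countable; I identify $\Gamma$ with its image in $G$.

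By Lemma \ref{lem:nongeocont} the induced quasi-action of $G$ on $\tilde M$ yields a continuous $G$-action on the JSJ tree $T$ with open vertex and edge stabilizers, with each $\Gamma_w$ a uniform lattice in $G_w$, and with $G$ compact-by-(totally disconnected). I would first replace $G$ by $G/G^0$: the identity component $G^0$ is compact, is characteristic, lies in every open subgroup, acts trivially on $T$ (being connected, it maps trivially to the totally disconnected group $\Aut(T)$), and meets the torsion-free $\Gamma$ trivially, so $\Gamma$ descends to a uniform lattice in the now totally disconnected group $G/G^0$; any compact open normal subgroup found there pulls back to a compact open normal subgroup of $G$. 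With $G$ totally disconnected, each edge stabilizer $G_e$ contains $\Gamma_e\cong\Z^2$ as a uniform lattice, and since $\Z^2$ is graphically discrete (Theorem \ref{thm:propc_examples}) with trivial maximal finite normal subgroup, Remark \ref{rem:gd_maxcmpctopen} furnishes a \emph{maximal} compact open normal subgroup $K_e\vartriangleleft G_e$. These subgroups are canonical, hence conjugation-equivariant: $gK_eg^{-1}=K_{ge}$ for all $g\in G$.

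The heart of the argument is to prove that $K_e=K_f$ whenever $e,f$ are incident to a common vertex $v$; combined with the connectivity of $T$ and the equivariance above, this forces $K_e$ to be independent of $e$ and normalized by all of $G$, so that $K:=K_e$ is the sought compact open normal subgroup. I would establish this matching by analyzing the proper cobounded quasi-action of $G_v$ on the piece $\tilde M_v$, in two cases. When $v$ is hyperbolic, $G_v$ acts continuously on the boundary of the hyperbolic piece with compact kernel in the style of \cite[Theorem 3.5]{furman2001mostow}, and each incident $G_e$ is the stabilizer of a cusp; using the rigidity of finite-volume hyperbolic pieces (Mostow--Prasad rigidity) I would show that $K_e$ is detected by this boundary kernel and therefore depends only on $v$. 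When $v$ is Seifert fibered, $\tilde M_v$ is quasi-isometric to $\Hy^2\times\R$, and Proposition \ref{prop:quasi-action_project} provides a continuous homomorphism $G_v\to\Isom(\Hy^2)$ with two-ended kernel recording the circle-fiber direction; the matching then reduces to comparing the compact fiber-symmetry subgroups attached to the two incident tori, which I would carry out by an index computation on the fiber subgroup $\Z\le\Gamma_v$ common to $\Gamma_e$ and $\Gamma_f$, in direct analogy with the index/Euler-characteristic bookkeeping of Proposition \ref{prop:ssa_if} (via Lemma \ref{lem:index}).

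Having assembled $K$, the group $G/K$ is discrete and virtually isomorphic to $\pi_1(M)$; since $K$ is compact it acts on $\tilde M$ with uniformly bounded orbits, so the quasi-action of $G$ descends to a proper cobounded quasi-action of the discrete group $G/K$ on $\tilde M$. Re-running the Kapovich--Leeb analysis \cite{KapovichLeeb97} on this quasi-action, with the hyperbolic pieces rigidified by Mostow rigidity and the Seifert pieces by Kleiner--Leeb, realizes $G/K$ as acting geometrically on $\tilde M$, whence $O=\tilde M/(G/K)$ is a compact $3$-orbifold and $M\to O$ is an orbifold covering. I expect the principal obstacle to be the matching $K_e=K_f$ across Seifert vertices: unlike the purely hyperbolic free-product situation of Theorem \ref{thm:no_fi_subgroup}, where an isometry fixing one piece fixes everything (the analogue of Lemma \ref{lemma_aut_dis}), the circle-fiber symmetries of a Seifert piece contribute genuine compact factors, and one must show the normalizations coming from the two sides of each JSJ torus are forced to coincide, which is exactly where the incompatibility of adjacent Seifert fibrations—and hence the non-geometric hypothesis on $M$—is indispensable.
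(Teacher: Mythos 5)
Your overall skeleton coincides with the paper's proof: reduce to a uniform lattice in a second countable, compact-by-(totally disconnected) group via Proposition \ref{prop:non_uniform_lattices} and Lemma \ref{lem:nongeocont}, produce canonical maximal compact open normal subgroups of the stabilizers of the JSJ tree, match them across the tree so they assemble into a single compact open normal subgroup $K\vartriangleleft G$, and identify $G/K$ with an orbifold group by Kapovich--Leeb. The reductions, the use of Remark \ref{rem:gd_maxcmpctopen} to get the canonical $K_e\vartriangleleft G_e$, the propagation scheme, and the endgame are all sound, and your hyperbolic-vertex sketch is essentially the paper's argument — except that the input there is Schwartz's quasi-isometric rigidity of non-uniform lattices in $\Hy^3$ (to realize the discrete quotient of $G_v$ as a lattice acting on truncated hyperbolic space, whose horosphere stabilizers detect the compact kernel), not Mostow--Prasad rigidity.

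The genuine gap is the Seifert-vertex matching, where the tool you propose would fail. The index/Euler-characteristic bookkeeping of Proposition \ref{prop:ssa_if} (via Lemma \ref{lem:index}) is a rigidity-by-contradiction device: it rules out elements of $G_v$ permuting incident edges, and it works only because the surfaces there are assumed to have pairwise distinct Euler characteristics. At a Seifert vertex of $M$ there is no such distinctness to exploit, and edge-permuting elements genuinely exist — already $\Gamma_v$ permutes the infinitely many incident edges with finitely many orbits — so no analogue of ``every $g\in G_v$ fixes $\lk(v)$'' can hold, and comparing $K_e$ with $K_f$ directly gives only commensurability (both are compact open in $G_v$), never equality. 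What the matching actually requires, and what the paper does, is a \emph{vertex-level} compact open normal subgroup $K_v\vartriangleleft G_v$: following Kapovich--Leeb, extend the cobounded quasi-action of $G_v$ on $\Sigma_v\times\R$ to $\Hy^2\times\R$, project to get a continuous $\phi_v:G_v\to\Isom(\Hy^2)$ with open kernel $R_v$ whose lattice $R_v\cap\Gamma_v$ is two-ended, and use graphical discreteness of $\Z$ to extract a compact open $K_v\vartriangleleft R_v$ with quotient $\Z$ or $D_\infty$; one checks $K_v$ is maximal compact normal in $G_v$ and then that it equals the maximal compact normal subgroup of \emph{every} incident $G_e$, because $K_v$ is precisely the kernel of the $G_e$-action on the corresponding boundary flat of $\widetilde{O}_v$ (an isometry of the piece fixing a boundary flat is trivial), while $G_e/K_v$ acts faithfully and geometrically on $\E^2$ and hence has no nontrivial finite normal subgroups. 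Relatedly, your closing diagnosis is off: the matching is vertex-local and never invokes incompatibility of fibrations across a JSJ torus; the non-geometric hypothesis is consumed earlier — it is what makes Kapovich--Leeb's theorem, underlying Lemma \ref{lem:nongeocont}, apply so that $G$ acts on $T$ at all — and again at the end when identifying $G/K$ as the fundamental group of a compact non-geometric $3$-orbifold.
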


\begin{proof}
     We begin by outlining the proof. Let $M$ be as in the statement of the theorem, and let $G$ be a locally compact group containing $\Gamma \coloneqq \pi_1(M)$ as a lattice.  As noted above, $\Gamma$ has no non-uniform lattice embeddings, so $G$ contains $\G$ as a uniform lattice. We will show $G$ has a compact open normal subgroup $K$ and is therefore graphically discrete by Proposition~\ref{prop:propC}(\ref{item:propC1}). We then apply arguments analogous to those of Kapovich--Leeb~\cite[\S 5.2]{KapovichLeeb97}. To exhibit such a $K$, we will use an action of $G$ on the JSJ tree, and prove each vertex and edge stabilizer $G_v$ and $G_e$ of~$G$ has a maximal compact open normal subgroup, $K_v$ and $K_e$, respectively, so that $K_v = K_e$ if $e$ is incident to $v$. Taking $K= K_v$ will then give the desired subgroup.

 The group $G$ is compact-by-(totally disconnected) and acts continuously on the JSJ tree $T$ by Lemma~\ref{lem:nongeocont}. Each vertex stabilizer $G_v$ is open and compact-by-(totally disconnected). Since $\Gamma$ is a uniform lattice in $G$ and $G_v$ is open, $\Gamma_v\coloneqq \Gamma\cap G_v$ is a uniform lattice in $G_v$ by Lemma \ref{lem:open_lattice}.

First suppose $v$ is a vertex corresponding to a hyperbolic component. Then, $\Gamma_v$ is graphically discrete by Theorem~\ref{thm:propc_examples}, so $G_v$ is compact-by-discrete. The discrete quotient $Q_v$ of $G_v$ contains $\Gamma_v$ as a uniform lattice; hence, $Q_v$ is quasi-isometric to $\Gamma_v$. By \cite{Schwartz95},  $Q_v$ acts on $\bH^3$ by isometries with image a non-uniform lattice and with finite kernel. Without loss of generality, replace $Q_v$ by its quotient by this finite kernel. Then, let $K_v$ be the kernel of the homomorphism $G_v\to Q_v$. Since $Q_v$ is discrete without finite normal subgroups, $K_v$ is the maximal compact normal open subgroup of $G_v$.

We will now show if $e$ is an edge incident to $v$, then $K_v$ is the unique maximal compact normal subgroup of $G_e \leq G_v$. The discrete quotient $Q_v$ of $G_v$ acts geometrically on truncated hyperbolic space $\Omega_v \subseteq \Hy^3$. So, $G_v$ acts on $\Omega_v$ with kernel $K_v$. There is a one-to-one correspondence between the stabilizers of the set of horospheres of $\Omega_v$ under the $G_v$-action and the set of edge stabilizers $G_e \leq G$ with $e$ incident to $v$. Thus, if $e$ is incident to $v$, then the edge group $G_e$ acts geometrically on a horosphere. Any isometry of $\Omega_v$ fixing this horosphere fixes $\Omega_v$, hence $K_v$ is precisely the kernel of the action of $G_e$ on this horosphere.
By \cite[Example 2.E.23.5]{cornulierdlH2016metric}, $K_v$ is contained in a unique maximal compact normal subgroup of $G_e$ (the compact radical of $G_e$).
Since $G_e/K_v$ is a  finitely generated group acting geometrically and faithfully on $\mathbb{E}^2$, it  has no nontrivial finite normal subgroups. Therefore, $K_v$ is the unique maximal compact normal subgroup of $G_e$. 

Now suppose $v$ is a  vertex corresponding to a Seifert fibered component, and let $\tilde M_v=\Sigma_v\times \bR$ be a Seifert fibered vertex space, where $\Sigma_v$ is a convex subset of $\bH^2$ whose boundary is a union of disjoint geodesics.  Since $\Gamma_v$ acts geometrically on $\tilde M_v$ and is a uniform lattice in $G_v$, we deduce that $G_v$ admits a cobounded  quasi-action on $\tilde M_v$. 

To find the desired compact open normal subgroup, we employ work of Kapovich--Leeb~\cite[\S 5.2]{KapovichLeeb97}. We summarize the relevant points in their argument, and refer to their paper for details. By reflecting along boundary flats, the cobounded action of $G_v$ on $\Sigma_v\times \bR$ extends to a quasi-action of $G_v$ on $\bH^2\times \bR$. This quasi-action preserves fibers of the form $\{x\}\times \bR$ and so descends to a quasi-action of $G_v$ on $\bH^2$ \cite[Proposition 5.4]{KapovichLeeb97}. Using work of Tukia~\cite{tukia} and the line pattern coming from $\p \Sigma_v$, the authors prove the quasi-action on $\Hy^2$ can be quasi-conjugated to an isometric discrete cocompact action on $\bH^2$.  They conclude there is a map $\phi_v:G_v\to \Isom(\bH^2)$ whose image is a discrete non-uniform lattice $Q_v$ in $\bH^2$ where $Q_v$ is the fundamental group of a 2-orbifold $S_v$. 

Since the image of $\phi:G_v \rightarrow \Isom(\Hy^2)$ is discrete, the kernel of $\phi_v$ is an open subgroup~$R_v$. The group  $R_v\cap \Gamma_v$  is the kernel of the restriction $\phi_v|_{\Gamma_v}$, which is a finitely generated two-ended group.  Since $\bZ$ is graphically discrete and   $R_v\cap \Gamma_v$ is a lattice in $R_v$, there is a compact open normal subgroup $K_v\vartriangleleft R_v$ with quotient $L_v$ isomorphic to $\bZ$ or $D_\infty$. 
Since neither $Q_v$ nor $L_v$ has finite normal subgroups, $K_v$ is a maximal compact normal open subgroup of $G_v$. 
Therefore $G_v/K_v$ fits into the short exact sequence \[1\to L_v\to G_v/K_v\to Q_v=\pi_1(S_v)\to 1\]
and so $G_v/K_v$ is the fundamental group of a Seifert fibered 3-orbifold $O_v$. Hence $G_v$ acts on $\widetilde{O}_v$ with kernel $K_v$.
If $e$ is an edge incident to $v$, then $G_e$ is precisely the  stabilizer of the corresponding  boundary component of $\widetilde{O}_v$. The kernel of the action of $G_e$ on the corresponding boundary flat is precisely $K_v$, since any isometry of $\widetilde{O}_v$ fixing a boundary flat is trivial. Therefore $K_v$ is the unique maximal compact normal subgroup of $G_e$ as in the hyperbolic case.

We have shown that a maximal compact open normal subgroup of each vertex group $G_v$ is equal to the maximal compact open normal subgroup of each incident edge group.
Thus $K_v=K_{v'}$ for all vertices, and $K=K_v$ is a compact open normal subgroup of $G$.
By the exact same reasoning as in \cite[\S 5.3]{KapovichLeeb97} (or alternatively, applying \cite[Theorem 1.2]{KapovichLeeb97} to $G/K$), we see $G/K$ is the fundamental group of a compact non-geometric 3-dimensional orbifold $O$.
The image of $\Gamma$ under the composition  $\Gamma\xrightarrow{\rho}G \to G/K\cong \pi_1(O)$ is a finite-index subgroup, hence $\pi_1(M)$ is isomorphic to a finite-index subgroup of $\pi_1(O)$. Therefore $M$ is homeomorphic to a finite cover of $O$ by Theorem 2.1.2 of \cite{AschenbrennerFriedlWilton15}.
\end{proof}

\section{Blowups of groups acting on trees}\label{sec:blowups}

    This section provides a general construction to ``blowup'' the action of a locally compact group on a tree to a geometric action  on a locally finite graph with a tree of spaces decomposition. The results will be utilized in this paper to show that if two finitely generated groups act on the same proper metric space that admits a tree of spaces decomposition with certain properties, then the groups act geometrically on the same simplicial complex admitting a tree of spaces decomposition in a particularly nice way (see Theorem~\ref{thm:get_treeofspaces}).

\begin{defn}\label{defn:blowup}
	Let $G$ be a locally compact group acting continuously and minimally on a tree $T$. A \emph{blowup} of $G\curvearrowright T$ is a surjective map $p:X\to T$ such that
	\begin{enumerate}
		\item $X$ is a locally finite connected graph, and $G$ acts geometrically on $X$;
		\item $p$ is $G$-equivariant and simplicial with respect to the first barycentric subdivision of $T$;
		\item  for all $v\in VT$,   $X_v\coloneqq p^{-1}(v)$ is a connected subgraph of $X$ which we call a \emph{vertex space};
		\item for all $e\in ET$, if $m_e$ is the midpoint of $e$, then $X_e\coloneqq p^{-1}(m_e)$ is a connected subgraph of $X$ which we call an \emph{edge space}.
	\end{enumerate}
\end{defn}

The following proposition can be thought of an interweaving of the construction of the Cayley--Abels graph of a tdlc group, and the construction of the Bass--Serre tree from a graph of groups decomposition.

\begin{prop}[Blowup existence] \label{prop:blowups_exist} 
		Let $G$ be a locally compact group acting continuously, minimally and cocompactly on a tree $T$. Suppose all vertex and edge stabilizers of $T$ are compactly generated and $G$ contains a compact open subgroup.
		Then a blowup $p:X\to T$ of $G\curvearrowright T$ exists.
	\end{prop}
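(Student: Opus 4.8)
The plan is to assemble $X$ out of Cayley--Abels graphs of the vertex and edge stabilizers, glued together over the barycentric subdivision $T'$ of $T$. We may assume $G$ acts without inversions (replacing $T$ by $T'$ if necessary), so that $G_e\leq G_{\iota(e)}\cap G_{\tau(e)}$ for every oriented edge. Fix a compact open subgroup $U\leq G$. Since the action $G\curvearrowright T$ is continuous and $VT\sqcup ET$ is discrete, every stabilizer $G_w$ (for $w$ a vertex, or a midpoint $m_e$) is open and closed in $G$; hence $U_w\coloneqq U\cap G_w$ is a compact open subgroup of $G_w$, and crucially $U_e\leq U_{\iota(e)}\cap U_{\tau(e)}$. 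Because each $G_w$ is compactly generated and contains the compact open subgroup $U_w$, Lemma~\ref{lem:vanDantzig} furnishes a connected locally finite graph with vertex set $G_w/U_w$ --- a Cayley--Abels graph with respect to $U_w$ and a compact generating set $S_w\subseteq G_w$ --- on which $G_w$ acts geometrically and vertex-transitively. These will be the vertex spaces $X_v$ and edge spaces $X_e$.

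Next I would build $X$ globally. Choosing finitely many $G$-orbit representatives of vertices and edges of $T$ (finite by cocompactness), set $VX=\bigsqcup_v G/U_v\sqcup\bigsqcup_e G/U_e$, a $G$-set with compact open point stabilizers, together with the equivariant projection $p\colon VX\to VT'$ sending $G/U_v\to G\cdot v$ and $G/U_e\to G\cdot m_e$. The \emph{internal edges} are the $G$-orbits of the Cayley--Abels edges $\{gU_w,gsU_w\}$ with $s\in S_w$; since $S_w\leq G_w$ fixes the base point of the fibre, these edges never leave a single fibre of $p$. The \emph{gluing edges} are the $G$-orbits of $\{bU_e,\,bU_{\tau(e)}\}$ and $\{bU_e,\,bU_{\iota(e)}\}$ for $b\in G$; the inclusion $U_e\leq U_{\tau(e)}$ makes $bU_{\tau(e)}$ well defined from $bU_e$, so each vertex of an edge space carries exactly one gluing edge into each adjacent vertex space. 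Extending $p$ to collapse internal edges to their base vertex and to send gluing edges onto the corresponding half-edges of $T'$ produces a $G$-equivariant simplicial map with $p^{-1}(v)=X_v$ and $p^{-1}(m_e)=X_e$.

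It then remains to verify the four defining properties. Connectedness of each fibre holds by construction; each edge space is joined to its two neighbouring vertex spaces by the gluing edge $\{U_e,U_{\tau(e)}\}$ and its translates; and since $T$ is connected, $X$ is connected and $p$ is onto $T$. The $G$-action is by graph automorphisms, has compact open vertex stabilizers $U_w$, and has finitely many vertex- and edge-orbits, so once $X$ is known to be locally finite the action is automatically proper, cocompact, and continuous (the stabilizers being open), i.e.\ geometric.

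The main obstacle is precisely local finiteness, which is nontrivial exactly because $T$ need not be locally finite: a single vertex $v$ may have infinitely many incident edges, so infinitely many edge spaces attach to $X_v$. From the edge-space side this is harmless, as each vertex of $X_e$ has gluing-degree two, so the real content is to bound the gluing-degree of a vertex $x=U_v\in X_v$, and by equivariance it suffices to do this at the finitely many orbit representatives. I would argue as follows: the link $\lk_T(v)$ splits into finitely many $G_v$-orbits $G_v e_1,\dots,G_v e_m$ by cocompactness, the gluing map $\phi_{e_i}\colon G_{e_i}/U_{e_i}\to G_v/U_v$ has image $G_{e_i}U_v/U_v$, and an edge $ge_i$ contributes a gluing edge at $x$ only when $g\in U_v G_{e_i}$. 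Since $U_v$ is compact and $G_{e_i}$ is open, the set of contributing cosets $gG_{e_i}\in U_vG_{e_i}/G_{e_i}\cong U_v/(U_v\cap G_{e_i})$ is finite, and for each the fibre $\phi_{e_i}^{-1}(\mathrm{pt})\cong (G_{e_i}\cap wU_v)/U_{e_i}$ is finite by the same ``compact meets open'' principle. Hence $x$ has finite gluing-degree, and together with the local finiteness of the Cayley--Abels edges this shows $X$ is locally finite, completing the construction.
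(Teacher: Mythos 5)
Your construction is in the same spirit as the paper's proof of Proposition~\ref{prop:blowups_exist} (Cayley--Abels-type fibres glued along $T$, with local finiteness coming from the fact that a compact subgroup meets an open subgroup in finite index), but there is a genuine gap at the point where you define the gluing edges, and it is precisely the difficulty the paper's proof is engineered to avoid. You set $VX=\bigsqcup_v G/U_v\sqcup\bigsqcup_e G/U_e$ with the disjoint unions running over finitely many \emph{orbit representatives}, and then declare gluing edges $\bigl\{bU_e,bU_{\tau(e)}\bigr\}$, justified by the inclusion $U_e\leq U_{\tau(e)}$. That inclusion is true as a statement about subgroups of $G$, but $bU_{\tau(e)}$ is a vertex of $X$ only when $\tau(e)$ is itself one of the chosen vertex representatives; in general $\tau(e)=hv$ for a representative $v$ and some $h\in G$, and $G/U_{\tau(e)}$ is simply not one of the pieces of $VX$. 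Moreover, one cannot always choose representatives so that both endpoints of every representative edge are representative vertices: this can be arranged precisely when the quotient graph $T/G$ is a tree, whereas the hypotheses allow quotients with cycles (HNN-type splittings). The obvious repair, gluing $bU_e$ to $bhU_v$, is \emph{not well defined}: independence of the coset representative $b$ requires $h^{-1}U_eh\leq U_v$, i.e.\ $U\cap G_e\subseteq hUh^{-1}$, and there is no reason for this, since one only knows $h^{-1}G_eh\leq G_v$ --- conjugation by $h$ does not preserve $U$. Consequently your claim that ``each vertex of an edge space carries exactly one gluing edge into each adjacent vertex space'' fails, and the local finiteness count that follows, while the right idea, is carried out for edges that have not actually been defined.

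This is fixable, and the two devices in the paper's proof are exactly the fix. First, the paper does not work with the varying subgroups $U_w=U\cap G_w$: using minimality and cocompactness it shows that each $K\cap G_w$ (for $w$ a representative) has finite index in $K$, and replaces $K$ by the single compact open subgroup $\bigcap_{w}(K\cap G_w)$, which fixes \emph{every} representative vertex and edge. Second, and more importantly, it never defines a gluing \emph{map}: its type II edges join $(g_1K,w_1)$ to $(g_2K,w_2)$ whenever $g_1^{-1}g_2\in KFK$ and $g_1w_1$ is incident to $g_2w_2$ in $T$, where $F$ is a finite set translating the endpoints of representative edges back into the set of representatives. This relation is invariant under $g_i\mapsto g_ik_i$ with $k_i\in K$ (because $K$ fixes all representatives), so it is well defined, and it deliberately allows finitely many --- rather than exactly one --- gluing edges at each vertex, with finiteness coming from commensurability of compact open subgroups ($KFK\subseteq J_0K$ for some finite $J_0$). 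If you modify your construction accordingly, for instance by gluing $bU_e$ to the entire finite, $U_e$-invariant set $\bigl\{buhU_v : u\in U_e\bigr\}$, or by adopting the double-coset relation outright, then the remainder of your argument (connected fibres, connectivity of $X$, properness, cocompactness, continuity, and the two-step compact-meets-open count) goes through.
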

\begin{proof}
    Let $\Omega_0\subseteq VT\sqcup ET$ be a set of representatives of the  $G$-orbits of vertices and edges of $T$.     
     Let $K$ be a compact open subgroup of $G$. We first show we can replace $K$ by a compact open subgroup  of $G$ contained in $G_w$ for all $w \in \Omega_0$. For every $w\in VT\sqcup ET$, let $K_w\coloneqq K\cap G_w$. We claim every $K_w$ is a finite-index subgroup of $K$. Indeed, since $K$ is compact, $K$ acts elliptically on $T$. Hence, there is a vertex $w_0\in VT$ with $K\leq G_{w_0}$. Since the action of $G$ on $T$ is minimal and cocompact, there exists a $g\in G$ such that $w$ lies on the path $P=[w_0,gw_0]$. Since $K\cap gKg^{-1}$  fixes $P$ pointwise, $K\cap gKg^{-1}\leq K_w\leq K$. As $K$ is a compact open subgroup of $G$, it is commensurated. Since $K\cap gKg^{-1}$  is a finite-index subgroup of $K$, it follows $K_w$ is also a finite-index subgroup of $K$. Let  $K_0\coloneqq \cap_{w\in \Omega_0}K_w$. As $\Omega_0$ is finite, $K_0$  is a compact open subgroup of $G$.
 By replacing $K$ with $K_0$ if necessary, we can thus assume $K\leq G_w$ for every $w\in \Omega_0$. 
 
 We also make the following additional choices on which the construction depends. 
 For each $w\in \Omega_0$, the group $G_w$ is compactly generated, so there exists a finite symmetric set $S_w\subseteq G_w$ such that $S_w\cup K_w$ generates~$G_w$. 
 Let $\Omega_1$ be the closure of $\Omega_0$, i.e.\ $\Omega_1$ is the smallest subgraph of $T$ (not necessarily connected) containing $\Omega_0$.
  Pick a finite symmetric set $F\subseteq G$ such that $\Omega_1\subseteq F\Omega_0$. 
  
  We define a simplicial graph $X$ as follows. The vertex set of $X$ is defined to be 
\[VX\coloneqq G/K \times \Omega_0,\] where $G/K$ is the collection of left cosets of $K$ in $G$. Two vertices $(g_1K,w_1)$ and $(g_2K,w_2)$ are joined by an edge in $X$ if either of the following hold:
\begin{enumerate}
	\item[(I)] $w_1=w_2$ and $g_1^{-1}g_2\in KS_{w_1}K$;
	\item[(II)]   $g_1^{-1}g_2\in KFK$ and $g_1w_1$ is an edge incident to the vertex $g_2w_2$ or vice versa.
\end{enumerate}
We call these \emph{type I} and \emph{type II} edges, respectively.  Note that they are mutually exclusive: an edge  is a type II edge if and only if it is not a type I edge.

We define a map $p:X\rightarrow T$ that  is simplicial with respect to $B(T)$,  the first barycentric subdivision of $T$, as follows:
\begin{itemize}
	\item if $v\in VT\cap \Omega_0$, then $(gK,v)\mapsto gv$;
	\item if $e\in ET\cap \Omega_0$, then $(gK,e)\mapsto m_{ge}$ where $m_{ge}$ is the midpoint of the  edge $ge$.
\end{itemize} 
The map is well-defined because $Kw=w$ for all $w\in \Omega_0$. The map is simplicial since type I edges of $X$ are sent to vertices of $B(T)$, and type II edges of $X$ are sent to edges of $B(T)$.

We claim that $X_y:= p^{-1}(y)$ is a connected subgraph for all vertices $y$ of $B(T)$. Indeed, if $(g_1K,w_1)$ and $(g_2K,w_2)$ are two vertices in $X_y$, then  $w_1=w_2$ and $g_1^{-1}g_2\in G_{w_1}$. 
 Since $K\cup S_{w_1}$ generates $G_{w_1}$,  there exist $s_1,s_2,\dots, s_n\in S_{w_1}$ and $k_1,\dots, k_n\in K$ such that $s_1k_1s_2\dots s_nk_n =g_1^{-1}g_2$, and so 
 \begin{align*} \left((g_1K, w_1),(g_1s_1K, w_1)\right),
 	\left((g_1s_1k_1K, w_1), (g_1s_1k_1s_2K, w_1)\right),
\dots
\end{align*}
is an edge path in $X_y$ from $(g_1K,w_1)$ to $(g_2K,w_2)=(g_2K,w_1)$  consisting only of type I edges. 

To show $X$ is connected, note that since $T$ is connected and each $X_y$ is connected, it is sufficient to show that if $v\in VT$ is incident to $e\in ET$, then there is an edge in $X$ from $X_v$ to $X_e$. Indeed, there exists $g\in G$ such that $g^{-1}e\in \Omega_0$ and so $g^{-1}v\in \Omega_1=\overline{\Omega_0}$. Thus there exists $f\in F$ such that $f^{-1}g^{-1}v\in \Omega_0$. It follows that $(gK,g^{-1}e)\in X_e$ and $(gfK,f^{-1}g^{-1}v)\in X_v$ are joined by a type II edge in $X$. Thus $X$ is connected.  

We now show $X$ is locally finite. Consider the finite set  $J\coloneqq  \bigcup_{w\in \Omega_0}S_w\cup F$. Since $K$ is a commensurated subgroup of $G$, there is a finite subset $J_0\subseteq G$ such that $KJK\subseteq J_0K$.  Therefore, every vertex adjacent to  some $(g_1K,w_1)\in VX$ is of the form $(g_1jK,w_2)$ for some $j\in J_0$ and $w_2\in \Omega_0$. Since there are only finitely many such vertices, $X$ is locally finite.

Define an action of $G$ on $X$ by $g\cdot (g_1K,w_1)=(gg_1K,w_1)$. It is clear that $(g_1K,w_1)$ and $(g_2K,w_2)$ are joined by an edge if and only if $(gg_1K,w_1)$ and $(gg_2K,w_2)$ are, and that $p$ is $G$-equivariant. The graph $X$ is locally finite and $G$ has $\lvert \Omega_0\rvert<\infty$ orbits of vertices in $X$, so $G$ acts cocompactly on $X$. Since every vertex stabilizer is open, the action of $G$ on~$X$ is continuous. Moreover, the action of $G$ on $X$ is proper since $X$ is locally finite and vertex stabilizers are compact. We have thus shown that $p:X\to T$ is a blowup.
\end{proof}

The following lemmas allow us to upgrade the blowup construction to build a model geometry with additional desirable properties needed to apply Theorem \ref{thm:commoncover_BV}.

\begin{lem}\label{lem:fin_pres_loops}
	Let $G$ be a compactly presented group acting geometrically on a locally finite connected graph $X$. For $M\geq 0$, let $X_M$ be the 2-complex obtained by attaching 2-cells to every edge loop of length at most $M$ in $X$. There exists a constant $M_0$ such that $X_M$ is simply-connected for all $M\geq M_0$.
\end{lem}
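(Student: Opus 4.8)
The plan is to translate the compact presentability of $G$ into a uniform bound on the length of loops needed to fill $X$, via a Milnor--Schwarz and van Kampen argument. First I would fix a base vertex $x_0\in VX$ and use the geometric action to manufacture a compact generating set for $G$. For $R\geq 0$ set
\[
S_R=\{g\in G : d_X(x_0,gx_0)\leq R\}.
\]
Since $X$ is locally finite, the ball $B(x_0,R)$ contains only finitely many vertices $y_1,\dots,y_m$, so $S_R$ is a finite union of cosets of the stabilizer $G_{x_0}$; this stabilizer is compact (properness together with local finiteness, cf.\ Proposition~\ref{prop:compact<->bounded}) and open, hence $S_R$ is compact and open. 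By the Milnor--Schwarz lemma for locally compact groups \cite[Theorem 4.C.5]{cornulierdlH2016metric}, for $R$ large enough $S_R$ generates $G$ and the orbit map $o\colon G\to X$, $g\mapsto gx_0$, is a quasi-isometry; I would record the quasi-isometry constants $(K,A)$ for later use.

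Next I would invoke compact presentability. Enlarging $R$ if necessary, $G$ admits a presentation $\langle S_R\mid \mathcal R\rangle$ in which every relator is a word in $S_R$ of length at most some constant $L$ (this is the definition of compact presentation applied to the chosen compact generating set). The heart of the argument is then a filling estimate: given any edge loop $\gamma$ in $X$ based at $x_0$, I would approximate $\gamma$ by a cyclic sequence of orbit points $x_0=g_0x_0,g_1x_0,\dots,g_\ell x_0=x_0$ with consecutive points uniformly close, read off the word $w=(g_{i-1}^{-1}g_i)_i$ in $S_R$ (each letter lies in $S_{R'}$ for some $R'=R'(R)$), and note that $w$ represents $1\in G$. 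Compact presentation lets me write $w$ as a product of conjugates of relators of $S_R$-length at most $L$. Pushing each relator and each conjugating element back into $X$ through the quasi-isometry $o$ converts this algebraic null-homotopy into a decomposition of $\gamma$ as a sum of loops in $X$, each of length bounded by an explicit constant $M_0=M_0(K,A,R,L)$. Consequently $\gamma$ is null-homotopic in the complex $X_{M_0}$, and since attaching further $2$-cells for $M\geq M_0$ cannot create fundamental group, $X_M$ is simply connected for all $M\geq M_0$. (Equivalently, this is precisely the statement that $X$ is coarsely simply connected, a quasi-isometry invariant of large-scale geodesic spaces that is equivalent to compact presentability of $G$; I would cite \cite{cornulierdlH2016metric} for this packaging if a less hands-on exposition is preferred.)

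The main obstacle is the uniformity of the bookkeeping: one must verify that the length bound $M_0$ on all auxiliary loops is genuinely independent of $\gamma$, which requires controlling (i) the discrepancy between the combinatorial loop $\gamma$ and its orbit approximation, (ii) the translation of a word of bounded $S_R$-length into a loop of bounded length in $X$ under $o$, and (iii) the lengths of the loops arising from conjugating a short relator by an arbitrary group element, where one uses that conjugation in the van Kampen diagram only contributes paths traversed once back and forth. None of these steps is conceptually hard, but assembling the quasi-isometry estimates so that a single $M_0$ suffices is where the care lies; the continuity and properness of the action are exactly what guarantee $S_R$ is compact and the stabilizers are compact, keeping all constants finite.
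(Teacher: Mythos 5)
Your proposal is correct in substance, but it takes a more hands-on route than the paper. The paper's proof is modular: it notes that $X^{(0)}$ is quasi-isometric to $G$, cites the equivalence between compact presentability and coarse simple connectedness together with its quasi-isometry invariance, deduces that the $2$-skeleton of the Rips complex $Y=P^{(2)}_N(X^{(0)})$ is simply connected for some $N$ \cite[Proposition 6.C.4]{cornulierdlH2016metric}, and then transfers simple connectedness to $X_{3N}$ via maps $\phi:X\to Y$ and $\psi:Y\to X_{3N}$ with $\psi\circ\phi=\id_X$, where a $1$-simplex of $Y$ maps to a geodesic of length at most $N$ and a $2$-simplex to a $2$-cell attached along a loop of length at most $3N$. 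You instead unpack the principle behind those citations: you run the van Kampen/filling argument directly, reading off from a loop in $X$ a word in a compact generating set, using a compact presentation with relators of bounded length to write that word as a product of conjugates of short relators, and pushing the diagram back through the orbit quasi-isometry to fill the loop by loops of uniformly bounded length. Both arguments rest on the same coarse-geometric fact; yours avoids the Rips complex at the cost of the bookkeeping you describe, while the paper's avoids the bookkeeping entirely by routing through the Rips complex.

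Two small points in your write-up need repair, though neither is fatal. First, since the action need not be free, your cyclic sequence of orbit points produces a word $w$ representing an element of the stabilizer $G_{x_0}$, not necessarily the identity. Since $G_{x_0}\subseteq S_R$, appending the single letter $g_\ell^{-1}$ fixes this, and that letter contributes a constant path in $X$, so the geometric filling is unaffected. Second, your claim that $S_R$ is a finite union of cosets of $G_{x_0}$ (and that these sets are open) tacitly assumes $G$ acts by graph automorphisms; under the paper's conventions a geometric action is by isometries, which need not preserve the vertex set. This is harmless: compactness of $S_R$ follows directly from properness and continuity of the action ($S_R$ is closed and has compact closure by Proposition~\ref{prop:compact<->bounded}), which is all the Milnor--Schwarz lemma requires.
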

\begin{proof}
 	Equip the vertex set $X^{(0)}$ with a metric such that $d(x,y)$ is the length of the shortest edge path in $X$ joining $x$ and $y$. The metric space $X^{(0)}$ is quasi-isometric to $G$ so is \emph{coarsely simply connected}; see e.g.\ \cite[\S 6,8]{cornulierdlH2016metric}. Thus, there is a constant $N\geq 1$ such that the 2-skeleton of the Rips complex $Y=P^{(2)}_N(X^{(0)})$ is simply connected \cite[Proposition 6.C.4]{cornulierdlH2016metric}. Since $N\geq 1$, there is a natural injection $\phi:X\to Y$. Setting $M=3N$ yields a continuous map $\psi:Y\to X_M$ sending a 1-simplex $[x,y]\in Y$ to a geodesic path in $X^{(1)}_M$ joining $x$ and $y$, and sending  a 2-simplex $[x,y,z]$ to a 2-cell in $X_M$  attached to edge path $[x,y]\cup[y,z]\cup[z,x]$. This can be done so that $\psi\circ \phi$ is the identity map on $X=X^{(1)}_M$. To show $X_M$ is simply-connected, it is enough to show any loop in $X$  can be filled by a disc in $X_M$. For any loop $f:S^1\to X$, there is a map $F:D^2\to Y$ such that $F|_{S^1}=\phi\circ f$. Thus, $(\psi\circ F)|_{S^1}=f$ as required.
\end{proof}

The following lemma will be essential for obtaining an imprimitivity system on a tree of spaces given imprimitivity systems on vertex spaces. A \emph{continuous $G$-tree} is a tree admitting a continuous $G$-action.
A \emph{finite refinement} of a continuous $G$-tree $T$ is a continuous  $G$-tree $T'$ and an equivariant map $p:T'\to T$ obtained by collapsing finite subtrees of $T'$ to points. 

\begin{lem}[Tree refinement lemma]\label{lem:tree_expansion}
	Let $G$ be a locally compact group acting continuously on a tree $T$ without edge inversions. Assume that for every vertex $v\in VT$, the stabilizer $G_v$ contains a compact open normal subgroup. Then there is a  finite refinement  $p:T'\to T$ such that for each $v\in VT'$,
	\begin{itemize}
		\item $G_v$ contains a finite-index open subgroup that is the stabilizer of a vertex or edge of $T$, and
		\item $G_v$ contains a compact open normal subgroup  fixing $\lk(v)$.
	\end{itemize}
\end{lem}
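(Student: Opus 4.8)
The plan is to build $T'$ by replacing each vertex of $T$ with a star that separates the edges of its link according to the orbits of a compact open normal subgroup. First I would choose, $G$-equivariantly, a compact open normal subgroup $U_v\triangleleft G_v$ for every $v\in VT$; this is possible by selecting $U_v$ on a set of orbit representatives and extending via $U_{gv}=gU_vg^{-1}$, which is well defined because each $U_v$ is normal in $G_v=\stab(v)$. Since $U_v$ is compact and open and the action is continuous (so every edge stabilizer $G_e$ is open), each orbit $U_v\cdot e$ in $\lk(v)$ is finite, being in bijection with $U_v/(U_v\cap G_e)$.

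Next I would define $T'$ as follows: for each $v\in VT$ introduce a central vertex $\hat v$ together with one peripheral vertex $c_{[e]}$ for every $U_v$-orbit $[e]$ of edges in $\lk(v)$, joined to $\hat v$ by an edge, and then turn each edge $e=(u,v)$ of $T$ into an edge of $T'$ joining the peripheral vertices $c_{[e]_u}$ and $c_{[e]_v}$ at its two ends. The result is a tree (a tree of stars); the map $p\colon T'\to T$ collapsing each star $p^{-1}(v)=\{\hat v\}\cup\{c_{[e]}\}$ to $v$ is $G$-equivariant, and the action on $T'$ has no inversions, since $G$ preserves the central/peripheral layering and does not invert edges of $T$.

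I would then verify the two conclusions vertex by vertex. For the first: $G_{\hat v}=G_v$ is a vertex stabilizer of $T$, while $G_{c_{[e]}}=\stab_{G_v}([e])$ contains $G_e$ (because for $g\in G_e$ one has $g[e]=gU_ve=U_v(ge)=U_ve=[e]$), and since $\stab_{G_v}([e])$ permutes the finite set $[e]$ with point stabilizer $G_e$, the index $[\stab_{G_v}([e]):G_e]\leq|[e]|$ is finite; thus $G_{c_{[e]}}$ has the edge stabilizer $G_e$ as a finite-index open subgroup. For the second: at $\hat v$ the subgroup $U_v$ fixes every $c_{[e]}$ because it preserves each of its own orbits setwise, so $U_v$ is a compact open normal subgroup of $G_{\hat v}$ fixing $\lk(\hat v)$ pointwise; at a peripheral vertex $c_{[e]}$ the link is finite (it consists of $\hat v$ together with one neighbour for each of the finitely many edges in $[e]$), so the pointwise link stabilizer is a finite intersection of open stabilizers and is therefore open, and intersecting it with a compact open normal subgroup of $G_{c_{[e]}}$ (which exists, as this group is commensurable with $G_e$ and hence compact-by-discrete) yields the required subgroup.

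The main obstacle is ensuring that each star $p^{-1}(v)$ is a \emph{finite} subtree, equivalently that $U_v$ has only finitely many orbits on $\lk(v)$; the rest of the verification is routine. This is precisely where the standing hypotheses of the section must enter: under cocompactness $\lk(v)$ has only finitely many $G_v$-orbits, so the problem reduces to controlling the number of $U_v$-suborbits of each $G_v$-orbit, i.e.\ to arranging that $U_vG_e$ has finite index in $G_v$ for each incident edge $e$. Securing this—making an equivariant choice of $\{U_v\}$ that simultaneously produces finite stars at every vertex—is the delicate point, and I expect it to require genuine use of the compact-by-discrete structure of the vertex groups rather than the formal bookkeeping of the construction above.
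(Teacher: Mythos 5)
Your construction of $T'$ and your verification of the two bullet points are the same as the paper's: the paper blows up each vertex $v$ into the star on $\{v\}\cup\{(v,K_ve)\mid e\in\lk(v)\}$ with an equivariant choice $K_{gv}=gK_vg^{-1}$, reconnects the original edges through peripheral vertices, identifies the stabilizer of a peripheral vertex as an open finite-index overgroup of $G_e$, and notes that $K_v$ fixes the link of the central vertex. The one local flaw in your write-up is at the peripheral vertices: the claim that $G_{c_{[e]}}$ has a compact open normal subgroup because it ``is commensurable with $G_e$ and hence compact-by-discrete'' is unjustified --- the hypotheses say nothing about edge stabilizers. No detour is needed: $U_v$ fixes $c_{[e]}$, so $U_v\leq G_{c_{[e]}}\leq G_v$, and normality of $U_v$ in $G_v$ gives $U_v\vartriangleleft G_{c_{[e]}}$; now intersect $U_v$ with the (open, normal) kernel of the $G_{c_{[e]}}$-action on the finite link. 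This is exactly the paper's step.

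The ``main obstacle'' on which you stop --- arranging that each star is finite --- is a phantom, and your hope that a cleverer equivariant choice of $\{U_v\}$ achieves it is false. Take $G=(\Z\times\Z_p)*_{\Z_p}(\Z\times\Z_p)$, the amalgam over the common compact open subgroup $A=\{0\}\times\Z_p$, acting on its Bass--Serre tree: all hypotheses of the lemma (and even those of Theorem \ref{thm:get_treeofspaces}: minimality, cocompactness, compactly presented stabilizers) hold, but every compact subgroup of the vertex stabilizer $\Z\times\Z_p$ lies in $A$, which is normal and equal to the edge stabilizer, so every admissible $U_v$ acts trivially on $\lk(v)\cong(\Z\times\Z_p)/A\cong\Z$, and the star at $v$ has infinitely many peripheral vertices no matter what. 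The paper does not resolve this either: its proof performs the identical collapse of the (diameter-two, possibly infinite) stars and simply declares the result a finite refinement. The resolution is that literal finiteness of the collapsed subtrees is never used: what Proposition \ref{prop:blowups_exist} and Theorem \ref{thm:get_treeofspaces} consume from the refinement are continuity (vertex stabilizers of $T'$ are open), cocompactness ($G$-orbits of type II vertices biject with $G$-orbits of oriented edges of $T$, so finitely many orbits pass from $T$ to $T'$), commensurability of $T'$-stabilizers with $T$-stabilizers, and the link-fixing property --- all of which you already verified. Once you drop the finiteness requirement you imposed on yourself (or read ``finite subtree'' as ``bounded subtree'', which the stars satisfy), your proof is complete and coincides with the paper's.
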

\begin{proof}
    We first define the tree $T'$. 
    For each $v \in VT$, pick a compact open normal subgroup   $K_v\vartriangleleft G_v$ such that $gK_vg^{-1}=K_{gv}$ for all $g\in G$. 
    Define the vertex set of $T'$ by 
\[ 	VT'\coloneqq VT\sqcup \bigl\{(v,K_ve)\mid v\in VT,e\in \lk(v)\bigr\}.\] 
    We call vertices in $VT$  \emph{type I} vertices and the remaining vertices \emph{type II} vertices. Define the set of edges of $T'$ to be  
	\begin{align*}ET'=&\bigl\{\bigl(v,(v, K_ve)\bigr)\mid v\in VT, e\in \lk(v)\bigr\}\,\, \sqcup\\
		& \bigl\{\bigl((v, K_v\bar e),(v', K_{v'}e)\bigr)\mid e\in ET \text{ and } e=(v, v')\bigr\}.
\end{align*}

Define an action of $G$ on $VT'$ by extending the action on type I vertices and on type~II vertices by $g(v,K_ve)=(gv,K_{gv}ge)=(gv,gK_ve)$.
This action on $VT'$ extends to an action of $G$ on $T'$. Since $G$ acts continuously on $T$ and each $K_v$ is open, the stabilizer of each vertex of $T'$ is open. Hence, the action of $G$ on $T'$ is continuous. For each $v \in VT$, let $Y_v$ be the induced subgraph of $T'$ with vertex set $\{v\}\cup\{(v,K_ve)\mid e\in \lk(v)\}$. Each $Y_v$ is a tree and is equal to $\Star(v)$. It is straightforward to verify that $T'$ is a tree and collapsing each subtree $Y_v$ to the vertex $v$ yields a finite refinement  $p:T'\to T$.

The stabilizer $G_v$ of a type I vertex $v\in VT'$ is precisely the stabilizer of the corresponding  vertex $v\in VT$. Moreover, $G_v$ contains the compact open normal subgroup $K_v$ that fixes $\lk(v)$, since all vertices in $\lk(v)$ are of the form $(v,K_ve)$ for some $e\in \lk(v)$.

Let $u\coloneqq (v,K_ve)$ be a type II vertex, where $e=(w,v)$. We first note that  $G_{u}$ contains $G_e$ as a finite-index open subgroup. It remains to show $G_u$ contains a compact open normal subgroup fixing $\lk(u)$. Since $G_u\leq G_v$ and $K_v$ fixes $u$, we deduce that  $K_v$ is a compact open normal subgroup of $G_u$.
Since the action of $G$ on $T$ is continuous and $K_v$ is compact, there is a finite set $F\subseteq G$ such that $K_ve=Fe$.
  The vertices adjacent to $u$ are precisely $v$ and the $F$ translates of $(w,K_{w}\bar e)$, so  $u$  has finite valence in $T'$. It now follows from continuity of the $G$-action on $T'$ that the kernel $R_u$ of the action of $G_u$ on $\lk(u)$ is a finite-index open normal subgroup of $G_u$. Thus $K_v\cap R_u$ is a compact open normal subgroup of $G_u$ fixing $\lk(u)$.   
\end{proof}

The next theorem is the main result of this subsection. The conditions in the conclusion of Theorem \ref{thm:get_treeofspaces} will be needed to apply Theorem \ref{thm:commoncover_BV} to deduce action rigidity.

 \begin{thm}[Model Geometry Theorem] \label{thm:get_treeofspaces}
 	Let $G$ be a locally compact group acting minimally,  cocompactly and continuously on a tree $T$. Suppose every vertex and edge stabilizer is compactly presented and every vertex stabilizer contains a compact open normal subgroup. Then $T$ has a finite refinement $T'$  such that  $G$ acts geometrically on a tree of spaces $(X,T')$ and for every $w\in VT'\cup ET'$, 
 	\begin{enumerate}
 		\item  $X_w$ is simply-connected;
 		\item  the action of $G_w$ on $X_w$ is discrete;
 		\item all edge and vertex spaces are locally finite cell complexes and the edge maps $\phi_e$ are cellular isomorphisms onto their images;
 		\item for each vertex  $v\in VT'$, the subcomplexes $\{\phi_e(X_e)\}_{e\in \lk(v)}$ are disjoint.
 	\end{enumerate}
 Moreover, if each vertex stabilizer $G_v$ contains a compact open normal subgroup fixing $\lk(v)$, we can assume $T=T'$.
 \end{thm}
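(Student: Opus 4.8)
The plan is to reduce to the situation described in the final ``moreover'' sentence, and then to build the tree of spaces by hand out of the discrete quotients of the vertex and edge stabilizers.

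First I would apply the Tree refinement lemma (Lemma~\ref{lem:tree_expansion}). Since that lemma requires an action without edge inversions, I would first pass to the barycentric subdivision $B(T)$, which is still a cocompact, continuous $G$-tree without inversions; its vertices are the old vertices and edge-midpoints of $T$, so its vertex stabilizers are the vertex and edge stabilizers of $T$. Each of these contains a compact open normal subgroup: this is the hypothesis for vertex stabilizers, and for an edge stabilizer $G_e\leq G_v$ one intersects the given compact open normal subgroup of $G_v$ with $G_e$ (the result is open, compact, and normal in $G_e$ since the subgroup is normal in the larger $G_v$). Lemma~\ref{lem:tree_expansion} then yields a finite refinement $T'$ of $B(T)$, hence of $T$, in which every vertex stabilizer contains a compact open normal subgroup fixing its link. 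This is exactly the hypothesis of the moreover statement, so it remains to prove the theorem assuming $T=T'$ already has this property, the moreover case being the one where this refinement step is omitted. I would also check that the hypotheses persist: each vertex stabilizer of $T'$ contains, as a finite-index open subgroup, a vertex or edge stabilizer of $T$, and each edge stabilizer of $T'$ is commensurable to such a stabilizer; since compact presentability is inherited by and from finite-index open subgroups, all vertex and edge stabilizers of $T'$ remain compactly presented.

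Now assume each $G_v$ has a compact open normal subgroup $K_v$ fixing $\lk(v)$, chosen $G$-equivariantly so that $K_{gv}=gK_vg^{-1}$. Because $K_v$ fixes $\lk(v)$ pointwise, $K_v\leq G_e$ for every $e\in\lk(v)$, so the discrete group $\bar G_v:=G_v/K_v$ acts on $\lk(v)$ with edge-stabilizers $G_e/K_v$; by Lemma~\ref{lem:finitevscompact} each $\bar G_v$ is finitely presented. For an edge $e=(v,v')$ I would set $\kappa_e:=K_vK_{v'}$, a compact open normal subgroup of $G_e$ (a subgroup since both factors are normal, and symmetric in $e,\bar e$), and let $\bar G_e:=G_e/\kappa_e$, again finitely presented. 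Using the method of Lemma~\ref{lem:fin_pres_loops} I would build, $G$-equivariantly across the finitely many orbits, simply-connected locally finite cell complexes $X_e$ carrying geometric $\bar G_e$-actions; these are the edge spaces, giving conclusion (1) for edges. To build the vertex space $X_v$ I would first form the induced family $\bar G_v\times_{G_e/K_v}X_e$ of disjoint copies of the edge spaces, one copy per element of $\lk(v)$, where the stabilizer $G_e/K_v$ acts on its copy through the quotient $G_e/K_v\twoheadrightarrow\bar G_e$ — so the finite group $\kappa_e/K_v$ acts trivially on that copy, which is precisely the device that tolerates the mismatch $K_v\neq K_{v'}$. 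I would then attach this family to a simply-connected scaffold carrying a geometric $\bar G_v$-action (e.g. a Cayley $2$-complex) and cone off short loops as in Lemma~\ref{lem:fin_pres_loops} to obtain a simply-connected locally finite $X_v$ in which the edge-space copies are disjoint subcomplexes. The edge map $\phi_e:X_e\to X_{\tau(e)}$ is then the inclusion of the distinguished copy: it is a $G_e$-equivariant cellular isomorphism onto its image, and distinct edges in $\lk(v)$ give disjoint images, yielding (3) and (4). Finally $G_v$ acts on $X_v$ through $\bar G_v$ with compact kernel $K_v$, so the image in $\Aut(X_v)$ is discrete, giving (2), and the analogous statement holds for edge spaces.

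Assembling the $X_w$ and $\phi_e$ produces a tree of spaces $(X,T')$ with a $G$-action in the sense of Definition~\ref{defn:actonToS}. The induced action on the total space is proper (every point stabilizer contains some $K_v$ with finite index, hence is compact), cocompact ($G$ has finitely many orbits of cells), and continuous (all relevant stabilizers are open), so it is geometric. The step I expect to be the main obstacle is the vertex-space construction: simultaneously arranging that $X_v$ is simply connected, that $\bar G_v$ acts geometrically, and that the prescribed edge spaces embed as \emph{disjoint} subcomplexes with their stabilizers acting through the further finite quotient $\bar G_e$. Keeping $\pi_1(X_v)$ trivial while forcing in these subcomplexes and retaining a cocompact action, all carried out $G$-equivariantly over the finitely many orbits, is the delicate bookkeeping; note that the equality $K_v=K_{v'}$ is exactly what fails in general, and the induced-complex gluing above is the mechanism that makes the construction robust to its failure.
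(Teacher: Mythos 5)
Your proposal is correct in outline and shares the paper's outer skeleton (reduce to the link-fixing case via Lemma~\ref{lem:tree_expansion}, then achieve simple connectivity by coning short loops as in Lemma~\ref{lem:fin_pres_loops}), but the core construction is genuinely different. The paper never builds Cayley complexes of the discrete quotients: it invokes Proposition~\ref{prop:blowups_exist} to produce a single locally finite graph $Y$ with a geometric $G$-action fibering $G$-equivariantly over $T'$, collapses the $G$-imprimitivity system whose blocks are $K_w$-orbits (so every stabilizer acts discretely on its fiber), and then takes the vertex space over $v$ to be the subgraph induced by the vertex fiber together with all incident edge fibers -- disjointness of edge-space images is automatic because distinct fibers are disjoint. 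You instead build, per orbit, complexes for $\bar G_v=G_v/K_v$ and $\bar G_e=G_e/(K_vK_{v'})$ and glue induced families $\bar G_v\times_{G_e/K_v}X_e$ onto a scaffold; your $\kappa_e=K_vK_{v'}$ is exactly the paper's $K_e=K_vK_w$, and your observation that the finite group $\kappa_e/K_v$ must act trivially on the copy is the same key device. What each approach buys: the blowup is intrinsically $G$-equivariant (it is built from cosets of $G$), so the induction/cocycle bookkeeping you flag as the main obstacle never arises, at the price of using the blowup machinery and its minimality hypothesis; your route avoids Proposition~\ref{prop:blowups_exist} entirely (and in fact never needs minimality) but must do the equivariant gluing by hand. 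That gluing does work, but note the mechanism: there is no $(G_e/K_v)$-equivariant map from the copy of $X_e$ to a freely-acted scaffold when $\kappa_e/K_v\neq 1$, so you cannot attach via a mapping cylinder; you must add $\bar G_v$-orbits of connecting edges, one orbit per $G_v$-orbit in $\lk(v)$, which stays locally finite because point stabilizers in the induced family are finite.

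Two concrete slips to repair. First, the barycentric-subdivision detour: under the paper's definition, a finite refinement $T'\to T$ collapses finite subtrees, so preimages of open edges are open edges; $B(T)$ is a subdivision, not a refinement, so ``a finite refinement of $B(T)$, hence of $T$'' is false as stated, and your output tree does not literally satisfy the theorem's conclusion. This issue only exists when the action has edge inversions, a case the paper's own proof silently excludes (it applies Lemma~\ref{lem:tree_expansion} directly to $T$); absent inversions you should simply skip $B(T)$. Relatedly, with inversions present the midpoint stabilizer in $B(T)$ is $G_{\{e,\bar e\}}$, which can strictly contain $G_e$, and $K_v\cap G_e$ need not be normal in it -- one should take $(K_v\cap G_e)(K_{v'}\cap G_e)$ instead. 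Second, for conclusion (3) you need the coning threshold $M$ to be chosen uniformly over all (finitely many orbits of) edge and vertex spaces, with $2$-cells attached canonically to every loop of length at most $M$: only then is the full subcomplex of $X_v$ on the copy of $X_e^{(1)}$ cellularly isomorphic to $X_e$, making $\phi_e$ an isomorphism onto its image. You do not say this, but it is exactly the uniformity the paper invokes when it sets $X_w=(W_w)_M$ for a single $M$.
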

\begin{proof}
	First apply Lemma \ref{lem:tree_expansion} to obtain a finite refinement $p:T'\to T$ such that  every vertex or edge stabilizer of $T'$  contains a finite-index open subgroup isomorphic to a vertex or edge stabilizer of $T$. Since all vertex and edge stabilizers of $T$ are compactly presented, so are all  vertex and edge stabilizers of $T'$ \cite[Corollary 8.A.5.]{cornulierdlH2016metric}. Moreover, for every vertex $v\in VT'$, there is  a compact open normal subgroup $K_v\vartriangleleft G_v$ that fixes $\lk(v)$. In the case that every vertex stabilizer of the original tree $T$ contains a compact open normal subgroup fixing $\lk(v)$, we do not need to apply Lemma \ref{lem:tree_expansion} and can take $T=T'$.
	
	Assume without loss of generality that $K_{gv}=gK_{v}g^{-1}$ for all $g\in G$ and $v\in VT'$. For each edge $e=(v,w)\in ET'$, define $K_e=K_vK_w$. Since $K_v$ and $K_w$ are compact open normal subgroups of $G_e$, we deduce $K_e$ is a compact open normal subgroup of $G_e$.  Note also that $K_{ge}=gK_eg^{-1}$ for all $g\in G$. 
	
	Since $G$ contains a compact open subgroup, we can apply Proposition \ref{prop:blowups_exist} to deduce a blowup $p:Y\to T'$ exists.
		For each vertex or edge $w\in VT'\cup ET'$, as $G_w$ is compact-by-discrete and acts geometrically on the graph $Y_w$, there is a  $G_w$-imprimitivity system $\sim_w$ on $Y_w$ whose blocks are $K_w$-orbits of vertices. This can be done because every $K_w$ is a compact open normal subgroup of $G_w$; see the proof of Proposition~\ref{prop:compactbydiscrete}. The union of all these imprimitivity systems gives an imprimitivity system $\sim$ on $Y$ for which each block is finite and contained in a single vertex or edge space.  The choice of the $K_w$ ensure $\sim$ is $G$-invariant, hence $p:Y\to T'$ factors through a blowup $p_Z:Z\to T'$, where $Z=Y/\sim$.
		
	 We now define a graph of spaces $(X,T')$ satisfying the required properties. For each vertex $v\in VT'$, let $W_v$ denote the subgraph of $Z$ induced by $Z_v\cup \bigsqcup_{e\in \lk(v)}Z_e$. Since $K_v\leq K_e$ for all $e\in \lk(v)$, the group $K_v$ acts trivially on $W_v$. Thus, the action of $G_v$ on $W_v$ is discrete. Similarly, the action of $G_e$ on the edge space $W_e:=Z_e$ is discrete.  For each $w\in VT'\cup ET'$, the group $G_w$ is compactly presented and acts geometrically on $W_w$, so by Lemma \ref{lem:fin_pres_loops}, there exists $M \geq 0$ such that the complex $(W_w)_M$ is simply-connected. As there are only  finitely many $G$-orbits of $W_w$ for $w\in VT'\cup ET'$, the number $M$ can be chosen uniformly. Set $X_w=(W_w)_M$, which is a locally finite cell complex. There are natural inclusions $\phi_e:X_e\hookrightarrow X_v$ if $e\in ET'$ is incident to $v\in VT'$. It is clear that for each vertex $v\in VT'$, the subcomplexes $\phi_e(X_e)$ for $e\in\lk(v)$ are disjoint.   The spaces $\{X_x\mid x\in VT'\cup ET'\}$ and maps $\{\phi_e\}_{e\in ET'}$ assemble to form a tree of spaces $(X,T')$. Moreover, the $G$-action on $Z$ induces a $G$-action on $(X,T')$.
\end{proof}

\section{Common Covers}\label{sec:commoncover}

\begin{assumption}\label{ass:ToS}
In this section we work with a topological group $G$ that acts continuously on a tree of spaces $(X,T)$.
We will assume that the edge and vertex spaces are locally finite cell complexes, that each edge map $\phi_e$ is a cellular isomorphism onto its image, and that for each vertex space $X_v$ the subcomplexes $\phi_e(X_e)$ for $e\in\lk(v)$ are disjoint.
\end{assumption}

    \begin{notation}\label{defn:ToS}
        For $v \in VT$, let $G_v \leq G$ be the subgroup stabilizing the vertex space $X_v$. Let $q_v:G_v \rightarrow \Aut(X_v)$ be the induced homomorphism, and let 
        \[ Q_v := q_v(G_v) \leq \Aut(X_v). \]
        Similarly, for $e \in ET$, let $G_e \leq G$ be the subgroup stabilizing the edge space $X_e$. Let $q_e:G_e \rightarrow \Aut(X_e)$ be the induced homomorphism, and let 
        \[ Q_e:= q_e(G_e) \leq \Aut(X_e). \]
        Then, $Q_e = Q_{\overline{e}}$. 
        We will refer to $Q_v$ and $Q_e$ as the \emph{localized vertex and edge groups}.
        Note that any $g\in G_v$ acts on $\lk(v)$ and, since the subcomplexes $\phi_e(X_e)$ are disjoint for $e\in\lk(v)$, we have that $q_v(g)$ determines this action. In other words, the action of $G_v$ on $\lk(v)$ descends to an action of $Q_v$ on $\lk(v)$.
    \end{notation}  
    
    \begin{rem}
     We will consider subgroups $\G,\G'\leq G$ that act faithfully and geometrically on $(X,T)$. The vertex and edge stabilizers of $\G$ and $\G'$ are given by $\G_v := \G \cap G_v$, $\G_v':= \G' \cap G_v$, $\G_e :=\G \cap G_e$, and $\G_e' := \G' \cap G_e$.
     The motivation for considering the localized vertex groups is that typically the intersection of the vertex groups $\G_v \cap \G_v'$ inside $G_v$ will have trivial (or at least a very disappointing) intersection.
     However, under the assumption that $Q_v$ is discrete  as a subgroup of $\Aut(X_v)$, the images $q_v(\G_v)$ and $q_v(\G_v')$ are guaranteed to be commensurable, so their intersection has finite index in the respective images. 
    \end{rem}

    \begin{defn}  \label{defn:ToS_loc_edges} 
    For each $e \in \lk(v)$, let 
        \[Q_v^e := \Stab_{Q_v}(\phi_e(X_e)) \leq Q_v. \] 
     The subcomplex $\phi_e(X_e) \subseteq X_v$ is isomorphic to the edge space $X_e$, so restricting an automorphism of $X_v$  to $\phi_e(X_e)$ yields a surjective homomorphism \[F_v^e:Q_v^e \twoheadrightarrow Q_e.\] 
    \end{defn}
    
    \begin{rem}
     The homomorphism $F_v^e$ need not be injective. For example, let $X_e \cong \R$, let $X_v \cong \H^2$, and let $\phi_e(X_e)$ be a geodesic line in $\H^2$ (or, more precisely, cell complex versions of these spaces). Then, the reflection in $\Hy^2$ about this line is a nontrivial element of $Q_v^e$ that is trivial in $Q_e$. Nonetheless, when $Q_v$ acts discretely on $X_v$, it immediately follows that the kernel of $F_v^e$ is finite. 
     
     If the kernels of all $F_v^e$ were trivial, then any $g\in G$ fixing an edge space would also fix the adjacent vertex spaces, and by fanning outward in $T$ we would see that $g$ fixes the whole tree of spaces. If in addition, the action of each  $Q_v$ on $X_v$ was discrete, then we would conclude that $G$ acts discretely on $(X,T)$, and commensurability of $\G$ and $\G'$ in $\Aut(X,T)$ would be immediate.
    \end{rem}

    \begin{notation}
        Recall from Definition \ref{defn:actonToS} that for each $g \in G$ and $v \in VT$, the restriction of $g$ to the vertex space~$X_v$ yields an isomorphism $g_v: X_v \rightarrow X_{gv}$. If $g \in G_v$, then $q_v(g) = g_v$. Similarly, for $e \in ET$, there is an isomorphism $g_e:X_e \rightarrow X_{ge}$. 
        To each $g \in G$ and $v\in VT$ we obtain an isomorphism $Q_v \rightarrow Q_{gv}$ induced by conjugating by $g_v$.         
    \end{notation}

    \begin{thm}[Common Cover Theorem] \label{thm:commoncover_BV} 
    Let $G$ be a topological group acting continuously on a tree of spaces $(X,T)$ as in Assumption \ref{ass:ToS}.
	Let $\G,\G'\leq G$ be finitely generated subgroups acting geometrically on $(X,T)$ and without edge inversions on $T$. Suppose that the following conditions are satisfied. 
	\begin{enumerate}	
	 \item \label{item:freeAction} The edge and vertex stabilizers of $\G$ and $\G'$ each act freely on the corresponding edge and vertex spaces; 
     \item The edge and vertex spaces are simply connected;
	 \item\label{item:Qvprop} The action of  $Q_v$  on $X_v$ is discrete for all $v \in VT$;
     \item\label{item:Qcommand} For any (finite) set of $G$-orbit representatives $\{e_i\}_{i=1}^k\subseteq\lk(v)$, the group $Q_v$ commands the set of subgroups $\{Q_v^{e_i}\}_{i=1}^k$.
	\end{enumerate}
    Then, the quotient graphs of spaces $X/\G$ and $X/\G'$ have a common finite cover. In particular, $\G$ and $\G'$ are weakly commensurable in $\Aut(X,T)$.
    \end{thm}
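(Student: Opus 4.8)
The plan is to build a common finite cover of the two quotient graphs of spaces by selecting, compatibly across all vertex and edge orbits, finite covers of the vertex and edge spaces that simultaneously refine the $\G$- and $\G'$-quotients. The guiding observation is that the localized groups $Q_v$ and $Q_e$ depend only on $G$ and $(X,T)$, not on the choice of $\G$ or $\G'$. Since $\G_v$ acts freely and geometrically on $X_v$ (condition (\ref{item:freeAction})) while $Q_v$ acts discretely on the locally finite complex $X_v$ (condition (\ref{item:Qvprop})), the restriction $q_v|_{\G_v}$ is injective, and its image $q_v(\G_v)$ is a cocompact subgroup of the discrete, properly acting group $Q_v$, hence of finite index; the same holds for $q_v(\G_v')$ and for the edge groups $q_e(\G_e),q_e(\G_e')\leq Q_e$. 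In particular $q_v(\G_v)$ and $q_v(\G_v')$ are commensurable, so their intersection contains a finite-index normal subgroup $N_v\triangleleft Q_v$.

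I would then choose the edge covers first and lift them to the vertices using the commanding hypothesis (condition (\ref{item:Qcommand})). Fix $G$-orbit representatives of edges. For $e=(v,w)$, the restriction maps $F_v^e\colon Q_v^e\twoheadrightarrow Q_e$ and $F_w^{\bar e}\colon Q_w^{\bar e}\twoheadrightarrow Q_e$ are surjective with finite kernel (by discreteness of $Q_v,Q_w$). Let $\dot Q_v^{e}$ and $\dot Q_w^{\bar e}$ be the commanding data furnished by Definition \ref{defn:commands} at the two endpoints. I would pick a finite-index normal subgroup $R_e\triangleleft Q_e$ small enough to lie inside $q_e(\G_e)\cap q_e(\G_e')$ and inside both $F_v^e(\operatorname{core}(\dot Q_v^e)\cap N_v)$ and $F_w^{\bar e}(\operatorname{core}(\dot Q_w^{\bar e})\cap N_w)$, where $\operatorname{core}$ is the normal core in the respective edge group; all of these are finite-index subgroups of $Q_e$, so such an $R_e$ exists, chosen $G$-equivariantly with $R_{\bar e}=R_e$. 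Setting $\hat Q_v^{e}:=(F_v^e)^{-1}(R_e)\cap\operatorname{core}(\dot Q_v^e)\cap N_v$ produces a finite-index normal subgroup of $Q_v^e$ that is contained in $\dot Q_v^e$ and in $N_v$ and satisfies $F_v^e(\hat Q_v^e)=R_e$.

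Now I would invoke commanding at each vertex: for the family $\{Q_v^{e_i}\}$ of edge subgroups indexed by orbit representatives of $\lk(v)$, the prescribed intersections $\hat Q_v^{e_i}$ yield a finite-index normal $\widetilde Q_v\triangleleft Q_v$ with $\widetilde Q_v\cap Q_v^{e_i}=\hat Q_v^{e_i}$; intersecting with $N_v$ gives $\hat Q_v:=\widetilde Q_v\cap N_v\triangleleft Q_v$ with $\hat Q_v\leq N_v\leq q_v(\G_v)\cap q_v(\G_v')$, and the edge intersections are undisturbed because each $\hat Q_v^{e_i}\leq N_v$, so $\hat Q_v\cap Q_v^{e_i}=\hat Q_v^{e_i}$. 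Consequently $F_v^e(\hat Q_v\cap Q_v^e)=R_e=F_w^{\bar e}(\hat Q_w\cap Q_w^{\bar e})$ for every $e=(v,w)$, meaning the cover $X_e/R_e$ of the edge space agrees with the cover induced on $\phi_e(X_e)$ from each adjacent vertex cover $X_v/\hat Q_v$ and $X_w/\hat Q_w$. Since the edge and vertex spaces are simply connected (condition (2)) and the actions are free, these matched covers assemble by the standard graph-of-spaces construction into a finite cover $\hat X$ of $X/\G$, with $\pi_1(\hat X)=\hat\G\leq\G$ of finite index; because the same subgroups $\hat Q_v\leq q_v(\G_v')$ and $R_e\leq q_e(\G_e')$ equally refine the $\G'$-quotients, the very same data exhibits $\hat X$ as a finite cover of $X/\G'$, with $\pi_1(\hat X)=\hat\G'\leq\G'$. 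Lifting an isomorphism of these two covers to the universal cover $(X,T)$ (using simple connectivity of the pieces) gives $g\in\Aut(X,T)$ with $g\hat\G g^{-1}=\hat\G'$, so $\hat\G\leq\G\cap g^{-1}\G'g$ has finite index in both $\G$ and $g^{-1}\G'g$, proving $\G$ and $\G'$ are weakly commensurable in $\Aut(X,T)$.

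The main obstacle is the coordination carried out in the two middle paragraphs: the edge covers $R_e$ and vertex covers $\hat Q_v$ must be chosen so that three constraints hold at once — the two sides of each edge induce the same cover, each vertex cover restricts on its incident edges to the chosen edge covers, and every cover lies inside both localized lattices. The commanding hypothesis is precisely the tool that decouples the rigid edge prescriptions from the flexible vertex choices; the remaining technical friction comes from the finite kernels of the maps $F_v^e$ and from performing all selections $G$-equivariantly so that the construction descends to the finite quotient graphs and yields genuine covering maps.
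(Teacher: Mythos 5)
Your algebraic construction of the matched finite-index subgroups (commanding data first, then compatible edge subgroups $R_e$, then the commanded vertex subgroups $\hat Q_v$ with prescribed edge intersections) closely parallels the paper's argument, but the proposal breaks down at the final assembly step, and this is not a technicality — it is the crux of the theorem. You claim that the matched local covers "assemble by the standard graph-of-spaces construction into a finite cover $\hat X$ of $X/\G$" and that "the very same data exhibits $\hat X$ as a finite cover of $X/\G'$." There is no such standard construction: the two quotients $X/\G$ and $X/\G'$ have different underlying graphs $T/\G$ and $T/\G'$, and even if one builds a finite cover of each using multiple copies of the pieces $X_v/\hat Q_v$ and $X_e/R_e$, the global gluing pattern of each cover is governed by the combinatorics of its own quotient graph, so the two resulting complexes need not be homeomorphic. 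To see that no cheap gluing argument can close this gap, specialize to the case where all vertex and edge spaces are points: then $X=T$, all $Q_v$ are trivial, the hypotheses hold vacuously, and the statement becomes exactly Leighton's theorem that two finite graphs with a common universal cover have a common finite cover. Any proof of Theorem \ref{thm:commoncover_BV} must therefore contain (or invoke) Leighton-type combinatorics. The paper does this in two stages: Proposition \ref{prop:lf_tree_assump} assembles your kind of matched data into an \emph{infinite} common regular cover $(\hat X,S)$ — a locally finite tree of compact spaces, where the tree structure permits an inductive fan-out construction of the covering maps and deck transformations — and then Theorem \ref{thm:GoO} (Leighton's theorem for graphs of objects, due to Shepherd) is applied to intermediate finite quotients $(\hat X,S)/\bar\G$ and $(\hat X,S)/\bar\G'$ to produce the common finite cover. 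Your proposal has no substitute for this ingredient.

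A secondary, fixable issue: your subgroups $N_v$ (normal cores of $q_v(\G_v)\cap q_v(\G_v')$ in $Q_v$) are not $G$-equivariant, because for $g\in G\setminus\G$ one has $\G_{gv}=\G\cap gG_vg^{-1}\neq g\G_vg^{-1}$, so $q_{gv}(\G_{gv})\neq g_vq_v(\G_v)g_v^{-1}$ in general; propagating $N_v$ along a $G$-orbit by conjugation destroys the containment $N_{gv}\leq q_{gv}(\G_{gv})\cap q_{gv}(\G'_{gv})$. You flag equivariance as "technical friction," but it is precisely why the paper defines the localized core $\check Q_v$ as an intersection over \emph{all} $G$-translates $g_v^{-1}q_{gv}(\G_{gv})g_v\cap g_v^{-1}q_{gv}(\G'_{gv})g_v$, together with a uniform cocompactness argument showing this intersection still has finite index. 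With that replacement your middle paragraphs would essentially reproduce the paper's construction of the data fed into Proposition \ref{prop:lf_tree_assump}; the missing piece remains the passage from that data to a genuine common finite cover.
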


    \begin{rem}
    The conclusion that $\G$ and $\G'$ are weakly commensurable in $\Aut(X,T)$ still follows even if $\G,\G'$ act with edge inversions on $T$. Indeed, if we consider $T$ as a bipartite graph with a partition of its vertex set, then $\G,\G'$ have subgroups of index at most 2 that stabilize both halves of the partition, and hence do not contain edge inversions.
    \end{rem}
    
    To prove Theorem~\ref{thm:commoncover_BV}, we construct $(\hat{X}, S)$, a locally-finite tree of compact spaces so that $\hat{X}$ is a regular cover of both $X/\G$ and $X/\G'$. Then, we apply the following theorem to obtain a common finite-sheeted cover of $X/\G$ and $X/\G'$. This theorem is obtained from Leighton's Theorem for Graphs of Objects \cite[Theorem 4.7]{Shepherd22} by taking $\cC$ to be the category of finite cell complexes.
    
    \begin{thm} \cite[Theorem 4.7]{Shepherd22} \label{thm:GoO} 
    Let $f_i:(Z,T')\to (Y_i,\gL_i)$ be coverings of graphs of spaces for $i=1,2$ such that:
    \begin{enumerate}
        \item $T'$ is a tree,
        \item all edge and vertex spaces are finite cell complexes,
        \item $f_i$ restricts to isomorphisms between edge and vertex spaces.
    \end{enumerate}
    Then $(Y_1,\gL_1)$ and $(Y_2,\gL_2)$ have a common finite-sheeted cover.
    \end{thm}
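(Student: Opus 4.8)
The plan is to derive Theorem~\ref{thm:GoO} as a direct instance of the abstract Leighton theorem for graphs of objects, \cite[Theorem 4.7]{Shepherd22}, by specialising the ambient category $\cC$ to be the category of finite cell complexes (which in this paper are $2$-dimensional). That abstract theorem asserts that, for a suitable category $\cC$, any two finite graphs of $\cC$-objects admitting a common cover whose underlying graph is a tree, and which restricts to isomorphisms on the constituent objects, possess a common finite-sheeted cover. Thus the proof reduces to two tasks: first, checking that finite cell complexes, together with the class of cellular covering maps (and with edge maps realised as cellular isomorphisms onto subcomplexes, as in Assumption~\ref{ass:ToS}), genuinely form a category meeting the hypotheses imposed in \cite[\S 4]{Shepherd22}; and second, matching the three hypotheses of Theorem~\ref{thm:GoO} to the input of that theorem.

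For the first task I would verify the structural axioms in turn. The crucial one is that each object has a \emph{finite} automorphism group: a cellular automorphism of a finite cell complex is determined by the induced permutation of its finitely many cells, so $\Aut$ of any object of $\cC$ is finite. This finiteness is exactly what powers the Leighton-style symmetrisation argument, replacing the finiteness of the symmetric group on the fiber of a covering of graphs in the classical statement. The remaining axioms are formal: cellular covering maps are closed under composition; the pullback (fiber product) of two cellular covering maps with finite fibers is again a finite cell complex equipped with cellular covering maps to the factors, so the category is closed under the pullbacks used to assemble common covers; and the edge maps $\phi_e$, being cellular isomorphisms onto subcomplexes, supply the ``inclusion'' morphisms the framework needs in order to attach edge objects to vertex objects.

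For the second task I would observe that the three hypotheses translate essentially verbatim. Condition~(1), that $T'$ is a tree, is precisely the requirement that the common cover $(Z,T')$ have tree underlying graph, equivalently that it be the universal cover at the level of underlying graphs, which is the Leighton hypothesis guaranteeing $(Y_1,\gL_1)$ and $(Y_2,\gL_2)$ share a common cover. Condition~(2), that all edge and vertex spaces are finite cell complexes, is the statement that both $(Y_1,\gL_1)$ and $(Y_2,\gL_2)$ are graphs of $\cC$-objects for our chosen $\cC$. Condition~(3), that each $f_i$ restricts to isomorphisms on edge and vertex spaces, is exactly the (special case of the) hypothesis of \cite[Theorem 4.7]{Shepherd22} that the covers be fiberwise isomorphisms, so that all covering data lives in the combinatorics of the underlying graphs and their attaching maps. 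With the category verified and the hypotheses matched, \cite[Theorem 4.7]{Shepherd22} outputs a common finite-sheeted cover of $(Y_1,\gL_1)$ and $(Y_2,\gL_2)$.

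The main obstacle, and essentially the only nonformal content, is the first task: confirming that the category of finite cell complexes satisfies \emph{all} of the axioms of \cite[\S 4]{Shepherd22}, not merely the two highlighted above. In particular one must take care that the relevant morphisms are the cellular maps, whose automorphism groups are finite, rather than arbitrary homeomorphisms, whose groups need not be; and one must confirm that the framework's abstract notion of a covering of graphs of $\cC$-objects, when restricted to our $\cC$, recovers the ordinary topological covering maps of total spaces, so that the common finite cover produced by \cite[Theorem 4.7]{Shepherd22} is a genuine covering of graphs of spaces in the sense of Definition~\ref{defn:GoS}. Once these compatibility points are settled, the conclusion is immediate.
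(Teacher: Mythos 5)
Your proposal is correct and matches the paper's approach exactly: the paper obtains Theorem~\ref{thm:GoO} precisely by invoking Leighton's Theorem for Graphs of Objects \cite[Theorem 4.7]{Shepherd22} with $\cC$ taken to be the category of finite cell complexes. The only difference is that you spell out the verification of the categorical axioms (finite automorphism groups, closure under fiber products, compatibility of the abstract covering notion with topological coverings), which the paper leaves implicit.
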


    \begin{rem} \label{rem:OnConditions} (\emph{On Conditions (\ref{item:freeAction})--(\ref{item:Qcommand}) in Theorem~\ref{thm:commoncover_BV}.})            
        If the edge and vertex groups act freely on the corresponding edge and vertex spaces of $(X,T)$ (condition (\ref{item:freeAction})), then $q_v$ and $q_e$ restrict to injective homomorphisms $\G_v, \G_v' \rightarrow Q_v$ and $\G_e, \G_e' \rightarrow Q_e$. In particular, $q_v(\G_v) \cong \G_v$ and so on. 
        We also note that condition (\ref{item:freeAction}) implies that $F_v^e$ is injective on $q_v(\G_v) \cap Q_v^e$ and $q_v(\G_v') \cap Q_v^e$.
        
        We assume each vertex space $X_v$ is simply connected, which together with condition (\ref{item:freeAction}), implies $\pi_1(X_v/\G_v) \cong \G_v$, and likewise for the edge spaces.      
        We assume the group $Q_v$ acts discretely on the vertex space $X_v$, so the vertex groups $q_v(\G_v)$ and  $q_v(\G_v')$ are commensurable inside $Q_v$. Condition~(\ref{item:Qcommand}) is used to ensure that the common finite-index subgroups of the vertex groups $q_v(\G_v)$ and $q_v(\G_v')$ guaranteed by condition (\ref{item:Qvprop}) can be chosen to agree along incident edge groups. 
    \end{rem}
    
    The common ``locally finite'' regular cover $\hat{X}$ of $X/\G$ and $X/\G'$ can be constructed by specifying common finite-index subgroups of the vertex groups
    $\hat{Q}_v \triangleleft q_v(\G_v) \cap q_v(\G_v') \leq Q_v$ for $v \in VT$ whose induced actions on the edge spaces match up in the sense of the next proposition. %

    \begin{prop} \label{prop:lf_tree_assump}
    	With the setup of Theorem \ref{thm:commoncover_BV}, suppose that for each $v \in VT$ there exists a finite-index subgroup $\hat{Q}_v \triangleleft Q_v$ so that the following hold:
        \begin{enumerate}
         \item\label{item:vscc} (Vertex space common covers) $\hat{Q}_v \leq q_v(\G_v) \cap q_v(\G_v')$. 
         \item\label{item:eq} (Equivariance) $g_v \hat Q_v g_v^{-1} = \hat{Q}_{gv}$ for all $g \in G$.
         \item\label{item:glueQh} (Gluing condition) For each edge $e = (u,v) \in ET$ with $\hat{Q}_v^e := Q_v^e \cap \hat{Q}_v $ and $\hat{Q}_u^{\bar{e}} := Q_u^{\bar{e}} \cap \hat{Q}_u$, we have
         \[ \hat{Q}_e :=  F_v^e(\hat{Q}_v^e) = F_u^{\bar{e}}(\hat{Q}_u^{\bar{e}}) \leq Q_e,\]
         and the restrictions $F_v^e|_{\hat{Q}_v^e}:\hat{Q}_v^e \rightarrow \hat{Q}_e$ and $F_u^{\bar{e}}|_{\hat{Q}_u^{\bar{e}}}:\hat{Q}_u^{\bar{e}} \rightarrow \hat{Q}_e $ are isomorphisms.
         \end{enumerate}
       Then, there exists a locally-finite tree of compact spaces $(\hat{X}, S)$ so that $\hat{X}$ is a regular cover of both $X/\G$ and $X/\G'$. 
    \end{prop}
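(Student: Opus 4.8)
The plan is to realize the desired common cover as a quotient of $X$ by a normal subgroup of $\G$ (and, symmetrically, of $\G'$), both assembled from the data $(\hat{Q}_v)$, and then to identify the two quotients. First I would push the $\hat{Q}_v$ up into $\G$ and $\G'$. Since condition (\ref{item:freeAction}) of Theorem~\ref{thm:commoncover_BV} makes $q_v|_{\G_v}$ and $q_v|_{\G'_v}$ injective, and $\hat{Q}_v\le q_v(\G_v)\cap q_v(\G'_v)$ by (\ref{item:vscc}), I set
\[ \hat{\G}_v:=(q_v|_{\G_v})^{-1}(\hat{Q}_v)\le\G_v, \qquad \hat{\G}'_v:=(q_v|_{\G'_v})^{-1}(\hat{Q}_v)\le\G'_v. \]
These are finite-index normal subgroups (preimages of $\hat{Q}_v\vartriangleleft Q_v$) with $\hat{\G}_v\cong\hat{Q}_v\cong\hat{\G}'_v$. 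The equivariance condition (\ref{item:eq}) translates into $\G$-invariance, $\gamma\hat{\G}_v\gamma^{-1}=\hat{\G}_{\gamma v}$ for $\gamma\in\G$ (and likewise for $\G'$), so the assignment $v\mapsto\hat{\G}_v$ is constant on $\G$-orbits up to conjugacy.

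Next I would assemble these into a graph of finite groups. Working over $\Lambda_1:=T/\G$, I replace each vertex group $\G_v$ by $\G_v/\hat{\G}_v$ and each edge group $\G_e$ by $\G_e/\hat{\G}_e$, where the gluing condition (\ref{item:glueQh}) is exactly what guarantees that $\G_e\cap\hat{\G}_v=\hat{\G}_e$ is independent of the incident vertex and that the edge inclusions descend to injections of the finite quotients with unchanged indices. This produces a well-defined finite graph of finite groups $\bar{\cG}_1$ together with a surjection $\G=\pi_1(X/\G)\twoheadrightarrow\pi_1(\bar{\cG}_1)$; let $\hat{N}:=\ker\bigl(\G\to\pi_1(\bar{\cG}_1)\bigr)\vartriangleleft\G$. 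By construction $\hat{N}\cap\G_v=\hat{\G}_v$, and since the stable-letter (free) part of $\G$ maps isomorphically, $\hat{N}$ is generated by conjugates of the elliptic subgroups $\hat{\G}_v$; hence $S:=T/\hat{N}$ is a tree, namely the Bass--Serre tree of $\bar{\cG}_1$. Then $\hat{X}:=X/\hat{N}$ is a tree of spaces over $S$ whose vertex and edge spaces are $X_v/\hat{\G}_v=X_v/\hat{Q}_v$ and $X_e/\hat{Q}_e$; these are compact because $\hat{Q}_v,\hat{Q}_e$ have finite index in the cocompact groups $Q_v,Q_e$, and local finiteness follows because finiteness of $\G_v/\hat{\G}_v$ bounds the valences of $S$. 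As $\hat{N}\vartriangleleft\G$, the quotient $\hat{X}\to X/\G$ is a regular covering.

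Finally I would run the identical construction with $\G'$ in place of $\G$, obtaining $\hat{N}'\vartriangleleft\G'$ and a regular cover $X/\hat{N}'\to X/\G'$ that is again a tree of spaces over a locally finite tree $S'$ with the \emph{same} vertex spaces $X_v/\hat{Q}_v$, the \emph{same} edge spaces $X_e/\hat{Q}_e$, and the \emph{same} edge maps; indeed, the gluing data in (\ref{item:glueQh}) is phrased purely through the $\hat{Q}_v$ and the maps $F_v^e$, with no reference to $\G$ or $\G'$. To conclude I would construct a tree-of-spaces isomorphism $X/\hat{N}\cong X/\hat{N}'$ by developing outward from a base vertex: the vertex and edge spaces agree, the gluing maps agree, and both valences equal $|\lk_T(v)/\hat{Q}_v|$, so there is no local obstruction, and since the underlying graphs are trees there is no monodromy obstruction to extending the identification globally. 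Composing this isomorphism with $X/\hat{N}'\to X/\G'$ then exhibits $\hat{X}$ as a regular cover of $X/\G'$ as well, completing the proof.

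I expect the main obstacle to be this final identification $X/\hat{N}\cong X/\hat{N}'$: one must verify that the purely local matching of vertex spaces, edge spaces, gluing maps, and valences actually develops into a global isomorphism of trees of spaces, and it is precisely the symmetry of the gluing and equivariance hypotheses that makes this possible. A secondary delicate point is checking that $\hat{N}$ has exactly the prescribed intersections $\hat{N}\cap\G_v=\hat{\G}_v$ and that $T/\hat{N}$ is genuinely a tree, both of which rest on translating condition (\ref{item:glueQh}) into the statement that edge groups descend injectively with unchanged index.
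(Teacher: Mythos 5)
Your proposal is correct in outline, but it takes a genuinely different route from the paper's proof. The paper constructs a single object directly: it chooses a subtree $S\subseteq T$ containing exactly one edge from each $\hat{Q}_v$-orbit in each link, defines $(\hat{X},S)$ with vertex spaces $X_v/\hat{Q}_v$ and edge spaces $X_e/\hat{Q}_e$, and then does the real work in proving that the natural map $(\hat{X},S)\to (X,T)/\G$ is a covering (verified via fiber-product diagrams on links) and that it is regular (by explicitly building deck transformations that fan outward over $S$). You instead obtain regularity for free by realizing $\hat{X}$ as $X/\hat{N}$ for a normal subgroup $\hat{N}\vartriangleleft\G$ (the kernel of the quotient morphism of graphs of groups), and you transfer the work to two other places: well-definedness of the quotient graph of groups together with the tree-ness of $T/\hat{N}$, and the final identification $X/\hat{N}\cong X/\hat{N}'$, which is a fanning-out induction that mirrors the paper's deck-transformation construction. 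Your approach is more conceptual --- it makes transparent that $\hat{X}$ depends only on the data $\{\hat{Q}_v\}$ and not on $\G$ or $\G'$ --- while the paper's is more self-contained, using no Bass--Serre machinery for quotient morphisms.

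Two points in your sketch need supplementing, though both are available from the hypotheses. First, the key identity $\G_e\cap\hat{\G}_v=\hat{\G}_e$ does not follow from the gluing condition alone: to show that $q_e(g)\in\hat{Q}_e$ forces $q_v(g)\in\hat{Q}_v$ you must know that $F_v^e$ is injective on $q_v(\G_v)\cap Q_v^e$, which is exactly what the free-action hypothesis (condition (\ref{item:freeAction}) of Theorem \ref{thm:commoncover_BV}) provides, as noted in Remark \ref{rem:OnConditions}; relatedly, identifying the edge spaces of $X/\hat{N}$ with $X_e/\hat{Q}_e$ uses $\hat{Q}_e\leq q_e(\G_e)$, which follows from $q_v(\G_v)\cap Q_v^e=q_v(\G_e)$, a consequence of the disjointness of the subcomplexes $\phi_{e'}(X_{e'})$ in Assumption \ref{ass:ToS}. (Your parenthetical claim of ``unchanged indices'' for the descended edge inclusions is neither needed nor true in any evident sense; injectivity suffices.) Second, your inference that $S=T/\hat{N}$ is a tree is valid but deserves its one-line proof: since $\hat{N}$ is generated by vertex stabilizers, the surjection from $\hat{N}$ onto $\pi_1(T/\hat{N})$ obtained by killing all vertex groups in the induced graph-of-groups decomposition of $\hat{N}$ is trivial, so $T/\hat{N}$ is simply connected. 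Once $S$ and $S'$ are trees, the loop-free quotient guarantees that links in $S$ (resp. $S'$) at a vertex with representative $v\in VT$ are canonically $\hat{Q}_v$-orbits of $\lk_T(v)$, and your development argument, maintaining a common representative in $T$ for each matched pair of vertices, goes through exactly as you describe.
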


\begin{defn}
    If $f_1:A_1\to B$ and $f_2:A_2\to B$ are continuous maps of topological spaces, then the associated \emph{fiber product} is the topological space $C:=\{(a_1,a_2)\in A_1\times A_2\mid f_1(a_1)=f_2(a_2)\}$, considered as a subspace of $A_1\times A_2$. Note that the projection maps $\pi_1:C\to A_1$ and $\pi_2:C\to A_2$ satisfy $f_1\pi_1=f_2\pi_2$. 

    If $C'$ is another topological space, then a commutative diagram of continuous maps
    \begin{equation}\label{fibreproduct}
		\begin{tikzcd}[
			ar symbol/.style = {draw=none,"#1" description,sloped},
			isomorphic/.style = {ar symbol={\cong}},
			equals/.style = {ar symbol={=}},
			subset/.style = {ar symbol={\subset}}
			]
			C'\ar{r}{\pi'_2}\ar{d}{\pi'_1}&A_2\ar{d}{f_2}\\
			A_1\ar{r}{f_1}&B,
		\end{tikzcd}
	\end{equation}
 is called a \emph{fiber product diagram} if there is a homeomorphism $h:C\to C'$ such that $\pi_1=\pi'_1 h$ and $\pi_2=\pi'_2 h$. We note that if $f_2:A_2\to B$ is a covering map of cell complexes, then $\pi'_1:C'\to A_1$ restricts to a covering map on each component of $C'$.
\end{defn}	

\begin{rem}\label{rem:fibreproduct}
If the map $\pi'_1$ in diagram (\ref{fibreproduct}) restricts to a covering on each component of $C'$, then any bijection $h:C\to C'$ with $\pi_1=\pi'_1 h$ and $\pi_2=\pi'_2 h$ will automatically be a homeomorphism. So in this case, to check that (\ref{fibreproduct}) is a fiber product diagram, it suffices to check that for each $(a_1,a_2)\in A_1\times A_2$ with $f_1(a_1)=f_2(a_2)$ there is a unique $c\in C'$ with $\pi'_1(c)=a_1$ and $\pi'_2(c)=a_2$.
\end{rem}

Fiber product diagrams occur naturally in the context of coverings of graphs of spaces. Specifically, suppose $f:(\hat{X},\hat{\Lambda})\to (X,\Lambda)$ is a morphism of graphs of spaces in which the maps between vertex spaces and the maps between edge spaces are covering maps. Then $f$ is a covering if and only if, for each vertex $v\in\Lambda$ and lift $\hat{v}\in\hat{\Lambda}$, the following is a fiber product diagram:
    \begin{equation}\label{fibreproductGoS}
		\begin{tikzcd}[
			ar symbol/.style = {draw=none,"#1" description,sloped},
			isomorphic/.style = {ar symbol={\cong}},
			equals/.style = {ar symbol={=}},
			subset/.style = {ar symbol={\subset}}
			]
			\bigsqcup_{\hat{e}\in\lk(\hat{v})}\hat{X}_{\hat{e}}\ar{r}\ar{d}&\hat{X}_{\hat{v}}\ar{d}\\
			\bigsqcup_{e\in\lk(v)}X_e\ar{r}&X_v
		\end{tikzcd}
	\end{equation}

    \begin{proof}[Proof of Proposition \ref{prop:lf_tree_assump}]
        We first define the locally-finite tree of compact spaces $(\hat{X}, S)$. 
        Let $S \subseteq T$ be a subtree so that for each $v \in VS$, the subtree $S$ contains exactly one edge from each $\hat{Q}_v$-orbit in $\lk(v)$. Since the group $\G$ acts cocompactly on $X$, the tree $S$ is locally finite. 
        For $v \in VS$ and $e \in ES$, define vertex and edge spaces by
        \[\hat{X}_v:= X_v / \hQ_v \,\, \textrm{ and } \,\,
          \hat{X}_e:= X_e /\hQ_e,  \]
          where $\hQ_e$ is given in condition (\ref{item:glueQh}).  The space $\hX_v$ is compact because $\hQ_v \leq q_v(\G_v) \cap q_v(\G_v')$ is a finite-index subgroup of $Q_v$, and $q_v(\G_v)\leq Q_v$ acts cocompactly on $X_v$. Similarly, $\hX_e$ is compact. If $e \in \lk(v)$, then as the subspaces $\{\phi_{e'}(X_{e'})\}_{e'\in \lk(v)}$ are disjoint,  %
           condition (\ref{item:glueQh}) implies
        \[\phi_e(X_e)/\hat{Q}_v^e \cong X_e /\hat{Q}_e = \hat{X}_e.\]  
        Thus, there is an embedding $\hat{X}_e \hookrightarrow \hat{X}_v$. The above data defines a tree of spaces $(\hX, S)$ as desired. 
        
        We now prove that $(\hat{X},S)$ is a regular cover of $(X,T)/\G$; the proof for $(X,T)/\G'$ is identical. There are finite-sheeted regular covers of the vertex and edge spaces 
         \[\hX_v = X_v/\hQ_v \rightarrow X_v/q_v(\G_v) \,\, \textrm{ and } \,\, \hX_e = X_e/\hQ_e \rightarrow X_e/q_e(\G_e) \]
        since $\hQ_v$ is a finite-index normal subgroup of $q_v(\G_v)$ by assumption, and likewise for $\hQ_e \triangleleft q_e(\G_e)$.        
        The covering maps for vertex and edge spaces define a morphism of graphs of spaces $\theta: (\hat{X},S)\to(X,T)/\G$, as shown in the diagram below on the right. Indeed,  the commutativity of the diagram on the right results from the commutativity of the diagram on the left, which follows from definitions. 
	\begin{equation*}
	\begin{tikzcd}[
	ar symbol/.style = {draw=none,"#1" description,sloped},
	isomorphic/.style = {ar symbol={\cong}},
	equals/.style = {ar symbol={=}},
	subset/.style = {ar symbol={\subset}}
	]
	\hQ_e \ar{d}{(F_v^e)^{-1}}\ar{r}& q_e(\G_e)\ar{d}\\
	\hQ_v \ar{r}& q_v(\G_v)
	\end{tikzcd}
	\quad \quad \quad \quad 	
	\begin{tikzcd}[
	ar symbol/.style = {draw=none,"#1" description,sloped},
	isomorphic/.style = {ar symbol={\cong}},
	equals/.style = {ar symbol={=}},
	subset/.style = {ar symbol={\subset}}
	]
	\hat{X}_e = X_e/\hQ_e \ar{d}\ar{r}&X_e/q_e(\G_e)\ar{d}\\
	\hat{X}_v = X_v/\hQ_v \ar{r}&X_v/q_v(\G_v)
	\end{tikzcd}
	\end{equation*}

We now show that $\theta$ is a covering of graphs of spaces by verifying that the following is a fiber product diagram for any $v\in VS$ and $e\in\lk(v)$:
\begin{equation}\label{fibreproductphi}
		\begin{tikzcd}[
			ar symbol/.style = {draw=none,"#1" description,sloped},
			isomorphic/.style = {ar symbol={\cong}},
			equals/.style = {ar symbol={=}},
			subset/.style = {ar symbol={\subset}}
			]
			\bigsqcup_{\hat{e}\in S\cap \G_v\cdot e}\hat{X}_{\hat{e}}\ar{r}\ar{d}&\hat{X}_v\ar{d}\\
			X_e/q_e(\G_e)\ar{r}&X_v/q_v(\G_v)
		\end{tikzcd}
	\end{equation}
 Note that, for $\hat{e}\in S\cap \G_v\cdot e$, if $g\in \G_v$ with $g\hat{e}=e$, then the map $\hat{X}_{\hat{e}}\to X_e/q_e(\G_e)$ is given by $\hat{Q}_{\hat{e}}\cdot x\mapsto q_e(\G_e)\cdot g_{\hat{e}}(x)$.

 By Remark \ref{rem:fibreproduct}, it suffices to consider points $q_e(\G_e)\cdot y\in X_e/q_e(\G_e)$ and $\hat{Q}_v\cdot x\in \hat{X}_v$ that map to the same point in $X_v/q_v(\G_v)$, and prove that they have a unique common lift in $\sqcup_{\hat{e}\in S\cap \G_v\cdot e}\hat{X}_{\hat{e}}$.
 Firstly, since they map to the same point in $X_v/q_v(\G_v)$, we know that there is $g\in\G_v$ with $g_v\phi_e(y)=x$.
 Now by construction of $S$ and condition (\ref{item:vscc}) of Proposition \ref{prop:lf_tree_assump}, there exists $h\in\G_v$ with $q_v(h)\in\hat{Q}_v$ and $\hat{e}:=hge\in S$. Putting $\hat{y}:=(hg)_e(y)$, we have that $\hat{Q}_{\hat{e}}\cdot\hat{y}\in\hat{X}_{\hat{e}}$ maps to $q_e(\G_e)\cdot y\in X_e/q_e(\G_e)$. 
 Now observe
 $$\phi_{\hat{e}}(\hat{y})=(hg)_v\phi_e(y)=h_v(x),$$
 and $\hat{Q}_v\cdot h_v(x)=\hat{Q}_v\cdot x$ since $q_v(h)\in\hat{Q}_v$, hence $\hat{Q}_{\hat{e}}\cdot\hat{y}$ also maps to $\hat{Q}_v\cdot x\in \hat{X}_v$.
 Finally, we must argue that such a point $\hat{Q}_{\hat{e}}\cdot\hat{y}$ is unique.
 Indeed, the subcomplexes $\phi_{e'}(X_{e'})\subset X_v$ for $e'\in\lk(v)$ are disjoint, so $\hat{Q}_v\cdot x$ intersects only one $\hat{Q}_v$-orbit of these subcomplexes, hence the edge space $X_{\hat{e}}$ containing $\hat{y}$ is uniquely determined.
 Moreover, the map $\hat{X}_{\hat{e}}\to\hat{X}_v$ is an embedding, so the point $\hat{Q}_{\hat{e}}\cdot\hat{y}\in\hat{X}_{\hat{e}}$ is also uniquely determined.
 Thus $\theta$ is a covering as claimed.

	Having constructed a covering map $\theta : (\hat{X},S)\to(X,T)/\G$, it remains to show the cover is regular. 
	This requires showing that the deck transformations are transitive on the fibers of $\theta$. 
	First note that for $g\in \G$ and $v\in VT$, condition (\ref{item:eq}) implies that $g_v\hat{Q}_vg_v^{-1}=\hat{Q}_{gv}$, and so if $v, gv\in VS$ then the map $g_v:X_v\to X_{gv}$ descends to an isomorphism $\hat{g}_v:\hat{X}_v\to\hat{X}_{gv}$. Similarly, if $e,ge\in ES$ then we get an isomorphism $\hat{g}_e:\hat{X}_e\to\hat{X}_{ge}$.

	Now returning to the regularity of $\theta$, suppose that $\hat x_1, \hat x_2 \in \theta^{-1}(x)$ for some vertex $x$ in a vertex space of $(X,T)/\G$.
	Then $\hat x_i \in \hat X_{v_i}$, where $v_i \in VS$. The point $\hat x_i$ corresponds to an orbit $\hat Q_{v_i} \cdot x_i$ for some $x_i \in X_{v_i}$, and $\theta(\hat x_i)$ corresponds to the orbit $\G \cdot x_i$.
	So $\theta(\hat x_1) = \theta(\hat x_2)$ implies that $\G\cdot{x_1} = \G\cdot{x_2}$. Hence, there exists $g \in \G$ such that $gx_1 = x_2$.
	Let $v := v_1$.
	We construct a deck transformation $\hat{f}:(\hat{X},S)\to(\hat{X},S)$ that extends the map $\hat{g}_v:\hat X_v=\hat X_{v_1}\to \hat X_{v_2}$, such that the restriction of $\hat{f}$ to each vertex space $\hat{X}_u$ is a map $\hat{f}_u=\hat{h}_u:\hat{X}_u\to\hat{X}_{\hat{f}u}$ for some $h\in \G$, and similarly for the restrictions to edge spaces. 
	It is clear from these local maps that the covering $(\hat{X},S)\to(X,T)/\G$ will be invariant under $\hat{f}$. 
	We will define $\hat{f}$ inductively by fanning out from $v$, so we first set $\hat{f}v=gv=v_2$ and $\hat{f}_v=\hat{g}_v$. 
	
	Given $e\in\lk(v)\cap ES$, it might be that $ge\notin ES$. However, by the construction of $S$, there exists $h\in \G_{gv}$ such that $q_{gv}(h)\in\hat{Q}_{gv}$ and $hge\in ES$. We then define $\hat{f} e=hge$ and $\hat{f}_e=\widehat{(hg)}_e$. We now consider the following commutative diagram. 
	\begin{equation*}
		\begin{tikzcd}[
			ar symbol/.style = {draw=none,"#1" description,sloped},
			isomorphic/.style = {ar symbol={\cong}},
			equals/.style = {ar symbol={=}},
			subset/.style = {ar symbol={\subset}}
			]
			X_e\ar{d}\ar{r}{g_e}& X_{ge}\ar{d}\ar{r}{h_{ge}} &X_{hge}\ar{d}\\
			X_v\ar{r}{g_v}& X_{gv} \ar{r}{h_{gv}} &X_{gv}
		\end{tikzcd}
	\end{equation*}
	Since $q_{gv}(h)\in\hat{Q}_{gv}$, we see $\hat h_{gv}:\hat X_{gv}\to \hat X_{gv}$ is the identity, so the above commutative diagram  descends to the following commutative diagram.

		\begin{equation*}
		\begin{tikzcd}[
			ar symbol/.style = {draw=none,"#1" description,sloped},
			isomorphic/.style = {ar symbol={\cong}},
			equals/.style = {ar symbol={=}},
			subset/.style = {ar symbol={\subset}}
			]
			\hat{X}_e\ar{d}\ar{r}{\hat{f}_e=\widehat{(hg)}_e}&\hat{X}_{\hat{f}e}\ar{d}\\
			\hat{X}_v\ar{r}{\hat{f}_v=\hat{g}_v}&\hat{X}_{gv}.
		\end{tikzcd}
	\end{equation*}
	
	If $u=\iota(e)\in VS$ is the other end of $e$, then we continue the definition of $\hat{f}$ by setting $\hat{f}u=hgu$ and $\hat{f}_u=\widehat{(hg)}_u$. 
	 We then repeat the procedure for other edges in $\lk(v)\cap ES$. Note that the map $g:\lk(v)\to\lk(gv)$ will send edges in different $\hat{Q}_v$-orbits to edges in different $\hat{Q}_{gv}$-orbits (again using condition (\ref{item:eq})), while composing with the various $h$ does not change the orbits; this means that $\hat{f}:\lk(v)\cap ES\to\lk(gv)\cap ES$ is an injection, and hence a bijection.
	
	 At the next stage we fan out from $u$ by doing a similar thing for edges in $\lk(u)\cap ES$ (other than $\bar{e}$), and so on. 
	 We have already noted that $\hat{f}$ induces bijections on links in $S$, so it must define an automorphism of $S$, and hence an automorphism of $(\hat{X},S)$. Finally, because we were free to choose the initial points $\hat x_1, \hat x_2 \in \theta^{-1}(x)$, the collection of all such $\hat{f}$ must form the complete group of deck transformations of the covering $(\hat{X},S)\to(X,T)/\G$, which shows that this covering is regular.         
    \end{proof}

    To prove Theorem~\ref{thm:commoncover_BV}, we will first find subgroups $\hat{Q}_v \trianglelefteq Q_v$ that satisfy the common cover, equivariance, and gluing condition hypotheses of Proposition~\ref{prop:lf_tree_assump}, then we will apply Leighton's Theorem for graphs of objects to the cover $(\hat{X}, S)$ of $(X,T)/\G$ and $(X,T)/\G'$ provided by Proposition~\ref{prop:lf_tree_assump}. 
    The first step in finding the subgroups $\hat{Q}_v$ will be taking finite-index subgroups $\check Q_v \leqslant Q_v$ that will satisfy the common cover and equivariance conditions.

    \begin{defn}
      The \emph{localized cores} of the localized vertex groups are the following intersections:
    \begin{equation*}
\check{Q}_v:=\bigcap_{g\in G, \; u = gv}  g_v^{-1}  q_u(\G_{u}) g_v \cap  g_v^{-1} q_u(\G_{u}') g_v \leq  Q_v,
\end{equation*}
        where $g_v^{-1}q_u(\Gamma_u)g_v := \{g_v^{-1} \circ q_u(\gamma) \circ g_v \in \Isom(X_v) \,|\, \gamma \in \Gamma_u\}$.
    \end{defn}

\begin{lem}
    If the localized vertex groups  act discretely on their associated vertex spaces, then the localized core
	$\check{Q}_v$ is a finite-index normal subgroup of $Q_v$, and $g_v\check{Q}_v g_v^{-1}=\check{Q}_{gv}$ for all $g\in G$.
\end{lem}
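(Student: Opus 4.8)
The plan is to show that $\check{Q}_v$, although defined as an intersection over all of $G$, is actually finite-index because all of its defining subgroups have index bounded by a single constant in a finitely generated group, and then to deduce normality and equivariance by formal reindexing arguments using the cocycle relation $(gh)_v = g_{hv}\circ h_v$. First I would record two finiteness inputs. Since $\G$ acts geometrically on $(X,T)$, each vertex stabilizer $\G_u = \G\cap G_u$ acts properly and cocompactly on $X_u$; as $X_u$ is a connected locally finite cell complex, hence a proper geodesic space, the Milnor--Schwarz lemma shows $\G_u$ is finitely generated, and the freeness assumption (condition (\ref{item:freeAction})) gives that $q_u$ restricts to an isomorphism onto $q_u(\G_u)\leq Q_u$. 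Because $Q_v$ acts discretely on the locally finite complex $X_v$ this action is proper, and since $q_v(\G_v)\leq Q_v$ already acts cocompactly, the standard fact that a cocompact subgroup of a proper action has finite index shows that $q_v(\G_v)$, and likewise $q_v(\G_v')$, has finite index in $Q_v$. In particular $Q_v$ is finitely generated.

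Next I would obtain a uniform index bound on the subgroups intersected to form $\check{Q}_v$. Conjugation by $g_v$ is an isomorphism $Q_v\to Q_{gv}$ carrying $g_v^{-1}q_{gv}(\G_{gv})g_v$ onto $q_{gv}(\G_{gv})$, so $[Q_v : g_v^{-1}q_{gv}(\G_{gv})g_v] = [Q_{gv} : q_{gv}(\G_{gv})]$. If $gv = \gamma v_i$ for some $\gamma\in\G$ and orbit representative $v_i$, then $\G_{gv} = \gamma\G_{v_i}\gamma^{-1}$ and $\gamma_{v_i}$ conjugates $q_{v_i}(\G_{v_i})$ onto $q_{gv}(\G_{gv})$, so this index equals $[Q_{v_i}:q_{v_i}(\G_{v_i})]$ and depends only on the $\G$-orbit of $gv$. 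As $\G$ acts cocompactly, there are finitely many $\G$-orbits of vertices, so all these indices and their primed analogues are bounded by a single constant $N$. Thus every subgroup appearing in the intersection has index at most $N$ in the finitely generated group $Q_v$; since a finitely generated group has only finitely many subgroups of each bounded index, the intersection of all index-$\leq N$ subgroups of $Q_v$ is already finite-index, and as $\check{Q}_v$ contains it, $\check{Q}_v$ has finite index in $Q_v$.

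Finally I would treat normality and equivariance by reindexing. For normality, writing $\alpha = q_v(s)$ with $s\in G_v$, the cocycle identity yields $(gs^{-1})_v = g_v\circ s_v^{-1}$, so $\alpha\,(g_v^{-1}q_{gv}(\G_{gv})g_v)\,\alpha^{-1} = (gs^{-1})_v^{-1}q_{gv}(\G_{gv})(gs^{-1})_v$; since $gs^{-1}$ ranges over all of $G$ as $g$ does, conjugation by $\alpha$ permutes the defining family of subgroups and hence fixes $\check{Q}_v$, and the same holds for the primed family. For equivariance, fix $g\in G$, set $u = gv$, and reindex the intersection defining $\check{Q}_u$ by $k = hg^{-1}$; then $ku = hv$ and $(hg^{-1})_u = h_v\circ(g_v)^{-1}$, so the $k$-th term of $\check{Q}_u$ equals $g_v\,(h_v^{-1}q_{hv}(\G_{hv})h_v)\,g_v^{-1}$, which is exactly $g_v$ times the $h$-th term of $\check{Q}_v$ times $g_v^{-1}$, giving $g_v\check{Q}_vg_v^{-1} = \check{Q}_{gv}$. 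The one genuinely delicate point is the finite-index claim: the intersection is a priori over all of $G$, and it is the combination of the uniform index bound coming from cocompactness with the finite generation of $Q_v$ coming from Milnor--Schwarz that collapses it to a finite-index subgroup; normality and equivariance are then formal consequences of the cocycle calculus.
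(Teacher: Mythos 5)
Your proof is correct and follows essentially the same route as the paper: equivariance by reindexing the defining intersection, normality as a consequence, and finite index via a uniform bound on the indices of the intersected subgroups (coming from cocompactness and finitely many orbits) combined with the fact that the finitely generated group $Q_v$ has only finitely many subgroups of each bounded index. The only cosmetic difference is that you obtain the uniform index bound through conjugation isomorphisms between orbit representatives and the proper-plus-cocompact argument, whereas the paper phrases it via fundamental domains of uniformly bounded size together with discreteness of the $Q_v$-action.
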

\begin{proof}
	It is immediate from the definition that $g_v\check{Q}_v g_v^{-1}=\check{Q}_{gv}$, and this implies the normality of $\check{Q}_v$ in $Q_v$. 
	
	The groups $\G$ and $\G'$ act geometrically on $(X,T)$, so there is a constant $N$ such that for all $v\in VT$, $\G_v$ and $\G'_v$ each act on $X_v$ with fundamental domain a subcomplex consisting of at most $N$ simplices. The same is then true for $q_v(g^{-1}\G_{gv}g)$ and $q_v(g^{-1}\G'_{gv}g)$ acting on $X_v$, and by discreteness of the $Q_v$-action we see that these are subgroups of $Q_v$ of uniformly bounded index. As $\check{Q}_v$ is an intersection of such subgroups, and $Q_v$ is finitely generated (as it acts geometrically on $X_v$), it follows that $\check{Q}_v$ has finite index in $Q_v$.
\end{proof}
    
    In the proof of Theorem~\ref{thm:commoncover_BV} below, we define $\hat Q_v$ to be a $G$-equivariant choice of finite-index subgroup of $\check Q_v$, which will guarantee that the equivariance and common cover conditions of Proposition~\ref{prop:lf_tree_assump} are still satisfied.
    We ensure that this choice of finite-index subgroup satisfies the gluing condition by applying the commanding hypothesis.

\begin{proof}[Proof of Theorem~\ref{thm:commoncover_BV}]
    We will first construct groups $\{\hat{Q}_v \leq Q_v \, | \, v \in VT\}$ satisfying the conditions of Proposition~\ref{prop:lf_tree_assump} by using the commanding assumption and the localized cores. Then, we apply Leighton's Theorem for graphs of objects to the cover $(\hat{X}, S)$ of $X/\G$ and $X/\G'$ guaranteed by Proposition~\ref{prop:lf_tree_assump}. 
    
    The outline of the construction of the desired groups $\{\hat{Q}_v \leq Q_v \, | \, v \in VT\}$ is as follows. We begin by using the commanding hypothesis on the groups $\{Q_v \, | \, v \in VT\}$ to obtain finite-index subgroups $\dot{Q}_v^e \leq Q_v^e$ for all $v \in VT$ and $e \in \lk(v)$. Then, we specify finite-index subgroups $\hat{Q}_v^e \leq \dot{Q_v^e}$ that are normal in $Q_v^e$, satisfy $F_v^e(\hat{Q}_v^e) = F_u^{\bar{e}}(\hat{Q}_u^{\bar{e}})$ for an edge $e = (u,v)$, and so that $F_v^e|_{\hat{Q}_v^e}$ is an isomorphism onto its image. Finally, we apply the commanding hypothesis to obtain $\hat{Q}_v$ so that $\hat{Q}_v \cap Q_v^e = \hat{Q}_v^e$. We ensure that our constructions are $G$-equivariant throughout. 
    
    We first specify, for each $v \in VT$, a choice of finite-index subgroups $\dot{Q}_v^e \leq Q_v^e$ for all $e \in \lk(v)$.  
    Let $v \in VT$. Let $\{e_i\}_{i=1}^k$ be a finite set of $G_v$-orbit representatives of edges in $\lk(v)$ (or equivalently $Q_v$-orbit representatives). Let 
    \[\check{Q}_v^e:=\check{Q}_v\cap Q_v^e\triangleleft Q_v^e,\] 
    where $\check{Q}_v$ is the localized core. 
    The restriction of $F_v^e$ to $\check Q_v^e$ is injective since $\check Q_v^e \leq q_v(\G_v^e)$, and $F_v^e$ is injective on $q_v(\G_e)$ by Remark~\ref{rem:OnConditions}.
    By our hypotheses, $Q_v$ commands $\{ Q_v^{e_i}\}_{i=1}^k$ so there exist finite-index subgroups $\dot{Q}_v^{e_i} \trianglelefteq Q_v^{e_i}$ as in Definition~\ref{defn:commands}.
    We may assume that $\dot{Q}_v^{e_i}\triangleleft \check Q_v^{e_i}$. Define $\dot{Q}_v^e\triangleleft \check Q_v^e$ for all $e\in\lk(v)$ by conjugating. 
    Furthermore, define families $\{\dot{Q}_u^e\}_{e \in \lk(u)}$ for all $u\in G\cdot v$ by conjugating, so  $g_u\dot{Q}_{u}^{e}g_u^{-1}=\dot{Q}_{gu}^{ge}$ for all $g\in G$, $u\in G\cdot v$ and $e\in\lk(u)$. 
    We define $\{\dot{Q}_u^e\}_{e \in ET}$ by repeating the above procedure for each $G$-orbit of vertices. 
    One upshot is that we can bound the indices $[Q_e:F_v^e(\dot{Q}_v^e)]$ by some constant $M$ independent of $e$ and $v$ since there are finitely many vertex and edge orbits.
   
    To now define the subgroups $\hat{Q}_v^e \leq \dot{Q}_v^e \leq Q_v^e$, let $\hat{Q}_e\trianglelefteq Q_e$ be the intersection of all subgroups of $Q_e$ of index at most $M$. 
    Then, $\hat{Q}_e$ has finite index in $Q_e$, and $\hat{Q}_e=\hat{Q}_{\bar{e}}$. 
    Let $\hat Q_v^e = (F_v^e)^{-1}(\hat Q_e) \cap \dot{Q}_v^e$.
    The map $F_v^e$ is injective on $\dot{Q}_v^e$ since $\dot{Q}_v^e\leq \check{Q}_v^e$, and observe that by construction $\hat Q_e \leq F_v^e(\dot Q_v^e)$, so $F_v^e$ will isomorphically identify $\hat Q_v^e$ with $\hat Q_e$.
    Since $g_e Q_e g_e^{-1}=Q_{ge}$ for $g\in G$, the characteristic definition of $\hat{Q}_e$ and the $G$-equivariance of $\check Q_v$ and $\check Q_v^e$ implies that $g_e\hat{Q}_e g_e^{-1}=\hat{Q}_{ge}$ and that $g_v\hat{Q}_{v}^{e}g_v^{-1}=\hat{Q}_{gv}^{ge}$.
   
    Finally, apply the fact that $Q_v$ commands the subgroups $\{Q_v^{e_i}\}_{i=1}^k$ and the fact that $\hat{Q}_v^{e_i} \leq \dot{Q}_v^{e_i}$ to obtain a finite-index normal subgroup $\hat{Q}_v\triangleleft Q_v$ such that $\hat{Q}_v\cap Q_v^{e_i}=\hat{Q}_v^{e_i}$. By normality of $\hat{Q}_v$ and $G$-equivariance of the $\hat{Q}_v^e$, we in fact have that $\hat{Q}_v\cap Q_v^e=\hat{Q}_v^e$ for all $e\in\lk(v)$. By intersecting with $\check{Q}_v$ we can assume that $\hat{Q}_v\leq \check{Q}_v$.  By the $G$-equivariance of the subgroups $\check{Q}_v$, $\dot{Q}_v^e$ and $\hat{Q}_v^e$, we can choose  the subgroups $\hat{Q}_v$ as above such that  $g_v\hat{Q}_v g_v^{-1}=\hat{Q}_{gv}$ for all  $v\in VT$ and $g\in G$. Hence, condition~(2) is satisfied.  By construction, condition~(1) of Proposition~\ref{prop:lf_tree_assump} is satisfied.  
    Condition~(3) holds because $F_v^e$ restricts to an isomorphism from $\hat Q_v^e$ to $\hat Q_e$.

    There exists a pair of regular coverings $(\hat{X},S)\to(X,T)/\G$ and $(\hat{X},S)\to(X,T)/\G'$ by Proposition~\ref{prop:lf_tree_assump}. 
    The covering $(\hat{X},S)\to(X,T)/\G$ corresponds to a subgroup $\hat{\G}\triangleleft \G$. 
    Since the deck transformation group $\G/\hat{\G}$ acts properly, cocompactly and discretely on the tree $S$ (note this is the deck transformation action constructed in Proposition~\ref{prop:lf_tree_assump}, not the restriction of the original action of $\G$ on $T$), there is a finite-index subgroup $\bar \G\leq \G$ containing $\hat \G$ such that $\bar \G / \hat \G$ acts freely on $S$.
    The covering then factors as $(\hat{X},S)\to(\hat{X},S)/\bar{\G}\to (X,T)/\G$, where the first covering restricts to isomorphisms between vertex and edge spaces.  Similarly, we have a composition of coverings $(\hat{X},S)\to(\hat{X},S)/\bar{\G}'\to (X,T)/\G'$. Finally, we obtain a common finite cover of $(\hat{X},S)/\bar{\G}$ and $(\hat{X},S)/\bar{\G}'$ by applying Theorem \ref{thm:GoO}. This completes the proof of Theorem~\ref{thm:commoncover_BV}, so $\G$ and $\G'$ are abstractly commensurable.
    \end{proof}

        \section{Action rigidity}

\subsection{Ends and accessibility for locally compact groups}

	We recall the definition of ends of a locally compact compactly generated group \cite[\S 19.4.2]{cornulier2018quasiisometric}.
	The \emph{space of ends} $E(X)$ of a geodesic metric space $X$ is the inverse limit of $\{\pi_0(X\setminus B)\}_{B}$, where $B$ ranges over bounded subsets of $X$. The \emph{number of ends} of $e(X)$ of $X$ is $e(X)\coloneqq |E(X)|\in \bN\cup\{\infty\}$. A quasi-isometry $f:X\to Y$ induces homeomorphism $E(X)\cong E(Y)$, so that $e(X)=e(Y)$. If $G$ is a compactly generated locally compact group, then $e(G)$ is defined to be the number of ends of its Cayley graph with respect to a compact generating set, which is well-defined up to quasi-isometry. Although this graph is not proper if $G$ is not discrete, this does not matter as we defined ends using bounded rather than compact subsets.
	
	Suppose $\Gamma$ is a finitely generated group and that $\rho:\Gamma\to G$ is a uniform lattice embedding into a locally compact group $G$. Then Lemma~\ref{lem:finitevscompact} implies $G$ is compactly generated. Moreover, \cite[Propositions 4.B.4. and 5.C.3]{cornulierdlH2016metric} imply that $\rho$ is a quasi-isometry, where $\Gamma$ and $G$ are equipped with the word metric with respect to finite and compact generating sets respectively. Consequently, we have the following:
\begin{lem}\label{lem:ends_lattice}
	If a finitely generated group $\Gamma$ is  a uniform lattice in a locally compact group $G$, then $e(\Gamma)=e(G)$.
\end{lem}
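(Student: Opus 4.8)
The plan is to reduce the statement to the quasi-isometry invariance of the number of ends, exploiting that a finitely generated uniform lattice is quasi-isometric to its ambient group. First I would apply Lemma~\ref{lem:finitevscompact}(1): as $\Gamma$ is finitely generated, $G$ is compactly generated. Fixing a finite generating set $S_\Gamma$ for $\Gamma$ and a compact generating set $S_G$ for $G$, and equipping each with its word metric, the inclusion $\Gamma\hookrightarrow G$ is a quasi-isometry by \cite[Proposition~5.C.3]{cornulierdlH2016metric} (this is the Milnor--Schwarz lemma for locally compact groups, \cite[Theorem~4.C.5]{cornulierdlH2016metric}, which is how the uniform lattice ``acts geometrically on a Cayley graph of $G$''). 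Since each group is quasi-isometric to its Cayley graph, the geodesic spaces $\mathrm{Cay}(\Gamma,S_\Gamma)$ and $\mathrm{Cay}(G,S_G)$ are quasi-isometric.

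It then suffices to show that the number of ends defined in this section---via the unbounded components of complements of bounded subsets---is a quasi-isometry invariant of geodesic metric spaces, since $e(G)$ is by definition the number of ends of $\mathrm{Cay}(G,S_G)$. Here I would argue directly from the coarse definition: a $(K,A)$-quasi-isometry $f$ with coarse inverse $\bar f$ carries bounded sets to bounded sets and carries the region outside a large ball coarsely into the region outside a large ball, while coarsely preserving which points can be joined by paths remaining far from a fixed basepoint. From this one extracts mutually inverse correspondences between the unbounded complementary components of the two spaces at all sufficiently large scales, and hence an equality of the two supremal counts. Combined with the first paragraph, this gives $e(\Gamma)=e(G)$.

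The main obstacle is exactly this invariance step, and the subtlety it must navigate is that $\mathrm{Cay}(G,S_G)$ is typically not locally finite, so the coarse count used here may differ from the topological number of ends of \cite{DrutuKapovich18}. The point I would emphasize is that the definition in this section refers only to the metric and its bounded subsets, so properness of the spaces is never invoked and the coarse-connectivity argument applies to arbitrary geodesic spaces; this is precisely the invariance already used implicitly when asserting that $e(G)$ does not depend on the choice of compact generating set $S$.
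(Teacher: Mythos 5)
Your proposal is correct and follows essentially the same route as the paper: the paper's entire proof is the observation that $\Gamma$ acts geometrically on a Cayley graph of $G$ with respect to a compact generating set (hence is quasi-isometric to it), combined with the quasi-isometry invariance of the coarse end count that the paper already invokes implicitly when asserting $e(G)$ is independent of the choice of compact generating set. You simply make explicit the two ingredients (Lemma~\ref{lem:finitevscompact} plus \cite[Proposition 5.C.3]{cornulierdlH2016metric}, and the invariance argument for geodesic spaces) that the paper leaves as a one-line remark.
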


We make use of the following result of Houghton:

\begin{thm}[{\cite{houghton1974Ends},\cite[\S 19.4.2]{cornulier2018quasiisometric}}]\label{thm:houghton_ends}
	Let $G$ be a compactly generated locally compact group. If $e(G)=\infty$, then $G$ is compact-by-(totally disconnected).
\end{thm}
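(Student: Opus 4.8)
The plan is to analyze $G$ through its identity component $G^0$, which is always a closed normal subgroup and which fits into the exact sequence $1\to G^0\to G\to G/G^0\to 1$ with $G/G^0$ totally disconnected. The governing dichotomy is whether $G^0$ is compact. If $G^0$ \emph{is} compact, then $G$ is already compact-by-(totally disconnected): $G^0$ is a compact normal subgroup with totally disconnected quotient, which is exactly the desired conclusion, and this direction uses nothing about ends. Hence the whole content of the theorem is to show that the hypothesis $e(G)=\infty$ forces $G^0$ to be compact; equivalently, that a non-compact connected normal subgroup is incompatible with having infinitely many ends.

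To turn the hypothesis $e(G)=\infty$ into usable structure, I would invoke the locally compact analogue of Stallings' theorem (in the spirit of Abels \cite{abels1973/74speckerkompaktifizierungen}): a compactly generated locally compact group with infinitely many ends acts continuously and cocompactly, with no global fixed point, on a simplicial tree $T$ whose edge stabilizers are compact and open. Granting such an action, the vertex stabilizers are open as well, since each vertex of $T$ is incident to an edge (the tree is nontrivial, having no global fixed point), and a vertex stabilizer contains the compact open stabilizer of any incident edge.

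The connectedness of $G^0$ then does the work. Any open subgroup of a connected topological group is also closed, hence equals the whole group; applying this to $G^0\cap G_v$ inside $G^0$ for each (open) vertex stabilizer $G_v$ shows that $G^0$ is contained in every vertex stabilizer, i.e.\ $G^0$ fixes $T$ pointwise. Therefore $G^0$ lies in the kernel of the $G$-action on $T$. That kernel is an intersection of closed stabilizers, hence closed, and it is contained in a single compact edge stabilizer $G_e$; a closed subgroup of a compact group is compact, so the kernel, and therefore $G^0$, is compact. By the first paragraph this yields that $G$ is compact-by-(totally disconnected), completing the proof.

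The step I expect to be the main obstacle is producing the tree action from $e(G)=\infty$: establishing the locally compact Stallings–Abels splitting over a compact subgroup requires an accessibility/ends argument for compactly generated locally compact groups and must be handled carefully to avoid circularity with the statement at hand. A fallback route avoids the tree entirely and instead uses Gleason–Yamabe to write the connected group $G^0$ as (compact)-by-(connected Lie), observes that a connected Lie group is a connected manifold and so has at most two ends, whence $e(G^0)\le 2$, and then transfers this bound to $G$ via a theorem on ends of extensions $1\to N\to G\to Q\to 1$ with $N$ and (possibly) $Q$ noncompact. I find the tree argument cleaner, since the transfer theorem in the fallback is itself essentially as hard as the result being proved, so I would make the Stallings–Abels tree the central input and expend the effort there.
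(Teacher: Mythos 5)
The paper offers no proof of this statement at all: it is imported verbatim from Houghton and from Cornulier's treatment (\S 19.4.2 of the cited reference), so your proposal has to be measured against those proofs. Your deduction \emph{from} the tree action \emph{to} the conclusion is correct, and can even be shortened: a compact open edge stabilizer is already a compact open subgroup of $G$, so Lemma \ref{lem:vanDantzig} gives compact-by-(totally disconnected) at once, without routing through $G^0$. The genuine gap is the input on which everything rests. The Stallings--Abels splitting theorem in the form you need it --- a compactly generated locally compact group, with no total-disconnectedness hypothesis, having infinitely many ends acts cocompactly on a tree with compact \emph{open} edge stabilizers --- is not available independently of the theorem you are proving. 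Abels' splitting theorem is for totally disconnected groups; Kr\"on--M\"oller's ends and accessibility theory runs on rough Cayley graphs, whose existence is equivalent to having a compact open subgroup; and Cornulier's splitting/accessibility results (the paper's Theorem \ref{thm:dunwoody_accessibility} is of this kind, and additionally needs compact \emph{presentation}) are obtained by first invoking precisely \S 19.4.2, i.e.\ the statement at hand, to reduce to the totally disconnected case. This is not an accident of exposition: since a compact open edge stabilizer instantly yields the conclusion of Theorem \ref{thm:houghton_ends}, any proof of your input must contain a proof of the theorem itself. So your main route is circular --- you have pushed the entire content of the statement into a black box that you can neither cite nor avoid proving.

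The route you demote to a fallback is, in fact, essentially the proof of Houghton and of Cornulier: show the identity component $G^0$ is compact via the structure theory of connected locally compact groups, then use that $G/G^0$ is totally disconnected. Two corrections to your sketch of it. First, ``a connected Lie group is a connected manifold and so has at most two ends'' is faulty reasoning: connected manifolds can have infinitely many ends (an infinite-genus surface, for instance). What is true is that a connected Lie group is homeomorphic to $K\times\R^n$ with $K$ a maximal compact subgroup, and such a space has $0$, $1$, or $2$ ends. Second, your claim that the transfer step (a noncompact closed normal subgroup with noncompact quotient forces one end; with cocompact quotient it forces $e(G)=e(G^0)\leq 2$) is ``essentially as hard as the result being proved'' has it backwards: that lemma admits a direct coarse-geometric proof using neither splittings nor compact open subgroups, which is exactly what makes the standard argument non-circular. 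That transfer lemma, not the tree, is where the effort should be expended.
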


We will use the following reformulation of an accessibility result due to Cornulier, building on work of Stallings, Abels, Dunwooody and Kr\"on--M\"oller:

\begin{thm}[\cite{dunwoody1985accessibility,kronmoller08roughcayley,cornulier2018quasiisometric}]\label{thm:dunwoody_accessibility}
	Let $G$ be an infinite-ended  compactly presented locally compact  group. Then $G$ has a continuous cocompact action on a tree with compact edge stabilizers and vertex stabilizers compactly generated and having at most one end.
\end{thm}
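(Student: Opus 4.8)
The plan is to follow the classical Stallings--Dunwoody strategy, transported to the locally compact setting, where the two essential ingredients are a Stallings-type splitting theorem over compact subgroups and a Dunwoody-type accessibility (termination) argument powered by compact presentability.

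First I would reduce to the totally disconnected case. Since $e(G)=\infty$, Houghton's theorem (Theorem~\ref{thm:houghton_ends}) ensures $G$ is compact-by-(totally disconnected): there is a compact normal subgroup $W\lhd G$ with $\bar G := G/W$ totally disconnected. The quotient map $\pi:G\to\bar G$ has compact kernel, hence is a quasi-isometry, so $\bar G$ is again compactly presented (cf.\ \cite[Corollary 8.A.5]{cornulierdlH2016metric}) and infinite-ended. Moreover, any continuous cocompact $\bar G$-action on a tree $T$ with the desired stabilizer properties pulls back along $\pi$ to a $G$-action with the same properties: edge stabilizers $\pi^{-1}(\bar G_e)$ are compact (being compact-by-compact), while vertex stabilizers $\pi^{-1}(\bar G_v)$ are compactly generated and, having compact kernel over $\bar G_v$, have the same number of ends $\leq 1$. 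Thus it suffices to treat a totally disconnected $G$, which by van Dantzig (Theorem~\ref{thm:vanD}) contains a compact open subgroup.

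Next I would build a combinatorial model on which to run the argument. By Lemma~\ref{lem:vanDantzig}, $G$ acts geometrically on a connected locally finite vertex-transitive graph $X$, its Cayley--Abels graph, which is quasi-isometric to $G$ and hence infinite-ended. Because $G$ is compactly presented, $X$ is coarsely simply connected, so attaching $G$-equivariantly a single orbit of $2$-cells along all loops of length at most $M$ for $M$ large enough (as in Lemma~\ref{lem:fin_pres_loops}, cf.\ \cite[Proposition 6.C.4]{cornulierdlH2016metric}) yields a simply connected, locally finite $2$-complex $Y$ on which $G$ acts continuously, cocompactly, with compact open cell stabilizers. The core is then the accessibility argument of Dunwoody \cite{dunwoody1985accessibility}, in the form developed for locally compact groups by Kr\"on--M\"oller \cite{kronmoller08roughcayley} and Cornulier \cite{cornulier2018quasiisometric}. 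Structure-tree theory produces from the infinitely many ends of $X$ a first continuous cocompact action of $G$ on a tree with compact open edge stabilizers; Bass--Serre theory then lets one iterate by splitting any vertex stabilizer that still has more than one end. To see that this process terminates, one encodes each such splitting as a $G$-invariant pattern of disjoint tracks in $Y$ and invokes Dunwoody's key estimate: the number of pairwise non-parallel disjoint tracks in $Y$ is bounded by a constant depending only on the number of cells in a $G$-fundamental domain of $Y$. Since each genuine refinement contributes a new non-parallel track, after finitely many steps every vertex stabilizer has at most one end, yielding the tree $T$; its vertex stabilizers are compactly generated because $G$ is compactly generated, acts cocompactly on $T$, and the edge stabilizers are compact.

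The main obstacle is the termination step: establishing Dunwoody's uniform bound on disjoint non-parallel tracks in the non-discrete, locally compact setting, where one must work with the Cayley--Abels complex $Y$ and compact open cell stabilizers rather than a genuine cocompact action of a discrete group. This is precisely the content of Cornulier's accessibility theorem for compactly presented groups, and it is the only place where the full strength of compact presentation, as opposed to mere compact generation, is required.
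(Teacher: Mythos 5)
Your proposal is correct and takes essentially the same route as the paper: the paper's own proof just verifies the hypothesis of Cornulier's accessibility theorem (an infinite-ended group admits no continuous proper transitive action on $\mathbb{R}$) and then cites that theorem together with Kr\"on--M\"oller and Cornulier's Theorem 19.41 for compact generation of vertex stabilizers, and your Houghton reduction, Cayley--Abels $2$-complex, and Dunwoody tracks argument are precisely the internal machinery of those cited results. Since your termination step (the uniform bound on disjoint non-parallel tracks, the one place compact presentation rather than compact generation is needed) is still deferred to the same citation, your sketch is an unpacking of, not an alternative to, the paper's argument.
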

\begin{proof}[Remark on the Proof]
		It follows from \cite[Corollary 19.39]{cornulier2018quasiisometric} that $G$ does not admit a continuous proper transitive isometric action on the real line.
	It now follows from Definition 19.40 and  Theorem 19.45 of \cite{cornulier2018quasiisometric} that $G$ admits a continuous cocompact action on a tree $T$ with compact edge stabilizers and
	vertex stabilizers having at most one end. Although not explicitly stated in \cite{cornulier2018quasiisometric}, it follows from \cite[Theorem 3.27]{kronmoller08roughcayley} and \cite[Theorem 19.41]{cornulier2018quasiisometric}  that vertex stabilizers are compactly generated.
\end{proof}

\subsection{Action rigidity for infinite-ended groups}

Recall from Definition \ref{defn:commandsfinite} that a group $\Gamma$ \emph{commands its finite subgroups} if for every finite subgroup $\gL \leq \Gamma$, there exists a finite-index subgroup $\Gamma' \leq \Gamma$ so that $\Gamma' \cap \gL = \{1\}$. For example, residually finite groups and virtually torsion-free groups command their finite subgroups.

\begin{thm} \label{thm:actionrigidA}
Suppose $\Gamma$ and $\Gamma'$ are finitely presented infinite-ended groups that command their finite  subgroups. Suppose every one-ended vertex group in a Stallings--Dunwoody decomposition of $\Gamma$ is graphically discrete. If $\Gamma$ and $\Gamma'$ act geometrically on the same proper  metric space, then $\Gamma$ and $\Gamma'$ are abstractly commensurable. In particular, $\Gamma$ is action rigid within the family of groups that command their finite  subgroups.
\end{thm}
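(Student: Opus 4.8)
The plan is to realize $\Gamma$ and $\Gamma'$ as uniform lattices modulo finite kernels in a common locally compact group, extract a tree from accessibility, thicken it to a tree of spaces using graphical discreteness, and conclude with the Common Cover Theorem. First I would apply Proposition~\ref{prop:ARequivalences} to obtain a locally compact group $G$ containing both $\Gamma$ and $\Gamma'$ as uniform lattices modulo finite kernels. Since $\Gamma$ is finitely presented and infinite-ended, Lemmas~\ref{lem:finitevscompact} and~\ref{lem:ends_lattice} show that $G$ is compactly presented with $e(G)=\infty$, so Theorem~\ref{thm:houghton_ends} gives that $G$ is compact-by-(totally disconnected) and in particular contains a compact open subgroup (Lemma~\ref{lem:vanDantzig}). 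Accessibility in the form of Theorem~\ref{thm:dunwoody_accessibility} then furnishes a continuous cocompact $G$-action on a tree $T$ with compact edge stabilizers and compactly generated, at-most-one-ended vertex stabilizers. Restricting this action to $\Gamma$ (resp. $\Gamma'$) yields a finite graph of groups with finite edge groups (as each $\Gamma\cap G_e$ is finite) and at-most-one-ended vertex groups $\Gamma_v=\Gamma\cap G_v$, that is, a Stallings--Dunwoody decomposition; by invariance of the one-ended factors up to isomorphism, each one-ended $\Gamma_v$ is isomorphic to a one-ended vertex group of the hypothesized decomposition, hence graphically discrete.

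The decisive use of graphical discreteness comes next: I claim each vertex stabilizer $G_v$ is compact-by-discrete. When $\Gamma_v$ is finite this is automatic since $G_v$ is then compact. When $\Gamma_v$ is one-ended, I would quotient $G_v$ by the compact normal subgroup $W_v=W\cap G_v$, where $W\lhd G$ witnesses that $G$ is compact-by-(totally disconnected), to obtain a totally disconnected group $G_v/W_v$ in which $\Gamma_v$ remains a uniform lattice modulo finite kernel; graphical discreteness of $\Gamma_v$ and Proposition~\ref{prop:propC} then force $G_v/W_v$, and hence $G_v$, to contain a compact open normal subgroup. Since $\Gamma_v$ is finitely presented (vertex groups of a finite graph of groups decomposition of a finitely presented group with finite edge groups are finitely presented), $G_v$ is compactly presented by Lemma~\ref{lem:finitevscompact}, and $G_e$ is compact hence compactly presented. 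These are precisely the hypotheses of the Model Geometry Theorem~\ref{thm:get_treeofspaces}, which I would apply to obtain a finite refinement $T'$ of $T$ and a geometric $G$-action on a tree of spaces $(X,T')$ whose edge and vertex spaces are simply connected, locally finite cell complexes, whose edge maps are cellular isomorphisms onto disjoint subcomplexes, and whose localized vertex groups $Q_v$ act discretely.

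It remains to verify the hypotheses of the Common Cover Theorem~\ref{thm:commoncover_BV} for suitable finite-index subgroups of $\Gamma$ and $\Gamma'$. Discreteness of $Q_v$ is condition (3), supplied by the Model Geometry Theorem, and simple connectivity is condition (2). For the commanding condition (4), I would observe that each localized edge group $Q_v^e$ is finite: its image $Q_e$ is finite (being the image of a compact group in a discrete automorphism group) and $\ker F_v^e$ is finite by discreteness of $Q_v$ (Definition~\ref{defn:ToS_loc_edges}). Hence it suffices that $Q_v$ commands its finite subgroups in the sense of Definition~\ref{defn:commandsfinite}, and this follows because commanding of finite subgroups passes to arbitrary subgroups, to quotients by finite normal subgroups, and to finite-index overgroups: $\Gamma$ commands its finite subgroups, hence so does $\Gamma_v$, hence so does $q_v(\Gamma_v)=\Gamma_v/(\Gamma_v\cap\ker q_v)$, and finally so does the finite overgroup $Q_v\supseteq q_v(\Gamma_v)$. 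For the free-action condition (1), I would use that $\Gamma$ acts cocompactly on $X$ with finitely many conjugacy classes of finite cell stabilizers, and invoke commanding once more to produce a finite-index normal subgroup $\hat\Gamma\lhd\Gamma$ meeting each such stabilizer trivially, so that $\hat\Gamma$ acts freely on $X$; likewise for $\hat\Gamma'\lhd\Gamma'$.

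Applying Theorem~\ref{thm:commoncover_BV} to $\hat\Gamma,\hat\Gamma'\leq G$ acting on $(X,T')$ then produces a common finite cover of the quotient graphs of spaces $X/\hat\Gamma$ and $X/\hat\Gamma'$; since $X$ is simply connected and the actions are free, this common cover has fundamental group isomorphic to a finite-index subgroup of both $\hat\Gamma$ and $\hat\Gamma'$, so $\hat\Gamma$ and $\hat\Gamma'$ are abstractly commensurable, and therefore so are $\Gamma$ and $\Gamma'$. The final assertion then follows: any group $\Gamma'$ commanding its finite subgroups and acting geometrically on the same proper metric space as $\Gamma$ is quasi-isometric to $\Gamma$, hence finitely presented and infinite-ended, so the argument above applies and yields abstract commensurability, which for discrete groups implies virtual isomorphism; thus $\Gamma$ is action rigid within this family. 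I expect the main obstacle to be the simultaneous arrangement of the freeness and commanding conditions of the Common Cover Theorem alongside the discreteness of the localized vertex groups $Q_v$; this is exactly where graphical discreteness is indispensable, through the passage from $\Gamma_v$ to the compact-by-discrete structure on $G_v$ that feeds the Model Geometry Theorem. A secondary technical point, needed to transfer graphical discreteness from the hypothesized decomposition to the one induced by the accessibility tree, is the invariance of the one-ended Stallings--Dunwoody factors up to isomorphism.
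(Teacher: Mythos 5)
Your proposal is correct and follows essentially the same route as the paper's proof: realize both groups as uniform lattices modulo finite kernels in the isometry group of the common space, apply Houghton's theorem and the accessibility theorem (Theorem~\ref{thm:dunwoody_accessibility}) to obtain the tree, use graphical discreteness of the one-ended vertex groups to make each vertex stabilizer $G_v$ compact-by-discrete, feed this into the Model Geometry Theorem~\ref{thm:get_treeofspaces}, and conclude with the Common Cover Theorem~\ref{thm:commoncover_BV} after using the commanding hypothesis both to arrange free actions of finite-index subgroups and to verify the commanding condition for the finite groups $Q_v^{e}$. The only differences are that you spell out details the paper leaves implicit (the identification of the accessibility vertex groups with the hypothesized Stallings--Dunwoody factors, the reduction to the tdlc case, the finiteness of $Q_v^e$, and the permanence properties of commanding finite subgroups), while omitting only the routine step of passing to a minimal invariant subtree before invoking Theorem~\ref{thm:get_treeofspaces}.
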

\begin{proof}
    Suppose that $\Gamma$ and $\Gamma'$ act geometrically on a proper  metric space~$Y$. 
    Then $G:=\Isom(Y)$ is an infinite-ended compactly presented locally compact  group that contains $\G$ and $\G'$ as uniform lattices modulo finite kernels. 
    By Theorem~\ref{thm:dunwoody_accessibility}, $G$ admits a continuous cocompact action on a tree $T$ with compact edge stabilizers and vertex stabilizers compactly generated and with at most one end. After passing to a subtree if necessary, we may assume the action is minimal. 
    
    Since $G_v$ is open for all $v \in VT$, it contains $\Gamma_v$ as a uniform lattice modulo finite kernel by Lemma~\ref{lem:open_lattice}. By Theorem~\ref{thm:houghton_ends}, $G$ is compact-by-(totally disconnected). Therefore, as  $\Gamma_v$ is graphically discrete, the group $G_v$ contains a compact open normal subgroup. Thus, by Theorem~\ref{thm:get_treeofspaces}, the tree $T$ has a finite refinement $T'$ such that $G$ acts geometrically on a tree of spaces $(X,T')$ that satisfies Assumption \ref{ass:ToS}. Moreover, for all $a \in VT \sqcup ET$, the space $X_a$ is simply connected and the action of $G_a$ on $X_a$ is discrete. The groups $\Gamma$ and $\Gamma'$ are uniform lattices modulo finite kernels in $\Aut(X,T')$.
    
    Since $\Gamma$ and $\Gamma'$ command all of their finite subgroups, we may replace these groups by finite-index subgroups $\hat{\Gamma}\leq \Gamma$ and $\hat{\Gamma'}\leq \Gamma'$ so that the edge and vertex stabilizers of $\hat{\Gamma}$ and $\hat{\Gamma'}$ act freely on the corresponding edge and vertex spaces. In particular, $\hat{\Gamma}, \hat{\Gamma'}$ act freely on $\Aut(X,T')$. The group $Q_v = q_v(G_v)$ as in Notation~\ref{defn:ToS} is discrete, and hence contains $\Gamma_v$ as a finite-index subgroup. Thus, $Q_v$ commands all of its finite subgroups since $\Gamma_v$ does. Therefore, $\hat{\Gamma}$ and $\hat{\Gamma'}$ are weakly commensurable in $\Aut(X,T')$ by Theorem~\ref{thm:commoncover_BV}. 
    In particular,  $\hat{\Gamma}$ and $\hat{\Gamma'}$ are abstractly commensurable, hence so are  $\Gamma$ and $\Gamma'$.
\end{proof}

    By adding a stronger hypothesis to the group $\Gamma$, we obtain that $\Gamma$ is action rigid. 

\begin{defn} \label{def:persistcommand}
  A finitely generated group $\Gamma$ \emph{persistently commands its finite subgroups} if any group $\Gamma'$ virtually isomorphic to $\Gamma$ commands its finite subgroups. 
\end{defn}

\begin{example}
   Virtually polycyclic groups (e.g.\ finitely generated nilpotent groups) persistently  command their finite subgroups by \cite[Theorem 13.77]{DrutuKapovich18}.
\end{example}

    \begin{example} \label{ex:persistent3man}
        The fundamental group of a compact $3$-manifold persistently commands its finite subgroups. Indeed, if a finitely generated group $\Gamma$ is virtually isomorphic to the fundamental group of a compact 3-manifold $M$, then $\Gamma$ contains a finite-index subgroup $\Gamma'$ isomorphic to the fundamental group of a compact 3-manifold $N$ by~\cite[Theorem 1.2]{haissinskylecuire_QIrigid}. 
        The group $\pi_1(N)$ is residually finite by~\cite{hempel_RF3man} together with geometrization; see~\cite{AschenbrennerFriedlWilton15}. Hence, $\Gamma$ is residually finite, and therefore commands its finite subgroups. 
    \end{example}

\begin{example}\label{exmp:virt_cub}
    Any cubulated hyperbolic group persistently commands its finite subgroups.
    This can be deduced from the fact that all cubulated hyperbolic groups are residually finite, due to Agol \cite[Corollary 1.3]{Agol13}.
    Indeed, suppose $\G_1$ is a hyperbolic group that acts geometrically on a CAT(0) cube complex $X$, and let $\G_2$ be virtually isomorphic to $\G_1$.
    We claim that $\G_2$ has a finite-index subgroup that also acts on $X$ geometrically, hence $\G_2$ is residually finite and commands its finite subgroups.
    As $\G_1$ and $\G_2$ are virtually isomorphic, for $i=1,2$ there exist finite-index subgroups $\Gamma'_i\leq \Gamma_i$ and finite normal subgroups $F_i\vartriangleleft \Gamma'_i$ such that $\Gamma'_1/F_1\cong \Gamma'_2/F_2$.
    Since $\Gamma'_1$ is residually finite, there is a finite-index subgroup $\Gamma''_1\leq \Gamma'_1$ intersecting $F_1$ trivially. Thus there is a finite-index subgroup $\Gamma''_2\leq \Gamma'_2$ containing $F_2$ such that $\Gamma''_2/F_2\cong \Gamma''_1F_1/F_1\cong \Gamma''_1$. Since $\Gamma''_1$ acts geometrically on $X$, so does $\Gamma''_2$ as required.
\end{example}

    \begin{lem} \label{lem:commandfiniteinfended}
        If $\Gamma$ is a finitely generated infinite-ended group such that every one-ended vertex group in a Stallings--Dunwoody decomposition of $\Gamma$ commands its finite subgroups, then $\Gamma$ commands its finite subgroups. 
    \end{lem}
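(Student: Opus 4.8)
The plan is to verify the single-subgroup reformulation from Definition~\ref{defn:commandsfinite}: every finite subgroup $\Lambda\leq\Gamma$ admits a finite-index (indeed normal) subgroup meeting it trivially. Write $\Gamma=\pi_1(\mathcal{G})$, where $\mathcal{G}$ is the given Stallings--Dunwoody graph of groups, so $\mathcal{G}$ has finite underlying graph, finite edge groups $G_e$, and vertex groups $G_v$ that are either finite or one-ended. Every vertex group commands its finite subgroups: the one-ended ones by hypothesis, and the finite ones trivially (take the trivial subgroup). Recalling the equivalent form and intersecting over a finite collection, each $G_v$ admits, for any finite family of finite subgroups, a finite-index normal subgroup meeting each member trivially.

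First I would reduce to a single finite subgroup lying in a vertex group. Given a finite $\Lambda\leq\Gamma$, it acts on the Bass--Serre tree of $\mathcal{G}$; since a finite group acting on a tree fixes a vertex, $\Lambda$ is conjugate into some vertex group, and after conjugating we may assume $\Lambda\leq G_{v_0}$. Because the separating subgroup produced below will be the kernel of a homomorphism, hence normal, this conjugation is harmless.

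Next, at each vertex $v$ I would use the commanding property to choose a finite-index normal subgroup $\hat{G}_v\triangleleft G_v$ meeting every incident edge-group image $\Theta_e(G_e)$ trivially, and additionally meeting $\Lambda$ trivially when $v=v_0$. Setting $F_v:=G_v/\hat{G}_v$ and $\phi_v\colon G_v\twoheadrightarrow F_v$, each $\phi_v$ is then injective on every incident edge group, and $\phi_{v_0}$ is injective on $\Lambda$. Consequently the maps $\bar\Theta_e:=\phi_{\tau(e)}\circ\Theta_e$ are injective, so they are legitimate edge maps of a new finite graph of groups $\bar{\mathcal{G}}$ over the same underlying graph, with vertex groups $F_v$ and edge groups $G_e$. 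The family $\{\phi_v\}$ defines a morphism of graphs of groups $\mathcal{G}\to\bar{\mathcal{G}}$, inducing a homomorphism $\Phi\colon\Gamma\to P:=\pi_1(\bar{\mathcal{G}})$ whose restriction to $G_{v_0}$ is $\phi_{v_0}$; in particular $\Phi|_\Lambda=\phi_{v_0}|_\Lambda$ is injective.

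The main point, and the step I expect to require the most care, is passing from the infinite intermediate group $P$ to a genuine finite quotient of $\Gamma$ without destroying injectivity on $\Lambda$. Here $P$ is the fundamental group of a finite graph of finite groups, hence virtually free and in particular residually finite. The image $\Phi(\Lambda)\cong\Lambda$ is a finite subgroup of $P$, so residual finiteness supplies a finite quotient $\rho\colon P\to F$ with $\rho|_{\Phi(\Lambda)}$ injective. Then $\phi:=\rho\circ\Phi\colon\Gamma\to F$ is a homomorphism to a finite group with $\phi|_\Lambda$ injective, so $\Gamma':=\ker\phi$ is a finite-index normal subgroup with $\Gamma'\cap\Lambda=\{1\}$ (and $\Gamma'\cap g\Lambda g^{-1}=\{1\}$ for all $g$ by normality). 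As $\Lambda$ was arbitrary, $\Gamma$ commands its finite subgroups. The delicate bookkeeping is the well-definedness of the morphism $\mathcal{G}\to\bar{\mathcal{G}}$ and of $\Phi$: this is precisely where injectivity of the $\bar\Theta_e$, guaranteed by choosing the $\hat{G}_v$ to meet the edge-group images trivially as well as $\Lambda$, is needed so that $\bar{\mathcal{G}}$ is a bona fide graph of groups and the edge relations defining $\Gamma$ are respected.
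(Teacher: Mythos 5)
Your proof is correct. Each step checks out: a finite subgroup $\Lambda$ fixes a vertex of the Bass--Serre tree, so after conjugation (harmless, since your final separating subgroup is a kernel, hence normal) it lies in some $G_{v_0}$; each vertex group, being finite or one-ended, commands any finite collection of its finite subgroups, which yields finite-index normal $\hat{G}_v\triangleleft G_v$ meeting the incident edge-group images (and $\Lambda$ at $v_0$) trivially; injectivity of the induced edge maps $\bar\Theta_e$ makes $\bar{\mathcal G}$ a bona fide graph of finite groups, so $F_{v_0}$ embeds in $P=\pi_1(\bar{\mathcal G})$ and $\Phi|_\Lambda$ is injective; and $P$ is virtually free (Karrass--Pietrowski--Solitar), hence residually finite, so a finite quotient of $P$ injective on $\Phi(\Lambda)$ pulls back to the desired finite-index normal subgroup of $\Gamma$. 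This is, however, a different route from what the paper actually records: the paper's proof is two sentences, asserting that ``elementary arguments about finite covers of graphs of groups'' suffice and otherwise citing Shepherd's Proposition 6.11(1) together with the characterization that a group commands its finite subgroups if and only if its finite subgroups are discrete in the profinite topology. Your argument is, in effect, a complete write-up of the elementary route the paper only gestures at, but organized through quotients rather than covers: instead of building a finite cover of the graph of spaces, you build a morphism to a quotient graph of groups with finite vertex groups and invoke residual finiteness of its (virtually free) fundamental group. What the paper's citation buys is brevity and a conceptual framing (profinite separability of finite subgroups behaves well under graph-of-groups combinations); what your argument buys is self-containedness modulo standard Bass--Serre theory, and it makes visible exactly where the commanding hypothesis on vertex groups enters, namely in choosing the $\hat{G}_v$ compatibly with the finite edge groups.
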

    \begin{proof}
    This can be shown using elementary arguments about finite covers of graphs of groups.
    The result also follows from \cite[Proposition 6.11(1)]{Shepherd23} and the fact that a group commands its finite subgroups if and only if its finite subgroups are discrete in the profinite topology.
    \end{proof}

    \begin{thm} \label{thm:actionrigidity}
        If $\Gamma$ is a finitely presented infinite-ended group so that all one-ended vertex groups in a Stallings--Dunwoody decomposition of $\Gamma$ are graphically discrete and persistently command their finite subgroups, then $\Gamma$ is action rigid.
    \end{thm}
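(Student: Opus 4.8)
The plan is to reduce everything to Theorem~\ref{thm:actionrigidA}: the only new content beyond that theorem is to show that \emph{any} finitely generated group $\Gamma'$ acting geometrically on the same space as $\Gamma$ automatically commands its finite subgroups, after which Theorem~\ref{thm:actionrigidA} applies verbatim. So suppose $\Gamma'$ is finitely generated and $\Gamma,\Gamma'$ act geometrically on a common proper metric space $Y$, and put $G=\Isom(Y)$. By Lemma~\ref{lem:geom_lattice} (cf.\ Proposition~\ref{prop:ARequivalences}) both are uniform lattices modulo finite kernels in $G$; since $\Gamma$ is finitely presented and infinite-ended, Lemmas~\ref{lem:finitevscompact} and~\ref{lem:ends_lattice} make $G$ compactly presented and infinite-ended, hence compact-by-(totally disconnected) by Theorem~\ref{thm:houghton_ends}. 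As in the proof of Theorem~\ref{thm:actionrigidA}, the accessibility result Theorem~\ref{thm:dunwoody_accessibility} (after passing to a minimal subtree) gives a continuous, minimal, cocompact $G$-action on a tree $T$ with compact edge stabilizers and compactly generated, at-most-one-ended vertex stabilizers $G_v$. Restricting to the images $\bar\Gamma,\bar\Gamma'\leq G$ yields Stallings--Dunwoody decompositions of $\bar\Gamma$ and $\bar\Gamma'$ whose one-ended vertex groups are the $\bar\Gamma_v=\bar\Gamma\cap G_v$ and $\bar\Gamma'_v=\bar\Gamma'\cap G_v$ for one-ended $v$ (their ends equal $e(G_v)\leq 1$ by Lemmas~\ref{lem:open_lattice} and~\ref{lem:ends_lattice}).

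The crux is that for each one-ended $v$, the groups $\bar\Gamma_v$ and $\bar\Gamma'_v$ are virtually isomorphic. Since $G_v$ is open, Lemma~\ref{lem:open_lattice} exhibits both as uniform lattices modulo finite kernel in $G_v$. The vertex groups $\Gamma_v$ of the induced Stallings--Dunwoody decomposition of $\Gamma$ itself are, up to conjugacy and isomorphism, the one-ended factors of $\Gamma$, hence graphically discrete by hypothesis; exactly as in the proof of Theorem~\ref{thm:actionrigidA}, graphical discreteness together with $G$ being compact-by-(totally disconnected) forces $G_v$ to contain a compact open normal subgroup $K_v$. Then $G_v/K_v$ is discrete, and the images of $\bar\Gamma_v$ and $\bar\Gamma'_v$ in it are finite-index subgroups with finite kernels, so each of $\bar\Gamma_v,\bar\Gamma'_v$ is virtually isomorphic to $G_v/K_v$, and therefore to the other. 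Here it is essential that graphical discreteness is only invoked for the $\Gamma_v$, where it is granted by hypothesis, rather than for the $\bar\Gamma'_v$, since graphical discreteness is not a virtual-isomorphism invariant.

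Now I would propagate the commanding property. Each $\Gamma_v$ \emph{persistently} commands its finite subgroups, and this is a virtual-isomorphism invariant by Definition~\ref{def:persistcommand}; since $\bar\Gamma'_v$ (and likewise the one-ended vertex group of $\Gamma'$ itself, which differs from $\bar\Gamma'_v$ only by the finite kernel of $\Gamma'\to G$) is virtually isomorphic to $\Gamma_v$, it follows that every one-ended vertex group of the Stallings--Dunwoody decomposition of $\Gamma'$ commands its finite subgroups. Lemma~\ref{lem:commandfiniteinfended} then shows $\Gamma'$ commands its finite subgroups, and the same lemma (persistent commanding implies commanding) shows $\Gamma$ does too. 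At this point $\Gamma$ and $\Gamma'$ both command their finite subgroups, act geometrically on $Y$, and $\Gamma$'s one-ended factors are graphically discrete, so Theorem~\ref{thm:actionrigidA} yields that $\Gamma$ and $\Gamma'$ are abstractly commensurable, hence virtually isomorphic. As $\Gamma'$ was arbitrary, $\Gamma$ is action rigid.

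The step I expect to be the main obstacle is the bookkeeping that makes the hypotheses land on the right groups: one must identify the vertex groups produced by the accessibility tree $T$ with the canonical one-ended factors of $\Gamma$ (so the \emph{isomorphism}-invariant property of graphical discreteness genuinely applies to them) while only ever transferring the \emph{virtual-isomorphism}-invariant property of persistent commanding across the finite kernels of the lattice embeddings and across the passage $\Gamma'\rightsquigarrow\bar\Gamma'\rightsquigarrow\bar\Gamma\rightsquigarrow\Gamma$. Persistent commanding is defined precisely to be robust under exactly these moves, which is what lets the argument go through where a naive commanding hypothesis would fail.
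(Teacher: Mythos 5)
Your proposal is correct and follows essentially the same route as the paper's proof: apply accessibility (Theorem~\ref{thm:dunwoody_accessibility}) to $\Isom(Y)$, use graphical discreteness of the one-ended vertex groups of $\Gamma$ to make each $G_v$ compact-by-discrete, deduce that the corresponding vertex groups of $\Gamma$ and $\Gamma'$ are virtually isomorphic, transfer the commanding property via persistence and Lemma~\ref{lem:commandfiniteinfended}, and finish with Theorem~\ref{thm:actionrigidA}. Your additional bookkeeping (distinguishing $\Gamma$ from its image $\bar\Gamma$, and noting that graphical discreteness is only ever invoked on the canonical one-ended factors of $\Gamma$) makes explicit points the paper leaves implicit, but the argument is the same.
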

        \begin{proof}
            Suppose that $\Gamma$ and a group $\Gamma'$ act geometrically on the same proper  metric space $Y$. Then, the group $\Gamma'$ is finitely presented and infinite-ended. We claim that $\Gamma'$ commands its finite subgroups. 
            
            The group $G = \Isom(Y)$ acts continuously and compactly on a tree $T$ with compact edge stabilizers and vertex stabilizers that are compactly generated and have at most one end by Theorem~\ref{thm:dunwoody_accessibility}. As above, since $G_v$ is open for all $v \in VT$, it contains $\Gamma_v$ and $\Gamma_v'$ as uniform lattices modulo finite kernels by Lemma~\ref{lem:open_lattice}. Since $\Gamma_v$ is graphically discrete and $G$ is compact-by-(totally disconnected) by Theorem~\ref{thm:houghton_ends}, the group $G_v$ is compact-by-discrete.
            Hence, $\Gamma_v$ and $\Gamma_v'$ are virtually isomorphic. Thus, $\Gamma_v'$ commands its finite subgroups, so $\Gamma'$ commands its finite subgroups by Lemma~\ref{lem:commandfiniteinfended}. Therefore, $\Gamma$ and $\Gamma'$ are abstractly commensurable by Theorem~\ref{thm:actionrigidA}.
        \end{proof}
 
    \begin{rem}
    In fact, the proof of Theorem \ref{thm:actionrigidity} says that if $\Gamma$ is as above and both $\Gamma$ and $\Gamma'$ act geometrically on the same proper  space, then they are abstractly commensurable.
    \end{rem}

    \begin{thm} \label{thm:3manactionrigid}
        The fundamental group of a (possibly reducible) closed $3$-manifold  is action rigid if and only if the manifold does not admit $\Hy^3$ or {\bf Sol} geometry. 
    \end{thm}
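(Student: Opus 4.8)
The plan is to run the argument through the geometric/prime decomposition of $M$ together with an end count of $\Gamma=\pi_1(M)$, handling the two implications separately. For the reverse implication, suppose $M$ carries $\Hy^3$ or \textbf{Sol} geometry. Then $\Gamma$ is a cocompact lattice in $L=\Isom(\Hy^3)$, respectively $L=\Isom(\textbf{Sol})$, and in each case $L$ contains cocompact lattices lying in infinitely many distinct commensurability classes: for $\Hy^3$ take fundamental groups of two incommensurable closed hyperbolic $3$-manifolds, and for \textbf{Sol} take $\Z^2\rtimes_A\Z$ and $\Z^2\rtimes_B\Z$ for Anosov $A,B\in\SL_2(\Z)$ whose eigenvalues are multiplicatively independent. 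Any such pair acts geometrically on the common model space $\Hy^3$ (resp. \textbf{Sol}); since all the groups involved are torsion-free, Lemma~\ref{lem:tfree} promotes abstract incommensurability to a failure of virtual isomorphism, so $\Gamma$ is not action rigid.

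For the forward implication, assume $M$ admits neither geometry and split on the number of ends of $\Gamma$. If $\Gamma$ is finite or two-ended, then every group acting geometrically on a space that also carries a geometric $\Gamma$-action is quasi-isometric to $\Gamma$, hence finite resp. virtually $\Z$, and any two such groups are virtually isomorphic. If $\Gamma$ is infinite-ended, then by the Kneser--Milnor decomposition the one-ended vertex groups of a Stallings--Dunwoody splitting are exactly the fundamental groups of the aspherical prime summands; each is graphically discrete by Theorem~\ref{thm:propc-3manifolds} (geometric case) or Theorem~\ref{thm:nongeo3mangd} (non-geometric case) --- note this holds even for hyperbolic or \textbf{Sol} summands --- and persistently commands its finite subgroups by Example~\ref{ex:persistent3man}, so Theorem~\ref{thm:actionrigidity} yields action rigidity of $\Gamma$. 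Finally, if $\Gamma$ is one-ended then $M$ is closed, irreducible and aspherical; when $M$ is non-geometric, $\Gamma$ has only trivial lattice embeddings by Theorem~\ref{thm:nongeo3mangd} and is action rigid by Lemma~\ref{lem:triv_env_act_rig}.

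This leaves the one-ended geometric cases, where excluding $\Hy^3$ and \textbf{Sol} forces the geometry into $\{\mathbf{E}^3,\textbf{Nil},\Hy^2\times\R,\widetilde{\SL_2(\bR)}\}$. For $\mathbf{E}^3$ and \textbf{Nil} the group is virtually nilpotent; Gromov's polynomial growth theorem, together with Pansu's theorem in the \textbf{Nil} case to exclude the abelian Carnot limit, shows that any $\Gamma'$ quasi-isometric to $\Gamma$ is virtually a lattice in the same low-dimensional simply connected nilpotent Lie group, and all such lattices are abstractly commensurable, so $\Gamma'$ is virtually isomorphic to $\Gamma$.

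The remaining cases $\Hy^2\times\R$ and $\widetilde{\SL_2(\bR)}$ are the crux and the expected main obstacle. Here I would pass to $G=\Isom(Y)$ for the common proper space $Y$, apply the Kleiner--Leeb structure theorem (Proposition~\ref{prop:quasi-action_project}) to the quasi-action of $G$ on $\Hy^2\times\R$ to produce a continuous $\rho\colon G\to\Isom(\Hy^2)$ whose kernel meets each lattice in a two-ended subgroup --- the Seifert fiber --- and then show that the (non)vanishing of the associated bounded Euler class is an invariant of $G$, hence shared by $\Gamma$ and $\Gamma'$. The difficulty is genuine: $\Hy^2\times\R$ and $\widetilde{\SL_2(\bR)}$ are themselves quasi-isometric (Gersten, as used in Corollary~\ref{cor:latticesol}), so a product lattice of zero Euler number and a twisted lattice of nonzero Euler number are quasi-isometric but not abstractly commensurable, and quasi-isometric rigidity alone can never separate them; one must prove that a single envelope $G$ cannot contain lattices of both types, i.e.\ that the coarse fibration of $Y$ and the Euler-number dichotomy are determined by $G$ rather than by its quasi-isometry type. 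Once this is in place the remaining bookkeeping is routine, since all cocompact Fuchsian groups are abstractly commensurable and any two nonzero rational Euler numbers agree after passing to suitable vertical and horizontal finite covers. This is exactly the phenomenon --- action rigidity in the presence of quasi-isometric flexibility --- that the theorem is designed to exhibit.
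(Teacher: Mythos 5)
Your argument tracks the paper's proof closely in every case but one. The reverse implication (incommensurable torsion-free cocompact lattices in $\Isom(\Hy^3)$ and $\Isom(\textbf{Sol})$, promoted to failure of virtual isomorphism via Lemma~\ref{lem:tfree}) is the paper's argument, which cites \cite[Theorem A]{neumann_commensurability} where you construct examples by hand. Your infinite-ended case (Stallings--Dunwoody vertex groups are aspherical prime summand groups, graphically discrete by Theorems~\ref{thm:propc-3manifolds} and~\ref{thm:nongeo3mangd}, persistently commanding finite subgroups by Example~\ref{ex:persistent3man}, then Theorem~\ref{thm:actionrigidity}) and your one-ended non-geometric case (Theorem~\ref{thm:nongeo3mangd} plus Lemma~\ref{lem:triv_env_act_rig}) are identical to the paper's; for $\mathbf{E}^3$ and \textbf{Nil} the paper simply cites quasi-isometric rigidity \cite[Theorem 1.7]{frigerioQIrigid} where you sketch the Gromov--Pansu route. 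Organizing by ends rather than by irreducibility is a cosmetic difference.

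The genuine gap is the one you name yourself: for the $\Hy^2\times\R$ and $\widetilde{\SL_2(\bR)}$ geometries you never prove that a single locally compact envelope cannot contain both a zero-Euler-number lattice and a nonzero-Euler-number lattice. This is not routine bookkeeping and cannot be bypassed: the two classes are quasi-isometric (Gersten) but not abstractly commensurable, and all groups involved are virtually torsion-free, so by Lemma~\ref{lem:tfree} they are not virtually isomorphic either; hence this separation statement \emph{is} the entire content of action rigidity in the Seifert-fibered case, and identifying ``the bounded Euler class is an invariant of $G$'' as the right mechanism is a plan, not a proof. The paper does not prove it from scratch either: it first uses Rieffel's theorem \cite{rieffelQI} to know that any finitely generated group sharing a model space with $\Gamma$ is virtually a closed $\Hy^2\times\R$- or $\widetilde{\SL_2(\bR)}$-manifold group, and then invokes \cite[Corollary 1.2]{dastessera} --- precisely the statement that uniform lattices in a common locally compact group cannot mix the two geometries --- together with \cite[Theorem A]{neumann_commensurability} for commensurability within each geometry. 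To complete your proof you must either cite that result or reprove it; establishing envelope-invariance (as opposed to mere quasi-isometry invariance, which is false here) of the Euler-class dichotomy is a substantial theorem in its own right.
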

    \begin{proof}
        Let $M$ be a closed $3$-manifold; after passing to a two-sheeted cover if necessary, we may assume $M$ is oriented. Suppose first that $M$ is irreducible. If $M$ is non-geometric, then $\pi_1(M)$ is action rigid by Theorem~\ref{thm:nongeo3mangd} and Lemma~\ref{lem:triv_env_act_rig}.
        If $M$ admits $\Hy^3$ or {\bf Sol} geometry, then $M$ is not action rigid since there are infinitely many commensurability classes of manifolds with each of these geometries; see~\cite[Theorem A]{neumann_commensurability}. If $M$ admits one of the geometries $S^3$, $S^2 \times \R$, $\E^3$ or {\bf Nil}, then $\pi_1(M)$ is quasi-isometrically rigid, and hence action rigid; see \cite[Theorem 1.7]{frigerioQIrigid}. 
        Finally, if $M$ admits either $\Hy^2 \times \R$ or $\widetilde{\SL_2(\R)}$ geometry and $\Gamma$ is quasi-isometric to $M$, then $\Gamma$ is virtually isomorphic to the fundamental group of a closed geometric $3$-manifold that admits the geometry of either $\Hy^2 \times \R$ or $\widetilde{\SL_2(\R)}$~\cite{rieffelQI}. Action rigidity for these groups then follows from \cite[Corollary 1.2]{dastessera} and~\cite[Theorem A]{neumann_commensurability}.
        
        Now suppose that $M$ is reducible, so $\pi_1(M)$ splits over a finite subgroup and, therefore, has more than one end. Since $2$-ended groups are action rigid, suppose the fundamental group of $M$ is infinite-ended and each one-ended vertex group in a Stallings--Dunwoody decomposition of $\pi_1(M)$ is the fundamental group of an irreducible closed $3$-manifold. Then, each one-ended vertex group is graphically discrete by Theorems~\ref{thm:propc-3manifolds} and Theorem~\ref{thm:nongeo3mangd} and persistently commands its finite subgroups by Example~\ref{ex:persistent3man}.  
        Thus, $\pi_1(M)$ is action rigid by Theorem~\ref{thm:actionrigidity}.
    \end{proof}

    If $M$ is a $3$-manifold with toroidal boundary, then $\pi_1(M)$ may have a one-ended vertex group in a Stallings--Dunwoody decomposition that is isomorphic to $F_n \times \Z$, which is not graphically discrete. The proof above does not extend to this setting, which includes the case $\pi_1(M) \cong (F_2 \times \Z) * (F_2 \times \Z)$, and we ask the following. 
 
 \begin{question}
    Is the fundamental group of a $3$-manifold with toroidal boundary action rigid? 
 \end{question}

    \begin{example} \label{example_notRF}
        This example, which follows the argument of~\cite[Proposition 7.1]{starkwoodhouse2024action}, illustrates that the commanding finite subgroup hypothesis  is necessary in the action rigidity results given in this section. By work of Raghunathan~\cite[Main Theorem]{raghunathan_torsion} there exists a torsion-free uniform lattice $\Lambda \leq \Spin(2,n)$ and a finite extension $1 \rightarrow F \rightarrow \hat{\Lambda} \rightarrow \Lambda \rightarrow 1$ so that $\hat{\Lambda}$ is not virtually torsion-free. Thus, there exists an element $f \in F$ of order $r$ that is contained in every finite-index subgroup of $\hat{\Lambda}.$ Let 
        \[ \Delta = \Delta(r,5,5) = \la a,b,c\,|\, a^2=b^2=c^2=(ab)^r = (bc)^5 = (ac)^5 = 1 \ra\]
        be a hyperbolic triangle group. Then, the amalgamated free product $H = \hat{\Lambda} *_{\la f \ra = \la ab \ra} \Delta$ is not action rigid, but $\hat{\Lambda}$ and $\Delta$ are both graphically discrete by Theorem~\ref{thm:propc_examples}. Indeed, $\hat \Lambda$ acts geometrically on the symmetric space $X$ associated to $\SO(2,n)$ because  $\Spin(2,n)$ is a double cover of the identity component of $\SO(2,n)$. The proof of~\cite[Proposition 7.1]{starkwoodhouse2024action} can thus be applied by replacing vertex spaces isometric to $\Hy_{\mathbb{F}}^n$ in the model geometry with copies of the symmetric space $X$.
    \end{example}

\section{Quasi-isometric rigidity}\label{sec:model_geom}
In this section we outline a two-step method  for proving quasi-isometric rigidity theorems; see Proposition~\ref{prop:qi_framework}. We then apply this approach in Theorem \ref{thm:common_model_geom},  giving quasi-isometric rigidity for certain graphs of hyperbolic $n$-manifold groups.

\subsection{From action rigidity to quasi-isometric rigidity} \label{sec:actiontoQIrigid}

\begin{defn}\label{defn:uniform_QI}
	 A metric space $X$ has \emph{the uniform quasi-isometry property} if there exist constants $K\geq 1$ and $A\geq 0$ such that for every quasi-isometry $f:X\to X$, there is a $(K,A)$-quasi-isometry $f':X\to X$ such that $\sup_{x\in X}d(f(x),f(x'))<\infty$.
\end{defn}

We recall that a finitely generated group $\Gamma$ is \emph{quasi-isometrically rigid} if every finitely generated group quasi-isometric to $\Gamma$ is virtually isomorphic to $\Gamma$.

\begin{prop}\label{prop:qi_framework}
	Let $\Gamma$ be a finitely generated tame group. Suppose that:
	\begin{enumerate}
		\item $\Gamma$ has the uniform quasi-isometry property;
		\item $\Gamma$ is action rigid.
	\end{enumerate}
Then $\Gamma$ is quasi-isometrically rigid.
\end{prop}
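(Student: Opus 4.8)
The plan is to manufacture, from any finitely generated group $\Gamma'$ quasi-isometric to $\Gamma$, a single locally compact group in which both $\Gamma$ and $\Gamma'$ sit as uniform lattices modulo finite kernels; action rigidity will then do the rest. So fix a quasi-isometry $\phi\colon\Gamma'\to\Gamma$ with coarse inverse $\bar\phi$. As in Section~\ref{sec:qi}, this induces a quasi-action $\{f_{\gamma'}\}_{\gamma'\in\Gamma'}$ of $\Gamma'$ on $\Gamma$ with $f_{\gamma'}=\phi\circ L_{\gamma'}\circ\bar\phi$; since each $L_{\gamma'}$ is an isometry of $\Gamma'$, these are $(K_0,A_0)$-quasi-isometries for constants depending only on $\phi$, and the quasi-action is proper and cobounded because $\Gamma'$ acts on itself geometrically. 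The assignment $\gamma'\mapsto[f_{\gamma'}]$ is a genuine homomorphism $\Gamma'\to\QI(\Gamma)$, and $\gamma\mapsto[L_\gamma]$ is the standard homomorphism $\Gamma\to\QI(\Gamma)$.

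The heart of the argument is to topologize $\QI(\Gamma)$ into a locally compact group. Let $(K,A)$ be the constants from the uniform quasi-isometry property (Definition~\ref{defn:uniform_QI}), enlarged so that $K\geq K_0$ and $A\geq A_0$. By that property every class in $\QI(\Gamma)$ has a $(K,A)$-quasi-isometry representative, and by tameness (Definition~\ref{defn:tame}) any two close $(K,A)$-representatives of a given class are $C(K,A)$-close. Thus one obtains a well-defined topology $G$ on $\QI(\Gamma)$ in which $[g_i]\to[g]$ means that chosen $(K,A)$-representatives converge uniformly on bounded subsets of $\Gamma$. I would show $G$ is a Hausdorff, locally compact, second countable topological group: Hausdorffness comes from tameness, since distinct classes have representatives that are not close, hence are separated at the uniform scale; local compactness comes from an Arzel\'a--Ascoli argument, as the $(K,A)$-quasi-isometries sending a fixed basepoint $x_0$ into a bounded set are equicontinuous and, because $\Gamma$ is proper, pointwise precompact, so they form a compact identity neighborhood.

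Next I would verify that $\gamma\mapsto[L_\gamma]$ and $\gamma'\mapsto[f_{\gamma'}]$ are uniform lattice embeddings modulo finite kernels into $G$. Each image is \emph{cobounded}: any $[g]\in G$ has a $(K,A)$-representative sending $x_0$ near some orbit point, so after left-translating (resp.\ applying some $f_{\gamma'}$) it fixes $x_0$ up to bounded error and thus lies in the compact identity neighborhood above; this gives cocompactness. Each image is \emph{discrete} by properness of $\Gamma$ (resp.\ of the quasi-action), since $[L_{\gamma_i}]\to[\id]$ forces $d(\gamma_i x_0,x_0)$ bounded and hence $\gamma_i$ in a finite set. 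Finally the \emph{kernels are finite}: if $L_\gamma$ is close to the identity then, being a $(1,0)$-quasi-isometry, tameness bounds $\sup_x d(\gamma x,x)=|\gamma|$ by $C(1,0)$, so the kernel is a bounded, hence finite, subset; the same reasoning with $C(K_0,A_0)$ and properness of the quasi-action handles $\Gamma'$ (alternatively, the kernels are sets of bounded self-quasi-isometries, precompact by Proposition~\ref{prop:compact<->bounded}). With both groups uniform lattices modulo finite kernels in the common group $G$, Proposition~\ref{prop:ARequivalences} produces a proper metric space on which $\Gamma$ and $\Gamma'$ both act geometrically, and action rigidity (Definition~\ref{defn:AB_COMM&VISOM}) then yields that $\Gamma$ and $\Gamma'$ are virtually isomorphic, which is precisely quasi-isometric rigidity.

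The main obstacle is the construction of $G$ as a bona fide topological group: one must check that composition and inversion of classes are continuous for the uniform-on-bounded-sets topology. This is exactly the interface where both hypotheses are indispensable. The uniform quasi-isometry property supplies globally $(K,A)$-controlled representatives, without which the product of two $(K,A)$-quasi-isometries (a priori only a $(K^2,\dots)$-quasi-isometry) could not be reinterpreted at the fixed uniform scale, and Arzel\'a--Ascoli would fail; tameness rigidifies ``close'' into ``uniformly close,'' which is what makes the topology Hausdorff, forces the lattices to be discrete, and makes the kernels finite. Everything downstream of producing $G$ (and hence a common geometric model) is a direct appeal to Proposition~\ref{prop:ARequivalences} and the action rigidity hypothesis, so I expect essentially all of the work to lie in establishing that $G$ is a locally compact group; this step can be seen as an instance of the discretisation of cobounded quasi-actions in the tame, uniform setting.
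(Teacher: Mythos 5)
Your high-level skeleton is the right one, and in fact it is essentially equivalent to the paper's: produce one object (a common locally compact envelope, or equivalently by Proposition~\ref{prop:ARequivalences} a common proper metric space) on which $\Gamma$ and every group quasi-isometric to it act geometrically, then let action rigidity finish. Your verification of the lattice properties (discreteness, cocompactness, finiteness of kernels via tameness) would also go through once the envelope exists. The problem is the central step, which you correctly flag as ``the main obstacle'' but then do not resolve: making $G=\QI(\Gamma)$ into a Hausdorff, locally compact \emph{topological group}. As written, your topology is not even well defined. Representatives of a class are unique only up to $C(K,A)$-closeness, so ``chosen $(K,A)$-representatives converge uniformly on bounded sets'' depends on the choices (take a constant sequence of classes and alternate between two representatives at distance exactly $C$); and if you instead define $R$-neighborhoods by ``some representatives agree up to $C$ on $B_R$,'' the errors compound ($2C$, $3C$, \dots) so these sets need not nest and need not form a neighborhood basis. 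This is the standard obstruction to turning a coarse equivalence into a topology. The same coarse error blocks your continuity argument for composition: the composite of two $(K,A)$-representatives is only a $(K^2,\ast)$-quasi-isometry, and replacing it by a $(K,A)$-representative of its class moves it by a bounded but nonzero amount, which is incompatible with genuine uniform-on-bounded-sets convergence. Joint continuity of multiplication and inversion is precisely the hard analytic content here, and your sketch gives no mechanism for it.

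This is exactly the work the paper outsources rather than redoes: it cites \cite[Lemma 3.4]{margolis2024model} to get a cobounded quasi-action of the abstract group $\QI(\Gamma)$ on $\Gamma$ (this is where Definitions~\ref{defn:tame} and~\ref{defn:uniform_QI} are used, to choose uniformly controlled representatives satisfying the quasi-action axioms), and then the discretisation theorem \cite[Proposition 4.5 and Theorem 4.20]{margolis2022discretisable} to quasi-conjugate that quasi-action to an \emph{isometric} action on a proper quasi-geodesic space $X$. The locally compact group you want is then simply $\Isom(X)$ --- no direct topologization of $\QI(\Gamma)$ is ever attempted. Any finitely generated $\Gamma'$ quasi-isometric to $\Gamma$ has a proper cobounded quasi-action on $\Gamma$ which $f$ quasi-conjugates to a geometric action of $\Gamma'$ on $X$, and action rigidity applies. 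So your proposal, to be completed, would in effect have to reprove Margolis's discretisation theorem; the specific route you sketch (a quotient-type topology on quasi-isometry classes) is known to be delicate for exactly the reasons above, and filling this gap is not a routine verification.
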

\begin{proof}
	Let $G=\QI(\Gamma)$. Since $\Gamma$ is tame and has the uniform quasi-isometry property, there is a  cobounded quasi-action $\{f_g\}_{g\in G}$ of $G$ on $\Gamma$ such that $[f_g]=g$; see e.g.\ \cite[Lemma 3.4]{margolis2024model}. 
	It follows from work of Margolis that there is a proper quasi-geodesic metric space $X$ and a quasi-isometry $f:\Gamma\to X$ that quasi-conjugates the quasi-action of $G$ on~$\Gamma$ to an isometric action of $G$ on $X$ \cite[Proposition 4.5 and Theorem 4.20]{margolis2022discretisable}. Suppose $\Gamma'$ is a finitely generated group quasi-isometric to $\Gamma$. Since $\Gamma'$ admits a proper cobounded quasi-action on $\Gamma$, the map $f$ quasi-conjugates this quasi-action to a geometric action of $\G'$  on $X$.  Thus, every finitely generated group quasi-isometric to~$\Gamma$ acts geometrically on $X$. Since $\Gamma$ is action rigid, this implies $\Gamma$ is quasi-isometrically rigid. 
\end{proof}

\begin{rem}
	If $\Gamma$ is tame and has infinite outer automorphism group, then the uniform quasi-isometry property is never satisfied \cite[Claim 1.26]{whyte2010coarse}. In particular, infinite-ended groups do not have the uniform quasi-isometry property.
\end{rem}

\subsection{Action rigidity for graphs of groups}\label{sec:actionrigidityGoG}

The aim of this subsection is to prove the action rigidity result given in Theorem~\ref{thm:hyp_action_rigid_criterion} by applying Theorems~\ref{thm:get_treeofspaces} and~\ref{thm:commoncover_BV}. We begin with the relevant definitions. 

\begin{defn}
	If $\cG$ is a finite graph of finitely presented groups, an \emph{associated Bass--Serre tree of spaces} $(X,T)$ is the universal covering of a graph of spaces $(Z,\Lambda)$  associated to $\cG$ (in the sense of Definition \ref{def:gog_gosp}) in which all vertex and edge spaces of $(Z,\Lambda)$ are compact CW complexes.
\end{defn}

\begin{notation}
	Throughout the remainder of  this section, given a tree of spaces $(X,T)$, we identify an edge space $X_e$ with the subspace of the form $X_e\times \{0\}$ in the total space; see Definition \ref{defn:GoS}. In this way, $X_e=X_{\bar e}$ is a subspace of $X$ for each $e\in ET$.
\end{notation}

Work of Papasoglu and Mosher--Sageev--Whyte \cite{papasoglu2005quasi,moshersageevwhyte2011quasiactions} gives many examples of trees of spaces that are preserved by quasi-isometries in the following sense:
\begin{defn}\label{defn:tree_QI}
	A tree of spaces $(X,T)$ is \emph{preserved by quasi-isometries} if for every   $(K,A)$-quasi-isometry $f:X\to X$, there is a constant $B=B(X,K,A)$ such that the following hold:
	\begin{enumerate}
		\item if $w\in VT\cup ET$, there is some $w'\in VT\cup ET$ such that $d_\Haus(f(X_w),X_{w'})\leq B$;
		\item if $w'\in VT\cup ET$, there is some $w\in VT\cup ET$ such that $d_\Haus(f(X_w),X_{w'})\leq B$.
	\end{enumerate}
\end{defn}

\begin{notation}
	Suppose $\cG=\cG(\Lambda,\{\G_v\},\{\G_e\},\{\Theta_e\})$ is a graph of groups. If $v\in \Lambda$, the \emph{collection  of images of incident edge maps}  $\{\Theta_e(\G_e)\}_{e\in \lk(v)}$ is  assumed to be indexed by $\lk(v)$. In particular, distinct edges $e_1,e_2\in\lk(v)$ such that $\Theta_{e_1}(\G_{e_1})= \Theta_{e_2}(\G_{e_2})$ correspond to distinct elements of $\{\Theta_e(\G_e)\}_{e\in \lk(v)}$.
\end{notation}

We recall a graph of groups $\cG$ is \emph{minimal} if the action of $\pi_1(\cG)$ on the associated Bass--Serre tree has no proper invariant subtree.

\begin{thm}\label{thm:hyp_action_rigid_criterion}
	Suppose $\Gamma$ is the fundamental group of a minimal finite graph of groups $\cG$ satisfying the following properties:
	\begin{enumerate}
		\item Every vertex group of $\cG$  is graphically discrete, hyperbolic  and cubulated.
		\item For each vertex group of $\cG$, the collection  of images of incident edge maps is an almost malnormal family of infinite quasi-convex subgroups.
		\item The associated Bass--Serre  tree of spaces $(X,T)$ is preserved by quasi-isometries.
	\end{enumerate}
	Then $\Gamma$ is action rigid.
\end{thm}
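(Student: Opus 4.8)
```latex
The plan is to deduce Theorem~\ref{thm:hyp_action_rigid_criterion} by verifying the hypotheses of the Common Cover Theorem (Theorem~\ref{thm:commoncover_BV}) for the full isometry group $G=\Isom(Y)$ of a common model geometry, following the three-step strategy outlined in the introduction. So suppose $\Gamma$ and another finitely generated group $\Gamma'$ act geometrically on the same proper metric space $Y$, and let $G=\Isom(Y)$, which is a locally compact group containing both $\Gamma$ and $\Gamma'$ as uniform lattices modulo finite kernels. Since each vertex group is hyperbolic with an almost malnormal family of infinite quasi-convex incident edge groups, $\Gamma$ is hyperbolic relative to the edge groups, and in particular is acylindrically hyperbolic (hence tame), so by Proposition~\ref{prop:non_uniform_lattices} all lattice embeddings are uniform. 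The first step is to produce a continuous $G$-action on a tree $T$ compatible with the splitting $\cG$: here I would use hypothesis~(3), that the associated Bass--Serre tree of spaces $(X,T)$ is preserved by quasi-isometries. By the Milnor--Schwarz lemma the action of $G$ on $Y$ is quasi-conjugate to a cobounded quasi-action on $X$; the quasi-isometry-invariance of the tree of spaces means every element of $G$ coarsely permutes the vertex and edge spaces, and hence (as in the Kapovich--Leeb-style arguments used in Lemma~\ref{lem:nongeocont} and in \cite{moshersageevwhyte2011quasiactions,papasoglu2005quasi}) induces an automorphism of $T$. One must check this action $G\to\Aut(T)$ is continuous, which follows from Lemma~\ref{lem:coarse_conv} exactly as in the proof of Lemma~\ref{lem:nongeocont}: if $g_i\to 1_G$ then the induced quasi-isometries move each vertex space a uniformly bounded amount, so eventually fix each vertex of $T$.

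The second step is to verify that $G$ is compact-by-(totally disconnected) and that each vertex stabilizer contains a compact open normal subgroup, so that the Model Geometry Theorem (Theorem~\ref{thm:get_treeofspaces}) applies. Using almost malnormality of the edge groups, the $\Gamma$-action on $T$ is acylindrical, so there exist edges $e,e'$ with $\Gamma_e\cap\Gamma_{e'}$ finite; as in Lemma~\ref{lem:nongeocont}, the open subgroup $G_e\cap G_{e'}$ then contains a finite group as a uniform lattice and is therefore compact, giving a compact open subgroup of $G$ and hence compact-by-(totally disconnected) by Lemma~\ref{lem:vanDantzig}. For each vertex $v\in VT$ the stabilizer $G_v$ is open, so by Lemma~\ref{lem:open_lattice} it contains $\Gamma_v$ as a uniform lattice modulo finite kernel; since $\Gamma_v$ is graphically discrete by hypothesis~(1) and $G_v$ is totally disconnected, $G_v$ contains a compact open normal subgroup. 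Applying Theorem~\ref{thm:get_treeofspaces} produces a finite refinement $T'$ and a geometric $G$-action on a tree of spaces $(X,T')$ satisfying Assumption~\ref{ass:ToS}, with all edge and vertex spaces simply connected and each localized stabilizer acting discretely.

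The third and main step is to check the hypotheses~(\ref{item:freeAction})--(\ref{item:Qcommand}) of Theorem~\ref{thm:commoncover_BV}. Since $\Gamma_v$ is cubulated hyperbolic, it is residually finite (Example~\ref{exmp:virt_cub}), so $\Gamma$ and $\Gamma'$ command their finite subgroups; passing to finite-index subgroups $\hat\Gamma,\hat\Gamma'$ makes the edge and vertex stabilizers act freely, giving condition~(\ref{item:freeAction}). Simple connectivity of edge and vertex spaces is condition~(2), and condition~(\ref{item:Qvprop}) (discreteness of $Q_v$) is part of the conclusion of Theorem~\ref{thm:get_treeofspaces}. The crux is condition~(\ref{item:Qcommand}): the localized vertex group $Q_v$ must command the collection $\{Q_v^{e_i}\}$ of stabilizers of orbit representatives of incident edge spaces. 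Because $Q_v$ contains $\Gamma_v$ as a finite-index subgroup, $Q_v$ is itself cubulated hyperbolic (Example~\ref{exmp:virt_cub}), and the subgroups $Q_v^{e_i}$ correspond, up to finite kernel, to the images of the incident edge maps, which form an almost malnormal family of quasi-convex subgroups by hypothesis~(2). The commanding property then follows from Example~\ref{rem:special_command}, i.e.\ from Wise's Malnormal Special Quotient Theorem combined with the work of Agol and Osin as packaged in \cite[Theorem~1.4]{Shepherd23}. The hard part will be ensuring that the localized edge subgroups $Q_v^{e}\twoheadrightarrow Q_e$ are genuinely an almost malnormal quasi-convex family inside $Q_v$ rather than inside $\Gamma_v$ --- one must pass between $\Gamma_v$ and the possibly-larger discrete group $Q_v$ and argue that quasi-convexity and almost malnormality are preserved under the commensuration, and that the finite kernels of $F_v^e$ do not obstruct commanding. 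With condition~(\ref{item:Qcommand}) established, Theorem~\ref{thm:commoncover_BV} gives a common finite cover of $X/\hat\Gamma$ and $X/\hat\Gamma'$, so $\hat\Gamma$ and $\hat\Gamma'$ are abstractly commensurable, hence $\Gamma$ and $\Gamma'$ are virtually isomorphic, proving $\Gamma$ is action rigid.
```
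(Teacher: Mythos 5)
Your overall architecture matches the paper's: a continuous $G$-action on $T$ via hypothesis (3) and the functoriality of induced tree automorphisms, then the Model Geometry Theorem (Theorem \ref{thm:get_treeofspaces}), then the Common Cover Theorem (Theorem \ref{thm:commoncover_BV}); and your use of $2$-acylindricity to produce a compact open subgroup of $G$ (so that graphical discreteness of $\Gamma_v$ can legitimately be applied to $G_v$) is correct and in fact needed. However, there is a genuine gap at the point where you invoke Theorem \ref{thm:get_treeofspaces}: you settle for its general conclusion, a \emph{finite refinement} $T'$ of $T$, and then verify the commanding hypothesis of Theorem \ref{thm:commoncover_BV} only at vertices coming from the original graph of groups. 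The refinement produced by Lemma \ref{lem:tree_expansion} introduces new (type II) vertices $u$ whose stabilizers contain an edge stabilizer $G_e$ of $T$ with finite index. At such a vertex, the edge of $T'$ joining $u$ to its type I neighbor is stabilized by all of $G_u$, so the localized family $\{Q_u^{e_i}\}$ contains $Q_u$ itself, alongside finite-index subgroups of $Q_u$. Such a family is never commanded in the present setting: taking $\Lambda_1=Q_u$ in Definition \ref{defn:commands} forces $\hat{\Gamma}=\hat{\Lambda}_1$, and since the edge groups here are infinite and residually finite one may choose $\hat{\Lambda}_2$ to be a proper finite-index normal subgroup of $\Lambda_2\cap\hat{\Lambda}_1$, making $\Lambda_2\cap\hat{\Gamma}=\hat{\Lambda}_2$ impossible. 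So with a genuine refinement, hypothesis (4) of Theorem \ref{thm:commoncover_BV} fails and the Common Cover Theorem cannot be applied. The paper avoids this by proving the stronger statement that each $G_v$ contains a compact open normal subgroup $K_v$ \emph{fixing} $\lk(v)$, which activates the ``Moreover'' clause of Theorem \ref{thm:get_treeofspaces} and yields $T'=T$. The missing argument uses geometry you already have in hand: the quasi-action of $G_v$ on $X_v$ is cobounded, so by Lemma \ref{lem:cpct_normal_qaction} elements of $K_v$ move points of $X_v$ a uniformly bounded amount; since distinct incident edge spaces are at infinite Hausdorff distance (Proposition \ref{prop:hd_graph}, via almost malnormality), $K_v$ cannot permute them, hence fixes $\lk(v)$ pointwise.

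Two further, smaller problems. First, your justification of the free-action condition --- ``$\Gamma_v$ is residually finite, so $\Gamma$ and $\Gamma'$ command their finite subgroups'' --- is not a valid inference: for graphs of groups with infinite edge groups, residual finiteness (or commanding of finite subgroups) of vertex groups does not pass to the fundamental group. The paper instead shows $\Gamma$ is virtually special, hence virtually torsion-free, via Agol's theorem together with Wise's malnormal quasiconvex hierarchy theorem, and obtains the same for $\Gamma'$ only after using compact-by-discreteness of each $G_v$ to show $\Gamma'_v$ is virtually isomorphic to $\Gamma_v$ (Example \ref{exmp:virt_cub}), so that $\Gamma'$ inherits the same kind of hierarchy. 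Second, the step you explicitly leave open as ``the hard part'' --- that $\{Q_v^{e_i}\}$ is an almost malnormal family of quasi-convex subgroups of $Q_v$, not merely of $q_v(\Gamma_v)$ --- is genuinely required before Example \ref{rem:special_command} can be quoted; it does hold (an infinite intersection $Q_v^{qe_i}\cap Q_v^{e_j}$ forces an infinite intersection $\Gamma_{qe_i}\cap\Gamma_{e_j}$ of the finite-index subgroups, hence $qe_i=e_j$ by $2$-acylindricity, and quasi-convexity passes to commensurable overgroups), but your proposal as written does not close this step.
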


\begin{rem}\label{rem:inf_index_image}
	Let $\cG=\cG(\Lambda,\{\G_v\},\{\G_e\},\{\Theta_e\})$ be a minimal graph of groups such that every    vertex group $\G_v$ is infinite and the collection  of images of incident edge maps is an almost malnormal family of $\G_v$. Then every edge monomorphism $\G_e\to \G_v$ has infinite-index image in $\G_v$. 
\end{rem}

We will use the following results in the proof of Theorem~\ref{thm:hyp_action_rigid_criterion}. Recall %
an action of a group $G$ on a tree $T$ is \emph{$k$-acylindrical} if any segment of length $k$ in $T$ has finite stabilizer. 

\begin{lem}\label{lem:acylindrical}
	Suppose $\Gamma$ is the fundamental group of a minimal finite graph of groups $\cG$  such that for each vertex group, the collection  of images of incident edge maps is an almost malnormal family.  Then the action of $\Gamma$ on the associated Bass--Serre tree is $2$-acylindrical. 
	
	Moreover, if vertex groups of $\cG$ are hyperbolic and every edge map has quasi-convex image, then $\Gamma$ is hyperbolic and all vertex and edge stabilizers are quasi-convex in $\Gamma$.
\end{lem}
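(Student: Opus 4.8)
The plan is to prove Lemma \ref{lem:acylindrical} in two parts, handling acylindricity first and then hyperbolicity with quasi-convexity.

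For the $2$-acylindricity, I would argue by contradiction. Suppose there is a segment $[a,b]$ of length $2$ in the Bass--Serre tree $T$ whose pointwise stabilizer is infinite. Let $v$ be the midpoint of this segment, so that $a$ and $b$ are vertices adjacent to $v$ via distinct edges $e_1, e_2 \in \lk(v)$ (distinct because the segment has length $2$). An element $g$ fixing the whole segment fixes both $e_1$ and $e_2$, hence lies in $\G_{e_1} \cap \G_{e_2}$, thought of as subgroups of $\G_v$ via the incident edge maps. Since the segment has length $2$, the edges $e_1$ and $e_2$ are distinct edges of $\lk(v)$, and I would translate the almost malnormality hypothesis on the family $\{\Theta_e(\G_e)\}_{e\in\lk(v)}$ into the statement that $\Theta_{e_1}(\G_{e_1}) \cap \Theta_{e_2}(\G_{e_2})$ is finite whenever $e_1 \neq e_2$ in $\lk(v)$. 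The main subtlety here is bookkeeping: two distinct edges of $\lk(v)$ might have the same image subgroup in $\G_v$, but the notation preceding the theorem explicitly indexes the family by $\lk(v)$ precisely so that the almost malnormal condition controls intersections of stabilizers of distinct \emph{edges} (not merely distinct subgroups). I would spell out that the stabilizer in $\Gamma$ of a segment $[a,b]$ of length $2$ is conjugate into such an intersection, and is therefore finite, which is exactly $2$-acylindricity.

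For the second part, I would invoke the Bestvina--Feighn combination theorem. The hypotheses give a finite graph of hyperbolic groups with quasi-convex (hence quasi-isometrically embedded) edge groups, and the almost malnormality of the incident edge families supplies the \emph{annuli flare} / acylindricity input that guarantees no essential annuli obstruction. Concretely, $2$-acylindricity from the first part ensures the hallways-flare condition holds, so Bestvina--Feighn implies $\Gamma = \pi_1(\cG)$ is hyperbolic. Once $\Gamma$ is known to be hyperbolic, quasi-convexity of vertex and edge stabilizers in $\Gamma$ follows from standard facts about graphs of hyperbolic groups with quasi-convex edge groups: each vertex space embeds quasi-isometrically in the Bass--Serre tree of spaces, and a subgroup that is quasi-convex in a quasi-convex vertex subgroup is quasi-convex in the whole group. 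I would cite the combination theorem of Bestvina--Feighn and its refinements (e.g.\ by Kapovich or by Dahmani) for both the hyperbolicity and the quasi-convexity of the vertex and edge groups.

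The main obstacle I anticipate is verifying the precise hypotheses of the combination theorem rather than the logical structure, which is straightforward. In particular, the annuli-flare condition in Bestvina--Feighn is usually phrased in terms of no unbounded families of essential annuli, and I would need to check carefully that almost malnormality of the incident edge families (equivalently, the $2$-acylindricity already established) rules these out; this is where the real content lies. A clean route is to cite an acylindrical version of the combination theorem, so that $2$-acylindricity together with hyperbolic, quasi-convex vertex and edge data directly yields hyperbolicity of $\Gamma$. Given that this lemma is stated as auxiliary machinery for Theorem \ref{thm:hyp_action_rigid_criterion}, I expect the intended proof simply to quote these combination-theorem results, so the write-up would consist of the short contradiction argument for acylindricity followed by a citation-driven paragraph for the hyperbolicity and quasi-convexity statements.
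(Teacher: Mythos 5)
Your proposal is correct and takes essentially the same route as the paper, whose proof is even terser: it asserts in one sentence that $2$-acylindricity follows from the almost malnormality of the incident edge-image families, and then cites the Bestvina--Feighn combination theorem as adapted by Kapovich to the acylindrical setting (or, alternatively, the more general theorem of Martin--Osajda) for hyperbolicity and quasi-convexity of vertex and edge stabilizers. The one point to tighten in your acylindricity paragraph is that the translation of almost malnormality must retain conjugators: two distinct tree-edges at a vertex $v$ may lie in the same $\Gamma_v$-orbit, in which case the relevant intersection is $g\Theta_e(\G_e)g^{-1}\cap\Theta_e(\G_e)$ with $g\in\G_v\setminus\Theta_e(\G_e)$, which is finite by the ``unless $i=j$ and $g\in\Lambda_i$'' clause of the definition, not by distinctness of indices in $\lk(v)$.
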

\begin{proof}
	The fact that the action of $\Gamma$ is 2-acylindrical follows  from the condition that images of edge maps form an almost malnormal family of $G_v$. 	
	Hyperbolicity can be deduced from the Bestvina--Feighn combination theorem \cite{bestvinafeighn92combination} by adapting a proof of Kapovich, who considers a slightly more restrictive notion of acylindricity \cite{kap01combination}. Alternatively, the result follows verbatim from a much more general theorem of Martin--Osajda \cite{martinxreosajda2021combination}.
\end{proof}

We make extensive use of the next proposition to prove rigidity results in this section. Informally, it gives a criterion on a tree of space $(X,T)$ which ensures the combinatorial structure of $T$ can be completely recovered from the coarse geometry of its vertex and edge spaces.

\begin{prop}\label{prop:hd_graph}
	Let $\cG$ be a minimal finite graph of finitely generated groups. Suppose that for each vertex group,  the collection  of images of incident edge maps is an almost malnormal family of infinite subgroups. 
	Assume $T$ is the Bass--Serre tree associated to $\cG$, the corresponding tree of spaces is $(X,T)$. Then the following hold for all $w,w'\in VT \sqcup ET$:
	\begin{enumerate}
		\item If $d_\Haus(X_w,X_{w'})<\infty$, then $w=w'$.\label{item:hd_graph1}
		\item If $w\neq w'$, then   $X_{w'}\subseteq N_R(X_{w})$ for some $R$ if and only if $w$ is a vertex and $w'$ is an edge incident to $w$.\label{item:hd_graph2}
		\item For any distinct edges $e_1,e_2\in ET$, $N_R(X_{e_1})\cap N_R(X_{e_2})$ is bounded for all $R$.\label{item:hd_graph3}
	\end{enumerate}
\end{prop}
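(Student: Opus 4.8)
The plan is to extract all three statements from the coarse structure of the Bass--Serre tree together with two inputs coming from the hypotheses: the edge monomorphisms have infinite-index image (Remark~\ref{rem:inf_index_image}), and the action of $\Gamma=\pi_1(\cG)$ on $T$ is $2$-acylindrical (Lemma~\ref{lem:acylindrical}). The basic geometric tool is the map $\pi\colon X\to T$ that collapses each vertex space $X_v$ to $v$ and each cylinder $X_e\times[0,1]$ onto the edge $e$. Since $T$ is a tree, any path in $X$ joining points in distinct vertex spaces must traverse every cylinder over the intervening edges, so $\pi$ is $1$-Lipschitz; in particular $\pi(N_R(X_w))\subseteq N_R(w)$ for every $w\in VT\sqcup ET$. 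The crucial point, which lets me ignore possible distortion of the edge and vertex groups, is that I will never convert distances in $X$ to distances inside a vertex space: all coarse-intersection bounds will instead be deduced from properness of the \emph{geometric} actions $\Gamma_v\curvearrowright X_v$ and $\Gamma_e\curvearrowright X_e$.

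First I would record two lemmas. \textbf{(A)} For every $v\in VT$ and $e\in\lk(v)$, the vertex space $X_v$ is not contained in any neighborhood $N_R(\phi_e(X_e))$: since $\Gamma_v$ acts properly cocompactly on $X_v$ and $\phi_e(X_e)$ is coarsely a $\Gamma_e$-orbit, such a containment would force $\Gamma_v=\Gamma_e\cdot S$ for a finite set $S$, contradicting the infinite index of $\Theta_e(\Gamma_e)$ in $\Gamma_v$. \textbf{(B)} For distinct $e_1,e_2\in\lk(v)$, the set $N_R(X_{e_1})\cap N_R(X_{e_2})$ is bounded. To see this, suppose $(z_n)$ is an unbounded sequence in this set. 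Using cocompactness of the edge actions, choose $\gamma_n\in\Gamma_{e_1}$ and $\delta_n\in\Gamma_{e_2}$ with $z_n$ uniformly close to $\gamma_n o_1$ and to $\delta_n o_2$; then $d(\gamma_n o_1,\delta_n o_2)$ is uniformly bounded, so by properness of $\Gamma\curvearrowright X$ the elements $s_n:=\gamma_n^{-1}\delta_n$ lie in a finite set, and after passing to a subsequence $s_n\equiv s$. For any two indices, $\gamma_n^{-1}\gamma_m\in\Gamma_{e_1}\cap s\Gamma_{e_2}s^{-1}=\Gamma_{e_1}\cap\Gamma_{se_2}$. Because $\gamma_1\in\Gamma_{e_1}$ fixes $v$, the edge $se_2=\gamma_1^{-1}e_2$ also lies in $\lk(v)$ and is distinct from $e_1$, so $\Gamma_{e_1}\cap\Gamma_{se_2}$ is the stabilizer of a length-$2$ segment of $T$ and hence finite by $2$-acylindricity; this forces $(\gamma_n)$, and therefore $(z_n)$, to be bounded, a contradiction.

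With these in hand I would deduce the three statements in the order $(3)\Rightarrow(2)\Rightarrow(1)$. For \emph{(3)}, given arbitrary distinct edges $e_1,e_2$, let $v$ be the endpoint of $e_1$ on the geodesic $[e_1,e_2]$ and $f\in\lk(v)$ the first edge of that geodesic after $v$, so $f\neq e_1$. Any $x\in N_R(X_{e_1})\cap N_R(X_{e_2})$ lies on one side of $f$ in $T$, and a length-$\le R$ path from $x$ to whichever of $X_{e_1},X_{e_2}$ is on the far side must cross the cylinder over $f$; hence $x\in N_R(X_f)$, and Lemma~\textbf{(B)} applied to $e_1,f$ shows the intersection is bounded. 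For \emph{(2)}, the implication $(\Leftarrow)$ is immediate since $\phi_e(X_e)\subseteq X_v$ when $e\in\lk(v)$. For $(\Rightarrow)$, suppose $X_{w'}\subseteq N_R(X_w)$ with $w\neq w'$; in each case where $w$ is an edge, or $w,w'$ are both vertices, or $w'$ is an edge not incident to the vertex $w$, the forced-crossing principle together with Lemma~\textbf{(A)} (when a vertex space would be coarsely engulfed by an edge space) or Lemma~\textbf{(B)} and unboundedness of edge spaces (when one edge space would be coarsely engulfed by a distinct one) yields a contradiction; the only surviving possibility is that $w$ is a vertex and $w'$ an incident edge. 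Finally \emph{(1)} is immediate: if $d_\Haus(X_w,X_{w'})<\infty$ and $w\neq w'$, then both $X_{w'}\subseteq N_R(X_w)$ and $X_w\subseteq N_R(X_{w'})$ hold, so statement~\emph{(2)} would force $w$ to be simultaneously a vertex and an edge.

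The main obstacle I anticipate is making the ``forced-crossing'' principle rigorous: I must argue that a short path between two edge spaces genuinely enters the vertex space through the prescribed edge cylinders, so that being close to two edge spaces localizes near a single vertex. This is where the tree structure and the $1$-Lipschitz projection $\pi$ do the real work, and it must be stated carefully for paths that may oscillate across cylinders. The other subtlety worth flagging is the temptation to prove Lemma~\textbf{(B)} by passing to the intrinsic metric of $X_v$, which would be invalid if the edge groups are distorted; the argument above deliberately stays in $X$ and uses only properness of the ambient geometric action, so no quasi-isometric embedding of vertex or edge spaces is needed. A minor bookkeeping point is that the relevant subgroups should act on $T$ without inversions, so that edge stabilizers fix the endpoints of their edges; this holds after passing to an index-two subgroup if necessary.
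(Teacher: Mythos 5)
Your proposal is correct, but it follows a genuinely different route from the paper. The paper's proof is a short reduction to the literature: the Milnor--Schwarz lemma gives a quasi-isometry $f\colon\G\to X$ with $d_\Haus\bigl(f(\G_w),X_w\bigr)<\infty$ for all $w\in VT\sqcup ET$, and then Lemma~2.2 and Corollary~2.4 of Mosher--Sageev--Whyte \cite{moshersageevwhyte2011quasiactions} are cited to translate the three coarse statements into algebra (finite Hausdorff distance $\iff$ commensurability of stabilizers; coarse containment $\iff$ commensurability into a stabilizer; bounded coarse intersection $\iff$ finite intersection of stabilizers), after which parts (\ref{item:hd_graph1})--(\ref{item:hd_graph3}) follow from the same two inputs you use, namely Remark~\ref{rem:inf_index_image} and Lemma~\ref{lem:acylindrical}. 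You instead re-derive the coarse-to-algebraic dictionary by hand inside $X$: your Lemma~(A) is the geometric incarnation of ``infinite index of edge groups forbids coarse engulfing of a vertex space,'' your Lemma~(B) is the geometric incarnation of ``$2$-acylindricity bounds coarse intersections of distinct adjacent edge spaces,'' and your projection/forced-crossing argument plays the role that the Mosher--Sageev--Whyte machinery plays in reducing arbitrary pairs $w,w'$ to adjacent ones. Both lemmas are proved correctly: the coset bookkeeping $\gamma_n^{-1}\gamma_m\in\Gamma_{e_1}\cap\Gamma_{se_2}$ with $se_2\in\lk(v)\setminus\{e_1\}$ is right, and your insistence on using properness of $\G\curvearrowright X$ rather than intrinsic metrics on vertex and edge spaces is exactly the correct way to sidestep distortion. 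The deduction order $(3)\Rightarrow(2)\Rightarrow(1)$ is sound, and the two issues you flag---rigor of forced crossing, which only needs continuity of $\pi$ together with $\pi^{-1}(m_f)=X_f$, and absence of edge inversions, which holds automatically for Bass--Serre trees of graphs of groups---are routine to fill in. What the paper's route buys is brevity and the full biconditional statements, which are convenient elsewhere; what yours buys is a self-contained argument that needs neither the transfer to the word metric on $\G$ nor any external citation beyond the two algebraic facts.
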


\begin{proof}
Let $\G=\pi_1(\cG)$ and let $\Gamma_w$ denote the stabilizer of $w$ for each $w\in VT\sqcup ET$. By the Milnor--Schwarz lemma, the action of $\G$ on $X$ induces a quasi-isometry $f:\G\to X$ such that $d_\Haus (f(\G_w),X_w)<\infty$ for each $w\in VT\sqcup ET$. It follows from \cite[Lemma 2.2 and Corollary 2.4]{moshersageevwhyte2011quasiactions} that:
\begin{enumerate}[label=(\alph*)]
	\item $X_w$ and $X_{w'}$ are at finite Hausdorff distance if and only if $\G_w$ and $\G_{w'}$ are commensurable;\label{item:finHD}
	\item $X_{w}\subseteq N_R(X_{w'})$ for some $R$ if and only if $\G_w$ is commensurable to a subgroup of~$\G_{w'}$.\label{item:coarse_contain}
	\item $N_R(X_{w})\cap N_R(X_{w})$ is bounded for all $R$ if and only if $\G_w\cap \G_{w'}$ is finite.\label{item:coarse_intersection}
\end{enumerate} 
Remark \ref{rem:inf_index_image}, Lemma \ref{lem:acylindrical} and the hypotheses on $\Gamma$ ensure that (\ref{item:hd_graph1})--(\ref{item:hd_graph3}) follow immediately from \ref{item:finHD}--\ref{item:coarse_intersection} respectively.
 \end{proof}
We now use Proposition \ref{prop:hd_graph} to show that under suitable hypotheses, quasi-isometries of $X$ induce automorphisms of $T$ in a functorial manner:
\begin{cor}\label{cor:tree_auts}
	Let $(X,T)$ be a tree of spaces  preserved by quasi-isometries and satisfying properties (\ref{item:hd_graph1}) and (\ref{item:hd_graph2}) in the conclusion of Proposition \ref{prop:hd_graph}. Then for every $(K,A)$-quasi-isometry $f:X\to X$, there is a constant $B=B(X,K,A)$ and a unique  tree automorphism $f_*:T\to T$ such that $d_\Haus(f(X_w),X_{f_*(w)})\leq B$ for all $w\in VT \cup ET$. 
	Moreover, the following are satisfied  for all quasi-isometries  $f,g:X\to X$:
	\begin{enumerate}
		\item If $f$ and $g$ are close, then $f_*=g_*$.
		\item $(g\circ f)_*=g_*\circ f_*$;
		\item $(\id_X)_*=\id_T$.
	\end{enumerate}
\end{cor}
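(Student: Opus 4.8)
The existence and uniqueness of $f_*$ is the heart of the matter; the three functoriality properties will then follow formally once uniqueness is established. So the plan is to first construct $f_*$ as a map on $VT \cup ET$, then verify it is a tree automorphism, then deduce uniqueness, and finally read off properties (1)--(3) from uniqueness together with the near-equality $d_\Haus(f(X_w), X_{f_*(w)}) \leq B$.

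\textbf{Construction of the map $f_*$.} Let $f$ be a $(K,A)$-quasi-isometry of $X$. Since $(X,T)$ is preserved by quasi-isometries, there is a constant $B_0 = B_0(X,K,A)$ such that for each $w \in VT \cup ET$ there exists some $w' \in VT \cup ET$ with $d_\Haus(f(X_w), X_{w'}) \leq B_0$. I would \emph{define} $f_*(w)$ to be this $w'$; the first step is to check that $w'$ is uniquely determined, which is exactly property (\ref{item:hd_graph1}) of Proposition \ref{prop:hd_graph}: if $d_\Haus(f(X_w), X_{w'}) \leq B_0$ and $d_\Haus(f(X_w), X_{w''}) \leq B_0$, then by the triangle inequality for Hausdorff distance $d_\Haus(X_{w'}, X_{w''}) \leq 2B_0 < \infty$, forcing $w' = w''$. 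Thus $f_*$ is a well-defined function on $VT \cup ET$, and by construction $d_\Haus(f(X_w), X_{f_*(w)}) \leq B_0$ for all $w$, giving the required near-equality with $B = B_0$.

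\textbf{Showing $f_*$ is a tree automorphism.} Here I would use a coarse inverse $\bar f$ of $f$, which is a $(K',A')$-quasi-isometry for constants depending only on $K,A$, and apply the same construction to get $\bar f_*$. The maps $f_*$ and $\bar f_*$ are mutually inverse: since $\bar f \circ f$ is close to $\id_X$, we have $d_\Haus(\bar f(f(X_w)), X_w)$ bounded, and combined with $d_\Haus(f(X_w), X_{f_*(w)}) \leq B_0$ and $d_\Haus(\bar f(X_{f_*(w)}), X_{\bar f_*(f_*(w))}) \leq B_0'$ we obtain $d_\Haus(X_{\bar f_*(f_*(w))}, X_w) < \infty$, so $\bar f_* f_* = \id$ by property (\ref{item:hd_graph1}); symmetrically $f_* \bar f_* = \id$. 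Hence $f_*$ is a bijection of $VT \cup ET$. To see it is simplicial, I would invoke property (\ref{item:hd_graph2}): the incidence relation ``$w'$ is an edge incident to the vertex $w$'' is characterized coarsely by the condition that $w \neq w'$ and $X_{w'} \subseteq N_R(X_w)$ for some $R$. This coarse containment is preserved under a quasi-isometry $f$ up to adjusting $R$, so $f_*$ preserves incidence in both directions (using that $f_*$ is a bijection with coarse-Lipschitz inverse), and thus induces a graph automorphism of $T$. Because $T$ is a tree and $f_*$ is a simplicial bijection preserving incidence, it is automatically a tree automorphism.

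\textbf{Uniqueness and the functoriality properties.} Uniqueness of $f_*$ among \emph{tree automorphisms} satisfying $d_\Haus(f(X_w), X_{f_*(w)}) \leq B$ for some $B$ is again immediate from property (\ref{item:hd_graph1}): if $\psi$ is another such automorphism with constant $B'$, then for each $w$ we get $d_\Haus(X_{f_*(w)}, X_{\psi(w)}) \leq B + B' < \infty$, so $f_*(w) = \psi(w)$. With uniqueness in hand, properties (1)--(3) follow formally. For (3), the identity automorphism $\id_T$ satisfies $d_\Haus(\id_X(X_w), X_w) = 0 \leq B$, so by uniqueness $(\id_X)_* = \id_T$. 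For (1), if $f$ and $g$ are close then $d_\Haus(f(X_w), g(X_w))$ is uniformly bounded, hence $d_\Haus(g(X_w), X_{f_*(w)})$ is bounded, so $f_*$ also witnesses the defining property for $g$ and uniqueness gives $g_* = f_*$. For (2), the composite $g_* \circ f_*$ is a tree automorphism, and $d_\Haus((g \circ f)(X_w), X_{g_*(f_*(w))}) \leq d_\Haus(g(f(X_w)), g(X_{f_*(w)})) + d_\Haus(g(X_{f_*(w)}), X_{g_*(f_*(w))})$, where the first term is bounded because $g$ is a quasi-isometry applied to two sets at bounded Hausdorff distance and the second is bounded by the defining property of $g_*$; thus $g_* \circ f_*$ witnesses the defining property for $g \circ f$, and uniqueness forces $(g \circ f)_* = g_* \circ f_*$.

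\textbf{Main obstacle.} The genuinely substantive step is verifying that $f_*$ preserves incidence, i.e.\ that it is simplicial rather than merely a bijection of $VT \cup ET$; this is where property (\ref{item:hd_graph2}) and the quasi-isometry invariance of coarse containment must be combined carefully, tracking how the constant $R$ transforms under $f$ and its coarse inverse. Everything else reduces cleanly to repeated applications of property (\ref{item:hd_graph1}) and the triangle inequality for Hausdorff distance.
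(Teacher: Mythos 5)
Your proposal is correct and follows essentially the same route as the paper's proof: define $f_*$ via the quasi-isometry-preservation property, get well-definedness and uniqueness from Proposition \ref{prop:hd_graph}(\ref{item:hd_graph1}), incidence preservation from Proposition \ref{prop:hd_graph}(\ref{item:hd_graph2}), bijectivity from the coarse inverse, and the functoriality properties (1)--(3) formally from the strong uniqueness. The only difference is cosmetic (you establish bijectivity before simplicialness, the paper does the reverse, and you spell out the functoriality details the paper leaves as "follow easily").
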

\begin{proof}
	Let   $f:X\to X$ be a $(K,A)$-quasi-isometry. Since $(X,T)$ is preserved by quasi-isometries,  there is a constant $B=B(X,K,A)$ and a function $f_*:VT\sqcup ET\to VT\sqcup ET$ such that $d_\Haus(f(X_w),X_{f_*(w)})\leq B$ for all $w\in VT\sqcup ET$. By (\ref{item:hd_graph1}) of Proposition \ref{prop:hd_graph},  no two distinct vertex or edge spaces are at finite Hausdorff distance, hence $f_*$ is uniquely determined. Moreover, (\ref{item:hd_graph2}) of Proposition \ref{prop:hd_graph} ensures $f_*$ sends vertices to vertices, edges to edges, and preserves incidence of edges and vertices. Therefore, $f_*$ must be a tree morphism. Applying the same argument to a coarse inverse $\bar f$ of $f$, we deduce $f_*$ is invertible and hence an automorphism. The remaining properties follow easily from the definition of $f_*$ and the fact that distinct vertex or edge spaces are at infinite Hausdorff distance. 
\end{proof}

\begin{proof}[Proof of Theorem \ref{thm:hyp_action_rigid_criterion}]
    Suppose $\Gamma$ and $\Gamma'$ are uniform lattices modulo finite kernels in the same locally compact group $G$. As in Remark \ref{rem:2ndcountable}, we can assume without loss of generality that $G$ is second countable. Let $(X,T)$ be the associated Bass--Serre tree of spaces for $\Gamma$. We first exhibit a quasi-action of $G$ on the space $X$.  Agol's Theorem~\cite{Agol13} implies vertex groups of $\cG$ are virtually special. Thus, $\Gamma$ has, in the terminology of Wise, an almost malnormal quasiconvex hierarchy terminating in virtually special groups, so $\Gamma$ is virtually special~\cite[Theorem 11.2]{wise2021structure}. Hence, $\Gamma$ is virtually torsion-free. We can now replace $\Gamma$ by a finite-index torsion-free subgroup so that $\Gamma$ is a uniform lattice in $G$.  As in Section \ref{sec:qi}, the uniform lattice embedding $\Gamma\to G$ induces a quasi-action of $G$ of $\Gamma$. Since $\Gamma$ acts geometrically on $X$, the quasi-action of $G$ on $\Gamma$ can be quasi-conjugated to a quasi-action $\{f_g\}_{g\in G}$ of $G$ on $X$. 

    We now show that $G$ acts minimally, cocompactly, and continuously on the tree $T$. By Proposition \ref{prop:hd_graph} and Corollary \ref{cor:tree_auts}, there is a constant $B$ such that for every $g\in G$ there is a tree automorphism $g_*\in \Aut(T)$ such that $d_\Haus(f_g(X_w),X_{g_*(w)})\leq B$. By the functoriality properties of $g\mapsto g_*$ given in Corollary \ref{cor:tree_auts}, this defines an action of  $G$ on $T$ extending the action of $\Gamma$ on $T$. In particular, this action is cocompact and minimal.
    By Lemma \ref{lem:coarse_conv}, there exists a constant $C$ such that for every $x\in X$ and sequence $(g_i)\to 1$ in $G$, we have $d(f_{g_i}(x),x)\leq C$ for all $i$ sufficiently large.  Since distinct vertex or edge spaces are at infinite Hausdorff distance by Proposition \ref{prop:hd_graph}, it follows that if a sequence $(g_i)\to \id$ in~$G$, then the sequence $(g_i)_*$ converges to the identity pointwise in $T$. Hence, the action of $G$ on $T$ is continuous. 

    We claim each vertex group $G_v$ contains a compact open normal subgroup fixing  $\lk(v)$.  Let $v\in VT$. 
    Since $\Gamma$ is a uniform lattice in $G$ and $G_v$ is open, Lemma \ref{lem:open_lattice} implies $\Gamma_v$ is a uniform lattice in $G_v$. Since $\Gamma_v$ is graphically discrete, $G_v$ has a compact open normal subgroup $K_v\vartriangleleft G_v$. Now for each $g\in G_v$, we have $d_\Haus(f_g(X_v),X_{v})\leq B$, hence there is quasi-action $\{f^v_g\}_{g\in G_v}$ of $G_v$ on $X_v$ such that $\sup_{x\in X_v} d(f_g(x),f^v_k(x))\leq B$. This quasi-action is cobounded since $\Gamma_v\leq G_v$ acts cocompactly on $X_v$. By Lemma \ref{lem:cpct_normal_qaction},  there is a constant $D$ such that $\sup_{x\in X_v}d(x,f_k^v(x))\leq D$ for all $k\in K_v$.
    Since no two distinct edge spaces are at finite Hausdorff distance and $d_\Haus\bigl(f_k^v(\phi_e(X_e)),\phi_{k_*(e)}(X_{k_*(e)})\bigr)<\infty$ for all $k\in K_v$ and $e\in \lk(v)$, it follows that $K_v$ fixes $\lk(v)$ pointwise.

 Finally, for each $w\in VT \cup ET$, since $\Gamma_w$ is finitely presented and $G_w$ is open, it follows from Lemmas  \ref{lem:open_lattice} and \ref{lem:finitevscompact} that $G_w$ is compactly presented. Thus, Theorem \ref{thm:get_treeofspaces} yields a geometric action of $G$ on a tree of spaces $(X',T)$ satisfying the conclusions of Theorem \ref{thm:get_treeofspaces}. 
 
 To deduce that $\Gamma$ and $\Gamma'$ are abstractly commensurable, we show Theorem \ref{thm:commoncover_BV} can  be applied. Each $G_v$ is compact-by-discrete, so $\Gamma'_v$ is virtually isomorphic to $\Gamma_v$. Thus, $\Gamma'_v$ is virtually special by the argument in Example~\ref{exmp:virt_cub}, together with Agol's Theorem. Therefore, $\Gamma'$ is virtually torsion-free as in the argument at the beginning of this proof. Since $\Gamma$ and $\Gamma'$ are virtually torsion-free, we replace them with finite-index subgroups that act freely on $(X',T)$ and are uniform lattices (rather than uniform lattices modulo finite kernels) in~$G$.

Define $Q_v$ and $Q_v^e$ as in Notation \ref{defn:ToS} and Definition \ref{defn:ToS_loc_edges}. The group $Q_v$ is a  hyperbolic group containing $\Gamma_v$ as a finite-index subgroup. In particular, $Q_v$ is virtually special. For each $e\in \lk(v)$, the group $Q_v^{e}$ is a quasi-convex subgroup of $Q_v$  containing  $\Gamma_e$ as a finite-index subgroup. This implies, for any set of $G$-orbit representatives $\{e_i\}_{i=1}^k\subseteq\lk(v)$, the groups $\{Q_v^{e_i}\}_{i=1}^k$ form an almost malnormal family of quasi-convex subgroups in $Q_v$. Therefore, $Q_v$ commands $\{Q_v^{e_i}\}_{i=1}^k$  by Example~\ref{rem:special_command}. 	All the hypotheses of Theorem \ref{thm:commoncover_BV} are met, so $\Gamma$ and $\Gamma'$ are abstractly commensurable.
\end{proof}

    While our main application of Theorem~\ref{thm:hyp_action_rigid_criterion} appears in the next subsection, we include two corollaries. A {\it primitive} element of a group is an element that is not a proper power of another group element.

    \begin{cor}\label{cor:amalgam_3mfld}
        If $M$ and $N$ are closed hyperbolic $3$-manifolds, then $\pi_1(M) *_{\Z} \pi_1(N)$, where a generator of $\Z$ maps to primitive elements in $\pi_1(M)$ and $\pi_1(N)$, is action rigid. 
    \end{cor}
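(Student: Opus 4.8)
The plan is to realize $\Gamma=\pi_1(M)*_{\Z}\pi_1(N)$ as the fundamental group of the graph of groups $\cG$ whose underlying graph is a single edge, with vertex groups $\pi_1(M)$ and $\pi_1(N)$ and edge group $\Z$ mapping to the cyclic subgroups generated by the prescribed primitive elements, and then to verify the three hypotheses of Theorem~\ref{thm:hyp_action_rigid_criterion}. First I would note that $\cG$ is minimal: both edge maps have proper (indeed infinite-index) image, so the action of $\Gamma$ on the associated Bass--Serre tree $T$ admits no proper invariant subtree.

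For hypothesis (1), each vertex group is the fundamental group of a closed hyperbolic $3$-manifold, hence is hyperbolic; it is graphically discrete by Theorem~\ref{thm:propc-3manifolds}; and it is cubulated by \cite{kahnmarkovic2012immersing,bergeronwise2012boundary}. For hypothesis (2), the image of each edge map is an infinite cyclic subgroup $\langle g\rangle$ with $g$ primitive; being two-ended, $\langle g\rangle$ is quasi-convex in the hyperbolic group $\pi_1(M)$ (resp.\ $\pi_1(N)$). Since the underlying graph has a single edge, the collection of incident edge images at each vertex is the singleton family $\{\langle g\rangle\}$, so it remains to check this subgroup is almost malnormal. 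As $\pi_1(M)$ is torsion-free and $g$ is primitive (hence represented by a loxodromic isometry of $\Hy^3$), the stabilizer of the axis of $g$ is exactly $\langle g\rangle$; thus if $x\langle g\rangle x^{-1}\cap\langle g\rangle\neq 1$ then $x$ preserves this axis and so $x\in\langle g\rangle$, giving malnormality. I would emphasize that primitivity is essential here, since a proper power is normalized by its root and would fail malnormality.

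The main obstacle is hypothesis (3): the associated Bass--Serre tree of spaces $(X,T)$ must be preserved by quasi-isometries. Here I would first record that $\Gamma$ is one-ended, being an amalgam of one-ended groups over the infinite subgroup $\Z$, and that $\Gamma$ is hyperbolic with quasi-convex vertex and edge stabilizers by Lemma~\ref{lem:acylindrical}. Since the splitting is over the two-ended subgroup $\Z$, I would invoke the quasi-isometry invariance of splittings over two-ended subgroups due to Papasoglu \cite{papasoglu2005quasi} (together with \cite{moshersageevwhyte2011quasiactions}) to conclude that every quasi-isometry of $X$ coarsely permutes the vertex and edge spaces, i.e.\ $(X,T)$ is preserved by quasi-isometries in the sense of Definition~\ref{defn:tree_QI}. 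The delicate point is that the vertex spaces are quasi-isometric to $\Hy^3$ while the edge spaces are lines (codimension two), so the edge spaces do not coarsely separate; extracting Definition~\ref{defn:tree_QI} from the cited results is the most technical step, and I would alternatively package the conclusion through quasi-isometry invariance of the Bowditch JSJ tree, using that closed hyperbolic $3$-manifold groups have $S^2$ boundary with no local cut points and hence appear as rigid vertices.

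With hypotheses (1)--(3) in hand, Theorem~\ref{thm:hyp_action_rigid_criterion} applies directly and yields that $\Gamma$ is action rigid.
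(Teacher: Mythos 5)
Your overall strategy---realize $\pi_1(M)*_{\Z}\pi_1(N)$ as a one-edge graph of groups and verify the three hypotheses of Theorem~\ref{thm:hyp_action_rigid_criterion}---is exactly the paper's proof. Your verifications of hypotheses (1) and (2) match the paper's, except for malnormality: the paper argues abstractly that a primitive infinite cyclic subgroup of any torsion-free hyperbolic group equals the maximal two-ended subgroup containing it, hence equals its own commensurator and is malnormal, whereas you argue via axes in $\Hy^3$. Both are valid; the paper's version has the advantage of applying verbatim to its companion Corollary~\ref{cor:surface-by-free}, where the vertex groups are not manifold groups.

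The one place where your write-up has a gap is hypothesis (3). Your primary citation---``quasi-isometry invariance of splittings over two-ended subgroups'' from Papasoglu and Mosher--Sageev--Whyte---does not, as stated, apply to an arbitrary given splitting: those results protect the \emph{canonical} pattern of two-ended subgroups over which the group splits, so a self-quasi-isometry of $X$ is only guaranteed to carry an edge space of your tree near a coset of \emph{some} two-ended subgroup over which $\Gamma$ splits, not necessarily near an edge space of the same tree $T$. To close this, one must know the given splitting is canonical, i.e., that the vertex groups admit no splittings over finite or two-ended subgroups. Your fallback route supplies exactly this missing ingredient: since $\partial\pi_1(M)\cong S^2$ has no local cut points, Bowditch's theory shows the vertex groups do not split over two-ended subgroups; then $T$ is $2$-acylindrical with two-ended edge stabilizers and rigid vertex stabilizers (Lemma~\ref{lem:acylindrical}), hence coincides with Bowditch's JSJ tree, which is preserved by boundary homeomorphisms and therefore by quasi-isometries. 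This is precisely the paper's argument, except that the paper rules out splittings of the vertex groups differently: a non-elementary torsion-free hyperbolic group splitting over a finite or two-ended subgroup has infinite outer automorphism group, while closed hyperbolic $3$-manifold groups have finite $\Out$ by Mostow rigidity. Incidentally, for this particular corollary your instinct about Mosher--Sageev--Whyte can be made rigorous by a different route: the edge group $\Z$ has cohomological codimension two in the $3$-manifold vertex groups, so the hypotheses of Theorem~\ref{thm:common_model_geom} hold except for non-elementarity of edge groups, and the remark following Proposition~\ref{prop:action_rigid_graphofgroups} (relying on Proposition~\ref{prop:induced_tree_aut}) then gives action rigidity directly.
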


    Manning--Mj--Sageev~\cite{mj2024cubulating} proved many hyperbolic surface-by-free groups are cubulated, which gives examples to which the following corollary can be applied. 
    
    \begin{cor}\label{cor:surface-by-free}
        If $\Gamma$ and $\Gamma'$ are cubulated hyperbolic surface-by-free groups, then $\Gamma *_{\Z} \Gamma'$, where a generator of $\Z$ maps to primitive elements in $\Gamma$ and $\Gamma'$, is action rigid. 
    \end{cor}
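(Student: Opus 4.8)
The plan is to realize $\Gamma *_{\Z} \Gamma'$ as the fundamental group of the obvious finite graph of groups $\cG$ and to verify the three hypotheses of Theorem \ref{thm:hyp_action_rigid_criterion}. Take $\cG$ to have underlying graph a single edge, vertex groups $\Gamma$ and $\Gamma'$, edge group $\Z$, and edge maps sending a fixed generator of $\Z$ to the specified primitive elements $g\in\Gamma$ and $g'\in\Gamma'$. Since $\langle g\rangle$ and $\langle g'\rangle$ are proper (indeed infinite-index) subgroups of the one-ended groups $\Gamma$ and $\Gamma'$, the graph of groups is minimal, and $\pi_1(\cG)=\Gamma*_{\Z}\Gamma'$.

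For hypothesis (1), the vertex groups are hyperbolic and cubulated by assumption, and they are graphically discrete by Theorem \ref{thm:propc_examples}(\ref{propc:QI}): a hyperbolic surface-by-free group is tame, being non-elementary hyperbolic and hence acylindrically hyperbolic (Example \ref{exmp:tame_morse}), and by the work of Farb--Mosher \cite{farbmosher2002surfacebyfree} the natural map $\Gamma\to\QI(\Gamma)$ has finite-index image. For hypothesis (2), the collection of images of incident edge maps at each vertex is a single subgroup, namely $\langle g\rangle$ at $\Gamma$ and $\langle g'\rangle$ at $\Gamma'$; these are infinite, and infinite cyclic subgroups of a hyperbolic group are quasi-convex. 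Because surface-by-free groups are torsion-free and $g$ is not a proper power, $\langle g\rangle$ is a maximal cyclic subgroup of $\Gamma$; maximal cyclic subgroups of a torsion-free hyperbolic group are malnormal, and a single malnormal infinite subgroup constitutes an almost malnormal family. The same applies to $\langle g'\rangle\leq\Gamma'$.

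The substantial step is hypothesis (3): that the associated Bass--Serre tree of spaces $(X,T)$ is preserved by quasi-isometries in the sense of Definition \ref{defn:tree_QI}. Here the vertex spaces are quasi-isometric to $\Gamma$ and $\Gamma'$ while the edge spaces are quasi-isometric to lines. The group $\Gamma*_{\Z}\Gamma'$ is one-ended (an amalgam of one-ended groups over an infinite subgroup) and finitely presented, and the almost malnormality from hypothesis (2) makes the splitting $2$-acylindrical (Lemma \ref{lem:acylindrical}) and of finite depth. I would deduce that quasi-isometries of $X$ coarsely permute the vertex and edge spaces from the quasi-isometric rigidity of splittings over two-ended subgroups developed by Papasoglu \cite{papasoglu2005quasi} and Mosher--Sageev--Whyte \cite{moshersageevwhyte2011quasiactions}. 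This is precisely the geometric input that cannot be extracted from the algebraic hypotheses alone, and checking that the finite-depth framework applies with surface-by-free vertex spaces (rather than coarse Poincar\'e duality vertex spaces, as in the analogous Corollary \ref{cor:amalgam_3mfld}) is where the main care is required. Once all three hypotheses are in place, Theorem \ref{thm:hyp_action_rigid_criterion} yields that $\Gamma*_{\Z}\Gamma'$ is action rigid.
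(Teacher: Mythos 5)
Your reduction to Theorem \ref{thm:hyp_action_rigid_criterion} matches the paper's strategy, and your verifications of hypotheses (1) and (2) are fine (your malnormality argument via maximal cyclic subgroups is a legitimate alternative to the paper's commensurator/limit-set argument). The gap is in hypothesis (3), which you flag but do not close, and the machinery you point to does not close it. Mosher--Sageev--Whyte's finite-depth theorem (\cite[Theorem 1.6]{moshersageevwhyte2011quasiactions}, used in the paper only for Theorem \ref{thm:common_model_geom}) requires the depth zero vertex groups to be coarse Poincar\'e duality groups; a hyperbolic surface-by-free group $\pi_1(\Sigma)\rtimes F_n$ with $n\geq 2$ is not coarse PD, so that route fails outright. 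Papasoglu's theorem \cite{papasoglu2005quasi} says quasi-isometries coarsely respect the pieces of the \emph{JSJ} decomposition over two-ended subgroups, not of an arbitrary splitting over $\Z$; to invoke it you must first prove that the splitting $\Gamma *_{\Z}\Gamma'$ \emph{is} the JSJ decomposition. That is exactly the step your proposal omits: you never rule out that the vertex groups $\Gamma,\Gamma'$ themselves split over finite or two-ended subgroups (nor that they are quadratically hanging). If a vertex group did split further, the JSJ would be a proper refinement of your tree, and nothing in the cited results would force quasi-isometries to preserve the original vertex spaces.

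The paper supplies precisely this missing ingredient: a non-elementary torsion-free hyperbolic group that splits over a finite or two-ended subgroup has infinite outer automorphism group (\cite{levitt2005automorphisms}), whereas hyperbolic surface-by-free groups have finite outer automorphism group by Farb--Mosher \cite[Theorem 1.3]{farbmosher2002surfacebyfree}; hence the vertex groups admit no such splittings. Combined with $2$-acylindricity (Lemma \ref{lem:acylindrical}, which also gives hyperbolicity of the amalgam via Bestvina--Feighn), this identifies $T$ with the JSJ tree of cylinders and hence with Bowditch's JSJ tree, which is determined by the local cut point structure of $\partial(\Gamma *_{\Z}\Gamma')$ and is therefore preserved by boundary homeomorphisms, in particular by quasi-isometries. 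To repair your proof, insert the finite-$\Out$ argument and replace the appeal to Papasoglu/MSW with this boundary-canonicity of the Bowditch tree (or, if you insist on Papasoglu, use the finite-$\Out$ argument to first show your splitting is the JSJ).
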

        \begin{proof}[Proof of Corollaries \ref{cor:amalgam_3mfld} and \ref{cor:surface-by-free}]
    	We first note that for any torsion-free hyperbolic group $\Gamma$, any primitive infinite cyclic subgroup $H$ is quasi-convex and malnormal. Quasi-convexity of two-ended subgroups of hyperbolic groups was shown by Gromov \cite{gromov1987hyperbolic}.  To see $H$ is malnormal, we first note that $H$ is equal to the maximal two-ended subgroup of $\Gamma$ containing $H$. Thus $H$  coincides with the stabilizer of its limit set in $\partial \Gamma$, hence  is equal to its commensurator in $\Gamma$ (see for instance \cite[\S 7]{minasyan2005some}). This implies that $H$  is malnormal.
    	
    	We now prove the corollaries. In both cases, the ambient group $\Lambda$ splits as a graph of groups $\Gamma *_{\Z} \Gamma'$, where $\Gamma$ and $\Gamma'$ are torsion-free hyperbolic groups. In both cases, the vertex groups are cubulated; this follows from Remark \ref{rem:closed_mfld_cubulated} when vertex groups are 3-manifolds. It also follows from Theorem \ref{thm:propc_examples} that the vertex groups of $\Lambda$ are graphically discrete. It follows the previous paragraph that the incident edge group of each vertex group of $\Lambda$ is a  malnormal  infinite quasi-convex subgroup of the vertex group.  Lemma \ref{lem:acylindrical} thus implies $\Lambda$ is hyperbolic and the associated Bass--Serre tree $T$  is 2-acylindrical. 
    	
    	If a non-elementary torsion-free hyperbolic group splits over a finite or two-ended subgroup, then its outer automorphism group is infinite (see e.g. \cite{levitt2005automorphisms}). Since fundamental groups of closed hyperbolic 3-manifolds and hyperbolic surface-by-free groups have finite outer automorphism group \cite[Theorem 1.3]{farbmosher2002surfacebyfree}, it follows that vertex groups of $\Lambda$ cannot split over a finite or two-ended subgroup.
    	  	Therefore, $T$ has two-ended edge stabilizers and vertex stabilizers that do not split over any finite or two-ended subgroup.  As $T$ is 2-acylindrical, it is equal to the JSJ tree of cylinders of $\Lambda$ over two-ended subgroups, hence  coincides with Bowditch's JSJ  tree of a one-ended hyperbolic group~\cite{bowditch,guirardel2017jsj}. This tree is preserved by homeomorphisms of $\partial \Lambda$, so the associated Bass--Serre tree is preserved by quasi-isometries.  Theorem~\ref{thm:hyp_action_rigid_criterion} implies that $\Lambda$  is action rigid as required.
    \end{proof}

    Quasi-isometric rigidity in the above examples is open. 

   We remark that Hruska--Wise proved that a separable quasi-convex subgroup of a hyperbolic group is virtually almost malnormal, giving other instances where Theorem~\ref{thm:hyp_action_rigid_criterion} can be applied; see Corollary~\ref{cor:QI3manifold}.

\begin{prop}[{\cite[Theorem 9.3]{hruskawise2009packing}}]\label{prop:virtmal}
	Let $\Gamma$ be a hyperbolic group, and let $H\leq \Gamma$ be a quasi-convex separable subgroup. Then there exists a finite-index subgroup $\Gamma'\leq \Gamma$ containing $H$ such that $H$ is almost malnormal in $\Gamma'$.
\end{prop}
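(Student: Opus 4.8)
The statement is a theorem of Hruska--Wise, and the plan is to reconstruct the standard argument combining finite height of quasi-convex subgroups with separability, run as an induction on height. Recall that the \emph{height} of a subgroup $H\le\Gamma$ is the largest $n$ for which there exist $g_1,\dots,g_n$ lying in pairwise distinct cosets of $H$ with $\bigcap_{i=1}^n g_iHg_i^{-1}$ infinite. Unwinding the definition of almost malnormality recalled before Example~\ref{rem:special_command}, a single subgroup $H$ is almost malnormal in $\Gamma$ exactly when $gHg^{-1}\cap H$ is finite for every $g\notin H$, which is precisely the condition that $H$ has height at most $1$. The theorem of Gitik--Mitra--Rips--Sageev guarantees that a quasi-convex subgroup of a hyperbolic group has finite height, so it suffices to prove that whenever $H$ has height $n\ge 2$ one can pass to a finite-index subgroup $\Gamma_1$ with $H\le\Gamma_1$ in which the height of $H$ is strictly smaller, and then iterate. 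The inductive hypothesis does apply inside $\Gamma_1$: a finite-index subgroup of a hyperbolic group is again hyperbolic with $H$ quasi-convex in it, and $H$ remains separable in $\Gamma_1$, since the profinite topology on $\Gamma_1$ refines the subspace topology inherited from the profinite topology on $\Gamma$. Moreover height is monotone, decreasing under passage to $\Gamma_1$, since fewer conjugating elements are available.

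The structural inputs for the inductive step are as follows. First, an intersection of finitely many quasi-convex subgroups of a hyperbolic group is quasi-convex, so every intersection $\bigcap_i g_iHg_i^{-1}$ is quasi-convex. Second, for a fixed infinite quasi-convex subgroup $K\le\Gamma$, the set of cosets $gH$ with $K\le gHg^{-1}$ is finite, of cardinality bounded by the width of $H$; this follows from the Gitik--Mitra--Rips--Sageev machinery together with control of limit sets of quasi-convex subgroups in $\partial\Gamma$. Consequently there are only finitely many $\Gamma$-conjugacy classes of the maximal infinite intersections $K=\bigcap_{i=1}^n g_iHg_i^{-1}$ realizing the full height $n$. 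Finally, for quasi-convex $H$ the commensurator $\Comm_\Gamma(H)$ is quasi-convex and contains $H$ with finite index, so the commensurating elements form finitely many cosets of $H$.

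I would then use separability to excise, simultaneously, the finitely many depth-$n$ configurations. Fix representatives $K_1,\dots,K_m$ of the conjugacy classes of maximal depth-$n$ intersections; after conjugating, each $K_j$ may be taken inside $H$, cut out by at least one further conjugate $g_jHg_j^{-1}$ with $g_j\notin H$. Each $K_j$ lies in only finitely many conjugates of $H$, so the witnessing elements are governed by finitely many double cosets $Hg_jH$ with $g_j\notin H$. Using separability of $H$ (and the finiteness of $[\Comm_\Gamma(H):H]$ to handle commensurating elements), I would produce a single finite-index subgroup $\Gamma_1\ge H$ from which all these witnessing double cosets are removed, so that no maximal depth-$n$ intersection survives inside $\Gamma_1$ and the height of $H$ in $\Gamma_1$ drops below $n$. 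Iterating until the height reaches $1$ produces the required finite-index $\Gamma'\ge H$ in which $H$ is almost malnormal.

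The main obstacle will be the bookkeeping in the inductive step: one must ensure that deleting the finitely many witnessing elements genuinely lowers the height, rather than merely destroying some configurations while others --- arising from $H$-translates, from $\Gamma$-conjugates, or from commensurating elements --- continue to realize height $n$ inside $\Gamma_1$. This is exactly where the finiteness statements must be combined: finite height yields finitely many conjugacy classes of maximal intersections and finite multiplicity of the conjugates containing a fixed infinite intersection, so that the relevant witnessing elements fall into finitely many double cosets, and separability of $H$ (upgraded to the separability of these double cosets for quasi-convex separable subgroups of hyperbolic groups) lets a single finite-index subgroup handle them all at once.
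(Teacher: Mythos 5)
The paper itself offers no proof of this proposition --- it is quoted directly from Hruska--Wise --- so your proposal can only be judged on its own terms and against the cited argument. Your skeleton is the right one: finite height via Gitik--Mitra--Rips--Sageev, quasi-convexity supplying finiteness, and separability to cut down to a finite-index subgroup. Your two finiteness claims are both correct for quasi-convex subgroups of hyperbolic groups, and together they give what the argument really needs: there are only finitely many $\Gamma$-orbits of $n$-element sets of cosets $\{H, g_2H, \dots, g_nH\}$ whose associated intersection $H \cap \bigcap_i g_iHg_i^{-1}$ is infinite, where $n$ is the height.

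The genuine gap is that you stop at exactly the step you flag as the main obstacle, and the tool you propose for it (``separability of these double cosets'') is a red herring: it neither follows from the stated hypotheses nor is it needed. The point you are missing is that every finite-index subgroup produced by separability may be taken to contain $H$, and containment of $H$ handles double cosets automatically: if $H \leq \Gamma_1$ and $g \notin \Gamma_1$, then $HgH \cap \Gamma_1 = \emptyset$, since $hgh' \in \Gamma_1$ with $h,h' \in H$ would force $g \in \Gamma_1$. Here is the missing bookkeeping in full. Let $C_j = \{H, g_{j,2}H, \dots, g_{j,n}H\}$, $1 \leq j \leq m$, be representatives of the finitely many $\Gamma$-orbits above (every orbit has a representative containing the coset $H$), and use separability of $H$ to choose a finite-index $\Gamma_1 \leq \Gamma$ with $H \leq \Gamma_1$ and $g_{j,i} \notin \Gamma_1$ for all $i,j$. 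Since $H \leq \Gamma_1$, a coset $xH$ meets $\Gamma_1$ if and only if $x \in \Gamma_1$. If $H$ still had height $n$ in $\Gamma_1$, witnessed by $\{H, \gamma_2H, \dots, \gamma_nH\}$ with $\gamma_i \in \Gamma_1$, this set would equal $\gamma \cdot C_j$ for some $\gamma \in \Gamma$ and some $j$; in particular $\gamma H$ is one of the cosets $H, \gamma_2 H, \dots, \gamma_n H$, each of which meets $\Gamma_1$, so $\gamma \in \Gamma_1$, and then each $\gamma g_{j,i}H$ meeting $\Gamma_1$ forces $g_{j,i} \in \gamma^{-1}\Gamma_1 = \Gamma_1$, a contradiction. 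So the height drops and your induction closes; note that $H$-translates, $\Gamma$-conjugates and commensurating elements need no separate treatment, so the commensurator input can be deleted. Two further remarks. First, the same hull/limit-set argument behind your finiteness claims proves directly that there are only finitely many double cosets $HgH$ with $g \notin H$ and $H \cap gHg^{-1}$ infinite, so excluding representatives of these by separability gives almost malnormality in a single step, with no induction on height at all. Second, that finiteness is where quasi-convexity genuinely enters --- finite height plus separability alone would leave possibly infinitely many bad double cosets, which no single finite-index subgroup could exclude --- so it deserves a proof or a precise citation rather than a gesture at ``GMRS machinery''.
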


\subsection{Quasi-isometric rigidity for graphs of hyperbolic manifold groups}

    The aim of this section is to prove the following quasi-isometric rigidity theorem. If $\Gamma$ is a group of finite cohomological dimension $\cd(\Gamma)$, the \emph{cohomological codimension} of a subgroup $\Lambda\leq \Gamma$ is $\cd(\Gamma)-\cd(\Lambda)$.

\begin{thm}\label{thm:common_model_geom}
	Let $\Gamma$ be the fundamental group of a finite graph of groups $\cG$ satisfying the following properties:
	\begin{enumerate}
	\item Each vertex group of $\cG$ is cubulated and is the fundamental group of a closed hyperbolic $n$-manifold for some $n\geq 3$.
	\item For each vertex group of $\cG$, the collection  of images of incident edge maps is an almost malnormal family of  non-elementary quasi-convex subgroups with  cohomological codimension at least two.
	\end{enumerate}
Then any finitely generated group quasi-isometric to $\Gamma$ is abstractly commensurable to~$\Gamma$.
\end{thm}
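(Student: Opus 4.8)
The plan is to combine the action rigidity criterion of Theorem~\ref{thm:hyp_action_rigid_criterion} with the action-to-quasi-isometry upgrade of Proposition~\ref{prop:qi_framework}, verifying the two conditions of the latter---tameness and the uniform quasi-isometry property---by means of the cohomological codimension hypothesis and the work of Mosher--Sageev--Whyte and Biswas. First I would record the structural consequences of the hypotheses. Each vertex group is a uniform lattice in $\mathrm{PO}(n,1)$, an adjoint (hence center-free) rank-one simple Lie group without compact factors; since $n\geq 3$ it is not a non-uniform lattice in $\PSL_2(\bR)$, so it is graphically discrete by Theorem~\ref{thm:propc_examples}(\ref{propc:latticelie}), and it is hyperbolic and cubulated by hypothesis. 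The images of incident edge maps are almost malnormal, quasi-convex, and non-elementary, hence infinite. Because these images have cohomological codimension at least two, they have strictly smaller cohomological dimension than the vertex groups, so no edge map is surjective, and a finite connected graph of groups with no surjective edge map is minimal. By Lemma~\ref{lem:acylindrical}, $\Gamma$ is therefore hyperbolic, and being a nontrivial graph of infinite groups it is non-elementary, hence acylindrically hyperbolic and so tame by Example~\ref{exmp:tame_morse}.

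The geometric heart of the argument is to establish the two inputs coming from the codimension hypothesis: that the associated Bass--Serre tree of spaces $(X,T)$ is preserved by quasi-isometries in the sense of Definition~\ref{defn:tree_QI}, and that $\Gamma$ has the uniform quasi-isometry property of Definition~\ref{defn:uniform_QI}. Both would follow from the finite-depth tree-of-spaces theory of Mosher--Sageev--Whyte~\cite{moshersageevwhyte2011quasiactions} together with the rigidity results of Biswas~\cite{biswas12flows}: because each edge space is a quasi-isometrically embedded copy of the universal cover of an edge group of codimension at least two inside a copy of $\Hy^n$, the edge spaces do not coarsely separate their vertex spaces, and the resulting pattern is rigid, forcing every self-quasi-isometry of $X$ to coarsely permute the vertex and edge spaces and to restrict on each vertex space to a map uniformly close to an isometry. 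I expect this step to be the main obstacle, since it is where the codimension hypothesis is genuinely used; indeed, as the introduction remarks, admitting two-ended (codimension-one) edge groups would destroy the uniform quasi-isometry property and collapse the whole strategy.

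With these inputs in hand the conclusion is bookkeeping. The hypotheses of Theorem~\ref{thm:hyp_action_rigid_criterion} are now all verified, so $\Gamma$ is action rigid; moreover, since the vertex groups are cubulated and hyperbolic, the proof of that theorem produces virtually special (hence virtually torsion-free) vertex stabilizers for any competing lattice and concludes not merely virtual isomorphism but abstract commensurability via the Common Cover Theorem~\ref{thm:commoncover_BV}. Now let $\Gamma'$ be any finitely generated group quasi-isometric to $\Gamma$. Running the argument in the proof of Proposition~\ref{prop:qi_framework}---using tameness and the uniform quasi-isometry property to quasi-conjugate the $\QI(\Gamma)$-quasi-action to an isometric action on a proper model space $X$---shows that $\Gamma'$ acts geometrically on the same proper metric space $X$ as $\Gamma$. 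By Proposition~\ref{prop:ARequivalences} this means $\Gamma$ and $\Gamma'$ are uniform lattices modulo finite kernels in a common locally compact group, so the abstract-commensurability conclusion of the proof of Theorem~\ref{thm:hyp_action_rigid_criterion} applies and yields that $\Gamma'$ is abstractly commensurable to $\Gamma$, as required.
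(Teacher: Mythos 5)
Your proposal is correct and follows essentially the same route as the paper's own proof: graphical discreteness of the vertex lattices via Theorem~\ref{thm:propc_examples}, action rigidity through Theorem~\ref{thm:hyp_action_rigid_criterion} after verifying (via Mosher--Sageev--Whyte) that the Bass--Serre tree of spaces is preserved by quasi-isometries, the uniform quasi-isometry property from Biswas' pattern rigidity, and the final assembly through Proposition~\ref{prop:qi_framework}, with the codimension-two hypothesis doing exactly the work you assign it. You even handle two points the paper leaves implicit --- minimality of $\cG$ (no edge map can be surjective since edge groups have strictly smaller cohomological dimension) and the upgrade from virtual isomorphism to abstract commensurability via the Common Cover Theorem~\ref{thm:commoncover_BV} inside the proof of Theorem~\ref{thm:hyp_action_rigid_criterion} --- so no correction is needed.
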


\begin{rem}\label{rem:closed_mfld_cubulated}
	Work of Kahn--Markovic and Bergeron--Wise implies that fundamental groups of closed hyperbolic $3$-manifolds are cubulated~\cite{kahnmarkovic2012immersing, bergeronwise2012boundary}. A theorem of Bergeron--Haglund--Wise shows many additional hyperbolic $n$-manifolds are cubulated \cite{bergeronhaglundwise2011hyperplane}. 
\end{rem}

Before proving Theorem \ref{thm:common_model_geom}, we state a concrete special case.

\begin{cor}\label{cor:QI3manifold}
	Let $\Gamma_1$ and $\Gamma_2$ be fundamental groups of closed hyperbolic 3-manifolds, and let $F_1\leq \Gamma_1$ and  $F_2\leq \Gamma_2$ be isomorphic quasi-convex non-abelian free subgroups. There exist finite-index subgroups $\Gamma'_1\leq \Gamma_1$ and $\Gamma'_2\leq \Gamma_2$ containing $F_1$ and $F_2$ respectively,  such that any finitely generated group quasi-isometric to  $\Gamma=\Gamma'_1*_{F_1=F_2}\Gamma'_2$ is abstractly commensurable  to $\Gamma$.
\end{cor}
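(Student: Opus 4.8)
The plan is to deduce Corollary \ref{cor:QI3manifold} as a direct application of Theorem \ref{thm:common_model_geom}, so the work lies entirely in arranging the hypotheses of that theorem. Writing $\Gamma=\Gamma'_1*_{F_1=F_2}\Gamma'_2$ as the fundamental group of a graph of groups $\cG$ with two vertices (labeled by $\Gamma'_1,\Gamma'_2$) and one edge (labeled by the free group $F:=F_1\cong F_2$), I must verify: (i) each vertex group is cubulated and is the fundamental group of a closed hyperbolic $3$-manifold; and (ii) for each vertex group, the single incident edge image is an almost malnormal, non-elementary, quasi-convex subgroup of cohomological codimension at least two. Since $n=3$ here, Remark \ref{rem:closed_mfld_cubulated} (Kahn--Markovic, Bergeron--Wise) immediately gives that $\pi_1$ of a closed hyperbolic $3$-manifold is cubulated, and $\Gamma'_i$, being finite-index in $\Gamma_i$, is again the fundamental group of a closed hyperbolic $3$-manifold and is itself cubulated; so condition (i) holds for any finite-index choice.

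The substance is condition (ii). First I would address \emph{almost malnormality}. A quasi-convex subgroup of a hyperbolic group need not be almost malnormal, but Proposition \ref{prop:virtmal} (Hruska--Wise) tells me that if $F_i$ is in addition \emph{separable} in $\Gamma_i$, then there is a finite-index subgroup $\Gamma'_i\leq\Gamma_i$ containing $F_i$ in which $F_i$ is almost malnormal. This is exactly where the passage to finite-index subgroups in the statement comes from. The required separability of the quasi-convex subgroup $F_i$ follows from the fact that $\Gamma_i=\pi_1(M_i)$ is virtually special (by Agol's theorem, via Kahn--Markovic and Bergeron--Wise cubulation), and quasi-convex subgroups of virtually special hyperbolic groups are separable; I would cite Agol \cite{Agol13} together with Wise \cite{wise2021structure}. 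Thus I pick $\Gamma'_1,\Gamma'_2$ as provided by Proposition \ref{prop:virtmal}, and then $F_i$ is almost malnormal and quasi-convex in $\Gamma'_i$ (quasi-convexity being inherited by finite-index overgroups, and indeed already guaranteed by Gromov for the relevant subgroups).

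Next I would verify the remaining parts of (ii) for the single edge image $F_i\leq\Gamma'_i$. Non-elementarity is clear since $F$ is a non-abelian free group, hence not virtually cyclic and infinite. For the \emph{cohomological codimension}: since $\Gamma'_i$ is the fundamental group of a closed aspherical $3$-manifold, $\cd(\Gamma'_i)=3$, while $F$ is a non-abelian free group so $\cd(F)=1$; hence the cohomological codimension is $3-1=2\geq 2$, as needed. With all hypotheses of Theorem \ref{thm:common_model_geom} now in place for $\cG$, that theorem gives that any finitely generated group quasi-isometric to $\Gamma=\Gamma'_1*_{F_1=F_2}\Gamma'_2$ is abstractly commensurable to $\Gamma$, completing the proof.

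I do not anticipate a deep obstacle here, as this is a specialization argument; the one point requiring care is the \emph{almost malnormality}, because it forces the passage to finite-index subgroups $\Gamma'_i$ and relies on the separability input. The main thing to get right is the logical order: establish separability (from virtual specialness) \emph{first}, feed it into Proposition \ref{prop:virtmal} to obtain $\Gamma'_i$ and almost malnormality of $F_i$ in $\Gamma'_i$, and only then check that the codimension and cubulation conditions transfer to the finite-index vertex groups (which they do, since finite-index subgroups of closed hyperbolic $3$-manifold groups are again of the same type). I would also remark that the isomorphism $F_1\cong F_2$ and the amalgamation are compatible with the graph-of-groups setup, so that $\Gamma'_1*_{F_1=F_2}\Gamma'_2$ is genuinely $\pi_1(\cG)$ for the two-vertex one-edge graph described above.
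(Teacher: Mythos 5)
Your proof is correct and follows essentially the same route as the paper: apply Proposition \ref{prop:virtmal} (Hruska--Wise) to obtain the finite-index subgroups $\Gamma'_i$ in which $F_i$ is almost malnormal, invoke Remark \ref{rem:closed_mfld_cubulated} for cubulation, and then apply Theorem \ref{thm:common_model_geom}. The only difference is that you spell out the separability input (via Agol/virtual specialness) and the cohomological codimension computation explicitly, which the paper leaves implicit; both checks are accurate.
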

\begin{proof}
By Proposition \ref{prop:virtmal}, there are finite-index subgroups $\Gamma'_1\leq \Gamma_1$ and $\Gamma'_2\leq \Gamma_2$ containing $F_1$ and $F_2$ as malnormal subgroups. Let $\Gamma=\Gamma'_1*_{F_1=F_2}\Gamma'_2$. By Remark~\ref{rem:closed_mfld_cubulated}, $\Gamma_1$ and $\Gamma_2$ are cubulated. The rest of the hypotheses of Theorem~\ref{thm:common_model_geom} are satisfied, so the theorem can be applied.
\end{proof}

Fix $\Gamma$ and $\cG$ as in Theorem \ref{thm:common_model_geom} for the remainder of this section. Let $(X,T)$ be the Bass--Serre tree of spaces associated to $\cG$. We first prove $\Gamma$ is action rigid using Theorem~\ref{thm:hyp_action_rigid_criterion}, and then we will prove $\Gamma$ has the uniform quasi-isometry property. Quasi-isometric rigidity then follows from Proposition \ref{prop:qi_framework}, which allows us to  upgrade  action rigidity to quasi-isometric rigidity.

\subsubsection{Action Rigidity}

 To prove that $\Gamma$ is action rigid, we use Theorem~\ref{thm:hyp_action_rigid_criterion}. To do so,  it remains to prove that the tree of spaces $(X,T)$ is preserved by quasi-isometries. We note that the hypotheses of Theorem~\ref{thm:common_model_geom} ensure the conclusions of Proposition~\ref{prop:hd_graph} hold. 
The next proposition is a consequence of work of Mosher--Sageev--Whyte \cite{moshersageevwhyte2011quasiactions}. It will be used to prove both  action rigidity  and  the uniform quasi-isometry property.
\begin{prop}[{\cite[Theorem 1.6]{moshersageevwhyte2011quasiactions}}]\label{prop:induced_tree_aut}
	The tree of spaces $(X,T)$ is preserved by quasi-isometries. 
\end{prop}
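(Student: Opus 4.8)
The plan is to invoke the quasi-isometric rigidity result of Mosher--Sageev--Whyte for trees of hyperbolic spaces, which applies precisely to the geometric setup at hand. The key structural features that make \cite[Theorem 1.6]{moshersageevwhyte2011quasiactions} applicable are already guaranteed by the hypotheses of Theorem~\ref{thm:common_model_geom}: the vertex spaces are copies of $\Hy^n$ (being universal covers of closed hyperbolic $n$-manifolds) and the edge spaces coarsely embed as the limit sets of quasi-convex subgroups. First I would verify that $(X,T)$ is a \emph{tree of coarse $\PD(n)$ spaces} (or whatever the precise hypothesis in \cite{moshersageevwhyte2011quasiactions} requires): the vertex spaces are quasi-isometric to $\Hy^n$, which are coarse Poincar\'e duality spaces of dimension $n$, and the edge-to-vertex inclusions are coarse embeddings of codimension at least two. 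This last point is exactly where the \emph{cohomological codimension at least two} hypothesis on the edge groups enters, ensuring that the coarse separation properties required by Mosher--Sageev--Whyte hold.

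Second I would confirm the \emph{coarse separation} and \emph{non-degeneracy} conditions. Since each family of incident edge images is almost malnormal and consists of non-elementary quasi-convex subgroups, Proposition~\ref{prop:hd_graph} already gives that distinct vertex and edge spaces are at infinite Hausdorff distance, that edge spaces are coarsely contained only in their incident vertex spaces, and that neighborhoods of distinct edge spaces have bounded intersection. These are exactly the geometric inputs that allow one to reconstruct the combinatorics of $T$ from the coarse geometry of $X$. With the codimension-$\geq 2$ hypothesis, the edge spaces do not coarsely separate the vertex spaces, so a quasi-isometry $f:X\to X$ must coarsely permute the collection of vertex spaces (each quasi-isometric to $\Hy^n$, which are rigid and cannot be coarsely swapped for lower-dimensional edge spaces) and, consequently, coarsely permute the edge spaces as well. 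This yields both conditions (1) and (2) of Definition~\ref{defn:tree_QI}.

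The main obstacle, or rather the main thing to get right, is citing \cite[Theorem 1.6]{moshersageevwhyte2011quasiactions} with exactly the correct list of hypotheses and translating between their language (``coarse $\PD(n)$ spaces,'' ``finite depth,'' ``qi-preserved decompositions'') and the group-theoretic hypotheses stated here. The substantive geometric work---showing vertex spaces are coarse $\PD(n)$ spaces, edge spaces have the right codimension, and the decomposition has finite depth---is all encoded in the manifold and malnormality hypotheses, so the proof itself should be short. Concretely, I would write:

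\begin{proof}
Each vertex space of $(X,T)$ is quasi-isometric to $\Hy^n$, hence is a coarse Poincar\'e duality space of formal dimension $n$. Each edge space is quasi-isometric to the coset space of a non-elementary quasi-convex subgroup whose cohomological codimension in the ambient hyperbolic $n$-manifold group is at least two; in particular, the edge spaces coarsely embed with codimension at least two into the incident vertex spaces and therefore do not coarsely separate them. By Proposition~\ref{prop:hd_graph}, distinct vertex and edge spaces lie at infinite Hausdorff distance, edge spaces are coarsely contained only in their incident vertex spaces, and neighborhoods of distinct edge spaces have bounded coarse intersection. These are precisely the hypotheses of \cite[Theorem 1.6]{moshersageevwhyte2011quasiactions}, which asserts that the decomposition $(X,T)$ is quasi-isometrically rigid in the sense that any quasi-isometry of $X$ coarsely permutes the vertex and edge spaces with uniformly bounded constants. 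This is exactly the statement that $(X,T)$ is preserved by quasi-isometries in the sense of Definition~\ref{defn:tree_QI}.
\end{proof}
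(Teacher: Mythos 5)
Your overall strategy coincides with the paper's: both proofs reduce the statement to \cite[Theorem 1.6]{moshersageevwhyte2011quasiactions}. The gap lies in the final step, where you assert that coarse $\PD(n)$ vertex spaces, non-separating edge spaces, and the conclusions of Proposition~\ref{prop:hd_graph} ``are precisely the hypotheses of [Theorem 1.6]''. They are not. That theorem is stated for a graph of groups satisfying five conditions: (i) the graph of groups is of finite type, irreducible, and of finite depth; (ii) no depth-zero raft of the Bass--Serre tree is a line; (iii) every depth-zero vertex group is coarse PD; (iv) the crossing graph condition holds for every one-vertex depth-zero raft; and (v) every vertex and edge group is of coarse finite type. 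The conclusions of Proposition~\ref{prop:hd_graph} appear nowhere on this list --- in the paper they serve a different purpose, namely producing the induced tree automorphism functorially (Corollary~\ref{cor:tree_auts}) --- so citing them as ``the hypotheses'' of Mosher--Sageev--Whyte leaves the actual hypotheses unchecked.

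Of the five conditions, your proof in substance addresses only (iii) (vertex spaces quasi-isometric to $\Hy^n$) and (iv) (the codimension-$\geq 2$ hypothesis implies, via \cite[Lemma 3.8]{moshersageevwhyte2011quasiactions}, that edge spaces do not coarsely separate adjacent vertex spaces, so each crossing graph is empty and the condition holds vacuously --- here your instinct matches the paper exactly). Still missing are: irreducibility and finite depth, which follow from edge groups having infinite index in their adjacent vertex groups (Remark~\ref{rem:inf_index_image}); condition (ii), which holds because every depth-zero raft is a single vertex; and condition (v), which is the least automatic --- the paper argues that vertex groups are of type $FP$ (being closed aspherical manifold groups) and that edge groups, being hyperbolic (hence $FP_\infty$) and of finite cohomological dimension (as subgroups of the vertex groups), are also of type $FP$, hence of coarse finite type. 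These verifications are routine given the hypotheses of Theorem~\ref{thm:common_model_geom}, but they are what the proof of this proposition consists of; omitting them, and substituting conditions that Mosher--Sageev--Whyte do not ask for, is a genuine gap.
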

\begin{proof}
	 We apply Theorem 1.6 \cite{moshersageevwhyte2011quasiactions}, which  says that if a graph of groups $\cG$ satisfies the following five conditions, then its associated Bass--Serre tree of spaces is preserved by quasi-isometries. 
	 \begin{enumerate}[label=(MSW\arabic*), leftmargin=*]
	 	\item $\cG$ is finite type, irreducible, and of finite depth.\label{item:msw1}
	 	\item  No depth zero raft of the Bass--Serre tree of $\cG$ is a line.\label{item:msw2}
	 	\item Every depth zero vertex group of $\cG$ is coarse PD.\label{item:msw3}
	 	\item For every one vertex, depth zero raft of the Bass--Serre tree, the crossing
	 	graph condition holds.\label{item:msw4}
	 	\item  Every vertex and edge group is coarse finite type.\label{item:msw5}
	 \end{enumerate}
	 Since some of these conditions are quite technical and will not be needed outside the current proof, we refer the reader to \cite{moshersageevwhyte2011quasiactions} for definitions of the above terms. We now show that a graph of groups $\cG$ satisfying the hypothesis of Theorem \ref{thm:common_model_geom} satisfies the preceding five conditions, which says $(X,T)$ is preserved by quasi-isometries.
	 
	Firstly, $\cG$ is a finite graph of groups with hyperbolic vertex and edge groups, so $\Gamma$ is finitely presented and $\cG$ has finite type. By Remark \ref{rem:inf_index_image}, every edge group is an infinite-index subgroup of its adjacent vertex groups. Therefore, $\cG$ is irreducible and all vertex groups have depth zero, hence  $\cG$ satisfies \ref{item:msw1}. All depth zero rafts consist of a single vertex, so \ref{item:msw2} is satisfied. Each vertex group of $\cG$ is the fundamental group of a closed aspherical manifold,  hence is coarse PD; thus \ref{item:msw3} is satisfied. 
	
	Since every vertex group of $\cG$ is the fundamental group of a closed aspherical manifold, it is  of type $FP$. Furthermore, as edge groups are hyperbolic, hence of type $FP_\infty$, and  have finite cohomological dimension (as they are subgroups of their adjacent vertex groups), they also have type $FP$. In particular, all vertex and edge groups have cohomological dimension equal to their coarse dimension as defined in \cite{moshersageevwhyte2011quasiactions}. As a result, \cite[Lemma 3.8]{moshersageevwhyte2011quasiactions} ensures that no edge group $X_e$ coarsely separates an adjacent vertex group, so each vertex group has empty crossing graph and \ref{item:msw4} vacuously holds. As noted above, since  every vertex and edge group of $\cG$ has type $FP$, it is of coarse finite type, hence \ref{item:msw5} holds.
\end{proof}

\begin{prop}\label{prop:action_rigid_graphofgroups}
	If  $\Gamma$ is as in Theorem \ref{thm:common_model_geom}, then $\Gamma$ is action rigid.
\end{prop}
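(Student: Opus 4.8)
The plan is to verify that $\Gamma$ and its defining graph of groups $\cG$ satisfy all three hypotheses of Theorem~\ref{thm:hyp_action_rigid_criterion}, and then to invoke that theorem directly. One preliminary point: Theorem~\ref{thm:hyp_action_rigid_criterion} requires $\cG$ to be minimal, whereas Theorem~\ref{thm:common_model_geom} only posits a finite graph of groups. This is not an obstacle, since hypothesis~(2) of Theorem~\ref{thm:common_model_geom} forces every incident edge-map image to have cohomological codimension at least two, hence to be a proper infinite-index subgroup of the adjacent vertex group (cf.\ Remark~\ref{rem:inf_index_image}). As no edge inclusion is surjective, $\cG$ has no valence-one vertex whose edge map is an isomorphism, so $\cG$ is automatically minimal.

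First I would check hypothesis~(1) of Theorem~\ref{thm:hyp_action_rigid_criterion}. Each vertex group is, by hypothesis~(1) of Theorem~\ref{thm:common_model_geom}, the fundamental group of a closed hyperbolic $n$-manifold with $n\geq 3$; it is therefore word-hyperbolic, and it is cubulated by the same hypothesis. For graphical discreteness, I would observe that such a vertex group is a torsion-free uniform lattice in $\Isom(\Hy^n)$, hence an irreducible (cocompact, so not the excluded non-uniform $\PSL_2(\bR)$ case) lattice in a connected center-free real simple Lie group without compact factors. Graphical discreteness then follows from Theorem~\ref{thm:propc_examples}(\ref{propc:latticelie}).

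Next I would verify hypothesis~(2). By hypothesis~(2) of Theorem~\ref{thm:common_model_geom}, for each vertex group the images of incident edge maps form an almost malnormal family of \emph{non-elementary} quasi-convex subgroups. Since a non-elementary subgroup of a hyperbolic group contains a non-abelian free subgroup and is in particular infinite, these are almost malnormal families of \emph{infinite} quasi-convex subgroups, which is exactly what hypothesis~(2) of Theorem~\ref{thm:hyp_action_rigid_criterion} requires. The cohomological-codimension assumption plays no role here; it is needed only for the final hypothesis.

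Finally, hypothesis~(3)---that the associated Bass--Serre tree of spaces $(X,T)$ is preserved by quasi-isometries---is precisely the content of Proposition~\ref{prop:induced_tree_aut}, which was deduced from the Mosher--Sageev--Whyte criterion using the coarse Poincar\'e duality of the vertex groups and the codimension-at-least-two hypothesis on the edge groups. With all three hypotheses established, Theorem~\ref{thm:hyp_action_rigid_criterion} applies and shows that $\Gamma$ is action rigid. The only step carrying real content beyond translating the stronger hypotheses of Theorem~\ref{thm:common_model_geom} into those of Theorem~\ref{thm:hyp_action_rigid_criterion} is the verification of hypothesis~(3), and that work has already been isolated into Proposition~\ref{prop:induced_tree_aut}; so I expect the proof itself to be short, with the bookkeeping above being routine.
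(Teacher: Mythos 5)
Your proposal is correct and takes essentially the same approach as the paper: the paper's proof likewise consists of checking the hypotheses of Theorem~\ref{thm:hyp_action_rigid_criterion} using Theorem~\ref{thm:propc_examples} (graphical discreteness of the vertex groups) and Proposition~\ref{prop:induced_tree_aut} (the tree of spaces is preserved by quasi-isometries). Your explicit verification of minimality, which the paper leaves implicit, is a welcome addition---just note that the correct justification is that cohomological codimension at least two forces each edge image to be a proper (indeed infinite-index) subgroup, whereas Remark~\ref{rem:inf_index_image} states the reverse implication (minimality implies infinite index) and so is not itself the needed argument.
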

\begin{proof}
    It follows from the definition of $\Gamma$, Theorem \ref{thm:propc_examples}, and Proposition \ref{prop:induced_tree_aut} that all the hypotheses of Theorem~\ref{thm:hyp_action_rigid_criterion} are satisfied, hence $\Gamma$ is action rigid.
\end{proof}

\begin{rem}
	Thus far, we have made no use of the fact that edge groups are non-elementary. In particular, if $\Gamma$ satisfies all the hypotheses of Theorem \ref{thm:common_model_geom} except the non-elementary edge group condition, then $\Gamma$ is action rigid.
\end{rem}

\subsubsection{The uniform quasi-isometry property}
In light of Proposition \ref{prop:qi_framework},  Example \ref{exmp:tame_morse} and Proposition \ref{prop:action_rigid_graphofgroups}, in order to show $\Gamma$ is quasi-isometrically rigid it remains to show $\Gamma$ satisfies the uniform quasi-isometry property. 
We first recall relevant results of Biswas~\cite{biswas12flows}; see \cite{schwartz1997symmetric,biswasmj2012pattern,mj_patternrigidity} for related results.

\begin{defn}
Let $\Gamma$ be a finitely generated group acting geometrically on a proper geodesic hyperbolic space $X$. A \emph{$\Gamma$-symmetric pattern} $\cJ$ in $X$ is a $\Gamma$-invariant  collection of quasi-convex subsets of $X$ such that: \begin{enumerate}
	\item  for every $J\in \cJ$, the stabilizer $\stab_\Gamma(J)$ acts cocompactly on $J$ and  is an infinite, infinite-index subgroup of $\Gamma$;
	\item $\cJ$ contains only finitely many $\Gamma$-orbits.
\end{enumerate}
A \emph{symmetric pattern} in $X$ is a $\Gamma$-symmetric pattern for some finitely generated group~$\Gamma$ acting geometrically on  $X$. We denote $X$ together with a symmetric pattern $\cJ$ by $(X,\cJ)$. 
A \emph{pattern-preserving quasi-isometry} $f:(X,\cJ)\to (X',\cJ')$  is a quasi-isometry $f:X\rightarrow X'$ such that there exists a constant $A \geq 0$ so that:
\begin{enumerate}
	\item for all $J_1\in \cJ$, there exists a $J_2\in \cJ'$ such that $d_{\Haus}(f(J_1),J_2)\leq A$;
	\item  for all $J_2\in \cJ'$, there exists a $J_1\in \cJ$ such that $d_{\Haus}(f(J_1),J_2)\leq A$.
\end{enumerate}
Let $\QI(X,\cJ)$ be the subgroup of $\QI(X)$ consisting of equivalence classes of pattern-preserving quasi-isometries of $\cJ$.
\end{defn}

\begin{rem}\label{rem:pattern_cosets}
Suppose a group $\Gamma$ acts geometrically on a proper hyperbolic space $X$ and   $H_1,\dots, H_n\leq \Gamma$ is a collection of infinite, infinite-index quasi-convex subgroups. If  $x\in X$,  then $\cJ\coloneqq \{gH_i x\mid 1\leq i\leq n, g\in \Gamma\}$ is a $\Gamma$-symmetric pattern. Moreover,  up to modifying each element of $\cJ$ by uniformly bounded  finite Hausdorff distance, every $\Gamma$-symmetric pattern arises in this way. 
\end{rem}

Biswas proved that for $n\geq 3$, pattern-preserving quasi-isometries of $\Hy^n$  are bounded distance from isometries. More precisely, the theorem below follows from the discussion on \cite[p590]{biswas12flows}. Note that Biswas states it in the more general framework of uniformly proper maps.

\begin{thm}[{\cite{biswas12flows}}] \label{thm:rel_rigidity}
	Suppose $\cJ$ and $\cJ'$ are symmetric patterns in $\bH^n$ for some $n\geq 3$. If $f:(\bH^n,\cJ)\to (\bH^n,\cJ')$ is a  pattern-preserving quasi-isometry, then there is a hyperbolic isometry $f':\bH^n\rightarrow\bH^n$ such that $\sup_{x\in \bH^n}d(f(x),f'(x))<\infty$.
\end{thm}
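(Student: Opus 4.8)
The statement to be proved, Theorem~\ref{thm:rel_rigidity}, asserts that pattern-preserving quasi-isometries of $\bH^n$ (for $n\geq 3$) are bounded distance from isometries. The plan is to reduce this to the rigidity results of Biswas, which were stated in the framework of uniformly proper maps. First I would recall that a quasi-isometry between proper geodesic hyperbolic spaces induces a homeomorphism of their boundaries, so the pattern-preserving quasi-isometry $f:(\bH^n,\cJ)\to(\bH^n,\cJ')$ induces a homeomorphism $\partial f:\partial \bH^n\to\partial\bH^n$ of the boundary sphere $S^{n-1}$. The key observation is that because $f$ sends each pattern element $J_1\in\cJ$ to within uniformly bounded Hausdorff distance of some $J_2\in\cJ'$, the induced boundary map $\partial f$ carries the limit set $\partial J_1$ of each pattern element bijectively onto the limit set $\partial J_2$ of its image. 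Thus $\partial f$ preserves the collection of limit spheres (the boundaries of the quasi-convex pattern elements), which is exactly the combinatorial data that Biswas's pattern rigidity analyses.

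The main work is to invoke Biswas's theorem correctly. Biswas shows that a pattern-preserving quasi-isometry (equivalently, uniformly proper map respecting the patterns) of $\bH^n$ with $n\geq 3$ induces a boundary homeomorphism that must be the restriction of a conformal (i.e.\ Möbius) transformation of $S^{n-1}=\partial\bH^n$. I would cite the discussion on~\cite[p590]{biswas12flows} for this, noting that the rigidity is precisely what fails in dimension $n=2$, hence the hypothesis $n\geq 3$. Once we know $\partial f$ agrees with the boundary action of a Möbius transformation, we let $f'\in\Isom(\bH^n)$ be the hyperbolic isometry realizing this conformal map on $S^{n-1}$ via the correspondence $\Isom(\bH^n)\cong \mathrm{Conf}(S^{n-1})$.

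It then remains to upgrade the equality of boundary maps $\partial f=\partial f'$ to the bounded-distance statement $\sup_{x\in\bH^n}d(f(x),f'(x))<\infty$. This is a standard fact for quasi-isometries of a proper geodesic hyperbolic space: two quasi-isometries of $\bH^n$ that induce the same boundary homeomorphism are at bounded distance from each other. I would justify this by the Morse lemma — for any pair of distinct boundary points, the images under $f$ and under $f'$ of the bi-infinite geodesic joining them are quasigeodesics with the same endpoints, hence remain within uniformly bounded Hausdorff distance, and a straightforward coarse argument covering $\bH^n$ by such geodesics gives the uniform bound. Since $f'$ is itself an isometry, this completes the proof.

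The step I expect to be the genuine obstacle is the appeal to Biswas's rigidity: extracting from~\cite{biswas12flows} the precise statement that a pattern-preserving quasi-isometry of $\bH^n$ ($n\geq 3$) induces a \emph{Möbius} boundary map, as opposed to merely a quasiconformal or quasisymmetric one. The translation between the ``symmetric pattern'' language used here (via $\Gamma$-invariant collections of quasi-convex sets, as in Remark~\ref{rem:pattern_cosets}) and the ``uniformly proper map'' framework of Biswas must be checked carefully, and one must verify that every pattern-preserving quasi-isometry in our sense is a uniformly proper map preserving the patterns in his sense, so that his theorem applies verbatim. The remaining geometric steps — the boundary extension and the bounded-distance upgrade — are routine consequences of the hyperbolicity of $\bH^n$.
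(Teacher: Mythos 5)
Your proposal is correct and follows essentially the same route as the paper: the paper gives no independent proof of this statement, but simply quotes it from Biswas~\cite{biswas12flows} (noting, as you do, that Biswas works with uniformly proper maps, so a translation of frameworks is needed), and your outline—boundary extension, Biswas's rigidity giving a M\"obius boundary map, then the standard fact that quasi-isometries of $\bH^n$ with the same boundary extension are at bounded distance—is exactly the deduction implicit in that citation. You also correctly identify that the only genuine content lies in invoking Biswas's theorem accurately, which is precisely how the paper treats it.
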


The following is a consequence of Theorem~\ref{thm:rel_rigidity}, which is used implicitly in the proof of \cite[Corollary 1.3]{biswas12flows}; see also \cite[Corollary 1.5]{mj_patternrigidity}.

\begin{cor}\label{cor:patternrigid}
    Suppose a group $\Gamma$ acts geometrically on $\Hy^n$ for some $n \geq 3$ and $\cJ$ is a $\Gamma$-symmetric pattern in $\bH^n$. Then, $\QI(\bH^n,\cJ)$ can be identified with a discrete subgroup of $\Isom(\bH^n)$, and under this identification $\Gamma$ is a finite-index subgroup of $\QI(\bH^n,\cJ)$.
\end{cor}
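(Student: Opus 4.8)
The plan is to derive Corollary \ref{cor:patternrigid} directly from Biswas's rigidity result (Theorem \ref{thm:rel_rigidity}) together with the standard relationship between symmetric patterns and cosets of quasi-convex subgroups (Remark \ref{rem:pattern_cosets}). First I would set up the map $\QI(\Hy^n,\cJ)\to\Isom(\Hy^n)$. Given a pattern-preserving quasi-isometry $f:(\Hy^n,\cJ)\to(\Hy^n,\cJ)$, Theorem \ref{thm:rel_rigidity} produces an isometry $f'$ with $\sup_{x}d(f(x),f'(x))<\infty$, i.e.\ $f'$ is close to $f$. Since $\Hy^n$ has no nontrivial bounded isometries (it is tame, and the only isometry at bounded distance from the identity is the identity), this isometry $f'$ is \emph{unique}. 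Hence the assignment $[f]\mapsto f'$ is a well-defined map; uniqueness also shows it is injective, and a routine check shows it respects composition, so it is an injective homomorphism $\Phi:\QI(\Hy^n,\cJ)\hookrightarrow\Isom(\Hy^n)$.

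Next I would identify the image $\Phi(\QI(\Hy^n,\cJ))$ with a subgroup of $\Isom(\Hy^n)$ and show it contains $\Gamma$ with finite index. The action of $\Gamma$ on $\Hy^n$ is by isometries that permute the elements of $\cJ$ up to bounded Hausdorff distance (by Remark \ref{rem:pattern_cosets}, $\cJ$ is the collection of translates $gH_i x$, and left multiplication by $\gamma\in\Gamma$ sends $gH_ix$ to $\gamma g H_i x$, another element of $\cJ$). Thus each $\gamma\in\Gamma$ gives a pattern-preserving quasi-isometry, yielding an inclusion $\Gamma\hookrightarrow\QI(\Hy^n,\cJ)$ whose composite with $\Phi$ is the original geometric action $\Gamma\to\Isom(\Hy^n)$ (which is faithful since $\Gamma$ acts geometrically on $\Hy^n$, in particular properly, so has finite, and here trivial after passing to torsion-free situation, kernel — more precisely the kernel is finite and hence trivial as $\Isom(\Hy^n)$-stabilizers are compact). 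Therefore $\Gamma$ is identified with a subgroup of the image of $\Phi$.

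The main step is establishing discreteness of $\Phi(\QI(\Hy^n,\cJ))$ and the finite-index claim. Let $H=\Phi(\QI(\Hy^n,\cJ))\leq\Isom(\Hy^n)$. I would argue that $H$ normalizes, or more precisely commensurates, the image of $\Gamma$: each element of $\QI(\Hy^n,\cJ)$ conjugates the quasi-action of $\Gamma$ to a quasi-action preserving the same pattern $\cJ$, and the coboundedness of $\Gamma$'s action forces $H$ to act coboundedly on $\Hy^n$ as well. Concretely, $\Gamma$ already acts cocompactly, and every $\Phi([f])$ is at bounded distance from a quasi-isometry $f$ that permutes $\cJ$, so $H$ lies in the commensurator-like subgroup of $\Isom(\Hy^n)$ stabilizing the pattern $\cJ$ up to bounded distance. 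Since $\Gamma$ is a uniform lattice in $\Isom(\Hy^n)$, any subgroup $H$ containing $\Gamma$ and acting properly must be discrete and contain $\Gamma$ with finite index (a discrete cocompact subgroup has finite index in any overgroup that still acts properly discontinuously, by comparison of covolumes). The properness of the $H$-action is where I expect the real work to lie: I would deduce it from the fact that distinct elements of $\cJ$ are at infinite Hausdorff distance (Proposition \ref{prop:hd_graph}-type reasoning applied to the cosets), so that an element of $H$ fixing a large ball and preserving $\cJ$ must be close to the identity, hence equal to it by uniqueness of $f'$.

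The hard part will be pinning down the discreteness and finite-index statement rigorously, since it requires translating the coarse, pattern-preserving data into a genuine properness statement in $\Isom(\Hy^n)$. I would handle this by invoking the structure of $\cJ$ as cosets of infinite, infinite-index quasi-convex subgroups (Remark \ref{rem:pattern_cosets}): the stabilizer in $\Isom(\Hy^n)$ of the whole pattern, intersected with the set of isometries at bounded distance from pattern-preserving quasi-isometries, is precisely $H$, and this set is discrete because an isometry preserving all the (rigidly positioned, mutually far-apart) limit sets of the $J\in\cJ$ and fixing a point must be trivial. Once discreteness is in hand, the finite-index conclusion follows from the observation that $\Gamma\leq H$ are both uniform lattices acting on the same space $\Hy^n$, so $[H:\Gamma]<\infty$. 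This completes the identification of $\QI(\Hy^n,\cJ)$ with a discrete subgroup of $\Isom(\Hy^n)$ containing $\Gamma$ as a finite-index subgroup.
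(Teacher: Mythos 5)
Your first two steps (the well-defined injective homomorphism $\QI(\Hy^n,\cJ)\hookrightarrow\Isom(\Hy^n)$ via Theorem \ref{thm:rel_rigidity} and uniqueness of the close isometry, and the inclusion of $\Gamma$ into $\QI(\Hy^n,\cJ)$) are correct and match the paper, and your covolume argument for $[\QI(\Hy^n,\cJ):\Gamma]<\infty$ \emph{once discreteness is known} is also fine --- it is essentially the paper's last line. The genuine gap is the discreteness step, and the argument you sketch there would not close. First, for a subgroup $H\leq\Isom(\Hy^n)$, properness of the $H$-action on $\Hy^n$ is \emph{equivalent} to discreteness of $H$, so recasting the goal as properness does not reduce the problem. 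Second, your stated reason --- ``an isometry preserving all the limit sets of the $J\in\cJ$ and fixing a point must be trivial'' --- is not what closeness to the identity actually gives you. An element of $H$ close to the identity only \emph{permutes} the limit sets; using discreteness of the pattern (only finitely many limit sets have visual diameter bounded below, because their weak convex hulls must pass near a basepoint) you can conclude that such an element fixes any \emph{given} limit set once it is sufficiently close to the identity, but the required closeness depends on the limit set. This quantifier order yields a family of open subgroups of $H$ (stabilizers of finite collections of limit sets) with trivial intersection, i.e.\ that $H$ is totally disconnected --- and a totally disconnected group need not be discrete (e.g.\ $\Z_p$), so no argument about elements ``fixing a large ball'' can finish from here.

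The paper bridges exactly this gap with Lie theory: $\QI(\Hy^n,\cJ)$ is a \emph{closed} subgroup of $\Isom(\Hy^n)$, hence a Lie group by Cartan's theorem; it is totally disconnected because it coarsely permutes a discrete pattern (citing Mj); and a totally disconnected Lie group is discrete by the no-small-subgroups property, packaged as Lemma \ref{lem:totdisc_to_lie} applied to the inclusion map. Your proposal never verifies closedness and never invokes anything playing the role of Cartan plus no-small-subgroups, which is precisely the input needed to upgrade ``totally disconnected'' (what your limit-set reasoning can genuinely deliver) to ``discrete''. A minor secondary point: the kernel of the geometric action $\Gamma\to\Isom(\Hy^n)$ is finite but need not be trivial, so the finite-index statement should be read for the image of $\Gamma$; your aside about passing to a torsion-free situation is neither needed nor justified.
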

 For completeness, we provide a proof of Corollary \ref{cor:patternrigid} from Theorem \ref{thm:rel_rigidity}.
\begin{proof}
	Theorem \ref{thm:rel_rigidity} ensures that $\QI(\bH^n,\cJ)\leq\Isom(\bH^n)$. It is easy to see that $\QI(\bH^n,\cJ)$ is closed, hence a Lie group by Cartan's Theorem. Since $\QI(\bH^n,\cJ)$ coarsely permutes the discrete pattern $\cJ$, it is totally disconnected; this follows from \cite[Proposition 3.5]{mj_patternrigidity}, noting that the compact-open topology on $\Isom(\bH^n)$ coincides with the topology on $\Isom(\bH^n)$ as a subgroup of $\Homeo(\partial \bH^n)$ equipped with the topology of uniform convergence. Lemma \ref{lem:totdisc_to_lie} applied to the inclusion map implies $\QI(\bH^n,\cJ)$ is discrete. As $\Gamma$ is a uniform lattice contained in the discrete group $\QI(\bH^n,\cJ)$, it is a finite-index subgroup of $\QI(\bH^n,\cJ)$.
\end{proof}

    Symmetric patterns arise naturally in the setting of Theorem~\ref{thm:common_model_geom}. Indeed, let $\Gamma$ be as in Theorem~\ref{thm:common_model_geom} and let $(X,T)$ be the associated Bass--Serre tree of spaces. If $v$ is a  vertex of $T$, then the vertex group $\Gamma_v$ is a uniform lattice in $\bH^{n_v}$ for some $n_v\geq 3$. Since $\Gamma_v$ acts geometrically on both $X_v$ and $\bH^{n_v}$, the Milnor--Schwarz Lemma implies there exists a coarsely $\Gamma_v$-equivariant quasi-isometry $h_v:X_v\rightarrow \bH^{n_v}$.  Let $\overline{h_v}$ be a coarse inverse to $h_v$. Since there are finitely many $\Gamma$-orbits of vertices of $T$, we can without loss of generality assume $h_v$ and $\overline{h_v}$ have uniform quasi-isometry constants as $v$ varies. For each $v\in VT$, define a $\Gamma_v$-symmetric pattern $\cJ_v\subseteq X_v$ by \[\cJ_v\coloneqq \{\phi_e(X_e)\mid e\in \lk(v)\}.\]  Since the map $h_v$ is coarsely $\Gamma_v$-equivariant, Remark \ref{rem:pattern_cosets} yields a $\Gamma_v$-symmetric pattern $\hat \cJ_v\subseteq \bH^{n_v}$ such that $h_v$ is a pattern-preserving quasi-isometry $(X_v,\cJ_v)\to (\bH^{n_v},\hat \cJ_v)$. 

    The following lemma will be used to show quasi-isometries are pattern preserving.

\begin{lem}\label{lem:unif_peripherals}
For $K\geq 1$ and $A\geq 0$, there is a constant $B=B(K,A,X)$ such that the following holds. If $v,w\in VT$ and $f:X_v\to X_w$ is a $(K,A)$-quasi-isometry such that  $d_\Haus (f(J),J')<\infty$ for some $J\in \cJ_v$ and $J'\in \cJ_w$, then $d_\Haus (f(J),J')\leq B$.
\end{lem}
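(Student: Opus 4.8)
The statement is a uniformity result: any quasi-isometry between two hyperbolic vertex spaces that sends one pattern element to within finite Hausdorff distance of another must do so within a \emph{uniform} bound depending only on $K$, $A$, and $X$. The plan is to transport the problem to the hyperbolic model spaces $\bH^{n_v}$ and $\bH^{n_w}$ via the fixed quasi-isometries $h_v, h_w$ (and coarse inverses $\overline{h_v}, \overline{h_w}$) constructed above, and then apply Biswas' rigidity theorem (Theorem \ref{thm:rel_rigidity}) to obtain a genuine isometry, from which a uniform bound follows by elementary hyperbolic geometry.

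\textbf{First steps.} First I would observe that there are only finitely many $\Gamma$-orbits of vertices in $T$, and the quasi-isometries $h_v$ and $\overline{h_v}$ can be chosen with uniform quasi-isometry constants as $v$ ranges over $VT$; call these constants $(L, C)$. Given a $(K,A)$-quasi-isometry $f: X_v \to X_w$ with $d_\Haus(f(J), J') < \infty$ for some $J \in \cJ_v$, $J' \in \cJ_w$, form the composition
\[
g := h_w \circ f \circ \overline{h_v} : \bH^{n_v} \to \bH^{n_w}.
\]
Then $g$ is a $(K',A')$-quasi-isometry with $K' = K'(K,A,L,C)$ and $A' = A'(K,A,L,C)$ depending only on the allowed constants, not on $v$, $w$, or $f$. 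Since $h_v$ and $h_w$ are pattern-preserving (sending $\cJ_v$ to $\hat\cJ_v$ and $\cJ_w$ to $\hat\cJ_w$ up to uniformly bounded Hausdorff error), the map $g$ sends the pattern element $h_v(J) \in \hat\cJ_v$ to within finite Hausdorff distance of $h_w(J') \in \hat\cJ_w$.

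\textbf{The key step.} Here is where I would invoke Theorem \ref{thm:rel_rigidity}: to apply it I need $g$ to be a genuine pattern-preserving quasi-isometry between the two patterned spaces $(\bH^{n_v}, \hat\cJ_v)$ and $(\bH^{n_w}, \hat\cJ_w)$, and I need the two hyperbolic dimensions to agree. The dimension equality $n_v = n_w$ should follow from the fact that $f$ is a quasi-isometry between the vertex spaces (hence $\bH^{n_v}$ and $\bH^{n_w}$ are quasi-isometric, forcing $n_v = n_w$ since hyperbolic spaces of distinct dimensions are not quasi-isometric). Theorem \ref{thm:rel_rigidity} then yields an isometry $g': \bH^{n} \to \bH^{n}$ with $\sup_x d(g(x), g'(x)) =: D < \infty$. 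The crucial point for \emph{uniformity} is that the bound $D$ depends only on the quasi-isometry constants $(K', A')$ and the patterns: I would argue this via a compactness/finiteness argument using that there are finitely many $\Gamma$-orbits of patterns $\hat\cJ_v$ and invoking that Biswas' construction (the flow-space argument on \cite[p590]{biswas12flows}) produces the approximating isometry with control depending only on the input constants. Once $g'$ is an isometry with $g(h_v(J))$ coarsely equal to $h_w(J')$, the isometry $g'$ maps the pattern element $h_v(J)$ to within uniformly bounded distance of $h_w(J')$, and since both are quasi-convex with stabilizers acting cocompactly, an isometry sending one to (coarsely) the other does so within a bound determined by $D$ and the geometry. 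Pulling back through $\overline{h_w}$ (or $h_w$) then gives $d_\Haus(f(J), J') \leq B$ with $B = B(K,A,X)$.

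\textbf{Main obstacle.} The principal difficulty is establishing the \emph{uniformity} of the constant $D$ (equivalently $B$) rather than just finiteness. Theorem \ref{thm:rel_rigidity} as stated only asserts $\sup_x d(f(x), f'(x)) < \infty$ for a single map, so I must extract from its proof (or from a standard additional argument) that the approximating isometry can be found with error bounded in terms of the quasi-isometry constants alone. The cleanest route is likely a contradiction-and-limit argument: if no uniform $B$ existed, take a sequence of quasi-isometries $f_i$ with the same constants but $d_\Haus(f_i(J_i), J_i') \to \infty$, normalize by composing with isometries so that basepoints are controlled, extract a limiting quasi-isometry by Arzelà–Ascoli (as is done elsewhere in the paper, e.g.\ in Lemma \ref{lem:continuous_quasiconj}), and apply Theorem \ref{thm:rel_rigidity} to the limit to derive a contradiction. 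The finiteness of the number of pattern orbits, guaranteed by the graph of groups being finite, is what makes this compactness argument go through.
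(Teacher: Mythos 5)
Your plan does not work as stated, and the obstruction is visible already at your ``key step'': Theorem \ref{thm:rel_rigidity} applies only to \emph{pattern-preserving} quasi-isometries, i.e.\ maps under which every element of $\hat{\cJ}_v$ lands within finite Hausdorff distance of an element of $\hat{\cJ}_w$ and conversely. The hypothesis of Lemma \ref{lem:unif_peripherals} is far weaker: $f$ is an arbitrary $(K,A)$-quasi-isometry for which a \emph{single} pair $J\in\cJ_v$, $J'\in\cJ_w$ satisfies $d_\Haus(f(J),J')<\infty$; nothing is assumed about the remaining pattern elements. So the composition $g=h_w\circ f\circ\overline{h_v}$ is not known to be pattern-preserving and Biswas' theorem cannot be invoked. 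This is not a repairable technicality: in the paper this lemma is precisely the tool used (in Proposition \ref{prop:unif_qis}) to upgrade qualitative pattern preservation of the maps $f_v$ to \emph{uniform} pattern preservation, which is the precondition for applying Theorem \ref{thm:rel_rigidity}; proving the lemma \emph{via} Biswas' theorem reverses the logical order. Your fallback compactness argument for uniformity has its own gap: if $d_\Haus(f_i(J_i),J_i')\to\infty$, then after normalizing basepoints the pairs $(J_i,J_i')$ degenerate (points of $J_i'$ escape to infinity relative to points of $f_i(J_i)$, or vice versa), so the limiting quasi-isometry need not satisfy the finiteness hypothesis at all, and no contradiction can be extracted by applying the rigidity theorem to the limit --- finiteness of Hausdorff distance is not a condition that survives such limits.

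The paper's proof is elementary and avoids rigidity entirely. The finiteness hypothesis is first converted into boundary data: $d_\Haus(f(J),J')<\infty$ forces the induced boundary homeomorphism $\partial f:\partial X_v\to\partial X_w$ to carry the limit set $\Lambda J$ onto $\Lambda J'$. Quantitative control then comes from two uniform statements: (i) by a theorem of Swenson, each $J\in\cJ_v$ is at finite Hausdorff distance from the weak convex hull $\WCH(\Lambda J)$, and since there are only finitely many $\Gamma$-orbits of pattern elements this distance is at most a uniform constant $A_1$; (ii) the Extended Morse Lemma for $\delta$-hyperbolic spaces gives a constant $A_2=A_2(K,A,X)$ with $d_\Haus\bigl(f(\WCH(\Lambda J)),\WCH(\Lambda J')\bigr)\leq A_2$. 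Chaining these estimates yields $d_\Haus(f(J),J')\leq KA_1+A+A_1+A_2$. The lesson, which is what the paper exploits, is that the only invariant of the pair $(J,J')$ pinned down by the hypothesis is the boundary correspondence, and the uniform bound must come from Morse-type stability of convex hulls under quasi-isometries, not from an approximating isometry.
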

\begin{proof}
	For each $v\in VT$ and  $J\in \cJ_v$, since $\stab_{\Gamma_v}(J)$ is quasi-convex and acts cocompactly on  $J$, it follows from a result of Swenson that $J$ has finite Hausdorff distance from $\WCH(\Lambda J)$, where $\Lambda J\subseteq \partial X_v$ is the limit set of $J$ and $\WCH(\Lambda J)$ is the weak convex hull of $J$ \cite{swenson}. Since there are only finitely many $\Gamma$ orbits of such $J$, there exists a constant $A_1 \geq 0$ such that $d_\Haus(\WCH(\Lambda J),J)\leq A_1$ for all $J\in \cJ_v$.
	
	Now suppose $f:X_v\to X_w$ is a $(K,A)$-quasi-isometry such that $d_\Haus (f(J),J')<\infty$ for some $J\in \cJ_v$ and $J'\in \cJ_w$. Then $f$ induces a homeomorphism $\partial f:\partial X_v\to \partial X_w$ such that $\partial f(\Lambda J)=\Lambda J'$. The Extended Morse Lemma for $\delta$-hyperbolic spaces implies there is a constant $A_2=A_2(K,A,X)$ such that $d_\Haus(f(\WCH(\Lambda J)),\WCH(\Lambda J'))\leq A_2$, hence $d_\Haus(f(J),J')\leq KA_1+A+A_1+A_2\eqqcolon B$ as required.
\end{proof}

    We can now use Theorem \ref{thm:rel_rigidity} to prove the second  hypotheses of Proposition \ref{prop:qi_framework}.

\begin{prop}\label{prop:unif_qis}
	If $\Gamma$ is as in Theorem \ref{thm:common_model_geom}, then $\Gamma$ has the uniform quasi-isometry property.
\end{prop}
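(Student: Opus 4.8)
The plan is to verify Definition~\ref{defn:uniform_QI} directly for the tree of spaces $X$; since $\Gamma$ acts geometrically on $X$, it suffices to treat $X$, because the uniform quasi-isometry property is a quasi-isometry invariant. (Given a fixed quasi-isometry $\Gamma\to X$ with coarse inverse, one conjugates a self-quasi-isometry through it, and the process changes quasi-isometry constants only by a fixed amount.) So fix a quasi-isometry $f:X\to X$. By Proposition~\ref{prop:induced_tree_aut} the pair $(X,T)$ is preserved by quasi-isometries, and the hypotheses of Theorem~\ref{thm:common_model_geom} ensure the conclusions of Proposition~\ref{prop:hd_graph} hold; hence Corollary~\ref{cor:tree_auts} supplies a tree automorphism $f_*:T\to T$ and a constant $B$ with $d_\Haus\bigl(f(X_w),X_{f_*(w)}\bigr)\le B$ for every $w\in VT\cup ET$.

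Next I would analyze $f$ vertex space by vertex space. For $v\in VT$ write $v'=f_*(v)$. As $X_{v'}$ is quasi-convex in $X$ (Lemma~\ref{lem:acylindrical}), composing $f|_{X_v}$ with nearest-point projection onto $X_{v'}$ yields a quasi-isometry $f_v:X_v\to X_{v'}$ within bounded distance of $f|_{X_v}$. Since $f$ carries each edge space to the edge space indexed by $f_*$ within uniformly bounded Hausdorff distance (Corollary~\ref{cor:tree_auts} together with Lemma~\ref{lem:unif_peripherals}), and the maps $h_v$ are pattern-preserving, the conjugate $g_v:=h_{v'}\circ f_v\circ \overline{h_v}$ is a pattern-preserving quasi-isometry $(\bH^{n_v},\hat\cJ_v)\to(\bH^{n_{v'}},\hat\cJ_{v'})$; here $n_v=n_{v'}$, since $g_v$ is a quasi-isometry $\bH^{n_v}\to\bH^{n_{v'}}$ and distinct hyperbolic spaces are not quasi-isometric. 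Because the $h_v$ and $\overline{h_v}$ have quasi-isometry constants uniform in $v$, the $g_v$ are $(K_1,A_1)$-quasi-isometries with $K_1,A_1$ independent of $v$. Theorem~\ref{thm:rel_rigidity} then produces, for each $v$, a hyperbolic isometry $\iota_v$ of $\bH^{n_v}$ with $\sup_{x}d\bigl(g_v(x),\iota_v(x)\bigr)<\infty$.

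The crucial point is that this bounded distance is \emph{uniform} in $v$. Since $\iota_v$ lies at finite distance from $g_v$, the two maps induce the same homeomorphism of $\partial\bH^{n_v}$; and two $(K_1,A_1)$-quasi-isometries of $\bH^{n}$ (an isometry being a $(1,0)$-quasi-isometry) inducing the same boundary map are at distance at most $D=D(K_1,A_1,n)$ by the (extended) Morse lemma, a bound depending only on the quasi-isometry constants and $\delta$-hyperbolicity. As $n$ ranges over the finitely many values $n_v$, we get $\sup_{x}d\bigl(g_v(x),\iota_v(x)\bigr)\le D$ for all $v$. Now set $f'_v:=\overline{h_{v'}}\circ\iota_v\circ h_v:X_v\to X_{v'}$. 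Each $f'_v$ is a $(K_0,A_0)$-quasi-isometry with $K_0,A_0$ depending only on the uniform constants of the $h_v$ (and \emph{not} on $f$, since $\iota_v$ is an isometry), and $f'_v$ is within distance $K_0D+O(1)$ of $f_v$, hence of $f|_{X_v}$, uniformly in $v$.

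Finally I would assemble the $f'_v$ into a global map $f':X\to X$ by sending the cylinder over each edge $e$ to the cylinder over $f_*(e)$, interpolating between the restrictions of $f'_{\iota(e)}$ and $f'_{\tau(e)}$ to the edge space (these agree up to bounded error, as both track $X_{f_*(e)}$). Since $f'$ respects the tree structure through $f_*$ and restricts to a uniform quasi-isometry on every vertex and edge space with coarsely consistent overlaps, a standard tree-of-spaces distance estimate shows $f'$ is a $(K',A')$-quasi-isometry with $K',A'$ depending only on $K_0,A_0$ and $(X,T)$, and in particular not on $f$; by construction $\sup_x d\bigl(f(x),f'(x)\bigr)<\infty$. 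I expect the main obstacle to be exactly this last bookkeeping: one must keep the distance between $f$ and $f'$ finite \emph{globally} while holding the quasi-isometry constants of $f'$ independent of $f$, which is precisely why the boundary-map/Morse-lemma argument of the previous paragraph is required rather than the naive observation that a map at bounded distance from a quasi-isometry is a quasi-isometry.
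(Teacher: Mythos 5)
Your overall strategy matches the paper's: use Proposition \ref{prop:induced_tree_aut} and Corollary \ref{cor:tree_auts} to obtain $f_*$, replace each induced vertex map by $\overline{h_{f_*(v)}}\circ\phi_v\circ h_v$ with $\phi_v$ an isometry supplied by Theorem \ref{thm:rel_rigidity}, and glue. But there is a genuine gap at the gluing step, and it is exactly the step where the hypothesis that the edge groups are \emph{non-elementary} must be used --- a hypothesis your argument never invokes. You claim the restrictions of $f'_{\iota(e)}$ and $f'_{\tau(e)}$ to the edge space ``agree up to bounded error, as both track $X_{f_*(e)}$.'' Tracking the same subspace only controls images, not maps: two translations of a line both track the line and can be arbitrarily far apart as maps. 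The honest estimate available to you passes through $f$ itself: corresponding points of $\phi_{\bar e}(X_e)\subseteq X_{\iota(e)}$ and $\phi_e(X_e)\subseteq X_{\tau(e)}$ are at distance $2$ in $X$, so $f$ moves them within $2K_f+A_f$ of each other, where $(K_f,A_f)$ are the constants of the \emph{original} quasi-isometry $f$. Hence, even granting your uniform closeness of $f'_v$ to $f|_{X_v}$, the disagreement of the two edge restrictions is only bounded in terms of $f$'s constants; interpolating across the edge cylinder then yields a glued map whose quasi-isometry constants depend on $f$, defeating the purpose.

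The paper closes this gap with a tameness argument: the two induced edge maps $\hat f_{\iota(e),e},\hat f_{\tau(e),e}:X_e\to X_{f_*(e)}$ are $(K_2,A_2)$-quasi-isometries with constants depending only on $X$, they are at \emph{finite} distance from each other, and $X_e$ is tame because it is a non-elementary hyperbolic space (Example \ref{exmp:tame_morse}); tameness upgrades finite distance to a bound $A_3(K_2,A_2,X)$ independent of $f$, after which \cite[Proposition 2.14]{cashenmartin2017quasi} performs the gluing with uniform constants. This is precisely where non-elementarity is essential: a quasi-line is not tame, which is why the uniform quasi-isometry property fails if two-ended edge groups are allowed, as the paper remarks in the introduction. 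Incidentally, the uniformity you work to establish on vertex spaces via boundary maps and the Morse lemma is correct but not needed: the paper only uses that $\hat f_v$ has uniform constants and is at finite (not uniformly bounded) distance from $f_v$, with all uniform control along edge spaces coming from Lemma \ref{lem:unif_peripherals} and the tameness argument above --- which your proposal is missing.
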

\begin{proof}
	Since $\Gamma$ is quasi-isometric to the space $X$, where $(X,T)$ is an associated Bass--Serre tree of spaces, it is sufficient to show that $X$ has the uniform quasi-isometry property.  Namely, we show there exist constants $K$ and $A$ such that every quasi-isometry $f:X\to X$ is close to a $(K,A)$-quasi-isometry. Note that the optimal quasi-isometry constants of $f$ will typically be much larger than $K$ and $A$. 
	
	Our strategy will be to use the induced map $f_*:T\to T$ to construct a $(K,A)$-quasi-isometry $\hat f:X\to X$ close to $f$. We first use Theorem \ref{thm:rel_rigidity} to show that for each vertex $v\in VT$, there are pattern-preserving  quasi-isometries $\hat f_v:X_v\to X_{f_*(v)}$ whose quasi-isometry constants are independent of $f$. Then, we glue these quasi-isometries of vertex spaces together, using tameness of edge spaces, to yield a quasi-isometry $\hat f$.

	We fix a quasi-isometry $f:X\to X$. By Proposition~\ref{prop:induced_tree_aut}, the tree of spaces $(X,T)$ is preserved by quasi-isometries.  Let $f_*: T \rightarrow T$ be a corresponding tree automorphism as in Corollary \ref{cor:tree_auts}. Then, for every $w\in VT \cup ET$, there is a quasi-isometry $f_w:X_w\to X_{f_*(w)}$ such that $\sup_{x\in X_w} d(f(x),f_w(x))<\infty$ by~\cite[Lemma 2.1]{farbmosher2000abelianbycyclic}. 
	Proposition~\ref{prop:induced_tree_aut} implies $f$ coarsely preserves edge spaces of $X$, so for every vertex $v\in VT$, $f_v$ sends  elements of $\cJ_v$ to $\cJ_{f_*(v)}$  up to finite Hausdorff distance. Applying a similar argument to a coarse inverse $\bar f$ of $f$ yields a coarse inverse $\bar f_v$ of $f_v$ that  sends  elements of $\cJ_{f_*(v)}$ to $\cJ_{v}$  up to finite Hausdorff distance.  Thus, by Lemma~\ref{lem:unif_peripherals}, the map $f_v$ is a pattern-preserving quasi-isometry  $(X_v,\cJ_v)\to (X_{f_*(v)},\cJ_{f_*(v)})$.
	
	Therefore, for each $v \in VT$, the composition $h_{f_*(v)}f_v\overline{h_v}$ is a pattern-preserving quasi-isometry $(\bH^{n},\hat \cJ_v)\to (\bH^{n},\hat \cJ_{f_*(v)})$, where $n=n_v=n_{f_*(v)}$ and $h_v:X_v \rightarrow \Hy^{n_v}$ is defined above.  Theorem~\ref{thm:rel_rigidity} implies $h_{f_*(v)}f_v\overline{h_v}$ is close to an isometry $\phi_v:\bH^n\to \bH^n$. Thus, there exist constants $K_1 \geq 1$ and $A_1 \geq 0$, which are independent of~$v$, such that the map $\hat f_v\coloneqq \overline{h_{f_*(v)}}\phi_vh_v:X_v\to X_{f_*(v)}$ is a  $(K_1,A_1)$-quasi-isometry and $\sup_{x\in X_v} d(f_v(x),\hat f_v(x))<\infty$. Crucially, $K_1$ and $A_1$ depend only on $X$ and not  on the quasi-isometry constants of the original quasi-isometry~$f$.
	
	It follows from Lemma \ref{lem:unif_peripherals} that there is a constant $B$, depending only on $X$, such that for each $v\in VT$ and  $e\in \lk(v)$, \[d_\Haus(\hat f_v (\phi_e(X_e)),\phi_{f_*(e)}(X_{f_*(e)}))\leq B,\]
	where $\phi_e:X_e \rightarrow X_v$ is the edge map. Another application of \cite[Lemma 2.1]{farbmosher2000abelianbycyclic} implies that for every $v\in VT$ and  $e\in \lk(v)$, there exists a $(K_2,A_2)$-quasi-isometry $\hat f_{v,e}:X_e\to X_{f_*(e)}$ such that \[\sup_{x\in X_e} d\bigl( \phi_{f_*(e)}(\hat f_{v,e}(x)),\hat f_v(\phi_e(x))\bigr)\leq B',\]
	where $K_2$, $A_2$ and $B'$ depend only on $X$.
	Since $\sup_{x\in X_v}d(\hat f_v(x),f(x))<\infty$, it follows that $\sup_{x\in X_e}d(\hat f_{v,e}(x),f(x))<\infty$.
	
	Now, for each $e\in ET$, we have two $(K_2,A_2)$-quasi-isometries $\hat f_{\iota(e),e},\hat f_{\tau(e),e}:X_e\to X_{f_*(e)}$ such that $\sup_{x\in X_e}d(\hat f_{\iota(e),e}(x),\hat f_{\tau(e),e}(x))<\infty$. Since $X_e$ is a non-elementary hyperbolic space, it is tame; see Example \ref{exmp:tame_morse}. Hence there is a constant $A_3=A_3(K_2,A_2,X)$ such that $\sup_{x\in X_e}d(\hat f_{\iota(e),e}(x),\hat f_{\tau(e),e}(x))\leq A_3$. Again, it is crucial to note all these constants depend only on $X$ and not  $f$.   It now follows from  \cite[Proposition 2.14]{cashenmartin2017quasi} that the quasi-isometries $\{\hat f_v\}_{v\in VT}$ can be glued together with controlled error to yield a $(K_4,A_4)$-quasi-isometry $\hat f:X\to X$, where $K_4$ and $A_4$ depend only on $X$, such that $\sup_{x\in X} d(\hat f(x),f(x))<\infty$. 
\end{proof}

\begin{proof}[Proof of Theorem \ref{thm:common_model_geom}]
	Since $\G$ is hyperbolic, it is tame by Example \ref{exmp:tame_morse}. The result is thus  a combination of Propositions \ref{prop:qi_framework},  \ref{prop:action_rigid_graphofgroups} and \ref{prop:unif_qis}. 
\end{proof}

Provided an analogue of Theorem \ref{thm:rel_rigidity} holds, the proof of Theorem \ref{thm:common_model_geom} goes through virtually unchanged for uniform lattices in more general rank one symmetric spaces.  For quaternionic hyperbolic spaces and the Cayley hyperbolic plane, the required statement follows from  Pansu's much stronger quasi-isometric rigidity theorem \cite{pansu1989metriques}, where the conclusion of Theorem \ref{thm:rel_rigidity} holds without the pattern. For complex hyperbolic spaces,  Biswas--Mj prove an analogue of Theorem \ref{thm:rel_rigidity} provided the pattern $\cJ$ arises as the translates of odd-dimensional Poincar\'e duality groups \cite{biswasmj2012pattern}.  
Applying these  results and  following the  proof of  Theorem \ref{thm:common_model_geom} gives the following:
\begin{thm}\label{thm:common_model_geom_rak_one}
	Let $\Gamma$ be the fundamental group of a finite graph of groups $\cG$ satisfying the following properties:
	\begin{enumerate}
		\item Each vertex group  of $\cG$ is cubulated and is a uniform lattice in a rank one symmetric space  other than $\bH^2$.
		\item For each vertex group  of $\cG$, the collection  of images of incident edge maps is an almost malnormal family of  non-elementary quasi-convex subgroups with virtual cohomological codimension at least two.
		Moreover, for each  vertex group that is a lattice in complex hyperbolic space, all its incident edge groups are odd-dimensional Poincar\'e duality groups.
	\end{enumerate}
	Then any finitely generated group quasi-isometric to $\Gamma$ is abstractly commensurable to~$\Gamma$.
\end{thm}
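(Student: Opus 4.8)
The plan is to follow the proof of Theorem \ref{thm:common_model_geom} almost verbatim, factoring quasi-isometric rigidity through Proposition \ref{prop:qi_framework}. Since every vertex group is hyperbolic and the incident edge groups are non-elementary quasi-convex, $\Gamma$ is hyperbolic by Lemma \ref{lem:acylindrical}, hence tame by Example \ref{exmp:tame_morse}; moreover $\Gamma$ is virtually torsion-free (virtually special, as in the proof of Theorem \ref{thm:hyp_action_rigid_criterion}), so that quasi-isometric rigidity will upgrade to abstract commensurability via Lemma \ref{lem:tfree}. It therefore suffices to establish that $\Gamma$ is action rigid and that $\Gamma$ has the uniform quasi-isometry property. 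The only inputs that must be replaced are the two appearances of Biswas' pattern rigidity theorem (Theorem \ref{thm:rel_rigidity}): these are substituted by Pansu's theorem \cite{pansu1989metriques} for quaternionic hyperbolic space and the Cayley hyperbolic plane, and by the Biswas--Mj pattern rigidity theorem \cite{biswasmj2012pattern} for complex hyperbolic space. For real hyperbolic vertex groups of dimension at least three the statement is already Theorem \ref{thm:common_model_geom}, so the genuinely new content lies in the remaining rank one geometries.

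For action rigidity I would verify the three hypotheses of Theorem \ref{thm:hyp_action_rigid_criterion}, exactly as in Proposition \ref{prop:action_rigid_graphofgroups}. Each vertex group is cubulated and hyperbolic by assumption, and is graphically discrete by Theorem \ref{thm:propc_examples}(\ref{propc:latticelie}), whose statement covers uniform lattices in all the relevant rank one symmetric spaces. The incident edge maps form an almost malnormal family of infinite quasi-convex subgroups by hypothesis. The remaining condition --- that the associated Bass--Serre tree of spaces $(X,T)$ is preserved by quasi-isometries --- follows from the Mosher--Sageev--Whyte criterion exactly as in Proposition \ref{prop:induced_tree_aut}: after passing to a torsion-free finite-index subgroup via Selberg's lemma, each vertex group is the fundamental group of a closed aspherical (negatively curved) manifold, hence coarse $PD$; the edge groups are hyperbolic of type $FP$ with cohomological dimension equal to their coarse dimension; and the hypothesis that each edge group has virtual cohomological codimension at least two feeds into \cite[Lemma 3.8]{moshersageevwhyte2011quasiactions} to force every crossing graph to be empty. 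Thus conditions (MSW1)--(MSW5) all hold, and Theorem \ref{thm:hyp_action_rigid_criterion} yields action rigidity.

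For the uniform quasi-isometry property I would re-run the argument of Proposition \ref{prop:unif_qis}, with the rank one symmetric space $Y_v$ in place of $\bH^{n_v}$. For each vertex $v$ the vertex group $\Gamma_v$ acts geometrically on both $X_v$ and $Y_v$, giving a coarsely equivariant quasi-isometry $h_v\colon X_v\to Y_v$ that transports the peripheral pattern $\cJ_v$ to a symmetric pattern $\hat{\cJ}_v\subseteq Y_v$. Given a quasi-isometry $f\colon X\to X$, the induced tree automorphism $f_*$ produces pattern-preserving quasi-isometries of vertex spaces, and conjugating by the $h_v$ gives pattern-preserving self-quasi-isometries of $Y_v$. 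Here the rigidity input enters: Pansu's theorem forces these to be uniformly close to isometries with no pattern hypothesis whatsoever, while for complex hyperbolic vertex groups the Biswas--Mj theorem gives the same conclusion precisely because the incident edge groups are odd-dimensional Poincar\'e duality groups, so $\hat{\cJ}_v$ is a union of translates of odd-dimensional duality subgroups. As in Proposition \ref{prop:unif_qis}, the resulting isometries carry quasi-isometry constants independent of $f$; one then checks the pattern is preserved with uniform Hausdorff error using Lemma \ref{lem:unif_peripherals}, and glues the vertex-space maps together using tameness of the non-elementary hyperbolic edge spaces and \cite[Proposition 2.14]{cashenmartin2017quasi}. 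Combining the two conclusions through Proposition \ref{prop:qi_framework} finishes the proof.

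The main obstacle is concentrated entirely in securing the correct analogue of Theorem \ref{thm:rel_rigidity}, with constants uniform over all pattern-preserving maps. For the quaternionic and octonionic cases this is not really a difficulty, as Pansu's rigidity is strictly stronger and pattern-free; the genuine care is in the complex hyperbolic case, where the Biswas--Mj conclusion is available only under the odd-dimensional Poincar\'e duality hypothesis on the pattern --- which is exactly why that hypothesis is built into the statement. The exclusion of $\bH^2$ is likewise dictated by these rigidity theorems, since pattern-preserving quasi-isometries of $\bH^2$ are not close to isometries. A secondary bookkeeping point, handled by Selberg's lemma throughout, is that lattices in these symmetric spaces may have torsion, so cohomological codimension must be read virtually; passing to torsion-free finite-index subgroups does not affect abstract commensurability and leaves every step above intact.
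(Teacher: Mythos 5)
Your proposal is correct and follows essentially the same route as the paper: the paper's proof consists precisely of substituting Pansu's theorem (quaternionic and Cayley hyperbolic cases) and the Biswas--Mj pattern rigidity theorem (complex hyperbolic case, where the odd-dimensional Poincar\'e duality hypothesis on edge groups is exactly what makes that theorem applicable) for Theorem \ref{thm:rel_rigidity}, and then re-running the proof of Theorem \ref{thm:common_model_geom} unchanged. Your additional bookkeeping --- graphical discreteness via Theorem \ref{thm:propc_examples}(\ref{propc:latticelie}), the MSW conditions read virtually after Selberg's lemma, and the uniform constants in the gluing step --- fills in details the paper leaves implicit but introduces no new ideas or deviations.
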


\bibliographystyle{alpha}
\bibliography{mybib}

\newcommand{\etalchar}[1]{$^{#1}$}
\begin{thebibliography}{BKMM12}

\bibitem[Abe74]{abels1973/74speckerkompaktifizierungen}
Herbert Abels.
\newblock Specker-{K}ompaktifizierungen von lokal kompakten topologischen
  {G}ruppen.
\newblock {\em Math. Z.}, 135:325--361, 1973/74.

\bibitem[AFW15]{AschenbrennerFriedlWilton15}
Matthias Aschenbrenner, Stefan Friedl, and Henry Wilton.
\newblock {\em 3-manifold groups}.
\newblock EMS Series of Lectures in Mathematics. European Mathematical Society
  (EMS), Z\"{u}rich, 2015.

\bibitem[Ago13]{Agol13}
Ian Agol.
\newblock The virtual {H}aken conjecture.
\newblock {\em Doc. Math.}, 18:1045--1087, 2013.
\newblock With an appendix by Agol, Daniel Groves, and Jason Manning.

\bibitem[BF92]{bestvinafeighn92combination}
M.~Bestvina and M.~Feighn.
\newblock A combination theorem for negatively curved groups.
\newblock {\em J. Differential Geom.}, 35(1):85--101, 1992.

\bibitem[BFS20]{bader2020lattice}
Uri Bader, Alex Furman, and Roman Sauer.
\newblock Lattice envelopes.
\newblock {\em Duke Math. J.}, 169(2):213--278, 2020.

\bibitem[BH99]{bridsonhaefliger}
Martin~R. Bridson and Andr{\'e} Haefliger.
\newblock {\em Metric spaces of non-positive curvature}, volume 319 of {\em
  Grundlehren der Mathematischen Wissenschaften [Fundamental Principles of
  Mathematical Sciences]}.
\newblock Springer-Verlag, Berlin, 1999.

\bibitem[BHW11]{bergeronhaglundwise2011hyperplane}
Nicolas Bergeron, Fr\'{e}d\'{e}ric Haglund, and Daniel~T. Wise.
\newblock Hyperplane sections in arithmetic hyperbolic manifolds.
\newblock {\em J. Lond. Math. Soc. (2)}, 83(2):431--448, 2011.

\bibitem[Bis12]{biswas12flows}
Kingshook Biswas.
\newblock Flows, fixed points and rigidity for {K}leinian groups.
\newblock {\em Geom. Funct. Anal.}, 22(3):588--607, 2012.

\bibitem[BKMM12]{behrstock2012mcgs}
Jason Behrstock, Bruce Kleiner, Yair Minsky, and Lee Mosher.
\newblock Geometry and rigidity of mapping class groups.
\newblock {\em Geom. Topol.}, 16(2):781--888, 2012.

\bibitem[BL89]{BergmanLenstraHendrik89}
George~M. Bergman and Hendrik~W. Lenstra, Jr.
\newblock Subgroups close to normal subgroups.
\newblock {\em J. Algebra}, 127(1):80--97, 1989.

\bibitem[BM00]{BurgerMozes00}
Marc Burger and Shahar Mozes.
\newblock Lattices in product of trees.
\newblock {\em Inst. Hautes \'{E}tudes Sci. Publ. Math.}, (92):151--194 (2001),
  2000.

\bibitem[BM12]{biswasmj2012pattern}
Kingshook Biswas and Mahan Mj.
\newblock Pattern rigidity in hyperbolic spaces: duality and {PD} subgroups.
\newblock {\em Groups Geom. Dyn.}, 6(1):97--123, 2012.

\bibitem[Bow98]{bowditch}
Brian~H. Bowditch.
\newblock Cut points and canonical splittings of hyperbolic groups.
\newblock {\em Acta Math.}, 180(2):145--186, 1998.

\bibitem[BP00]{bourdonpajot}
Marc Bourdon and Herv\'{e} Pajot.
\newblock Rigidity of quasi-isometries for some hyperbolic buildings.
\newblock {\em Comment. Math. Helv.}, 75(4):701--736, 2000.

\bibitem[BW12]{bergeronwise2012boundary}
Nicolas Bergeron and Daniel~T. Wise.
\newblock A boundary criterion for cubulation.
\newblock {\em Amer. J. Math.}, 134(3):843--859, 2012.

\bibitem[CDH{\etalchar{+}}23]{chesebrodebloishoffmanmillichapmondalworden2023dehn}
Eric Chesebro, Jason DeBlois, Neil~R. Hoffman, Christian Millichap, Priyadip
  Mondal, and William Worden.
\newblock Dehn surgery and hyperbolic knot complements without hidden
  symmetries.
\newblock {\em Int. Math. Res. Not. IMRN}, (6):5293--5351, 2023.

\bibitem[CdlH16]{cornulierdlH2016metric}
Yves Cornulier and Pierre de~la Harpe.
\newblock {\em Metric geometry of locally compact groups}, volume~25 of {\em
  EMS Tracts in Mathematics}.
\newblock European Mathematical Society (EMS), Z\"{u}rich, 2016.
\newblock Winner of the 2016 EMS Monograph Award.

\bibitem[CJ94]{cassonjungreis}
Andrew Casson and Douglas Jungreis.
\newblock Convergence groups and {S}eifert fibered {$3$}-manifolds.
\newblock {\em Invent. Math.}, 118(3):441--456, 1994.

\bibitem[CK15]{chifankida2015and}
Ionut Chifan and Yoshikata Kida.
\newblock {$OE$} and {$W^*$} superrigidity results for actions by surface braid
  groups.
\newblock {\em Proc. Lond. Math. Soc. (3)}, 111(6):1431--1470, 2015.

\bibitem[CM12]{capracemonod2012lattice}
Pierre-Emmanuel Caprace and Nicolas Monod.
\newblock A lattice in more than two {K}ac-{M}oody groups is arithmetic.
\newblock {\em Israel J. Math.}, 190:413--444, 2012.

\bibitem[CM17]{cashenmartin2017quasi}
Christopher~H. Cashen and Alexandre Martin.
\newblock Quasi-isometries between groups with two-ended splittings.
\newblock {\em Math. Proc. Cambridge Philos. Soc.}, 162(2):249--291, 2017.

\bibitem[Cor18]{cornulier2018quasiisometric}
Yves~de Cornulier.
\newblock On the quasi-isometric classification of locally compact groups.
\newblock In {\em New directions in locally compact groups}, volume 447 of {\em
  London Math. Soc. Lecture Note Ser.}, pages 275--342. Cambridge Univ. Press,
  Cambridge, 2018.

\bibitem[Dav98]{davis_buildings}
Michael~W. Davis.
\newblock Buildings are {${\rm CAT}(0)$}.
\newblock In {\em Geometry and cohomology in group theory ({D}urham, 1994)},
  volume 252 of {\em London Math. Soc. Lecture Note Ser.}, pages 108--123.
  Cambridge Univ. Press, Cambridge, 1998.

\bibitem[DK18]{DrutuKapovich18}
Cornelia Dru\c{t}u and Michael Kapovich.
\newblock {\em Geometric group theory}, volume~63 of {\em American Mathematical
  Society Colloquium Publications}.
\newblock American Mathematical Society, Providence, RI, 2018.
\newblock With an appendix by Bogdan Nica.

\bibitem[dlST19]{delasalletessera2019char}
Mikael de~la Salle and Romain Tessera.
\newblock Characterizing a vertex-transitive graph by a large ball.
\newblock {\em Journal of Topology}, 12(3):705--743, 2019.

\bibitem[DT16]{dastessera}
Kajal Das and Romain Tessera.
\newblock Integrable measure equivalence and the central extension of surface
  groups.
\newblock {\em Groups Geom. Dyn.}, 10(3):965--983, 2016.

\bibitem[Dun85]{dunwoody1985accessibility}
M.~J. Dunwoody.
\newblock The accessibility of finitely presented groups.
\newblock {\em Invent. Math.}, 81(3):449--457, 1985.

\bibitem[Dym15]{dymarz2015envelopes}
Tullia Dymarz.
\newblock Envelopes of certain solvable groups.
\newblock {\em Comment. Math. Helv.}, 90(1):195--224, 2015.

\bibitem[FM00]{farbmosher2000abelianbycyclic}
Benson Farb and Lee Mosher.
\newblock On the asymptotic geometry of abelian-by-cyclic groups.
\newblock {\em Acta Math.}, 184(2):145--202, 2000.

\bibitem[FM02]{farbmosher2002surfacebyfree}
B.~Farb and L.~Mosher.
\newblock The geometry of surface-by-free groups.
\newblock {\em Geom. Funct. Anal.}, 12(5):915--963, 2002.

\bibitem[For24]{forester2024incommensurable}
Max Forester.
\newblock Incommensurable lattices in {B}aumslag-{S}olitar complexes.
\newblock {\em J. Lond. Math. Soc. (2)}, 109(3):Paper No. e12879, 31, 2024.

\bibitem[Fra23]{francoeur23bireversiblelamplighter}
Dominik Francoeur.
\newblock Bireversible automata generating lamplighter groups.
\newblock {\em Bull. Lond. Math. Soc.}, 55(2):990--997, 2023.

\bibitem[Fra25]{francoeur2025bireversible}
Dominik Francoeur.
\newblock On bireversible automata and commensurators of groups in
  automorphisms of their cayley graphs.
\newblock {\em arXiv preprint arXiv:2507.09784}, 2025.

\bibitem[Fri18]{frigerioQIrigid}
Roberto Frigerio.
\newblock Quasi-isometric rigidity of piecewise geometric manifolds.
\newblock In {\em Handbook of group actions. {V}ol. {IV}}, volume~41 of {\em
  Adv. Lect. Math. (ALM)}, pages 95--138. Int. Press, Somerville, MA, 2018.

\bibitem[Fur01]{furman2001mostow}
A.~Furman.
\newblock Mostow-{M}argulis rigidity with locally compact targets.
\newblock {\em Geom. Funct. Anal.}, 11(1):30--59, 2001.

\bibitem[Gab92]{gabai}
David Gabai.
\newblock Convergence groups are {F}uchsian groups.
\newblock {\em Ann. of Math. (2)}, 136(3):447--510, 1992.

\bibitem[Ger92]{gersten1992bounded}
S.~M. Gersten.
\newblock Bounded cocycles and combings of groups.
\newblock {\em Internat. J. Algebra Comput.}, 2(3):307--326, 1992.

\bibitem[GH21]{guirardel2021measure}
Vincent Guirardel and Camille Horbez.
\newblock Measure equivalence rigidity of $\mathrm{Out}(f_n)$, 2021.

\bibitem[GHH08]{goodmanheardhodgson}
Oliver Goodman, Damian Heard, and Craig Hodgson.
\newblock Commensurators of cusped hyperbolic manifolds.
\newblock {\em Experiment. Math.}, 17(3):283--306, 2008.

\bibitem[GL17]{guirardel2017jsj}
Vincent Guirardel and Gilbert Levitt.
\newblock {JSJ} decompositions of groups.
\newblock {\em Ast\'erisque}, (395):viii + 165, 2017.

\bibitem[GM05]{glasnermozes05automata}
Yair Glasner and Shahar Mozes.
\newblock Automata and square complexes.
\newblock {\em Geom. Dedicata}, 111:43--64, 2005.

\bibitem[Gro81]{gromov81}
Mikhael Gromov.
\newblock Groups of polynomial growth and expanding maps.
\newblock {\em Inst. Hautes \'{E}tudes Sci. Publ. Math.}, (53):53--73, 1981.

\bibitem[Gro87]{gromov1987hyperbolic}
M.~Gromov.
\newblock Hyperbolic groups.
\newblock In {\em Essays in group theory}, volume~8 of {\em Math. Sci. Res.
  Inst. Publ.}, pages 75--263. Springer, New York, 1987.

\bibitem[Hag06]{haglund2006commensurability}
Fr\'{e}d\'{e}ric Haglund.
\newblock Commensurability and separability of quasiconvex subgroups.
\newblock {\em Algebr. Geom. Topol.}, 6:949--1024, 2006.

\bibitem[Hem87]{hempel_RF3man}
John Hempel.
\newblock Residual finiteness for {$3$}-manifolds.
\newblock In {\em Combinatorial group theory and topology ({A}lta, {U}tah,
  1984)}, volume 111 of {\em Ann. of Math. Stud.}, pages 379--396. Princeton
  Univ. Press, Princeton, NJ, 1987.

\bibitem[HH20]{horbez2020boundary}
Camille Horbez and Jingyin Huang.
\newblock Boundary amenability and measure equivalence rigidity among
  two-dimensional artin groups of hyperbolic type.
\newblock {\em arXiv preprint arXiv:2004.09325}, 2020.

\bibitem[HH25]{horbez2025measure}
Camille Horbez and Jingyin Huang.
\newblock Measure equivalence rigidity among the higman groups.
\newblock {\em Journal of the European Mathematical Society}, 2025.

\bibitem[HL]{haissinskylecuire_QIrigid}
Peter Ha\"{i}ssinsky and Cyril Lecuire.
\newblock Quasi-isometric rigidity of three manifold groups.
\newblock arXiv:2005.06813.

\bibitem[Hou74]{houghton1974Ends}
C.~H. Houghton.
\newblock Ends of locally compact groups and their coset spaces.
\newblock {\em J. Austral. Math. Soc.}, 17:274--284, 1974.
\newblock Collection of articles dedicated to the memory of Hanna Neumann, VII.

\bibitem[HR63]{hewittross1963abstract}
Edwin Hewitt and Kenneth~A. Ross.
\newblock {\em Abstract harmonic analysis. {V}ol. {I}: {S}tructure of
  topological groups. {I}ntegration theory, group representations}.
\newblock Die Grundlehren der mathematischen Wissenschaften, Band 115. Academic
  Press, Inc., Publishers, New York; Springer-Verlag,
  Berlin-G\"{o}ttingen-Heidelberg, 1963.

\bibitem[Hua18]{huang2018commensurability}
Jingyin Huang.
\newblock Commensurability of groups quasi-isometric to {RAAG}s.
\newblock {\em Invent. Math.}, 213(3):1179--1247, 2018.

\bibitem[HW09]{hruskawise2009packing}
G.~Christopher Hruska and Daniel~T. Wise.
\newblock Packing subgroups in relatively hyperbolic groups.
\newblock {\em Geom. Topol.}, 13(4):1945--1988, 2009.

\bibitem[JSV13]{jmelsalhivago}
Abdelrazak Jmel, Ezzeddine Salhi, and Gioia Vago.
\newblock Homeomorphisms of the {S}ierpinski curve with periodic properties.
\newblock {\em Dyn. Syst.}, 28(2):203--213, 2013.

\bibitem[Kap01]{kap01combination}
Ilya Kapovich.
\newblock The combination theorem and quasiconvexity.
\newblock {\em Internat. J. Algebra Comput.}, 11(2):185--216, 2001.

\bibitem[Kid10]{kida2010MCGs}
Yoshikata Kida.
\newblock Measure equivalence rigidity of the mapping class group.
\newblock {\em Ann. of Math. (2)}, 171(3):1851--1901, 2010.

\bibitem[KK00]{kapovichkleiner2000hyperbolic}
Michael Kapovich and Bruce Kleiner.
\newblock Hyperbolic groups with low-dimensional boundary.
\newblock {\em Ann. Sci. \'{E}cole Norm. Sup. (4)}, 33(5):647--669, 2000.

\bibitem[KKL98]{kapovich1998derahm}
Michael Kapovich, Bruce Kleiner, and Bernhard Leeb.
\newblock Quasi-isometries and the de {R}ham decomposition.
\newblock {\em Topology}, 37(6):1193--1211, 1998.

\bibitem[KL97]{KapovichLeeb97}
Michael Kapovich and Bernhard Leeb.
\newblock Quasi-isometries preserve the geometric decomposition of {H}aken
  manifolds.
\newblock {\em Invent. Math.}, 128(2):393--416, 1997.

\bibitem[KL01]{kleinerleeb2001symmetric}
Bruce Kleiner and Bernhard Leeb.
\newblock Groups quasi-isometric to symmetric spaces.
\newblock {\em Comm. Anal. Geom.}, 9(2):239--260, 2001.

\bibitem[KM08]{kronmoller08roughcayley}
Bernhard Kr\"{o}n and R\"{o}gnvaldur~G. M\"{o}ller.
\newblock Analogues of {C}ayley graphs for topological groups.
\newblock {\em Math. Z.}, 258(3):637--675, 2008.

\bibitem[KM12]{kahnmarkovic2012immersing}
Jeremy Kahn and Vladimir Markovic.
\newblock Immersing almost geodesic surfaces in a closed hyperbolic three
  manifold.
\newblock {\em Ann. of Math. (2)}, 175(3):1127--1190, 2012.

\bibitem[Lev05]{levitt2005automorphisms}
Gilbert Levitt.
\newblock Automorphisms of hyperbolic groups and graphs of groups.
\newblock {\em Geom. Dedicata}, 114:49--70, 2005.

\bibitem[Mal10]{malone}
William Malone.
\newblock {\em Topics in geometric group theory}.
\newblock ProQuest LLC, Ann Arbor, MI, 2010.
\newblock Thesis (Ph.D.)--The University of Utah.

\bibitem[Mar91]{margulis}
G.~A. Margulis.
\newblock {\em Discrete subgroups of semisimple {L}ie groups}, volume~17 of
  {\em Ergebnisse der Mathematik und ihrer Grenzgebiete (3) [Results in
  Mathematics and Related Areas (3)]}.
\newblock Springer-Verlag, Berlin, 1991.

\bibitem[Mar22]{margolis2022discretisable}
Alex Margolis.
\newblock Discretisable quasi-actions i: Topological completions and
  hyperbolicity.
\newblock {\em arXiv preprint arXiv:2207.04401}, 2022.

\bibitem[Mar24]{margolis2024model}
Alex Margolis.
\newblock Model geometries dominated by locally finite graphs.
\newblock {\em Adv. Math.}, 451:Paper No. 109813, 2024.

\bibitem[Min05]{minasyan2005some}
Ashot Minasyan.
\newblock Some properties of subsets of hyperbolic groups.
\newblock {\em Comm. Algebra}, 33(3):909--935, 2005.

\bibitem[Mj12]{mj_patternrigidity}
Mahan Mj.
\newblock Pattern rigidity and the {H}ilbert-{S}mith conjecture.
\newblock {\em Geom. Topol.}, 16(2):1205--1246, 2012.

\bibitem[Mj24]{mj2024cubulating}
Mahan Mj.
\newblock Cubulating surface-by-free groups.
\newblock {\em J. Topol.}, 17(4):Paper No. e70011, 65 pp.;, 2024.
\newblock With an appendix by Jason Manning, Mj, and Michah Sageev.

\bibitem[MO15]{minasyanosin2015acylindrical}
Ashot Minasyan and Denis Osin.
\newblock Acylindrical hyperbolicity of groups acting on trees.
\newblock {\em Math. Ann.}, 362(3-4):1055--1105, 2015.

\bibitem[MO21]{martinxreosajda2021combination}
Alexandre Martin and Damian Osajda.
\newblock A combination theorem for combinatorially non-positively curved
  complexes of hyperbolic groups.
\newblock {\em Math. Proc. Cambridge Philos. Soc.}, 170(3):445--477, 2021.

\bibitem[Mos73]{mostow_strongrigidity}
G.~D. Mostow.
\newblock {\em Strong rigidity of locally symmetric spaces}.
\newblock Annals of Mathematics Studies, No. 78. Princeton University Press,
  Princeton, N.J.; University of Tokyo Press, Tokyo, 1973.

\bibitem[MSW02]{moshersageevwhyte_trees}
Lee Mosher, Michah Sageev, and Kevin Whyte.
\newblock Maximally symmetric trees.
\newblock volume~92, pages 195--233. 2002.
\newblock Dedicated to John Stallings on the occasion of his 65th birthday.

\bibitem[MSW11]{moshersageevwhyte2011quasiactions}
Lee Mosher, Michah Sageev, and Kevin Whyte.
\newblock Quasi-actions on trees {II}: {F}inite depth {B}ass-{S}erre trees.
\newblock {\em Mem. Amer. Math. Soc.}, 214(1008):vi+105, 2011.

\bibitem[MZ55]{montgomeryzippin1955topological}
Deane Montgomery and Leo Zippin.
\newblock {\em Topological transformation groups}.
\newblock Interscience Publishers, New York-London, 1955.

\bibitem[Neu97]{neumann_commensurability}
Walter~D. Neumann.
\newblock Commensurability and virtual fibration for graph manifolds.
\newblock {\em Topology}, 36(2):355--378, 1997.

\bibitem[Osi07]{Osin07}
Denis~V. Osin.
\newblock Peripheral fillings of relatively hyperbolic groups.
\newblock {\em Invent. Math.}, 167(2):295--326, 2007.

\bibitem[Osi16]{osin2016acylindrically}
D.~Osin.
\newblock Acylindrically hyperbolic groups.
\newblock {\em Trans. Amer. Math. Soc.}, 368(2):851--888, 2016.

\bibitem[Pan89]{pansu1989metriques}
Pierre Pansu.
\newblock M\'{e}triques de {C}arnot-{C}arath\'{e}odory et quasiisom\'{e}tries
  des espaces sym\'{e}triques de rang un.
\newblock {\em Ann. of Math. (2)}, 129(1):1--60, 1989.

\bibitem[Pap05]{papasoglu2005quasi}
Panos Papasoglu.
\newblock Quasi-isometry invariance of group splittings.
\newblock {\em Ann. of Math. (2)}, 161(2):759--830, 2005.

\bibitem[Par13]{pardon2013hilbertsmith}
John Pardon.
\newblock The {H}ilbert-{S}mith conjecture for three-manifolds.
\newblock {\em J. Amer. Math. Soc.}, 26(3):879--899, 2013.

\bibitem[PW02]{papasogluwhyte2002quasiisometries}
Panos Papasoglu and Kevin Whyte.
\newblock Quasi-isometries between groups with infinitely many ends.
\newblock {\em Comment. Math. Helv.}, 77(1):133--144, 2002.

\bibitem[Rag72]{raghunathan1972discrete}
M.~S. Raghunathan.
\newblock {\em Discrete subgroups of {L}ie groups}.
\newblock Ergebnisse der Mathematik und ihrer Grenzgebiete, Band 68.
  Springer-Verlag, New York-Heidelberg, 1972.

\bibitem[Rag84]{raghunathan_torsion}
M.~S. Raghunathan.
\newblock Torsion in cocompact lattices in coverings of {${\rm Spin}(2,\,n)$}.
\newblock {\em Math. Ann.}, 266(4):403--419, 1984.

\bibitem[Rei91]{reid91}
Alan~W. Reid.
\newblock Arithmeticity of knot complements.
\newblock {\em J. London Math. Soc. (2)}, 43(1):171--184, 1991.

\bibitem[Rie01]{rieffelQI}
Eleanor~G. Rieffel.
\newblock Groups quasi-isometric to {$\mathbf H^2\times\mathbf R$}.
\newblock {\em J. London Math. Soc. (2)}, 64(1):44--60, 2001.

\bibitem[Ron89]{ronan_buildings}
Mark Ronan.
\newblock {\em Lectures on buildings}, volume~7 of {\em Perspectives in
  Mathematics}.
\newblock Academic Press, Inc., Boston, MA, 1989.

\bibitem[Sch80]{Schlichting80}
G.~Schlichting.
\newblock Operationen mit periodischen {S}tabilisatoren.
\newblock {\em Arch. Math. (Basel)}, 34(2):97--99, 1980.

\bibitem[Sch95]{Schwartz95}
Richard~Evan Schwartz.
\newblock The quasi-isometry classification of rank one lattices.
\newblock {\em Inst. Hautes \'{E}tudes Sci. Publ. Math.}, (82):133--168 (1996),
  1995.

\bibitem[Sch97]{schwartz1997symmetric}
Richard~Evan Schwartz.
\newblock Symmetric patterns of geodesics and automorphisms of surface groups.
\newblock {\em Invent. Math.}, 128(1):177--199, 1997.

\bibitem[Ser77]{SerreTrees}
Jean-Pierre Serre.
\newblock {\em Arbres, amalgames, {${\rm SL}_{2}$}}.
\newblock Soci\'{e}t\'{e} Math\'{e}matique de France, Paris, 1977.
\newblock Avec un sommaire anglais, R\'{e}dig\'{e} avec la collaboration de
  Hyman Bass, Ast\'{e}risque, No. 46.

\bibitem[She22]{Shepherd22}
Sam Shepherd.
\newblock Two generalisations of {L}eighton's theorem.
\newblock {\em Groups Geom. Dyn.}, 16(3):743--778, 2022.
\newblock With an appendix by Giles Gardam and Daniel J. Woodhouse.

\bibitem[She23]{Shepherd23}
Sam Shepherd.
\newblock Imitator homomorphisms for special cube complexes.
\newblock {\em Trans. Amer. Math. Soc.}, 376(1):599--641, 2023.

\bibitem[She24]{shepherd2024commensurability}
Sam Shepherd.
\newblock Commensurability of lattices in right-angled buildings.
\newblock {\em Adv. Math.}, 441:Paper No. 109522, 55, 2024.

\bibitem[Sta68]{stallings}
John~R. Stallings.
\newblock On torsion-free groups with infinitely many ends.
\newblock {\em Ann. of Math. (2)}, 88:312--334, 1968.

\bibitem[SW79]{ScottWall79}
Peter Scott and Terry Wall.
\newblock Topological methods in group theory.
\newblock In {\em Homological group theory ({P}roc. {S}ympos., {D}urham,
  1977)}, volume~36 of {\em London Math. Soc. Lecture Note Ser.}, pages
  137--203. Cambridge Univ. Press, Cambridge-New York, 1979.

\bibitem[SW24]{starkwoodhouse2024action}
Emily~R. Stark and Daniel~J. Woodhouse.
\newblock Action rigidity for free products of hyperbolic manifold groups.
\newblock {\em Ann. Inst. Fourier (Grenoble)}, 74(2):503--544, 2024.

\bibitem[Swe01]{swenson}
Eric~L. Swenson.
\newblock Quasi-convex groups of isometries of negatively curved spaces.
\newblock {\em Topology Appl.}, 110(1):119--129, 2001.
\newblock Geometric topology and geometric group theory (Milwaukee, WI, 1997).

\bibitem[Tao14]{tao_hilbert}
Terence Tao.
\newblock {\em Hilbert's fifth problem and related topics}, volume 153 of {\em
  Graduate Studies in Mathematics}.
\newblock American Mathematical Society, Providence, RI, 2014.

\bibitem[Thu97]{thurston1997threedimensional}
William~P. Thurston.
\newblock {\em Three-dimensional geometry and topology. {V}ol. 1}, volume~35 of
  {\em Princeton Mathematical Series}.
\newblock Princeton University Press, Princeton, NJ, 1997.
\newblock Edited by Silvio Levy.

\bibitem[Tro84]{trofimov1984polynomial}
V.~I. Trofimov.
\newblock Graphs with polynomial growth.
\newblock {\em Mat. Sb. (N.S.)}, 123(165)(3):407--421, 1984.

\bibitem[Tuk88]{tukia}
Pekka Tukia.
\newblock Homeomorphic conjugates of {F}uchsian groups.
\newblock {\em J. Reine Angew. Math.}, 391:1--54, 1988.

\bibitem[VD36]{vandantzig36}
D.~Van~Dantzig.
\newblock Zur topologischen {A}lgebra. {III}. {B}rouwersche und {C}antorsche
  {G}ruppen.
\newblock {\em Compositio Math.}, 3:408--426, 1936.

\bibitem[vdDW84]{vandendrieswilkie1984gromovs}
L.~van~den Dries and A.~J. Wilkie.
\newblock Gromov's theorem on groups of polynomial growth and elementary logic.
\newblock {\em J. Algebra}, 89(2):349--374, 1984.

\bibitem[Wal11]{walsh}
Genevieve~S. Walsh.
\newblock Orbifolds and commensurability.
\newblock In {\em Interactions between hyperbolic geometry, quantum topology
  and number theory}, volume 541 of {\em Contemp. Math.}, pages 221--231. Amer.
  Math. Soc., Providence, RI, 2011.

\bibitem[Why99]{whyte1999amenability}
Kevin Whyte.
\newblock Amenability, bi-{L}ipschitz equivalence, and the von {N}eumann
  conjecture.
\newblock {\em Duke Math. J.}, 99(1):93--112, 1999.

\bibitem[Why10]{whyte2010coarse}
Kevin Whyte.
\newblock Coarse bundles.
\newblock {\em arXiv preprint arXiv:1006.3347}, 2010.

\bibitem[Wis00]{wise2000subgroup}
Daniel~T. Wise.
\newblock Subgroup separability of graphs of free groups with cyclic edge
  groups.
\newblock {\em Q. J. Math.}, 51(1):107--129, 2000.

\bibitem[Wis21]{wise2021structure}
Daniel~T. Wise.
\newblock {\em The structure of groups with a quasiconvex hierarchy}, volume
  209 of {\em Annals of Mathematics Studies}.
\newblock Princeton University Press, Princeton, NJ, 2021.

\bibitem[Wol68]{wolf1968growth}
Joseph~A. Wolf.
\newblock Growth of finitely generated solvable groups and curvature of
  {R}iemannian manifolds.
\newblock {\em J. Differential Geometry}, 2:421--446, 1968.

\bibitem[WZ10]{wiltonzalesskii2010profinite}
Henry Wilton and Pavel Zalesskii.
\newblock Profinite properties of graph manifolds.
\newblock {\em Geom. Dedicata}, 147:29--45, 2010.

\bibitem[Xie06]{xie2006quasiisometric}
Xiangdong Xie.
\newblock Quasi-isometric rigidity of {F}uchsian buildings.
\newblock {\em Topology}, 45(1):101--169, 2006.

\end{thebibliography}

\end{document}